\documentclass[10pt,reqno]{amsart}
\setcounter{page}{1} 
\usepackage{flafter,amsmath,amssymb,latexsym,psfrag,graphicx,color,indentfirst,bm,amsthm}
\usepackage{geometry}
\usepackage[T1]{fontenc}
\usepackage[utf8]{inputenc}
\usepackage[all]{xy}
\usepackage[colorlinks,
linktocpage=true,
linkcolor=blue,
citecolor=blue,
urlcolor=blue,
anchorcolor=black
]{hyperref}
\usepackage{float}
\usepackage[makeroom]{cancel}
\usepackage{tikz}
\usepackage{circuitikz}
\usepackage{mathtools}
\usepackage[numbers,sort&compress]{natbib}
\usepackage{appendix}
\usepackage{multirow}
\usepackage{caption}
\usepackage{subfigure}
\usepackage{graphicx}
\usepackage{dutchcal}
\usepackage{comment}
\allowdisplaybreaks
\usepackage{tikz}
\usepackage{tikz-cd}
\tikzset{mynode/.style={inner sep=2pt,fill,outer sep=0,circle}}
\usetikzlibrary{fadings}
\usepackage{amsmath,amssymb,braket,tikz}
\usetikzlibrary{tikzmark,calc}

\newtheorem{theorem}{Theorem}[section]
\newtheorem{lemma}{Lemma}[section] 
\newtheorem{corollary}{Corollary}[section] 
\newtheorem{proposition}{Proposition}[section] 
\newtheorem{remark}{Remark}[section]  

\newtheorem{definition}[theorem]{Definition}

\numberwithin{equation}{section}


\newcommand{\R}{\mathbb R}
\newcommand{\N}{\mathbb N}
\newcommand{\T}{\mathbb T}
\newcommand{\Z}{\mathbb Z}

\makeatletter
\newcommand{\Extend}[5]{\ext@arrow0099{\arrowfill@#1#2#3}{#4}{#5}}
\makeatother

\geometry{a4paper,left=1.8cm,right=1.8cm,top=1.8cm,bottom=1.8cm}
\def\({\left(}
\def\){\right)}
\def\<{\left\langle}
\def\>{\right\rangle}

\def\be{{\beta}}

\def\R{{\mathbb{R}}}

\def\T{{\mathbb{T}}}
\def\N{{\mathbb N}}
\def\Z{{\mathbb{Z}}}

\def\J{{\bf J}}

\def\u{{\bf u}}


\def\beaa{\begin{eqnarray*}}
\def\eeaa{\end{eqnarray*}}
\def\ba{\begin{array}}
\def\ea{\end{array}}

\def\be#1{\begin{equation} \label{#1}}
\def \eeq{\end{equation}}

\def\upppi{U_\Delta^3(G_{\alpha}^i,L_{x_1}^2L_{x_2}^2)}

\usepackage{mathrsfs}


\linespread{1.1}
\parskip=2pt

\numberwithin{equation}{section}
\numberwithin{figure}{section}



\newcommand\obullet[1]{\ThisStyle{\ensurestackMath{%
			\stackon[1pt]{\SavedStyle#1}{\SavedStyle\kern.6\LMpt\bullet}}}}
\newcommand\ocirc[1]{\ThisStyle{\ensurestackMath{%
			\stackon[1pt]{\SavedStyle#1}{\SavedStyle\kern.6\LMpt\circ}}}}


\begin{document}

\title[On Scattering for 2D quintic NLS under partial harmonic confinement]{On Scattering for  two-dimensional quintic  Schr\"odinger equation under partial harmonic confinement}

\author{Zuyu Ma}
\address{Zuyu Ma
\newline \indent The Graduate School of China Academy of Engineering Physics,
	Beijing 100088,\ P. R. China}
\email{mazuyu23@gscaep.ac.cn}
\author{Yilin Song}
\address{Yilin Song
\newline \indent The Graduate School of China Academy of Engineering Physics,
	Beijing 100088,\ P. R. China}
\email{songyilin21@gscaep.ac.cn}
\author{Ruixiao Zhang}
\address{Ruixiao Zhang
\newline \indent The Graduate School of China Academy of Engineering Physics,
	Beijing 100088,\ P. R. China}
\email{zhangruixiao21@gscaep.ac.cn}

\author{Zehua Zhao}
\address{Zehua Zhao
\newline \indent Department of Mathematics and Statistics, Beijing Institute of Technology, Beijing, China.
\newline \indent Key Laboratory of Algebraic Lie Theory and Analysis of Ministry of Education, Beijing, China.}
\email{zzh@bit.edu.cn}

\author{Jiqiang Zheng}
\address{Jiqiang Zheng
	\newline \indent Institute of Applied Physics and Computational Mathematics,
	Beijing, 100088, China.
	\newline\indent
	National Key Laboratory of Computational Physics, Beijing 100088, China}
\email{zheng\_jiqiang@iapcm.ac.cn, zhengjiqiang@gmail.com}

\thanks{	\textbf{Key words}: nonlinear  Schr\"odinger equation, partial harmonic oscillator,  well-posedness theory, scattering, Morawetz estimate, concentration compactness/rigidity method} 
\thanks{	\textbf{AMS subject classifications}: Primary: 35Q55; Secondary: 35P25, 35B40 78M35.}

\maketitle
	
\begin{abstract}
In this article, we study the scattering theory for the following two dimensional defocusing quintic nonlinear Schr\"odinger equation (NLS) with partial harmonic oscillator which is given by
\begin{align}\label{NLS-abstract}
\begin{cases}\tag{PHNLS}
i\partial_tu+(\partial_{x_1}^2+\partial_{x_2}^2)u-x_2^2u=|u|^4u,&(t,x_1,x_2)\in\R\times\R\times\R,\\
u(0,x_1,x_2)=u_0(x_1,x_2).
\end{cases}
\end{align}	

First, we establish the linear profile decomposition for the Schr\"odinger operator $e^{it(\partial_{x_1}^2+\partial_{x_2}^2-x_2^2)}$ by utilizing the classical linear profile decomposition associated with the Schr\"odinger equation in $L^2(\R)$. Then, applying the normal form technique, we approximate the nonlinear profiles using solutions of the new-type quintic dispersive continuous resonant (DCR) system. This allows us to employ the concentration-compactness/rigidity argument introduced by Kenig and Merle in our setting and prove scattering for equation \eqref{NLS-abstract} in the weighted Sobolev space.
  
The second part of this paper is dedicated to proving the scattering theory for this mass-critical (DCR) system. Inspired by Dodson's seminal work [B. Dodson, Amer. J. Math. 138 (2016), 531-569], we develop long-time Strichartz estimates associated with the spectral projection operator $\Pi_n$, along with low-frequency localized Morawetz estimates, to address the challenges posed by the Galilean transformation and spatial translation.
\end{abstract}

\tableofcontents

	
\section{Introduction}
We consider the defocusing quintic nonlinear Schr\"odinger equation with partially harmonic oscillator of the form
\begin{align}\label{NLS}
\begin{cases}
i\partial_tu+\Delta_{\R^2}u-(\alpha x_1^2+x_2^2)u=\mu|u|^4u,&(t,x)=(t,x_1,x_2)\in\R\times\R\times\R,\\
u(0,x)=u_0(x),
\end{cases}
\end{align}
where $u:\R\times\R^2\to\Bbb{C}$ is a complex-valued function and $\mu\in\{-1,1\}$.  The operator $-\Delta_{\R^2}+\alpha x_1^2+x_2^2$ is corresponding to the harmonic oscillator when $\alpha=1$ and the partially harmonic oscillator when $\alpha=0$.  When $\mu=1$, the equation  refers to the defocusing case while $\mu=-1$  refers to the focusing case. The solution to \eqref{NLS} enjoys the following the mass and energy conservation laws:
\begin{align*}
\mbox{Mass: }	M(u(t))&=\int_{\R\times\R}|u(t,x_1,x_2)|^2dx_1dx_2,\\
\mbox{Energy: }E(u(t))&=\int_{\R\times\R}\left(\frac{1}{2}|\partial_{(x_1,x_2)}u(t,x_1,x_2)|^2+\frac{1}{2}(|x_2|^2+\alpha|x_1|^2)|u(t,x_1,x_2)|^2+\frac{\mu}{6}|u(t,x_1,x_2)|^6\right)dx_1dx_2.
\end{align*}

The equation $\eqref{NLS}$ arises from the study of Bose-Einstein condensation(BEC) and can also be derived in NLS with a constant magnetic potential.  In the practical  experiment, the BEC is observed in presence of a confined potential trap and its macroscopic behavior strongly depends on the shape of this trapping potential. We refer the readers to \cite{FO,JP,Pitaeskii} for more physical backgrounds for the equation \eqref{NLS}.

\subsection{Recent progress on the global well-posedness and scattering for \eqref{NLS} and related models}
Before presenting the main results, we briefly review the global well-posedness and scattering result of the following equation
\begin{align}\label{NLS-2}
\begin{cases}
i\partial_tu+\Delta_{x,y}u-(\alpha |x|^2+\beta |y|^2)u=\mu|u|^{p-1}u,&(t,z)=(t,x,y)\in\R\times\R^{d-m}\times\R^{m},\\
u(0,x,y)=u_0(x,y),
\end{cases}
\end{align}
where $\alpha,\beta\in\{0,1\}$, $\mu\in\{-1,1\}$ and $1<p\leqslant 1+\frac{4}{d-2}$.
	
For the case of $\alpha=\beta=0$, then \eqref{NLS-2}  degenerates to the following classical nonlinear Schr\"odinger equation,
\begin{align}\label{NLS-classical}
\begin{cases}
i\partial_tu+\Delta u=\mu|u|^{p-1}u,&(t,x)\in\R\times\R^d,\\
u(0,x)=u_0(x), 1<p\leqslant 1+\frac{4}{d-2}.
\end{cases}
\end{align}
We note that the equation $\eqref{NLS-classical}$ is invariant under the following scaling transformation
\begin{align*}
u(t,x)\rightarrow  u_\lambda(t,x)=\lambda^{\frac{2}{p-1}} u(\lambda^2t,\lambda x),\quad\forall\lambda>0.
\end{align*}  
Moreover, we have the following 
\begin{align*}
\|u_\lambda\|_{\dot{H}^{s_c}(\R^d)}=\|u_0\|_{\dot{H}^{s_c}(\R^d)},\quad s_c=\frac{d}{2}-\frac{2}{p-1}.
\end{align*}
If $s_c=0$,  $\eqref{NLS-classical}$ is called mass-critical and if $s_c=1$,  $\eqref{NLS-classical}$ is called energy-critical.  If $s_c\in(0,1)$, $\eqref{NLS-classical}$ is called energy-subcritical.
	
For the defocusing energy-critical case, i.e. $p=1+\frac{4}{d-2}$ and $\mu=1$,  Bourgain \cite{Bourgain-JAMS} gave the first result on the global well-posedness and scattering for radial initial data with $d=3$. He developed the induction on energy argument and the Lin-Strauss Morawetz estimates to exclude the energy concentration near the origin. Later, Colliander-Keel-Staffilani-Takaoka-Tao \cite{CKSTT-Ann} exploited the interaction Morawetz inequality to prove the scattering for general data. Ryckman-Visan and Visan \cite{Ryckman-Visan,Visan} proved the scattering for $\eqref{NLS-classical}$ with dimensions $d\geq4$. For $\mu=-1$, i.e. the focusing case, Kenig-Merle \cite{Kenig-Merle} established the scattering/blow-up dichotomy of radial solutions in dimension $d\in\{3,4,5\}$. Later, Killip and Visan  \cite{Killip-Visan-AJM} used the double-Duhamel trick to remove the radial assumption of scattering theory in higher dimensions $d\geqslant5$.  Inspired by the work on solving the mass-critical problem \cite{Dodson-d=3,dodson-d=2,Dodson-d=1}, Dodson \cite{D5}  overcame the logarithmic loss in double-Duhamel argument and prove the scattering result for non-radial $\dot{H}^1$ initial data  in  dimension four.   
	
For  the defocusing mass-critical case, i.e. $p=1+\frac{4}{d}$ and $\mu=1$, Tao-Visan-Zhang \cite{Tao-Visan-Zhang2} first proved the existence of the minimal mass blow-up solution which is almost-periodic and reduced the proof of  scattering  to the exclusion of such almost-periodic solution. In  \cite{Tao-Visan-Zhang}, they applied the frequency-localized Morawetz estimate to show the scattering for the mass-critical NLS in higher-dimensions $d\geq3$ with radial initial data.  Afterwards, Killip-Tao-Visan \cite{KTV2009}  proved the scattering when $d=2$. They also treat the case of $\mu=-1$ in the same paper. For the focusing mass-critical NLS when $d\geq3$, Killip-Visan-Zhang \cite{KVZ2008} proved the global well-posedness and scattering for  radial initial data. But  their argument is not available at $d=1$ and non-radial data.  Dodson \cite{Dodson-d=1,dodson-d=2,Dodson-d=3} developed the long-time Strichartz estimate and the frequency-localized Morawetz estimates to prove the scattering for the general initial data when $d\geq1$. For the focusing case, i.e. $\mu=-1$, Dodson \cite{dodson-focusing} utilized the new frequency-localized Morawetz estimates to prove the scattering for $d\geq1$.  
	
For the case of $\alpha=\beta=1$, \eqref{NLS-2} turns to the nonlinear Schr\"odinger equation with harmonic oscillator, which is given by
\begin{align}\label{NLS-harmonic}
\begin{cases}
i\partial_tu+\Delta u-|x|^2u=\mu|u|^{p-1}u,&(t,x)\in\R\times\R^d,\\
u(0,x)=u_0(x)\in\Sigma(\R^d),
\end{cases}
\end{align}
where $\Sigma(\R^d)$ denotes the classical weighted Sobolev space
\begin{align*}
\Sigma(\R^d)=\left\{f\in H^1(\R^d):\|f\|_{L^2}+\|\nabla f\|_{L^2}+\|xf\|_{L^2}<\infty\right\}.
\end{align*}
Since the spectrum of $-\Delta+|x|^2$ are purely discrete, the effect of this operator to the long-time behaviors of the nonlinear problem is similar to the case of NLS posed on the pure torus. In the outstanding work of Colliander-Keel-Staffilani-Takaoka-Kao \cite{CKSTT-Invent}, they constructed the solution to cubic NLS on $\Bbb{T}^2$ may not convergence to the free Schr\"odinger group when $t\to\pm\infty$. Thus the scattering of \eqref{NLS-harmonic}  will not occur for the cubic NLS on $\Bbb{T}^2$. Hani and Thomann \cite{Hani-Thomann} proved the modified scattering of \eqref{NLS-harmonic} with cubic nonlinearity when dimensions $1\leq d\leq5$.  More recently, Chapert \cite{Chapert} constructed the smooth solution to \eqref{NLS-harmonic} whose Sobolev norms blow-up logarithmically as $t\to\infty$.    The global well-posedness of $\eqref{NLS-harmonic}$ in the energy sub-critical case was proved by Carles \cite{Carles1} with $\mu=1$ and Zhang \cite{Z3} with $\mu=-1$, where the mass-energy of the initial data is below that of the ground state.   Later, Killip-Visan-Zhang \cite{Killip-Visan-Zhang} studied the  global well-posedness for the energy-critical case with radial initial data in $d\geqslant3$ and Jao \cite{Jao-CPDE} extended \cite{Killip-Visan-Zhang} to the general data. Jao   \cite{Jao-DCDS} generalized the global well-posedness in \cite{Jao-CPDE} to the case of general potential with quadratic growth.    We also refer to \cite{Jao-Killip-Visan-1D,Jao-APDE} for the partial results concerning the $L^2$ profile decomposition associated to the mass-critical problem. 
	
For the case of $\alpha=0$, $\beta=1$, it is corresponding to the model of NLS with partial harmonic potential. It is worth to emphasize that the impact of the partial harmonic oscillator can be compared to the influence of  waveguide manifold $\R^{d-m}\times\Bbb{T}^m$. The nonlinear Schr\"odinger equation posed on the wave-guide manifold can be written as
\begin{equation}\label{NLS-product}
\begin{cases}
i\partial_t u + \Delta_{x,y}u = \mu |u|^{p-1}u,&(t,x,y)\in\R\times\R^{d-m}\times\Bbb{T}^m,\\
u(0,x,y) = u_0(x,y) \in H^s{(\R^{d-m}\times \mathbb{T}^m)}.
\end{cases}
\end{equation}
For the case of $m=1$, $d\geqslant2$, Tzvetkov-Visciglia \cite{TV} proved the scattering for $1+\frac{4}{d-1}<p<1+\frac{4}{d-2}$. For the right-endpoint, i.e. the energy-critical case, the global well-posedness and scattering was proved in Zhao \cite{Zhao-JDE} with $d=3,4$. For the left endpoint case, i.e. the mass-critical case, Cheng-Guo-Zhao \cite{Cheng-Guo-Zhao} and Cheng-Guo-Yang-Zhao \cite{Cheng-Guo-Yang-Zhao} proved the scattering when $d=2,3$ respectively.   We also refer to Luo \cite{Luo-normalized,Luo-JMPA,Luo-MA} for recent developments regarding the focusing case. For NLS on more general waveguide manifold $\Bbb{R}^{d-m}\times\Bbb{T}^m$,  we  refer to \cite{Cheng-Zhao-Zheng,Hani-Pausader,Ionescu-Pausader,Zhao-d=4,Zhao-Zheng} and references therein.

Inspired by the work of scattering theory on waveguide manifold, one can expect scattering occur for NLS with partial harmonic oscillators. Let  the potential be added in $y$-directions, the strong dispersion may lead to scatter. Antonelli-Carles-Silva \cite{Antonelli} proved the scattering for $\eqref{NLS-2}$ in the weighted Sobolev spaces, where $\mu=1$, $m=1$, $d\in\{2,3,4\}$ and $1+\frac{4}{d-1}<p<1+\frac{4}{d-2}$. The focusing case was treated by Ardila-Carles \cite{AC} when the initial mass-energy is below that of the ground state. The main contributions of   \cite{Antonelli} is to establish the  global-in-time Strichartz estimates for the Schr\"odinger operator $e^{it(-\Delta_{x,y}+|y|^2)}$. If the potential is added in all directions, the effect of the potential implies that the dispersive estimate can only  hold locally-in-time instead of global-in-time. By Mehler's formula, the associated heat kernel can be written as
\begin{align*}
e^{t(\Delta-|x|^2)}(x,y)=e^{\alpha(t)(x^2+y^2)}e^{\frac{\sinh(t)\Delta}{2}}(x,y),\quad(x,y)\in\R^d\times\R^d,
\end{align*}
where $\alpha(t)=\frac{1-\cosh(t)}{2\sinh(t)}$.  Moreover, using the analytic continuation, we can write the solution to the  linear Schr\"odinger equation with harmonic potential explicitly,
\begin{align*}
e^{it(\Delta-|x|^2)}f(x)=\frac{1}{(2\pi i\sin(t))^\frac{d}{2}}\int_{\R^d}e^{i\sin^{-1}t\big(\frac{x^2+y^2}{2}\cos(t)-xy\big)}f(y)dy.
\end{align*}
Then one can verify that
\begin{align*}
\big\|e^{it(-\Delta_{y}+|y|^2)}f(x,y)\big\|_{L_{y}^\infty(\R^m)}\lesssim|\sin(t)|^{-\frac{m}{2}}\|f(x,y)\|_{L_{y}^1(\R^m)},\quad\forall t\notin\frac{\pi}{2}\Bbb{Z}.
\end{align*}
However, by the strong dispersion of the $x$-direction, we have the global-in-time dispersive estimate:
\begin{align*}
\big\|e^{it(-\Delta_{x,y}+|y|^2)}f(x,y)\big\|_{L_{x}^\infty L_{y}^2(\R^{d-m}\times\R^m)}\lesssim|t|^{-\frac{d-m}{2}}\|f(x,y)\|_{L_{x}^1L_{y}^2(\R^{d-m}\times\R^m)}.
\end{align*}
Therefore, we can expect the scattering for the whole range of subcritical and critical regime, i.e. $1+\frac{4}{d-m}\leqslant p\leqslant 1+\frac{4}{d-2}$. However, it is a challenge problem to show the scattering at the two end-points, where the left endpoint is corresponding to the $(d-m)$-dimensional mass critical and the right endpoint is corresponding to the $d$-dimensional energy-critical. For the right endpoint case, the technical difficulty is the delicate global-in-time Strichartz estimates. We refer to   \cite{Hani-Pausader,Zhao-JDE} for more details of the waveguide manifold, which is similar to the partial confinement. For the left end-point,  the global well-posedness is direct and clear. However, obtaining the scattering behavior is hard. Cheng-Guo-Guo-Liao-Shen \cite{Cheng-Guo-Guo-Liao-Shen} proved the scattering for \eqref{NLS-2} where $d=3$, $m=1$ and $p=3$ by utilizing the concentration-compactness argument and Dodson's long-time Strichartz estimate of nonlinear solutions. For the case of $d=2$, $p=3$, Deng-Su-Zheng \cite{Deng-Su-Zheng} established the growth of higher-order Sobolev norms. 
	
In this article, our goal is to establish the scattering theory for left end-point case when $d=2$, which is   mass-critical.

\subsection{Main results}
In this article, we will focus on the following defocusing quintic NLS with partial harmonic oscillator on $\R^2$,
\begin{align}\label{NLS-final}\begin{cases}
i\partial_tu+(\Delta_{\R^2}-x_2^2)u=|u|^4u,&(t,x_1,x_2)\in\R\times\R\times\R,\\
u|_{t=0}=u_0.
\end{cases}\tag{PHNLS}
\end{align}

The solution to \eqref{NLS-final} satisfies the following two conservation laws:
\begin{align*}
\mbox{Mass: }	M(u(t))&=\int_{\R\times\R}|u(t,x_1,x_2)|^2dx_1dx_2,\\
\mbox{Energy: }E(u(t))&=\int_{\R\times\R}\left(\frac{1}{2}|\partial_{x_1,x_2}u(t,x_1,x_2)|^2+\frac{1}{2}|x_2|^2|u(t,x_1,x_2)|^2+\frac{1}{6}|u(t,x_1,x_2)|^6\right)dx_1dx_2.
\end{align*}
By the conservation of energy, it is natural to work on the following weighted Sobolev spaces 
\begin{align*}
\Sigma_{x_1,x_2}(\R\times\R):=\bigg\{f\in L^2(\R^2):\|\nabla f\|_{L^2(\R^2)}+\|x_2f\|_{L^2(\R^2)}+\|f\|_{L^2(\R\times\R)}<\infty\bigg\}.
\end{align*}

Our main result is the global well-posedness and scattering to the equation \eqref{NLS-final}.
\begin{theorem}\label{Thm1}For any $u_0\in\Sigma_{x_1 , x_2}(\R\times\R)$, there exists a unique global solution $u\in C(\R,\Sigma_{x_1 , x_2}(\R\times\R))$. Moreover, $u$ scatters in $\Sigma_{x_1,x_2}(\R\times\R)$ for both time directions, i.e. there exists $u_\pm\in\Sigma_{x_1 , x_2}(\R\times\R)$ such that 
\begin{align*}
\lim_{t\to\pm\infty}\|u(t)-e^{it(\Delta_{\R^2}-x_2^2)}u_{\pm}\|_{\Sigma_{x_1 , x_2}(\R\times\R)}=0.
\end{align*}
\end{theorem}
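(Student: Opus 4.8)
The plan is to follow the concentration-compactness/rigidity scheme of Kenig-Merle, adapted to the partial harmonic setting via the reduction to the dispersive continuous resonant (DCR) system announced in the abstract. First I would set up the local well-posedness and small-data scattering theory in $\Sigma_{x_1,x_2}(\R\times\R)$, exploiting that the $x_1$-direction carries the full mass-critical dispersive (Strichartz) estimates for $e^{it(\partial_{x_1}^2+\partial_{x_2}^2-x_2^2)}$ while the $x_2$-direction is handled spectrally by decomposing along the Hermite basis $\{h_n(x_2)\}$; the operator $-\partial_{x_2}^2+x_2^2$ has eigenvalues $2n+1$, so the semigroup acts diagonally on each mode with phase $e^{-i(2n+1)t}$. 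The energy conservation gives an a priori bound on $\|u\|_{\Sigma}$, so the remaining issue is to upgrade this to a global spacetime (scattering) bound.

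Next I would run the standard machinery: if Theorem~\ref{Thm1} fails, there is a critical mass $M_c$ below which scattering holds and at which it fails, and a minimal non-scattering ("almost periodic modulo symmetries") solution. The key input here is the linear profile decomposition for $e^{it(\partial_{x_1}^2+\partial_{x_2}^2-x_2^2)}$ claimed in the abstract, built from the classical $L^2(\R_{x_1})$ profile decomposition applied mode-by-mode in $x_2$; the symmetry group consists of $x_1$-translations, Galilean boosts in $x_1$, $x_1$-dilations (with appropriate time rescaling), and time translations — notably there is \emph{no} scaling or translation in $x_2$, and the harmonic confinement is time-periodic, which forces the usual "time-periodicity obstruction" to be dealt with by extracting nonlinear profiles that are either genuinely spread out in $x_1$ or whose $x_2$-behavior is governed by the resonant system. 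The crucial nonlinear approximation step is: after removing the fast linear phases $e^{-i(2n+1)t}$, the quintic interaction retains only the resonant frequency combinations $n_1-n_2+n_3-n_4+n_5=n$ (cf.\ the $\pro$ macro in the preamble), and an approximate-solution/perturbation argument shows the nonlinear profiles are well approximated by solutions of the quintic DCR system; one then imports the scattering theory for that system, which is the content of the second half of the paper.

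With the minimal counterexample reduced to an almost periodic solution, the rigidity step is to exclude it. Since there is no dilation symmetry in $x_2$ and the relevant scaling only touches $x_1$, the solution's $x_1$-frequency support satisfies either a self-similar, low-to-high cascade, or soliton-like scenario; the almost periodicity pins the $x_2$-profile into a compact family. I would then deploy a frequency-localized interaction Morawetz inequality in the $x_1$-variable only (the confined variable $x_2$ being integrated out against the $L^2_{x_2}$ norm), in the spirit of Dodson's treatment of the 1D/2D mass-critical NLS — this is exactly where the long-time Strichartz estimates adapted to the spectral projections $\Pi_n$ and the low-frequency localized Morawetz estimate mentioned in the abstract enter. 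The Morawetz bound contradicts the lower bound coming from almost periodicity, ruling out each enemy scenario and hence completing the proof.

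The hard part will be the nonlinear profile analysis and the rigidity Morawetz estimate working \emph{simultaneously} with the two very different directions: $x_1$ behaves like mass-critical NLS on $\R$ (so one needs Dodson's sharp long-time Strichartz and bilinear tools, which are delicate even without confinement), while $x_2$ is a discrete, non-dispersive, time-periodic direction that cannot be rescaled away — so the profile decomposition must simultaneously capture $L^2(\R_{x_1})$ concentration and Hermite-mode structure, and the error terms coming from the non-resonant quintic interactions (which oscillate but do not decay) must be controlled via the normal form / integration by parts in time. Establishing that the resonant DCR system genuinely approximates the nonlinear profiles in the right Strichartz space, uniformly in the profile parameters, is the technical crux; the Morawetz step is then conceptually standard but requires the $\Pi_n$-adapted long-time Strichartz estimate to close.
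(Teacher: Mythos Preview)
Your outline has all the right ingredients but misplaces one crucial structural piece: where Dodson's long-time Strichartz / frequency-localized Morawetz machinery actually enters.

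In the paper's architecture, the $\Pi_n$-adapted long-time Strichartz estimate and the low-frequency localized Morawetz estimate are used \emph{exclusively} to prove Theorem~\ref{Thm2} (scattering for the DCR system), not for the rigidity step of Theorem~\ref{Thm1}. Once DCR scattering is available, it is fed into the nonlinear profile approximation (your second paragraph, and the paper's Theorem~\ref{large-scale}): any profile with concentration parameter $\lambda_k^j\to\infty$ is shown, via the normal-form argument, to have bounded scattering norm thanks to Theorem~\ref{Thm2}. This forces the Palais--Smale sequence to converge (after passing to a single profile with $\lambda_k\equiv 1$), and the resulting critical element $u_c$ is almost periodic modulo \emph{only} the $x_1$-translation $\tilde x_1(t)$ --- there is \emph{no} residual scaling or frequency modulation, because the $\Sigma_{x_1,x_2}$ bound already controls $\|\partial_{x_1}u_c\|_{L^2}$ uniformly. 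Consequently there are no ``self-similar / cascade / soliton'' scenarios to sort through at this level.

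The rigidity for Theorem~\ref{Thm1} is then elementary: a standard (Planchon--Vega style) interaction Morawetz in $x_1$, with no frequency truncation, gives
\[
\int_{-T_0}^{T_0}\int_{\R}\big|\partial_{x_1}\|u_c(t,x_1,\cdot)\|_{L^2_{x_2}}^2\big|^2\,dx_1\,dt\lesssim \sup_t\|u_c\|_{L^2}^3\|\partial_{x_1}u_c\|_{L^2}\lesssim 1,
\]
and combining this with the spatial compactness $\int_{|x_1-\tilde x_1(t)|\le C(\eta)}\|u_c\|_{\mathcal H^1_{x_2}}^2\,dx_1\gtrsim m_c^2$ yields $m_c^2 T_0\lesssim C(\eta)$, forcing $u_c\equiv 0$. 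So your third paragraph should be replaced by this two-line argument; the heavy Dodson machinery lives entirely inside the proof of Theorem~\ref{Thm2}, which you correctly identify as an input at the profile stage but then invoke again, redundantly, at the rigidity stage.
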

	
The proof of Theorem \ref{Thm1} is based on the concentration-compactness/rigidity argument introduced by Kenig-Merle \cite{Kenig-Merle}. The main ingredient in this argument is the linear and nonlinear profile decomposition. We also note that the proof of Theorem \ref{Thm1} is based on Theorem \ref{Thm2}, which is given below.
	
In establishing the linear profile decomposition for bounded sequences in $\Sigma_{x_1 , x_2}(\R^2)$, the remainder terms can be only small  in $L^6_{t,x_1} \mathcal{H}_{x_2}^{1-\varepsilon_0}(\R\times\R\times\R)$ with $\varepsilon_0\in(0,\frac{1}{2})$. More precisely, we describe the defect of compactness of the following global-in-time Strichartz estimates
\begin{equation*}
e^{it (\partial_{x_1}^2 + \partial_{x_2}^2 - x_2^2)} : \Sigma_{x_1 , x_2}(\R^2) \to L^6_{t,x_1} \mathcal{H}_{x_2}^{1-\varepsilon_0}(\R\times\R\times\R).
\end{equation*}
We remark that we cannot obtain the remainder term appearing in the linear profile decomposition is small in $L_{t,x_1}^6\mathcal{H}_{x_2}^{1}$. Thus we replace $\mathcal{H}_{x_2}^1$ by the weaker Sobolev space $\mathcal{H}_{x_2}^{1-\varepsilon_0}$ for some $\varepsilon_0\ll1$. Fortunately, we observe that the weaker space-bound $L_{t,x_1}^6\mathcal{H}_{x_2}^{1-\varepsilon_0}$ can lead to scatter.
	
To accomplish such linear profile decomposition,  we prove the following type linear profile decomposition which can be understood as   linear profile decomposition with the spectral projector,
\begin{equation*}
e^{it \partial_{x_1}^2} : L^2_{x_1} \mathcal{H}_{x_2}^{1}(\R\times\R) \to L^6_{t,x_1} \mathcal{H}_{x_2}^{1-\varepsilon_0}(\R\times\R\times\R).
\end{equation*}
	
Next, we focus on the nonlinear profile decomposition. The nonlinear profiles are solutions to equation $\eqref{NLS-final}$ with initial data being the bubble function in linear profile decomposition. The main difficulty is to find a suitable approximating solution at the large scale. To do this, we consider the sequence of solutions with the highly-concentrated initial data as the nonlinear profiles,
\begin{align}\label{nonlinaer-profile}
\begin{cases}
i\partial_tu_\lambda+(\partial_{x_1}^2+\partial_{x_2}^2-x_2^2)u_\lambda=|u_\lambda|^4u_\lambda,\\
u_\lambda(0,x_1,x_2)=\frac{1}{\lambda^\frac{1}{2}}\phi(\frac{x_1}{\lambda},x_2), \quad \lambda > 0.
\end{cases}
\end{align} 
Here we only rescale the initial data $\phi$ in one direction instead of the whole directions. Acting the operator $e^{it (\partial_{x_2}^2 - x_2^2)}$ to both the nonlinear profile $u_{\lambda}$ and denote the new solution by the following 
\begin{align*}
w_\lambda(t,x_1,x_2)=e^{-it(\partial_{x_2}^2-x_2^2)}u_\lambda(t,x_1,x_2).
\end{align*}
By direct calculation,  we have
\begin{align*}
\begin{cases}
i\partial_tw_\lambda+\partial_{x_1}^2w_\lambda=e^{-it(\partial_{x_2}^2-x_2^2)}\left(|e^{it(\partial_{x_2}^2-x_2^2)}w_\lambda|^4e^{it(\partial_{x_2}^2-x_2^2)}w_\lambda\right),\\
w_\lambda|_{t=0}=\lambda^{-\frac{1}{2}}\phi(\frac{x_1}{\lambda},x_2).
\end{cases}
\end{align*}
By using the scaling transformation, we denote
 $$\tilde{v}(t,x_1,x_2)=\lambda^\frac{1}{2}w_\lambda(\lambda^2t,\lambda x_1,x_2).$$
It is clearly that $\tilde{v}$ satisfies
\begin{align*}
\begin{cases}
i\partial_t\tilde{v}+\partial_{x_1}^2\tilde{v}=e^{-i\lambda^2t(\partial_{x_2}^2-x_2^2)}\left(\left|e^{i\lambda^2 t(\partial_{x_2}^2-x_2^2)}\tilde{v}\right|^4e^{i\lambda^2 t(\partial_{x_2}^2-x_2^2)}\tilde{v}\right),\\
\tilde{v}|_{t=0}=\phi(x_1,x_2).
\end{cases}
\end{align*}
Let $\Pi_n$ be the spectral projector associated to the harmonic oscillator $-\partial_{x_2}^2+x_2^2$.
Acting the spectral projector $\Pi_n$ on both side of the above equation and decomposing $\tilde{v}=\sum\limits_{n\in\N}\Pi_{n}\tilde{v}$, then we get
\begin{align*}
\begin{cases}
i\partial_t\tilde{v}_n+\partial_{x_1}^2\tilde{v}_n=e^{i\lambda^2t(2n+1)}\Pi_n\Big(\sum\limits_{\substack{n_1,n_2,n_3,n_4,n_5\in\N}}e^{-i\lambda^2t(1+2n_1-2n_2+2n_3-2n_4+2n_5)}\tilde{v}_{n_1}\overline{\tilde{v}_{n_2}}\tilde{v}_{n_3}\overline{\tilde{v}_{n_4}}\tilde{v}_{n_5}\Big),\\
\tilde{v}_n|_{t=0}=\phi_n\stackrel{\triangle}{=}\Pi_n\phi.
\end{cases}
\end{align*}
	
Taking $\lambda\to\infty$ and to make sure that the right-handside of the above equation is well-defined, we have $n_1-n_2+n_3-n_4+n_5=n$ in the summation. Thus we derive the limiting equation
\begin{align}\label{limit-equation}
\begin{cases}
i\partial_t v_n+\partial_{x_1}^2v_n=\sum\limits_{\substack{n_1,n_2,n_3,n_4,n_5\in\Bbb{N}\\n_1-n_2+n_3-n_4+n_5=n}}\Pi_n(v_{n_1}\overline{v_{n_2}}v_{n_3}\overline{v_{n_4}}v_{n_5}),\\
v_n|_{t=0}=\phi_n(x_1,x_2).
\end{cases}
\end{align}
Using the relation between $w_\lambda$ and $u_\lambda$ and sum over $n\in\N$, we derive the nonlinear profiles $u_\lambda$ as
\begin{align*}
u_{\lambda}=e^{it(\partial_{x_2}^2-x_2^2)}\sum_{n\in\N}\lambda^{-\frac{1}{2}}v_n\left(\frac{t}{\lambda^2},\frac{x_1}{\lambda},x_2\right).
\end{align*}
Therefore, it  remains to prove the scattering result for the following equation, which is called the dispersive continuous resonant system(DCR),
\begin{align}\label{DCR}
\begin{cases}\tag{DCR}
i\partial_tv+\partial_{x_1}^2v=F(v),\\
v|_{t=0}=\phi(x_1,x_2),
\end{cases}
\end{align}
where 
\begin{align*}
F(v)=\sum_{\substack{n_1,n_2,n_3,n_4,n_5,n\in\N\\n_1-n_2+n_3-n_4+n_5=n}}\Pi_n(v_{n_1}\overline{v_{n_2}}v_{n_3}\overline{v_{n_4}}v_{n_5}).
\end{align*}
The study of (DCR) system arises from the relativistic quantum mechanics and weak turbulence theory. 	In \cite {Masmoudi-Nakanishi}, the authors showed that the nonlinear Klein-Gordon equation can be approximated by two coupled Schr\"odinger system as the speed of light tends to infinity in energy-space. Recently, Hani-Pausader-Tzvetkov-Visciglia \cite{Hani-Pausader-Tzvetkov-Visciglia} studied the modified scattering of the cubic nonlinear Schr\"odinger equation on waveguide $\R\times\Bbb{T}^d$ with $1\leq d\leq4$ by reducing the problem to the continuous resonant Schr\"odinger system. The (DCR) system can be understood as the dispersive-generalized version of their continuous resonant systems. The result of \cite{Hani-Pausader-Tzvetkov-Visciglia} is tightly linked to the weak turbulence theory. 
	
In our second result, we prove the global well-posedness and scattering for  \eqref{DCR}.
\begin{theorem}\label{Thm2}
For any $\phi\in L_{x_1}^2\mathcal{H}_{x_2}^1(\R\times\R)$, there exists a unique global solution $v\in C(\R,L_{x_1}^2\mathcal{H}_{x_2}^1(\R\times\R))\cap L_{t,x_1}^6\mathcal{H}_{x_2}^{1}(\R\times\R\times\R)$. Moreover, $v$ scatters in the both time directions, i.e. there exist $v_\pm\in L_{x_1}^2\mathcal{H}_{x_2}^1(\R\times\R)$ such that
\begin{align}\label{scattering-def}
\lim_{t\to\pm\infty}\big\|v(t)-e^{it\partial_{x_1}^2}v_\pm\big\|_{L_{x_1}^2\mathcal{H}_{x_2}^1(\R\times\R)}=0,
\end{align}
where $\mathcal{H}_{x_2}^1$ denotes the Sobolev spaces associated to the harmonic oscillator equipped with the norm
\begin{align*}
\|f\|_{\mathcal{H}^{1}(\R)}^2=\sum_{n\in\N}(2n+1)\|\Pi_nf\|_{L^2(\R)}^2.
\end{align*}
\end{theorem}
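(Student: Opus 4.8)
The plan is to run the Kenig--Merle concentration--compactness/rigidity scheme for \eqref{DCR}, with the rigidity part following the long-time Strichartz and Morawetz strategy that Dodson developed for the $1$d mass-critical NLS; the new feature throughout is that every estimate must be performed Hermite-component by Hermite-component, in the eigenbasis of $-\partial_{x_2}^2+x_2^2$, with the resonance constraint $n_1-n_2+n_3-n_4+n_5=n$ controlled at each stage. As a first step I would establish local well-posedness, small-data scattering, and a perturbation/stability theory for \eqref{DCR} in $L^2_{x_1}\mathcal{H}^1_{x_2}$. Since $e^{it\partial_{x_1}^2}$ commutes with every $\Pi_n$, the one-dimensional Strichartz estimates lift verbatim to $\mathcal{H}^1_{x_2}$-valued statements; the single new ingredient is a multilinear bound of the form $\|F(v)\|_{L^{6/5}_{t,x_1}\mathcal{H}^1_{x_2}}\lesssim\|v\|_{L^6_{t,x_1}\mathcal{H}^1_{x_2}}^5$, in which the summation over the resonance set is absorbed using the $L^2_{x_2}$-boundedness of $\Pi_n$, the fractional Leibniz rule in $x_2$, and Cauchy--Schwarz in the Hermite index (exploiting that $\|\cdot\|_{\mathcal{H}^1_{x_2}}$ is just the $\ell^2$-norm of $\{(2n+1)^{1/2}\|\Pi_n\cdot\|_{L^2_{x_2}}\}$). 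I would also record the conservation laws: the mass $M(v)=\|v\|_{L^2_{x_1,x_2}}^2$, the Hermite-weighted quantity $\sum_n(2n+1)\|\Pi_nv(t)\|_{L^2}^2=\|v(t)\|_{L^2_{x_1}\mathcal{H}^1_{x_2}}^2$, and the Hamiltonian energy $E(v)=\tfrac12\|\partial_{x_1}v\|_{L^2}^2+\mathcal{P}(v)$ with $\mathcal{P}(v)\geq0$ the resonant sextic potential (a time average of $\|e^{is(-\partial_{x_2}^2+x_2^2)}v\|_{L^6_{x_1,x_2}}^6$) --- all preserved precisely because the nonlinearity is exactly resonant. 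The conserved Hermite-weighted quantity is what lets one treat \eqref{DCR} as a \emph{single} scaling-critical problem in the $x_1$-variable alone, with $x_2$ merely a passenger.

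\emph{Reduction to an almost periodic solution.} Assuming Theorem~\ref{Thm2} fails, there is a minimal critical value of the conserved quantity; feeding the linear profile decomposition for $e^{it\partial_{x_1}^2}$ on $L^2_{x_1}\mathcal{H}^1_{x_2}$, the corresponding nonlinear profiles (solutions of \eqref{DCR} with highly concentrated bubble data), and the stability theory of the previous step into the standard argument, one extracts a nonzero maximal-lifespan solution $v$ with $\|v\|_{L^6_{t,x_1}\mathcal{H}^1_{x_2}}=\infty$ on both half-lines such that $\{e^{-ix_1\xi_1(t)}N(t)^{-1/2}v(t,N(t)^{-1}(x_1-x_1(t)),x_2):t\in\R\}$ is precompact in $L^2_{x_1}\mathcal{H}^1_{x_2}$ for suitable $N(t)>0$, $x_1(t)\in\R$, and Galilean parameter $\xi_1(t)\in\R$; crucially only the $x_1$-variable carries modulation parameters, since \eqref{DCR} admits no dilation or translation symmetry in $x_2$. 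By the standard reductions --- local constancy of $N(t)$, partition of the maximal interval into characteristic subintervals, and reduction of the frequency scale to $N(t)\leq1$ --- one is left with two scenarios: (a) $\int_0^\infty N(t)^3\,dt<\infty$ (the frequency-cascade case), and (b) $\int_0^\infty N(t)^3\,dt=\infty$ (the quasi-soliton case), after normalizing $\xi_1(t)$ in case (b).

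\emph{Long-time Strichartz and the Morawetz contradiction.} The technical core is then a pair of estimates. First, a long-time Strichartz estimate: partitioning a compact interval $J$ into characteristic subintervals, an induction on $x_1$-frequency bounds the high-$x_1$-frequency part $P_{>M}v$, measured in the $\Pi_n$-weighted atomic norm $\big(\sum_n(2n+1)\|\Pi_nP_{>M}v\|_{U^2_\Delta(J)}^2\big)^{1/2}$, by $\big(M^{-1}\int_JN(t)^3\,dt\big)^{1/2}$ plus lower-order terms, with the resonant sum again treated componentwise in $n$ and the $x_2$-harmonic structure contributing only the benign weight $(2n+1)$. Second, a low-($x_1$-)frequency localized interaction Morawetz inequality for $P_{\leq M}v$, whose Morawetz weight depends on $x_1-y_1$ alone (with $x_2,y_2$ and the Hermite weights integrated out) and whose nonlinear contribution inherits a favorable sign from $\mathcal{P}(v)\geq0$. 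Combining the two: in case (b) one gets, on long intervals, $\int_0^TN(t)^3\,dt\lesssim o(1)\int_0^TN(t)^3\,dt+C$, contradicting $\int_0^\infty N(t)^3\,dt=\infty$; in case (a) the long-time Strichartz estimate upgrades $v$ to $L^\infty_tH^1_{x_1}\mathcal{H}^1_{x_2}$, after which almost periodicity forces $\|\partial_{x_1}v(t)\|_{L^2}\to0$ and $\mathcal{P}(v(t))\to0$ along a time sequence, so $E(v)=0$ and hence $v\equiv0$, contradicting $M(v)>0$.

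\emph{Main obstacle.} I expect the decisive difficulty to be the localized interaction Morawetz step: running a genuinely one-dimensional (in $x_1$) interaction Morawetz argument for a two-dimensional solution and keeping it quantitatively stronger than the long-time Strichartz bound, while simultaneously (i) controlling the error terms generated by the resonant $\Pi_n$-nonlinearity uniformly in the Hermite index and (ii) tracking the Galilean parameter $\xi_1(t)$ and the spatial center $x_1(t)$ --- the Morawetz functional is not Galilean invariant, so one must first prove that $\xi_1(t)$ remains bounded (via a frequency center-of-mass / momentum argument) and then remove it by a boost, and absorb $x_1(t)$ into the weight. Making these two competing estimates close against each other, uniformly in the $x_2$-Hermite data, is the crux of the proof.
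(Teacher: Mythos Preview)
Your proposal is correct and follows essentially the same roadmap as the paper: local theory and stability for \eqref{DCR} in $L^2_{x_1}\mathcal{H}^1_{x_2}$, a Palais--Smale reduction to an almost-periodic solution with parameters $(N(t),x_1(t),\xi(t))$ in the $x_1$-variable only, the split into rapid-cascade and quasi-soliton scenarios, a Dodson-style long-time Strichartz estimate built on $U^2_\Delta$/$V^2_\Delta$ atomic spaces, and a low-frequency-localized interaction Morawetz estimate in $x_1$ to close. One tactical point worth flagging: for the multilinear bound $\|F(v)\|_{L^{6/5}_{t,x_1}\mathcal{H}^1_{x_2}}\lesssim\|v\|_{L^6_{t,x_1}\mathcal{H}^1_{x_2}}\|v\|_{L^6_{t,x_1}L^2_{x_2}}^4$, the paper does \emph{not} use Cauchy--Schwarz in the Hermite index as you suggest, but instead rewrites the resonant sum as a time average $\frac{1}{\pi}\int_0^\pi e^{is(-\partial_{x_2}^2+x_2^2)}\big(\prod_j e^{is(-\partial_{x_2}^2+x_2^2)}F_j\big)\,ds$ and applies the local-in-time Strichartz estimate for the harmonic oscillator in $x_2$ --- this is exactly what buys four factors of $L^2_{x_2}$ rather than $\mathcal{H}^1_{x_2}$, and is the reason the scattering criterion can be pushed down to $L^6_{t,x_1}L^2_{x_2}$ (a genuine improvement over the waveguide case $\mathbb{R}\times\mathbb{T}$); your Cauchy--Schwarz route would likely lose a factor and force extra $\mathcal{H}^\beta_{x_2}$ regularity on several inputs.
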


Since the remainder term appearing  linear profile decomposition can be  small in $L_{t,x_1}^6\mathcal{H}_{x_2}^{1-\varepsilon_0}$ for $\varepsilon_0\in(0,1]$, thus we need to prove that the weaker regularity in $x_2$ variable can also imply the scattering. By using the nonlinear estimate of \eqref{DCR}, we have the following scattering criterion.
\begin{proposition}[Scattering Criterion]
Let $u$ be the solution to \eqref{DCR}. If $u$ obeys
\begin{align*}
\|u\|_{L_{t,x_1}^6L_{x_2}^2(\R\times\R\times\R)}<\infty,
\end{align*}
then we have
\begin{align*}
\|u\|_{L_{t,x_1}^6\mathcal{H}_{x_2}^1(\R\times\R\times\R)}<\infty.
\end{align*}
Furthermore, $u$ scatters in the sense of \eqref{scattering-def}.
\end{proposition}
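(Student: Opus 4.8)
The plan is the usual pattern for a scattering criterion: prove a nonlinear estimate that isolates the $x_2$-regularity on a single factor, upgrade the a priori regularity on a partition of $\R$ into finitely many intervals on which $\|u\|_{L^6_{t,x_1}L^2_{x_2}}$ is small, and then read off scattering from the Duhamel formula. The only genuinely setting-specific ingredient is the nonlinear estimate, because the $x_2$-propagator $e^{is(-\partial_{x_2}^2+x_2^2)}$ is \emph{not} bounded on $L^p_{x_2}$ for $p\neq 2$.

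\emph{Nonlinear estimate.} The crux is a bound of the form, valid on every interval $I$,
\begin{equation*}
\|F(v)\|_{L^{6/5}_{t,x_1}\mathcal H^1_{x_2}(I)}\;\lesssim\;\|v\|_{L^6_{t,x_1}L^2_{x_2}(I)}^{2}\,\|v\|_{L^6_{t,x_1}\mathcal H^1_{x_2}(I)}^{3}
\end{equation*}
(the precise form being the one established earlier; the argument below uses only that the small factor $\|v\|_{L^6_{t,x_1}L^2_{x_2}}$ enters to a positive power). To prove it, note the oscillatory representation of the resonant nonlinearity
\begin{equation*}
F(v)=\lim_{T\to\infty}\frac1{2T}\int_{-T}^{T}e^{is\HH_{x_2}}\Big(\big|e^{-is\HH_{x_2}}v\big|^{4}e^{-is\HH_{x_2}}v\Big)\,ds,\qquad \HH_{x_2}:=-\partial_{x_2}^{2}+x_2^{2},
\end{equation*}
which is exactly the averaging that selects the resonance $n_1-n_2+n_3-n_4+n_5=n$. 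Since $e^{is\HH_{x_2}}$ is unitary on every $\mathcal H^\sigma_{x_2}$, pulling the $\mathcal H^1_{x_2}$-norm inside the average by Minkowski's inequality gives, for fixed $(t,x_1)$, $\|F(v)\|_{\mathcal H^1_{x_2}}\le\sup_{s}\big\||w_s|^{4}w_s\big\|_{\mathcal H^1_{x_2}}$ with $w_s=e^{-is\HH_{x_2}}v$. The one-dimensional fractional Leibniz rule for $\HH_{x_2}^{1/2}$, followed by the Gagliardo--Nirenberg embedding $\|f\|_{L^\infty(\R)}\lesssim\|f\|_{L^2(\R)}^{1/2}\|f\|_{\mathcal H^1(\R)}^{1/2}$ applied to the four factors not carrying the derivative (both norms being $e^{-is\HH_{x_2}}$-invariant), yields $\||w_s|^{4}w_s\|_{\mathcal H^1_{x_2}}\lesssim\|v\|_{L^2_{x_2}}^{2}\|v\|_{\mathcal H^1_{x_2}}^{3}$ uniformly in $s$, and a Hölder step in $(t,x_1)$ with $\tfrac56=2\cdot\tfrac16+3\cdot\tfrac16$ concludes.

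\emph{A priori $\mathcal H^1_{x_2}$ bound, bootstrap, and scattering.} Because $\partial_{x_1}^{2}$ commutes with $\HH_{x_2}$ and the resonance relation forces $\lambda_n=\lambda_{n_1}-\lambda_{n_2}+\lambda_{n_3}-\lambda_{n_4}+\lambda_{n_5}$ with $\lambda_m=2m+1$, the quantity $\|v(t)\|_{L^2_{x_1}\mathcal H^1_{x_2}}^{2}=\sum_n(2n+1)\|\Pi_n v(t)\|_{L^2_{x_1}}^{2}$ is conserved along the \eqref{DCR} flow (the imaginary part of $\langle\HH_{x_2}v,F(v)\rangle$ vanishes after relabelling the summation indices, exactly as for the higher conservation laws of resonant systems); hence $E_0:=\|u(t)\|_{L^2_{x_1}\mathcal H^1_{x_2}}$ is finite and time-independent. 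Fix $\eta=\eta(E_0)>0$ small and split $\R$ into $N\sim(\|u\|_{L^6_{t,x_1}L^2_{x_2}(\R)}/\eta)^{6}$ intervals $I_j$ with $\|u\|_{L^6_{t,x_1}L^2_{x_2}(I_j)}\le\eta$. On each $I_j$, the Duhamel formula, the Strichartz estimates for $e^{it\partial_{x_1}^{2}}$ with values in $\mathcal H^1_{x_2}$ (valid since $\partial_{x_1}^{2}$ and $\HH_{x_2}$ commute), the conserved $E_0$, and the nonlinear estimate give $\|u\|_{L^6_{t,x_1}\mathcal H^1_{x_2}(I_j)}\le CE_0+C\eta^{2}\|u\|_{L^6_{t,x_1}\mathcal H^1_{x_2}(I_j)}^{3}$; since the local theory makes $\|u\|_{L^6_{t,x_1}\mathcal H^1_{x_2}}$ finite near the left endpoint of $I_j$, a continuity argument with $\eta$ so small that $C\eta^{2}(2CE_0)^{3}\le CE_0$ forces $\|u\|_{L^6_{t,x_1}\mathcal H^1_{x_2}(I_j)}\le 2CE_0$, and summing the $N$ sixth powers gives $\|u\|_{L^6_{t,x_1}\mathcal H^1_{x_2}(\R)}<\infty$. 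For scattering, set $u_\pm:=u_0+i\int_0^{\pm\infty}e^{-is\partial_{x_1}^{2}}F(u)(s)\,ds$; the dual Strichartz estimate and the nonlinear estimate bound $\big\|\int_{t_1}^{t_2}e^{-is\partial_{x_1}^{2}}F(u)(s)\,ds\big\|_{L^2_{x_1}\mathcal H^1_{x_2}}$ by $\|u\|_{L^6_{t,x_1}L^2_{x_2}([t_1,t_2])}^{2}\|u\|_{L^6_{t,x_1}\mathcal H^1_{x_2}([t_1,t_2])}^{3}\to0$ as $t_1,t_2\to\pm\infty$, so the integral converges in $L^2_{x_1}\mathcal H^1_{x_2}$ and $\|u(t)-e^{it\partial_{x_1}^{2}}u_\pm\|_{L^2_{x_1}\mathcal H^1_{x_2}}\to0$, which is \eqref{scattering-def}.

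\emph{Main obstacle.} Everything past the nonlinear estimate is routine, so the substance lies there, and the delicate point is precisely that $e^{is\HH_{x_2}}$ is not bounded on $L^p_{x_2}$ for $p\neq 2$: after passing to $w_s$ one cannot just place one factor in $\mathcal H^1_{x_2}$ and the other four in $L^2_{x_2}$, since only the $\mathcal H^\sigma_{x_2}$-norms survive the propagator. Routing the needed $L^\infty_{x_2}$-control of the four remaining factors through the $\mathcal H^1_{x_2}$-to-$L^2_{x_2}$ interpolation is what produces three --- rather than one --- copies of $\|u\|_{L^6_{t,x_1}\mathcal H^1_{x_2}}$ in the estimate; this is harmless here only because the resonant conservation law supplies the a priori bound $\|u(t)\|_{L^2_{x_1}\mathcal H^1_{x_2}}=E_0$, and without it one would have to run a more delicate simultaneous bootstrap of the energy-type and spacetime norms.
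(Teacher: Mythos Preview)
Your argument is correct, and the overall structure (partition into small-$L^6_{t,x_1}L^2_{x_2}$ intervals, bootstrap, Duhamel for scattering) matches the paper's. The genuine difference is in how you prove the nonlinear estimate. You bound the time-average pointwise in $s$, invoke the Moser-type product rule for $H^{1/2}$, and then feed the four $L^\infty_{x_2}$ factors through Gagliardo--Nirenberg; this costs you three copies of $\|v\|_{L^6_{t,x_1}\mathcal H^1_{x_2}}$ and forces a cubic bootstrap, which you correctly close using the conserved quantity $M_S(u)=\|u\|_{L^2_{x_1}\mathcal H^1_{x_2}}^2$. The paper instead exploits the local-in-time Strichartz estimate for the harmonic oscillator, $\|e^{is(-\partial_{x_2}^2+x_2^2)}f\|_{L^5_sL^{10}_{x_2}([0,\pi]\times\R)}\lesssim\|f\|_{L^2_{x_2}}$: after writing the resonant nonlinearity via the $\frac1\pi\int_0^\pi$ average and dualizing against $\varphi_0\in L^2_{x_2}$, all five factors can be placed in $L^5_sL^{10}_{x_2}$ and hence in $L^2_{x_2}$, yielding the sharp estimate $\|F(u)\|_{L^{6/5}_{t,x_1}\mathcal H^1_{x_2}}\lesssim\|u\|_{L^6_{t,x_1}\mathcal H^1_{x_2}}\|u\|_{L^6_{t,x_1}L^2_{x_2}}^4$ with a single $\mathcal H^1_{x_2}$ factor and a \emph{linear} bootstrap. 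Your observation that $e^{is\HH_{x_2}}$ is unbounded on $L^p_{x_2}$, $p\ne2$, is exactly the obstruction at fixed $s$; the paper's point is that it \emph{is} bounded from $L^2_{x_2}$ into $L^5_sL^{10}_{x_2}$, and this integrated gain is precisely what distinguishes the partial harmonic setting from the waveguide $\R\times\T$ (where no analogous Strichartz gain on the compact factor is available and one is stuck, as in Cheng--Guo--Zhao, with $h^\beta$, $\beta>\tfrac38$, in place of $\ell^2$). For the scattering criterion alone your weaker estimate suffices, but the sharp version is used elsewhere in the paper to reduce the scattering norm all the way to $L^2_{x_2}$.
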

	
\begin{remark}
 We  emphasize that due to the better decay of the bilinear estimate and the higher-order nonlinear terms, we can use it to derive the long-time Strichartz estimate instead of  complicated bilinear estimates (cf. Lemma 7.12 in \cite{Cheng-Guo-Guo-Liao-Shen}).   
\end{remark}

\begin{remark}
We note that the $\eqref{DCR}$-system is a mass-critical Schr\"odinger system, which can be compared with 1D mass-critical NLS. Thus, naturally the proof of Theorem \ref{Thm2} is inspired by the seminal work of Dodson \cite{Dodson-d=1} in $1D$. See also \cite{dodson-d=2,Dodson-d=3} for higher dimensions.
\end{remark}

\begin{remark}
In \cite{Cheng-Guo-Zhao}, the authors studied the resonant system associated to the quintic NLS on $\R\times\Bbb{T}$. One main difference between two cases is that we prove better nonlinear estimates for our case, which allows us to reduce the regularity to $L^2$ level with respect to $x_2$ variable, which is \textbf{different} from \cite{Cheng-Guo-Zhao}. See Section 5 for more details. 
\end{remark}

\begin{remark}
In this paper, we study the long time behavior of the defocusing energy-subcritical/mass-critical nonlinear Schr\"odinger equations with partial harmonic oscillators. It is of great interests to study the scattering of the focusing problem. The main difficulty is to classify the threshold of the scattering/blowing-up dichotomy. We refer to \cite{AC,BBJV,Stefanov} for more results.
\end{remark}

 \begin{remark}
It is also interesting to study the long time behavior of the defocusing energy-critical nonlinear Schr\"odinger equations with partial harmonic oscillators such as 4D cubic NLS with 1D (or 2D) partial harmonic oscillation and 3D quintic NLS with 2D partial harmonic oscillation. More ingredients are expected to be necessary. We refer to \cite{Hani-Pausader} and \cite{Ionescu-Pausader} for waveguide-type models which share similar spirits. We leave them for interested readers.
\end{remark}

\subsection{Outline of the proof}
The nonlinear model considered in this article can be compared to the NLS on the waveguide $\R\times\Bbb{T}$, which was studied by Cheng-Guo-Zhao \cite{Cheng-Guo-Zhao}. The key \textbf{difference} between the two cases is that in the partial harmonic oscillator model, the eigenfunction of harmonic oscillator do not enjoy the algebraic property, which makes this problem more difficult. Differences between the two types of models (partial harmonic potential v.s. waveguide)  will be discussed in detail in the following sections.
	
The proof of the main results can be separated into two parts. In the first part, we prove the scattering for equation \eqref{NLS-final} under the assumption of Theorem \ref{Thm2} via the concentration-compactness/rigidity argument. First, we give the linear profile decomposition of the Schr\"odinger operator with partial harmonic oscillator $e^{itH_p}$, where
\begin{align*}
H_p=-\partial_{x_1}^2-\partial_{x_2}^2+x_2^2,\quad x=(x_1,x_2)\in\R\times\R.
\end{align*}
This profile decomposition can be proved via the classical  $L^2$ profile decomposition which was developed by Merle-Vega \cite{Merle-Vega} and Begout-Vargas \cite{Begout-Vargas}.
	
By constructing the approximated solution, we prove the existence of the critical element when assuming that Theorem \ref{Thm1} fails. We emphasize that Theorem \ref{Thm2} is crucial in proving the  existence of such special solutions.  Since the spectral projector cannot commute with the nonlinear terms, we need to use the limiting equations, which is supposed to  be global and scatter in both time directions. This method was borrowed from  Hani-Pausader \cite{Hani-Pausader} and Ionescu-Pausader \cite{Ionescu-Pausader}. Then, we use the normal form technique to obtain the additional regularity of the limiting equations, which can help us  approximate the nonlinear profiles. The presence of the spectral projection brings some difficulties in proving that the error terms is small in $L_{t,x_1}^6\mathcal{H}_{x_2}^{1}\cap L_t^\infty L_{x_1}^2\mathcal{H}_{x_2}^1$.   Inspired by the work of \cite{Cheng-Guo-Guo-Liao-Shen}, we use the normal form method to obtain the additional regularity. Finally, by the interaction Morawetz estimates and the compactness of the critical element, we can preclude the possibility of such solution, thus can  complete the proof of Theorem \ref{Thm1}. We refer to Section \ref{sec:profile} for more details.
	
The second part is devoted to prove Theorem \ref{Thm2}. The $\eqref{DCR}$ system is a  mass-critical nonlinear Schr\"odinger equation with respect to the $x_1$ variable. The spectral resonance on the second variable makes the problem more complicated.  Therefore, we will follow the idea of Dodson \cite{Dodson-d=1} which give the scattering for the one-dimensional mass-critical nonlinear Schr\"odinger equations. Note that the $\eqref{DCR}$ system is scale-invariant, thus the reduction to the minimal mass blow-up solution to the equation \eqref{NLS-classical} with $p=1+\frac{4}{d}$ can be directly applied to our case. We also observe that the linear profile decomposition established in the first part can be applied to this part as well. In other words, the exclusion of the almost-periodic solution is more complicated. Using the standard argument as in Tao-Visan-Zhang \cite{Tao-Visan-Zhang}, we can show that if  Theorem \ref{Thm2} fails, then there exist the minimal blow-up solution, which is concentrated in space and frequency. 

\begin{proposition}[Reduction to the almost-periodic solution]
Assume that Theorem \ref{Thm2} fails, then there exists a minimal mass $m_0$ and a non-trivial minimal blow-up solution $v_c\in C(I,L_{x_1}^2\mathcal{H}_{x_2}^1(\R\times\R))$ with $M_S(v_c)=m_0$ and
\begin{align*}
\|v_c\|_{L_{t,x_1}^6L_{x_2}^2((I\cap(t_0,\infty))\times\R\times\R)}=\|v_c\|_{L_{t,x_1}^6L_{x_2}^2((I\cap(-\infty,t_0^\prime))\times\R\times\R)}=\infty, \quad\forall t_0, t_0'\in I,
\end{align*}
where the maximal lifespan $I\supset [0,\infty)$ and $M_S(u)$ is defined in \eqref{equ:Msudef} below.  Moreover, there exists $C(\eta)>0$ and the parameters $N(t):I\to(0,1]$, $\xi(t): I\to\R$ and $x_1(t):I\to\R$ such that
\begin{align}\label{compact-intro}
\int_{|x_1-x_1(t)|\geqslant\frac{C(\eta)}{N(t)}}\|v_c(t,x_1,x_2)\|_{\mathcal{H}_{x_2}^1(\R)}^2dx_1+\int_{|\xi-\xi(t)|\geqslant C(\eta)N(t)}\|\mathcal{F}_{x_1}v_c(t,\xi,x_2)\|_{\mathcal{H}_{x_2}^1(\R)}^2d\xi<\eta.
\end{align}
Furthermore, by the invariance of Galilean transform, we can take $N(0)=1$, $\xi(0)=0$ and 
\begin{align*}
|N^\prime(t)|+|\xi^\prime(t)|\lesssim N^3(t).
\end{align*} 
\end{proposition}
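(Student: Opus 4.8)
The plan is to follow the by-now-standard route of Tao–Visan–Zhang and Killip–Tao–Visan for mass-critical problems, adapted so that the ``second variable'' $x_2$ rides along as an $\mathcal H^1$-valued fiber. The starting point is the linear profile decomposition established in the first part of the paper (the one for $e^{it\partial_{x_1}^2}: L^2_{x_1}\mathcal H^1_{x_2}\to L^6_{t,x_1}\mathcal H^{1-\varepsilon_0}_{x_2}$), combined with the nonlinear estimates for \eqref{DCR} and the stability theory they yield. Assuming Theorem \ref{Thm2} fails, one defines
\[
M_S(m)=\sup\big\{\|v\|_{L^6_{t,x_1}L^2_{x_2}(I\times\R\times\R)}:\ v \text{ solves \eqref{DCR} on } I,\ M_S(v)\le m\big\},
\]
using the mass $M_S$ from \eqref{equ:Msudef}, and lets $m_0$ be the infimal mass at which $M_S$ becomes infinite; small-data theory guarantees $m_0>0$. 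Taking an optimizing sequence $v_k$ of near-critical-mass solutions that blow up in both time directions, one applies the linear profile decomposition to the (renormalized) initial data; the nonlinear profiles, evolved via \eqref{DCR}, combine by the stability theory, and a Palais–Smale argument forces all but one profile to vanish. This produces a critical solution $v_c$ of mass exactly $m_0$ whose $L^6_{t,x_1}L^2_{x_2}$ norm is infinite on $I\cap(t_0,\infty)$ and on $I\cap(-\infty,t_0')$ for every $t_0,t_0'\in I$, and — because the only surviving profile's orbit is precompact after modding out by the \eqref{DCR}-symmetry group (scaling in $x_1$, $x_1$-translation, and Galilean boost in $\xi_1$) — gives the almost-periodicity, which is exactly the compactness statement \eqref{compact-intro} once one names the modulation parameters $N(t)$, $x_1(t)$, $\xi(t)$ and records that $N(t)\in(0,1]$ by the blow-up-forward normalization.

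Next I would normalize the parameters at $t=0$. The scaling invariance $v(t,x_1,x_2)\mapsto \lambda^{1/2}v(\lambda^2 t,\lambda x_1,x_2)$ and the Galilean invariance of \eqref{DCR} in the $x_1$-variable (the potential and $\Pi_n$ act only in $x_2$, so Galilean boosts in $\xi_1$ commute through untouched) let me arrange $N(0)=1$ and $\xi(0)=0$; by time-translation I may also assume $0\in I$ and $[0,\infty)\subset I$ after possibly reflecting time. The remaining point is the local-constancy/frequency-control bound $|N'(t)|+|\xi'(t)|\lesssim N^3(t)$. This is obtained as in Dodson's one-dimensional argument: one first shows $N(t)$ is quasi-constant on intervals where the ``local mass at scale $N$'' and the Duhamel contribution are controlled, deriving a differential inequality for $N$ by testing the compactness property against the equation on short intervals; the $\xi'$ bound comes from a similar argument applied to the Fourier side, using that the momentum-type quantity $\int \mathrm{Im}\,\bar v\,\partial_{x_1}v\,dx$ has a controlled time derivative because $F(v)$ in \eqref{DCR} is $x_1$-translation invariant and its phase structure ($n_1-n_2+n_3-n_4+n_5=n$) guarantees the $x_2$-fiber nonlinearity contributes no net momentum. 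A subtle ingredient is that the $\mathcal H^1_{x_2}$-regularity of the profile (from the scattering criterion / normal form regularity) has to be uniform along the orbit so that all these integral quantities are finite and the compactness lives in $L^2_{x_1}\mathcal H^1_{x_2}$, not merely $L^2_{x_1}L^2_{x_2}$.

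I expect the main obstacle to be the profile-decomposition/Palais–Smale step, specifically the decoupling of nonlinear profiles across different symmetry parameters. Unlike the pure 1D NLS case, here each profile carries a frozen $x_2$-profile and the nonlinearity $F(v)$ is a \emph{nonlocal} (through $\Pi_n$) quintilinear expression that does not commute with the spectral projector; one must check that profiles living at asymptotically separated $x_1$-scales, $x_1$-positions, or $\xi_1$-frequencies genuinely decouple in all the relevant $L^6_{t,x_1}\mathcal H^{\sigma}_{x_2}$ norms, and that the ``large-scale'' profiles (those with $\lambda_k\to\infty$ or $\lambda_k\to0$) are approximated by solutions of \eqref{DCR} itself — here the construction preceding \eqref{DCR} is exactly what licenses that approximation, so the work is to quantify the error in $L^6_{t,x_1}\mathcal H^{1-\varepsilon_0}_{x_2}\cap L^\infty_tL^2_{x_1}\mathcal H^1_{x_2}$ and feed it into the stability lemma. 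Granting those decoupling and approximation facts, the extraction of $v_c$ and the passage to \eqref{compact-intro} is routine; the local-constancy estimates are then a direct transcription of Dodson's computation with the $x_2$-fiber carried as a Hilbert-space weight.
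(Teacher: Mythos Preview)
Your proposal is correct and follows essentially the same route as the paper: a Palais--Smale condition built on the linear profile decomposition of Theorem~\ref{LinearProfile}, combined with the stability lemma for \eqref{DCR}, extracts the critical element, and the almost-periodicity and parameter estimates then follow by the standard Tao--Visan--Zhang/Dodson machinery (the paper in fact defers these details to \cite{Tao-Visan-Zhang,YZ}). One clarification: your anticipated difficulty with ``large-scale'' profiles is simpler than you fear, because \eqref{DCR} is \emph{exactly} invariant under $v(t,x_1,x_2)\mapsto\lambda^{1/2}v(\lambda^2 t,\lambda x_1,x_2)$ --- the normal-form approximation preceding \eqref{DCR} was needed only to pass from \eqref{NLS-final} to \eqref{DCR}, not within the \eqref{DCR} analysis itself, so the nonlinear profiles at all scales are genuine \eqref{DCR} solutions and no separate approximation step is required.
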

	
\begin{remark}
By the standard compactness argument, $\eqref{compact-intro}$ can be expressed as the following
\begin{align*}
\left\{\frac{1}{N(t)^\frac12}e^{-ix_1\cdot\xi(t)}v_c\left(t,\frac{x_1-x_1(t)}{N(t)},x_2\right):t\in I\right\} \mbox{ is pre-compact in }L_{x_1}^2\mathcal{H}_{x_2}^1(\R\times\R).
\end{align*}
	
\end{remark}

Since the (DCR) system is $L^2$-critical, to preclude the possibility of the almost-periodic solution in the sense of \eqref{compact-intro}, utilizing the interaction Morawetz estimate is usually necessary. Unfortunately,  $v_c$ is only in $L_{x_1}^2\mathcal{H}_{x_2}^1(\R\times\R)$, thus we cannot prove that the right-hand side of the interaction Morawetz estimate is bounded. But inspired by Dodson's seminal work \cite{Dodson-d=1}, we can further reduce the minimal blow-up solutions to the following two scenarios:

\begin{proposition}[Two special scenarios for blow-up]Suppose that Theorem \ref{Thm2} fails, then there exists a minimal blow-up solution $v_c:I\times\R\times\R\rightarrow\Bbb{C}$ with the maximal lifespan $ I\supset [0,\infty)$ obeying \eqref{compact-intro} and $N(0)=1$. Moreover, we have the following two scenarios of blow-up solution
\begin{enumerate}
\item The rapid frequency cascade solution:
\begin{align*}
\int_{0}^{\infty}N(t)^3dt<\infty,
\end{align*}
			
\item The quasi-soliton case:
\begin{align*}
\int_{0}^{\infty}N(t)^3dt=\infty.
\end{align*}
\end{enumerate}
	
\end{proposition}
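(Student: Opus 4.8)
The plan is to obtain this proposition as an essentially immediate consequence of the preceding reduction to the almost-periodic solution: the dichotomy is tautological once one has a single minimal blow-up solution with the stated normalizations, so the only genuine content is bookkeeping about which symmetries are used, together with one elementary observation about $N(t)$ in the first scenario.

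First, let $v_c$ be the minimal-mass, almost-periodic blow-up solution of \eqref{DCR} produced by the preceding proposition. By construction it obeys \eqref{compact-intro}, it blows up in the $L^6_{t,x_1}L^2_{x_2}$-norm in both time directions, its maximal lifespan contains $[0,\infty)$, and its parameters satisfy $N\colon I\to(0,1]$, $N(0)=1$, $\xi(0)=0$, together with $|N'(t)|+|\xi'(t)|\lesssim N(t)^3$; these are exactly the outputs of that proposition. The normalizations $N(0)=1$ and $\xi(0)=0$ are legitimate because the mass-critical rescaling $v(t,x_1,x_2)\mapsto\mu^{1/2}v(\mu^2t,\mu x_1,x_2)$ and the Galilean boost in the $x_1$-variable both send solutions of \eqref{DCR} to solutions and preserve the functional $M_S$; restricting attention to the forward half-line, where the solution blows up as $t\to+\infty$, is already part of that conclusion, and (were it not) would be recovered from the time-reversal symmetry $v(t,x_1,x_2)\mapsto\overline{v(-t,x_1,x_2)}$, which preserves \eqref{DCR} because the Hermite spectral projectors $\Pi_n$ have real kernels.

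Next, the map $T\mapsto\int_0^T N(t)^3\,dt$ is nondecreasing and therefore has a limit in $(0,\infty]$; finiteness of this limit puts us in scenario (1) and infiniteness in scenario (2), and there are no other possibilities. For later use we record that in scenario (1) one necessarily has $N(t)\to0$ as $t\to\infty$: from $|N'(t)|\lesssim N(t)^3$ the function $t\mapsto N(t)^{-2}$ is Lipschitz, so if $\limsup_{t\to\infty}N(t)=\delta>0$ there would exist times $t_k\to\infty$ with mutually disjoint neighbourhoods of a fixed positive length on each of which $N\gtrsim\delta$, forcing $\int_0^\infty N(t)^3\,dt=\infty$ and contradicting scenario (1).

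Accordingly, there is no real obstacle inside this proposition itself; the difficulty lies on either side of it. Upstream, it is the construction of the almost-periodic solution --- in particular adapting the linear profile decomposition and the stability/perturbation theory to the $x_2$-variable and the projectors $\Pi_n$ --- that is delicate. Downstream, scenario (1) is to be excluded by an additional-regularity argument in the spirit of Dodson's long-time Strichartz estimates, exploiting $N(t)\to0$, which together with mass conservation and the frequency concentration in \eqref{compact-intro} yields a contradiction; and scenario (2) is to be excluded by a low-frequency-localized interaction Morawetz estimate powered by the long-time Strichartz estimates attached to $\Pi_n$, where it is precisely in reconciling the Galilean and spatial-translation parameters with the harmonic direction that the genuine new difficulties of \eqref{DCR} appear. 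None of that machinery is needed for the statement at hand.
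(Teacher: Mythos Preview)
Your proposal is correct and matches the paper's treatment: the paper also presents this proposition as an immediate consequence of the reduction to the almost-periodic solution, simply stating in Section~\ref{sec:Thm2} that ``we can consider the following two scenarios respectively'' without further argument, since the dichotomy is tautological once the minimal blow-up solution with the stated normalizations is in hand. Your additional remark that $N(t)\to0$ in the rapid-cascade scenario is also correct and is used later in the paper (where it is asserted as ``$\varlimsup_{t\to\infty}N(t)=0$'' without proof), so including it here is a welcome piece of bookkeeping.
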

	
In order to preclude the possibility of quasi-soliton case, following Dodson's strategy in \cite{Dodson-d=1}, we truncate the solutions to low frequency, since the low-frequency localized function is bounded in $\dot{H}^{\frac12}$. To control the error terms produced by this truncation, we need to  control  the high-frequency part carefully and delicately. To do this, we introduce the long-time Strichartz estimate, which can be understood as an  infinite vector-valued analogue of the long-time Strichartz estimate in \cite{Dodson-d=1}.
	
Before  introducing the long-time Strichartz estimate, we first give some notations and definitions. By the compactness of the minimal blow-up solutions, we can choose three small constants $0<\eta_3\ll\eta_2\ll\eta_1\ll1$ and $\eta_3<\eta_2^{100}$ satisfying
\begin{align}
&|N^\prime(t)|+|\xi^\prime(t)|\leqslant 2^{-20}\eta_1^{-\frac{1}{2}}N(t)^3,\label{xi(t)-intro}\\
&\int_{|x_1-x_1(t)|\geq \frac{2^{-20}\eta_3^{-\frac{1}{2}}}{N(t)} }\|v_c\|_{\mathcal{H}_{x_2}^1(\R)}^2dx_1+\int_{|\xi-\xi(t)|\geq 2^{-20}\eta_3^{-\frac{1}{2}}N(t) }\|\mathcal{F}_{x_1}v_c(t,\xi,x_2)\|_{\mathcal{H}_{x_2}^1(\R)}^2d\xi<\eta_2^2.\label{almost-compact-2-intro}
\end{align} 
Let $k_0$ be a positive integer and let $[a,b]$ be a compact time interval obeying
\begin{align}\label{scaling-0-intro}
\|v_c\|_{L_{t,x_1}^6L_{x_2}^2([a,b]\times\R\times\R)}=2^{k_0}.
\end{align}
By the rescaling argument, we can let
\begin{align}\label{scaling-intro}
\int_{a}^{b}N(t)^3dt=\eta_32^{k_0}.
\end{align}
	
We note that the long-time Strichartz estimate is not a priori estimate. It is proved for the almost-periodic solution. It is worth to point out  since the double endpoint Strichartz estimate fails in dimension one, we establish the long-time Strichartz estimate using the variant of atomic space $U_\Delta^p$, which was first observed in Dodson \cite{Dodson-d=1}. 

\begin{theorem}[Long-time Strichartz estimate]\label{longtimestrichartz-intro}
If $u$ is an almost periodic solution to \eqref{DCR} with the maximal lifespan $I\supset[0,\infty)$, then for any positive $k_0$, $\eta_1$, $\eta_2$, $\eta_3$ satisfying $\eqref{xi(t)-intro}$-$\eqref{scaling-intro}$ and $\eta_3<\eta_2^{100}$, we have
\begin{align}\label{longtime-intro}
\|u\|_{\widetilde{X}_{k_0}([a, b],L_{x_1}^2L_{x_2}^2)}\lesssim 1,
\end{align}
and the implicit constant in \eqref{longtime-intro} does not depend on $k_0$.  We refer to Section \ref{sec:long} for the  definition of the long-time Strichartz norm $\widetilde{X}_{k_0}$.
		
\end{theorem}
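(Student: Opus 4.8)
The plan is to follow Dodson's bootstrap scheme from \cite{Dodson-d=1}, upgraded to the infinite vector-valued setting dictated by the spectral projectors $\Pi_n$. First I would fix the bootstrap quantity: on the interval $[a,b]$ with $\|u\|_{L_{t,x_1}^6L_{x_2}^2([a,b]\times\R\times\R)}=2^{k_0}$, subdivide $[a,b]$ into $O(2^{k_0})$ consecutive subintervals $J_\ell$ on each of which $\|u\|_{L_{t,x_1}^6L_{x_2}^2(J_\ell\times\R\times\R)}\leq\varepsilon$ for a small absolute constant $\varepsilon$, and organize them dyadically into blocks $G_\alpha^j$ of $2^j$ consecutive subintervals. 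The norm $\widetilde{X}_{k_0}$ then measures, frequency block by frequency block in the $x_1$ variable (Littlewood--Paley pieces centered at the moving frequency $\xi(t)$) and time block by time block, the $U_\Delta^2$- and $U_\Delta^3$-norms of $u$ with weights that grow once the frequency cutoff exceeds the soliton scale $N(t)$; establishing \eqref{longtime-intro} then amounts to closing this bootstrap with a constant independent of $k_0$.

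Second comes the inductive step. Assuming $\|u\|_{\widetilde{X}_{k_0}}\leq C$, I would write $u$ on each block via Duhamel as the free evolution of the data at the left endpoint plus the Duhamel term of $F(u)$, apply $U^p/V^p$ duality together with the single-endpoint Strichartz estimates — the double-endpoint version fails in one dimension, which is precisely why the atomic spaces $U_\Delta^p$ are used — and control each Duhamel contribution by the bilinear Strichartz inequality. In every quintic interaction $\Pi_n(v_{n_1}\overline{v_{n_2}}v_{n_3}\overline{v_{n_4}}v_{n_5})$ one factor carries the high frequency $\gtrsim N$, and pairing it with the lowest-frequency factor gains a factor $(N_{\mathrm{lo}}/N_{\mathrm{hi}})^{1/2}$; because the nonlinearity is quintic rather than cubic there are three extra factors which can be absorbed in $L_{t,x_1}^6$ and summed, giving a better net decay than in the cubic waveguide analysis (cf. Lemma 7.12 of \cite{Cheng-Guo-Guo-Liao-Shen}) and — importantly — allowing the $x_2$-regularity to be carried only at the $L_{x_2}^2$ level. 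Summing over dyadic frequency blocks and over the $2^{k_0}$ subintervals, the total is bounded by $\varepsilon C$ plus the data contribution, which by almost-periodicity and \eqref{compact-intro}--\eqref{almost-compact-2-intro} is $O(1)$, plus the contribution of frequencies comparable to $N(t)$, governed by $\int_a^b N(t)^3\,dt=\eta_3 2^{k_0}$ and hence by $\eta_3$ after normalization.

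Third, I would collect and absorb the error mechanisms peculiar to this model. The failure of $\Pi_n$ to commute with the nonlinearity causes no genuine loss, since $\Pi_n$ acts only in $x_2$ and commutes with every $x_1$-frequency projection; the spatial-translation error from $x_1(t)$ and the Galilean error from $\xi(t)$ are tamed by $|N'(t)|+|\xi'(t)|\lesssim\eta_1^{-1/2}N(t)^3$ from \eqref{xi(t)-intro}, which is summable against the block structure; and the high-frequency mass stays small via the compactness property \eqref{compact-intro}. Choosing first $\varepsilon$ (absolute), then $\eta_1$, then $\eta_2$, then $\eta_3<\eta_2^{100}$ small in that order, the bootstrap closes with an implicit constant independent of $k_0$, which is exactly \eqref{longtime-intro}.

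The hard part, as in Dodson's work, is the frequency-interaction bookkeeping inside the maximal-function/atomic-space framework while simultaneously tracking the infinitely many $\Pi_n$-components and the moving center $\xi(t)$: one must check that the $U_\Delta^2\hookrightarrow L_{t,x_1}^6$-type embeddings and the bilinear estimates hold with constants uniform in $n$ even after summing the $(2n+1)$-weights, and that the resonance constraint $n_1-n_2+n_3-n_4+n_5=n$ does not obstruct the frequency gain. A further subtlety, absent on the torus, is that the Hermite functions carry no product algebra, so the quintic term cannot be diagonalized; one must instead rely solely on the orthogonality of the $\Pi_n$ and the kernel bounds for Hermite spectral multipliers, which is why the improved quintic decay is essential rather than merely convenient.
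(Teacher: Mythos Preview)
Your proposal is correct and follows essentially the same approach as the paper: Dodson's bootstrap on the $\widetilde{X}_{k_0}$-norm built from $U_\Delta^p$-atoms, Duhamel plus $U^p/V^p$ duality, bilinear Strichartz gains, the extra quintic factors absorbed in $L_{t,x_1}^6$, compactness to control high-frequency mass, and the $\eta_1\gg\eta_2\gg\eta_3$ hierarchy to close. The paper organizes the core step into two explicit multilinear lemmas (one for the case where a second factor is also high-frequency, one for the all-low case, the latter requiring a further $u_L/u_H$ splitting and a $V_\Delta^1\hookrightarrow U_\Delta^2$ argument on the piecewise-constant Duhamel term), then feeds both into a single ``intermediate theorem'' that drives the induction; your outline compresses this into the single paragraph on the inductive step but matches in substance.
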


\begin{remark}[A quick comparison with the cubic analogue]
In \cite{Cheng-Guo-Guo-Liao-Shen}, the authors proved the long-time Strichartz estimate for cubic (DCR) system when $d=2$. We emphasize that the proof for our case is quite \textbf{different} from their proof. In our setting, the bilinear estimates have the better decay  and the nonlinear term is quintic, so we do not need to utilize the interaction Morawetz estimate to establish the refined bilinear estimates for the minimal blow-up solution. 
\end{remark}
	
By using the  above estimate, we can prove the following truncated interaction Morawetz estimate
\begin{align}
\bigg\|\int_{\R}\partial_{x_1}\left(\left|P_{\leqslant K}^{x_1}v_c(t,x_1,x_2)\right|^2\right)dx_2\bigg\|_{L_{t,x_1}^2([0,T]\times\R)}^2\lesssim o(K)+\sup_{t\in I}M_I(t)
\end{align}
where $\eta_3K=\int_{0}^{T}N(t)^3dt$ with $\eta_3$ sufficiently small and 
\begin{align*}
M_I(t)&=\int_{\R\times\R}\int_{\R\times\R}\frac{x_1-\tilde{x_1}}{|x_1-\tilde{x_1}|}|P_{\leqslant K}^{x_1}v_c(t,\tilde{x_1},\tilde{x_2})|^2\Im(\overline{P_{\leqslant K}^{x_1}v_c}\partial_{x_1}v_c)(t,x_1,x_2)dx_1d\tilde{x_1}dx_2d\tilde{x_2}\\
&\hspace{2ex}+\int_{\R\times\R}\int_{\R\times\R}\frac{x_1-\tilde{x_1}}{|x_1-\tilde{x_1}|}|P_{\leqslant K}^{x_1}v_c(t,\tilde{x_1},\tilde{x_2})|^2\Im(\overline{P_{\leqslant K}^{x_1}v_c}\partial_{x_1}v_c)(t,x_1,x_2)dx_1d\tilde{x_1}dx_2d\tilde{x_2}.
\end{align*}
In fact, we need to control the error term arising from the frequency cut-off
\begin{equation*}
\bigg\|\int_{\R}\partial_{x_1}(|P_{\leq K}^{x_1}{u}(t,x_1,x_2)|^2)dx_2\bigg\|_{L_{t,x_1}^2([0,T]\times\R)}^2 \lesssim \int_0^T M_I'(t) \mathrm{d}t +\mathcal{E} \lesssim \sup\limits_{t\in [0,T]}|M_I(t)| + \mathcal{E},
\end{equation*}
where $\mathcal{E}$ denotes the error term coming from the low-frequency truncation.
	
We will present the  proof of the long-time Strichartz estimate in Section \ref{sec:long} and the truncated  Morawetz estimate in Section \ref{sec:Morawetz-frequency}. Then using the Morawetz estimate and the compactness of the almost-periodic solution, we can show
\begin{align*}
\|u\|^4_{L^2_{x_1,x_2}}K   &\sim  \|u \|^4_{L^2_{x_1,x_2}}  \int_0^T N(t)^3 dt\\
&\lesssim \int_0^T N(t)^3 \bigg(\int_{|x_1-x_1(t)|\leq \frac{C\Big(\frac{\|u\|_{L_{x_1,x_2}^2 }^2}{100}\Big)}{N(t)}}\int_{\R}\left|P_{\le  10\eta_3^{-1}K}^{x_1}
u(t,x_1,x_2)\right|^2dx_1dx_2 \bigg )^2 dt \\
& \lesssim \bigg\|\int_{\R}\partial_{x_1} \left( |P_{\le 10\eta_3^{-1} K}^{x_1} u(t,x_1,x_2)|^2 \right)dx_2 \bigg\|^2_{L^2_{t,x_1}([0,T]\times \mathbb{R})} \lesssim o(K),
\end{align*}
where $\int_{0}^{T}N(t)^3dt=K$. Let $K$ be sufficiently large,  then we can get $u=0$, which  contradicts to  $u\neq0$. This precludes the possibility of the quasi-solitons.
	
In the  rapid frequency cascade case,  the long-time Strichartz estimate also plays an important role. Indeed, we can utilize it to prove  the additional regularity of the rapid cascade solution, i.e. $v_c\in L_t^\infty H_{x_1}^sL_{x_2}^2([0,\infty)\times\R\times\R)$ with $s>1$. Moreover, we can preclude the possibility of the rapid cascade solution. Therefore, we complete the proof of Theorem \ref{Thm2}.

\subsection{Organization of the paper}
The paper is organized as follows: In Section \ref{sec:pre}, we collect the  basic properties of Fourier transform and the harmonic analysis tool associated with the partial harmonic oscillator. In Section \ref{sec:local}, we present the Strichartz estimates for solutions to the linear Schr\"odinger equation under partial harmonic confinement. Using the Strichartz estimate, we prove the local well-posedness, stability theorem and small-data scattering to equation $\eqref{NLS-final}$. In Section \ref{sec:profile}, we prove the linear profile decomposition for the Schr\"odinger group $e^{it(\Delta_{\R^2}-x_2^2)}u_0$ and then apply this decomposition to show the existence of the minimal blow-up solution if Theorem \ref{Thm1} fails. Then we show that such non-trivial blow-up solution cannot exist. In Section \ref{sec:Local}, we first prove the local well-posedness, stability theorem and small-data scattering to equation $\eqref{DCR-2}$. In Section \ref{sec:minimal}, we show the existence of the minimal blow-up solution which is precompact in $L_{x_1}^2\mathcal{H}_{x_2}^1$ up to the symmetry. In Section \ref{sec:long}, we prove the long-time Strichartz estimate for the minimal blow-up solutions. In Section \ref{sec:Morawetz-frequency}, we establish the low-frequency localized Morawetz estimates, which play a crucial role in ruling out the possibility of a minimal blow-up solution. In Section \ref{sec:Thm2}, we show the impossibility of the minimal blow-up solution and give the proof of Theorem \ref{Thm2}.
\section{Preliminaries}\label{sec:pre}

In this section, we first introduce the basic notations which will be frequently used in the rest of the article. We will use the notation $A\lesssim B$, which means that there exists a constant $C>0$ such that $A\leqslant CB$. We also use the notation $A\sim B$ which means that there exist two constants $C_1,C_2>0$ such that $C_1B\leqslant A\leqslant C_2B$. For any $a(x)\in\R^d$, we denote $\langle a\rangle $ by $\langle a\rangle=\sqrt{1+|a|^2}$.
\subsection{Fourier transform and functional spaces}
In this subsection, we recall the basic definition and properties of Fourier transform and some useful functional spaces.

First, we define the Fourier transform of function $f$ as
\begin{align*}
(\mathcal{F}f)(\xi)=\frac{1}{(2\pi)^2}\int_{\R^2}f(x)e^{-ix\cdot\xi}dx
\end{align*}
and the inverse Fourier transform of $\mathcal{F}f$  as
\begin{align*}
(\mathcal{F}f)^\vee(x)=\int_{\R^2}\widehat{f}(\xi)e^{ix\cdot\xi}d\xi.
\end{align*}
Then, for $\forall s\in\R$,we can define the fractional order differential operator $|\nabla|^s$ as $(|\nabla|^sf)^\wedge(\xi)=|\xi|^s\widehat{f}(\xi)$. Similarly, the operator $\langle\nabla\rangle^s$ can be defined as $(\langle\nabla\rangle^sf)^\wedge(\xi)=\langle\xi\rangle^s\widehat{f}(\xi)=(1+|\xi|^2)^\frac{s}{2}\widehat{f}(\xi)$. 

In our paper, we also need the Fourier transform and inverse Fourier transform with respect to one variable. For example, we give the definition of Fourier transform on $x_1$-variable. For $x=(x_1,x_2)\in\R\times\R$, we define
\begin{align*}
(\mathcal{F}_{x_1}f)(\xi,x_2)=\frac{1}{2\pi}\int_{\R}f(x_1,x_2)e^{-ix_1\cdot\xi}dx_1,\quad\xi\in\R.
\end{align*}

We also need the Littlewood-Paley projectors. We take a cut-off function $\chi(t)\in C_0^\infty(\R)$ satisfying $\chi(t)=1$ if $t\leqslant1$ and $\chi(t)=0$ if $t\geqslant2$.  For $N\in2^{\Bbb{Z}}$, we let 
\begin{align*}
\chi_N(t)=\chi(N^{-1}t),\quad \varphi_N(t)=\chi_{N}(t)-\chi_{N/2}(t).
\end{align*}
Now, we can define the Littlewood-Paley projector as
\begin{align*}
P_{\leqslant N}f(x)&=\mathcal{F}^{-1}(\chi_N(|\xi|)\widehat{f}(\xi)),\quad x,\xi\in\R^2,\\
P_Nf(x)&=\mathcal{F}^{-1}(\varphi_N(|\xi|)\widehat{f}(\xi)),\quad x,\xi\in\R^2. 
\end{align*}
We also define the partial Littlewood-Paley projector as
\begin{align*}
P_{\leqslant N}^{x_1}(x_1,x_2)&=\mathcal{F}_{x_1}^{-1}(\chi_N(|\xi|)(\mathcal{F}_{x_1}f)(\xi,x_2)),\hspace{1ex}x_1,x_2,\xi\in\R\\
P_{ N}^{x_1}(x_1,x_2)&=\mathcal{F}_{x_1}^{-1}(\varphi_N(|\xi|)(\mathcal{F}_{x_1}f)(\xi,x_2)),\hspace{1ex}x_1,x_2,\xi\in\R.
\end{align*}
For simplicity, we  denote $P_j^{x_1}:=P_{2^j}^{x_1}$ for $j\in\Bbb{Z}$.

Next, we denote the standard Lebesgue spaces by $L^p(\R^2)$ and its norm is given by
\begin{align*}
\|f\|_{L^p(\R^2)}=\left(\int_{\R^2}|f(x)|^pdx\right)^\frac{1}{p}.
\end{align*}  
Let $I$ be the  time interval, we denote the mixed space-time Lebesgue space as $L_t^qL_x^r(I\times\R^2)$ with the norm
\[  \| u \|_{L_{t}^{q}L^{r}_{x}(I\times \R^{2})}=\|  \|u(t) \|_{L^{r}_{x}(\R^{2})}  \|_{L^{q}_{t}(I)}. \]
We denote $W^{s,p}$ by the inhomogeneous  Sobolev space,
\begin{align*}
W^{s,p}(\R^2):=\left\{f\in L^p(\R^2):\|f\|_{W^{s,p}(\R^2)}:=\|\langle\nabla\rangle^sf\|_{L^p(\R^2)}<+\infty\right\}.
\end{align*}

\subsection{Harmonic oscillator and associated functional space}
The harmonic oscillator $H=-\Delta+|x|^2$ is an important Schr\"odinger operator in the study of mathematics and physics. The one-dimensional harmonic oscillator can form a $L^2$-basis. We denote $E_n$ the $n$-th eigenspace associated to $H$ and $\lambda_n=2n+1$ the $n$-th eigenvalues. The eigenspace can be spanned by the Hermite function $e_n$ which is given by
\begin{align*}
e_n(x)=\frac{1}{\sqrt{n!}2^{\frac{n}{2}}\pi^\frac{1}{4}}(-1)^ne^{\frac{x^2}{2}}\frac{d^n}{dx^n}(e^{-x^2}),\quad x\in\R
\end{align*}  
and $e_n$ satisfies the following equation
\begin{align*}
(-\partial_{x}^2+x^2)e_n(x)=(2n+1)e_n(x).
\end{align*}
We denote $\Pi_n$ by the spectral projector on the $n$-th eigenspace $E_n$. For any $f\in L^2(\R^2)$, we can decompose it as $$f(x_1,x_2)=\sum\limits_{n\in\N}\Pi_nf(x_1,x_2)=\sum_{n\in\N}\langle f,e_n\rangle_{L_{x_2}^2(\R)} e_n(x_2),$$
where $$\langle f,e_n\rangle=\int_{\R}f(x_1,x_2)\overline{e_n(x_2)}dx_2.$$ 
For $s\in\R$ and $p\geqslant1$, we denote the inhomogeneous Sobolev space $\mathcal{W}^{s,p}(\R)$ by the following
\begin{align*}
\mathcal{W}^{s,p}(\R)=\left\{u\in L_x^p(\R):\|u\|_{\mathcal{W}^{s,p}(\R)}:=\left\|\langle\partial_x\rangle^su\|_{L_x^p(\R)}+\||\cdot|^su\|_{L_x^p(\R)}<\infty\right.\right\}.
\end{align*} 
If $p=2$, the above norm is equivalent to the  following, which was proved in \cite{equi}
\begin{align*}
\|f\|_{\mathcal{W}^{s,2}(\R)}^2=\sum_{n\in\N}(2n+1)^s\|\Pi_nf\|_{L^2(\R)}^2.
\end{align*}
\begin{remark}
We  note that the Sobolev spaces associated to  the Schr\"odinger operator with partial harmonic oscillator also enjoy similar equivalent properties, see   Su-Wang-Xu \cite{Su-Wang-Xu1,Su-Wang-Xu2} for more details. 
\end{remark}
In the rest of this article, we denote $\mathcal{W}^{s,2}(\R)$ by $\mathcal{H}^s(\R)$. We also define function space $L_{x_1}^p\mathcal{H}_{x_2}^s(\R\times\R)$  by
\begin{align*}L_{x_1}^{p}\mathcal{H}_{x_2}^{s}(\R\times\R)&=\bigg\{f\in L_{x_1}^{p}L_{x_2}^{2}(\R\times\R):\|f\|_{L_{x_1}^{p}\mathcal{H}_{x_2}^{s}(\R\times\R)}:=\bigg(\int_{\R}\|f(x_1,\cdot)\|_{\mathcal{H}_{x_2}^{s}(\R)}^{p}dx_1\bigg)^{1/p}\\&=\bigg(\int_{\R}\left\|\bigg(\sum_{n\in\mathbb{N}}(2n+1)^{s}|f_{n}(x_1,x_2)|^{2}\bigg)^{\frac{1}{2}}\right\|_{L_{x_2}^{2}(\R)}^{p}dx_1\bigg)^{1/p}<\infty\bigg\},\end{align*}
where $f_n=\Pi_nf$. 
We can define the Hermite-Sobolev spaces $H_{x_1}^{s_1}\mathcal{H}_{x_2}^{s_2}$ and $\dot{H}_{x_1}^{s_1}\mathcal{H}_{x_2}^{s_2}$ as 
\begin{align*}
\|f\|_{H_{x_1}^{s_1}\mathcal{H}_{x_2}^{s_2}(\R\times\R)}^2=\int_{\R}\|\langle\partial_{x_1}\rangle^{s_1}f(x_1,\cdot)\|_{\mathcal{H}_{x_2}^{s_2}(\R)}^2dx_1.
\end{align*}
and
\begin{align*}
\|f\|_{\dot{H}_{x_1}^{s_1}\mathcal{H}_{x_2}^{s_2}(\R\times\R)}^2=\int_{\R}\||\partial_{x_1}|^{s_1}f(x_1,\cdot)\|_{\mathcal{H}_{x_2}^{s_2}(\R)}^2dx_1.
\end{align*}
For any time interval $I\subset\R$ and $f:I\times\R^2\to\Bbb{C}$, we can define
\begin{align*}
\|f\|_{L_t^pL_{x_1}^qW_{x_2}^{s,r}(I\times\R\times\R)}\stackrel{\triangle}{=}\Bigg(\int_{I}\bigg(\int_{\R}\big\|\langle\partial_{x_2}\rangle^sf(t,x_1,\cdot)\big\|_{L_{x_2}^r(\R)}^qdx_1\bigg)^\frac{p}{q}dt\bigg)^\frac{1}{p}
\end{align*}
and
\begin{align*}
\|f\|_{L_t^pL_{x_1}^q\mathcal{H}_{x_2}^{s}(I\times\R\times\R)}\stackrel{\triangle}{=}\Bigg(\int_{I}\bigg(\int_{\R}\big\|f(t,x_1,\cdot)\big\|_{\mathcal{H}_{x_2}^s(\R)}^qdx_1\bigg)^\frac{p}{q}dt\bigg)^\frac{1}{p}
\end{align*}
with $1\leqslant p,q,r\leqslant\infty$. We also use the following norms for sequence $\{f_n\}_{n\in\Bbb{N}}$
\begin{align*}
\|f_n\|_{L_t^pL_{x_1}^qL_{x_2}^r\ell_n^2(I\times\R\times\R\times\Bbb{N})}\stackrel{\triangle}{=}\big\|\|f_n\|_{\ell_n^2(\N)}\big\|_{L_t^pL_{x_1}^qL_{x_2}^r(I\times\R\times\R)}.
\end{align*}

We  need the following lemma  of Dirac delta function.
\begin{lemma}\label{Dirac}
The Delta function $\delta(x)$ belongs to the space $\mathcal{H}^{-s}(\R)$ for $s\geq0$.
\end{lemma}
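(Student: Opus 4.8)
\textbf{Proof proposal for Lemma \ref{Dirac}.}

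The plan is to characterize membership in $\mathcal{H}^{-s}(\R)$ through the Hermite expansion and to reduce the claim to a classical pointwise bound on Hermite functions. Recall that the norm on $\mathcal{H}^{-s}(\R)$ is, by definition (via the duality extension of the equivalence quoted after the definition of $\mathcal{W}^{s,2}$),
\begin{align*}
\|f\|_{\mathcal{H}^{-s}(\R)}^2=\sum_{n\in\N}(2n+1)^{-s}\,|\langle f,e_n\rangle|^2,
\end{align*}
so for the distribution $f=\delta$ we have $\langle\delta,e_n\rangle=e_n(0)$, and the entire statement amounts to showing
\begin{align*}
\sum_{n\in\N}(2n+1)^{-s}\,|e_n(0)|^2<\infty,\qquad s>0.
\end{align*}
(The borderline $s=0$ already fails, which is why the hypothesis is $s\ge 0$ should be read as $s>0$; I would state the lemma for $s>0$ or note this.) First I would record that $e_{2k+1}(0)=0$ by oddness, so only even indices $n=2k$ contribute.

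Next I would invoke the standard asymptotics for Hermite functions at the origin. Using $e_{2k}(0)=(-1)^k\,\frac{\sqrt{(2k)!}}{2^{k}\,k!\,\pi^{1/4}}$ together with Stirling's formula, one gets $|e_{2k}(0)|^2\sim c\,k^{-1/2}$ as $k\to\infty$; equivalently $|e_n(0)|^2\lesssim \langle n\rangle^{-1/2}$ for all $n\in\N$. Consequently
\begin{align*}
\sum_{n\in\N}(2n+1)^{-s}|e_n(0)|^2\lesssim \sum_{k\ge 1}(4k+1)^{-s}k^{-1/2}\lesssim \sum_{k\ge1}k^{-s-1/2},
\end{align*}
which converges for every $s>0$ (indeed already for $s>1/2$, and the low-order terms are harmless). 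This proves $\delta\in\mathcal{H}^{-s}(\R)$. Alternatively, and perhaps cleaner to present, I would avoid explicit asymptotics by using the Mehler kernel: $\sum_{n}r^n |e_n(0)|^2 = (\pi(1-r^2))^{-1/2}$ for $0<r<1$, and then extract the $n^{-1/2}$ bound on the coefficients from this generating function via a Tauberian/Abelian comparison, or simply bound $\sum_n (2n+1)^{-s}|e_n(0)|^2$ by an integral $\int_0^1 (\text{something})\,dr$ after writing $(2n+1)^{-s}$ as a Gamma integral $\frac{1}{\Gamma(s)}\int_0^\infty t^{s-1}e^{-(2n+1)t}\,dt$ and summing the geometric-type series in $n$ first.

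The only real obstacle is making the coefficient bound $|e_n(0)|^2\lesssim\langle n\rangle^{-1/2}$ rigorous without hand-waving; this is entirely classical (it is the value at the center of the well-known $L^\infty$ asymptotics of Hermite functions, or a one-line consequence of Stirling applied to the closed form above), so I would cite a standard reference on Hermite functions rather than reprove it. Everything else — reducing to the series, discarding odd terms, comparing with a convergent $p$-series — is routine.
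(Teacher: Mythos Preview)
Your approach is essentially the same as the paper's: expand $\delta$ in Hermite functions, note $e_{2k+1}(0)=0$, compute $e_{2k}(0)$ from the closed form, and test convergence of $\sum_n (2n+1)^{-s}|e_n(0)|^2$. Your asymptotic
\[
|e_{2k}(0)|^2=\frac{(2k)!}{4^{k}(k!)^2\sqrt{\pi}}\sim \frac{1}{\pi\sqrt{k}}
\]
via Stirling is correct --- and in fact sharper than what the paper writes, which bounds $\frac{(2m)!}{4^{m}(m!)^2}$ by $2^{-m}$ (an error; the true decay is $\sim m^{-1/2}$).

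The gap is in your convergence step. The series $\sum_{k\ge 1}k^{-s-1/2}$ converges if and only if $s>\tfrac12$, not for every $s>0$; your parenthetical ``indeed already for $s>1/2$, and the low-order terms are harmless'' does not rescue the range $0<s\le\tfrac12$, since the problem is the tail, not the first few terms. In fact the lemma as stated is false for $s\le\tfrac12$: because $|e_n(0)|^2\sim c\,n^{-1/2}$ with a \emph{nonzero} constant (not merely $\lesssim$), the sum genuinely diverges there. The correct hypothesis is $s>\tfrac12$, and this is all the paper actually uses (it only invokes $\delta\in\mathcal{H}^{-1}$ in the inverse Strichartz argument). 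State and prove the lemma for $s>\tfrac12$; your argument then goes through cleanly, and your Mehler-kernel alternative is a nice way to make the coefficient bound self-contained.
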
 
\begin{proof}
By definition, we have
\begin{align}\label{eq1.6v31}
\left\|\delta_0(x)\right\|_{\mathcal{H}^{-s}_x(\R)}^2 = \sum_{n=0}^\infty {(2n+1)^{-s }} |c_n|^2,
\end{align}
where $c_n=\left\langle\delta_0(x),  e_n(x) \right\rangle = e_n(0)$. Since $e^{-x^2} = \sum\limits_{m=0}^\infty \frac{(-x^2)^m}{m!} = \sum\limits_{n=0}^\infty \frac{d^n}{dx^n}\Big|_{x=0} e^{-x^2} \cdot \frac{x^n}{n!}$,
we have
\begin{align*}
\frac{d^n}{dx^n}\bigg|_{x=0} e^{-x^2} =
\begin{cases}
0,   & \text{ $n$ is odd},\\
\frac{(-1)^\frac{n}2}{(\frac{n}2)!} n!, & \text{ $n$ is even}.
\end{cases}
\end{align*}
Thus
\begin{align*}
e_n(0)& = \begin{cases}
0,  &  \text{ $n$ is odd},\\
\frac{(-1)^n}{\sqrt{n!} 2^\frac{n}2 \pi^\frac14}  \frac{(-1)^\frac{n}2}{(\frac{n}2)!} n!,  & \text{ $n$ is even}.
\end{cases}
\end{align*}
Together with \eqref{eq1.6v31}, this implies
\begin{align*}
\left\|\delta_0(x) \right\|_{\mathcal{H}_x^{-s}(\R)}^2 \le  \pi^{-\frac14} \sum_{\substack{ n=0, \\ \text{ n even} } }^\infty \frac{n!} {2^n  \left( \left( \frac{n}2 \right)! \right)^2 (2n+1)^s } \lesssim \sum\limits_{m = 0}^\infty \frac1{2^m(4m + 1)^s} \lesssim 1.
\end{align*}

\end{proof}

Next, we establish the following Bernstein's inequalities, which will be used later.
\begin{proposition}\label{Bernstein}For any $s\geq0$, $2\leq p<q<\infty$, we have
\begin{gather}
\big\|  \| P^{x_1}_{\le N} u \|_{ L_{x_2}^2 (\mathbb{R}) }  \big\| _{L^{q} _{ x_1 } (\mathbb{R}) } \lesssim N^{\frac{1}{p} - \frac{1}{q}} \big\|   \| P_{\le N}^{x_1} u \|_{L_{x_2}^2 (\mathbb{R}) }  \big\|_{ L^p_{x_1} (\mathbb{R}) }, \label{Bern-1} \\
\big\|  \|| \partial_{x_1}|^s P_{N}^{x_1} u \|_{ L_{x_2}^2 (\mathbb{R}) }  \big\| _{L^{2} _{ x_1 } (\R) } \sim N^s \big\|   \| P_{ N}^{x_1} u \|_{L_{x_2}^2 (\mathbb{R}) }  \big\|_{ L^2_{x_1} (\mathbb{R}) }. \label{Bern-3}
\end{gather}

\end{proposition}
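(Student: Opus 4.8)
The plan is to reduce both inequalities to the classical ($x_2$-independent) Bernstein estimates on $\R_{x_1}$, viewing the inner norm $\|\cdot\|_{L_{x_2}^2(\R)}$ as a Hilbert-space-valued modulus and using crucially that $P_{\leqslant N}^{x_1}$ and $P_N^{x_1}$ act only in the $x_1$ variable. All the tools needed — Minkowski's integral inequality, Young's convolution inequality, and Plancherel in $x_1$ — are insensitive to the $L_{x_2}^2$-target, so no vector-valued Littlewood--Paley square-function theory is required.

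For \eqref{Bern-1}, the first step is to realize $P_{\leqslant N}^{x_1}$ as convolution in $x_1$ against $K_N(x_1)=N\check\chi(Nx_1)$, where $\check\chi=\mathcal{F}^{-1}\big(\chi(|\cdot|)\big)\in\mathcal{S}(\R)$, so that $\|K_N\|_{L_{x_1}^r(\R)}=N^{1-\frac1r}\|\check\chi\|_{L^r(\R)}$ for every $r\in[1,\infty]$. Because the projector must appear on \emph{both} sides of \eqref{Bern-1} (and the estimate is false for an arbitrary function in place of $P_{\leqslant N}^{x_1}u$, as a tall thin bump shows), I would pick a fattened cutoff $\tilde\chi\in C_c^\infty(\R)$ with $\tilde\chi\equiv1$ on $\mathrm{supp}\,\chi$, so that the associated operator $\tilde P_{\leqslant N}^{x_1}$ satisfies $\tilde P_{\leqslant N}^{x_1}P_{\leqslant N}^{x_1}=P_{\leqslant N}^{x_1}$, and then run the argument on $v:=P_{\leqslant N}^{x_1}u$. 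Minkowski's integral inequality moves $\|\cdot\|_{L_{x_2}^2}$ inside the $x_1$-convolution to give the pointwise bound $\big\|\tilde P_{\leqslant N}^{x_1}v(x_1,\cdot)\big\|_{L_{x_2}^2}\leqslant\big(|\tilde K_N|*_{x_1}\|v(\cdot,\cdot)\|_{L_{x_2}^2}\big)(x_1)$, after which Young's convolution inequality with $1+\tfrac1q=\tfrac1r+\tfrac1p$ yields \eqref{Bern-1} with the gain $N^{\frac1p-\frac1q}$ coming precisely from $\|\tilde K_N\|_{L_{x_1}^r(\R)}=N^{\frac1p-\frac1q}\|\check{\tilde\chi}\|_{L^r(\R)}$.

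For \eqref{Bern-3} I would instead combine Plancherel in $x_1$ with Fubini: for fixed $x_2$,
\begin{align*}
\int_{\R}\big||\partial_{x_1}|^sP_N^{x_1}u(x_1,x_2)\big|^2\,dx_1=\int_{\R}|\xi|^{2s}\varphi_N(|\xi|)^2\big|\mathcal{F}_{x_1}u(\xi,x_2)\big|^2\,d\xi,
\end{align*}
and since $\varphi_N(|\cdot|)$ is supported in $\{|\xi|\sim N\}$, on that set $|\xi|^{2s}\sim N^{2s}$; integrating in $x_2$ and undoing Plancherel on the right-hand side gives the two-sided bound \eqref{Bern-3}. I do not expect any real obstacle here; the only points warranting a line of care are the fattened-multiplier device forced by the form of \eqref{Bern-1}, the measurability of $x_1\mapsto\|u(x_1,\cdot)\|_{L_{x_2}^2}$ needed to legitimately apply the vector-valued Minkowski inequality, and checking that $\chi(|\cdot|)$ (hence $\tilde\chi(|\cdot|)$) is genuinely a Schwartz function on $\R$, which holds because $\chi\equiv1$ near the origin so the singularity of $|\xi|$ at $\xi=0$ is not seen.
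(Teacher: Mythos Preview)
Your proof is correct. For \eqref{Bern-3} your Plancherel-in-$x_1$ then Fubini argument coincides with the paper's (the paper first expands in the Hermite basis $\{e_n\}$ in $x_2$, writing $d_n=\langle |\partial_{x_1}|^sP_N^{x_1}u,e_n\rangle_{L_{x_2}^2}$, but that is just another parametrization of the same Fubini step).

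For \eqref{Bern-1}, however, your route differs from the paper's. The paper observes that the scalar function
\[
g(x_1):=\|P_{\le N}^{x_1}u(x_1,\cdot)\|_{L_{x_2}^2}^2=\sum_{n\in\N}|c_n(x_1)|^2,\qquad c_n=\langle P_{\le N}^{x_1}u,e_n\rangle_{L_{x_2}^2},
\]
is itself band-limited in $x_1$ to $\{|\xi|\le CN\}$ (each $c_n$ is band-limited to $\{|\xi|\le 2N\}$, hence so is each $|c_n|^2$ and the sum), and then applies the \emph{scalar} Bernstein inequality $L^{p/2}_{x_1}\to L^{q/2}_{x_1}$ directly to $g$; this squaring device is what forces the stated hypothesis $p\ge 2$ so that $p/2\ge 1$. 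Your fattened-projector plus Minkowski plus Young argument is the standard vector-valued Bernstein proof: it is more routine, needs no Hermite expansion, and in fact yields \eqref{Bern-1} for the full range $1\le p\le q\le\infty$. The paper's trick is a neat shortcut that exploits the Hilbert-space structure of the inner $L_{x_2}^2$ norm but buys nothing beyond the range already covered by your argument.
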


\begin{proof}

We denote $c_n := \langle P^{x_1}_{\le N}u ,  e_n \rangle _{L^2_{x_2}} $.  
First, we prove \eqref{Bern-1}. 
\begin{align*}
\big\|  \| P^{x_1}_{\le N} u \|_{ L_{x_2}^2 (\mathbb{R}) }  \big\|  _{L^{q} _{ x_1 } (\mathbb{R}) }^2  =  &    \big\|\sum_{n\in\N} |c_n| ^2 \big\|_{L_{x_1}^{\frac{q}{2}}(\R)} \\
=  &  \big\|P_{\leq CN}^{x_1}\big(\sum_{ n\in\N }|c_n|^2\big)\big\|_{L_{x_1}^\frac{q}2(\R)} \\ 
\lesssim & N^{\frac{2}{p} - \frac{2}{q}} \big\|P_{\leq CN}^{x_1}\big(\sum_{ n\in\N }|c_n|^2\big)\big\|_{L_{x_1}^\frac{p}2(\R)} \\
\lesssim  & N^{\frac{2}{p} - \frac{2}{q}}  \big\| \| P_{\le N}^{x_1} u \|_{L_{x_2}^2 (\mathbb{R}) }  \big\|_{ L^p_{x_1} (\mathbb{R}) }^2,
\end{align*}
which complete the proof of \eqref{Bern-1}.

Denote $ d_n := \langle |\partial_{x_1}|^s P^{x_1}_{ N} u ,  e_n \rangle _{L^2_{x_2}} $.  One can find
\begin{equation*}
\| d_n \|_{L^2_{x_1}(\R)} \sim  N^s \| \langle  P_N^{x_1}u , e_n \rangle \|_{L^2_{x_1}(\R)}. 
\end{equation*}
Consequently, we have
\begin{align*}
\big\|  \| |\partial_{x_1}|^s  P_{ N} ^{x_1}  u \|_{ L_{x_2}^2 (\mathbb{R}) }  \big\| _{L^{2} _{ x_1 } (\R) } ^ 2 =  & \int_{\mathbb{R}}    \sum_{n\in\N}  |d_n| ^2   dx_1 \\
\sim & N^{2s} \int_{\mathbb{R}} \sum_{n\in\N}  \| \langle P_N^{x_1}u , e_n \rangle \|^2 dx_1 \\
\sim & N^{2s}\big\|   \| P_{ N}^{x_1} u \|_{L_{x_2}^2 (\mathbb{R}) }  \big\|_{ L^2_{x_1} (\mathbb{R}) } ^2.
\end{align*}
Thus we have proved $\eqref{Bern-3}$.
\end{proof}

With similar strategy, we can also have the following Gagliardo-Nirenberg inequality.
\begin{lemma}[Gagliardo-Nirenberg inequality]\label{GNIN}
Let  $0< s_1 \leq  s_2$, then we have
\begin{equation*}
\big\| \| |\partial_{x_1}|^{s_1} u \|_{L_{x_2}^2(\R)} \big\|_{L_{x_1}^{2}(\R)} \lesssim \big\| \|  u \|_{L_{x_2}^2(\R)} \big\|^{\theta}_{L_{x_1}^2(\R)} \big\| \| |\partial_{x_1}|^{s_2} u \big\|_{L_{x_2}^2(\R)} \big\|^{1-\theta}_{L_{x_1}^{2}(\R)},
\end{equation*}
where
\begin{equation*}
\theta = \frac{s_2 - s_1}{s_2}.
\end{equation*}
\end{lemma}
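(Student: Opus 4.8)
The plan is to reduce this vector-valued statement to a one-variable interpolation inequality in the Fourier variable dual to $x_1$, in the same spirit as the proof of Proposition \ref{Bernstein}. First I would expand $u$ in the Hermite basis in the $x_2$-variable, $u(x_1,x_2)=\sum_{n\in\N}f_n(x_1)e_n(x_2)$ with $f_n(x_1)=\langle u(x_1,\cdot),e_n\rangle_{L_{x_2}^2(\R)}$. Since $|\partial_{x_1}|^{s}$ acts only in $x_1$ and the $e_n$ are orthonormal in $L_{x_2}^2(\R)$, Plancherel in $x_2$ gives $\||\partial_{x_1}|^{s}u(x_1,\cdot)\|_{L_{x_2}^2(\R)}^2=\sum_{n\in\N}\big||\partial_{x_1}|^{s}f_n(x_1)\big|^2$ for every $s\ge0$. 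Integrating in $x_1$ and using Plancherel in $x_1$, there is a constant $c_0$ (depending only on the Fourier normalization, the same for all $s$) such that, with $g(\xi):=\sum_{n\in\N}|\widehat{f_n}(\xi)|^2\ge0$,
\begin{align*}
\big\|\,\||\partial_{x_1}|^{s}u\|_{L_{x_2}^2(\R)}\big\|_{L_{x_1}^2(\R)}^2=c_0\int_\R|\xi|^{2s}g(\xi)\,d\xi .
\end{align*}
Thus the three quantities in the lemma equal, up to the same factor $c_0^{1/2}$, the numbers $\big(\int_\R|\xi|^{2s_1}g\big)^{1/2}$, $\big(\int_\R g\big)^{1/2}$ and $\big(\int_\R|\xi|^{2s_2}g\big)^{1/2}$, so it suffices to establish the scalar interpolation inequality
\begin{align*}
\int_\R|\xi|^{2s_1}g(\xi)\,d\xi\le\Big(\int_\R g(\xi)\,d\xi\Big)^{\theta}\Big(\int_\R|\xi|^{2s_2}g(\xi)\,d\xi\Big)^{1-\theta},\qquad \theta=\frac{s_2-s_1}{s_2}.
\end{align*}

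The key observation is that this choice of $\theta$ is exactly the one forcing $2s_1=0\cdot\theta+2s_2\cdot(1-\theta)$, so that pointwise in $\xi$ one has the identity $|\xi|^{2s_1}g(\xi)=g(\xi)^{\theta}\,\big(|\xi|^{2s_2}g(\xi)\big)^{1-\theta}$. Applying H\"older's inequality in $\xi$ with the conjugate exponents $1/\theta$ and $1/(1-\theta)$ — legitimate since $0\le\theta<1$ thanks to $0<s_1\le s_2$ — immediately produces the displayed bound with constant $1$; taking square roots finishes the proof. The degenerate case $s_1=s_2$ gives $\theta=0$ and the statement is trivial, while the cases in which the right-hand side vanishes or is infinite are handled in the usual way ($u\equiv 0$, respectively vacuous).

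As for the main obstacle: there is none of substance. This is a vector-valued reformulation of the classical scalar Gagliardo--Nirenberg interpolation, and the only steps requiring care are the Hermite--Plancherel reduction (valid whenever the right-hand side is finite, which puts $g\in L^1(d\xi)\cap L^1(|\xi|^{2s_2}d\xi)$) and the elementary bookkeeping check that the interpolation exponent matches. Both are routine, and the argument parallels the Hermite-expansion proof of Proposition \ref{Bernstein} above.
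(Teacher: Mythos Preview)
Your proposal is correct and aligns with what the paper intends: the paper gives no explicit proof but only says the lemma follows ``with similar strategy'' to Proposition \ref{Bernstein}, i.e., via the Hermite expansion in $x_2$, which is precisely your reduction. In fact, because both norms here are $L^2_{x_1}$, the Hermite expansion is not strictly needed: Fubini and Plancherel in $x_1$ already give $\big\|\||\partial_{x_1}|^{s}u\|_{L_{x_2}^2}\big\|_{L_{x_1}^2}^2=c_0\int_{\R}|\xi|^{2s}\|\mathcal{F}_{x_1}u(\xi,\cdot)\|_{L_{x_2}^2}^2\,d\xi$, after which your H\"older argument in $\xi$ finishes the proof with constant $1$.
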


Next, we give the Littlewood-Paley theorem in a vector-valued version.  
The proof of this proposition is rather standard, we refer to \cite{Grafakos} for details.
\begin{proposition}[Littlewood-Paley Theorem,\cite{Grafakos}]\label{Littlewood}Let $1<p<\infty$, then we have
\begin{align*}
\|f\|_{L_{x_1}^pL_{x_2}^2(\R\times\R)}&=\Big\|\big(\sum_{n\in\N}\|\Pi_nf\|_{L_{x_2}^2(\R)}^2\big)^\frac{1}{2}\Big\|_{L_{x_1}^p(\R)}\sim\bigg\|\big(\sum_{n\in\N}\sum_{k\in\Bbb{Z}}\|P_k^{x_1}\Pi_nf\|_{L_{x_2}^2(\R)}^2\big)^\frac12\bigg\|_{L_{x_1}^p(\R)},
\end{align*}
where the spectral projector $\Pi_n$ acts on the $x_2$-variable.
\end{proposition}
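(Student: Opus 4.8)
The statement to prove is the vector-valued Littlewood--Paley theorem (Proposition~\ref{Littlewood}): for $1<p<\infty$,
\begin{align*}
\|f\|_{L_{x_1}^pL_{x_2}^2(\R\times\R)}\sim\Big\|\big(\textstyle\sum_{n\in\N}\sum_{k\in\Z}\|P_k^{x_1}\Pi_nf\|_{L_{x_2}^2(\R)}^2\big)^{1/2}\Big\|_{L_{x_1}^p(\R)}.
\end{align*}
The plan is to reduce the claim to the scalar Littlewood--Paley square function theorem on $\R$ applied with values in a Hilbert space, by first absorbing the $x_2$-variable into the target Hilbert space. Concretely, I would first observe that the spectral projectors $\Pi_n$ form an orthogonal decomposition of $L^2_{x_2}(\R)$, so by Parseval in $x_2$ one has the pointwise-in-$x_1$ identity $\|f(x_1,\cdot)\|_{L^2_{x_2}}^2=\sum_{n\in\N}\|\Pi_nf(x_1,\cdot)\|_{L^2_{x_2}}^2$; integrating the $p/2$-power in $x_1$ gives the first equality $\|f\|_{L^p_{x_1}L^2_{x_2}}=\big\|(\sum_n\|\Pi_nf\|_{L^2_{x_2}}^2)^{1/2}\big\|_{L^p_{x_1}}$ stated in the proposition. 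This step is purely formal.

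For the second, substantive equivalence, I would view $g:=f$ as a function of $x_1\in\R$ taking values in the Hilbert space $H:=L^2_{x_2}(\R;\ell^2_n)$ — equivalently, pack the sequence $\{\Pi_n f(x_1,\cdot)\}_{n}$ into a single $H$-valued function $G(x_1)$. The Littlewood--Paley projectors $P_k^{x_1}$ act only on the $x_1$-variable and commute with $\Pi_n$ (which acts on $x_2$), so $P_k^{x_1}G$ is exactly the $H$-valued Littlewood--Paley piece of $G$, and $\|P_k^{x_1}\Pi_n f\|_{L^2_{x_2}\ell^2_n}=\|P_k^{x_1}G(x_1)\|_H$. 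The desired equivalence then reads
\begin{align*}
\|G\|_{L^p_{x_1}(\R;H)}\sim\Big\|\big(\textstyle\sum_{k\in\Z}\|P_k^{x_1}G\|_H^2\big)^{1/2}\Big\|_{L^p_{x_1}(\R)},
\end{align*}
which is precisely the classical square-function theorem for $H$-valued functions on $\R$. Since $H$ is a Hilbert space (hence a UMD Banach space with the additional property needed for the square-function characterization, not just the weaker $R$-boundedness statement), this is standard; I would cite \cite{Grafakos} (or the standard vector-valued Calder\'on--Zygmund / Mikhlin multiplier theory in UMD spaces) rather than reprove it. The one-sided inequality $\gtrsim$ follows from randomized (Khintchine) estimates and the boundedness of the $x_1$-Littlewood--Paley square function on $L^p(\R;H)$; the reverse $\lesssim$ follows by a duality/Rademacher-function argument together with the analogous bound applied to the dual exponent, again using that $H$ and $H^*$ are both Hilbert.

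The main (and really only) obstacle is making sure the vector-valued square-function theorem is being invoked in a form that genuinely applies: one needs $H$ to be not merely UMD but to admit the two-sided square-function equivalence, which for Hilbert-space-valued functions is classical and symmetric in $p$ and $p'$. Once that is pinned down, the proof is a two-line reduction: the $x_2$-Parseval identity for the first equality, and the identification of $\{\Pi_n f\}_n$ with an $H$-valued function of $x_1$ together with the commutation $[P_k^{x_1},\Pi_n]=0$ for the second. No estimate specific to the harmonic oscillator is needed — only that $\{\Pi_n\}$ is an orthogonal resolution of the identity on $L^2_{x_2}$ — so I would keep the write-up short and defer the hard analytic input to the cited reference, exactly as the excerpt's remark ("The proof of this proposition is rather standard, we refer to \cite{Grafakos} for details") anticipates.
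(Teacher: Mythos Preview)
Your proposal is correct and aligns with the paper's treatment: the paper gives no proof of its own, simply stating that the result is ``rather standard'' and deferring to \cite{Grafakos}, which is exactly the reference you invoke after the Parseval-in-$x_2$ reduction and the identification with the Hilbert-space-valued Littlewood--Paley square function on $\R$. Your outline (orthogonal decomposition via $\Pi_n$, commutation $[P_k^{x_1},\Pi_n]=0$, then the classical $H$-valued square-function theorem) is the standard route the citation is pointing to.
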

We end this section by giving a Moser type estimate, which is crucial in establishing the nonlinear estimate.
\begin{lemma}[Moser type estimate, \cite{Jao-CPDE,Killip-Visan-Zhang}]\label{Moser-estimate}
For $\gamma\in(0,1]$ and $p_i,q_i,r\in(1,\infty)$ satisfying $\frac{1}{r}=\frac{1}{p_i}+\frac{1}{q_i}$, $i=1,2$. Then, we have
\begin{align*}
\big\|H^\gamma(fg)\big\|_{L^r(\R)}\lesssim \|H^\gamma f\|_{L^{p_1}(\R)}\|g\|_{L^{q_1}(\R)}+\|H^\gamma f\|_{L^{p_2}(\R)}\|g\|_{L^{q_2}(\R)},
\end{align*}
where operator $H^\gamma$ is defined by the functional calculus associated to the harmonic oscillator $H=-\partial_{x_2}^2+x_2^2$.
\end{lemma}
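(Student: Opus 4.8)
The plan is to prove the Moser-type estimate for the operator $H^\gamma$, $\gamma\in(0,1]$, by the standard Littlewood-Paley/paraproduct technique, but carried out with respect to the \emph{Hermite} Littlewood-Paley decomposition rather than the Fourier one. First I would fix a smooth partition of unity adapted to the dyadic blocks of the harmonic oscillator: writing $P_{\le M}^{H}$ and $P_M^{H}$ for the spectral multipliers $\psi(H/M^2)$ and $\varphi(H/M^2)$ with $M\in 2^{\N}$, one has $\sum_M P_M^{H}=\mathrm{Id}$ on $L^2(\R)$, and the heat-kernel / Mehler-formula bounds for the harmonic oscillator give the usual mapping properties: $P_M^H$ and $P_{\le M}^H$ are bounded on every $L^p(\R)$, $1<p<\infty$, uniformly in $M$, together with the Bernstein inequalities $\|H^\gamma P_M^H g\|_{L^p}\sim M^{2\gamma}\|P_M^H g\|_{L^p}$ and $\|P_M^H g\|_{L^q}\lesssim M^{(1/p-1/q)}\|P_M^H g\|_{L^p}$ for $p\le q$. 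These are exactly the analogues, in the Hermite setting, of the facts used in Proposition \ref{Bernstein}, and they are available from the references \cite{Jao-CPDE,Killip-Visan-Zhang} cited in the statement.

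Next I would expand $fg=\sum_{M,M'}P_M^Hf\,P_{M'}^Hg$ and split the double sum into the three paraproduct regimes: (i) $M'\lesssim M$ (high--low), (ii) $M\ll M'$ (low--high), and (iii) $M\sim M'$ (diagonal). In regime (i), $H^\gamma(P_M^Hf\,P_{\le cM}^Hg)$ is spectrally supported where $H\sim M^2$, so applying $\|H^\gamma(\cdot)\|_{L^r}\sim$ a single dyadic piece, using Bernstein in $H$ to put the $H^\gamma$ on the high factor, and then Hölder with exponents $\tfrac1r=\tfrac1{p_1}+\tfrac1{q_1}$, one gets $\sum_M \|H^\gamma P_M^Hf\|_{L^{p_1}}\,\|P_{\le cM}^Hg\|_{L^{q_1}}$; a square-function / Littlewood-Paley estimate for the $x_2$-variable (Proposition \ref{Littlewood} applied in the Hermite variable) and the uniform boundedness of $P_{\le cM}^H$ on $L^{q_1}$ collapse this to $\|H^\gamma f\|_{L^{p_1}}\|g\|_{L^{q_1}}$. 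Regime (ii) is symmetric and produces the term $\|H^\gamma f\|_{L^{p_2}}\|g\|_{L^{q_2}}$ after moving the derivative onto $f$ via $\|H^\gamma P_{\le cM'}^Hf\|_{L^{p_2}}\lesssim\|H^\gamma f\|_{L^{p_2}}$ (here one uses $\gamma>0$ and the fact that $P_{\le cM'}^H$ commutes with and is dominated by bounded functions of $H$). The diagonal regime (iii) is estimated the same way as (i), distributing $H^\gamma$ onto either factor. Summing the three contributions yields the claimed bound.

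The main obstacle, and the place where genuine care is needed, is that the harmonic-oscillator Littlewood-Paley pieces $P_M^H$ do \emph{not} satisfy an exact Leibniz/frequency-support rule: $P_M^H f\cdot P_{M'}^H g$ is \emph{not} spectrally localized to $H\sim M^2$ even when $M'\ll M$, because $H$ is not a Fourier multiplier and the eigenfunctions $e_n$ have no algebraic product structure (this is precisely the difficulty flagged in the introduction, "the eigenfunction of harmonic oscillator do not enjoy the algebraic property"). The way around this is to not insist on exact localization but to exploit the rapid off-diagonal decay of the kernels of $P_M^H$ and $H^\gamma P_M^H$: one inserts a \emph{fattened} projector $\widetilde P_M^H=P_{M/8\le\cdot\le 8M}^H$ so that $\|H^\gamma(P_M^Hf\,P_{\le cM}^Hg)\|_{L^r}\lesssim\sum_{L}\|\widetilde P_L^H H^\gamma(P_M^Hf\,P_{\le cM}^Hg)\|_{L^r}$ with the sum over $L$ enjoying summable tails by kernel decay, reducing matters to the dyadic Hölder estimate above. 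Alternatively one may follow the argument in \cite{Killip-Visan-Zhang} verbatim, replacing their heat-kernel bounds by the Mehler-formula bounds for $e^{-tH}$; either route only uses Gaussian kernel estimates and interpolation, so no new ingredient is required beyond bookkeeping the error sums.
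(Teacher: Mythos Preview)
The paper does not prove this lemma; it is stated with citation to \cite{Jao-CPDE,Killip-Visan-Zhang} and used as a black box, so there is no in-paper argument to compare against beyond what those references contain.

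Your paraproduct framework is the right general idea, but your treatment of regime (ii) (low--high, $M\ll M'$) contains a genuine error. In that regime the product sits (heuristically) at Hermite frequency $\sim M'$, so applying $H^\gamma$ produces a factor $M'^{2\gamma}$. You claim to absorb this into $f$ via ``$\|H^\gamma P_{\le cM'}^H f\|_{L^{p_2}}\lesssim \|H^\gamma f\|_{L^{p_2}}$'', but what you actually need to control is $M'^{2\gamma}\|P_{\le cM'}^H f\|_{L^{p_2}}$, and this is \emph{not} bounded by $\|H^\gamma P_{\le cM'}^H f\|_{L^{p_2}}$ when $f$ lives at low Hermite frequency---the two quantities differ by an arbitrarily large factor. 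The $M'^{2\gamma}$ must land on $g$, giving $\|f\|_{L^{p_2}}\|H^\gamma g\|_{L^{q_2}}$. This strongly suggests the second term in the printed statement is a typo for $\|f\|_{L^{p_2}}\|H^\gamma g\|_{L^{q_2}}$, i.e.\ the standard Kato--Ponce form, which is also what the application in Lemma~\ref{nonlinear-1} actually requires. Separately, the route in the cited references is simpler than a full Hermite paraproduct and avoids the spectral-localization obstacle you correctly flagged: one uses the norm equivalence $\|H^\gamma h\|_{L^p}\sim \|\langle\nabla\rangle^{2\gamma}h\|_{L^p}+\|\langle x\rangle^{2\gamma}h\|_{L^p}$ for $\gamma\in(0,1]$, $1<p<\infty$ (cf.\ \cite{equi} and the definition of $\mathcal{W}^{s,p}$ in Section~\ref{sec:pre}), applies the classical fractional Leibniz rule to the derivative piece, and handles the weight piece by the pointwise inequality $\langle x\rangle^{2\gamma}|fg|\le (\langle x\rangle^{2\gamma}|f|)|g|$ plus H\"older.
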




\section{Local well-posedness and stability result}\label{sec:local}
In this section, we will prove the local well-posedness and stability result for equation $\eqref{NLS-final}$. Then we also collect the related result such as small data scattering and persistence of regularity. Compared to the classical nonlinear Schr\"odinger equation without the potential, we use the weaker norm as the scattering size. It is worth to mention that the uniform boundedness of this weaker norm is enough to obtain the scattering result. 

Before presenting the local theory, we first recall the Strichartz estimates, which is very crucial in establishing both local and global theory for dispersive equations. We shall use the following notation. 
\begin{definition}[Schr\"odinger admissible pairs] We call $(p,q)\in\R^2$ the admissible pairs if $(p,q)$ satisfies the following 
\begin{align*}
\frac{2}{p}=\frac{1}{2}-\frac{1}{q},\quad 4\leqslant p\leqslant\infty.
\end{align*}
\end{definition}
Now we can state the Strichartz estimates for linear Schr\"odinger equations with partially harmonic oscillator. This estimates were proved in Proposition 3.1 in \cite{Antonelli}.

\begin{proposition}[Strichartz estimates, \cite{Antonelli}]\label{Strichartz-Antonelli}
Let $I$ be a interval,  $(p,q)$ and $(\tilde{p},\tilde{q})$  be any Schr\"odinger admissible pairs,  we have the following estimate
\begin{align}\label{Strichartz-L^2}
\big\|e^{it(\Delta_{\R^2}-x_2^2)}f\big\|_{L_t^pL_{x_1}^qL_{x_2}^2(I \times\R\times\R)}&\lesssim\|f\|_{L_{x_1,x_2}^2(\R\times\R)}\\
\big\|\int_{0}^{t}e^{i(t-s)(\Delta_{\R^2}-x_2^2)}F(s,x_1,x_2)\big\|_{L_t^pL_{x_1}^qL_{x_2}^2(I \times\R\times\R)}&\lesssim\|F\|_{L_t^{\tilde{p}^\prime}L_{x_1}^{\tilde{q}^\prime}L_{x_2}^2(\R\times\R)}\label{Strichartz-inhomo}\\\label{Stricharz-Sobolev}
\big\|e^{it(\Delta_{\R^2}-x_2^2)}f\big\|_{L_t^pL_{x_1}^q\mathcal{H}_{x_2}^\sigma(I\times\R\times\R)}&\lesssim\|f\|_{L_{x_1}^2\mathcal{H}_{x_2}^\sigma(\R\times\R)},\quad\sigma\geq0.
\end{align}
	
\end{proposition}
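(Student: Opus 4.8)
The plan is to reduce all three estimates to the one-dimensional, Hilbert-space-valued Strichartz estimates for the free Schr\"odinger group in the $x_1$-variable. The structural fact that makes this work is that $H_p=-\partial_{x_1}^2-\partial_{x_2}^2+x_2^2$ is the sum of the two \emph{commuting} self-adjoint operators $-\partial_{x_1}^2$ and $H:=-\partial_{x_2}^2+x_2^2$, which act on different variables; hence the propagator factors,
\begin{align*}
e^{it(\Delta_{\R^2}-x_2^2)}=e^{it\partial_{x_1}^2}\,e^{it(\partial_{x_2}^2-x_2^2)}=e^{it(\partial_{x_2}^2-x_2^2)}\,e^{it\partial_{x_1}^2}.
\end{align*}
I will use three properties of the harmonic factor $e^{it(\partial_{x_2}^2-x_2^2)}=e^{-itH}$: (i) for every fixed $t$ it is unitary on $L^2_{x_2}(\R)$, so it acts isometrically on each fibred space $L^q_{x_1}L^2_{x_2}$; (ii) it commutes with every operator acting on $x_1$ alone, in particular with $e^{it\partial_{x_1}^2}$; (iii) by the spectral theorem it commutes with $H^{\sigma/2}$, and one has $\|g\|_{\mathcal H_{x_2}^\sigma}=\|H^{\sigma/2}g\|_{L^2_{x_2}}$.

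For \eqref{Strichartz-L^2} I would write $e^{it(\Delta_{\R^2}-x_2^2)}f=e^{it(\partial_{x_2}^2-x_2^2)}\big(e^{it\partial_{x_1}^2}f\big)$ and use (i) pointwise in $t$, so that $\|e^{it(\Delta_{\R^2}-x_2^2)}f\|_{L^q_{x_1}L^2_{x_2}}=\|e^{it\partial_{x_1}^2}f\|_{L^q_{x_1}L^2_{x_2}}$ for each $t$; taking the $L^p_t$-norm reduces the claim to the one-dimensional estimate $\|e^{it\partial_{x_1}^2}f\|_{L^p_tL^q_{x_1}L^2_{x_2}}\lesssim\|f\|_{L^2_{x_1}L^2_{x_2}}$ for $L^2_{x_2}$-valued data. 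This is proved as in the scalar case: the $1$D Schr\"odinger kernel is a scalar, so Minkowski's inequality upgrades the $|t|^{-1/2}$ dispersive estimate to $\|e^{it\partial_{x_1}^2}g\|_{L^\infty_{x_1}L^2_{x_2}}\lesssim|t|^{-1/2}\|g\|_{L^1_{x_1}L^2_{x_2}}$, while mass conservation gives $\|e^{it\partial_{x_1}^2}g\|_{L^2_{x_1}L^2_{x_2}}=\|g\|_{L^2_{x_1}L^2_{x_2}}$; the $TT^{\ast}$/Keel--Tao argument then delivers every admissible pair with $p>4$, and the endpoint $(p,q)=(4,\infty)$ is the classical one-dimensional endpoint Strichartz estimate, whose kernel-based proof carries $L^2_{x_2}$-valued coefficients verbatim.

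For \eqref{Strichartz-inhomo} I would factor $e^{i(t-s)(\Delta_{\R^2}-x_2^2)}=e^{it(\partial_{x_2}^2-x_2^2)}\,e^{i(t-s)\partial_{x_1}^2}\,e^{-is(\partial_{x_2}^2-x_2^2)}$, pull the $t$-dependent unitary $e^{it(\partial_{x_2}^2-x_2^2)}$ out of the $s$-integral by (ii), and discard it by (i); what remains is $\big\|\int_0^t e^{i(t-s)\partial_{x_1}^2}\big(e^{-is(\partial_{x_2}^2-x_2^2)}F(s)\big)\,ds\big\|_{L^p_tL^q_{x_1}L^2_{x_2}}$, i.e.\ the $L^2_{x_2}$-valued inhomogeneous one-dimensional Strichartz estimate (standard via the Christ--Kiselev lemma away from the double endpoint), and a final use of (i) turns $\|e^{-is(\partial_{x_2}^2-x_2^2)}F\|_{L^{\tilde p'}_tL^{\tilde q'}_{x_1}L^2_{x_2}}$ into $\|F\|_{L^{\tilde p'}_tL^{\tilde q'}_{x_1}L^2_{x_2}}$. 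Finally \eqref{Stricharz-Sobolev} is immediate from \eqref{Strichartz-L^2}: since $H^{\sigma/2}$ commutes with $e^{it(\Delta_{\R^2}-x_2^2)}$ by (iii), one has $\|e^{it(\Delta_{\R^2}-x_2^2)}f\|_{L^p_tL^q_{x_1}\mathcal H_{x_2}^\sigma}=\|e^{it(\Delta_{\R^2}-x_2^2)}(H^{\sigma/2}f)\|_{L^p_tL^q_{x_1}L^2_{x_2}}\lesssim\|H^{\sigma/2}f\|_{L^2_{x_1}L^2_{x_2}}=\|f\|_{L^2_{x_1}\mathcal H_{x_2}^\sigma}$.

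The only genuinely delicate point is the endpoint $p=4$ (equivalently $q=\infty$): the abstract $TT^{\ast}$ machinery handles an endpoint only when the dispersive decay exponent exceeds $1$, whereas here it equals $\tfrac12$, so one must instead invoke the sharp one-dimensional endpoint estimate $\|e^{it\partial_{x_1}^2}g\|_{L^4_tL^\infty_{x_1}}\lesssim\|g\|_{L^2_{x_1}}$ and verify that it upgrades to the fibred setting $L^4_tL^\infty_{x_1}L^2_{x_2}$, which is routine because the argument is kernel-based. A minor caveat worth recording: $e^{it(\partial_{x_2}^2-x_2^2)}$ is genuinely singular pointwise in $x_2$ at $t\in\tfrac{\pi}{2}\Z$ (visible in the Mehler formula recalled in the introduction), but the whole argument uses only its $L^2_{x_2}$-unitarity, which holds for all $t$, so this never intervenes.
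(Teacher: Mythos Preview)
The paper does not give its own proof of this proposition; it simply records the estimates and cites \cite{Antonelli} (``These estimates were proved in Proposition 3.1 in \cite{Antonelli}''), so there is nothing in the paper to compare your argument against. Your outline is correct and is exactly the standard route: factor the propagator as $e^{it\partial_{x_1}^2}e^{-itH}$, use the $L^2_{x_2}$-unitarity of $e^{-itH}$ to strip off the harmonic part, and invoke the one-dimensional Strichartz estimates for $L^2_{x_2}$-valued data; the $\mathcal{H}^\sigma$ version then follows by commuting $H^{\sigma/2}$ through. This is also the mechanism behind the proof in \cite{Antonelli}. Your remarks about the endpoint $(p,q)=(4,\infty)$ are on point: the abstract Keel--Tao machinery excludes $q=\infty$, but the scalar 1D estimate $\|e^{it\partial_{x_1}^2}g\|_{L^4_tL^\infty_{x_1}}\lesssim\|g\|_{L^2_{x_1}}$ is classical (kernel-based, or via restriction for the parabola) and extends to Hilbert-space-valued data by Minkowski.
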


\begin{remark}
The Strichartz estimates for Schr\"odinger equation with partially harmonic oscillator is very closed to that in the waveguide manifold setting. To be precise, the Strichartz estimates are also global-in-time as in the waveguide case (see \cite{Antonelli}). We refer to Barron \cite{Barron} for more recent progress on the global-in-time Strichartz estimates on waveguide manifolds via decoupling method.
\end{remark}

We also have the following nonlinear estimate.
\begin{lemma}[Nonlinear estimate]For $\varepsilon_0\in[0,\frac{1}{2})$, then we have the following nonlinear estimate
\begin{align*}
\big\|\prod_{j=1}^{5}u_j\big\|_{L_t^{\frac{6}{5}}L_{x_1}^{\frac{6}{5}}\mathcal{H}_{x_2}^{1-\varepsilon_0}(I\times\R\times\R)}\lesssim\prod_{j=1}^{5}\|u_j\|_{L_t^6L_{x_1}^6\mathcal{H}_{x_2}^{1-\varepsilon_0}(I\times\R\times\R)},
\end{align*}
where $I\times\R\times\R$ is the space-time slab.
\end{lemma}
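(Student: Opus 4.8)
The plan is to prove this multilinear estimate by a straightforward Hölder argument in the $(t,x_1)$ variables combined with the Moser-type estimate (Lemma~\ref{Moser-estimate}) in the $x_2$ variable to handle the Hermite-Sobolev regularity. First I would observe that, pointwise in $(t,x_1)$, the $\mathcal{H}_{x_2}^{1-\varepsilon_0}$ norm is comparable to $\|H^{(1-\varepsilon_0)/2}(\cdot)\|_{L^2_{x_2}} + \|\cdot\|_{L^2_{x_2}}$ by the equivalence of norms quoted before Lemma~\ref{Dirac}, with $H = -\partial_{x_2}^2 + x_2^2$. So it suffices to control $\|H^{(1-\varepsilon_0)/2}(\prod_{j=1}^5 u_j)\|_{L^2_{x_2}}$ and $\|\prod_{j=1}^5 u_j\|_{L^2_{x_2}}$, each integrated in $L^{6/5}_{t,x_1}$. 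Since $1-\varepsilon_0 \in (1/2, 1] \subset (0,1]$, the hypothesis $\gamma \in (0,1]$ of Lemma~\ref{Moser-estimate} applies with $\gamma = (1-\varepsilon_0)/2 \le 1/2$.

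Next I would apply the Moser estimate iteratively to distribute the operator $H^\gamma$ across the five-fold product: writing $\prod_{j=1}^5 u_j = u_1 \cdot (u_2 u_3 u_4 u_5)$ and peeling off one factor at a time, one obtains pointwise in $(t,x_1)$ a bound of the form
\begin{align*}
\big\|H^\gamma\big(\textstyle\prod_{j=1}^5 u_j\big)\big\|_{L^2_{x_2}} \lesssim \sum_{k=1}^5 \|H^\gamma u_k\|_{L^{10}_{x_2}} \prod_{j \ne k} \|u_j\|_{L^{10}_{x_2}},
\end{align*}
where the choice of exponents $\frac{1}{2} = \frac{1}{10} + \cdots + \frac{1}{10}$ (five tenths) is the natural one balancing a single $H^\gamma$-factor in $L^{10}_{x_2}$ against four plain factors in $L^{10}_{x_2}$. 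The plain $L^2_{x_2}$ term is handled identically (indeed more easily, without the operator). Then I would integrate this pointwise inequality in $(t,x_1)$ using Hölder with $\frac{5}{6} = \frac{1}{6} + \cdots + \frac{1}{6}$: each factor $\|H^\gamma u_k\|_{L^{10}_{x_2}}$ or $\|u_j\|_{L^{10}_{x_2}}$ sits in $L^6_{t,x_1}$, giving exactly $\prod_{j}\|u_j\|_{L^6_t L^6_{x_1}\mathcal{H}_{x_2}^{1-\varepsilon_0}}$ after reassembling, since $\|u_j\|_{L^6_{x_1}\mathcal{H}_{x_2}^{1-\varepsilon_0}}$ controls both $\|u_j\|_{L^6_{x_1}L^{10}_{x_2}}$ (via the one-dimensional Sobolev embedding $\mathcal{H}^{1-\varepsilon_0}_{x_2} \hookrightarrow L^{10}_{x_2}$, valid since $1-\varepsilon_0 > 1/2 - 1/10 = 2/5$) and $\|H^\gamma u_j\|_{L^6_{x_1}L^{10}_{x_2}}$ (same embedding applied after $H^\gamma$, using that $H^\gamma$ lowers Hermite regularity by $1-\varepsilon_0$, and $0 > 2/5 - (1-\varepsilon_0)$ need not hold — here one must instead interpolate or simply note $\mathcal{H}^{1-\varepsilon_0} \hookrightarrow \mathcal{W}^{1-\varepsilon_0,10}$ directly).

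The main obstacle, and the step requiring care, is the one-dimensional Sobolev-type embedding $\mathcal{H}^{1-\varepsilon_0}(\R) \hookrightarrow L^{10}(\R)$ — more precisely $\mathcal{W}^{1-\varepsilon_0,2}(\R)\hookrightarrow L^{10}(\R)$ — and the companion statement that $H^\gamma$ maps $\mathcal{H}^{1-\varepsilon_0}$ boundedly into $L^{10}$ when $\gamma=(1-\varepsilon_0)/2$; equivalently one wants $\mathcal{W}^{1-\varepsilon_0,2}(\R)\hookrightarrow \mathcal{W}^{1-\varepsilon_0,10}(\R)$ is false, so the right formulation is that $\|H^\gamma f\|_{L^{10}_{x}} \lesssim \|f\|_{\mathcal{W}^{1-\varepsilon_0,2}}$ fails in general and one should instead directly bound $\|f\|_{\mathcal{W}^{1-\varepsilon_0,2}}\gtrsim \|\langle\partial_x\rangle^{1-\varepsilon_0}f\|_{L^2}+\||x|^{1-\varepsilon_0}f\|_{L^2}$ and use $\|\langle\partial_x\rangle^\gamma(H^\gamma f)\|_{L^2}\lesssim \|f\|_{\mathcal H^{2\gamma}}$ type bounds. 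To keep this clean I would simply invoke that, in one dimension, $\|g\|_{L^{10}(\R)} \lesssim \|g\|_{H^{s}(\R)}$ for any $s > \frac{1}{2} - \frac{1}{10}=\frac25$, apply it to $g = u_j$ and $g = H^\gamma u_j$ in turn (noting $H^\gamma u_j \in H^{1-\varepsilon_0-2\gamma+\text{something}}$... — actually $H^\gamma$ maps $\mathcal H^{1-\varepsilon_0}\to \mathcal H^{1-\varepsilon_0-2\gamma}=\mathcal H^0=L^2$, which is \emph{not} enough), so the correct route is: bound $\|\prod u_j\|_{L^{6/5}_{t,x_1}\mathcal{H}^{1-\varepsilon_0}_{x_2}}$ by using Moser with the \emph{full} $H^{(1-\varepsilon_0)/2}$ landing on one factor while the other four factors are measured in $L^\infty_{x_2}$ via $\mathcal{H}^{1-\varepsilon_0}_{x_2}\hookrightarrow L^\infty_{x_2}$ (valid since $1-\varepsilon_0>1/2$), then Hölder $\frac{5}{6}=\frac16+\frac16+\frac16+\frac16+\frac16$ in $(t,x_1)$ with each factor in $L^6_{t,x_1}$. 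This is the cleanest path and avoids the problematic $L^{10}_{x_2}$ gymnastics; the embedding $\mathcal H^{1-\varepsilon_0}(\R)\hookrightarrow L^\infty(\R)$ for $\varepsilon_0<1/2$ is the only nontrivial ingredient and follows from the norm equivalence together with the one-dimensional fact $H^{1/2+}(\R)\hookrightarrow L^\infty(\R)$.
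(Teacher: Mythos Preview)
The paper states this lemma without proof, so there is no reference argument to compare against; the intended proof is the routine combination of a product/Moser estimate in $x_2$ with H\"older in $(t,x_1)$, which is exactly your strategy.

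Your final route---place $H^{(1-\varepsilon_0)/2}$ on one factor in $L^2_{x_2}$, measure the remaining four factors in $L^\infty_{x_2}$ via the embedding $\mathcal{H}^{1-\varepsilon_0}(\R)\hookrightarrow L^\infty(\R)$ (valid since $1-\varepsilon_0>\tfrac12$), then apply H\"older $\tfrac56=5\cdot\tfrac16$ in $(t,x_1)$---is correct and is the clean way to argue. Two remarks. First, Lemma~\ref{Moser-estimate} as stated requires $p_i,q_i\in(1,\infty)$, so the $L^\infty$ endpoint is not literally covered by that lemma; you can sidestep this either by citing the stronger Kato--Ponce-type product rule for $H^\gamma$ that does allow an $L^\infty$ factor (this is in the same references \cite{Jao-CPDE,Killip-Visan-Zhang}), or more simply by using the equivalent norm $\|f\|_{\mathcal H^{s}}\sim\|\langle\partial_{x_2}\rangle^{s} f\|_{L^2_{x_2}}+\||x_2|^{s} f\|_{L^2_{x_2}}$ and treating the two pieces separately: the weight piece is a direct H\"older bound $\||x_2|^{1-\varepsilon_0}\prod_j u_j\|_{L^2_{x_2}}\le \||x_2|^{1-\varepsilon_0}u_1\|_{L^2_{x_2}}\prod_{j\ge2}\|u_j\|_{L^\infty_{x_2}}$, and the derivative piece is the classical fractional Leibniz rule in $H^{1-\varepsilon_0}(\R)$, which does permit $L^\infty$ on the undifferentiated factors. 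Second, the detour through $L^{10}_{x_2}$ exponents should be deleted: as you yourself recognized mid-argument, it cannot close because $H^{(1-\varepsilon_0)/2}$ maps $\mathcal H^{1-\varepsilon_0}$ only into $L^2$, not into $L^{10}$. Present the $L^\infty$ route from the start.
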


Then combining the nonlinear estimate as well as the Strichartz estimates, one can obtain the local well-posedness in $L_{x_1}^2\mathcal{H}_{x_2}^1(\R^2)$ and $\Sigma_{x_1,x_2}(\R^2)$ respectively. 

Before stating the local well-posedness result, we  give some notations. We denote $X_1(t)$ and $X_2(t)$   by the following
\begin{align*}
X_1(t)=x_2\sin(t)-i\cos(t)\partial_{x_2},\quad X_2(t)=x_2\cos(t)+i\sin(t)\partial_{x_2}.
\end{align*}
For $f\in\mathcal{S}(\R^2)$, we have the following identity:
\begin{align}\label{transform}
|X_1(t)f(x_1,x_2)|^2+|X_2(t)f(x_1,x_2)|^2=|x_2f(x_1,x_2)|^2+|\partial_{x_2}f(x_1,x_2)|^2.
\end{align}
As a direct consequence, we have the following identity
\begin{align*}
\|u\|_{\Sigma_{x_1,x_2}(\R^2)}^2=\|u\|_{L_{x_1}^2\mathcal{H}_{x_2}^1(\R\times\R)}^2+\|X_1(t)u\|_{L_{x_1,x_2}^2(\R\times\R)}^2+\|X_2(t)u\|_{L_{x_1,x_2}^2(\R\times\R)}^2.
\end{align*}
Now we give the local well-posedness for \eqref{NLS-final}.
\begin{lemma}[Local well-posedness]\label{lwp}
For any $\eta>0$ and the initial data $u_0$ satisfying $\|u_0\|_{L_{x_1}^2\mathcal{H}_{x_2}^1(\R\times\R)}\leqslant\eta$, there exists a constant $\Lambda(\eta)$ such that
\begin{align*}
&\big\|e^{it(\Delta_{\R^2}-x_2^2)}u_0\big\|_{L_{t,x_1}^6L_{x_2}^2(I\times\R\times\R)}+\big\|X_1(t)e^{it(\Delta_{\R^2}-x_2^2)}u_0\big\|_{L_{t,x_1}^6L_{x_2}^2(I\times\R\times\R)}\\
&+\big\|X_2(t)e^{it(\Delta_{\R^2}-x_2^2)}u_0\big\|_{L_{t,x_1}^6L_{x_2}^2(I\times\R\times\R)}\leqslant\Lambda(\eta),
\end{align*}
where $I$ is the local time interval, then there exists a unique solution  $u\in C(I,L_{x_1}^2\mathcal{H}_{x_2}^1(\R\times\R))$ to \eqref{NLS-final} obeying
\begin{align*}
\|u\|_{L_{t,x_1}^6\mathcal{H}_{x_2}^{1 }(I\times\R\times\R)}\leqslant 2\|e^{it(\Delta_{\R^2}-x_2^2)}u_0\|_{L_{t,x_1}^6\mathcal{H}_{x_2}^{1}(I\times\R\times\R)}
\end{align*}
and
\begin{align*}
\|u\|_{L_t^\infty L_{x_1}^2\mathcal{H}_{x_2}^1(I\times\R\times\R)}\lesssim \|u_0\|_{L_{x_1}^2\mathcal{H}_{x_2}^1(\R\times\R)}.
\end{align*}
Furthermore, if $u_0\in\Sigma_{x_1,x_2}(\R\times\R)$, then the solution is global in $C(\R,\Sigma_{x_1,x_2}(\R\times\R))$.
\end{lemma}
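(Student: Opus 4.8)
The plan is to run a standard fixed-point argument in the Strichartz space adapted to the weaker scattering norm $L^6_{t,x_1}\mathcal{H}^1_{x_2}$, using Proposition~\ref{Strichartz-Antonelli} and the quintic nonlinear estimate stated above. Concretely, I set up the Duhamel map
\begin{align*}
\Phi(u)(t)=e^{it(\Delta_{\R^2}-x_2^2)}u_0-i\int_0^t e^{i(t-s)(\Delta_{\R^2}-x_2^2)}\big(|u|^4u\big)(s)\,ds
\end{align*}
on the ball $B=\{u:\|u\|_{L^6_{t,x_1}\mathcal{H}^1_{x_2}(I\times\R\times\R)}\le 2\|e^{it(\Delta_{\R^2}-x_2^2)}u_0\|_{L^6_{t,x_1}\mathcal{H}^1_{x_2}}\}$ equipped with the weaker metric $d(u,v)=\|u-v\|_{L^6_{t,x_1}L^2_{x_2}}$. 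The inhomogeneous Strichartz estimate \eqref{Strichartz-inhomo} with the admissible pair $(6,6)$ (note $\frac{2}{6}=\frac12-\frac16$), combined with \eqref{Stricharz-Sobolev}, controls $\|\Phi(u)\|_{L^6_{t,x_1}\mathcal{H}^1_{x_2}}$ by $\|e^{it(\Delta_{\R^2}-x_2^2)}u_0\|_{L^6_{t,x_1}\mathcal{H}^1_{x_2}}+C\big\|\,|u|^4u\,\big\|_{L^{6/5}_{t,x_1}\mathcal{H}^1_{x_2}}$, and the Nonlinear estimate (with $\varepsilon_0=0$) bounds the last term by $C\|u\|^5_{L^6_{t,x_1}\mathcal{H}^1_{x_2}}$. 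Choosing $\eta$ (hence $\Lambda(\eta)$) small enough makes $\Phi$ map $B$ into itself; the same estimates on differences, using the algebraic identity $|u|^4u-|v|^4v=O((|u|^4+|v|^4)|u-v|)$ and Hölder to peel off four factors in $L^6_{t,x_1}\mathcal{H}^1_{x_2}$ and one in $L^6_{t,x_1}L^2_{x_2}$ (here $\mathcal{H}^1_{x_2}\hookrightarrow L^2_{x_2}$ in the $x_2$ factor and the Moser estimate, Lemma~\ref{Moser-estimate}, handles the derivative distribution), give a contraction in the weaker metric. This yields the unique fixed point $u\in C(I,L^2_{x_1}\mathcal{H}^1_{x_2})$ with the two claimed bounds, the $L^\infty_t$ bound coming from applying \eqref{Strichartz-L^2}, \eqref{Strichartz-inhomo} at the $L^\infty_t L^2_{x_1}\mathcal{H}^1_{x_2}$ endpoint.

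For the $\Sigma_{x_1,x_2}$ part I would exploit the vector-field/commutator structure encoded in $X_1(t),X_2(t)$ and the identity \eqref{transform}. The key algebraic fact is that $X_1(t)$ and $X_2(t)$ commute with the linear propagator $e^{it(\Delta_{\R^2}-x_2^2)}$ (they are the Heisenberg-evolved position and momentum operators for the $x_2$-harmonic oscillator) and that $\partial_{x_1}$ trivially commutes. Applying $X_j(t)$ to the Duhamel formula and using that $X_j(t)$ acts on the quintic nonlinearity by a Leibniz-type rule — $X_j(t)(|u|^4u)$ is a sum of terms each of which is a product of four copies of $u$ (or $\bar u$) and one copy of $X_j(t)u$, up to harmless zeroth-order terms — one closes an a priori estimate for $\|X_j(t)u\|_{L^6_{t,x_1}L^2_{x_2}}$ and $\|X_j(t)u\|_{L^\infty_t L^2_{x_1,x_2}}$ in terms of the already-controlled $L^6_{t,x_1}\mathcal{H}^1_{x_2}$ norm of $u$ and the initial data, by the same Strichartz plus Hölder bookkeeping. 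Combined with the $L^2_{x_1}\mathcal{H}^1_{x_2}$ control and \eqref{transform}, this gives $u\in C(I,\Sigma_{x_1,x_2})$ on the local interval; the hypothesis of the lemma already supplies smallness of exactly the three linear quantities $\|e^{it(\cdot)}u_0\|_{L^6_{t,x_1}L^2_{x_2}}$, $\|X_1(t)e^{it(\cdot)}u_0\|_{L^6_{t,x_1}L^2_{x_2}}$, $\|X_2(t)e^{it(\cdot)}u_0\|_{L^6_{t,x_1}L^2_{x_2}}$, so the fixed point argument runs in the product space.

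Global existence in $\Sigma_{x_1,x_2}$ then follows from conservation laws and a continuity/bootstrap argument. Mass conservation gives a uniform bound on $\|u\|_{L^2_{x_1,x_2}}$, and energy conservation — recall $E(u)$ controls $\|\nabla u\|_{L^2}^2+\|x_2 u\|_{L^2}^2+\|u\|_{L^6}^6$ with the defocusing sign making all terms nonnegative — gives a uniform-in-time bound on $\|u\|_{\Sigma_{x_1,x_2}}$ (equivalently on $\|u\|_{L^2_{x_1}\mathcal{H}^1_{x_2}}$ by the norm equivalence, plus $\|X_j(0)u\|_{L^2}$ via \eqref{transform} at $t=0$). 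Since the local existence time depends only on the size of the $L^2_{x_1}\mathcal{H}^1_{x_2}$-norm (through $\eta$ and $\Lambda(\eta)$) and that norm stays bounded, the solution extends to all of $\R$; iterating and using the continuity of the $\Sigma$-norm in time yields $u\in C(\R,\Sigma_{x_1,x_2}(\R\times\R))$.

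The main obstacle I anticipate is the $\Sigma$-bound, specifically verifying that the operators $X_1(t),X_2(t)$ genuinely commute with $e^{it(\Delta_{\R^2}-x_2^2)}$ and interact with the nonlinearity only through a clean Leibniz rule producing no uncontrollable terms — this is the step where the $x_2$-harmonic-oscillator structure (as opposed to a flat waveguide) must be used carefully, since $X_j(t)$ are time-dependent and one must track the lower-order commutators $[X_j(t),|u|^4u]$ and check they reduce to products already estimated in $L^6_{t,x_1}\mathcal{H}^1_{x_2}$. The purely $L^2_{x_1}\mathcal{H}^1_{x_2}$ local theory, by contrast, is routine once the Strichartz and nonlinear estimates above are in hand.
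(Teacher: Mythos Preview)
Your proposal is correct and follows essentially the same approach as the paper: Duhamel map, Strichartz plus the quintic nonlinear estimate to close the fixed point in $L^6_{t,x_1}\mathcal{H}^1_{x_2}$, contraction in the weaker $L^6_{t,x_1}L^2_{x_2}$ metric, and propagation of the $X_j(t)$ norms via their commutation with the propagator and Leibniz action on $|u|^4u$. The only organizational difference is that the paper runs the contraction directly in the larger space that already includes the $X_j(t)$ norms (so the $\Sigma$ control is obtained simultaneously rather than as a separate persistence-of-regularity step), but the estimates are identical; your anticipated obstacle about $X_j(t)$ is not one --- the Leibniz rule holds cleanly and produces exactly one $X_j(t)u$ factor per term, which is what the paper's displayed inequality for $\|X_j(t)\Phi(u)\|$ uses.
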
 

\begin{proof}
We define the  map
\begin{align*}
\Phi(u)=e^{it(\Delta_{\R^2}-x_2^2)}u_0-i\int_{0}^{t}e^{i(t-s)(\Delta_{\R^2}-x_2^2)}(|u|^4u)(s)ds.
\end{align*}
Denote the metric space $X$ by
\begin{align*}
X:=\big\{u\in C(I,L_{x_1}^2\mathcal{H}_{x_2}^1(\R\times\R)):\|u\|_{L_t^\infty L_{x_1}^2\mathcal{H}_{x_2}^1(I\times\R\times\R)}\leqslant2\eta,\|u\|_{L_{t,x_1}^6\mathcal{H}_{x_2}^1(I\times\R\times\R)}\leqslant2C\Lambda(\eta)\big\}.
\end{align*}
By  the operator $X_1(t), X_2(t)$, we have the equivalent representation of $X$ as the following
\begin{align*}
X&=\big\{u\in C(I,L_{x_1}^2\mathcal{H}_{x_2}^1(\R\times\R)):\|u\|_{L_t^\infty L_{x_1}^2\mathcal{H}_{x_2}^1(I\times\R\times\R)}\leqslant2\eta,\|X_j(t)u\|_{L_t^\infty L_{x_1,x_2}^2(I\times\R\times\R)}\leqslant2\eta,\\
&\hspace{4ex}	\|u\|_{L_{t,x_1}^6L_{x_2}^2(I\times\R\times\R)}\leqslant2C\Lambda(\eta),\|X_j(t)u\|_{L_{t,x_1}^6L_{x_2}^2(I\times\R\times\R)}\leqslant2C\Lambda(\eta)\big\}.
\end{align*}
By the direct calculus, for any $u\in X$, we have 
\begin{align*}
& \ \left\|X_1(t) \Phi(u) \right\|_{L_t^\infty L_{x_1,x_2}^2(I\times\R\times\R)}+ \left \|X_2(t) \Phi(u) \right\|_{L_t^\infty L_{x_1,x_2}^2(I\times\R\times\R)}\\
\lesssim& \left\|\partial_{x_2} u_0 \right\|_{L_{x_1,x_2}^2(\R \times\R)} + \left\|x_2 u_0 \right\|_{L_{x_1,x_2}^2(\R\times\R)}\\
&+ \|u\|_{L_{t,x_1}^6 H_{x_2}^1(I\times\R\times\R)}^4 \left(  \left\|X_1(t) u\right\|_{L_{t,x_1}^6 L_{x_2}^2(I\times\R\times\R)}
+ \left\|X_2(t) u \right\|_{L_{t,x_1}^6 L_{x_1}^2(I\times\R\times\R)} \right) .
\end{align*}
Thus
\begin{align}\nonumber
&\left\|\Phi(u) \right\|_{L_t^\infty L_{x_1,x_2}^2(I\times\R\times\R)} + \left\|X_1(t) \Phi(u) \right\|_{L_t^\infty L_{x_1,x_2}^2(I\times\R\times\R)} + \left\|X_2(t) \Phi(u) \right\|_{L_t^\infty L_{x_1,x_2}^2(I\times\R\times\R)}\\\label{conclusion}
 \le& \eta + \left(2C\Lambda(\eta) \right)^5 \leq 2\eta.
\end{align}
Similarly, we can obtain
\begin{align}\nonumber
&\left\|\Phi(u) \right\|_{L_{t,x_1}^6L_{x_2}^2(I\times\R\times\R)} + \left \|X_1(t) \Phi(u) \right\|_{L_{t,x_1}^6 L_{x_2}^2(I\times\R\times\R)} + \left\|X_2(t) \Phi(u) \right\|_{L_{t,x_1}^6 L_{x_2}^2(I\times\R\times\R)}\\\label{conclusion2}
\le& 2\Lambda(\eta)+  \left(2C\Lambda(\eta) \right)^5 \le 2C \Lambda(\eta).
\end{align}
Similarly, the map is contraction in the metric space $X$. For any $u, v\in X$, invoking the Strichartz estimate, H\"older's inequality, and Sobolev embedding again, we have
\begin{align}\label{difference}
\left\|\Phi(u)- \Phi(v)\right\|_{L_{t,x_1}^6 L_{x_2}^2}& \lesssim \big\| |u|^4 u - |v|^4 v\big\|_{L_{t,x_1}^\frac65 L_{x_2}^2}\notag\\
& \lesssim \|u-v\|_{L_{t,x_1}^6 L_{x_2}^2} \left(\|u\|_{L_{t,x_1}^6 H_{x_2}^1}^4 + \|v\|_{L_{t,x_1}^6 H_{x_2}^1}^4\right)\\
& \lesssim  \left(2C\Lambda(\eta) \right)^4 \|u-v\|_{L_{t,x_1}^6 L_{x_2}^2}. \notag
\end{align}
	
Combining \eqref{conclusion}, \eqref{conclusion2}, and \eqref{difference}, we have for $\Lambda(\eta)$ small enough, $\Phi: X \to X $ is a contractive map. Therefore, the theorem follows from the fixed point theorem. Similarly, one can obtain the local well-posedness for $u_0\in\Sigma_{x_1,x_2}(\R^2)$ in a similar way.
\end{proof}

As a direct consequence of the above lemma, we have the scattering result for small data in $\Sigma_{x_1,x_2}(\R\times\R)$.

\begin{lemma}[Small data scattering in $\Sigma_{x_1,x_2}(\R\times\R)$]\label{small-data}
Suppose that $u_0\in \Sigma_{x_1,x_2}(\R\times\R)$ and there exists a small constant $\delta_0>0$ such that  $\|u_0\|_{L_{x_1}^2\mathcal{H}_{x_2}^{1}(\R\times\R)}\leq\delta_0$, then $\eqref{NLS-final}$ admits an unique global solution  $u(t,x_1,x_2)\in C_t^0\Sigma_{x_1,x_2}(\R\times\R\times\R)\cap L_{t,x_1}^6\mathcal{H}_{x_2}^1(\R\times\R\times\R)\cap L_t^6W_{x_1}^{1,6}L_{x_2}^2(\R\times\R\times\R)$. Moreover, $u$ scatters in $\Sigma_{x_1,x_2}(\R\times\R)$, i.e. there exist $u_\pm\in\Sigma_{x_1,x_2}(\R\times\R)$ such that
\begin{align*}
\big\|	u(t,x_1,x_2)-e^{it(\Delta_{\R^2}-x_2^2)}u_{\pm}(x_1,x_2)\big\|_{\Sigma_{x_1,x_2}(\R\times\R)}\to0 \mbox{ as } t\to\pm\infty.
\end{align*} 

\end{lemma}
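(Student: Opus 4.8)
The plan is to derive Lemma~\ref{small-data} as a routine consequence of the local well-posedness machinery of Lemma~\ref{lwp} together with the global-in-time Strichartz estimates of Proposition~\ref{Strichartz-Antonelli}; the point is simply that the smallness hypothesis $\|u_0\|_{L^2_{x_1}\mathcal H^1_{x_2}}\le\delta_0$ lets us run the contraction argument on the \emph{entire} time line $I=\R$ rather than on a small interval. First I would take $I=\R$ and observe that, by the homogeneous Strichartz estimate \eqref{Stricharz-Sobolev} with $\sigma=1$ and $(p,q)=(6,6)$, the quantity $\Lambda(\delta_0)$ in Lemma~\ref{lwp} satisfies $\Lambda(\delta_0)\lesssim\delta_0$ uniformly in time, because the Strichartz constant is global-in-time (this is exactly the feature of the partial confinement emphasized in the remark after Proposition~\ref{Strichartz-Antonelli}). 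Choosing $\delta_0$ small enough that $\Lambda(\delta_0)$ meets the smallness threshold required at the end of the proof of Lemma~\ref{lwp}, the fixed-point argument there produces a global solution $u\in C(\R,L^2_{x_1}\mathcal H^1_{x_2})$ with
\begin{align*}
\|u\|_{L^6_{t,x_1}\mathcal H^1_{x_2}(\R\times\R\times\R)}+\|u\|_{L^\infty_tL^2_{x_1}\mathcal H^1_{x_2}(\R\times\R\times\R)}\lesssim\delta_0,
\end{align*}
and, when $u_0\in\Sigma_{x_1,x_2}$, the same argument in the $X_1(t),X_2(t)$-weighted space (using identity \eqref{transform} to convert the $\Sigma$-norm into an $L^2$-bound on $X_1(t)u,X_2(t)u$) gives $u\in C(\R,\Sigma_{x_1,x_2})$ with the weighted Strichartz norms also controlled by $\delta_0$. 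The extra regularity statement $u\in L^6_tW^{1,6}_{x_1}L^2_{x_2}$ follows by differentiating the Duhamel formula in $x_1$: since $\partial_{x_1}$ commutes with $e^{it(\Delta_{\R^2}-x_2^2)}$, one applies the Strichartz estimate to $\langle\partial_{x_1}\rangle u$ and closes the estimate with the nonlinear estimate (the quintic term $|u|^4u$ is handled by the Leibniz rule, putting one factor in $L^6_tW^{1,6}_{x_1}L^2_{x_2}$ and the other four in $L^6_{t,x_1}\mathcal H^1_{x_2}$, all of which are already finite).

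Next I would prove scattering. Define $u_+:=u_0-i\int_0^\infty e^{-is(\Delta_{\R^2}-x_2^2)}(|u|^4u)(s)\,ds$, and similarly $u_-$ with the integral over $(-\infty,0]$; the claim is that this improper integral converges in $\Sigma_{x_1,x_2}$ and that $u(t)-e^{it(\Delta_{\R^2}-x_2^2)}u_+\to0$ in $\Sigma_{x_1,x_2}$ as $t\to+\infty$. Writing, by Duhamel, $e^{-it(\Delta_{\R^2}-x_2^2)}u(t)-u_+=i\int_t^\infty e^{-is(\Delta_{\R^2}-x_2^2)}(|u|^4u)(s)\,ds$, the unitarity of $e^{it(\Delta_{\R^2}-x_2^2)}$ on $L^2_{x_1}\mathcal H^1_{x_2}$ reduces the estimate to bounding the dual-Strichartz norm of $|u|^4u$ over $[t,\infty)$, namely
\begin{align*}
\Big\|u(t)-e^{it(\Delta_{\R^2}-x_2^2)}u_+\Big\|_{L^2_{x_1}\mathcal H^1_{x_2}}\lesssim\big\||u|^4u\big\|_{L^{6/5}_{t,x_1}\mathcal H^1_{x_2}([t,\infty)\times\R\times\R)}\lesssim\|u\|_{L^6_{t,x_1}\mathcal H^1_{x_2}([t,\infty)\times\R\times\R)}^5,
\end{align*}
using the nonlinear estimate (the $\varepsilon_0=0$ case of the Nonlinear estimate Lemma), and the right-hand side tends to $0$ as $t\to\infty$ by dominated convergence since the full $L^6_{t,x_1}\mathcal H^1_{x_2}(\R\times\R\times\R)$ norm of $u$ is finite. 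The same Cauchy-criterion argument with $[t,\infty)$ replaced by $[t_1,t_2]$ shows $e^{-it(\Delta_{\R^2}-x_2^2)}u(t)$ is Cauchy in $L^2_{x_1}\mathcal H^1_{x_2}$ as $t\to\infty$, so $u_+$ is well-defined there. For the weighted part, one notes that $X_1(t)$ and $X_2(t)$ also commute with $e^{it(\Delta_{\R^2}-x_2^2)}$ (they are, up to the free $x_1$-flow, the Heisenberg evolution of $x_2$ and $\partial_{x_2}$), so the identical computation applied to $X_j(t)u$ — whose global $L^6_{t,x_1}L^2_{x_2}$ norm is finite by the first part — gives the convergence in the $X_j$-weighted $L^2$ norms; combining with identity \eqref{transform} upgrades the convergence to the full $\Sigma_{x_1,x_2}$ norm. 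The argument for $t\to-\infty$ is symmetric.

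The only genuinely substantive point, and the one I would be most careful about, is verifying that the constant $\Lambda$ coming from Lemma~\ref{lwp} really is controlled by $\delta_0$ \emph{globally in time} and that it feeds into the smallness threshold of that lemma's contraction — this is precisely where the global-in-time nature of the Strichartz estimate \eqref{Strichartz-L^2}--\eqref{Stricharz-Sobolev} (inherited from the dispersive decay in the $x_1$-direction) is indispensable, and it is the feature that distinguishes this partially-confined problem from the fully-confined harmonic oscillator where only local-in-time Strichartz estimates hold. Everything else is a standard perturbative bookkeeping exercise: the commutation of $\partial_{x_1}$, $X_1(t)$, $X_2(t)$ with the linear propagator, the Leibniz rule for the quintic nonlinearity, and the dominated-convergence tail estimate. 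I would not belabor the routine Hölder/Sobolev manipulations, which are identical to those already displayed in the proof of Lemma~\ref{lwp}.
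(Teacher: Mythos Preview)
Your proposal is correct and follows precisely the approach the paper intends: the paper states this lemma as ``a direct consequence of the above lemma'' (i.e., Lemma~\ref{lwp}) without further proof, and your argument---running the contraction on $I=\R$ via the global-in-time Strichartz estimates, propagating the $X_j(t)$-weighted norms, and extracting scattering by the standard Duhamel-tail Cauchy criterion---is exactly the intended unpacking of that remark. The handling of the $L^6_tW^{1,6}_{x_1}L^2_{x_2}$ regularity and the $\Sigma$-convergence via the Heisenberg operators $X_1(t),X_2(t)$ matches the computations displayed later in the Scattering Criterion proof.
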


Next, we will give the stability theorem. 

\begin{theorem}[Stability theorem]\label{Stablity}
Let   $I$ be a compact interval and suppose that $\tilde u$ is the approximate solution to \eqref{NLS-final} satisfying $i\partial_t\tilde{u}+(\partial_{x_1}^2+\partial_{x_2}^2-x_2^2)\tilde{u}=|\tilde{u}|^4\tilde{u}+e$ for some function $e$. Assume that
\begin{align*}
\|\tilde{u}\|_{L_t^\infty L_{x_1}^2\mathcal{H}_{x_2}^{1}(I\times\R\times\R)}\leq M,\quad\|\tilde{u}\|_{L_{t,x_1}^6\mathcal{H}_{x_2}^{1}(I\times\R\times\R)}\leq L,
\end{align*}
where $L,M$ are two positive numbers. Let $t_1\in I$ and let $u(t_1)$ obey
\begin{equation}\label{stability-con1}
\left\|u(t_1)- \tilde{u}(t_1)\right\|_{ L_{x_1}^2 \mathcal{H}_{x_2}^{1 } (\R\times\R)} \le M^\prime.
\end{equation}
Furthermore, we assume the following small condition:
\begin{align}
\left\|e^{i(t-t_1)\left(\Delta_{\R^2}-x_2^2\right) } \left(u(t_1)-\tilde{u}(t_0) \right)\right\|_{L_{t,x_1}^6   \mathcal{H}_{x_2}^{1 }(I\times\R\times\R)   }  + \|e\|_{L_{t,x_1}^\frac65   \mathcal{ H}_{x_2}^{1}(I\times\R\times\R)  }  \le \epsilon, \label{stablity-con2}
\end{align}
for some $0<\varepsilon<\varepsilon_1$, where $\varepsilon_1=\varepsilon_1(M,M^\prime,L)$ is a small positive constant.  
Then, there exists a solution $u$ to \eqref{NLS} on $I\times \R  \times \R$ with an initial data $u(t_1)$ at time $t=t_1$ satisfying
\begin{align*}
&\  \left\|u-\tilde{u}\right\|_{L_{t,x_1}^6  \mathcal{H}_{x_2}^{1}}   \le C(M,M^\prime,L)\epsilon,\quad
\left\|u-\tilde{u}\right\|_{L_t^\infty L_{x_1}^2 \mathcal{H}_{x_2}^{ 1 }  }    \le C(M,M^\prime,L)M^\prime,   \\
& \text{ and }  \|u\|_{L_t^\infty L_{x_1}^2 \mathcal{H}_{x_2}^{ 1 }  \cap L_{t,x_1}^6   \mathcal{H}_{x_2}^{1}
}   \le C(M,M^\prime,L).
\end{align*}
\end{theorem}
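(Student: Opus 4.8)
The plan is to run the standard Kenig–Merle-type stability argument adapted to the weighted/Hermite-Sobolev setting, using the Strichartz estimates of Proposition~\ref{Strichartz-Antonelli} and the quintic nonlinear estimate together with its $X_1(t), X_2(t)$-variants. First I would reduce to the case where $\tilde u$ is defined on a short subinterval on which its Strichartz norm $\|\tilde u\|_{L_{t,x_1}^6\mathcal H_{x_2}^1}$ is small: partition $I$ into finitely many subintervals $I_1,\dots,I_J$ (with $J=J(L,\varepsilon)$) so that $\|\tilde u\|_{L_{t,x_1}^6\mathcal H_{x_2}^1(I_j\times\R\times\R)}\le\delta$ for a small $\delta=\delta(M,M')$ to be chosen. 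It suffices to prove the claim on each $I_j$ with the error blowing up at worst by a constant per step, and then iterate; the number of iterations depends only on $(M,M',L)$, so the final constants do too. This is exactly the device that converts a small-data perturbation lemma into a long-interval stability theorem.

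On a single subinterval $I_j$, set $w=u-\tilde u$, which solves
\begin{align*}
i\partial_t w+(\partial_{x_1}^2+\partial_{x_2}^2-x_2^2)w=|u|^4u-|\tilde u|^4\tilde u-e,\qquad w(t_j)=u(t_j)-\tilde u(t_j),
\end{align*}
so by Duhamel
\begin{align*}
w(t)=e^{i(t-t_j)(\Delta_{\R^2}-x_2^2)}w(t_j)-i\int_{t_j}^t e^{i(t-s)(\Delta_{\R^2}-x_2^2)}\big(|u|^4u-|\tilde u|^4\tilde u\big)(s)\,ds+i\int_{t_j}^t e^{i(t-s)(\Delta_{\R^2}-x_2^2)}e(s)\,ds.
\end{align*}
I would close this by a contraction/continuity argument in the norm $\|w\|_{S(I_j)}:=\|w\|_{L_{t,x_1}^6\mathcal H_{x_2}^1}+\|w\|_{L_t^\infty L_{x_1}^2\mathcal H_{x_2}^1}$. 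Applying Proposition~\ref{Strichartz-Antonelli} (both the homogeneous and inhomogeneous/dual Strichartz estimates in the $\mathcal H_{x_2}^\sigma$-valued form with $\sigma=1$) and the pointwise bound $\big||u|^4u-|\tilde u|^4\tilde u\big|\lesssim(|u|^4+|\tilde u|^4)|w|$ together with the Moser-type estimate (Lemma~\ref{Moser-estimate}) to distribute $\langle\partial_{x_2}\rangle$ and $H^{1/2}$ across the product, and using the quintic nonlinear estimate applied to $w,\tilde u$, one gets
\begin{align*}
\|w\|_{S(I_j)}\lesssim M'+\varepsilon+\big(\|w\|_{S(I_j)}^4+\delta^4\big)\|w\|_{S(I_j)},
\end{align*}
which, for $\delta$ small relative to the implicit constant and $\|w(t_j)\|\le C^{j}(M,M')$ suitably controlled, gives $\|w\|_{S(I_j)}\lesssim M'+\varepsilon$. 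One then needs to also propagate the hypothesis on the next interval: $\|w(t_{j+1})\|_{L_{x_1}^2\mathcal H_{x_2}^1}\lesssim M'$ and $\|e^{i(t-t_{j+1})(\Delta-x_2^2)}w(t_{j+1})\|_{L_{t,x_1}^6\mathcal H_{x_2}^1}\lesssim\varepsilon+(\text{something controlled})$, which follows from the Duhamel formula and the same estimates; this is where the constants accumulate geometrically in $j$, hence the final dependence $C(M,M',L)$.

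Finally, the weighted bounds on $X_1(t)w$ and $X_2(t)w$ are handled in parallel: since $X_1,X_2$ essentially commute with $e^{it(\partial_{x_2}^2-x_2^2)}$ and satisfy a Leibniz-type rule against the nonlinearity (as already used in the proof of Lemma~\ref{lwp}), running the same Duhamel/Strichartz scheme on $X_j(t)w$ — with $X_j(t)$ of the quintic term estimated by $\|\tilde u\|_{L_{t,x_1}^6H_{x_2}^1}^4\big(\|X_j(t)u\|_{L_{t,x_1}^6L_{x_2}^2}+\|X_j(t)\tilde u\|_{L_{t,x_1}^6L_{x_2}^2}\big)$ plus lower-order commutator contributions — yields the $\Sigma_{x_1,x_2}$-level bounds, and combined with \eqref{transform} this gives the stated control of $\|u-\tilde u\|_{L_t^\infty L_{x_1}^2\mathcal H_{x_2}^1}$ and of $\|u\|$ in the full norm. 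The main obstacle I anticipate is purely technical rather than conceptual: making the nonlinear difference estimate work at the $\mathcal H_{x_2}^1$ level requires the fractional Leibniz rule for the harmonic-oscillator operator $H^{1/2}$ acting on a quintic product of functions that are only in $L_{x_2}^2$ in one slot — this is precisely the role of the Moser estimate (Lemma~\ref{Moser-estimate}), but one must be careful that the non-algebra nature of the Hermite functions (flagged in the paper's introduction) does not obstruct distributing the derivative, and that the constant in the iteration genuinely depends only on $(M,M',L)$ and not on the profile itself.
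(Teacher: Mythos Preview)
Your proposal is correct and is precisely the standard partition-and-iterate perturbation argument the paper has in mind: the paper does not actually give a proof but simply declares the result standard and refers to Killip--Visan's notes and Appendix~A of \cite{Cheng-Guo-Guo-Liao-Shen}, which carry out exactly the scheme you describe (subdivision of $I$ into $J(L,\delta)$ pieces, Duhamel for $w=u-\tilde u$, the quintic nonlinear estimate in $L^{6/5}_{t,x_1}\mathcal H^1_{x_2}$, and geometric iteration). One small remark: your final paragraph on $X_1(t),X_2(t)$ is superfluous here, since the theorem as stated is posed and concluded purely in $L^2_{x_1}\mathcal H^1_{x_2}$ and $L^6_{t,x_1}\mathcal H^1_{x_2}$---no $\Sigma_{x_1,x_2}$ control is asserted, so you can close the loop entirely with the $\mathcal H^1_{x_2}$-level nonlinear estimate (the paper's unnumbered nonlinear-estimate lemma with $\varepsilon_0=0$) without ever invoking $X_1,X_2$.
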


The proof of the above stability result is standard and we refer the readers to Killip-Visan \cite{Killip-Visan-note} and Appendix A in \cite{Cheng-Guo-Guo-Liao-Shen} for the details. 

The following proposition gives the scattering criterion for solution to \eqref{NLS-final} under the boundedness of the weaker scattering norm. 
\begin{proposition}[Scattering Criterion]Suppose that $u(t,x_1,x_2)\in C_t^0\Sigma_{x_1,x_2}(\R\times\R\times\R)$ is a global solution to \eqref{NLS-final} obeying 
\begin{align*}
\|u\|_{L_{t,x_1}^6\mathcal{H}_{x_2}^{1-\varepsilon_0}(\R\times\R\times\R)}\leq L
\end{align*}
and the initial data $u_0$ satisfy $\|u_0(x_1,x_2)\|_{\Sigma_{x_1,x_2}(\R\times\R)}\leq M$ for some positive constants $M,L>0$ and $\varepsilon_0\in(0,\frac12)$. Then $u$ scatters in both time directions, i.e. there exist $u_\pm\in \Sigma_{x_1,x_2}(\R\times\R)$ such that
\begin{align*}
\Big\|u(t)-e^{it(\partial_{x_1  }^2+\partial_{x_2}^2-x_2^2)}u_\pm\Big \|_{\Sigma_{x_1,x_2}(\R\times\R)}\to0,\mbox{ as }t\to\pm\infty.
\end{align*}
\end{proposition}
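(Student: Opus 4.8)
The plan is to run a standard ``scattering criterion implies scattering'' argument, exploiting the fact proven in the earlier nonlinear estimate that the weaker norm $L^6_{t,x_1}\mathcal{H}^{1-\varepsilon_0}_{x_2}$ upgrades to the full-regularity scattering norm once we also track the operators $X_1(t),X_2(t)$. First I would split the time line $\R$ into finitely many intervals $I_1,\dots,I_J$ on each of which $\|u\|_{L^6_{t,x_1}\mathcal{H}^{1-\varepsilon_0}_{x_2}(I_j\times\R\times\R)}\le\delta$, where $\delta=\delta(M,L)$ is a small constant to be chosen; this is possible because the total norm is bounded by $L$. On each $I_j=[t_j,t_{j+1}]$ I treat $u$ as a solution of the integral equation with data $u(t_j)$, apply the Strichartz estimates of Proposition~\ref{Strichartz-Antonelli} together with the nonlinear estimate and the commutator identities for $X_1(t),X_2(t)$, and conclude by a bootstrap/continuity argument that
\begin{align*}
\|u\|_{L^6_{t,x_1}\mathcal{H}^{1}_{x_2}(I_j\times\R\times\R)}+\|X_1(t)u\|_{L^6_{t,x_1}L^2_{x_2}(I_j\times\R\times\R)}+\|X_2(t)u\|_{L^6_{t,x_1}L^2_{x_2}(I_j\times\R\times\R)}<\infty,
\end{align*}
with the bound controlled by $M$ and the size of $u(t_j)$ in $\Sigma_{x_1,x_2}$. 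Summing over the finitely many $j$ and using the conservation/boundedness of $\|u(t)\|_{\Sigma_{x_1,x_2}}$ (which follows from Lemma~\ref{lwp} and the mass/energy conservation of \eqref{NLS-final}) gives the global bound $\|u\|_{L^6_{t,x_1}\mathcal{H}^{1}_{x_2}(\R\times\R\times\R)}+\|X_1(t)u\|_{L^6_{t,x_1}L^2_{x_2}(\R\times\R\times\R)}+\|X_2(t)u\|_{L^6_{t,x_1}L^2_{x_2}(\R\times\R\times\R)}<\infty$.

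Once the full Strichartz norm is finite, scattering is routine. I would define $u_{+}=u_0-i\int_0^{\infty}e^{-is(\partial_{x_1}^2+\partial_{x_2}^2-x_2^2)}(|u|^4u)(s)\,ds$ (and symmetrically $u_-$), check via the Strichartz inhomogeneous estimate and the nonlinear estimate that this integral converges absolutely in $\Sigma_{x_1,x_2}$, and then estimate
\begin{align*}
\big\|u(t)-e^{it(\partial_{x_1}^2+\partial_{x_2}^2-x_2^2)}u_{+}\big\|_{\Sigma_{x_1,x_2}(\R\times\R)}\lesssim \big\|\,|u|^4u\,\big\|_{L^{6/5}_{t,x_1}\mathcal{H}^{1}_{x_2}((t,\infty)\times\R\times\R)}+(\text{$X_j$-analogues})\to 0
\end{align*}
as $t\to+\infty$, using that the right-hand side is the tail of a finite quantity. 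The $\Sigma_{x_1,x_2}$-norm is handled by the identity $\|u\|_{\Sigma_{x_1,x_2}}^2=\|u\|_{L^2_{x_1}\mathcal{H}^1_{x_2}}^2+\|X_1(t)u\|_{L^2_{x_1,x_2}}^2+\|X_2(t)u\|_{L^2_{x_1,x_2}}^2$ recorded before Lemma~\ref{lwp}, so each piece is controlled by the corresponding Strichartz quantity.

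The main obstacle, and the only nontrivial point, is the very first step: upgrading the regularity from $\mathcal{H}^{1-\varepsilon_0}_{x_2}$ to $\mathcal{H}^{1}_{x_2}$ (and simultaneously getting the $X_1(t),X_2(t)$ bounds) on a short interval. Here one cannot simply interpolate, since there is no a priori control of the $\mathcal{H}^{1}_{x_2}$ norm of $u$ on the interval; instead one runs a continuity argument in which the small quantity $\|u\|_{L^6_{t,x_1}\mathcal{H}^{1-\varepsilon_0}_{x_2}(I_j)}=\delta$ absorbs four of the five factors in the quintic nonlinearity (by H\"older with the weak norm in four slots and the strong norm in one), so that the Duhamel term in the strong norm is bounded by $\delta^4$ times itself plus the free-evolution data term; choosing $\delta$ small closes the estimate. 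The commutators $[X_j(t),\,i\partial_t+\partial_{x_1}^2+\partial_{x_2}^2-x_2^2]=0$ and the Leibniz-type action of $X_j(t)$ on $|u|^4u$ (which produces terms with one factor $X_j(t)u$ and four plain factors) mean the $X_j$-bounds are proved by exactly the same bootstrap, run in parallel. After this local step, everything downstream is the standard finite-partition-plus-Duhamel-tail machinery.
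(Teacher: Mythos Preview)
Your proposal is correct and follows essentially the same route as the paper: partition $\R$ into finitely many subintervals on which the weak norm $\|u\|_{L^6_{t,x_1}\mathcal{H}^{1-\varepsilon_0}_{x_2}}\le\delta$, run a bootstrap on each piece in which four copies of the weak norm absorb the quintic nonlinearity, track the $X_1(t),X_2(t)$ pieces in parallel, sum, and then invoke the standard Duhamel-tail argument for scattering. The only point worth noting is that the paper also explicitly includes $L^6_t W^{1,6}_{x_1}L^2_{x_2}$ (i.e.\ the $\partial_{x_1}$ regularity) in the bootstrap; since $\partial_{x_1}$ commutes with the linear propagator and acts by Leibniz on $|u|^4u$, it closes by exactly the same mechanism you describe for $X_j(t)$, so this is a trivial addition.
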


\begin{proof}
By the classical scattering theory as in \cite{Killip-Visan-note,Tao-book}, it remains to prove
\begin{align}\label{remain}
\|u\|_{L_{t,x_1}^6\mathcal{H}_{x_2}^{1}\cap L_t^6W_{x_1}^{1,6}L_{x_2}^2(\R\times\R\times\R)}\leqslant C(M,L).
\end{align}
To prove \eqref{remain}, we  take $N\sim(1+\frac{L}{\delta})^6$ large enough and then  divide $\R$ into $N$-pieces of sub-intervals $I_j=[t_j,t_{j+1}]$ such that
\begin{align}\label{scatteringnorm}
\|u\|_{L_{t,x_1}^6\mathcal{H}_{x_2}^{1-\varepsilon_0}(I_j\times\R\times\R)}\leq\delta,
\end{align}
where $\delta$ will be chosen latter.
	
For every interval $I_j$, using \eqref{transform}, Proposition \ref{Strichartz-Antonelli}, Sobolev embedding and \eqref{scatteringnorm}, we obtain
\begin{align*}
&\hspace{6ex}\|u\|_{L_{t,x_1}^6\mathcal{H}_{x_2}^{1}\cap L_t^6W_{x_1}^{1,6}L_{x_2}^2(I_j\times\R\times\R)}\\&\lesssim \|u(t_j)\|_{\Sigma_{x_1 , x_2}(\R\times\R)}+\big\||u|^4u\big\|_{L_{t,x_1}^\frac{6}{5}L_{x_2}^2(I_j\times\R\times\R)}+\big\|X_1(t)\big(|u|^4u\big)\big\|_{L_{t,x_1}^\frac65L_{x_2}^2(I_j\times\R\times\R)}\\
&\hspace{3ex}+\big\|X_2(t)\big(|u|^4u\big)\big\|_{L_{t,x_1}^\frac65L_{x_2}^2(I_j\times\R\times\R)}\\
&\lesssim\|u(t_j)\|_{\Sigma_{x_1 , x_2}(\R\times\R)}+\|u\|_{L_{t,x_1}^6H_{x_2}^{1-\varepsilon_0}(I_j\times\R\times\R)}^4\Big(\|u\|_{L_{t,x_1}^6L_{x_2}^2(I_j\times\R\times\R)}+\big\|X_1(t)u\big\|_{L_{t,x_1}^6L_{x_2}^2(I_j\times\R\times\R)}\\
		&\hspace{3ex}+\big\|X_2(t)u\big\|_{L_{t,x_1}^6L_{x_2}^2(I_j\times\R\times\R)}\Big)+\|u\|_{L_{t,x_1}^4H_{x_2}^{1-\varepsilon_0}(I_j\times\R\times\R)}^4\|u\|_{L_t^6W_{x_1}^{1,6}L_{x_2}^2(I_j\times\R\times\R)}\\
		&\lesssim\|u(t_j)\|_{\Sigma_{x_1 , x_2}(\R\times\R)}+\delta^4\|u\|_{L_{t,x_1}^6H_{x}^1\cap L_t^6W_{x_1}^{1,6}L_{x_2}^2(I_j\times\R\times\R)}.
	\end{align*}
	Then choosing $\delta\ll1$, we have by the bootstrap argument
	\begin{align*}
		\|u\|_{L_{t,x_1}^6\mathcal{H}_{x_2}^1\cap L_t^6W_{x_1}^{1,6}(I_j\times\R\times\R)}\lesssim \|u(t_j)\|_{\Sigma_{x_1,x_2}(\R\times\R)}.
	\end{align*}
	Then summing over all sub-interval $I_j$, we have the desired bound
	\begin{align*}
		\|u\|_{L_{t,x_1}^6\mathcal{H}_{x_2}^{1}\cap L_t^6W_{x_1}^{1,6}L_{x_2}^2(\R\times\R\times\R)}\leqslant C(M,L).
	\end{align*}
	Hence, we complete the proof.
\end{proof}
\begin{remark}\label{rem-global}
	We remark that by the combination of mass and conservation laws, one can obtain the global well-posedness for \eqref{NLS-final} for initial data $u_0\in\Sigma_{x_1 , x_2}(\R\times\R)$.
\end{remark}
\section{ Profile decomposition and  proof of Theorem \ref{Thm1}}\label{sec:profile}
In this section, we will present a linear profile decomposition associated to the linear Schr\"odinger flow $e^{it(\Delta_{\R^2}-x_{2}^2)}$ in  $L_{x_1}^2\mathcal{H}_{x_2}^1(\R\times\R)$ and $\Sigma_{x_1,x_2}(\R\times\R)$ respectively. The linear profile decomposition in $L^2(\R^2)$  plays an important role in treating the mass-critical  problem. The $L^2$ profile decomposition was first handled by Merle-Vega \cite{Merle-Vega} in dimension two and B\'egout-Vargas \cite{Begout-Vargas} for $d\geq1$. In Subsection \ref{linear}, we give the proof of linear profile decomposition for bounded sequences $\{f_n\}\subset L_{x_1}^{2}\mathcal{H}_{x_2}^1(\R\times\R)$. Then we can use this to give the analysis of nonlinear profiles. Using the concentration-compactness argument and assuming that Theorem \ref{Thm2} holds, we can prove the existence of the minimal blow-up solution to \eqref{NLS-final} if Theorem \ref{Thm1} fails. By the interaction Morawetz estimate, we preclude the possibility of such special solutions and thus complete the proof of Theorem \ref{Thm1}.
\subsection{Linear profile decomposition}\label{linear}

In this subsection, we will establish the  linear profile decomposition.

\begin{definition}[Symmetry group $G$] For position $x_0\in\R$, frequency $\xi_0\in\R$, and scaling parameter $\lambda>0$, we define the unitary transform $g_{x_0,\xi_0,\lambda}:L_{x_1}^2\mathcal{H}_{x_2}^1(\R\times\R)\to L_{x_1}^2\mathcal{H}_{x_2}^1(\R\times\R)$ as the following
\begin{align*}
g_{x_0,\xi_0,\lambda}f(x_1,x_2)=\frac{1}{\lambda^{\frac{1}{2}}}e^{ix_1\cdot\xi_0}f\left(\frac{x_1-x_0}{\lambda},x_2\right).
\end{align*}
Let $G$ be the collection of such transformations.
\end{definition}

We establish the following linear profile decomposition for the Schr\"odinger operator $e^{it\partial_{x_1}^2}:L_{x_1}^2\mathcal{H}_{x_2}^1(\R\times\R)\to L_{t,x_1}^6\mathcal{H}_{x_2}^{1-\varepsilon_0}(\R\times\R\times\R)$, which will be used in the study of (DCR) system.
\begin{theorem}[Linear Profile Decomposition I: $L_{x_1}^2\mathcal{H}_{x_2}^1(\R\times\R)$ ]\label{LinearProfile} Let $\{u_k\}_{k\geqslant1}$ be a bounded sequence in $L_{x_1}^2\mathcal{H}_{x_2}^1(\R\times\R)$. There exists $J^*\in\{0,1,\cdots \} \cup\{\infty\}$ such that for $1\leqslant J\leqslant J^*$, the bubble function $\phi^j\in L_{x_1}^2\mathcal{H}_{x_2}^1(\R\times\R)$, $1\leqslant j\leqslant J$ and $r_k^J\in L_{x_1}^2\mathcal{H}_{x_2}^1(\R\times\R)$, group elements $\{g_{k}^j\}_{j\geq1}\subset G$ and time parameters $\{t_k^j\}_{j\geq1}\subset\R$ and we have the following decomposition
\begin{align*}
u_k(x_1,x_2)&=\sum_{j=1}^{J}g_k^je^{it_k^j\partial_{x_1}^2}\phi^j+r_k^J(x_1,x_2)\\
&:=\sum_{j=1}^J\frac{1}{(\lambda_k^j)^{\frac{1}{2}}}e^{ix_1\cdot\xi_k^j}\left(e^{it_k^j\partial_{x_1}^2}\phi^j\right)\left(\frac{x_1-x_{1,k}^j}{\lambda_k^j},x_2\right)+r_k^J(x_1,x_2).
\end{align*}
We have the following orthogonal properties
\begin{gather}
\lim_{k\to\infty}\left(\|u_k\|_{L_{x_1}^2\mathcal{H}_{x_2}^1(\R\times\R)}^2-\sum_{j=1}^J\|\phi^j\|_{L_{x_1}^2\mathcal{H}_{x_2}^1(\R\times\R)}^2-\|r_k^J\|_{L_{x_1}^2\mathcal{H}_{x_2}^1(\R\times\R)}^2\right)=0,\\
\limsup_{k\to\infty}\left\|e^{it(\partial_{x_1}^2+\partial_{x_2}^2-x_2^2)}r_k^J\right\|_{L_{t,x_1,x_2}^6(\R\times\R\times\R)}\to0,\quad J\to J^*,\label{eq5.3}\\
(\lambda_k^j)^{\frac{d-1}{2}}e^{-it_{k}^j\partial_{x_1}^2}\left(e^{-i(\lambda_k^jx_1+x_{1,k}^j)\xi_k^j}r_k^J(\lambda_k^jx_1+x_{1,k}^j,x_2)\right)\rightharpoonup0 \mbox{ in }L_{x_1}^2\mathcal{H}_{x_2}^1, k\to\infty,j\leqslant J.
\end{gather}
The  frames $\{(\lambda_k^j,t_k^j,x_{1,k}^j,\xi_k^j)\}_{j\geq1}$ is asymptotically orthogonal in the sense of
\begin{align}\label{frame-orthogonal}
\frac{\lambda^j_k}{\lambda_k^{j'}} + \frac{\lambda_k^{j'}}{\lambda_k^j} + \lambda_k^j \lambda^{j'}_k \left|\xi_k^j - \xi_k^{j'}\right|^2 + \frac{ \left|x_{1,k}^j - x_{1,k}^{j'}-2t_{k}^j(\lambda_{k}^j)^2(\xi_k^j-\xi_{k}^{j^\prime})^2 \right|^2}{\lambda_k^j \lambda_k^{j'}}
+ \frac{ \left| \left(\lambda_k^j \right)^2 t_k^j - \left(\lambda_k^{j'} \right)^2 t_k^{j'} \right|}{\lambda_k^j \lambda_k^{j'}} \to \infty
\end{align}
where $j\neq j^\prime$ and $k\to\infty$.
\end{theorem}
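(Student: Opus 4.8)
The plan is to reduce the vector-valued profile decomposition in $L_{x_1}^2\mathcal{H}_{x_2}^1(\R\times\R)$ to the classical scalar $L^2(\R)$ profile decomposition for the one-dimensional Schr\"odinger operator $e^{it\partial_{x_1}^2}$ established by Merle-Vega and B\'egout-Vargas, applied \emph{fiberwise} in the harmonic-oscillator variable $x_2$. The key observation is that $e^{it\partial_{x_1}^2}$ commutes with the spectral projectors $\Pi_n$ acting on $x_2$, and that the space $L_{x_1}^2\mathcal{H}_{x_2}^1$ is precisely the $\ell^2$-sum over $n$ (with weights $(2n+1)$) of the Hilbert-space-valued $L^2_{x_1}$ norms. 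So the natural framework is the abstract profile decomposition for sequences bounded in $L^2_{x_1}(\R; \mathcal{H})$, where $\mathcal{H} = \mathcal{H}^1_{x_2}(\R)$ is a fixed separable Hilbert space; the only structural input one needs is the inverse Strichartz inequality in this Hilbert-valued setting, which follows by applying the scalar refined Strichartz/bilinear estimate (plus the improved Fefferman-Stein / bilinear restriction estimates underlying Begout-Vargas) componentwise and summing in $n$.

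First I would set up the iteration exactly as in the standard concentration-compactness scheme. Starting from the bounded sequence $\{u_k\}$, one extracts the first profile by a Hilbert-valued inverse Strichartz inequality: if $\limsup_k \|e^{it(\partial_{x_1}^2+\partial_{x_2}^2-x_2^2)}u_k\|_{L^6_{t,x_1,x_2}}>0$, then there exist a frame $(\lambda_k^1, t_k^1, x_{1,k}^1, \xi_k^1)$ and a nonzero weak limit $\phi^1 \in L_{x_1}^2\mathcal{H}_{x_2}^1$ such that, after applying the adjoint symmetry and the Schr\"odinger propagator, $u_k$ converges weakly to $\phi^1$ along this frame and moreover the profile carries a fixed fraction of the Strichartz norm. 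Here the role of the propagator $e^{it(\partial_{x_2}^2-x_2^2)}$ is only cosmetic on the $x_2$ variable — it is an isometry on $\mathcal{H}^1_{x_2}$ — so the frame parameters are genuinely one-dimensional, living only in the $(t,x_1,\xi)$ directions, exactly as the statement asserts. Then one sets $r_k^1 = u_k - g_k^1 e^{it_k^1\partial_{x_1}^2}\phi^1$, checks the Pythagorean decoupling of the $L_{x_1}^2\mathcal{H}_{x_2}^1$ norm by the weak convergence, and iterates. The index $J^*$ is the supremum of the number of profiles extractable before the Strichartz norm of the remainder goes to zero; finiteness of the mass forces the energies $\|\phi^j\|^2$ to be summable, so $\|\phi^j\| \to 0$, which together with a quantitative inverse-Strichartz bound gives \eqref{eq5.3}.

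The asymptotic orthogonality of the frames \eqref{frame-orthogonal} I would prove by the usual contradiction argument: if two frames $j \neq j'$ failed to diverge in \emph{every} term of the displayed sum, one could pass to a subsequence along which all the frame parameters converge (after normalization), and then the weak-convergence defining relations for $\phi^j$ and $\phi^{j'}$ would become mutually incompatible — one would deduce that the $j'$-th profile also contributes to the $j$-th weak limit, forcing it to vanish. The precise form of the cross term $|x_{1,k}^j - x_{1,k}^{j'} - 2 t_k^j (\lambda_k^j)^2 (\xi_k^j - \xi_k^{j'})^2|^2 / (\lambda_k^j \lambda_k^{j'})$ is dictated by the Galilei boost: conjugating $e^{it\partial_{x_1}^2}$ by $e^{ix_1\xi_0}$ shifts the spatial center by $2t\xi_0$, so this is just the scalar one-dimensional orthogonality relation (as in Carles-Keraani / B\'egout-Vargas) lifted verbatim. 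None of this step sees the $x_2$ variable.

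The main obstacle I expect is the \textbf{Hilbert-valued inverse Strichartz inequality} itself — i.e.\ establishing that a nontrivial $L^6_{t,x_1,x_2}$ Strichartz norm forces a bubble along a one-dimensional frame. The scalar $1$D result rests on the refined bilinear restriction estimate for the parabola (Moyua-Vargas-Vega, Tao) and improved Strichartz inequalities; the honest way to vectorize it is either (a) to run the dyadic Whitney decomposition of the bilinear interaction directly on $\|u\|_{\mathcal{H}^1_{x_2}}$-valued functions, using that $\langle \partial_{x_2}\rangle$ and the Hermite weight commute with all $x_1$-frequency operations, or (b) to first project onto $\Pi_n$, extract a scalar profile for some $n$ carrying a fixed fraction of the norm, and then upgrade the convergence to the full $\ell^2_n$-sum by an approximation argument together with the decoupling of the tail. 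Option (b) is cleaner but requires care in showing that the \emph{same} frame works for all $n$ — this is where one leans on the fact that the remainder's Strichartz norm being small in $L^6_{t,x_1}L^2_{x_2}$ is controlled by its norm in $L^6_{t,x_1}\mathcal{H}^{1-\varepsilon_0}_{x_2}$ via the interpolation/Bernstein inequalities of Section~\ref{sec:pre}. Modulo this vectorization, every remaining step is a routine transcription of the classical $L^2$ argument, and I would present it as such, citing Merle-Vega and B\'egout-Vargas for the scalar backbone.
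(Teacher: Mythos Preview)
Your proposal is correct and follows essentially the same route as the paper: both prove a Hilbert-valued inverse Strichartz inequality (your option (a)) and then iterate \`a la Killip--Visan. The paper establishes the refined Strichartz estimate directly in the $L^6_{t,x_1,x_2}$ norm with $L_{x_1}^2\mathcal{H}_{x_2}^1$ on the right (Proposition~\ref{refined-Strichartz}), uses it to localize to a dyadic cube in $\xi_1$, and reads off the one-dimensional frame $(\lambda_k,t_k,x_{1,k},\xi_k)$ together with a concentration point $(x_2)_k$ from an $L^\infty_{t,x_1,x_2}$ bound; the nontriviality $\|\phi\|_{L_{x_1}^2\mathcal{H}_{x_2}^1}\gtrsim \epsilon^6 A^{-5}$ is obtained by pairing against $h(x_1)\delta_0(x_2)$, exploiting Lemma~\ref{Dirac} that $\delta_0\in\mathcal{H}^{-1}_{x_2}$. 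This Dirac-pairing is the one concrete technical device you would need to supply for option (a) --- it replaces the $L^\infty_{x_2}$ evaluation that doesn't make sense in $\mathcal{H}^1_{x_2}$ without further work. Your option (b) is not pursued in the paper, and your own caveat about frame-consistency across the Hermite modes is the right one: there is no obvious mechanism forcing a scalar profile extracted at a single $n$ to align with profiles at other $n$'s, so (a) is the safe route.
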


We also need the linear profile decomposition for bounded sequences in weighted Sobolev spaces, which is crucial in the proof of Theorem \ref{Thm1}.
\begin{theorem}[Linear Profile Decomposition II: $\Sigma_{x_1,x_2}(\R^2)$]\label{linearprofile2}Let ${u_k}$ be a bounded sequence in $\Sigma_{x_1,x_2}(\R^2)$ and $\{\lambda_k^j,t_k^j,\xi_k^j,x_{1,k}^j\}$ be the same as in Theorem \ref{LinearProfile}. In addition, we take $\lambda_k^j\to1$ or $\infty$ as $k\to\infty$ and $|\xi_k^j|\leqslant C_j$ for every $1\leq j\leq J$. Now we have the following linear profile decomposition
\begin{align*}
u_k(x_1,x_2)=\sum_{j=1}^J\phi_k^j(x_1,x_2)+r_k^J(x_1,x_2),
\end{align*}
where 
\begin{align*}
\phi_k^j(x_1,x_2)=\frac{1}{(\lambda_k^j)^\frac{1}{2}}e^{ix_1\cdot\xi_k^j}\Big(e^{it_k^j\partial_{x_1}^2}P_k^j\phi^j\Big)\left(\frac{x_1-x_{1,k}^j}{\lambda_k^j},x_2\right)
\end{align*}
and
\begin{align*}
P_k^j\phi^j(x)=\begin{cases}
\phi^j(x_1,x_2),&\mbox{ if }\lim\limits_{k\to\infty}\lambda_k^j=1,\\
P_{\leqslant(\lambda_k^j)^\theta}^{x_1}\phi^j(x_1,x_2),&\mbox{ if }\lim\limits_{k\to\infty}\lambda_k^j=\infty,
\end{cases}
\end{align*}
where $\theta\ll1$ is a fixed small number. Finally, we have the decoupling of energy and mass
\begin{gather*}
\lim_{k\to\infty}\left(E(u_k)-\sum_{j=1}^JE(\phi_k^j)-E(r_k^J)\right)=0,\\
\lim_{k\to\infty}\left(M(u_k)-\sum_{j=1}^JM(\phi_k^j)-M(r_k^J)\right)=0.\\
\end{gather*} 
\end{theorem}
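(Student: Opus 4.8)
The plan is to deduce this decomposition from Theorem \ref{LinearProfile} by exploiting the extra coercivity of the $\Sigma_{x_1,x_2}$–norm, which (from the definition of $\Sigma_{x_1,x_2}$ together with the description of the $\mathcal{H}^1$–norm in Section \ref{sec:pre}) satisfies $\|f\|_{\Sigma_{x_1,x_2}}^2\sim\|f\|_{L_{x_1}^2\mathcal{H}_{x_2}^1}^2+\|\partial_{x_1}f\|_{L_{x_1,x_2}^2}^2$. Since $\Sigma_{x_1,x_2}(\R^2)\hookrightarrow L_{x_1}^2\mathcal{H}_{x_2}^1(\R\times\R)$, the bounded sequence $\{u_k\}$ is covered by Theorem \ref{LinearProfile}: it yields profiles $\phi^j$, frames $(\lambda_k^j,t_k^j,x_{1,k}^j,\xi_k^j)$ obeying \eqref{frame-orthogonal}, and remainders $r_k^J$ with the $L^6$ smallness and the weak–convergence property. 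Passing to a subsequence, for each $j$ we may assume $\lambda_k^j\to\lambda_\infty^j\in[0,\infty]$, $\xi_k^j\to\xi_\infty^j\in[-\infty,\infty]$, and that $t_k^j$ either converges or tends to $\pm\infty$. The first task is to show that the extra bound $\|\partial_{x_1}u_k\|_{L^2}\lesssim1$ forces the normalizations $\lambda_k^j\to1$ or $\infty$ and $|\xi_k^j|\le C_j$ claimed in the statement.

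A direct computation gives, with $g_k^{j,-1}f(y_1,x_2)=(\lambda_k^j)^{1/2}e^{-i(\lambda_k^jy_1+x_{1,k}^j)\xi_k^j}f(\lambda_k^jy_1+x_{1,k}^j,x_2)$, the identity $\|\partial_{y_1}(g_k^{j,-1}u_k)\|_{L^2}^2=(\lambda_k^j)^2\|(\partial_{x_1}-i\xi_k^j)u_k\|_{L^2}^2$. If $|\xi_k^j|\to\infty$ for some $j$, then localizing $u_k$ to the $x_1$–Fourier window $\{|\xi-\xi_k^j|\leqslant C\}$ and using \eqref{frame-orthogonal} to decouple this window from the other bubbles gives $\int_{|\xi-\xi_k^j|\leqslant C}\|\mathcal{F}_{x_1}u_k(\xi,\cdot)\|_{L_{x_2}^2}^2\,d\xi\gtrsim\|\phi^j\|_{L^2}^2+o(1)$, hence $\|\partial_{x_1}u_k\|_{L^2}^2\gtrsim(|\xi_k^j|-C)^2\|\phi^j\|_{L^2}^2\to\infty$, a contradiction; so $|\xi_k^j|\leqslant C_j$. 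With $\xi_k^j$ bounded, the identity gives $\|\partial_{y_1}(g_k^{j,-1}u_k)\|_{L^2}^2\lesssim(\lambda_k^j)^2$; if $\lambda_k^j\to0$ this tends to $0$ while $g_k^{j,-1}u_k$ (composed with the appropriate free flow when $t_k^j\to\pm\infty$) converges weakly in $L_{x_1}^2\mathcal{H}_{x_2}^1$ to $e^{it_\infty^j\partial_{x_1}^2}\phi^j$, which then has vanishing $\partial_{x_1}$–derivative, forcing $\phi^j\equiv0$ — impossible. Therefore $\lambda_\infty^j\in(0,\infty]$, and after rescaling $\phi^j$ we get $\lambda_k^j\to1$ or $\lambda_k^j\to\infty$. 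In the first case the same identity makes $g_k^{j,-1}u_k$ bounded in $\dot{H}_{x_1}^1L_{x_2}^2$, so $\phi^j\in\Sigma_{x_1,x_2}$ and no truncation is needed.

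In the case $\lambda_k^j\to\infty$ the truncation $P_k^j=P_{\leqslant(\lambda_k^j)^\theta}^{x_1}$ is inserted precisely so that $\phi_k^j\in\Sigma_{x_1,x_2}$: by Bernstein's inequality \eqref{Bern-3}, $\|\partial_{x_1}\phi_k^j\|_{L^2}^2\lesssim|\xi_k^j|^2\|\phi^j\|_{L^2}^2+(\lambda_k^j)^{2\theta-2}\|\phi^j\|_{L^2}^2\lesssim1$, while the $\partial_{x_2}$ and $x_2$ pieces of $\phi_k^j$ are controlled by $\|\phi^j\|_{L_{x_1}^2\mathcal{H}_{x_2}^1}$ since $P_{\leqslant N}^{x_1}$ and $e^{it_k^j\partial_{x_1}^2}$ commute with $\partial_{x_2}$ and with multiplication by $x_2$. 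Because $\|P_{>N}^{x_1}\phi^j\|_{L_{x_1}^2\mathcal{H}_{x_2}^1}\to0$ as $N\to\infty$, replacing $\phi^j$ by $P_k^j\phi^j$ changes the remainder only by an $o_k(1)$ term in $L_{x_1}^2\mathcal{H}_{x_2}^1$, so the new $r_k^J$ still satisfies $\|e^{it(\partial_{x_1}^2+\partial_{x_2}^2-x_2^2)}r_k^J\|_{L_{t,x_1,x_2}^6}\to0$ (as $k\to\infty$, $J\to J^*$), the de–framed remainder still converges weakly to $0$, and the $L_{x_1}^2\mathcal{H}_{x_2}^1$–orthogonality of Theorem \ref{LinearProfile} persists with $\phi^j$ replaced by $\phi_k^j$. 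The mass decoupling is then immediate from the $L^2$–part of that orthogonality, using that $g_k^j$, $e^{it_k^j\partial_{x_1}^2}$ and $P_k^j$ are asymptotically $L^2$–isometries.

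For the energy, write $E=Q+\frac16\|\cdot\|_{L^6}^6$ with $Q(f)=\frac12\|\partial_{x_1}f\|_{L^2}^2+\frac12\|f\|_{L_{x_1}^2\mathcal{H}_{x_2}^1}^2$; the second term of $Q$ decouples directly via Theorem \ref{LinearProfile}, and the sextic term by a Br\'ezis--Lieb/Keraani–type argument based on \eqref{frame-orthogonal} (the large–scale bubbles have $\|\phi_k^j\|_{L^6_{x_1,x_2}}^6\to0$, and those with $t_k^j\to\pm\infty$ likewise have vanishing fixed–time $L^6$–norm by dispersion, so only the finitely many stationary unit–scale bubbles carry sextic energy and these decouple by the usual translation/Galilean/scaling orthogonality). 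The remaining and, I expect, most delicate point is the decoupling of the $\dot{H}_{x_1}^1$–part of the energy, namely $\|\partial_{x_1}u_k\|_{L^2}^2=\sum_j\|\partial_{x_1}\phi_k^j\|_{L^2}^2+\|\partial_{x_1}r_k^J\|_{L^2}^2+o(1)$: one has to upgrade the $L^2$–type orthogonality of Theorem \ref{LinearProfile} to the $\dot{H}_{x_1}^1$ functional, checking that for $j\neq j'$ the cross terms $\langle\partial_{x_1}\phi_k^j,\partial_{x_1}\phi_k^{j'}\rangle$ and $\langle\partial_{x_1}\phi_k^j,\partial_{x_1}r_k^J\rangle$ vanish. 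This is exactly where the normalizations $\lambda_k^j\to1,\infty$ and $|\xi_k^j|\leqslant C_j$ are used — they guarantee that distinct bubbles occupy asymptotically disjoint regions in $x_1$–frequency (via the $\lambda_k^j\lambda_k^{j'}|\xi_k^j-\xi_k^{j'}|^2$ term) and in physical $x_1$–space (via the remaining terms of \eqref{frame-orthogonal}), and it is the step that vanishing scales would destroy; one must also verify that the low–frequency truncation $P_k^j$ does not spoil it, which is why $\theta$ is taken $\ll1$. Combining this $\dot{H}_{x_1}^1$–decoupling with the mass, $L_{x_1}^2\mathcal{H}_{x_2}^1$ and sextic decouplings yields $\lim_k(E(u_k)-\sum_jE(\phi_k^j)-E(r_k^J))=0$ and completes the proof.
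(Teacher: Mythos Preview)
Your route differs from the paper's. The paper does not first run Theorem \ref{LinearProfile} and then constrain the parameters a posteriori; instead it builds the constraints into the \emph{inverse Strichartz} step (the second half of Proposition \ref{inverse-strichartz}). For a $\Sigma_{x_1,x_2}$--bounded sequence one has $\|P^{x_1}_{\ge R}f_k\|_{L^2_{x_1}\mathcal H^{1-\epsilon_0}_{x_2}}\lesssim R^{-\epsilon_0}$, so one may replace $f_k$ by $P^{x_1}_{\le R}f_k$ without spoiling the $L^6$ lower bound; the refined--Strichartz cube $Q_k$ then sits inside $\{|\xi|\le R\}$, which forces $\lambda_k\gtrsim R^{-1}$ and $|\xi_k|\lesssim R$ at each extraction. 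The truncation $P^{x_1}_{\le\lambda_k^\theta}$ and the $\Sigma$--norm decoupling \eqref{eq3.1348} are likewise packaged into that proposition, and iterating it \`a la Killip--Visan yields Theorem \ref{linearprofile2}. Your a posteriori approach is legitimate and arguably more transparent about why small scales are ruled out, but it requires more care than you have given it.

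There is a genuine gap in your parameter argument: the two steps are circular. Your proof that $|\xi_k^j|$ stays bounded localizes to the window $\{|\xi-\xi_k^j|\le C\}$ of \emph{fixed} width, and this captures the mass of the $j$-th bubble only if its Fourier width $1/\lambda_k^j$ is $\lesssim C$, i.e.\ only once $\lambda_k^j$ is already bounded below. Conversely, your proof that $\lambda_k^j\not\to 0$ uses $\|\partial_{y_1}(g_k^{j,-1}u_k)\|_{L^2}^2=(\lambda_k^j)^2\|(\partial_{x_1}-i\xi_k^j)u_k\|_{L^2}^2\lesssim(\lambda_k^j)^2$, which needs $|\xi_k^j|$ bounded. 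A clean fix is to argue via $\lambda_k^j|\xi_k^j|$: test the weak limit against $g_k^j\psi$ with $\psi=P^{y_1}_{\le N}\phi^j$, so that $g_k^j\psi$ has $x_1$--Fourier support in $\{|\xi-\xi_k^j|\le N/\lambda_k^j\}$. If $\lambda_k^j|\xi_k^j|\to\infty$ this window eventually lies in $\{|\xi|\ge|\xi_k^j|/2\}$, whence
\[
|\langle\psi,\phi^j\rangle|\leftarrow|\langle g_k^j\psi,u_k\rangle|\le\|\psi\|_{L^2}\,\|P^{x_1}_{\ge|\xi_k^j|/2}u_k\|_{L^2}\lesssim|\xi_k^j|^{-1}\|\partial_{x_1}u_k\|_{L^2}\to 0,
\]
a contradiction. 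If instead $\lambda_k^j|\xi_k^j|$ stays bounded while $\lambda_k^j\to 0$, a Galilean shift of $\phi^j$ reduces to $\lambda_k^j\xi_k^j\to 0$, and then your identity does give $\|\partial_{y_1}(e^{-it_k^j\partial_{y_1}^2}g_k^{j,-1}u_k)\|_{L^2}\to 0$, forcing $\phi^j\equiv 0$. With $\lambda_k^j$ thus bounded below, the same window argument (now with fixed width) gives $|\xi_k^j|\le C_j$. The remainder of your sketch (truncation $P^{x_1}_{\le(\lambda_k^j)^\theta}$, $\dot H^1_{x_1}$ and sextic decoupling) is at the same level of detail as the paper, which also defers these points to the standard references.
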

Before giving the two type of linear profile decomposition, we need to establish the refined Strichartz estimates. By using the bilinear restriction estimate \cite{Tao-bilinear} and Whitney-type decomposition, we have the following refined Strichartz estimates.

\begin{proposition}[Refined Strichartz inequality]\label{refined-Strichartz}We have 
\begin{align*}
\big\|e^{it\partial_{x_1}^2}f\big\|_{L_{t,x_1,x_2}^{6}(\R\times\R\times\R)}\lesssim\|f\|_{L_{x_1}^2\mathcal{H}_{x_2}^1}^\frac{2}{3}\left(\sup_{Q\in\mathcal{D}}|Q|^{-\frac{1}{5}}	\big\|e^{it\partial_{x_1}^2}f_Q\big\|_{L_{t,x_1,x_2}^{10}}\right)^\frac{1}{3},
\end{align*}
where $f_Q(x)=\mathcal{F}^{-1}\big\{\chi_Q\widehat{f}(\xi)\big\}(x)$, and  $\mathcal{D}=\bigcup\limits_{j\in\Z}\mathcal{D}_j$ with $\mathcal{D}_j$ being the set of all dyadic intervals with the side-length of $2^j$ in $\R$,
\begin{align*}
\mathcal{D}_j=\left\{\prod_{m=1}^{d-1}[2^jk_m,2^j(k_m+1))\in\R: k=(k_1,\cdots,k_{d-1})\in\Z^{d-1}\right\}.
\end{align*}

\end{proposition}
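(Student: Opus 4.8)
The plan is to adapt the classical refinement of the Strichartz inequality (Bourgain; Moyua--Vargas--Vega; B\'egout--Vargas \cite{Begout-Vargas}) to the present product setting, treating $x_2$ as an inert parameter: since $e^{it\partial_{x_1}^2}$ commutes with every operation in $x_2$ and $\mathcal{H}^1(\R_{x_2})\subset H^1(\R_{x_2})\hookrightarrow L^r(\R_{x_2})$ for $2\le r\le\infty$, the harmonic-oscillator regularity in $x_2$ will enter only through elementary vector-valued Sobolev embeddings. First I would write $\|e^{it\partial_{x_1}^2}f\|_{L^6_{t,x_1,x_2}}^2=\big\||e^{it\partial_{x_1}^2}f|^2\big\|_{L^3_{t,x_1,x_2}}$, dyadically decompose $f=\sum_{Q\in\mathcal{D}}f_Q$ in the $x_1$-frequency variable, and perform a Whitney decomposition of the resulting bilinear sum, $|e^{it\partial_{x_1}^2}f|^2=\sum_{j\in\Z}\sum_{(Q,Q')\in\mathcal{W}_j}(e^{it\partial_{x_1}^2}f_Q)\overline{e^{it\partial_{x_1}^2}f_{Q'}}$, where $\mathcal{W}_j$ collects the pairs of dyadic $x_1$-frequency intervals of common length $2^j$ at mutual distance comparable to $2^j$ (the near-diagonal pairs are harmless and absorbed by the plain $L^6$ Strichartz bound together with the triangle inequality).

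Next, for a single Whitney pair I would estimate $F_{Q,Q'}:=(e^{it\partial_{x_1}^2}f_Q)\overline{e^{it\partial_{x_1}^2}f_{Q'}}$ in $L^3_{t,x_1,x_2}$ by interpolating between two ingredients. On one side, Tao's bilinear restriction estimate \cite{Tao-bilinear} for the well-separated pieces $Q,Q'$ (together with its interpolation against the trivial $L^\infty$ bound), applied for each fixed $x_2$ and then assembled over $x_2$ by H\"older, the $x_1$-Bernstein inequality on $Q$, and the Sobolev embedding $\mathcal{H}^1_{x_2}\hookrightarrow L^r_{x_2}$, produces a bound of the shape $\|F_{Q,Q'}\|_{L^{5/2}_{t,x_1,x_2}}\lesssim |Q|^{-1/5}\|f_Q\|_{L^2_{x_1}\mathcal{H}^1_{x_2}}\|f_{Q'}\|_{L^2_{x_1}\mathcal{H}^1_{x_2}}$. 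On the other side, the trivial H\"older bound $\|F_{Q,Q'}\|_{L^5_{t,x_1,x_2}}\le\|e^{it\partial_{x_1}^2}f_Q\|_{L^{10}_{t,x_1,x_2}}\|e^{it\partial_{x_1}^2}f_{Q'}\|_{L^{10}_{t,x_1,x_2}}$ injects the supremum $A:=\sup_{Q}|Q|^{-1/5}\|e^{it\partial_{x_1}^2}f_Q\|_{L^{10}_{t,x_1,x_2}}$. Since $\tfrac13=\tfrac{2}{3}\cdot\tfrac{2}{5}+\tfrac{1}{3}\cdot\tfrac{1}{5}$ and $\|e^{it\partial_{x_1}^2}f_Q\|_{L^{10}}\le|Q|^{1/5}A$, interpolation yields $\|F_{Q,Q'}\|_{L^3_{t,x_1,x_2}}\lesssim \big(\|f_Q\|_{L^2_{x_1}\mathcal{H}^1_{x_2}}\|f_{Q'}\|_{L^2_{x_1}\mathcal{H}^1_{x_2}}\big)^{2/3}\big(b_Qb_{Q'}\big)^{1/3}$, where $b_Q:=|Q|^{-1/5}\|e^{it\partial_{x_1}^2}f_Q\|_{L^{10}_{t,x_1,x_2}}\le A$, all powers of $|Q|$ having cancelled.

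It then remains to sum over the Whitney pairs and the scales, which is the heart of the matter. The space-time frequency supports of the pieces $F_{Q,Q'}$ lie in dyadic parabolic annuli $\{|\tau+\tfrac{\xi^2}{2}|\sim 2^{2j}\}$, one per scale $j$, and within each annulus they are essentially disjoint after the affine normalization making them unit cubes; using the almost-orthogonality lemma for cubes with boundedly overlapping dilates one sums within a scale, while the Littlewood--Paley-type separation between the annuli is used to sum over $j$. Combining this with the per-pair bound above, the fact that each dyadic interval has $O(1)$ Whitney partners, AM--GM, and the identity $\sum_{|Q|=2^j}\|f_Q\|_{L^2_{x_1}\mathcal{H}^1_{x_2}}^2=\|f\|_{L^2_{x_1}\mathcal{H}^1_{x_2}}^2$, one is reduced to controlling $\sum_{j}\big(\sum_{|Q|=2^j}\|f_Q\|_{L^2_{x_1}\mathcal{H}^1_{x_2}}^2\,b_Q\big)$-type quantities; the convergence over scales is then extracted from the $|Q|^{-1/5}$ gain at large frequencies and the $L^2$-smallness of $\widehat f$ on small frequency cubes, leading finally to $\|e^{it\partial_{x_1}^2}f\|_{L^6_{t,x_1,x_2}}^2\lesssim\|f\|_{L^2_{x_1}\mathcal{H}^1_{x_2}}^{4/3}A^{2/3}$, which is the assertion.

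I expect the scale-summation to be the main obstacle: unlike the Whitney argument for a single Strichartz norm, here \emph{every} dyadic scale carries the full quantity $\|f\|_{L^2_{x_1}\mathcal{H}^1_{x_2}}^2$, so there is no room for loss, and the convergence has to be squeezed out of the curvature of the parabola (encoded in the bilinear restriction estimate) rather than from crude box orthogonality -- this is exactly what the bilinear estimate and the almost-orthogonality lemmas recorded before the statement are tailored to. Once the summation is organized correctly, the remaining interpolation exponents and the threading of the Sobolev embeddings $\mathcal{H}^1_{x_2}\hookrightarrow L^r_{x_2}$ are routine, if delicate.
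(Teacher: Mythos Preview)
Your approach is essentially the same as the standard one the paper has in mind (the paper actually leaves this proposition without proof, deferring to \cite{Killip-Visan-note}, but a draft proof appears in commented-out form and follows exactly your route): Whitney-decompose the square $(e^{it\partial_{x_1}^2}f)^2$ in the $x_1$-frequency, use the bounded overlap of the space--time Fourier supports of the bilinear pieces to pass to an $\ell^{3/2}$ sum over Whitney pairs, interpolate each $L^3$ piece between Tao's bilinear $L^{5/2}$ bound and the trivial $L^{10}\times L^{10}$ H\"older bound, extract the supremum $A=\sup_Q|Q|^{-1/5}\|e^{it\partial_{x_1}^2}f_Q\|_{L^{10}}$, and sum using Cauchy--Schwarz together with the fact that each cube has $O(1)$ Whitney partners; the $x_2$ variable is carried passively and enters only through $\mathcal H^1_{x_2}\hookrightarrow L^5_{x_2}$ at the very end.

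Two small corrections are worth making. First, there are no ``near-diagonal'' pairs to absorb separately: the Whitney decomposition already partitions the entire off-diagonal $\{\xi\ne\xi'\}$, so the identity $(e^{it\partial_{x_1}^2}f)^2=\sum_{(Q,Q')\in\mathcal W}F_{Q,Q'}$ is exact. Second, and more substantively, you should not split the summation into ``within a scale'' and ``across scales'' as you propose. The key geometric input is that the space--time Fourier supports $R(Q+Q')$ have bounded overlap \emph{globally} across all Whitney pairs at all scales simultaneously (the height above the paraboloid pins down the scale $|Q|$, and the spatial frequency then pins down $Q+Q'$); this single fact delivers the $\ell^{3/2}$ sum over the full collection $\mathcal W$ in one stroke. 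Your ``parabolic Littlewood--Paley across annuli, then boxes within each annulus'' is a correct way to \emph{see} why the overlap is bounded, but organising the proof that way and then trying to sum $\sum_j\big(\sum_{|Q|=2^j}\|f_Q\|_{L^2\mathcal H^1}^2\,b_Q\big)$ over $j$ will not close---as you yourself anticipate, each scale carries the full $\|f\|^2$ and there is no residual decay to sum. The convergence is not ``extracted from the $|Q|^{-1/5}$ gain'' (that gain is already spent in forming $A$); it comes from doing the almost-orthogonality once over all of $\mathcal W$ and then exploiting that the rectangles $Q\times Q'$ \emph{partition} the off-diagonal, so that after Cauchy--Schwarz one lands on $\big(\sum_{(Q,Q')}a_Q^2a_{Q'}^2\big)^{1/2}=\|\hat f\|_{L^2_\xi L^5_{x_2}}^2$ rather than on the divergent $\sum_Q a_Q^2$.
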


Next, we recall the refined Fatou lemma and local smoothing estimate, which will be used in the proof of inverse Strichartz inequality.
\begin{lemma}[Refined Fatou]\label{Refine-Fatou}Let $\{f_k\}$ be a bounded sequence in $L_{t,x_1}^6\mathcal{H}_{x_2}^{1}(\R\times\R\times \R)$ satisfying
\begin{align*}
\limsup_{k\to\infty}\|f_k\|_{L_{t,x_1,x_2}^6(\R\times\R\times\R)}<\infty.
\end{align*} 
If $f_k\to f$ almost everywhere, then we have
\begin{align*}
\|f_k\|_{L_{t,x_1,x_2}^6(\R\times\R\times\R)}^{6}-\|f_k-f\|_{L_{t,x_1,x_2(\R\times\R\times\R)}^6}^6\to\|f\|_{L_{t,x_1,x_2(\R\times\R\times\R)}^6}^6
\end{align*}
when $k\to\infty$.
\end{lemma}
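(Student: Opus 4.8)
The plan is to view this statement as the Brezis--Lieb lemma for the exponent $p=6$ on the full measure space $\R_t\times\R_{x_1}\times\R_{x_2}$; in particular the hypothesis that $\{f_k\}$ is bounded in $L^6_{t,x_1}\mathcal H^1_{x_2}$ plays no role in the proof — only the $\limsup$ condition on $\|f_k\|_{L^6_{t,x_1,x_2}}$ is used (these two bounds are genuinely different, since $L^6_{t,x_1,x_2}$ is $L^6$ in all three variables). First I would record the elementary pointwise estimate: for every $\varepsilon>0$ there is $C_\varepsilon>0$ such that
\[
\big||a+b|^6-|a|^6\big|\le \varepsilon|a|^6+C_\varepsilon|b|^6,\qquad a,b\in\C,
\]
which follows from the continuity and $6$-homogeneity of $z\mapsto|z|^6$ together with $|a+b|^6\lesssim|a|^6+|b|^6$.

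Next, I would apply this with $a=f_k-f$ and $b=f$ (so $a+b=f_k$) and set
\[
W_k:=\big||f_k|^6-|f_k-f|^6-|f|^6\big|,\qquad G_{k,\varepsilon}:=\big(W_k-\varepsilon|f_k-f|^6\big)^+ .
\]
From the pointwise estimate one gets $0\le G_{k,\varepsilon}\le (C_\varepsilon+1)|f|^6$, which is integrable since $f\in L^6_{t,x_1,x_2}$ by Fatou's lemma (using $\limsup_k\|f_k\|_{L^6_{t,x_1,x_2}}<\infty$). Because $f_k\to f$ almost everywhere, $W_k\to0$ a.e., hence $G_{k,\varepsilon}\to0$ a.e., and the dominated convergence theorem yields $\|G_{k,\varepsilon}\|_{L^1}\to0$ as $k\to\infty$. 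Using $W_k\le G_{k,\varepsilon}+\varepsilon|f_k-f|^6$ together with the uniform bound $\sup_k\|f_k-f\|_{L^6_{t,x_1,x_2}}^6<\infty$ (again via Fatou and the triangle inequality), we obtain $\limsup_{k\to\infty}\int W_k\,dt\,dx_1\,dx_2\lesssim\varepsilon$; letting $\varepsilon\downarrow0$ gives $\int W_k\,dt\,dx_1\,dx_2\to0$, which is exactly the asserted convergence.

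The argument is essentially routine, so there is no serious obstacle; the one point that deserves a little care is the uniform $L^6$-boundedness of the sequence $f_k-f$, which is precisely what allows the term $\varepsilon|f_k-f|^6$ to be absorbed, and it is furnished by the $\limsup$ hypothesis via Fatou's lemma. One should also keep in mind throughout that all almost-everywhere statements and dominations are with respect to the product measure on $\R\times\R\times\R$ and that $C_\varepsilon$ is independent of $k$.
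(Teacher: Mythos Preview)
Your argument is correct: this is precisely the Brezis--Lieb lemma for $p=6$ on $\R_t\times\R_{x_1}\times\R_{x_2}$, and your proof is the standard one. The paper does not supply its own proof of this lemma --- it simply recalls it as a known tool before proving the inverse Strichartz estimate --- so there is nothing to compare against.

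One small remark: your parenthetical that the $L^6_{t,x_1}\mathcal H^1_{x_2}$ bound and the $L^6_{t,x_1,x_2}$ bound are ``genuinely different'' is true as stated, but note that in one dimension $\mathcal H^1_{x_2}\hookrightarrow L^6_{x_2}$, so the first hypothesis actually implies the second; your observation that only the second is used in the proof remains valid and is the relevant point.
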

\begin{lemma}[Local smoothing estimate, \cite{Killip-Visan-note}]\label{Local smoothing}Fix $\varepsilon>0$,  for $\forall f\in L^2(\R\times\R)$ and $R>0$, we have
\begin{align}\label{Local-Smoothing-estimate}
\int_{\R}\int_{\R\times\R}\left|(|\partial_{x_1}|^{\frac{1}{2}}e^{it\partial_{x_1}^2}f)(x_1,x_2)\right|^2\langle x_1\rangle^{1-\varepsilon}dtdx_1dx_2\lesssim\|f\|_{L_{x_1,x_2}^2(\R\times\R)}^2.
\end{align}
\end{lemma}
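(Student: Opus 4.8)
The plan is to reduce the weighted space--time estimate to the sharp one-dimensional Kato smoothing bound and then recover the stated form by integrating against the weight. First I would observe that $|\partial_{x_1}|^{1/2}$ and $e^{it\partial_{x_1}^2}$ act only on the $x_1$ variable, so by Fubini it suffices to prove, for every $g\in L^2(\R_{x_1})$, the pointwise-in-$x_1$ inequality
\begin{equation*}
\sup_{x_1\in\R}\ \int_{\R}\big|\big(|\partial_{x_1}|^{1/2}e^{it\partial_{x_1}^2}g\big)(x_1)\big|^2\,dt\ \lesssim\ \|g\|_{L^2(\R)}^2 ;
\end{equation*}
granting this, \eqref{Local-Smoothing-estimate} follows by multiplying by the weight, integrating in $x_1$, and then integrating in $x_2$ with $g=f(\cdot,x_2)$, which produces exactly $\|f\|_{L^2_{x_1,x_2}}^2$ on the right.

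For the one-dimensional bound I would write $\big(|\partial_{x_1}|^{1/2}e^{it\partial_{x_1}^2}g\big)(x_1)=c\int_{\R}|\xi|^{1/2}e^{ix_1\xi-it\xi^2}\,\widehat g(\xi)\,d\xi$ and split it into the contributions of $\xi>0$ and $\xi<0$; by $|a+b|^2\le 2|a|^2+2|b|^2$ and the reflection $\xi\mapsto-\xi$ it is enough to treat $\xi>0$. The key step there is the change of variables $\eta=\xi^2$, under which $d\xi=\tfrac12\eta^{-1/2}\,d\eta$ and $|\xi|^{1/2}=\eta^{1/4}$, so the $\xi>0$ contribution becomes $c'\int_{0}^{\infty}\eta^{-1/4}\big(e^{ix_1\sqrt\eta}\,\widehat g(\sqrt\eta)\big)\,e^{-it\eta}\,d\eta$; for each fixed $x_1$ this is the inverse Fourier transform in $t$ of a function of $\eta$ whose $L^2_\eta$ norm does not depend on $x_1$. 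Plancherel in $t$ then shows that the corresponding time integral equals, up to an absolute constant, $\int_{0}^{\infty}\eta^{-1/2}\,|\widehat g(\sqrt\eta)|^2\,d\eta$, which is $\int_{0}^{\infty}|\widehat g(\xi)|^2\,d\xi$ after undoing the substitution --- a quantity independent of $x_1$. Adding the symmetric estimate for $\xi<0$ closes the one-dimensional bound, and hence the lemma.

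I do not expect a real obstacle here: the content is the classical $\tfrac12$-derivative smoothing of Kato type, and the only step requiring care is the substitution $\eta=\xi^2$, which is precisely where the half-derivative $|\partial_{x_1}|^{1/2}$ absorbs the square-root Jacobian $\tfrac12\eta^{-1/2}\,d\eta$ so that Plancherel in $t$ applies with no loss. If one prefers, the whole statement can simply be quoted from Killip--Visan \cite{Killip-Visan-note}, whose proof is the computation sketched above.
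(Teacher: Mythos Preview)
Your Kato smoothing argument for the one-dimensional bound $\sup_{x_1}\int_\R\big|\,|\partial_{x_1}|^{1/2}e^{it\partial_{x_1}^2}g(x_1)\big|^2\,dt\lesssim\|g\|_{L^2}^2$ is correct and is exactly the standard proof; the paper itself gives no argument and simply cites \cite{Killip-Visan-note}. The gap is in your second step: multiplying the uniform-in-$x_1$ bound by $\langle x_1\rangle^{1-\varepsilon}$ and integrating in $x_1$ does \emph{not} produce a finite multiple of $\|f\|_{L^2}^2$, because $\int_\R\langle x_1\rangle^{1-\varepsilon}\,dx_1=\infty$ for every $\varepsilon\in(0,2]$. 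In fact the lemma as printed is false: take $f(x_1,x_2)=g(x_1)h(x_2)$ with $\widehat g$ supported on $(0,\infty)$ and $\|h\|_{L^2}=1$; your own Plancherel computation shows $\int_\R\big|\,|\partial_{x_1}|^{1/2}e^{it\partial_{x_1}^2}g(x_1)\big|^2\,dt$ equals a nonzero constant independent of $x_1$, so the left side of \eqref{Local-Smoothing-estimate} diverges.

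This is a typo in the paper: the weight should be $\langle x_1\rangle^{-1-\varepsilon}$, which is the form appearing in \cite{Killip-Visan-note} (the dangling hypothesis ``$R>0$'' in the statement, which plays no role in the displayed inequality, also points to a transcription error from the localized version $\int_{|x_1|\le R}\cdots\lesssim R\|f\|_{L^2}^2$). With the corrected decaying weight your argument goes through verbatim: the $x_1$-integral of $\langle x_1\rangle^{-1-\varepsilon}$ contributes a finite constant $C_\varepsilon$, and Fubini in $x_2$ then yields the stated bound. You should have caught that the weight as written is growing rather than decaying --- that is precisely the red flag that the printed statement cannot be right.
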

We also need the following refined Sobolev embedding for Hermite-Sobolev space $\mathcal{H}^1$.
\begin{lemma}[Refined Sobolev Embedding,\cite{Cheng-Guo-Guo-Liao-Shen}]\label{refine-Sobolev} For $f\in \mathcal{H}^1(\R)$ and $R>0$,  we have
\begin{align*}
\|f\|_{L^\infty(|x|\geqslant R)}\lesssim R^{-\frac{1}{2}}\big(\|f\|_{L^2(\R)}+\|xf(x)\|_{L^2(\R)}^\frac{1}{2}\|f^\prime(x)\|_{L^2(\R)}^\frac{1}{2}\big).
\end{align*}
\end{lemma}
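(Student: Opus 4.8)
The plan is to obtain the pointwise bound directly from the fundamental theorem of calculus, integrating the derivative of $|f|^2$ in from infinity. First I would record what $\mathcal{H}^1(\R)$-membership concretely means: since one integration by parts gives $\langle Hf,f\rangle=\|f'\|_{L^2(\R)}^2+\|xf\|_{L^2(\R)}^2$ for $H=-\partial_x^2+x^2$, having $f\in\mathcal{H}^1(\R)$ is equivalent to $f'\in L^2(\R)$ and $xf\in L^2(\R)$, with $\|f\|_{\mathcal{H}^1}^2=\|f'\|_{L^2}^2+\|xf\|_{L^2}^2$. In particular $f\in H^1(\R)$, so $f$ has a continuous representative with $f(x)\to0$ as $x\to\pm\infty$, and for $x>0$ we have the absolutely convergent identity $|f(x)|^2=-\int_x^\infty\partial_y\big(|f(y)|^2\big)\,dy$, since the integrand is pointwise bounded by $2|f(y)|\,|f'(y)|$, which lies in $L^1(\R)$ by Cauchy--Schwarz.

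Next, for $x\ge R>0$ I would estimate
\[
|f(x)|^2\le 2\int_x^\infty|f(y)|\,|f'(y)|\,dy\le\frac{2}{R}\int_x^\infty|y\,f(y)|\,|f'(y)|\,dy\le\frac{2}{R}\,\|xf\|_{L^2(\R)}\,\|f'\|_{L^2(\R)},
\]
where the middle step uses $1\le y/R$ on $\{\,y\ge x\ge R\,\}$ and the last step is Cauchy--Schwarz. This gives $|f(x)|\le\sqrt2\,R^{-1/2}\,\|xf\|_{L^2}^{1/2}\,\|f'\|_{L^2}^{1/2}$ for all $x\ge R$, and the region $x\le-R$ is handled identically by integrating from $-\infty$ to $x$. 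Taking the supremum over $|x|\ge R$ therefore yields a bound even a little stronger than the one stated; appending the harmless summand $\|f\|_{L^2(\R)}$ (which is itself dominated by $\|f\|_{\mathcal{H}^1}$) recovers the displayed inequality exactly.

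I do not anticipate any real obstacle; the argument is short. The two points that merit a moment's care are: (i) one must integrate $\partial_y|f|^2$ in from $+\infty$ rather than average $|f|^2$ over a window of length $R$, since the averaging route produces a term of the form $\|f\|_{L^2}^{1/2}\|f'\|_{L^2}^{1/2}$ with \emph{no} $R^{-1/2}$ gain; and (ii) the $R^{-1/2}$ factor is bought precisely by trading $|f(y)|$ for $R^{-1}|y\,f(y)|$, so the estimate genuinely uses both $\|xf\|_{L^2}$ and $\|f'\|_{L^2}$---exactly the two pieces that make up the $\mathcal{H}^1(\R)$ norm. Since the statement is quoted verbatim from \cite{Cheng-Guo-Guo-Liao-Shen}, one may also simply cite it, but the self-contained argument above is essentially immediate.
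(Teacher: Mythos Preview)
Your argument is correct and complete: writing $|f(x)|^2=-\int_x^\infty\partial_y|f(y)|^2\,dy$, inserting $1\le y/R$ on $\{y\ge R\}$, and applying Cauchy--Schwarz gives exactly the bound, even without the $\|f\|_{L^2}$ summand. The paper does not supply its own proof of this lemma---it simply quotes the result from \cite{Cheng-Guo-Guo-Liao-Shen}---so your self-contained derivation is a genuine addition rather than a reproduction of anything in the text.
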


By the refined Strichartz estimates, refined Fatou Lemma and local smoothing estimate, we can prove the following inverse Strichartz estimate.
\begin{proposition}[Inverse Strichartz estimate] \label{inverse-strichartz}
Let  $\left\{f_k \right\}_{k\geqslant1}$ be a bounded sequence in $ L_{x_1}^2 \mathcal{H}_{x_2}^{1}(\R\times \R ) $ satisfying
\begin{align}\label{inverse-condition}
\lim_{k \to \infty} \left\|f_k \right\|_{L_{x_1}^2 \mathcal{H}_{x_2}^{1}(\R\times\R) } = A \quad\text{ and }\quad \lim_{k \to \infty} \left\|e^{it\partial_{x_1}^2} f_k \right\|_{L_{t,x_1,x_2}^{6} (\R\times\R\times\R) } = \epsilon.
\end{align}
Then, there exist $\phi \in L_{x_1}^2 \mathcal{H}_{x_2}^{1} $ and $ \left(\lambda_k,t_k,  \xi_k,  x_{1,k} \right) \in \R_+ \times \R  \times \R \times \R$, so that passing to a further subsequence  if necessary, we have
\begin{align}
& \lambda_k^{\frac{d-1}{2}}  e^{-i\xi_k \cdot (\lambda_k   x_1 + x_{1,k} ) } \left(e^{it_k \partial_{x_1}^2} f_k \right)\left(\lambda_k  x_1 + x_{1,k},x_2\right) \rightharpoonup  \phi(x_1,x_2)\  \text{ in } L_{x_1}^2 \mathcal{H}_{x_2}^{1}(\R\times\R), \text{ as } k \to \infty, \notag\\
& \lim_{k \to \infty} \left(\|f_k  \|_{L_{x_1}^2 \mathcal{H}_{x_2}^{1}(\R\times\R)}^2 - \|f_k -\phi_k \|_{L_{x_1}^2 \mathcal{H}_{x_2}^{1}(\R\times\R)}^2 \right) = \|\phi\|_{L_{x_1}^2 \mathcal{H}_{x_2}^{1}}^2 \gtrsim A^2 \left(\frac\epsilon A\right)^{12 },\label{orthogonal-inverse} \\
&  \limsup_{k \to \infty} \left\|e^{it\partial_{x_1}^2} (f_k -\phi_k ) \right\|_{L_{t,x_1,x_2}^6(\R\times\R\times\R)  }^6 \le \epsilon^{6 }   \left( 1- c \big(\frac{\varepsilon}{A}\big)^\beta \right), \label{orthogonal-inverse2}
\end{align}
where $c$ and $\beta$ are positive constants, and
\begin{align*}
\phi_k (x_1,x_2) = \frac1{\lambda_k^\frac{1}{2} } e^{iy\cdot \xi_k } \left(e^{-i\frac{t_k }{\lambda_k^2} \partial_{x_1}^2} \phi\right)\left(\frac{x_1-x_{1,k} }{\lambda_k },x_2\right).
\end{align*}
Moreover, if $\{f_k \}_{k \ge 1} $ is bounded in $\Sigma_{x_1,x_2} (\R\times\R)$, and also
\begin{align}\label{eq4.8v115}
\lim_{k \to \infty} \left\|f_k \right\|_{\Sigma_{x_1,x_2}(\R\times\R) } = A \quad\text{ and }\quad \lim_{k \to \infty} \left\|e^{it\partial_{x_1}^2} f_k \right\|_{L_{t,x_1,x_2}^6 (\R\times\R\times\R) } = \epsilon,
\end{align}
then we choose $\lambda_k\geqslant1$, $|\xi_k | \lesssim 1$ and
$\phi \in L_{x_1}^2 \mathcal{H}_{x_2}^{1 }(\R \times \R) $ such that
\begin{align}
\lambda_k  e^{-i\xi_k\cdot (\lambda_k  x_1 + x_{1,k} ) } \left(e^{it_k \partial_{x_1}^2} f_k  \right)\left(\lambda_k  x_1+ x_{1,k},x_2\right) \rightharpoonup  \phi(x_1,x_2)\  \text{ in } L_{x_1}^2 \mathcal{H}_{x_2}^{1 }(\R\times\R) , \text{ as } k \to \infty, \label{eq4.9v115}
\end{align}
and 
\begin{align}
\lim_{k \to \infty} \left( \left\|f_k \right\|_{\Sigma_{x_1,x_2}(\R\times\R)}^2 - \left\|f_k -\phi_k \right\|_{\Sigma_{x_1,x_2}(\R\times\R)}^2 \right) = \lim\limits_{k \to \infty} \|\phi_k \|_{\Sigma_{x_1,x_2}(\R\times\R) }^2\gtrsim A^2 \left(\frac\epsilon A\right)^{12 }.\label{eq3.1348}
\end{align}
\end{proposition}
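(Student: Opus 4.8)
The plan is to carry out the standard concentration‑compactness bubble extraction, keeping everything $\mathcal H^1_{x_2}$‑valued. First I would invoke the refined Strichartz inequality of Proposition~\ref{refined-Strichartz}: since $\|e^{it\partial_{x_1}^2}f_k\|_{L^6_{t,x_1,x_2}}\to\epsilon$ and $\|f_k\|_{L^2_{x_1}\mathcal H^1_{x_2}}\to A$, for each large $k$ there is a dyadic cube $Q_k\in\mathcal D$ with
\[
|Q_k|^{-\frac15}\big\|e^{it\partial_{x_1}^2}(f_k)_{Q_k}\big\|_{L^{10}_{t,x_1,x_2}}\gtrsim \epsilon\Big(\tfrac{\epsilon}{A}\Big)^2 .
\]
Using the scaling and Galilean symmetries of $e^{it\partial_{x_1}^2}$ I would move $Q_k$ to the unit cube centred at the origin; this produces the side‑length parameter $\lambda_k$, the centre $\xi_k$, and (to be fixed below) a spatial translation $x_{1,k}$ and a time parameter $t_k$. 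If $F_k$ denotes the resulting rescaled evolution, so $F_k(t)=e^{it\partial_{x_1}^2}g_k$ with $\widehat{g_k}(\xi_1,\cdot)$ supported in the unit $\xi_1$‑cube, then the critical norm is symmetry‑invariant and $\xi_1$‑Fourier projection is bounded on $L^6_{x_1}$, so $\|F_k\|_{L^6_{t,x_1,x_2}}\lesssim\epsilon$, while Bernstein in $x_1$ together with the one‑dimensional embedding $\mathcal H^1_{x_2}\hookrightarrow L^\infty_{x_2}$ gives $\|F_k\|_{L^\infty_{t,x_1,x_2}}\lesssim A$.

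Next I interpolate $L^{10}$ between $L^6$ and $L^\infty$: feeding the lower bound on $\|F_k\|_{L^{10}}$ into $\|F_k\|_{L^{10}}\lesssim\|F_k\|_{L^6}^{3/5}\|F_k\|_{L^\infty}^{2/5}$ yields $\|F_k\|_{L^\infty_{t,x_1,x_2}}\gtrsim\delta:=A(\epsilon/A)^6$, hence a point $(s_k,y_k,z_k)\in\R^3$ with $|F_k(s_k,y_k,z_k)|\gtrsim\delta$. Here the confined variable enters crucially: since $F_k$ is bounded by $\lesssim A$ in $L^\infty_{t,x_1}\mathcal H^1_{x_2}$, the refined Sobolev embedding of Lemma~\ref{refine-Sobolev}, with its $|x_2|^{-1/2}$ gain, forces $|z_k|\lesssim (A/\delta)^2$, so after passing to a subsequence $z_k\to z_\infty$; this substitutes for the missing $x_2$‑translation symmetry. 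I then set $t_k:=-s_k/\lambda_k^2$ and choose $x_{1,k}$ so as to recenter at $y_k$, and let $\phi$ be a weak limit in $L^2_{x_1}\mathcal H^1_{x_2}$ of the de‑symmetrized sequence $H_k:=\lambda_k^{1/2}e^{-i\xi_k\cdot(\lambda_kx_1+x_{1,k})}(e^{it_k\partial_{x_1}^2}f_k)(\lambda_kx_1+x_{1,k},x_2)$, which is bounded by $A$ since the symmetry is an $L^2_{x_1}\mathcal H^1_{x_2}$‑isometry. Testing $H_k$ against the reproducing kernel of functions with $\xi_1$‑support in the unit cube (an element of $L^2_{x_1}\mathcal H^{-1}_{x_2}$) and using the pointwise lower bound near $(0,z_k)$ gives $|\langle\phi,\,\cdot\,\rangle|\gtrsim\delta$, so $\phi\neq0$ and $\|\phi\|_{L^2_{x_1}\mathcal H^1_{x_2}}^2\ge\|\phi\|_{L^2_{x_1,x_2}}^2\gtrsim\delta^2=A^2(\epsilon/A)^{12}$; the identity $\|H_k\|^2-\|H_k-\phi\|^2\to\|\phi\|^2$ follows from weak convergence and transfers to \eqref{orthogonal-inverse} through the isometry, with $\phi_k$ as in the statement.

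For the Strichartz decrement \eqref{orthogonal-inverse2} I would combine the refined Fatou/Brezis--Lieb splitting of Lemma~\ref{Refine-Fatou}, which gives $\|e^{it\partial_{x_1}^2}f_k\|_{L^6}^6-\|e^{it\partial_{x_1}^2}(f_k-\phi_k)\|_{L^6}^6\to\|e^{it\partial_{x_1}^2}\phi\|_{L^6}^6$ once one knows the rescaled evolutions converge almost everywhere, which comes from the weak convergence together with the local smoothing estimate of Lemma~\ref{Local smoothing} and a Rellich‑type argument using the compact embedding $\mathcal H^1_{x_2}\hookrightarrow L^2_{x_2}$, with the lower bound $\|e^{it\partial_{x_1}^2}\phi\|_{L^6_{t,x_1,x_2}}^6\gtrsim\epsilon^6(\epsilon/A)^{\beta}$; the latter again follows from the $\xi_1$‑frequency localization of $\phi$ (Bernstein plus the dispersive bound $\|e^{it\partial_{x_1}^2}\phi\|_{L^\infty_{t,x_1,x_2}}\lesssim A$) combined with $\|\phi\|_{L^2_{x_1}\mathcal H^1_{x_2}}\gtrsim\delta$. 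Finally, in the weighted case the extra input is $\|f_k\|_{\dot H^1_{x_1}L^2_{x_2}}\lesssim\|f_k\|_{\Sigma_{x_1,x_2}}\lesssim A$: this forces $|\xi_k|\lesssim1$ and $\lambda_k\gtrsim1$ (one cannot concentrate at arbitrarily high $x_1$‑frequency with bounded $\dot H^1_{x_1}$‑norm yet nontrivial critical Strichartz norm), and when $\lambda_k\to\infty$ it necessitates inserting the truncation $P^{x_1}_{\le(\lambda_k)^{\theta}}$ into the bubble $\phi_k$ so that $\phi_k$ stays bounded in $\dot H^1_{x_1}L^2_{x_2}$; since $\|\cdot\|_{\Sigma_{x_1,x_2}}^2=\|\cdot\|_{L^2_{x_1}\mathcal H^1_{x_2}}^2+\|\partial_{x_1}\cdot\|_{L^2_{x_1,x_2}}^2$ by \eqref{transform}, the decoupling \eqref{eq4.9v115}--\eqref{eq3.1348} then follows from the $L^2_{x_1}\mathcal H^1_{x_2}$ decoupling already obtained together with weak convergence in $\dot H^1_{x_1}L^2_{x_2}$. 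The main obstacle I anticipate is precisely the bubble extraction in the confined variable: with no $x_2$‑translation symmetry one must run the whole interpolation, Bernstein and reproducing‑kernel machinery $\mathcal H^1_{x_2}$‑valued and rely on Lemma~\ref{refine-Sobolev} to trap the concentration height $z_k$ in a fixed compact set, and the bookkeeping that propagates the sharp exponent $(\epsilon/A)^{12}$ through the chain of interpolations has to be done with care.
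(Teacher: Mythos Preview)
Your proposal is correct and follows essentially the same route as the paper: refined Strichartz $\Rightarrow$ concentration cube, $L^{10}$--$L^6$--$L^\infty$ interpolation to locate a concentration point, Lemma~\ref{refine-Sobolev} to trap $z_k$ in a compact set (which the paper asserts without spelling out), weak extraction of $\phi$ and testing against $h(x_1)\delta_0(x_2)\in L^2_{x_1}\mathcal H^{-1}_{x_2}$ for the nontriviality, then local smoothing + Rellich + refined Fatou for the $L^6$ decrement, and finally $\dot H^1_{x_1}$ control in the $\Sigma$ case. The only step where you are vaguer than the paper is the lower bound $\|e^{it\partial_{x_1}^2}\phi\|_{L^6}\gtrsim \epsilon(\epsilon/A)^{\beta}$: the paper obtains this by pairing $e^{it\partial_{x_1}^2}\phi$ against the concrete $L^{6/5}_{t,x_1}$ test function $q(t)\,e^{it\partial_{x_1}^2}h(x_1)\,\delta_0(x_2)$ and invoking the pointwise lower bound, whereas your ``Bernstein plus dispersive bound'' sketch does not by itself produce an $L^6$ lower bound from an $L^2_{x_1}\mathcal H^1_{x_2}$ lower bound---you should use the test-function pairing there (and note that testing against $\delta_0$ only yields $\|\phi\|_{L^2_{x_1}\mathcal H^1_{x_2}}\gtrsim\delta$, not $\|\phi\|_{L^2_{x_1,x_2}}\gtrsim\delta$, though the former is all that is needed).
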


\begin{proof}
First, we assume that $ \left\{f_k \right\}_{k \ge 1} $ is bounded in $L_{x_1}^2 \mathcal{H}_{x_2}^1$. By the refined Strichartz estimate(cf. Proposition \ref{refined-Strichartz}), there exists the cube $Q_k$ such that
\begin{align}\label{key-1}
\varepsilon^{3}A^{-2}\lesssim\liminf_{k \to \infty}|Q_k|^{-\frac{1}{5}}\|e^{it\partial_{x_1}^2}(f_k)_{Q_k}\big\|_{L_{t,x_1,x_2}^{10}(\R\times\R\times\R)}.
\end{align}
Then choosing $\lambda_{k}^{-1}$ to be the side-length of $Q_k$ which implies that $|Q_k|=\lambda_k^{-1}$. We also let $\xi_k $ be the center of the cube $Q_k $. Hence, by H\"older's inequality, we have
\begin{align*}
&\liminf_{ n \rightarrow \infty }|Q_k|^{-\frac{1}{5}}\big\|e^{it\partial_{x_1}^2}(f_k)_{Q_k}\big\|_{L_{t,x_1,x_2}^{10}(\R\times\R\times\R)}\\
\lesssim&\liminf_{ n \rightarrow \infty }|Q_k|^{-\frac{1}{5}}\big\|e^{it\partial_{x_1}^2}(f_k)_{Q_k}\big\|_{L_{t,x_1,x_2}^{10}(\R\times\R\times\R)}^\frac{3}{5}\big\|e^{it\partial_{x_1}^2}(f_k)_{Q_k}\big\|_{L_{t,x_1,x_2}^\infty(\R\times\R\times\R)}^{\frac{2}{5}}\\
\lesssim&\liminf_{ n \rightarrow \infty }\lambda_k^{\frac15}\varepsilon^{\frac35}\big\|e^{it\partial_{x_1}^2}(f_k)_{Q_k}\big\|_{L_{t,x_1,x_2}^\infty(\R\times\R\times\R)}^{\frac25}.
\end{align*}
Then by \eqref{key-1}, there exists a frame $(t_k,x_{1,k},(x_2)_k)$ such that
\begin{align}\label{eq4.5336}
\liminf_{k \to \infty} \lambda_k^\frac{1}{2} \left|\left(e^{it_k  \Delta_{\R^2}} \left(f_k \right)_{Q_k } \right)(x_{1,k} ,(x_2)_k )\right|\gtrsim \epsilon^6 A^{-5 }  .
\end{align}
Since $ \left|(x_2)_k \right| \leqslant R$, we may assume, up to a subsequence, $(x_2)_k \to x^*$, as $k \to \infty$, with $|x^*|\lesssim 1$.
	
By the weak compactness of $L_{x_1}^2 \mathcal{H}_{x_2}^{1}(\R\times\R)$, we have
\begin{align*}
\lambda_k^\frac{1}{2}  e^{-i\xi_k (\lambda_k  x_1+x_{1,k} )} \left(e^{it_k  \partial_{x_1}^2} f_k \right)\left(\lambda_k  x_1 + x_{1,k} , x_2 \right) \rightharpoonup  \phi(x_1,x_2) \text{ in } L_{x_1}^2 \mathcal{H}_{x_2}^{1}(\R\times\R) \text{ as } k \to \infty.
\end{align*}
In fact, we have the following convergence  in Hilbert space $H$,
\begin{align*}
g_k \rightharpoonup g \text{ in } H \Rightarrow \|g_k \|_H^2 - \|g_k - g\|_H^2 \to \|g\|_H^2.
\end{align*}
Thus, we have
\begin{align*}
\lim_{k \to \infty} \left(\|f_k  \|_{L_{x_1}^2 \mathcal{H}_{x_2}^{1}(\R\times\R)}^2 - \|f_k -\phi_k \|_{L_{x_1}^2 \mathcal{H}_{x_2}^{1}(\R\times\R)}^2 \right) = \|\phi\|_{L_{x_1}^2 \mathcal{H}_{x_2}^{1}(\R\times\R)}^2.
\end{align*}
	
Now, it remains to prove $\eqref{orthogonal-inverse}$ and $\eqref{orthogonal-inverse2}$. We take a function $h$  which the Fourier transform is   $ \chi_{\left[-\frac12,\frac12 \right]}(\xi)$.  Recall that $\delta_0(x)\in \mathcal{H}^{-1}(\R)$, thus we have $h(x_1) \delta_0(x_2)\in L_{x_1}^2 \mathcal{H}_{x_2}^{-1}(\R \times \R)$.
From \eqref{eq4.5336}, we obtain
\begin{align}\label{lower-bound}
&\left|\left\langle h(x_1) \delta_0(x_2), \phi \left(x_1,x_2+ x^* \right)\right\rangle_{x_1,x_2} \right|\notag\\
=&\lim_{k \to \infty} \left|\bigg\langle \delta_0(x_2), \int_{\R}\bar{h}(x_1) \lambda_k^\frac{1}{2}  e^{-i\xi_k\cdot (\lambda_k  x_1 + x_{1,k} )} \left(e^{it_k  \partial_{x_1}^2} f_k  \right) \left(\lambda_k  x_1 +x_{1,k} , x_2+ (x_2)_k \right) \,dx_1 \bigg\rangle_{x_2} \right| \nonumber\\
=&  \lim_{k \to \infty} \lambda_k^\frac{1}{2}  \left|\left(e^{it_k \partial_{x_1}^2} \left(f_k \right)_{Q_k }\right)(x_{1,k} ,(x_2)_k )\right| \gtrsim   \epsilon^6A^{-5 }. 
\end{align}
Consequently, we have
\begin{align*}
\left\|\phi \left(x_1,x_2+ x^* \right) \right\|_{L_{x_1}^2 \mathcal{H}_{x_2}^1(\R\times\R)} \gtrsim  \epsilon^6A^{-5 }  .
\end{align*}
On the other hand, since
\begin{align*}
\|\phi(x_1,x_2)\|_{L_{x_1}^2 \mathcal{H}_{x_2}^1(\R\times\R)}
\geqslant  \left\|\phi \left(x_1,x_2+x^* \right) \right\|_{L_{x_1}^2 \mathcal{H}_{x_2}^1(\R\times\R)} -  \left|x^* \right| \| \phi\|_{L_{x_1,x_2}^2(\R\times\R)},
\end{align*}
we get
\begin{align*}
\left\|\phi \left(x_1,x_2+x^* \right)  \right\|_{L_{x_1}^2 \mathcal{H}_{x_2}^1(\R\times\R)}\leqslant  \|\phi\|_{L_{x_1}^2 \mathcal{H}_{x_2}^1(\R\times\R)} +  \left|x^* \right| \|\phi\|_{L_{x_1,x_2}^2(\R\times\R)} \lesssim \|\phi\|_{L_{x_1}^2 \mathcal{H}_{x_2}^1(\R\times\R)}.
\end{align*}
Therefore $\|\phi\|_{L_y^2 \mathcal{H}_x^1(\R\times\R)} \gtrsim    \epsilon^{6 } A^{-5 }  $ and we complete the proof of $\eqref{orthogonal-inverse}$.
	
We turn to prove $\eqref{orthogonal-inverse2}$. By local smoothing estimate \eqref{Local-Smoothing-estimate} and the  Rellich-Kondrashov theorem, we have
\begin{align*}
e^{it\partial_{x_1}^2} \left( \lambda_k^\frac{1}{2}  e^{-i \xi_k\cdot (\lambda_k  (x_1) +x_{1,k} ) } (e^{it_k \partial_{x_1}^2} f_k )(\lambda_k  x_1 + x_{1,k} , x_2 + (x_2)_k )\right) \to
e^{it\partial_{x_1}^2} \phi(x_1,x_2), \text{ as $k \to \infty$},
\end{align*}
for $(t,x_1,x_2) \in \R \times \R\times \R$.
Then invoking the refined Fatou lemma \ref{Refine-Fatou}, we have 
\begin{align*}
\left\|e^{it \partial_{x_1}^2} f_k \right\|_{L_{t,x_1,x_2}^6(\R\times\R\times\R)  }^6 - \left\|e^{it\partial_{x_1}^2} (f_k -\phi_k )\right\|_{L_{t,x_1,x_2}^6(\R\times\R\times\R) }^6 - \left\|e^{it\partial_{x_1}^2} \phi_k  \right\|_{L_{t,x_1,x_2}^6 (\R\times\R\times\R) }^6\to 0, \text{ as } k \to \infty.
\end{align*}
Therefore, using the invariance of Galilean transform, we have
\begin{align}\label{Galilean}
\limsup\limits_{k \to \infty} \left\|e^{it\partial_{x_1}^2} (f_k -\phi_k )\right\|_{L_{t,x_1,x_2}^6(\R\times\R\times\R)  }^6
= &  \limsup\limits_{k \to \infty} \left(  \left\|e^{it \partial_{x_1}^2} f_k \right\|_{L_{t,x_1,x_2}^6(\R\times\R\times\R) }^6  - \left\|e^{it\partial_{x_1}^2} \phi_k
\right\|_{L_{t,x_1,x_2}^6(\R\times\R\times\R) }^6 \right)\nonumber\\
= & \varepsilon^6 -   \left\|e^{it\partial_{x_1}^2} \phi   \right\|_{L_{t,x_1,x_2}^6(\R\times\R\times\R) }^6. 
\end{align}
We now take $q(t) \in C^\infty$, which is  supported  on  the  interval $[-1,1]$ so that
\begin{align*}
\|q(t) e^{it \partial_{x_1}^2} h \|_{L_{t,x_1}^\frac{6}{5}} = 1.
\end{align*}
Then by \eqref{lower-bound}, we have
\begin{align*}
& \left|\int_{\R} \left\langle q(t) h(x_1) \delta_0(x_2), \phi \left(x_1,x_2+ x^* \right)\right\rangle_{x_1,x_2} \,dt  \right|\gtrsim  \epsilon^\frac{12}{\epsilon_0} A^{1- \frac{12}{\epsilon_0}}  .
\end{align*}
On the other hand, by H\"older's inequality, Sobolev's inequality,, we have
\begin{align*}
\left| \int_{\R} \left\langle q(t) h(x_1) \delta_0(x_2), \phi \left(x_1, x_2+ x^* \right) \right\rangle_{x_1,x_2} \,dt \right|
&=  \left|\int_{\R}  \left\langle e^{it \partial_{x_1}^2} \left( q(t) h(x_1) \delta_0(x_2) \right) , e^{it \partial_{x_1}^2} \phi \left(x_1, x_2+x^* \right) \right\rangle_{x_1,x_2} \,dt \right|\\
&\lesssim   \left\|e^{it \partial_{x_1}^2} \left( c(t) h(x_1)  \right) \right\|_{L_{t,x_1,x_2}^\frac{6}{5}(\R\times\R\times\R)   }  \left\|e^{it \partial_{x_1}^2} \phi \left(x_1, x_2 \right) \right\|_{L_{t,x_1,x_2}^6 (\R\times\R\times\R)} \\&\lesssim  \left\|e^{it \partial_{x_1}^2} \phi(x_1,x_2) \right\|_{L_{t,x_1,x_2}^6 (\R\times\R\times\R) }.
\end{align*}
Hence, combining the above two estimates and \eqref{Galilean}, we get \eqref{orthogonal-inverse2}. 
	
Furthermore, we assume that  $ \left\{f_k \right\}_{k \ge 1} $ is bounded in $\Sigma_{x_1,x_2} (\R\times\R)$. In this case, Bernstein's inequality implies that
\begin{align*}
\limsup\limits_{k \to \infty} \left\|P_{\ge R}^{x_1} f_k \right\|_{L_{x_1}^2 \mathcal{H}_{x_2}^{1- {\epsilon_0} }(\R\times\R)}
& \lesssim R^{-\varepsilon_0}  \limsup\limits_{k \to \infty} \left\|f_k \right\|_{\Sigma_{x_1,x_2}(\R\times\R)  } \to 0 \text{ as  }  R\to \infty.
\end{align*}
Chooing $R\in 2^{\mathbb{N}}$ large enough relying only on  $\epsilon$, we invoke the dyadic decomposition to $f_k$, then we have
\begin{align*}
\lim\limits_{k \to \infty} \left\|e^{it\partial_{x_1}^2}  f_k \right\|_{L_{t,x_1,x_2}^6(\R\times\R\times\R)  } & \leqslant \lim\limits_{k \to \infty}
\left\|e^{it\partial_{x_1}^2}P_{\leq R}^{x_1} f_k \right\|_{L_{t,x_1,x_2}^6 (\R\times\R\times\R) } +\lim\limits_{k \to \infty} \left\|e^{it\partial_{x_1}^2} P_{\ge R}^{x_1}  f_k \right\|_{L_{t,x_1,x_2}^6(\R\times\R\times\R) }.
\end{align*}
Then by the Strichartz estimate and Sobolev embedding, we have
\begin{align*}
\lim\limits_{k \to \infty} \left\|e^{it\partial_{x_1}^2} P_{\le R}^{x_1} f_k \right\|_{L_{t,x_1,x_2}^6(\R\times\R\times\R)  } & 
\ge  \lim\limits_{k \to \infty}\left\|e^{it\partial_{x_1}^2} f_k \right\|_{L_{t,x_1,x_2}^6(\R\times\R\times\R)} - C   \lim\limits_{k \to \infty} \left\| P_{\ge R}^{x_1} f_k \right\|_{L_{x_1}^2 \mathcal{H}_{x_2}^{1- {\epsilon_0} }(\R\times\R)}\\ &\ge \varepsilon.
\end{align*}
In the sequel, we can replace $f_k $ by $P_{\le R}^{x_1} f_k $ in the above case, and for $R\gg1$ large enough, we may take $ \left\{Q_k \right\}_{k  \ge 1}  \subseteq \mathcal{D}$ and $ \left|Q_k\right |\lesssim R^2$ such that $\lambda_k  \gtrsim R^{-1}$, and $|\xi_k |\lesssim R$.   Notice that if $\phi\in\Sigma_{x_1,x_2}(\R\times\R)$, we cannot have that $\lambda_k$ is always bounded. Thus we need to treat the case when $\limsup_{k \to \infty}\lambda_k=\infty$ additionally. The case of $\lambda_k<\infty$ can be treated similar to the case that $f_k\in L_{x_1}^2\mathcal{H}_{x_2}^1$. 
The remaining part of Proposition \ref{inverse-strichartz} is \eqref{eq3.1348} for the case of $\lambda_k\to\infty$.
\begin{align*}
{ \lim\limits_{k \to \infty}\left \|\phi_k \right\|_{\Sigma_{x_1,x_2}(\R\times\R)}^2 \ge \lim\limits_{k \to \infty} \left\|P^{x_1}_{\le \lambda_k^\theta} \phi \right\|_{L_{x_1}^2 \mathcal{H}_{x_2}^1(\R\times\R\times\R)}^2 }\gtrsim A^2 \left(\frac\epsilon A\right)^{12}.
\end{align*}
Then the decoupling of the $\Sigma-$norm comes from { $P_{\leq \lambda_k^\theta} \to Id$} in $L_{x_1}^2 \mathcal{H}_{x_2}^1$ and  \eqref{eq4.9v115}.
\end{proof}

Next,	utilizing the inverse Strichartz estimates and following the strategy of Killip-Visan \cite{Killip-Visan-note}, we can finish the proof of Theorem \ref{LinearProfile} and Theorem \ref{linearprofile2}. Thus we omit the proof here. 

\subsection{ Nonlinear profile decomposition}

In this part, we will prove that for sufficiently large $\lambda>0$, 
the nonlinear profile $u_\lambda$ given in \eqref{nonlinaer-profile}
\begin{align*}
\begin{cases}
i\partial_t u_\lambda +(\partial_{x_1  }^2+\partial_{x_2  }^2) u_\lambda - x_2^2 u_\lambda = |u_\lambda|^4 u_\lambda,\\
u_\lambda(0,x_1,x_2) = \frac1{\lambda^\frac{1}{2}} \phi(\frac{x_1}\lambda, x_2),
\end{cases}
\end{align*}
can be approximated by $\tilde u_\lambda$ given in  \eqref{limit-equation}
\begin{align}\label{fml-DCR-2}
\tilde u_\lambda(t,x_1,x_2) = e^{it\left(\partial_{x_2  }^2 -x_2^2\right)} \sum_{n\in \mathbb{N} }  \frac1{\lambda^\frac{1}{2}} v_n\left(\frac{t}{\lambda^2}, \frac{x_1}\lambda, x_2\right) , \ (t,x_1,x_2)\in \mathbb{R}\times \mathbb{R} \times \mathbb{R},
\end{align}
where $v_n$ solves the (DCR) system, which is given by
\begin{align}\label{DCR-5}
\begin{cases}
\left(i\partial_t + \partial_{x_1}^2 \right) v_n(t,x_1,x_2) =
\sum\limits_{\substack{n_1,n_2,n_3,n_4,n_5,n \in \mathbb{N} , \\n_1-n_2+ n_3-n_4+n_5= n} } \Pi_n\left(v_{n_1} \bar{v}_{n_2} v_{n_3}\bar{v}_{n_4}v_{n_5} \right)(t,x_1,x_2), \\
v_n(0,x_1,x_2) = \phi_n(x_1,x_2)= \Pi_n\phi(x_1,x_2).
\end{cases}
\end{align}

Now, we give the following proposition, which describes the persistence of the higher regularity. The proof is standard and thus we omit the proof. We refer to \cite{Killip-Visan-note} for more details.
\begin{proposition}[Persistence of the regularity]\label{cor-preserve-regularity}
Suppose that $\phi \in L_{x_1}^2 \mathcal{H}_{x_2}^1 (\mathbb{R} \times \mathbb{R})$ and $v$ is the global solution of \eqref{DCR-5} given as in Theorem \ref{Thm2}. For any $s_1\ge 0 $ and $ s_2 \ge 1 $, if we assume further $v|_{t=0}\in H_{x_1}^{s_1 } \mathcal{H}_{x_2}^{s_2} (\mathbb{R}\times \mathbb{R})$, then the solution $v\in C_t^0 H_{x_1}^{s_1 } \mathcal{H}_{x_2}^{s_2} (\mathbb{R}\times \mathbb{R} \times \mathbb{R})$ and satisfies
\begin{align*}
\|v\|_{L_t^\infty H_{x_1}^{s_1 } \mathcal{H}_{x_2}^{s_2}  \cap L_{t}^6 W_{x_1}^{s_1 ,6} \mathcal{H}_{x_2}^{s_2}  (\mathbb{R}\times \mathbb{R} \times \mathbb{R})} \le C\left(\|\phi \|_{H_{x_1}^{s_1 } \mathcal{H}_{x_2}^{s_2} (\mathbb{R}\times \mathbb{R})} \right).
\end{align*}
\end{proposition}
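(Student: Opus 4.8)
The plan is to run a standard persistence-of-regularity bootstrap on the Duhamel formula for the (DCR) system, using the global spacetime bound $\|v\|_{L_{t,x_1}^6\mathcal{H}_{x_2}^1(\R\times\R\times\R)}\lesssim 1$ guaranteed by Theorem \ref{Thm2} as the driving input. First I would recall that by Theorem \ref{Thm2} the solution $v$ satisfies a global bound $\|v\|_{L_{t,x_1}^6\mathcal{H}_{x_2}^1}=:L<\infty$, and partition $\R$ into finitely many subintervals $I_\ell$, $\ell=1,\dots,N=N(L)$, on each of which $\|v\|_{L_{t,x_1}^6\mathcal{H}_{x_2}^1(I_\ell\times\R\times\R)}\le\delta$ for a small absolute $\delta$ fixed by the nonlinear estimate. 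On a single such interval $I_\ell=[a_\ell,b_\ell]$, write $v(t)=e^{i(t-a_\ell)\partial_{x_1}^2}v(a_\ell)-i\int_{a_\ell}^t e^{i(t-s)\partial_{x_1}^2}F(v)(s)\,ds$, apply the Strichartz estimates of Proposition \ref{Strichartz-Antonelli} (in the form for the pure $x_1$-Schr\"odinger group, which holds identically since $\Pi_n$ commutes with $e^{it\partial_{x_1}^2}$), and estimate the nonlinearity in $L_{t,x_1}^{6/5}H_{x_1}^{s_1}\mathcal{H}_{x_2}^{s_2}$.

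The core inequality needed is a quintic product estimate at the higher regularity level: for $s_1\ge0$, $s_2\ge1$,
\begin{align*}
\big\|\langle\partial_{x_1}\rangle^{s_1}F(v)\big\|_{L_{t,x_1}^{6/5}\mathcal{H}_{x_2}^{s_2}(I\times\R\times\R)}
\lesssim \|v\|_{L_{t,x_1}^6\mathcal{H}_{x_2}^1(I\times\R\times\R)}^4\,\|v\|_{L_{t,x_1}^6H_{x_1}^{s_1}\mathcal{H}_{x_2}^{s_2}(I\times\R\times\R)}.
\end{align*}
This is proved by the fractional Leibniz rule in $x_1$ (Kato--Ponce), distributing $\langle\partial_{x_1}\rangle^{s_1}$ onto one factor and placing the remaining four in $L_{t,x_1}^6$, combined with the Moser-type estimate Lemma \ref{Moser-estimate} for the $H^{s_2}_{x_2}$ derivative acting on the product $v_{n_1}\overline{v_{n_2}}v_{n_3}\overline{v_{n_4}}v_{n_5}$, together with the boundedness of $\Pi_n$ on the relevant spaces and the triangle inequality over the resonance sum $n_1-n_2+n_3-n_4+n_5=n$ (which one handles exactly as in the nonlinear estimate already invoked for Theorem \ref{Thm2}, using the $\ell_n^2$ structure of $\mathcal{H}^{s_2}_{x_2}$). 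Feeding this into the Duhamel formula on $I_\ell$ gives
\begin{align*}
\|v\|_{L_t^\infty H_{x_1}^{s_1}\mathcal{H}_{x_2}^{s_2}(I_\ell)\cap L_{t,x_1}^6 H_{x_1}^{s_1}\mathcal{H}_{x_2}^{s_2}(I_\ell)}
\lesssim \|v(a_\ell)\|_{H_{x_1}^{s_1}\mathcal{H}_{x_2}^{s_2}}+\delta^4\|v\|_{L_{t,x_1}^6 H_{x_1}^{s_1}\mathcal{H}_{x_2}^{s_2}(I_\ell)},
\end{align*}
and absorbing the last term for $\delta$ small yields control on $I_\ell$ by $\|v(a_\ell)\|_{H_{x_1}^{s_1}\mathcal{H}_{x_2}^{s_2}}$. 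Iterating over $\ell=1,\dots,N$ and summing the geometric-type growth gives the global bound $\|v\|_{L_t^\infty H_{x_1}^{s_1}\mathcal{H}_{x_2}^{s_2}\cap L_{t,x_1}^6 H_{x_1}^{s_1}\mathcal{H}_{x_2}^{s_2}}\le C(L,\|\phi\|_{H_{x_1}^{s_1}\mathcal{H}_{x_2}^{s_2}})$; upgrading $L_{t,x_1}^6$ to $L_t^6W_{x_1}^{s_1,6}\mathcal{H}_{x_2}^{s_2}$ is the same computation with the admissible pair $(6,6)$ in the $W^{s_1,6}_{x_1}$ norm, and continuity in $t$ follows from the Duhamel representation and dominated convergence in the usual way.

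The main obstacle is purely the nonlinear estimate at fractional $x_1$-regularity: one must check that the Kato--Ponce product rule and the Moser estimate interact cleanly with the spectral projectors $\Pi_n$ and the resonance constraint, so that no loss in $n$ (i.e. no loss of $x_2$-regularity) occurs when summing $\sum_{n_1-n_2+n_3-n_4+n_5=n}\|\Pi_n(\cdots)\|$. Here I would exploit that $\Pi_n$ is bounded uniformly on $L^r_{x_1}$-valued $\ell^2_n$ spaces and that the product $\prod_j v_{n_j}$ already carries the $\ell^2$-summability in each index from the $\mathcal{H}^1_{x_2}$ bound on four of the factors, exactly as in the proof of Theorem \ref{Thm2}; the extra $\langle\partial_{x_1}\rangle^{s_1}$ is harmless because it is a Fourier multiplier in $x_1$ only and commutes with $\Pi_n$. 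Everything else is the routine contraction/continuity-argument bookkeeping, which is why the statement is asserted without proof in the paper.
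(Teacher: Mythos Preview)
Your proposal is correct and is precisely the standard persistence-of-regularity argument the paper has in mind: the authors omit the proof entirely and refer to \cite{Killip-Visan-note}, where this bootstrap (partition into subintervals of small $L_{t,x_1}^6\mathcal{H}_{x_2}^1$ norm, Strichartz plus a quintic product estimate distributing the extra $\langle\partial_{x_1}\rangle^{s_1}$ and $H^{s_2/2}$ derivatives via Kato--Ponce/Moser, then iterate) is exactly what is carried out. The only point worth flagging is that Lemma~\ref{Moser-estimate} is stated in the paper for $\gamma\in(0,1]$, so for $s_2>1$ you are implicitly invoking its higher-order analogue (obtained e.g.\ by iterating the $\gamma=1$ case or via the equivalence of $\mathcal{W}^{s,p}$ with the usual Sobolev and weighted norms); this is routine but should be noted.
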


Based on Proposition \ref{cor-preserve-regularity}, we prove the  following theorem on approximating the non-linear profiles at large-scale. We will prove it via   stability theorem (cf. Theorem \ref{Stablity}).
\begin{theorem}\label{large-scale}
For any $\phi \in L_{x_1}^2 \mathcal{H}_{x_2}^1 (\R \times \R) $, $0 < \theta \ll 1$, $( \lambda_k, t_k, x_{1,k}, \xi_k) \in \mathbb{R}_+ \times \mathbb{R} \times \mathbb{R} \times \mathbb{R}$, $ | \xi_k  | \lesssim 1$ and $\lambda_k  \to \infty$ when $k  \to \infty$, then there exists a global solution $u_k \in C_t^0 L_{x_1}^2 \mathcal{H}_{x_2}^1 (\R \times \R \times \R) $ of
\begin{align*}
\begin{cases}
i \partial_t u_k +\partial_{x_1}^2 u_k + \partial_{x_2}^2 u_k - x_2^2 u_k  = |u_k|^4 u_k , \\
u_k (0,x_1,x_2) = \lambda_k^{-\frac{1}{2}} e^{ix_1\cdot \xi_k } \left( e^{it_k  \partial_{x_1}^2} { P_{\le \lambda_k^\theta}^{x_1} \phi} \right) \left( \frac{ x_1 - x_{1,k}}{\lambda_k }, x_2 \right),
\end{cases}
\end{align*}
for $k $ sufficiently large and obeying 
\begin{align*}
\|u_k  \|_{L_t^\infty L_{x_1}^2 \mathcal{H}_{x_2}^1 \cap L_{t,x_1}^6 \mathcal{H}_{x_2}^1( \mathbb{R}\times \mathbb{R} \times \mathbb{R})} \lesssim_{\|\phi\|_{L_{x_1}^2 \mathcal{H}_{x_2}^1 (\R \times \R) } }1.
\end{align*}
Moreover, suppose that $\epsilon_4 = \epsilon_4 \left( \|\phi\|_{L_{x_1}^2 \mathcal{H}_{x_2}^1 (\R \times \R \times \R) } \right) $ is a sufficiently small positive constant and $\psi  \in H_{x_1}^{10 } \mathcal{H}_{x_2}^{10} (\R \times \R) $ such that
\begin{align*}
\| \phi - \psi  \|_{L_{x_1}^2 \mathcal{H}_{x_2}^1 (\R \times \R) } \le \epsilon_4.
\end{align*}
Then there exists a solution $v \in C_t^0 H_{x_1}^2 \mathcal{H}_{x_2}^1( \mathbb{R} \times \mathbb{R} \times \mathbb{R})$ of (DCR), with
\begin{align*}
v(0, x_1 ,x_2 )  = \psi(x_1,x_2) ,   & \text{ if } t_k = 0, \\
\lim\limits_{t\to \pm \infty } \| v(t,y,x) - e^{it \partial_{x_1}^2} \psi \|_{L_{x_1}^2 \mathcal{H}_{x_2}^1 (\R \times \R) }  =  0,  & \text{ if  } t_k \to \pm \infty,
\end{align*}
such that for $k $ large enough, we have
$ \left\|u_k  \right\|_{L_t^\infty L_{x_1}^2 \mathcal{H}_{x_2}^1 \cap L_{t,x_1}^6 \mathcal{H}_{x_2}^1( \mathbb{R} \times \mathbb{R} \times \mathbb{R})}  \lesssim 1,$
with
\begin{align*}
\left\|u_k (t) - w_{\lambda_k } (t) \right\|_{L_t^\infty L_{x_1}^2 \mathcal{H}_{x_2}^1 \cap L_{t,x_1}^6 \mathcal{H}_{x_2}^1( \mathbb{R} \times \mathbb{R} \times \mathbb{R})}    \to 0, \text{ as } k \to \infty,
\end{align*}
where
\begin{align*}
w_{\lambda_k}  (t,x_1,x_2) = e^{- i(t - t_k ) |\xi_k |^2} e^{ix_1\cdot \xi_k } \lambda_k^{-\frac{1}{2}} e^{it \left (  \partial_{x_2}^2 - x_2^2 \right) } v\left( \frac{t}{\lambda_k^2} + t_k , \frac{ x_1 - x_{1,k} - 2 \xi_k ( t-t_k)}{ \lambda_k } , x_2 \right).
\end{align*}
\end{theorem}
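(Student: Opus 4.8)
\emph{Overall strategy.} The plan is to build from the \eqref{DCR}-solution $v$ an explicit approximate solution of \eqref{NLS-final}, to correct it by a normal-form term of size $O(\lambda_k^{-2})$, and then to close the argument with the stability theorem (Theorem \ref{Stablity}). First I fix $\psi\in H^{10}_{x_1}\mathcal{H}^{10}_{x_2}(\R\times\R)$ with $\|\phi-\psi\|_{L^2_{x_1}\mathcal{H}^1_{x_2}}\le\epsilon_4$ and let $v=\sum_{n\in\N}v_n$, $v_n=\Pi_n v$, be the global \eqref{DCR}-solution given by Theorem \ref{Thm2}: when $t_k\equiv 0$ we require $v(0)=\psi$, while when $t_k\to\pm\infty$ we take $v$ to be the solution scattering to $e^{it\partial_{x_1}^2}\psi$. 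Since the scattering bound in Theorem \ref{Thm2} depends only on $\|\psi\|_{L^2_{x_1}\mathcal{H}^1_{x_2}}\lesssim\|\phi\|_{L^2_{x_1}\mathcal{H}^1_{x_2}}$, we get $\|v\|_{L^\infty_tL^2_{x_1}\mathcal{H}^1_{x_2}\cap L^6_{t,x_1}\mathcal{H}^1_{x_2}}\lesssim_{\|\phi\|}1$; by Proposition \ref{cor-preserve-regularity} also $v\in C^0_tH^{10}_{x_1}\mathcal{H}^{10}_{x_2}\cap L^6_tW^{10,6}_{x_1}\mathcal{H}^{10}_{x_2}$ with a $k$-independent norm. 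Using $e^{is(\partial_{x_2}^2-x_2^2)}v_n=e^{-is(2n+1)}v_n$, a direct computation — in which the Galilean boost, the $x_1$-dilation and the factor $e^{it(\partial_{x_2}^2-x_2^2)}$ intertwine $i\partial_t+\partial_{x_1}^2+\partial_{x_2}^2-x_2^2$ with $i\partial_s+\partial_{y_1}^2$ — shows that the function $w_{\lambda_k}$ of the statement obeys
\begin{align*}
i\partial_t w_{\lambda_k}+(\partial_{x_1}^2+\partial_{x_2}^2-x_2^2)w_{\lambda_k}-|w_{\lambda_k}|^4 w_{\lambda_k}=\mathcal{E}_k^{\mathrm{osc}},
\end{align*}
where $\mathcal{E}_k^{\mathrm{osc}}$ is, up to sign, the image of $\sum_{\mu_{\vec n,n}\neq0}e^{2i\lambda_k^2 s\mu_{\vec n,n}}\Pi_n\!\big(v_{n_1}\bar v_{n_2}v_{n_3}\bar v_{n_4}v_{n_5}\big)$, $\mu_{\vec n,n}:=n-n_1+n_2-n_3+n_4-n_5$, $s=t/\lambda_k^2+t_k$, under the very transformation producing $w_{\lambda_k}$ from $v$; the resonant terms ($\mu_{\vec n,n}=0$) reassemble $F(v)$ and cancel against $(i\partial_s+\partial_{y_1}^2)v$.

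\emph{The normal-form correction.} The crucial point is that $\mathcal{E}_k^{\mathrm{osc}}$ is highly oscillatory but \emph{not} small in $L^{6/5}_{t,x_1}\mathcal{H}^1_{x_2}$ — its norm equals that of $\sum_{\mu\neq0}e^{2i\lambda_k^2 s\mu}\Pi_n(v_{n_1}\cdots v_{n_5})$, which is $O(1)$ — so $w_{\lambda_k}$ cannot be fed into Theorem \ref{Stablity} directly. Instead I subtract the normal-form term $\mathcal{B}_k$ with coefficients, schematically,
\begin{align*}
(\mathcal{B}_k)_{\vec n,n}\ \sim\ \frac{1}{\lambda_k^2\,\mu_{\vec n,n}}\,e^{2i\lambda_k^2 s\mu_{\vec n,n}}\,\Pi_n\!\big(v_{n_1}\bar v_{n_2}v_{n_3}\bar v_{n_4}v_{n_5}\big),\qquad s=\tfrac{t}{\lambda_k^2}+t_k,
\end{align*}
transformed as above, so that $i\partial_t\mathcal{B}_k$ reproduces $\mathcal{E}_k^{\mathrm{osc}}$ up to the terms where $\partial_s$ falls on $v_n$ — rewritten through \eqref{DCR-5} in terms of $\partial_{x_1}^2 v_n$ and the quintic nonlinearity — and up to $\partial_{x_1}^2\mathcal{B}_k$. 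Since $|\mu_{\vec n,n}|\ge2$ on the non-resonant set, every surviving contribution gains a factor $\lambda_k^{-2}$, and the infinite sums over $(\vec n,n)$ converge: because $\Pi_n$ fails to commute with products, one uses Lemma \ref{Moser-estimate} to place the $\mathcal{H}^1_{x_2}$-derivative onto the factors and then the rapid decay of $\|\Pi_m v(x_1)\|_{L^2_{x_2}}$ in $m$ supplied by $v\in\mathcal{H}^{10}_{x_2}$ (with $L^6_t W^{2,6}_{x_1}\mathcal{H}^{10}_{x_2}$ handling the $x_1$-derivatives and H\"older absorbing the $(t,x_1)$-integrations). This yields $\|\mathcal{B}_k\|_{L^\infty_tL^2_{x_1}\mathcal{H}^1_{x_2}\cap L^6_{t,x_1}\mathcal{H}^1_{x_2}}\lesssim_{\psi}\lambda_k^{-2}$ and $\|\mathcal{E}_k\|_{L^{6/5}_{t,x_1}\mathcal{H}^1_{x_2}}\to0$, where $\mathcal{E}_k$ is the error of the corrected approximant $\tilde u_k:=w_{\lambda_k}+\mathcal{B}_k$ (the difference $|w_{\lambda_k}+\mathcal{B}_k|^4(w_{\lambda_k}+\mathcal{B}_k)-|w_{\lambda_k}|^4 w_{\lambda_k}$ is absorbed since $\|\mathcal{B}_k\|\to0$ while $\|w_{\lambda_k}\|\lesssim_{\|\phi\|}1$).

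\emph{Matching the data and closing by stability.} At $t_1=0$ one verifies the hypotheses of Theorem \ref{Stablity}. If $t_k\equiv0$, then $u_k(0)-\tilde u_k(0)$ is the image of $P^{x_1}_{\le\lambda_k^\theta}\phi-\psi-\mathcal{B}_k(0)$ under an isometry, with $L^2_{x_1}\mathcal{H}^1_{x_2}$-norm $\le\epsilon_4+\|P^{x_1}_{>\lambda_k^\theta}\phi\|_{L^2_{x_1}\mathcal{H}^1_{x_2}}+O(\lambda_k^{-2})$, so by \eqref{Stricharz-Sobolev} (and that $e^{it(\partial_{x_2}^2-x_2^2)}$ is an isometry on $\mathcal{H}^1_{x_2}$) the quantity $\|e^{it(\Delta_{\R^2}-x_2^2)}(u_k(0)-\tilde u_k(0))\|_{L^6_{t,x_1}\mathcal{H}^1_{x_2}}$ is $<\epsilon_1$ once $\epsilon_4$ is small and $k$ is large. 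If $t_k\to\pm\infty$, one uses instead that $v$ scatters, so that both $e^{it(\Delta_{\R^2}-x_2^2)}u_k(0)$ and the free evolution of $\tilde u_k(0)$ coincide, up to $o_k(1)$, with rescaled-boosted free Schr\"odinger evolutions of $\psi$ over a time half-line receding to $\pm\infty$, on which $\|e^{is\partial_{x_1}^2}\psi\|_{L^6_{s,x_1}\mathcal{H}^1_{x_2}}$ is $o_k(1)$. Since $\|\tilde u_k\|_{L^\infty_tL^2_{x_1}\mathcal{H}^1_{x_2}\cap L^6_{t,x_1}\mathcal{H}^1_{x_2}}\lesssim_{\|\phi\|}1$, I partition $\R$ into $O_{\|\phi\|}(1)$ intervals on which this norm lies below the stability threshold, apply Theorem \ref{Stablity} on them successively, and conclude the existence of the global solution $u_k$ with $\|u_k\|_{L^\infty_tL^2_{x_1}\mathcal{H}^1_{x_2}\cap L^6_{t,x_1}\mathcal{H}^1_{x_2}}\lesssim_{\|\phi\|}1$ and $\|u_k-\tilde u_k\|_{L^\infty_tL^2_{x_1}\mathcal{H}^1_{x_2}\cap L^6_{t,x_1}\mathcal{H}^1_{x_2}}\to0$; combined with $\|\mathcal{B}_k\|\to0$ this is precisely the claimed $\|u_k-w_{\lambda_k}\|_{L^\infty_tL^2_{x_1}\mathcal{H}^1_{x_2}\cap L^6_{t,x_1}\mathcal{H}^1_{x_2}}\to0$.

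\emph{Main obstacle.} The heart of the proof is the second step: one must first recognize that the oscillatory nonlinearity is not itself small in the $L^{6/5}$-norm required by Theorem \ref{Stablity}, hence must carry out the normal-form (integration-by-parts-in-time) step, and then rigorously bound the resulting infinite sums over $\vec n=(n_1,\dots,n_5)$ and $n$. The non-multiplicativity of $\Pi_n$ makes these sums delicate: it forces the Moser-type estimate of Lemma \ref{Moser-estimate} together with an investment of $x_2$-regularity far above $\mathcal{H}^1$ — which is exactly why Proposition \ref{cor-preserve-regularity} is applied with a large second index — while the $\partial_{x_1}^2 v_n$ produced when $\partial_s$ hits the \eqref{DCR} nonlinearity demands a comparable investment of $x_1$-regularity ($L^6_t W^{2,6}_{x_1}$).
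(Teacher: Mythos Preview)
Your proof is correct and rests on the same normal-form idea as the paper, but the implementation differs in a useful way. The paper first splits the error $e_{\lambda_k}$ into high and low $x_1$-frequency parts: on $P^{x_1}_{\ge 2^{-10}}$ a Bernstein gain of $\lambda_k^{-1}$ already makes the $L^{6/5}_{t,x_1}\mathcal{H}^1_{x_2}$ norm small, while on $P^{x_1}_{\le 2^{-10}}$ the symbol $2\mu_{\vec n,n}-|\xi|^2$ never vanishes (since $|\xi|^2<1\le|\mu|$), so the paper integrates by parts in the Duhamel formula using the operator $(2\mu_{\vec n,n}+\partial_{x_1}^2)^{-1}P^{x_1}_{\le 2^{-10}}$. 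You instead take the simpler multiplier $\mu_{\vec n,n}^{-1}$, which is defined globally in frequency and therefore requires no splitting; the price is the extra term $\partial_{x_1}^2\mathcal{B}_k$, but this is harmless because Proposition \ref{cor-preserve-regularity} provides two spare $x_1$-derivatives. The summation over $(\vec n,n)$ is handled the same way in both arguments---trading $\ell^2_n$ for $\ell^1_n$ via $\langle n\rangle^{1/2}\lesssim\langle n\rangle^{-1}\prod_j\langle n_j\rangle^2$ and paying with high $\mathcal{H}_{x_2}$-regularity of $v$---and the matching of initial data and the final appeal to Theorem \ref{Stablity} proceed as you describe.
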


\begin{proof}
Using the translation symmetry, we can take $x_{1,k}=0$. By the partial Galilean transformation 
\begin{align*}
u(t,x_1,x_2)\to e^{-it|\xi|^2}e^{ix_1\cdot\xi}u(t,x_1-2t\xi,x_2)
\end{align*}
for $\xi\in\R$  and since $|\xi_k|$ is bounded, we can take $\xi_k=0$. Then we have
\begin{equation*}
w_{\lambda_k}(t,x_1,x_2) = \lambda_k^{-\frac{1}{2}} e^{it (\partial_{x_2}^2-x_2^2)} v\left( \frac{t}{\lambda_k^2} + t_k , \frac{ x_1 }{ \lambda_k } , x_2 \right).
\end{equation*}
First, we will show that $w_{\lambda_k}$ is an approximation solution to $\eqref{NLS-final}$ when $t_k=0$. By the stability theorem, after a simple calculus we deduce this theorem to show that
\begin{equation}\label{fml-error-term-target}
\bigg\|  \int_{0}^{t} e^{i (t-\tau) (\partial_{x_1}^2 + \partial_{x_2}^2 - x_2^2)} e_{\lambda_k} (\tau)  d\tau   \bigg\|_{L^6_{t,x_1}  \mathcal{H}^1_{x_2} (\R \times \R \times \R) }\longrightarrow  0 , \text{as} \hspace{2ex} k\rightarrow \infty,
\end{equation}
where
\begin{align}\label{fml-error-term}
e_{\lambda_k}:=&(i\partial_t+\partial_{x_1}^2+\partial_{x_2}^2-x_2^2)w_{\lambda_k}-|w_{\lambda_k}|^4 w_{\lambda_k}\nonumber\\
=&-\lambda_k^{-\frac{5}{2}}\sum_{n\in\mathbb{N}}e^{-it(2n+1)}\sum\limits_{\substack{n_1,n_2,n_3,n_4,n_5 \in \mathbb{N}  , \\ n_1-n_2+ n_3-n_4+n_5 \neq n}}  e^{-2it (n_1-n_2+n_3-n_4+n_5-n)}\nonumber \\
&\hspace{32ex}\times   \left(   \Pi_n( v_{n_1} \bar{v}_{n_2} v_{n_3} \bar{v}_{n_4} v_{n_5} )\right)\left(\frac{t}{\lambda_k^2} , \frac{x_1}{\lambda_k} , x_2 \right).
\end{align}
We then divide the error term $\eqref{fml-error-term}$ into three terms
\begin{align*}
e_{\lambda_k}=&-\lambda_k^{-\frac{5}{2}}\sum_{n\in\mathbb{N}}e^{-it(2n+1)}\sum\limits_{\substack{n_1,n_2,n_3,n_4,n_5 \in \mathbb{N}  }}  e^{-2it (n_1-n_2+n_3-n_4+n_5-n) }\\
&\hspace{32ex}\times  P^{x_1}_{\ge 2^{-10}}  \left(\Pi_n( v_{n_1}  \overline{v}_{n_2} v_{n_3}  \overline{v}_{n_4} v_{n_5} ) \left( \frac{t}{\lambda_k^2} , \frac{x_1}{\lambda_k} , x_2 \right)  \right)\\
&+\lambda_k^{-\frac{5}{2}}\sum_{n\in\mathbb{N}}e^{-it(2n+1)}\sum\limits_{\substack{n_1,n_2,n_3,n_4,n_5 \in \mathbb{N}, \\ n_1-n_2+n_3-n_4+n_5=n  }}  e^{-2it (n_1-n_2+n_3-n_4+n_5-n)}\\
&\hspace{32ex}\times P^{x_1}_{\ge 2^{-10}}  \left(\Pi_n( v_{n_1}  \overline{v}_{n_2} v_{n_3}  \overline{v}_{n_4} v_{n_5} ) \left( \frac{t}{\lambda_k^2} , \frac{x_1}{\lambda_k} , x_2 \right)  \right)\\
&-\lambda_k^{-\frac{5}{2}}\sum_{n\in\mathbb{N}}e^{-it(2n+1)}\sum\limits_{\substack{n_1,n_2,n_3,n_4,n_5 \in \mathbb{N}, \\ n_1-n_2+n_3-n_4+n_5 \ne n  }}  e^{-2it (n_1-n_2+n_3-n_4+n_5-n)}\\
&\hspace{32ex}\times  P^{x_1}_{\le 2^{-10}}  \left(\Pi_n( v_{n_1}  \overline{v}_{n_2} v_{n_3}  \overline{v}_{n_4} v_{n_5} ) \left( \frac{t}{\lambda_k^2} , \frac{x_1}{\lambda_k} , x_2 \right)  \right)\\
:=& e^1_{\lambda_k}  +  e^2_{\lambda_k}  +  e^3_{\lambda_k}.
\end{align*}
By the Strichartz estimate, the terms $e^1_{\lambda_k}$ and $e^2_{\lambda_k}$ can be deduced to prove
\begin{equation}\label{two-term}
\|e^j_{\lambda_k}\|_{L^{\frac{6}{5}}_{t,x_1}\mathcal{H}^1_{x_2} (\R \times \R \times \R) }\longrightarrow  0\mbox{ as } k\rightarrow 0.
\end{equation}  
For the last term $e^3_{\lambda_k}$, we will show 
\begin{equation}\label{last-term}
\bigg\|  \int_{0}^{t} e^{i (t-\tau) (\partial_{x_1}^2 + \partial_{x_2}^2 - x_2^2)} e^3_{\lambda_k} (\tau)  d\tau   \bigg\|_{L^6_{t,x_1}  \mathcal{H}^1_{x_2} (\R \times \R \times \R) }\longrightarrow  0 \mbox{ as }  k\rightarrow \infty.
\end{equation}
	
First,	we treat $e^1_{\lambda_k}$. Notice that after the partial Fourier transform with respect to $x_1$, there are no low frequency. Hence, by Bernstein's inequality, we  find that
\begin{align}\label{fml-error-t-1-sub}
&\|e^1_{\lambda_k}\|_{L^{\frac{6}{5}}_{t,x_1}\mathcal{H}^1_{x_2} (\R \times \R \times \R) }   \\
\lesssim& \lambda^{-\frac{7}{2}}_k   \bigg\|  \sum_{n\in\mathbb{N}}  e^{-i  t(2n+1)}  \sum   \limits_{\substack{n_i \in \mathbb{N}\\i=1,2,3,4,5  }}  e^{-2it  (n_1-n_2+n_3-n_4+n_5-n)}\partial_{x_1} P^{x_1}_{\le 2^{-10}} \Pi_{n}( v_{n_1} (\overline{v}_{n_2} v_{n_3} \overline{v}_{n_4}  v_{n_5}) )  (\frac{t}{\lambda_k^2} , \frac{x_1}{\lambda_k} , x_2) \bigg\|_{L^{\frac{6}{5}}_{t,x_1}\mathcal{H}^1_{x_2} }.\notag
\end{align}
By change of variables, the Leibniz rule, Plancherel theorem and H\"older's inequality, we obtain the following
\begin{align}\label{fml-error-term-1}
&\mbox{(RHS) of } \eqref{fml-error-t-1-sub}   \nonumber  \\
 \lesssim&\lambda^{-1}_k   \bigg\|  \sum_{n\in\mathbb{N}}  e^{-i  \lambda^2_k  t(2n+1)}  \sum   \limits_{\substack{n_i \in \mathbb{N}\\i=1,2,3,4,5  }}  e^{-2i   \lambda^2_k  t  (n_1-n_2+n_3-n_4+n_5-n)}    \Pi_{n}( \partial_{x_1}\overline{v}_{n_2} (v_{n_1} v_{n_3} \overline{v}_{n_4}  v_{n_5}) )  (t,x_1,x_2) \bigg\|_{L^{\frac{6}{5}}_{t,x_1}\mathcal{H}^1_{x_2} (\R \times \R \times \R) }  \nonumber  \\
&+ \lambda^{-1}_k   \bigg\|  \sum_{n\in\mathbb{N}}  e^{-i  \lambda^2_k  t(2n+1)}  \sum   \limits_{\substack{ni\in \mathbb{N}\\i=1,2,3,4,5  }}  e^{-2i   \lambda^2_k  t  (n_1-n_2+n_3-n_4+n_5-n)}    \Pi_{n}( \partial_{x_1}v_{n_3} (v_{n_1} \overline{v}_{n_2} \overline{v}_{n_4}  v_{n_5}) )  (t,x_1,x_2) \bigg\|_{L^{\frac{6}{5}}_{t,x_1}\mathcal{H}^1_{x_2} (\R \times \R \times \R) }  \nonumber  \\
&+ \lambda^{-1}_k   \bigg\|  \sum_{n\in\mathbb{N}}  e^{-i  \lambda^2_k  t(2n+1)}  \sum   \limits_{\substack{n_i \in \mathbb{N}\\i=1,2,3,4,5  }}  e^{-2i   \lambda^2_k  t  (n_1-n_2+n_3-n_4+n_5-n)}  \Pi_{n}( \partial_{x_1}\overline{v}_{n_4} (v_{n_1} \overline{v}_{n_2} v_{n_3}  v_{n_5}) )  (t,x_1,x_2) \bigg\|_{L^{\frac{6}{5}}_{t,x_1}\mathcal{H}^1_{x_2} (\R \times \R \times \R) }  \nonumber  \\
&+ \lambda^{-1}_k   \bigg\|  \sum_{n\in\mathbb{N}}  e^{-i  \lambda^2_k  t(2n+1)}  \sum   \limits_{\substack{n_i \in \mathbb{N}\\i=1,2,3,4,5  }}  e^{-2i   \lambda^2_k  t  (n_1-n_2+n_3-n_4+n_5-n)}   \Pi_{n}( \partial_{x_1}v_{n_1} (\overline{v}_{n_2} v_{n_3} \overline{v}_{n_4}  v_{n_5}) )  (t,x_1,x_2) \bigg\|_{L^{\frac{6}{5}}_{t,x_1}\mathcal{H}^1_{x_2} (\R \times \R \times \R) }  \nonumber  \\
&+ \lambda^{-1}_k   \bigg\|  \sum_{n\in\mathbb{N}}  e^{-i  \lambda^2_k  t(2n+1)}  \sum   \limits_{\substack{n_i \in \mathbb{N}\\i=1,2,3,4,5  }}  e^{-2i   \lambda^2_k  t  (n_1-n_2+n_3-n_4+n_5-n)}    \Pi_{n}( \partial_{x_1}v_{n_5} (v_{n_1} \overline{v}_{n_2} v_{n_3}  \overline{v}_{n_4}) )  (t,x_1,x_2) \bigg\|_{L^{\frac{6}{5}}_{t,x_1}\mathcal{H}^1_{x_2} (\R \times \R \times \R) }  \nonumber  \\
\lesssim& \lambda^{-1} _k\|\partial_{x_2}  v\|_{L^6_{t,x_1}  \mathcal{H}^1_{x_1} (\R \times \R \times \R) }  \|v\|^4_{L^6_{t,x_1}  \mathcal{H}^1_{x_2} (\R \times \R \times \R) }\rightarrow 0,\; \text{as} \hspace{2ex} k\rightarrow \infty.
\end{align}

Now, we turn to estimate the term $e^2_{\lambda_k}$. Similarly, By Bernstein's inequality, Leibniz rule, change of variables and symmetry among $v_{n_1},v_{n_2},v_{n_3},v_{n_4},v_{n_5}$, we get the following
\begin{align*}
&\|e^2_{\lambda_k}\|_{L^{\frac{6}{5}}_{t,x_1}\mathcal{H}^1_{x_2} (\R \times \R \times \R) }\\
\sim&  \lambda^{-1}_k  \bigg\|  \bigg(  \sum_{n}  \bigg|  \langle n \rangle^{\frac{1}{2}}   \sum\limits_{\substack{n_1,n_2,n_3,n_4,n_5, \\ n_1-n_2+n_3-n_4+n_5=n}}  \Pi_{n}  (\partial_{x_1} v_{n_1} \cdot   \overline{v}_{n_2}  v_{n_3}  \overline{v}_{n_4}  v_{n_5}) (t , x_1 , x_2)  \bigg|^2  \bigg)^\frac{1}{2}  \bigg\|_{  L^{\frac{6}{5}}_{t,x_1}  L^2_{x_2} (\R \times \R \times \R) }.
\end{align*}
From equation $n=n_1-n_2+n_3-n_4+n_5$, we can easily check that
\begin{equation*}
\langle n \rangle^2\lesssim \langle n_1 \rangle^2  \langle n_2 \rangle^2  \langle n_3 \rangle^2  \langle n_4 \rangle^2  \langle n_5 \rangle^2.
\end{equation*}
Consequently, we have
\begin{equation*}
\langle n \rangle^{\frac{1}{2}} \lesssim \langle n \rangle^{-1} \langle n \rangle^{2} \lesssim \langle n \rangle^{-1} \langle n_1 \rangle^2  \langle n_2 \rangle^2  \langle n_3 \rangle^2  \langle n_4 \rangle^2  \langle n_5 \rangle^2.
\end{equation*}
Notice that $\{\langle n \rangle^{-1}\}_{n=1}^{\infty}\in l^2_n$. Therefore, by using Minkowski's inequality, we can convert $l^1$ summation respect to variable $n$ to $l^2$ summation. This make us possible to obtain the summability over variable $n$ at the cost of some regularity on $x_2$ direction. By the boundedness of $\Pi_n$, we get 
\begin{align*}
&\bigg\|  \langle n \rangle^\frac{1}{2}  \sum\limits_{\substack{n_1,n_2,n_3,n_4,n_5, \\ n_1-n_2+n_3-n_4+n_5=n}}  \langle n_1 \rangle^2  \langle n_2 \rangle^2  \langle n_3 \rangle^2  \langle n_4 \rangle^2  \langle n_5 \rangle^2  \Pi_{n}  (\partial_{x_1} v_{n_1} \cdot   \overline{v}_{n_2}  v_{n_3}  \overline{v}_{n_4}  v_{n_5}) (t , x_1 , x_2)  \bigg\|_{  L^{\frac{6}{5}}_{t,x_1}  L^2_{x_2} l^2_n}\\
\lesssim & \bigg\|  \langle n \rangle^{-1}  \sum\limits_{\substack{n_1,n_2,n_3,n_4,n_5, \\ n_1-n_2+n_3-n_4+n_5=n}}  \langle n_1 \rangle^2  \langle n_2 \rangle^2  \langle n_3 \rangle^2  \langle n_4 \rangle^2  \langle n_5 \rangle^2  \Pi_{n}  (\partial_{x_1} v_{n_1} \cdot   \overline{v}_{n_2}  v_{n_3}  \overline{v}_{n_4}  v_{n_5}) (t , x_1 , x_2)  \bigg\|_{  L^{\frac{6}{5}}_{t,x_1}  L^2_{x_2} l^2_n}\\
\lesssim & \bigg\|  \sum\limits_{\substack{n_1,n_2,n_3,n_4,n_5}}  \langle n_1 \rangle^2  \langle n_2 \rangle^2  \langle n_3 \rangle^2  \langle n_4 \rangle^2  \langle n_5 \rangle^2    \|(\partial_{x_1} v_{n_1} \cdot   \overline{v}_{n_2}  v_{n_3}   \overline{v}_{n_4}  v_{n_5}) (t , x_1 , x_2)\|_{L^2_{x_2}}    \bigg\|_{  L^{\frac{6}{5}}_{t,x_1}}.
\end{align*}
H\"older's inequality and Sobolev embedding gives
\begin{equation*}
\|\partial_{x_1} v_{n_1} \cdot  \overline{v}_{n_2}  v_{n_3}  \overline{v}_{n_4}  v_{n_5}\|_{L^2_{x_2} (\R) }\lesssim \|\partial_{x_1} v_{n_1} \|_{L^2_{x_2} (\R) }  \|v_{n_2}\|_{L^{\infty}_{x_2} (\R) }  \|v_{n_3}\|_{L^{\infty}_{x_2} (\R) } \|v_{n_4}\|_{L^{\infty}_{x_2} (\R) }   \|v_{n_5}\|_{L^{\infty}_{x_2} (\R) }.  
\end{equation*}
Since $v_{n_1},v_{n_2},v_{n_3},v_{n_4}$ and $v_{n_5}$ hold the equivalent status, it is sufficient to show
\begin{align}\label{fml-error-term-2}
&\|e^2_{\lambda_k}\|_{L^{\frac{6}{5}}_{t,x_1}\mathcal{H}^1_{x_2}}  \nonumber  \\
\lesssim & \lambda^{-1}_k \big\|  \langle n_1 \rangle^2  \langle n_2 \rangle^2  \langle n_3 \rangle^2  \langle n_4 \rangle^2  \langle n_5 \rangle^2  \|\partial_{x_1} v_{n_1}\|_{L^2_{x_2}}   \|v_{n_2}\|_{L^{\infty}_{x_2}} \|v_{n_3}\|_{L^{\infty}_{x_2}}  \|v_{n_4}\|_{L^{\infty}_{x_2}}   \|v_{n_5}\|_{L^{\infty}_{x_2}}   \big\|_{L^{\frac{6}{5}}_{t,x_1}  l^2_{n_1} l^2_{n_2} l^2_{n_3} l^2_{n_4} l^2_{n_5} (\R \times \Z^5) }  \nonumber  \\
\lesssim & \lambda^{-1}_k  \|\partial_{x_1} v\|_{L^6_{t,x_1}  \mathcal{H}^6_{x_2}(\R^2)}     \|v\|^5_{L^6_{t,x_1}  \mathcal{H}^7_{x_2}(\R^2)}  \lesssim  \lambda^{-\frac{1}{3}}_k  C\big(  \|  \psi  \|_{H^1_{x_1}  \mathcal{H}^6_{x_2}(\R^2)}  \big)   C\big(  \|  \psi  \|_{L^2_{x_1}  \mathcal{H}^7_{x_2}(\R^2)}  \big)  \rightarrow  0, \text{as} \hspace{2ex} k\rightarrow \infty.
\end{align}
	
We now turn to estimate the term $e^3_{\lambda_k}$. Unfortunately, we can not get any  extra negative power of $\lambda_k$ by directly applying $\partial_{x_1}$ to $e^3_{\lambda}$, since $e^3_{\lambda_k}$ has very low frequency on $x_1$ direction. To overcome this difficulty, we use the normal form transform on $e^3_{\lambda_k}$ as follows:
\begin{align}\label{fml-error-term-3}
&\int_{0}^{t} e^{i (t-\tau) (\partial_{x_1}^2 + \partial_{x_2}^2 - x_2^2)} e^3_{\lambda_k} (\tau)  d\tau\\
=& -\lambda_k^{-\frac{5}{2}}  \sum\limits_{\substack{n_1,n_2,n_3,n_4,n_5,n\in\mathbb{N}, \\ n_1-n_2+n_3-n_4+n_5\ne n}}  \int_{0}^t  e^{it (\partial_{x_2}^2 -2n -1)}  e^{-it \widetilde{\partial}_{x_1}^2}  P^{x_1}_{\le 2^{-10}} \bigg(  \Pi_n (v_{n_1}  \overline{v_{n_2}}v_{n_3}\overline{v_{n_4}}   v_{n_5})  \bigg(  \frac{\tau}{\lambda_k^2} , \frac{x_1}{\lambda_k} , x_2  \bigg)  \bigg)  d\tau,\notag
\end{align}
where 
\begin{equation*}
\widetilde{\partial}_{x_1}^2 := 2(n_1-n_2+n_3-n_4+n_5-n) + \partial_{x_1}^2.
\end{equation*}
Using the restriction $n_1-n_2+n_3-n_4+n_5   \ne   n$ and $|\xi|\le 2^{-10}$, one can find $\widetilde{\partial}_{x_1}^2$ is revertible and its inverse can be defined as follows:
\begin{equation*}
\mathcal{F}_{x_1} \bigg(  (-i\widetilde{\partial}_{x_1}^2)^{-1}  f  \bigg)(\xi , x_2 ) = \frac{i (\mathcal{F}_{x_1} f) (\xi , x_2) }{2(n_1-n_2+n_3-n_4+n_5-n) - \xi^2 }.
\end{equation*}
	
Therefore, by integrating by parts,  $\eqref{fml-error-term-3}$ can be divided to three sub-terms
\begin{align*}
\mbox{(RHS) of }\eqref{fml-error-term-3} = & \lambda_k^{-\frac{5}{2}}  \sum\limits_{\substack{n_1,n_2,n_3,n_4,n_5,n \in \mathbb{N} \\ n_1-n_2+n_3-n_4+n_5\ne n}}  e^{it (\partial_{x_1}^2 + \partial_{x_2}^2 - x_2^2)}  (-i\widetilde{\partial}_{x_1}^2)^{-1} \\
&\hspace{24ex}  \times  P^{x_1}_{\le 2^{-10}} \bigg(  \Pi_n (v_{n_1}  \overline{v}_{n_2}  v_{n_3}  \overline{v}_{n_4}  v_{n_5})  \bigg(  \frac{0}{\lambda_k^2} , \frac{x_1}{\lambda_k} , x_2  \bigg)  \bigg)  \\
& - \lambda_k^{-\frac{5}{2}}  \sum\limits_{\substack{n_1,n_2,n_3,n_4,n_5,n \in \mathbb{N} \\ n_1-n_2+n_3-n_4+n_5\ne n}}  e^{-2it (n_1-n_2+n_3-n_4+n_5) - it}  (-i\widetilde{\partial}_{x_1}^2)^{-1} \\ 
&\hspace{24ex}  \times  P^{x_1}_{\le 2^{-10}} \bigg(  \Pi_n (v_{n_1}  \overline{v}_{n_2}  v_{n_3}  \overline{v}_{n_4}  v_{n_5})  \bigg(  \frac{t}{\lambda_k^2} , \frac{x_1}{\lambda_k} , x_2  \bigg)  \bigg) \\	
& + \lambda_k^{-\frac{5}{2}}  \sum\limits_{\substack{n_1,n_2,n_3,n_4,n_5,n \in \mathbb{N} \\ n_1-n_2+n_3-n_4+n_5\ne n}}  e^{it (\partial_{x_2}^2 -2n -1)}  \int_{0}^t    e^{-it \widetilde{\partial}_{x_1}^2}  (-i\widetilde{\partial}_{x_1}^2)^{-1}  \\
&\hspace{24ex}  \times  \partial_{\tau}  P^{x_1}_{\le 2^{-10}} \bigg(  \Pi_n (v_{n_1}  \overline{v}_{n_2}  v_{n_3}  \overline{v}_{n_4}  v_{n_5})  \bigg(  \frac{\tau}{\lambda_k^2} , \frac{x_1}{\lambda_k} , x_2  \bigg)  \bigg)  d\tau.
\end{align*}
	 
\begin{align*}
e_{\lambda_k}^{j_1} := & \lambda_k^{-\frac{5}{2}} \bigg\| \sum\limits_{\substack{n_1,n_2,n_3,n_4,n_5,n \in \mathbb{N} \\ n_1-n_2+n_3-n_4+n_5\ne n}}  e^{it (\partial_{x_1}^2 + \partial_{x_2}^2 - x_2^2)}  (-i\widetilde{\partial}_{x_1}^2)^{-1} \\
&\hspace{24ex}  \times  P^{x_1}_{\le 2^{-10}} \bigg(  \Pi_n (v_{n_1}  \overline{v}_{n_2}  v_{n_3}  \overline{v}_{n_4}  v_{n_5})  \bigg(  \frac{0}{\lambda_k^2} , \frac{x_1}{\lambda_k} , x_2  \bigg)  \bigg) \bigg\|_{L^6_{t,x_1}\mathcal{H}_{x_2}^1(\R \times \R \times \R)},\\
e_{\lambda_k}^{j_2} := & \lambda_k^{-\frac{5}{2}} \bigg\| \sum\limits_{\substack{n_1,n_2,n_3,n_4,n_5,n \in \mathbb{N} \\ n_1-n_2+n_3-n_4+n_5\ne n}}  e^{-2it (n_1-n_2+n_3-n_4+n_5) - it}  (-i\widetilde{\partial}_{x_1}^2)^{-1} \\ 
&\hspace{24ex}  \times  P^{x_1}_{\le 2^{-10}} \bigg(  \Pi_n (v_{n_1}  \overline{v}_{n_2}  v_{n_3}  \overline{v}_{n_4}  v_{n_5})  \bigg(  \frac{t}{\lambda_k^2} , \frac{x_1}{\lambda_k} , x_2  \bigg)  \bigg) \bigg\|_{L^6_{t,x_1}\mathcal{H}_{x_2}^1(\R \times \R \times \R)},\\
e_{\lambda_k}^{j_3} := & \lambda_k^{-\frac{5}{2}} \bigg\| \sum\limits_{\substack{n_1,n_2,n_3,n_4,n_5,n \in \mathbb{N} \\ n_1-n_2+n_3-n_4+n_5\ne n}}  e^{it (\partial_{x_2}^2 -2n -1)}  \int_{0}^t    e^{-it \widetilde{\partial}_{x_1}^2}  (-i\widetilde{\partial}_{x_1}^2)^{-1}  \\
&\hspace{24ex}  \times  \partial_{\tau}  P^{x_1}_{\le 2^{-10}} \bigg(  \Pi_n (v_{n_1}  \overline{v}_{n_2}  v_{n_3}  \overline{v}_{n_4}  v_{n_5})  \bigg(  \frac{\tau}{\lambda_k^2} , \frac{x_1}{\lambda_k} , x_2  \bigg)  \bigg)  d\tau \bigg\|_{L^6_{t,x_1}\mathcal{H}_{x_2}^1(\R \times \R \times \R)}.
\end{align*}
It is easy to verify that
\begin{align*}
\bigg\|  \int_{0}^{t} e^{i (t-\tau) (\partial_{x_1}^2 + \partial_{x_2}^2 - x_2^2)} e^3_{\lambda_k} (\tau)  d\tau  \bigg\|_{L^6_{t,x_1}  \mathcal{H}^1_{x_2} (\R \times \R \times \R) } \sim e^{j_1}_{\lambda_k} + e^{j_2}_{\lambda_k} + e^{j_3}_{\lambda_k}.
\end{align*}
We treat the term $e^{j_1}_{\lambda_k}$ first. Notice that operator $P^{x_1}_{\le 2^{-10}} ( -i \widetilde{\partial}_{x_1}^2 )^{-1}$ is bounded on $L^r(\R), 1<r<\infty$. Hence, using Minkowski's inequality and Strichartz estimates, we obtain
\begin{align}
e^{j_1}_{\lambda_k} &\lesssim \lambda^{-\frac{5}{2}}_k \bigg\|   \bigg\|  \langle n \rangle^{\frac{1}{2}}  \sum\limits_{\substack{n_1,n_2,n_3,n_4,n_5,n \in \mathbb{N} \\ n_1-n_2+n_3-n_4+n_5\ne n}}  P^{x_1}_{\le 2^{-10}} (-i \widetilde{\partial}_{x_1}^2 )^{-1} \Pi_n  \big( v_{n_1}  \overline{v}_{n_2}  v_{n_3}  \overline{v}_{n_4}  v_{n_5}    \big)  \bigg(  0, \frac{x_1}{\lambda_k} , x_2  \bigg)    \bigg\|_{l^2_n (\N) }   \bigg\|_{L^2_{x_1 , x_2} (\R \times \R) }\notag\\
&\lesssim \lambda^{-\frac{5}{2}}_k  \sum\limits_{\substack{n_1,n_2,n_3,n_4,n_5 \in \mathbb{N} } }  \Big\|  \langle n \rangle^{\frac{1}{2}}  \Pi_{n}   ( v_{n_1}  \overline{v}_{n_2}  v_{n_3}  \overline{v}_{n_4}  v_{n_5}    )  (  0, \frac{x_1}{\lambda_k} , x_2  )   \Big\|_{L^2_{x_1,x_2} l^2_n (\R \times \R \times \N) } \nonumber \\
&\lesssim  \lambda^{-2}_k  C\bigg(  \|v(0 , x_1 , x_2)\|_{H^1_{x_1}  \mathcal{H}^3_{x_2} (\R \times \R) }  \bigg)^5\longrightarrow  0  \mbox{ as } \hspace{2ex} k\rightarrow \infty. \label{fml-error-term-3-1}
\end{align}
By the boundedness of $(-i \widetilde{\partial}_{x_1}^2)^{-1}P^{x_1}_{\le 2^{-10}} $, Minkowski's inequality, the fractional Leibniz rule and Sobolev embedding, the term $e^{j_2}_{\lambda_k}$ can be bounded as follows 
\begin{align}
e^{j_2}_{\lambda_k} \lesssim & \lambda^{-\frac{5}{2}}_k  \bigg\|   \langle n \rangle^{\frac{1}{2}}  \sum\limits_{\substack{n_1,n_2,n_3,n_4,n_5 \in \mathbb{N} \\ n_1-n_2+n_3-n_4+n_5\ne n}} e^{-2it  (n_1-n_2+n_3-n_4+n_5) - it}\notag  \\   
& \hspace{16ex} \times (-i \widetilde{\partial}_{x_1}^2)^{-1}P^{x_1}_{\le 2^{-10}}   \Pi_{n}   ( v_{n_1}  \overline{v}_{n_2}  v_{n_3}  \overline{v}_{n_4}  v_{n_5}    )  (  \frac{t}{\lambda_k^2} , \frac{x_1}{\lambda_k} , x_2  )      \bigg\|_{L^6_{t,x_1}  L^2_{x_2}  l^2_n (\R \times \R \times \R \times \N) }  \nonumber 
\\\lesssim & \lambda_k^{-2}  \bigg\|   \sum_{n_1,n_2,n_3,n_4,n_5\in\mathbb{N}}   \|  v_{n_1} \overline{v}_{n_2} v_{n_3}  \overline{v}_{n_4}  v_{n_5}  \|_{ H^{\frac{1}{4}}_{x_1} \mathcal{H}^1_{x_2} (\R \times \R) }   \bigg\|_{L^6_t (\R) } \nonumber \\
\lesssim &  \lambda_k^{-2}  \bigg\|    \|  v  (t , x_1 , x_2)\|^5_{ W^{\frac{5}{6} , 4}_{x_1} \mathcal{H}^5_{x_2} (\R \times \R) }    \bigg\|_{L^6_t (\R) } \nonumber \\\lesssim &  \lambda_k^{-2}  C \bigg( \|  v  (0 , x_1 , x_2)\|_{ H_{x_1}^{\frac{61}{60}} \mathcal{H}^{5}_{x_2} (\R \times \R) } \bigg) \longrightarrow 0  \mbox{ as } \hspace{2ex} k\rightarrow \infty.\label{fml-term-3-3}
\end{align}	
We now treat the most hard term $e^{j_3}_{\lambda_k}$. Using Strichartz estimate we have
\begin{align}\label{4.27}
&\lambda_k^{-\frac{5}{2}} \bigg\| \sum\limits_{\substack{n_1,n_2,n_3,n_4,n_5,n \in \mathbb{N} \\ n_1-n_2+n_3-n_4+n_5\ne n}}  e^{it (\partial_{x_2}^2 -2n -1)}  \int_{0}^t    e^{-it \widetilde{\partial}_{x_1}^2}  (-i\widetilde{\partial}_{x_1}^2)^{-1} \notag \\
&\hspace{24ex}  \times  \partial_{\tau}  P^{x_1}_{\le 2^{-10}} \bigg(  \Pi_n (v_{n_1}  \overline{v}_{n_2}  v_{n_3}  \overline{v}_{n_4}  v_{n_5})  \bigg(  \frac{\tau}{\lambda_k^2} , \frac{x_1}{\lambda_k} , x_2  \bigg)  \bigg)  d\tau \bigg\|_{L^6_{t,x_1}\mathcal{H}^1_{x_2}(\R\times\R\times\R)} \notag\\
&\lesssim \lambda_k^{-\frac{5}{2}} \bigg\| \big( i\partial_t + \partial_{x_1}^2 + \partial_{x_2}^2 - x_2^2 \big) \sum\limits_{\substack{n_1,n_2,n_3,n_4,n_5,n \in \mathbb{N} \\ n_1-n_2+n_3-n_4+n_5\ne n}}  e^{it (\partial_{x_2}^2 -2n -1)}  \int_{0}^t    e^{-it \widetilde{\partial}_{x_1}^2}  (-i\widetilde{\partial}_{x_1}^2)^{-1} \notag \\
&\hspace{24ex}  \times  \partial_{\tau}  P^{x_1}_{\le 2^{-10}} \bigg(  \Pi_n (v_{n_1}  \overline{v}_{n_2}  v_{n_3}  \overline{v}_{n_4}  v_{n_5})  \bigg(  \frac{\tau}{\lambda_k^2} , \frac{x_1}{\lambda_k} , x_2  \bigg)  \bigg)  d\tau \bigg\| _{L^1_t  L^2_{x_1}  \mathcal{H}^1_{x_2} (\R \times \R \times \R) }.
\end{align}
Hence, one can find
\begin{align*}
&\mbox{(RHS) of }\eqref{4.27}\\
 \lesssim&  \lambda_k^{-\frac{5}{2}}  \bigg\|   \bigg(   \sum_{n\in\mathbb{N}}  \langle n \rangle   \bigg(    \sum\limits_{\substack{n_1,n_2,n_3,n_4,n_5 \in \mathbb{N} \\ n_1-n_2+n_3-n_4+n_5\ne n}}  \bigg|    e^{-2it  (n_1-n_2+n_3-n_4+n_5-n)  -  it} \\   
&\hspace{3ex}\times  (-i \widetilde{\partial}_{x_1}^2 )^{-1}    P^{x_1}_{\le 2^{-10}}  \partial_t  \Pi_n  \big(  v_{n_1}    \overline{v}_{n_2}    v_{n_3}  \overline{v}_{n_4}  v_{n_5}   \big) \big(  \frac{t}{\lambda_k^{2}} , \frac{x_1}{\lambda_k} , x_2   \big)    \bigg|   \bigg)^2    \bigg)^{\frac{1}{2}}   \bigg\|_{L^1_t  L^2_{x_1}  L^2_{x_2} (\R \times \R \times \R) }\\
  \lesssim&  \lambda_k^{-\frac{5}{2}}  \bigg\|   \bigg(   \sum_{n}  \langle n \rangle   \bigg(    \sum\limits_{\substack{n_1,n_2,n_3,n_4,n_5\in \mathbb{N} \\ n_1-n_2+n_3-n_4+n_5\ne n}}  \bigg\|  \partial_t   \Pi_n  \big(  v_{n_1}    \overline{v}_{n_2}   v_{n_3}  \overline{v}_{n_4}  v_{n_5}     \big)  \big(  \frac{t}{\lambda_k^{2}} , \frac{x_1}{\lambda_k} , x_2   \big)    \bigg\|_{L^2_{x_1,x_2} (\R \times \R) }   \bigg)^2    \bigg)^{\frac{1}{2}}   \bigg\|_{L^1_t (\R) }\\
 \lesssim&  \lambda_k^{-2}  \sum_{n_1,n_2,n_3,n_4,n_5}  \|   \partial_t  v_{n_1} \cdot \overline{v}_{n_2}  v_{n_3}  \overline{v}_{n_4}  v_{n_5}   \|_{L^1_t L^2_{x_1}  \mathcal{H}^1_{x_2} (\R \times \R \times \R) } +  \lambda_k^{-2}  \sum_{n_1,n_2,n_3,n_4,n_5}  \|     v_{n_1}  \partial_t \overline{v}_{n_2} \cdot  v_{n_3}  \overline{v}_{n_4}  v_{n_5}   \|_{L^1_t L^2_{x_1}  \mathcal{H}^1_{x_2} (\R \times \R \times \R) } \\
&  + \lambda_k^{-2}  \sum_{n_1,n_2,n_3,n_4,n_5}  \|    v_{n_1} \overline{v}_{n_2}   \partial_t  v_{n_3}  \cdot  \overline{v}_{n_4}  v_{n_5}   \|_{L^1_t L^2_{x_1}  \mathcal{H}^1_{x_2} (\R \times \R \times \R) } +  \lambda_k^{-2}  \sum_{n_1,n_2,n_3,n_4,n_5}  \|     v_{n_1} \overline{v}_{n_2}  v_{n_3} \partial_t \overline{v}_{n_4}  \cdot  v_{n_5}   \|_{L^1_t L^2_{x_1}  \mathcal{H}^1_{x_2} (\R \times \R \times \R) } \\
& + \lambda_k^{-2}  \sum_{n_1,n_2,n_3,n_4,n_5}  \|    v_{n_1} \overline{v}_{n_2}  v_{n_3}  \overline{v}_{n_4}   \partial_t  v_{n_5}   \|_{L^1_t L^2_{x_1}  \mathcal{H}^1_{x_2} (\R \times \R \times \R) }.
\end{align*} 
We only give detailed proof of $\sum\limits_{n_1,n_2,n_3,n_4,n_5}  \|   \partial_t  v_{n_1} \cdot \overline{v}_{n_2} v_{n_3}  \overline{v}_{n_4}  v_{n_5}   \|_{L^1_t L^2_{x_1} (\R \times \R) }$, since the other four terms can be handled similarly. By H\"older's inequality and the fact that $v$ solves $\eqref{fml-DCR-2}$, we obtain
\begin{align*}
&\sum_{n_1,n_2,n_3,n_4,n_5}  \|   \partial_t  v_{n_1} \cdot \overline{v}_{n_2}  v_{n_3}  \overline{v}_{n_4}  v_{n_5}   \|_{L^1_t L^2_{x_1}  \mathcal{H}^1_{x_2} (\R \times \R \times \R) }  \lesssim    \|v\|^4_{L^5_t  L^{10}_{x_1}  H^5_{x_2} (\R \times \R \times \R) }  \|\partial_t  v\|_{L^5_t  L^{10}_{x_1}  \mathcal{H}^5_{x_2} (\R \times \R \times \R) } \\
\lesssim  &  \|v\|^4_{L^5_t  L^{10}_{x_1}  \mathcal{H}^5_{x_2}}  \bigg(   \|  \partial_{x_1}^2  v\|_{L^5_t  L^{10}_{x_1}  \mathcal{H}^5_{x_2}}   +    \bigg\|    \sum\limits_{\substack{\tilde{n}_1,  \tilde{n}_1,  \tilde{n}_3,  \tilde{n}_4,  \tilde{n}_5 , n_1 \in \mathbb{N} \\ \tilde{n}_1-\tilde{n}_2+\tilde{n}_3-\tilde{n}_4+\tilde{n}_5\ne n_1 }}  \Pi_{n_1}  (   v_{\tilde{n}_1}  \overline{v}_{n_2}  v_{\tilde{n}_3}  \overline{v}_{\tilde{n}_4}  v_{\tilde{n}_5}   )    \bigg\|_{L^5_t  L^{10}_{x_1}  \mathcal{H}^5_{x_2}}   \bigg).
\end{align*}
We then apply H\"older's inequality and the Sobolev embedding to the resonant part, then we get
\begin{align*}
&\bigg\|    \sum\limits_{\substack{\tilde{n}_1,  \tilde{n}_1,  \tilde{n}_3,  \tilde{n}_4,  \tilde{n}_5,n_{1}\in \mathbb{N} \\ \tilde{n}_1-\tilde{n}_2+\tilde{n}_3-\tilde{n}_4+\tilde{n}_5\ne \tilde{n}}}  \Pi_{\tilde{n}}  (   v_{\tilde{n}_1}  \overline{v}_{\tilde{n}_2}  v_{\tilde{n}_3}  \overline{v}_{\tilde{n}_4}  v_{\tilde{n}_5}   )    \bigg\|_{L^5_t  L^{10}_{x_1}  \mathcal{H}^5_{x_2} (\R \times \R \times \R) }\\  
\lesssim & \bigg\|    \sum\limits_{\substack{\tilde{n}_1,  \tilde{n}_1,  \tilde{n}_3,  \tilde{n}_4,  \tilde{n}_5 \in \mathbb{N} \\ \tilde{n}_1-\tilde{n}_2+\tilde{n}_3-\tilde{n}_4+\tilde{n}_5\ne \tilde{n}}}  \langle n_{1} \rangle^{\frac{5}{2}}   \Pi_{\tilde{n}}   (   v_{\tilde{n}_1}  \overline{v}_{\tilde{n}_2}  v_{\tilde{n}_3}  \overline{v}_{\tilde{n}_4}  v_{\tilde{n}_5}   )    \bigg\|_{L^5_t  L^{10}_{x_1}  L^2_{x_2}  l^2_{\tilde{n}} (\R \times \R \times \R \times \N) } \\
\lesssim  &  \bigg\|    \sum\limits_{\substack{\tilde{n}_1,  \tilde{n}_1,  \tilde{n}_3,  \tilde{n}_4,  \tilde{n}_5 \in \mathbb{N} \\ \tilde{n}_1-\tilde{n}_2+\tilde{n}_3-\tilde{n}_4+\tilde{n}_5\ne \tilde{n}}}     \langle n_{1} \rangle^{-1}  \langle \tilde{n}_1  \rangle^{4}    \langle \tilde{n}_2  \rangle^{4}  \langle \tilde{n}_3  \rangle^{4}  \langle \tilde{n}_4  \rangle^{4}  \langle \tilde{n}_5  \rangle^{4}     \Pi_{\tilde{n}}  (   v_{\tilde{n}_1}  \overline{v}_{\tilde{n}_2}  v_{\tilde{n}_3}  \overline{v}_{\tilde{n}_4}  v_{\tilde{n}_5}   )    \bigg\|_{L^5_t  L^{10}_{x_1}  L^2_{x_2}  l^2_{\tilde{n}} (\R \times \R \times \R \times \N) } \\ 
\lesssim  &  \|v\|^5_{L^{25}_t  L^{50}_{x_1}  \mathcal{H}^{5}_{x_2} (\R \times \R \times \R) } \lesssim\|v\|^5_{L^{25}_t  W^{\frac{2}{5},\frac{50}{21}}_{x_1}  \mathcal{H}^{5}_{x_2} (\R \times \R \times \R) } \lesssim  C^5 \big(  \|\psi\|_{H_{x_1}^{1}  \mathcal{H}^{5}_{x_2} (\R \times \R) }  \big).
\end{align*}
The persistence of regularity implies that \begin{equation*}
\|  \partial_{x_1}^2 v  \|_{L^5_t  L^{10}_{x_1}  \mathcal{H}^5_{x_2} (\R \times \R \times \R) }  \lesssim  C  \big(   \|  \partial_{x_1}^2 \psi  \|_{  H^{\frac{1}{5}}_{x_1}  \mathcal{H}^5_{x_2} (\R \times \R) }   \big).
\end{equation*}
As a conclusion, we obtain
\begin{gather}\label{lambda-3}
e^{j_3}_{\lambda_k}  \lesssim  \lambda_k^{-2}  C^4 \Big(  \|\psi\|_{  H^{\frac{1}{5}}_{x_1}  \mathcal{H}^{5}_{x_2} }  \Big)  \cdot  \bigg(   C  \big(   \|  \partial_{x_1}^2  \psi  \|_{ H^{\frac{1}{5}}_{x_1}  \mathcal{H}^5_{x_2} }   \big)   +   C  \big(   \|\psi\|_{H_{x_1}^{\frac{16}{25}}  \mathcal{H}^{25}_{x_2}}    \big)     \bigg)\longrightarrow  0  \mbox{ as}\hspace{1ex}  k\rightarrow  \infty.
\end{gather}
Thus by \eqref{fml-error-term-1}, \eqref{fml-error-term-3} and \eqref{lambda-3}, we have that $e_{\lambda_k}^3\to0$ which implies that the error term $e_{\lambda_{k}}\to\infty$.
	
On the other hand, one can verify that
\begin{gather*}
\lim\limits_{k \rightarrow 0}  \|  w_{\lambda_k} (0, x_1 , x_2)  -  u_{\lambda_k}  (0, x_1 , x_2)   \|_{L^2_{x_1}  \mathcal{H}_{x_2}^1 (\R \times \R) }  =  \|  \psi - \phi  \|_{L^2_{x_1}  \mathcal{H}_{x_2} (\R \times \R) }  \le  \varepsilon_4 ,\\
\|  w_{\lambda_k}  \|_{L^\infty_t  L^2_{x_1}  \mathcal{H}^1_{x_2} (\R \times \R \times \R) }   =   \bigg\|   \lambda_k^{-\frac{1}{2}}  v( \frac{t}{\lambda_k} , \frac{x_1}{\lambda_k}) , x_2   \bigg\|_{  L^\infty_t  L^2_{x_1}  \mathcal{H}^1_{x_2} (\R \times \R \times \R) }  =  \| v \|_{  L^\infty_t  L^2_{x_1}  \mathcal{H}^1_{x_2} (\R \times \R \times \R) },
\end{gather*} 
and
\begin{equation*}
\|  w_{\lambda_k}  \|_{L^6_t  L^6_{x_1}  \mathcal{H}^1_{x_2} (\R \times \R \times \R) }   =   \bigg\|   \lambda_k^{-\frac{1}{2}}  v( \frac{t}{\lambda_k} , \frac{x_1}{\lambda_k}) , x_2   \bigg\|_{  L^6_t  L^6_{x_1}  \mathcal{H}^1_{x_2} (\R \times \R \times \R) }  =  \| v \|_{  L^6_t  L^6_{x_1}  \mathcal{H}^1_{x_2} (\R \times \R \times \R) },
\end{equation*}
hence we can use the stability theorem(cf. Theorem \ref{Stablity}) to finish the proof when $t_k=0$.
	
When $t_k \rightarrow \pm \infty$ while $k \rightarrow \infty$. Let $v$ solve $\eqref{fml-DCR-2}$, we have
\begin{equation*}
\lim\limits_{k \rightarrow 0} \| v(t , x_1 , x_2)  -  e^{it \partial_{x_1}^2 }  \psi \|_{L^2_{x_1}  \mathcal{H}^1_{x_2} (\R \times \R) }  \longrightarrow  0 .
\end{equation*}
In the linear profile decomposition, we take that $T=\infty$ such that $t_k\to T$ as $k\to\infty$, then we add the error term that $g_n^j(e^{it_k\partial_{x_1}^2}-e^{iT\partial_{x_1}^2}\phi^j)$ then we can reduce the case to $t_k=0$.
\end{proof}

\subsection{Existence of the almost-periodic solution}In this subsection, we prove that if Theorem \ref{Thm1} fails and Theorem \ref{Thm2} holds, then there exists a special solution which is pre-compact in $\Sigma_{x_1,x_2}$ modulo the symmetry.

By Remark \ref{rem-global}, the solution $u$ is global. Therefore, we can define   
\begin{align*}
\Lambda(L)=\sup\|u\|_{L_{t,x_1}^6\mathcal{H}_{x_2}^{1-\varepsilon_0}(\R\times\R\times\R)},
\end{align*}
where the $\sup$ is taken over all  solution $u\in C(\R,\Sigma_{x_1 , x_2}(\R\times\R))$ to the Cauchy problem \eqref{NLS-final} obeying
\begin{align*}
E(u(t))+M(u(t))\leq L.
\end{align*}
By the local well-posedness and small-data scattering, we know that $\Lambda(L)<\infty$ for some sufficient small $L\ll1$. We denote $L_{max}$ by
\begin{align*}
L_{max}=\sup\{L\geq0:\Lambda(L)<\infty\}.
\end{align*}

Suppose that $L_{max}<\infty$, we can show the existence of minimal blow-up solution which is almost-periodic under the assumption of Theorem \ref{Thm2}. This is achieved by showing the Palais-Smale type condition.

\begin{theorem}[Existence of minimal blow-up solution]\label{reduction}Assume that $L_{max}<\infty$, then there exists a solution $u_c\in C(\R,\Sigma_{x_1,x_2}(\R\times\R))$ to the equation $\eqref{NLS-final}$ satisfying
\begin{align}\label{con-minimal}
E(u_c)+M(u_c)=L_{max},\quad\|u_c\|_{L_{t,x_1}^6\mathcal{H}_{x_2}^{1-\varepsilon_0}((0,\infty)\times\R\times\R)}=\|u_c\|_{L_{t,x_1}^6\mathcal{H}_{x_2}^{1-\varepsilon_0}((-\infty,0)\times\R\times\R)}=\infty.
\end{align}
Moreover, there exists a function $\tilde{x}_1(t): \R \to \R$ and a large number $C(\eta)>0$ such that
\begin{align}\label{compact-minimal}
\int_{|x_1-\tilde{x}_1(t)|>C(\eta)}\|u_c(t,x_1)\|_{\mathcal{H}_{x_2}^{1}(\R)}^2dx_1<\eta,\quad\forall t\in\R.
\end{align}
	
\end{theorem}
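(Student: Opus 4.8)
The plan is to run the Kenig--Merle concentration--compactness scheme in the weighted space $\Sigma_{x_1,x_2}(\R\times\R)$, using Theorem~\ref{Thm2} through Theorem~\ref{large-scale} to dispose of the large-scale profiles. First I would pick global solutions $u_n\in C(\R,\Sigma_{x_1,x_2})$ of \eqref{NLS-final} with $E(u_n)+M(u_n)\to L_{max}$ and $\|u_n\|_{L^6_{t,x_1}\mathcal{H}^{1-\varepsilon_0}_{x_2}(\R\times\R\times\R)}\to\infty$; translating in time (using time-translation invariance) one may normalize so that $\|u_n\|_{L^6_{t,x_1}\mathcal{H}^{1-\varepsilon_0}_{x_2}([0,\infty)\times\R\times\R)}\to\infty$ and $\|u_n\|_{L^6_{t,x_1}\mathcal{H}^{1-\varepsilon_0}_{x_2}((-\infty,0]\times\R\times\R)}\to\infty$ simultaneously. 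Applying the $\Sigma$-profile decomposition (Theorem~\ref{linearprofile2}) to the bounded sequence $\{u_n(0)\}$ yields $u_n(0)=\sum_{j=1}^{J}\phi_n^j+r_n^J$ with frames $\{(\lambda_n^j,t_n^j,x_{1,n}^j,\xi_n^j)\}$ obeying the asymptotic orthogonality \eqref{frame-orthogonal}, each $\lambda_n^j\to1$ or $\lambda_n^j\to\infty$, $|\xi_n^j|\lesssim_j1$, the mass and energy decoupling, and (inherited from Theorem~\ref{LinearProfile}, since $P_{\le(\lambda_n^j)^\theta}^{x_1}\to\mathrm{Id}$ in $L^2_{x_1}\mathcal H^1_{x_2}$) the Strichartz smallness $\limsup_n\|e^{it(\Delta_{\R^2}-x_2^2)}r_n^J\|_{L^6_{t,x_1,x_2}}\to0$ as $J\to J^*$.

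\textbf{Nonlinear profiles and the Palais--Smale step.} For each $j$ I would define the nonlinear profile $W_n^j$: if $\lambda_n^j\to1$, then $W_n^j$ is the maximal-lifespan $\Sigma$-solution of \eqref{NLS-final} with $W_n^j(0)=\phi_n^j$ when $t_n^j\equiv0$, and the solution scattering to the linear evolution of $\phi_n^j$ when $t_n^j\to\pm\infty$ (defined via the wave operator, so $\|W_n^j(0)-\phi_n^j\|_\Sigma\to0$); if $\lambda_n^j\to\infty$, then $\phi_n^j$ has exactly the form required by Theorem~\ref{large-scale}, which together with Theorem~\ref{Thm2} produces a global $W_n^j$ with $\|W_n^j\|_{L^\infty_tL^2_{x_1}\mathcal H^1_{x_2}\cap L^6_{t,x_1}\mathcal H^1_{x_2}}\lesssim1$ uniformly in $n$. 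Now suppose, toward a contradiction, that at least two profiles are nontrivial or that $r_n^J\not\to0$ in $\Sigma$. By the decoupling of $E+M$ there is $\delta>0$ with $\limsup_n(E(\phi_n^j)+M(\phi_n^j))\le L_{max}-\delta$ for every $j$, hence by the definition of $L_{max}$ each bounded-scale $W_n^j$ has $\mathcal H^{1-\varepsilon_0}_{x_2}$-scattering norm $\lesssim\Lambda(L_{max}-\delta)$; invoking the scattering criterion for \eqref{NLS-final} (which upgrades such a bound, together with the $\Sigma$-bound on the data, to a bound on the $L^6_{t,x_1}\mathcal H^1_{x_2}$-norm), and recalling the uniform bounds already in hand for the large-scale profiles, every $W_n^j$ is global with $L^6_{t,x_1}\mathcal H^1_{x_2}$-norm bounded independently of $n$. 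Set $\tilde u_n:=\sum_{j\le J}W_n^j+e^{it(\Delta_{\R^2}-x_2^2)}r_n^J$. The five-parameter orthogonality \eqref{frame-orthogonal} lets one decouple the mixed, $\ell^2_n$-valued $L^6_{t,x_1}\mathcal H^1_{x_2}$ norms, giving $\sup_n\|\tilde u_n\|_{L^\infty_tL^2_{x_1}\mathcal H^1_{x_2}\cap L^6_{t,x_1}\mathcal H^1_{x_2}}<\infty$; and the error $(i\partial_t+\partial_{x_1}^2+\partial_{x_2}^2-x_2^2)\tilde u_n-|\tilde u_n|^4\tilde u_n$ is small in $L^{6/5}_{t,x_1}\mathcal H^1_{x_2}$ (cross terms vanish in the limit by orthogonality; the $r_n^J$-contribution is controlled by the Strichartz smallness above after truncating to finitely many profiles and using the $E+M$-decoupling for the tail). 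Since $\tilde u_n(0)-u_n(0)\to0$ in $\Sigma$, the stability theorem (Theorem~\ref{Stablity}) then forces $\|u_n\|_{L^6_{t,x_1}\mathcal H^1_{x_2}}<\infty$, hence $\|u_n\|_{L^6_{t,x_1}\mathcal H^{1-\varepsilon_0}_{x_2}}<\infty$, for large $n$ --- contradicting the choice of $u_n$.

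\textbf{Extraction of $u_c$ and compactness.} Consequently there is exactly one profile, $r_n^J\to0$ in $\Sigma$, and $\limsup_n(E(\phi_n^1)+M(\phi_n^1))=L_{max}$. This profile must have $\lambda_n^1\to1$ (otherwise Theorem~\ref{large-scale} and Theorem~\ref{Thm2} would make $W_n^1$, hence $u_n$, have finite scattering norm), $\xi_n^1\to\xi^1$ with $|\xi^1|\lesssim1$, and --- by the two-sided blow-up normalization together with one more use of stability --- $t_n^1$ bounded, so $t_n^1\to0$. Translating in $x_1$, passing to the limit, and removing $\xi^1$ by a Galilean transform, the limiting $\Sigma$-solution $u_c$ of \eqref{NLS-final} satisfies $E(u_c)+M(u_c)=L_{max}$ and blows up in the scattering norm in both time directions, i.e. \eqref{con-minimal}. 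The compactness \eqref{compact-minimal} follows by re-running the same Palais--Smale argument along the flow: for any $t_n\in\R$, the time-translates $u_c(\cdot+t_n)$ are near-minimizers blowing up on both half-lines, so the argument yields $x_{1,n}\in\R$ with $u_c(t_n,\cdot-x_{1,n})$ convergent in $\Sigma$ along a subsequence; hence $\{u_c(t,\cdot-\tilde x_1(t)):t\in\R\}$ is precompact in $L^2_{x_1}\mathcal H^1_{x_2}$ modulo $x_1$-translation, which is equivalent to the stated uniform tightness with $\tilde x_1(t)$ the translation extracted at time $t$.

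\textbf{Main obstacle.} The principal difficulty is the control of $\tilde u_n$ in the Palais--Smale step: establishing the uniform-in-$n$ Strichartz bound requires decoupling $L^6_{t,x_1}\mathcal H^1_{x_2}$-type norms of profiles separated by the five-parameter orthogonality \eqref{frame-orthogonal}, which in this mixed-norm, $\ell^2_n$-vector-valued setting is more delicate than for scalar NLS, and the error estimate must additionally absorb the nonlinear interaction between the $O(1)$-scale profiles and the large-scale profiles that are known only through their (DCR) approximation in Theorem~\ref{large-scale}. Threading the weaker scattering norm $\mathcal H^{1-\varepsilon_0}_{x_2}$ through the stability theorem (which is stated in $\mathcal H^1_{x_2}$) by means of the scattering criterion is a further bookkeeping point to be handled with care.
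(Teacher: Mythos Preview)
Your proposal is correct and follows essentially the same route as the paper: the paper reduces Theorem~\ref{reduction} to a Palais--Smale condition (Proposition~\ref{PS-condition}), whose proof applies the $\Sigma$-profile decomposition (Theorem~\ref{linearprofile2}), handles the large-scale profiles via Theorem~\ref{large-scale}, and uses the stability theorem on the approximate solution $u_n^K=\sum_{k\le K} v_n^k + e^{it(\partial_{x_1}^2+\partial_{x_2}^2-x_2^2)}w_n^K$. The one piece you flag as the ``main obstacle'' --- the decoupling of mixed $L^6_{t,x_1}\mathcal H^{1-\varepsilon_0}_{x_2}$ norms under the five-parameter orthogonality --- is exactly what the paper isolates as Lemma~\ref{le6.3}, proved by approximating the nonlinear profiles $v_n^k$ by the explicit (DCR)-based functions $w_{\lambda_n^k}$ from Theorem~\ref{large-scale} and then computing the overlap of their space-time supports case by case according to which parameter in~\eqref{frame-orthogonal} diverges.
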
  
As a standard argument, we can reduce the proof of Theorem \ref{reduction} to prove the following Palais-Smale condition.
\begin{proposition}[Palais-Smale condition]\label{PS-condition}
Suppose that $L_{max}<\infty$ and let $u_n\in C_t^0\Sigma_{x_1,x_2}(\R\times\R\times\R)$ be  sequences of solutions to \eqref{NLS-final} satisfying
\begin{align*}
&E(u_n)+M(u_n)\to L_{max}\\
&\|u_n\|_{L_{t,x_1}^6\mathcal{H}_{x_2}^{1-\varepsilon_0}((-\infty,t_n)\times\R\times\R)}=\|u_n\|_{L_{t,x_1}^6\mathcal{H}_{x_2}^{1-\varepsilon_0}((-\infty,t_n)\times\R\times\R)}\to\infty \mbox{ as }n\to\infty.
\end{align*} 
After passing the subsequence, there exists a spatial sequence $x_{1,n}\in\R$ and $w\in\Sigma_{x_1,x_2}(\R\times\R)$ such that 
\begin{align*}
u_n(t_n,x_1+x_{1,n},x_2)\to w(x_1,x_2)\quad\mbox{in }\Sigma_{x_1,x_2}(\R\times\R)
\end{align*} 
as $n\to\infty$.
\end{proposition}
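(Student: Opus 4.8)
The plan is to deduce the Palais--Smale condition from the linear and nonlinear profile decompositions established above, following the concentration--compactness scheme of Kenig--Merle \cite{Kenig-Merle} and Killip--Visan \cite{Killip-Visan-note}. By the time-translation invariance of \eqref{NLS-final} and of the norm $L^6_{t,x_1}\mathcal{H}^{1-\varepsilon_0}_{x_2}$, I would first replace $u_n(t)$ by $u_n(t+t_n)$ and so assume $t_n=0$; thus $\{u_n(0)\}$ is bounded in $\Sigma_{x_1,x_2}(\R\times\R)$ with $E(u_n(0))+M(u_n(0))\to L_{max}$ and with the scattering norm infinite on both $(0,\infty)$ and $(-\infty,0)$. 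Next I would apply Theorem \ref{linearprofile2} to $\{u_n(0)\}$, obtaining profiles $\phi^j$, frames $(\lambda_k^j,t_k^j,\xi_k^j,x_{1,k}^j)$ with $\lambda_k^j\to1$ or $\infty$ and $|\xi_k^j|\le C_j$, remainders $r_k^J$, the decoupling of mass and energy, and the asymptotic orthogonality of frames \eqref{frame-orthogonal}. Passing to a subsequence, I also assume each $t_k^j$ converges in $[-\infty,+\infty]$, absorbing any finite limit into $\phi^j$ so that $t_k^j=0$ or $t_k^j\to\pm\infty$. To each profile I attach a nonlinear profile $v_k^j$, defined as the maximal solution of \eqref{NLS-final} with data $\phi_k^j$ at time $0$ when $\lambda_k^j\to1$, and by the normal-form construction of Theorem \ref{large-scale} when $\lambda_k^j\to\infty$, in which case $v_k^j$ is approximated by the rescaled and modulated solution of the \eqref{DCR}-system with data $\phi^j$, a globally scattering solution by Theorem \ref{Thm2}.

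I would then argue by contradiction. If the decomposition reduces to a single profile $\phi^1$ with $\lambda_k^1\to\infty$ and $r_k^1\to0$ in $\Sigma_{x_1,x_2}$, then Theorem \ref{large-scale} together with Theorem \ref{Thm2} shows $v_k^1$ is globally defined with $L^6_{t,x_1}\mathcal{H}^1_{x_2}$ norm $\lesssim1$, and the stability theorem (Theorem \ref{Stablity}) transfers this bound to $u_n$, contradicting the blow-up \eqref{con-minimal}. Otherwise the decomposition has at least two nontrivial components, i.e. either $J^*\ge2$, or a single profile together with a remainder $r_k^1$ not tending to $0$ in $\Sigma_{x_1,x_2}$; in either case the decoupling of mass and energy forces $E+M\le L_{max}-\delta$ for each component for some $\delta>0$, so by the definition of $L_{max}$ and the Scattering Criterion for \eqref{NLS-final} (using Theorem \ref{large-scale}--Theorem \ref{Thm2} for any large-scale profiles) each nonlinear profile has scattering norm $\lesssim1$ uniformly in $k$. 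Using \eqref{frame-orthogonal} and \eqref{eq5.3}, the superposition $\sum_{j\le J}v_k^j$ plus the linear evolution of $r_k^J$ is an approximate solution of \eqref{NLS-final} whose error is made arbitrarily small by taking $J$ large and then $k$ large; the stability theorem again bounds $\|u_n\|_{L^6_{t,x_1}\mathcal{H}^1_{x_2}}$, a contradiction.

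Hence the decomposition must reduce to a single profile with $\lambda_k^1\to1$ and $r_k^1\to0$ in $\Sigma_{x_1,x_2}$. To locate the remaining parameters, observe that if $t_k^1\to+\infty$ then $\|e^{it(\Delta_{\R^2}-x_2^2)}\phi_k^1\|_{L^6_{t,x_1}\mathcal{H}^{1-\varepsilon_0}_{x_2}((0,\infty)\times\R\times\R)}\to0$, so by the small-data theory and Theorem \ref{Stablity} the forward scattering norm of $u_n$ is finite, contradicting \eqref{con-minimal}; the case $t_k^1\to-\infty$ is ruled out symmetrically, so $t_k^1=0$. Since $|\xi_k^1|\le C_1$, a partial Galilean transform in $x_1$ (a symmetry of \eqref{NLS-final}) allows us to take $\xi_k^1\to0$, and $\lambda_k^1\to1$; as translation in $x_1$ is an isometry of $\Sigma_{x_1,x_2}$ and modulation and scaling act continuously near the identity, I obtain $u_n(0,x_1+x_{1,k}^1,x_2)\to\phi^1$ in $\Sigma_{x_1,x_2}(\R\times\R)$. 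Setting $w:=\phi^1$ and $x_{1,n}:=x_{1,k}^1$ along the subsequence completes the argument.

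The main obstacle is the treatment of the large-scale profiles $\lambda_k^j\to\infty$: there the rescaled nonlinear profile does not solve \eqref{NLS-final} but is only approximated, through the normal-form analysis of Theorem \ref{large-scale}, by a solution of the \eqref{DCR}-system, and it is precisely the unconditional scattering statement Theorem \ref{Thm2} that supplies the required a priori bound on $v_k^j$ at the critical mass-energy level. A secondary difficulty is that the remainder in Theorem \ref{linearprofile2} is small only in the weaker norm $L^6_{t,x_1}\mathcal{H}^{1-\varepsilon_0}_{x_2}$ rather than in $L^6_{t,x_1}\mathcal{H}^1_{x_2}$, so one must invoke the Scattering Criterion for \eqref{NLS-final} to upgrade regularity before closing the stability argument; throughout, the weighted component of the $\Sigma_{x_1,x_2}$-norm must be propagated via the operators $X_1(t),X_2(t)$ introduced in Section \ref{sec:local}.
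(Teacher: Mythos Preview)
Your proposal is correct and follows essentially the same approach as the paper: reduce to $t_n=0$, apply the $\Sigma_{x_1,x_2}$ linear profile decomposition (Theorem \ref{linearprofile2}), and then run the Kenig--Merle dichotomy using Theorem \ref{large-scale} together with Theorem \ref{Thm2} to control any large-scale ($\lambda_k^j\to\infty$) profiles and the stability theorem to reach a contradiction unless a single profile at scale $\lambda_k^1\to1$, $t_k^1=0$ survives. The only cosmetic difference is the organization of the dichotomy (you split on ``single large-scale profile'' versus ``otherwise'', while the paper splits on $\sup_j\limsup_n S(\phi_n^j)=L_{max}$ versus $\le L_{max}-2\delta$), but the content, the key inputs, and the handling of the $\mathcal{H}^{1-\varepsilon_0}_{x_2}$ remainder are the same.
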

\begin{proof}
It is sufficient to prove that there exists $w(x_1 , x_2) \in L^2_{x_1}\mathcal{H}^1_{x_2}$ such that
\begin{equation*}
u_n(x_1 + x_{1,n} , x_2 , t + t_n) \rightarrow w(x_1 , x_2),
\end{equation*}
in $L^2_{x_1}\mathcal{H}^1_{x_2}(\R\times\R)$ up to a subsequence.
	
We consider the case when $t_n = 0$, otherwise replace $u_n(t)$ with $u_n(t+t_n)$ the left cases can be handled similarly. Moreover, we suppose $\theta = 0$ since the Galilean transform is harmless in the following proof. 
	
Applying linear profile decomposition (cf. Theorem \ref{LinearProfile}) to $\{u_n(0)\}_{n \ge 1}$, we have
\begin{align*}
u_n(0,x_1,x_2) &  = \sum\limits_{k=1}^K  \frac1{ (\lambda_n^k)^{\frac{1}{2}} }   e^{ix_1 \cdot \xi_n^k}   \left(e^{it_n^k \partial_{x_1}^2}  P_n^k  \phi^k\right)
\left(\frac{x_1-x_{1,n}^k}{\lambda_n^k}, x_2 \right)   + w_n^K(x_1,x_2),
\end{align*}
up to a subsequence. The remainder satisfies
\begin{equation}\label{eq6.1}
\limsup\limits_{n\to \infty} \left\|e^{it\partial_{x_1}^2} w_n^K \right\|_{ L_t^6 L_{x_1}^6 \mathcal{H}_{x_2}^{1-\epsilon_0}(\mathbb{R} \times \mathbb{R}\times\R )} \to 0, \text{ as } K \to \infty,
\end{equation}
and we obtain asymptotic decoupling of the mass and energy:
\begin{align}
& \lim\limits_{n\to \infty} \left(  M(u_n(0)) - \sum\limits_{k=1}^K M \bigg( \frac{1}{ (\lambda_n^k)^{\frac{1}{2}} }   e^{ix_1 \cdot \xi_n^k}   \left(e^{it_n^k \partial_{x_1}^2}  P_n^k  \phi^k\right)\left(\frac{x_1-x_{1,n}^k}{\lambda_n^k}, x_2 \right)  \bigg) - M(w_n^K)  \right) = 0, \label{eq6.2} \\
& \lim\limits_{n\to \infty} \left(  E(u_n(0)) - \sum\limits_{k=1}^K E  \bigg(  \frac{1}{ (\lambda_n^k)^{\frac{1}{2}} }   e^{ix_1 \cdot \xi_n^k}   \left(e^{it_n^k\partial_{x_1}^2}   P_n^k  \phi^k\right)\left(\frac{x_1-x_{1,n}^k}{\lambda_n^k}, x_2 \right)  \bigg) - E(w_n^K)  \right) = 0,\  \forall\, K, \label{eq5.5v30}
\end{align}
For convenience, we denote that $S(u)=M(u)+E(u)$. Therefore, there are two possibilities:

{\bf Case 1.}  $\sup\limits_{k} \limsup\limits_{n\to \infty}S\bigg(  \frac{1}{ (\lambda_n^k)^{\frac{1}{2}} }   e^{ix_1 \cdot \xi_n^k}   \left(e^{it_n^k \partial_{x_1}^2}   P_n^k  \phi^k\right)\left(\frac{x_1-x_{1,n}^k}{\lambda_n^k}, x_2 \right)  \bigg) = L_{max}$.
Combining \eqref{eq6.2}, \eqref{eq5.5v30} with the fact that $\phi^k$ are nontrivial in $L^2_{x_1}\mathcal{H}^1_{x_2}$, we obtain
\begin{align*}
u_n(0,x_1,x_2) =  \frac1{ (\lambda_n)^{\frac{1}{2}} }   e^{ix_1 \cdot \xi_n}   \left(e^{it_n \partial_{x_1}^2}   P_n  \phi \right)
\left(\frac{x_1-x_{1,n}}{\lambda_n}, x_2 \right) + w_n(x_1,x_2), 
\end{align*}
with $\lim\limits_{n\to \infty} \|w_n\|_{\Sigma_{x_1 , x_2}(\R\times\R)} = 0$. We assert that $\lambda_n \equiv 1$. Otherwise if $\lambda_n \to  \infty$, by Theorem \ref{large-scale}, for sufficiently large $n$ there exists a unique global solution $u_n$ such that
\begin{align*}
u_n(0,x_1,x_2) = \frac1{ (\lambda_n)^{\frac{1}{2}} }   e^{ix_1 \cdot \xi_n}   \left(e^{it_n \partial_{x_1}^2}   P_n  \phi \right)
\left(\frac{x_1-x_{1,n}}{\lambda_n}, x_2 \right)
\end{align*}
and
\begin{align*}
\limsup\limits_{n\to \infty} \|u_n\|_{ L_t^6 L_{x_1}^6 \mathcal{H}_{x_2}^{1-\epsilon_0} (\mathbb{R} \times \mathbb{R}\times\R )} \le C(L_{max}).
\end{align*}
Let $n\rightarrow \infty$, which is a contradiction with \eqref{Palais-Smale}.
	
Therefore $u_n(0,x_1,x_2) =   e^{ix_1 \cdot \xi_n}   \left(e^{it_n \partial_{x_1}^2}   P_n  \phi \right)
\left( x_1-x_{1,n}, x_2 \right)  + w_n(x_1,x_2)$.
If $t_n \equiv 0$, boundedness of $\xi_n $ implies convergence immediately. For the rest case $t_n \to -\infty$, we use the Galilean transform 
\begin{align*}
e^{it \partial_{x_1}^2} e^{ix_1 \xi_n} \tilde{\phi} (x) = e^{-it |\xi_n|^2} e^{ix_1 \xi_n} (e^{it \partial_{x_1}^2} \tilde{ \phi} )(x-2t \xi_n), \quad t\in \mathbb{R},
\end{align*}
and obtain that
\begin{align*}
&\left\|e^{it(\partial_{x_1}^2+\partial_{x_2}^2-x_2^2) }   \left( e^{ix_1 \xi_n}   ( e^{it_n\partial_{x_1}^2} P_n \phi)(x_1-x_{1,n} , x_2) \right)  \right\|_{ L_{t,x_1}^6 \mathcal{H}_{x_2}^{1-\epsilon_0} ((-\infty,0) \times \mathbb{R}\times\R )}\\
=  & \ \|e^{it\partial_{x_1   }^2} P_n \phi\|_{ L_{t,x_1}^6 \mathcal{H}_{x_2}^{1-\epsilon_0}((-\infty,t_n)\times \mathbb{R}\times\R  )}\to 0, \text{ as } n\to \infty.
\end{align*}
From Lemma \ref{lwp}, we have
\begin{align*}
\|u_n\|_{ L_{t,x_1}^6 H_{x_2}^{1-\epsilon_0}((-\infty, 0) \times \mathbb{R}\times\R  )} \le 2 \delta_0 < \infty,
\end{align*}
for $n$ sufficiently large, which contradicts to \eqref{Palais-Smale}. The case $t_n \to \infty$ is similar.
	
{ \bf Case 2.}  $\sup\limits_{k}  \limsup\limits_{n\to \infty}  S\left( \frac1{ (\lambda_n^k)^{\frac{1}{2}} }   e^{ix_1 \cdot \xi_n^k}   \left(e^{it_n^k \partial_{x_1}^2}   P_n^k  \phi \right)\left(\frac{x_1-x_{1,n}^k}{\lambda_n^k}, x_2 \right) \right) \le L_{max} - 2\delta$ for some $\delta > 0$.
	
In this case, for each fixed finite $K\le K^*$, then for all $1 \le k \le K$ and $n$ sufficiently large, we have
\begin{align*}
S\left( \frac1{ (\lambda_n^k)^{\frac{1}{2}} }   e^{ix_1 \cdot \xi_n^k}   \left(e^{it_n^k \partial_{x_1}^2}   P_n^k  \phi \right)\left(\frac{x_1-x_{1,n}^k}{\lambda_n^k}, x_2 \right) \right)  \le L_{max}- \delta.
\end{align*}
By the definition of $L_{max}$, there exists global solution $v_n^k$ to
\begin{equation*}
\begin{cases}
i\partial_t v_n^k +( \partial_{x_1}^2 + \partial_{x_2}^2 - x^2 _ 2)  v_n^k = |v_n^k|^4 v_n^k,\\
v_n^k(0,x_1,x_2) =  \frac1{ (\lambda_n^k)^{\frac{1}{2}} }   e^{ix_1 \cdot \xi_n^k}   \left(e^{it_n^k \partial_{x_1}^2}   P_n^k  \phi \right)
\left(\frac{x_1-x_{1,n}^k}{\lambda_n^k}, x_2 \right),
\end{cases}
\end{equation*}
such that
$ \|v_n^k \|_{L_{t,x_1}^6 \mathcal{H}_{x_2}^{1-\epsilon_0}} \lesssim \Lambda(L_{max} - \delta) < \infty$.
We can use
\begin{align*}
\| v_n^k\|_{L_{t,x_1}^6 \mathcal{H}_{x_2}^{1-\epsilon_0}(\R\times\R\times\R) }^2 &  \lesssim_{L_{max},\delta} M (v_n^k(0)), \text{ for $S(v_n^k(0)) \le \eta_0$, }
\end{align*}
where $\eta_0$ denotes the small data threshold in the small data scattering theorem, together with our bounds on the space-time norms of $v_n^k$ and the finity of $L_{max}$ to deduce
\begin{align}\label{eq6.3}
\| v_n^k\|_{L_{t,x_1}^6 \mathcal{H}_{x_2}^{1-\epsilon_0} }^2 &  \lesssim_{L_{max},\delta}S\left(  \frac1{ \lambda_n^k }   e^{ix_1 \cdot \xi_n^k}   \left(e^{it_n^k \Delta_{x_1  }}   P_n^k  \phi \right)\left(\frac{x_1-x_{1,n}^k}{\lambda_n^k}, x_2 \right)  \right) \lesssim_{L_{max}, \delta} 1.
\end{align}
Let
\begin{align*}
u_n^K = \sum\limits_{j=1}^K v_n^k + e^{it (\partial_{x_1}^2 + \partial_{x_2}^2 - x ^ 2 _ 2)} w_n^K.
\end{align*}
we have $u_n^K(0) = u_n(0)$. We will prove for sufficiently large $K$ and $n$, $u_n^K$ is an approximate solution to $u_n$ in the sense of the Theorem \ref{Stability-DCR}. Consequently, we have $L_{t,x_1}^6 \mathcal{H}_{x_2}^{1-\epsilon_0} $ norm of $u_n$ is finite, which is a contradiction to \eqref{Palais-Smale}.
	
By the stability theorem, we need to verify the following properties:
	
$(i)$ $\limsup\limits_{n\to \infty} \left\|u_n^K \right\|_{ L_t^6 L_{x_1}^6 \mathcal{H}_{x_2}^{1-\epsilon_0} } \lesssim_{L_{max},\delta} 1$, uniformly in $K$;
	
$(ii)$ $  \limsup\limits_{n\to \infty} \|e_n^K \|_{ L_{t,x_1}^\frac65 \mathcal{H}_{x_2}^{1-\epsilon_0} } \to 0$, as $ K \to K^* $, where $e_n^K = (i\partial_t + \partial_{x_1}^2 + \partial_{x_2}^2 - x_2 ^ 2)u_n^K - |u_n^K|^4  u_n^K$.
	
The verification of $(i)$ relies on the asymptotic decoupling of the nonlinear profiles $v_n^j$, which we record in the following lemma.
	
\begin{lemma}[Decoupling of nonlinear profiles]\label{le6.3}
Let $v_n^j$ be the nonlinear solutions defined above, then for $j\ne k$,
\begin{align}
&  \left\|\langle \mathcal{H}_{x_2}  \rangle^{1-\epsilon_0}  v_n^j \cdot  \langle \mathcal{H}_{x_2}
\rangle^{1-\epsilon_0} v_n^k   \right\|_{L_{t,x_1}^3 L_{x_2}^1(\mathbb{R} \times \mathbb{R}\times\R )} \to 0, \label{eq7.29} \\
& \left \|v_n^k \cdot \langle \mathcal{H}_{x_2}  \rangle^{1-\epsilon_0} v_n^j\right\|_{L_{t,x_1}^3 L_{x_2}^2(\mathbb{R} \times \mathbb{R}\times\R )} \to 0 \label{eq5.8v49}\mbox{ as } n\to \infty.
\end{align}
\end{lemma}
	
\begin{proof}
We only give detailed proof of \eqref{eq7.29}, since \eqref{eq5.8v49} can be proved similarly. By Theorem \ref{large-scale}, we only need to show
\begin{align*}
& \left\|\langle \mathcal{H}_{x_2} \rangle^{1- \epsilon_0} w_n^k \cdot \langle \mathcal{H}_{x_2} \rangle^{1- \epsilon_0} w_n^{k'} \right\|_{L_{t,x_1}^3 L_{x_2}^1(\mathbb{R} \times \mathbb{R}\times\R )} \to 0 \mbox{ as } n\to \infty,
\end{align*}
where $\mathcal{H}_{x_2}$ denotes the differential operator $(-\partial_{x_2}^2+x_2^2)^\frac12$ and $w_n^k$, $w_n^{k'}$ are the approximate solution of $v_n^k$ and $v_n^{k'}$ in Theorem \ref{large-scale}, respectively. 
We denote $e_j(x_2)$ the $j-th$ eigenfunction associated with $-\partial_{x_2}^2 + x_2^2$ . In the rest part of our proof, we use notation $\Pi_j$ present the following
\begin{equation*}
\Pi_j v := \langle e_j(x_2) , v \rangle _ {L^2_{x_2}(\R)},
\end{equation*}  
then we rewrite $w_n^k$ as 
\begin{align*}
&w_n^k (t , x_1 , x_2) \\
= & e^{-i(t-t_n^k) |\xi_n^k|^2} e^{ix \xi_n^k} \frac{1}{ (\lambda_n^k)^{\frac{1}{2}} } \sum_{j \in \mathbb{Z}} e^{ i t (2j+1) } e_j(x_2) \Pi_{j} v  \left( \frac{t}{ (\lambda_n^k)^2 } + t_n^k, \frac{x_1- x_{1,n}^k - 2 \xi_n^k(t-t_n^k)}{\lambda^k_n}\right).
\end{align*}
With this in hand, we obtain
\begin{align*}
& \left\|\langle \mathcal{H}_{x_2} \rangle^{1- \epsilon_0} w_n^k \cdot \langle \mathcal{H}_{x_2} \rangle^{1- \epsilon_0} w_n^{k'} \right\|_{L_{t,x_1}^3 L_{x_2}^1(\R\times\R\times\R)} \\
= & \bigg\| \langle \mathcal{H}_{x_2} \rangle^{1- \epsilon_0} \frac{1}{ (\lambda_n^k)^{\frac{1}{2}} } \sum_{j \in \mathbb{N}} e^{ i t (2j+1) } e_j(x_2) \Pi_{j} v  \left( \frac{t}{ (\lambda_n^k)^2 } + t_n^k, \frac{x_1- x_{1,n}^k - 2 \xi_n^k(t-t_n^k)}{\lambda^k_n}\right) \\
& \quad \times \langle \mathcal{H}_{x_2} \rangle^{1- \epsilon_0} \frac{1}{ (\lambda_n^{k'})^{\frac{1}{2}} } \sum_{j \in \mathbb{N}} e^{ i t (2j+1) } e_j(x_2) \Pi_{j} v  \left( \frac{t}{ (\lambda_n^{k'})^2 } + t_n^{k'}, \frac{x_1- x_{1,n}^{k'} - 2 \xi_n^{k'}(t-t_n^{k'})}{\lambda^{k'}_n}\right) \bigg\|_{L_{t,x_1}^3 L_{x_2}^1(\R\times\R\times\R)}\\
= & \frac1{ (\lambda_n^k \lambda_n^{k'})^{\frac{1}{2}} } \bigg\| \langle \mathcal{H}_{x_2} \rangle^{1- \epsilon_0} \sum_{j \in \mathbb{N}} e^{ i t (2j+1) } e_j(x_2) \Pi_{j} v  \left( \frac{t}{ (\lambda_n^k)^2 } + t_n^k, \frac{x_1- x_{1,n}^k - 2 \xi_n^k(t-t_n^k)}{\lambda^k_n}\right)  \\
& \quad \cdot \langle \mathcal{H}_{x_2} \rangle^{1- \epsilon_0} \sum_{j \in \mathbb{N}} e^{ i t (2j+1) } e_j(x_2) \Pi_{j} v  \left( \frac{t}{ (\lambda_n^{k'})^2 } + t_n^{k'}, \frac{x_1- x_{1,n}^{k'} - 2 \xi_n^{k'}(t-t_n^{k'})}{\lambda^{k'}_n}\right) \bigg\|_{L_{t,x_1}^3 L_{x_2}^1(\R\times\R\times\R)}.
\end{align*}
Since $\Pi_{j}v(x_2) \in L^3_{t,x_1}(\R\times\R)$, it is harmless to assume $v_j \in C_0^\infty(\mathbb{R} \times \mathbb{R})$, then
\begin{align*}
supp \Pi_{j} v \left( \frac{t}{(\lambda_n^k)^2 } + t_n^k, \frac{x_1 - x_{1,n}^k - 2\xi_n^k( t-t_n^k)}{\lambda_n^k}\right) & \subseteq \left\{ (t,x_1,x_2): \left|\frac{t}{(\lambda_n^k)^2} + t_n^k \right| \le T, \left|\frac{x_1 - x_{1,n}^k - 2\xi_n^k(t -t_n^k)}{\lambda_n^k} \right| \le R\right\} \\
& = \left\{ |t+ (\lambda_n^k)^2 t_n^k | \le (\lambda_n^k)^2 T, | x_1 - x_{1,n}^k - 2\xi_n^k(t- t_n^k)| \le \lambda_n^k R\right\} \\
& := S(n , k),
\end{align*}
where ${\rm supp}\; \Pi_{j} v \subset (- T, T) \times (- R, R)$. Thus we arrive at
\begin{align*}
& \frac1{ (\lambda_n^k \lambda_n^{k'})^{\frac{1}{2}} } \bigg\| \langle \mathcal{H}_{x_2}  \rangle^{1- \epsilon_0} \sum_{j \in \mathbb{N}} e^{ i t (2j+1) } e_j(x_2) \Pi_{j} v  \left( \frac{t}{ (\lambda_n^k)^2 } + t_n^k, \frac{x_1- x_{1,n}^k - 2 \xi_n^k(t-t_n^k)}{\lambda^k_n}\right) \\
& \quad \times \langle \mathcal{H}_{x_2} \rangle^{1- \epsilon_0} \sum_{j \in \mathbb{N}} e^{ i t (2j+1) } e_j(x_2) \Pi_{j} v  \left( \frac{t}{ (\lambda_n^{k'})^2 } + t_n^{k'}, \frac{x_1- x_{1,n}^{k'} - 2 \xi_n^{k'}(t-t_n^{k'})}{\lambda^{k'}_n}\right) \bigg\|_{L_{t,x_1}^3 L_{x_2}^1\left(  S( n , k ) \cap  S( n , k^{'} )  \right)}  \\
& \lesssim \frac1{  (\lambda_n^k \lambda_n^{k'})^{\frac{1}{2}} } \bigg\| \langle \mathcal{H}_{x_2}  \rangle^{1- \epsilon_0} \sum_{j \in \mathbb{N}} e^{ i t (2j+1) } e_j(x_2) \Pi_{j} v  \left( \frac{t}{ (\lambda_n^k)^2 } + t_n^k, \frac{x_1- x_{1,n}^k - 2 \xi_n^k(t-t_n^k)}{\lambda^k_n}\right) \bigg\|_{L_{t,x_1}^6 L_{x_2}^2(  S( n , k ) \cap  S( n , k^{'} )} \\
& \quad \times \left\| \langle \mathcal{H}_{x_2} \rangle^{1- \epsilon_0} \sum_{j \in \mathbb{N}} e^{ i t (2j+1) } e_j(x_2) \Pi_{j} v  \left( \frac{t}{ (\lambda_n^{k'})^2 } + t_n^{k'}, \frac{x_1- x_{1,n}^{k'} - 2 \xi_n^{k'}(t-t_n^{k'})}{\lambda^{k'}_n}\right)  \right\|_{L_{t,x_1}^6 L_{x_2}^2\left(  S( n , k ) \cap  S( n , k^{'} )  \right)}.
\end{align*}
Denote
\begin{align*}
\Lambda_n^k = \left\{(t,x_1)\in\R\times\R : \left|( \lambda_n^k)^{-2} t + t_n^k  \right| + \left| \frac{x_1 - x_{1,n}^k - 2\xi_n^k(t -t_n^k)}{\lambda_n^k} \right| \le R \right\},\\
\Lambda_n^{k'} = \left\{(t,x_1)\in\R\times\R : \left|( \lambda_n^{k'})^{-2} t + t_n^{k'} \right| + \left| \frac{x_1 - x_{1,n}^{k'}  - 2\xi_n^{k'}(t -t_n^{k'})}{\lambda_n^{k'}} \right| \le R \right\},
\end{align*}
then for
\begin{align*}
\tilde{w}_n^k (t,x,y) = \frac1{ (\lambda_n^k)^{\frac{1}{2}} } \sum\limits_{j \in \mathbb{N}} e^{-it (2j+1) } e_j(x_2)  \Pi_{j} v\left( \frac{t}{(\lambda_n^k)^2} + t_n^k, \frac{x_1 - x_{1,n}^k - 2\xi_n^k(t -t_n^k)}{\lambda_n^k} \right),
\end{align*}
we have $\operatorname{supp} \tilde{w}_n^k \subset \Lambda_n^k, \ \operatorname{supp} \tilde{w}_n^{k'} \subset \Lambda_n^{k'}$. Thus,  
\begin{align*}
& \left\| \langle \mathcal{H}_{x_2} \rangle^{1- \epsilon_0} \tilde{w}_n^k \cdot \langle \mathcal{H}_{x_2} \rangle^{1- \epsilon_0} \tilde{w}_n^{k'} \right\|_{L_{t,x_1}^3 L_{x_2}^1(\mathbb{R} \times \mathbb{R} \times \mathbb{R}  )}\\
\lesssim & \left\| \langle \mathcal{H}_{x_2} \rangle^{1- \epsilon_0} \tilde{w}_n^k \cdot \langle \mathcal{H}_{x_2} \rangle^{1- \epsilon_0} \tilde{w}_n^{k'} \right\|_{L_{t,x_1}^3 L_{x_2}^1(   \{ (\mathbb{R} \times \mathbb{R}) \setminus \Lambda_n^k \} \times \mathbb{R})}
+ \left\| \langle \mathcal{H}_{x_2} \rangle^{1- \epsilon_0} \tilde{w}_n^k \cdot \langle \mathcal{H}_{x_2} \rangle^{1- \epsilon_0} \tilde{w}_n^{k'} \right\|_{L_{t,x_1}^3 L_{x_2}^1(  \{ (\mathbb{R} \times \mathbb{R}) \setminus \Lambda_n^{k'} \} \times \mathbb{R})}\\
& +  \left\| \langle \mathcal{H}_{x_2} \rangle^{1- \epsilon_0} \tilde{w}_n^k \cdot \langle \mathcal{H}_{x_2} \rangle^{1- \epsilon_0} \tilde{w}_n^{k'} \right\|_{L_{t,x_1}^3 L_{x_2}^1( \{ \Lambda_n^k \cap \Lambda_n^{k'}  \}  \times \mathbb{R})} \\
\lesssim & \left\| \langle \mathcal{H}_{x_2} \rangle^{1- \epsilon_0} \tilde{w}_n^k \right\|_{L_{t,x_1}^6 L_{x_2}^2( \{ (\mathbb{R}\times\R) \setminus \Lambda_n^k \} \times \mathbb{R})}
\left\| \langle \mathcal{H}_{x_2} \rangle^{1- \epsilon_0} \tilde{w}_n^{k'} \right\|_{L_{t,x_1}^6 L_{x_2}^2( \{ (\mathbb{R}\times\R) \setminus \Lambda_n^k \} \times \mathbb{R})} \\
& + \left\| \langle \mathcal{H}_{x_2} \rangle^{1- \epsilon_0} \tilde{w}_n^k  \right\|_{L_{t,x_1}^6 L_{x_2}^2(\{ (\mathbb{R}\times\R) \setminus \Lambda_n^{k'} \} \times \mathbb{R})} \left\|\langle \mathcal{H}_{x_2} \rangle^{1- \epsilon_0} \tilde{w}_n^{k'} \right\|_{L_{t,x_1}^6 L_{x_2}^2 (\{ (\mathbb{R}\times\R) \setminus \Lambda_n^{k'} \} \times \mathbb{R})} 
+ \frac1{(\lambda_n^k \lambda_n^{k'})^\frac12}  area( \Lambda_n^{k} \cap \Lambda_n^{k'})^\frac13.
\end{align*}
To obtain the first two terms vanish when $k,k' \rightarrow \infty$, we need to show
\begin{align*}
&\hspace{2ex}\left\|\langle \mathcal{H}_{x_2} \rangle^{1- \epsilon_0} \tilde{w}_n^k \right\|_{L_{t,x_1}^6 L_{x_2}^2(\R\times\R\times\R)} \\
\sim & \left\| \langle j \rangle^{1- \epsilon_0} \frac1{(\lambda_n^k)^\frac12} e^{-it (2j+1) }  \Pi_{j}  v  \left( \frac{t}{(\lambda_n^k)^2 } + t_n^k, \frac{x_1 - x_{1,n}^k - 2 \xi_n^k( t- t_n^k)}{\lambda_n^k}\right) \right\|_{L_{t,x_1}^6 l_j^2(\R\times\R\times\N)}\\
= & \frac1{( \lambda_n^k)^\frac12} \left\| \langle j \rangle^{1- \epsilon_0}  \Pi_{j}  v \left( \frac{t}{(\lambda_n^k)^2} + t_n^k, \frac{x_1 - x_{1,n}^k - 2 \xi_n^k (t - t_n^k)}{\lambda_n^k}\right) \right\|_{L_{t,x_1}^6 l_j^2(\R\times\R\times\N)}\\
= & \frac1{(\lambda_n^k)^\frac12} ( \lambda_n^k )^\frac26 ( \lambda_n^k)^\frac16 \| v \|_{L_{t,x}^6 \mathcal{H}_{x_2}^{1- \epsilon_0}(\R\times\R\times\R)} = \|v \|_{L_{t,x}^6 \mathcal{H}_{x_2}^{1 - \epsilon_0}(\R\times\R\times\R)},
\end{align*}
and
\begin{align*}
\left\| \langle \mathcal{H}_{x_2} \rangle^{1- \epsilon_0} \tilde{w}_n^k \right\|_{L_{t,x_1}^6 L_{x_2}^2( \{ \mathbb{R}^2 \setminus \Lambda_n^k \} \times \mathbb{R})} =  \| v \|_{L_{t,x_1}^6 \mathcal{H}_{x_2}^{1- \epsilon_0}\left( \{ |t| + |x| \ge R\} \times \mathbb{R}\right)} \to 0, \text{ as } R \to \infty.
\end{align*}
It remains  to show
\begin{align*}
\frac1{(\lambda_n^k \lambda_n^{k'})^\frac12} area( \Lambda_n^k \cap \Lambda_n^{k'})^\frac13 \to 0, \text{ as } n \to \infty.
\end{align*}
Noteice that
\begin{align*}
& \quad area ( \Lambda_n^k \cap \Lambda_n^{k'})\\
& = area \left( \left\{ \left|\frac{t}{(\lambda_n^k)^2} + t_n^k \right| + \left| \frac{x - x_n^k - 2 \xi_n^k( t- t_n^k) }{ \lambda_n^k} \right| < R\right\} \cap \left\{ |\frac{t}{(\lambda_n^{k'})^2} + t_n^{k'} | + | \frac{x - x_n^{k'}  - 2 \xi_n^{k'}( t- t_n^{k'} ) }{ \lambda_n^{k'}} | < R\right\}\right)\\
& \lesssim \min\left( (\lambda_n^k)^3, ( \lambda_n^{k'})^3\right),
\end{align*}
hence if  $\lim\limits_{n\to \infty} \frac{\lambda_n^k}{\lambda_n^{k'}} = \infty $ or $ \lim\limits_{n\to \infty} \frac{\lambda_n^{k'}}{\lambda_n^{k}} = \infty $, we have
\begin{align*}
\frac1{(\lambda_n^k \lambda_n^{k'})^\frac12} area( \Lambda_n^{k} \cap \Lambda_n^{k'})^\frac13   \to 0 \text{ as } n \to \infty.
\end{align*}
The same thing happens if
\begin{align*}
\lim\limits_{n\to \infty} \frac{ | ( \lambda_n^k)^2 t_n^k - ( \lambda_n^{k'})^2 t_n^{k'}|}{\lambda_n^k \lambda_n^{k'}}  = \infty.
\end{align*}
For the following case
\begin{align*}
\lambda_n^k \lambda_n^{k'} | \xi_n^k - \xi_n^{k'}|^2 + \frac{ | x_n^{k'} - x_n^k - 2t_n^{k'} (\lambda_n^{k'})^2 ( \xi_n^{k'} - \xi_n^k)|^2}{\lambda_n^k \lambda_n^{k'}} \to \infty \text{ as } n \to \infty,
\end{align*}
it is easy to check that
\begin{align*}
area( \Lambda_n^k \cap \Lambda_n^{k'}) 
\lesssim  area \left( \left\{  \left| \frac{x_1 - x_{1,n}^k - 2\xi_n^k( t- t_n^k)}{\lambda_n^k} \right| < R\right\} \bigcap \left\{  \left| \frac{ x_1 - x_{1,n}^{k'} - 2\xi_n^{k'} (t - t_n^{k'})}{\lambda_n^{k'}} \right| < R\right\}\right).
\end{align*}
By change of variables
\begin{align*}
v = x - x_n^k - 2\xi_n^k( t- t_n^k), \quad w = x - x_n^{k'} - 2\xi_n^{k'}(t - t_n^{k'}),
\end{align*}
we have
\begin{align*}
area( \Lambda_n^k \cap \Lambda_n^{k'})
= \int_{(t,x) \in \Lambda_n^k \cap \Lambda_n^{k'}} \,\mathrm{d}t \mathrm{d}x
\lesssim \int_{| v| \le \lambda_n^k R, |w| \le \lambda_n^{k'} R} \,dtdx.
\end{align*}
Since
\begin{align*}
\frac{\partial(v,w)}{\partial(t,x)} =
\begin{bmatrix}
- 2\xi_n^k \ 1\\
- 2\xi_n^{k'} \ 1
\end{bmatrix},
\end{align*}
we further  have
\begin{align*}
area( \Lambda_n^k \cap \Lambda_n^{k'})
\lesssim \int_{|v| \le \lambda_n^k R, |w| \le \lambda_n^{k'} R} \left| \frac{\partial(t,x)}{\partial(v,w)} \right| \,\mathrm{d}v \mathrm{d}w
\lesssim \int_{|v| \le \lambda_n^k R, |w| \le \lambda_n^{k'} R} \frac1{|2(\xi_n^k - \xi_n^{k'})|} \,\mathrm{d}v \mathrm{d}w
\lesssim \frac{\lambda_n^k \lambda_n^{k'} R^2}{| \xi_n^k - \xi_n^{k'}|}.
\end{align*}
Using the fact $\lambda_n^k \sim \lambda_n^{k'}$ and the assumption $\lambda_n^k \lambda_n^{k'} |\xi_n^k - \xi_n^{k'}|^2 \to \infty$, we obtain that
\begin{align*}
\frac1{(\lambda_n^k \lambda_n^{k'})^\frac12} area( \Lambda_n^k \cap \Lambda_n^{k'})^\frac13
\lesssim \frac1{\lambda_n^k} \left( \frac{(\lambda_n^k)^2 R^2}{|\xi_n^k - \xi_n^{k'}|}\right)^\frac13 \sim \frac{R^\frac23}{ (\lambda_n^k)^\frac13 |\xi_n^k - \xi_n^{k'}|^\frac13} \to 0, \text{ as } n\to \infty.
\end{align*}
For the final case
\begin{align*}
\frac{ | x_{1,n}^{k'} - x_{1,n}^k - 2 t_n^{k'} ( \lambda_n^{k'})^2 ( \xi_n^{k'} - \xi_n^k)|^2}{\lambda_n^k \lambda_n^{k'}} \to \infty, \text{ as } n \to \infty.
\end{align*}
For $(t,x) \in \Lambda_n^k \cap \Lambda_n^{k'}$,
\begin{align*}
\left|\frac{t}{(\lambda_n^k)^2} + t_n^k \right| + \left|\frac{ x_1 - x_{1,n}^k - 2\xi_n^k t}{\lambda_n^k}\right| \le R,\quad
\left|\frac{t}{(\lambda_n^{k'})^2} + t_n^{k'} \right| + \left|\frac{ x_1 - x_{1,n}^{k'} - 2\xi_n^{k'} t}{ \lambda_n^{k'} }  \right| \le R,
\end{align*}
which implies
\begin{align*}
R \ge \frac{ |x_1 - x_{1,n}^k - 2\xi_n^k t|}{\lambda_n^k}
& = \frac{ |x_{1,n}^{k'} - x_{1,n}^k + x_1 - x_{1,n}^{k'} - 2\xi_n^k t|}{\lambda_n^k}\\
& \ge \frac{| x_{1,n}^{k'} - x_{1,n}^k - 2t_n^{k'}(\lambda_n^{k'})^2( \xi_n^{k'} - \xi_n^k) |}{ \lambda_n^k} \\
& \quad - \frac{ | 2 t_n^{k'}(\lambda_n^{k'})^2 ( \xi_n^{k'} - \xi_n^k ) + x_1 - x_{1,n}^{k'} - 2\xi_n^k t|}{\lambda_n^k}.
\end{align*}
We note
\begin{align*}
\frac{ | x_1 - x_{1,n}^{k'} - 2\xi_n^k t + 2 ( \xi_n^{k'} -  \xi_n^k) t_n^{k'} (\lambda_n^{k'})^2 |}{\lambda_n^k}
& =  \frac{ | x_1 - x_{1,n}^{k'} - 2\xi_n^{k'} t + 2 ( \xi_n^{k'} - \xi_n^k)  t + 2 ( \xi_n^{k'} - \xi_n^k) t_n^{k'} (\lambda_n^{k'})^2|}{ \lambda_n^k}\\
& \le \frac{ \lambda_n^{k'}}{\lambda_n^k} R + \frac{ 2 |( \xi_n^{k'} - \xi_n^k) ( \lambda_n^{k'})^2 |}{ \lambda_n^k} R < \infty.
\end{align*}
Consequently, $\Lambda_n^k \cap \Lambda_n^{k'}  = \varnothing$, when $n$ large enough.
		
\end{proof}

Then we verify claim $(i)$ above. By \eqref{eq6.3} and \eqref{eq7.29}, we have
\begin{align*}
\left\|\sum\limits_{k=1}^K v_n^k \right\|_{  L_{t,x_1}^6 \mathcal{H}_{x_2}^{1-\epsilon_0}(\R\times\R\times\R)}^6\lesssim
& \left(\sum\limits_{k =1 }^K \left\|    v_n^k     \right\|_{L_{t,x_1}^6 \mathcal{H}_{x_2}^{1-\epsilon_0}(\R\times\R\times\R)}^2 +
\sum\limits_{j\ne k} \left\|    \langle \mathcal{H}_{x_2}  \rangle^{1-\epsilon_0} v_n^j \cdot  \langle \partial_y
\rangle^{1-\epsilon_0} v_n^k     \right\|_{L_{t,x_1}^3 L_{x_2}^1(\R\times\R\times\R)}\right)^3\\
\lesssim &  \left(\sum\limits_{k=1}^K
S\left( \frac1{ (\lambda_n^k)^{\frac{1}{2}} }   e^{ix_1 \cdot \xi_n^k}   \left(e^{it_n^k\partial_{x_1}^2}   P_n^k  \phi \right)
\left(\frac{x_1-x_{1,n}^k}{\lambda_n^k}, x_2 \right) \right)  + o_K(1)\right)^3,
\end{align*}
for $K$ large enough. The mass and energy decoupling implies
\begin{align*}
\sum\limits_{k=1}^K
(E + M)u\left( \frac1{ (\lambda_n^k)^{\frac{1}{2}} }   e^{ix_1 \cdot \xi_n^k}   \left(e^{it_n^k \partial_{x_1}^2}   P_n^k  \phi \right)
\left(\frac{x_1-x_{1,n}^k}{\lambda_n^k}, x_2 \right) \right)\le L_{max}.
\end{align*}
Together with \eqref{eq5.3}, we obtain
\begin{align}\label{eq6.5}
\lim\limits_{K \to K^*} \limsup\limits_{n\to \infty} \|  u_n^K \|_{L_t^6 L_{x_1}^{6} \mathcal{H}_{x_2}^{1-\epsilon_0}} \lesssim_{L_{max}, \delta} 1.
\end{align}
Now we turn to prove the property $(ii)$. By the definition of $u_n^K$, we rewrite $e_n^K$ as
\begin{align*}
e_n^K & = (i\partial_t + \partial_{x_1}^2 + \partial_{x_2}^2 - x_2^2 )u_n^K - |u_n^K|^4 u_n^K \\
& = \sum\limits_{k=1}^K  |v_n^k|^4 v_n^k - \left|\sum\limits_{k =1}^K v_n^k\right|^4 \sum\limits_{k=1}^K  v_n^k
+ |u_n^K- e^{it(\partial^2_{x_1}+\partial^2_{x_2} - x_2^2)} w_n^K |^4 (u_n^K - e^{it(\partial^2_{x_1}+\partial^2_{x_2} - x_2^2)} w_n^K) - |u_n^K |^4 u_n^K.
\end{align*}
	
By the fractional chain rule, Minkowski's inequality, H\"older's inequality, Sobolev embedding, \eqref{eq5.8v49} and \eqref{eq6.3}, we get
\begin{align}\label{eq4.832}
&\quad  \left\| \sum\limits_{k=1}^K |v_n^k|^4 v_n^k - \left|\sum\limits_{k=1}^K  v_n^k \right|^4 \sum\limits_{k =1}^K v_n^k \right\|_{L_{t,x_1}^\frac65 \mathcal{H}_{x_2}^{1-\epsilon_0}}\\
& \lesssim \sum_{k \ne k'} \big \|\langle \mathcal{H}_{x_2} \rangle^{1-\epsilon_0} (v_n^k (v_n^{k'})^4) \big\|_{L_{t,x_1}^\frac65 L_{x_2}^2}           \notag \\
& \lesssim \sum_{k \ne k'} \big( \|v_n^{k'} \langle \mathcal{H}_{x_2} \rangle^{1-\epsilon_0} v_n^k\|_{L_{t,x_1}^3 L_{x_2}^2} \|v_n^{k'}\|_{L_{t,x_1}^6 L_{x_2}^\infty}^3+ \|v_n^k \langle \mathcal{H}_{x_2} \rangle^{1-\epsilon_0} v_n^{k'} \|_{L_{t,x_1}^3 L_{x_2}^2} \|v_n^{k'}\|_{L_{t,x_1}^6 L_{x_2}^\infty}^3\big)       \notag \\
& \sim \sum_{k \ne k'} \|v_{k'} \langle \mathcal{H}_{x_2} \rangle v_n^k \|_{L_{t,x_1}^3 L_{x_2}^2} \big(\|v_n^{k'} \|_{L_{t,x_1}^6 L_{x_2}^\infty}^3 + \|v_n^k \|_{L_{t,x_1}^6 L_{x_2}^\infty}^3\big)\notag \\
& \lesssim  \sum_{k \ne k'} \left\|v_n^j \langle \mathcal{H}_{x_2} \rangle^{1-\epsilon_0} v_n^k \right\|_{L_{t,x_1}^3 L_{x_2}^2} \big(\|v_n^{k'} \|_{L_{t,x_1}^6 H_y^{1-\epsilon_0}}^3+ \|v_n^k \|_{L_{t,x_1}^6 H_y^{1-\epsilon_0}}^3\big)\lesssim o_K(1), \text{ as } n\to \infty.\notag
\end{align}
It remains to bound $\left|u_n^K   - e^{it (\partial_{x_1}^2 + \partial_{x_2}^2 - x^2_2 )} w_n^K \right|^4 (u_n^K   - e^{it(\partial_{x_1}^2 + \partial_{x_2}^2 - x^2_2 )} w_n^K ) - |u_n^K |^4 u_n^K $. By the fractional chain rule, H\"older, Sobolev, we have
\begin{align*}
& \left\|\left|u_n^K - e^{it (\partial_{x_1}^2 + \partial_{x_2}^2  -x^2_2)  } w_n^K \right|^4 \left(u_n^J  - e^{it (\partial_{x_1}^2 + \partial_{x_2}^2  -x^2_2)  } w_n^K \right) - \left|u_n^K \right|^4 u_n^K \right\|_{  L_{t,x_1}^\frac65    \mathcal{H}_{x_2}^{1-\epsilon_0} }\\
\lesssim & \left\|u_n^K \right\|_{L_{t,x_1}^6 \mathcal{H}_{x_2}^{1-\epsilon_0}}^4 \left\|e^{it (\partial_{x_1}^2 + \partial_{x_2}^2  -x^2_2)  } w_n^K \right\|_{L_{t,x_1}^6 \mathcal{H}_{x_2}^{1-\epsilon_0}}+ \left\|u_n^K \right\|_{L_{t,x_1}^6 \mathcal{H}_{x_2}^{1-\epsilon_0}}^3 \left\|e^{it (\partial_{x_1}^2 + \partial_{x_2}^2  -x^2_2)  } w_n^K \right\|_{L_{t,x_1}^6 \mathcal{H}_{x_2}^{1-\epsilon_0}}^2 \\
& + \left\|u_n^K \right\|_{L_{t,x_1}^6 \mathcal{H}_{x_2}^{1-\epsilon_0}}^2 \left\|e^{it (\partial_{x_1}^2 + \partial_{x_2}^2  -x^2_2)  } w_n^K \right\|_{L_{t,x_1}^6 \mathcal{H}_{x_2}^{1-\epsilon_0}}^3+\left\|u_n^K \right\|_{L_{t,x_1}^6 \mathcal{H}_{x_2}^{1-\epsilon_0}}  \left\|e^{it (\partial_{x_1}^2 + \partial_{x_2}^2  -x^2_2)  } w_n^K  \right\|_{L_{t,x_1}^6 \mathcal{H}_{x_2}^{1-\epsilon_0}}^4 \\
& +  \left\|e^{it (\partial_{x_1}^2 + \partial_{x_2}^2  -x^2_2)  } w_n^K \right\|_{L_{t,x_1}^6 \mathcal{H}_{x_2}^{1-\epsilon_0}}^5\\
\lesssim & \left( \left\|u_n^K \right\|_{L_{t,x_1}^6 \mathcal{H}_{x_2}^{1-\epsilon_0}}^4 + \left\|e^{it (\partial_{x_1}^2 + \partial_{x_2}^2  -x^2_2)  } w_n^K  \right\|_{L_{t,x_1}^6 \mathcal{H}_{x_2}^{1-\epsilon_0}}^4 \right)
\left\|e^{it (\partial_{x_1}^2 + \partial_{x_2}^2  -x^2_2)  } w_n^K \right\|_{L_{t,x_1}^6 \mathcal{H}_{x_2}^{1-\epsilon_0}  }.
\end{align*}
Using \eqref{eq6.5}, and the decay property \eqref{eq6.1}, we get
\begin{align*}
\limsup\limits_{n\to \infty} \left\|\left|u_n^K - e^{it (\partial_{x_1}^2 + \partial_{x_2}^2  -x^2_2)  } w_n^K \right|^4 \left( u_n^K - e^{it (\partial_{x_1}^2 + \partial_{x_2}^2  -x^2_2)  } w_n^K  \right) - \left|u_n^K \right|^4 u_n^K  \right\|_{  L_{t,x_1}^6 \mathcal{H}_{x_2}^{1-\epsilon_0}} \to 0,
\end{align*}
as $K \rightarrow K^*$.
\end{proof}

\subsection{The proof of Theorem \ref{Thm1}}
In this part, we will preclude the possibility of the almost-periodic solution by the interaction Morawetz inequality. 
\begin{proposition}[Non-existence of the almost-periodic solution]\label{pr7.3}
The almost-periodic solution $u_c$ as in Theorem \ref{reduction} does not exist.
\end{proposition}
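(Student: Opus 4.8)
The plan is to run the classical Kenig--Merle endgame: derive a contradiction between an interaction Morawetz inequality for \eqref{NLS-final} and the spatial concentration \eqref{compact-minimal} of the critical element $u_c$.

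First I would establish a two-particle (interaction) Morawetz estimate adapted to the partial confinement. The key point is that the trapping potential $x_2^2$ is harmless: choosing the Morawetz weight to depend only on $x_1$, say $a(x_1-y_1)=|x_1-y_1|$, one has $\partial_{x_1}(x_2^2)=0$ and $[\partial_{x_1},x_2^2]=0$, so the potential contributes nothing to the virial identity. After integrating out the $x_2$ variable, the computation reduces to the one-dimensional defocusing interaction Morawetz identity (in the spirit of Colliander--Grillakis--Tzirakis and Planchon--Vega, and its waveguide adaptations in \cite{Cheng-Guo-Zhao,Cheng-Guo-Guo-Liao-Shen}), the defocusing sign of $|u|^4u$ making the nonlinear term nonnegative. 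Writing $\rho_u(t,x_1):=\|u(t,x_1,\cdot)\|_{L^2_{x_2}(\R)}^2$, I expect, for any $u\in C(I,\Sigma_{x_1,x_2})$ solving \eqref{NLS-final} on a compact interval $I$,
\begin{align*}
\int_I\|\partial_{x_1}\rho_u(t)\|_{L^2_{x_1}(\R)}^2\,dt\ &\lesssim\ \sup_{t\in I}|\mathcal{M}(t)|\\
&\lesssim\ \|u\|_{L^\infty_t L^2_{x_1,x_2}(I\times\R\times\R)}^3\,\|u\|_{L^\infty_t \dot H^{\frac12}_{x_1}L^2_{x_2}(I\times\R\times\R)},
\end{align*}
where $\mathcal{M}$ is the interaction Morawetz action and a further nonnegative nonlinear term has been discarded. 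Only the coercivity of the left side on concentrated profiles will be used, so the precise fractional order on $\rho_u$ is immaterial.

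Next I would extract a uniform-in-time bound for $u_c$. By Theorem \ref{reduction} and the conservation laws (defocusing sign), $M(u_c(t))\equiv M(u_c)=:m_0$ and $\|\partial_{x_1}u_c(t)\|_{L^2}^2\le2E(u_c)\le 2L_{max}$ for all $t$, while $m_0>0$ since $u_c\not\equiv0$ by \eqref{con-minimal}. Interpolating, $\|u_c(t)\|_{\dot H^{\frac12}_{x_1}L^2_{x_2}}\le\|u_c(t)\|_{L^2}^{1/2}\|\partial_{x_1}u_c(t)\|_{L^2}^{1/2}\lesssim_{L_{max}}1$, so the Morawetz estimate on $[-T,T]$ with $T\to\infty$ gives $\int_{\R}\|\partial_{x_1}\rho_{u_c}(t)\|_{L^2_{x_1}}^2\,dt\lesssim_{L_{max}}1$. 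For the opposite bound I would use \eqref{compact-minimal} with $\eta=m_0/4$: since $\|u_c(t,x_1,\cdot)\|_{L^2_{x_2}}\le\|u_c(t,x_1,\cdot)\|_{\mathcal{H}^1_{x_2}}$, it gives $\int_{|x_1-\tilde{x}_1(t)|\le C(\eta)}\rho_{u_c}(t,x_1)\,dx_1\ge m_0-\eta\ge m_0/2$; Cauchy--Schwarz on this interval of length $2C(\eta)$ then yields $\|\rho_{u_c}(t)\|_{L^2_{x_1}}^2\ge m_0^2/(8C(\eta))$. Applying the elementary one-dimensional inequalities $\|f\|_{L^\infty}^2\le2\|f\|_{L^2}\|f'\|_{L^2}$ and $\|f\|_{L^2}^2\le\|f\|_{L^1}\|f\|_{L^\infty}$ to $f=\rho_{u_c}(t)\ge0$ with $\|\rho_{u_c}(t)\|_{L^1_{x_1}}=m_0$ forces $\|\partial_{x_1}\rho_{u_c}(t)\|_{L^2_{x_1}}\ge\|\rho_{u_c}(t)\|_{L^2_{x_1}}^3/(2m_0^2)\ge c(L_{max},m_0)>0$, uniformly in $t\in\R$. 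Hence $\int_{\R}\|\partial_{x_1}\rho_{u_c}(t)\|_{L^2_{x_1}}^2\,dt=+\infty$, contradicting the Morawetz bound, and so $u_c$ cannot exist.

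The main obstacle is the first step. One must set up the interaction Morawetz identity carefully on $\R\times\R$ so that (a) the partial harmonic potential genuinely drops out, (b) the Morawetz action $\mathcal{M}(t)$ is bounded by $\|u_c\|_{L^\infty_t L^2}$ and $\|u_c\|_{L^\infty_t \dot H^{\frac12}_{x_1}L^2_{x_2}}$ alone, and (c) the coercive left-hand side is a spacetime quantity strong enough to detect the spatial concentration of $u_c$; the $\partial_{x_1}\rho_u$ (equivalently $\dot H^{\frac12}_{x_1}$-type) norm has this property. Once that estimate is in hand, the remaining steps are soft, relying only on conservation laws, the compactness in \eqref{compact-minimal}, and elementary interpolation.
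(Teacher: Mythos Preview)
Your proposal is correct and follows essentially the same route as the paper: an interaction Morawetz identity in the unconfined variable $x_1$ (so that the $x_2^2$ potential drops out), bounded above by conserved quantities, combined with the spatial concentration \eqref{compact-minimal} to force the coercive term $\int\|\partial_{x_1}\rho_{u_c}(t)\|_{L^2_{x_1}}^2\,dt$ to be infinite. The only cosmetic differences are that the paper uses the Planchon--Vega weight $\psi_{r_0}$ rather than $|x_1-y_1|$, and extracts the pointwise-in-time lower bound via H\"older and the embedding $\dot H^{1}\hookrightarrow L^4$ for $\rho_{u_c}$ instead of your $L^\infty$--$L^2$--$L^1$ interpolation chain; your argument for that step is in fact slightly cleaner.
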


\begin{proof}
For each $r_0>0$, we define the interaction Morawetz action
\begin{align*}
M_{r_0}(t) = \int_{\R \times\R} \int_{\R\times \R} \operatorname{Im}\left(\overline{ {u}_c(t,x_1,x_2) }  \partial_{x_1} u_c (t,x_1,x_2) \right) \cdot \partial_{x_1} \psi_{r_0} \left(|x_2-\tilde{x_2}| \right) \left|u_c \left(t,\tilde{x_1},\tilde{x_2} \right)\right|^2 \,dx_1dx_2 d\tilde{x_1} d\tilde{x_2},
\end{align*}
where $\psi_{r_0}\colon \R \to \R$ is a radial function defined as in \cite{Planchon-Vega} with
\begin{align*}
\Delta \psi_{r_0}(r) = \int_r^\infty s \log\left(\frac{s}r\right) w_{r_0}(s) \,ds,
\end{align*}
where
\begin{align*}
w_{r_0}(s) =
\begin{cases}
\frac1{s^3}, \ \text{if } s\ge r_0,\\
0, \  \text{ if } s < r_0.
\end{cases}
\end{align*}
It is straightforward to verify that $\psi_{r_0}$ is convex and $ \left| \partial \psi_{r_0} \right| $ is uniformly bounded (independent of $r_0$), with
\begin{align*}
-\Delta^2 \psi_{r_0} (r) = \frac{2\pi}{r_0} \delta_0(r) - w_{r_0}(r).
\end{align*}
Thus, we have  for all $T_0 > 0$,
\begin{align}\label{eq5.032}
\int_{-T_0}^{T_0} \int_{\R} \left| \partial_{x_1}  \left( \left\|u_c(t,x_1,x_2) \right\|_{L_{x_2}^2(\R) }^2\right) \right|^2 \,dx_1dt
\lesssim \sup_t \|u_c\|_{L_{x_1,x_2}^2(\R\times\R)}^3\|\partial_{x_1}u_c\|_{L_{x_1,x_2}^2(\R\times\R)}^2\lesssim1.
\end{align}
By the compactness of $u_c$ and the mass conservation, we have
\begin{align}\label{eq5.232}
\frac{m_c^2  }2 \le \int_{|x_1 - \tilde{x}_1(t)|\le C(\eta)}  \left\|  u_c(t,x_1,x_2) \right\|_{L_{x_2}^2(\R)}^2 \,dx_1.
\end{align}
where $m_c  := \|u_c\|_{L_{x_1,x_2}^2(\R\times\R)} > 0$ by \eqref{con-minimal}.
	
Therefore, for each $  T_0 > 0$, integrating with respect to time variable, Sobolev's inequality, and \eqref{eq5.032}, we deduce
\begin{align*}
\frac{ m_c^2 T_0} 2 &  \le \int_{-T_0}^{T_0} \left(\int_{|x_1 - \tilde{x}_1(t)|\le C\left(\eta\right)} \left\|u_c(t,x_1,x_2) \right\|_{L_{x_2}^2(\R)}^2 \,dx_1 \right)^2 \,dt\\
& \lesssim C\left(\eta\right)  \int_{-T_0}^{T_0}  \left( \left(\int_{\R} \left(\left\|u_c(t,x_1,x_2)\right\|_{L_{x_2}^2(\R)}^2\right)^4 \,dx_1 \right)^\frac14 \right)^2 \,dt\\
& \lesssim   C\left(\eta\right)^\frac{3}{2} \int_{-T_0}^{T_0} \int_{\R} \left| \partial_{x_1}  \left(\left\|u_c(t,x_1,x_2)\right\|_{L_{x_2}^2(\R)}^2\right) \right|^2 \,dx_1 dt\lesssim C\left(\eta\right).
\end{align*}
This implies that $T_0<\infty$ which is a contradiction when $T_0\to\infty$, thus we can finish the proof of Proposition \ref{pr7.3} and Theorem \ref{Thm1}.
\end{proof}




\section{Local well-posedness  of (DCR) system}\label{sec:Local}
In this section, we prove  the local well-posedness and small-data scattering for the (DCR) system which is given by
\begin{align}\label{DCR-2}
\begin{cases}
i\partial_tu+\partial_{x_1}^2u=F(u)\stackrel{\triangle}{=}\sum
\limits_{\substack{n_1,n_2,n_3,n_4,n_5,n\in\Bbb{N}\\n_1-n_2+n_3-n_4+n_5=n}}\Pi_{n}(\Pi_{n_1}u\overline{\Pi_{n_2}u}\Pi_{n_3}u\overline{\Pi_{n_4}u}\Pi_{n_4}u)\\
u(0,x_1,x_2)=\phi(x_1,x_2),
\end{cases}
\end{align}
where $\Pi_n$ denotes the spectral projector on the $n$-th eigenspace $E_n$.

The dispersive continuous resonant (DCR) system  enjoys structure with the following mass and energy conservation laws
\begin{align}\label{equ:Msudef}
&M_{S}(u)=\int_{\R\times\R}\sum_{n\in\N}(2n+1)|\Pi_nu|^2dx=M(u_0),\\\label{equ:Esudef}
&E_S(u)=\frac{1}{2}\sum_{n\in\Bbb{N}}\int_{\R\times\R}|\partial_{x_1}u_n|^2dx_1dx_2+\frac{1}{6}\sum_{\substack{n_1,n_2,n_3,n_4,n\in\Bbb{N}\\n_1-n_2+n_3-n_4+n_5=n}}\int_{\R\times\R}u_{n_1}\overline{u}_{n_2}u_{n_3}\overline{u}_{n_4}u_{n_5}\overline{u}_{n_6}dx_1dx_2.
\end{align}

First, we will give the local well-posedness and stability result of (DCR) system. These results can be proved by using the Strichartz estimates and the nonlinear estimates.
\begin{proposition}[Strichartz estimates associated to harmonic oscillator, \cite{Carles1,Jao-CPDE}]\label{Strichartz-harmonic} For $2\leqslant q,r\leqslant\infty$ with $\frac{2}{q}=\frac{1}{2}-\frac{1}{r}$, then the following estimate holds for any $T>0$,
\begin{align*}
\big\|e^{it(-\partial^2_{x}+x^2)}f\big\|_{L_t^q\mathcal{W}_{x}^{s,r}([-T,T]\times\R)}\lesssim\|f\|_{\mathcal{H}_x^s(\R)}.
\end{align*}
\end{proposition}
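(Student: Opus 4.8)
The statement is purely linear, so the plan is to reduce it to the classical $TT^{*}$ Strichartz machinery. The three ingredients I would assemble are: (i) a short‑time dispersive estimate for $e^{it(-\partial_x^2+x^2)}$ coming from Mehler's formula; (ii) the abstract Strichartz argument on a short interval, followed by a finite patching to cover $[-T,T]$; and (iii) an upgrade from the $L^r_x$‑valued estimate to the $\mathcal W^{s,r}_x$‑valued one by commuting the propagator with the functional calculus of $H=-\partial_x^2+x^2$. For the first ingredient, Mehler's formula gives $e^{it(\partial_x^2-x^2)}$ an explicit Gaussian integral kernel $K_t(x,y)$ whose modulus is $\lesssim |\sin(ct)|^{-1/2}$ for $ct\notin \pi\Z$ (here $c$ is a fixed normalization constant and the singular set is discrete); in particular there is $\delta>0$ such that $|K_t(x,y)|\lesssim |t|^{-1/2}$ for $0<|t|<\delta$. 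Interpolating the resulting bound $\|e^{itH}f\|_{L^\infty_x}\lesssim |t|^{-1/2}\|f\|_{L^1_x}$ with the unitarity $\|e^{itH}f\|_{L^2_x}=\|f\|_{L^2_x}$ yields
\[
\|e^{itH}f\|_{L^r_x(\R)}\lesssim |t|^{-(\frac12-\frac1r)}\,\|f\|_{L^{r'}_x(\R)},\qquad 0<|t|<\delta,\ 2\le r\le\infty .
\]

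\textbf{Strichartz on a short interval and patching.} With the dispersive bound above and $L^2$‑conservation, the standard $TT^{*}$ argument (Strichartz–Ginibre–Velo, or Keel–Tao) produces, for every admissible pair $\tfrac2q=\tfrac12-\tfrac1r$,
\[
\big\|e^{itH}f\big\|_{L^q_tL^r_x((-\delta,\delta)\times\R)}\lesssim \|f\|_{L^2_x(\R)};
\]
since the decay exponent $\tfrac12-\tfrac1r\le \tfrac12<1$, even the $(q,r)=(4,\infty)$ case is the non‑endpoint regime and needs no extra care. To pass to $[-T,T]$, cover it by $N\lesssim 1+T/\delta$ intervals $I_j=[a_j,b_j]$ of length $\le\delta$; on $I_j$ write $e^{itH}f=e^{i(t-a_j)H}\big(e^{ia_jH}f\big)$ with $t-a_j\in[0,\delta)$, so the short‑interval estimate together with unitarity gives $\|e^{itH}f\|_{L^q_tL^r_x(I_j\times\R)}\lesssim \|e^{ia_jH}f\|_{L^2}=\|f\|_{L^2}$. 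Summing over the finitely many $j$ yields $\|e^{itH}f\|_{L^q_tL^r_x([-T,T]\times\R)}\lesssim N^{1/q}\|f\|_{L^2}\lesssim_{T}\|f\|_{L^2}$, which is the $s=0$ case.

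\textbf{Lifting to $\mathcal W^{s,r}_x$.} The operator $H^{s/2}$ commutes with $e^{itH}$ by functional calculus, and for $1<r<\infty$ one has $\|u\|_{\mathcal W^{s,r}(\R)}\lesssim \|H^{s/2}u\|_{L^r(\R)}$, because the Hermite–Riesz transforms $\langle\partial_x\rangle^{s}H^{-s/2}$ and $|x|^{s}H^{-s/2}$ are bounded on $L^r$ (cf. the equivalences recorded in \cite{Su-Wang-Xu1,Su-Wang-Xu2}; for $r=2$ the identification with the spectral norm $\sum_n(2n+1)^s\|\Pi_n u\|_{L^2}^2$ is \cite{equi}). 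Applying the $s=0$ estimate to $H^{s/2}f$ then gives
\[
\big\|e^{itH}f\big\|_{L^q_t\mathcal W^{s,r}_x([-T,T]\times\R)}\lesssim \big\|e^{itH}H^{s/2}f\big\|_{L^q_tL^r_x([-T,T]\times\R)}\lesssim_{T}\big\|H^{s/2}f\big\|_{L^2_x}\sim \|f\|_{\mathcal H^s_x(\R)} .
\]
The only mildly delicate point is the endpoint $r=\infty$ (i.e. $q=4$), where boundedness of the Hermite–Riesz transforms on $L^\infty$ is not available; there one instead interpolates the full space‑time estimate between $r=2$ and a large finite $r$, or argues directly as in \cite{Carles1}.

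\textbf{Main obstacle.} None of the steps is hard individually; the only genuinely non‑soft input is the short‑time dispersive estimate, which is classical via Mehler's formula. The one piece of real bookkeeping is handling the discrete set of singular times of the Mehler kernel inside $[-T,T]$ — this is exactly what the finite patching in the second paragraph takes care of, at the harmless cost of a $T$‑dependent constant. Accordingly I expect the write‑up to be short, essentially a citation of \cite{Carles1,Jao-CPDE} with the patching argument spelled out.
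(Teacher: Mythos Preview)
The paper does not give a proof of this proposition at all; it is stated as a known result and attributed to \cite{Carles1,Jao-CPDE}. Your outline---Mehler kernel $\Rightarrow$ short-time dispersive estimate, abstract $TT^*$ Strichartz on a short interval, finite patching over $[-T,T]$, and then commuting $H^{s/2}$ through the propagator to upgrade to $\mathcal W^{s,r}_x$---is exactly the standard route followed in those references, and is correct. Your closing remark that the write-up should essentially be a citation is precisely what the paper does.
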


By using the Strichartz estimate and H\"older's inequality, one can obtain the following nonlinear estimate.
\begin{lemma}\label{nonlinear-1}
Let $F_j\in L_{x_1}^6L^2_{x_2}$ with $\beta\in\{0,1\}$, then we have
\begin{align}\label{I}
&\bigg\|\sum_{\substack{n_1,n_2,n_3,n_4,n_5,n\in\N\\n_1-n_2+n_3-n_4+n_5=n}}\Pi_{n}(\Pi_{n_1}F_1\overline{\Pi_{n_2}F_2}\Pi_{n_3}F_3\overline{\Pi_{n_4}F_4}\Pi_{n_5}F_5)\bigg\|_{L_{x_1}^\frac{6}{5}\mathcal{H}_{x_2}^\beta(\R\times\R)}\\\nonumber
\lesssim&\sum_{\tau\in\sigma_5}\|F_{\tau(1)}\|_{L_{x_1}^6\mathcal{H}_{x_2}^\beta(\R\times\R)}\prod_{j=2}^{5}\big\|F_{\tau(j)}\|_{L_{x_1}^6L_{x_2}^2(\R\times\R)},
\end{align}
where $\sigma_5$ is a permutation of the set $\{1,2,3,4,5\}$.
\end{lemma}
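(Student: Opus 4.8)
The plan is to expand the five spectral projectors, decompose $\mathcal{H}_{x_2}^\beta$ using the eigenvalue weights $\langle n\rangle^\beta = (2n+1)^\beta$, and exploit the resonance relation $n = n_1-n_2+n_3-n_4+n_5$ to distribute the weight $\langle n\rangle^\beta$ onto the factors. Concretely, writing $G_n := \Pi_n\big(\Pi_{n_1}F_1\overline{\Pi_{n_2}F_2}\Pi_{n_3}F_3\overline{\Pi_{n_4}F_4}\Pi_{n_5}F_5\big)$ summed over the resonant set, we have by the definition of the $\mathcal{H}_{x_2}^\beta$ norm
\begin{align*}
\big\|\textstyle\sum_{n_1-n_2+n_3-n_4+n_5=n}G_n\big\|_{L_{x_1}^{6/5}\mathcal{H}_{x_2}^\beta}
= \Big\|\Big(\sum_{n\in\N}\langle n\rangle^{\beta}\|G_n\|_{L_{x_2}^2}^2\Big)^{1/2}\Big\|_{L_{x_1}^{6/5}}.
\end{align*}
For $\beta=0$ this is an $\ell_n^2$ norm and for $\beta=1$ it is the weighted version; since $\beta\in\{0,1\}$ it suffices to treat these two endpoints (the Moser estimate, Lemma \ref{Moser-estimate}, and interpolation are not even needed because the weight is integer). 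The key algebraic observation, already used in the proof of Theorem \ref{large-scale} (see the derivation around \eqref{fml-error-term-2}), is that $n = n_1-n_2+n_3-n_4+n_5$ forces
\begin{equation*}
\langle n\rangle \lesssim \langle n_1\rangle\langle n_2\rangle\langle n_3\rangle\langle n_4\rangle\langle n_5\rangle,
\end{equation*}
hence $\langle n\rangle^\beta \lesssim \langle n_1\rangle^\beta\langle n_2\rangle^\beta\langle n_3\rangle^\beta\langle n_4\rangle^\beta\langle n_5\rangle^\beta$ for $\beta\in\{0,1\}$.

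The next step is to sum over $n$. Using $\|\Pi_n\|_{L^2_{x_2}\to L^2_{x_2}}\le 1$ and then the triangle inequality in $\ell_n^2$ to pass the outer sum over $n$ inside, followed by Minkowski's inequality to exchange $\ell_n^2$ with $L^{6/5}_{x_1}L^2_{x_2}$, I would reduce matters to bounding
\begin{equation*}
\Big\| \sum_{\substack{n_1,n_2,n_3,n_4,n_5\in\N}} \langle n_1\rangle^\beta\cdots\langle n_5\rangle^\beta\,\big\|(\Pi_{n_1}F_1)(\overline{\Pi_{n_2}F_2})(\Pi_{n_3}F_3)(\overline{\Pi_{n_4}F_4})(\Pi_{n_5}F_5)\big\|_{L^2_{x_2}} \Big\|_{L^{6/5}_{x_1}},
\end{equation*}
where the constraint $n_1-n_2+n_3-n_4+n_5=n$ has been dropped (summing freely over all five indices only enlarges the sum, since the resonance set is a subset). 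Now I place the factor carrying the highest-order weight (say $\Pi_{n_1}F_1$, then symmetrize over $\tau\in\sigma_5$ at the end) in $L^\infty_{x_2}$ or $L^2_{x_2}$ appropriately: apply Hölder on $x_2$ putting $\Pi_{n_1}F_1$ in $L^2_{x_2}$ and each of $\Pi_{n_j}F_j$ ($j\ge 2$) in $L^\infty_{x_2}$. Crucially, by the one-dimensional Sobolev embedding $\mathcal{H}^{1/2+\epsilon}(\R)\hookrightarrow L^\infty(\R)$ applied to the single Hermite mode — but better, using the pointwise bound on individual Hermite functions $\|e_{n_j}\|_{L^\infty}\lesssim \langle n_j\rangle^{-1/12}$ (or at worst $\lesssim 1$), one gets $\|\Pi_{n_j}F_j\|_{L^\infty_{x_2}} = |\langle F_j,e_{n_j}\rangle_{x_2}|\,\|e_{n_j}\|_{L^\infty}\lesssim |\langle F_j,e_{n_j}\rangle_{x_2}|$, which is a scalar, so the remaining $\langle n_j\rangle^\beta$ weights ($\beta=1$) must be absorbed. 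The decaying Hermite $L^\infty$ bound gives a gain of $\langle n_j\rangle^{-1/12}$ per factor, but this is \emph{not} enough to beat a full power of $\langle n_j\rangle$; instead, for $\beta=1$, I would \emph{not} push all four auxiliary factors into $L^\infty_{x_2}$ but rather keep them at $L^2_{x_2}$ level and use a different splitting: put two factors in $L^2_{x_2}$ and pair, or more efficiently, observe that $\langle n\rangle\|G_n\|_{L^2_{x_2}}$ can be bounded using $\langle n\rangle\Pi_n = \Pi_n(-\partial_{x_2}^2+x_2^2)$, which by the Leibniz-type (Moser) estimate of Lemma \ref{Moser-estimate} distributes one factor of $\mathcal{H}_{x_2}^1$ onto exactly one of the five inputs while the rest stay in $L^2_{x_2}$ — no, on $L^\infty$. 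This is exactly what is needed: $\mathcal{H}_{x_2}^1\hookrightarrow L^\infty_{x_2}$ in one dimension, so the four non-differentiated factors are controlled in $L^\infty_{x_2}$ by their $L^2_{x_2}$ norms only if we have already spent the derivative, which we have.

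The final step combines the $x_2$ estimate with a Hölder estimate on $x_1$: with one factor $F_{\tau(1)}$ in $L^6_{x_1}\mathcal{H}_{x_2}^\beta$ and the other four in $L^6_{x_1}L^2_{x_2}$, the product lies in $L^{6/5}_{x_1}$ since $\tfrac{5}{6}=\tfrac{1}{6}+4\cdot\tfrac{1}{6}$; then apply Cauchy–Schwarz in each index $n_j$ ($j\ge 2$) together with $\sum_{n_j}\|\Pi_{n_j}F_j\|_{L^2_{x_2}}^2 = \|F_j\|_{L^2_{x_2}}^2$ (Plancherel for the Hermite expansion) and similarly $\sum_{n_1}\langle n_1\rangle^\beta\|\Pi_{n_1}F_1\|_{L^2_{x_2}}^2 = \|F_1\|_{\mathcal{H}_{x_2}^\beta}^2$. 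Finally one symmetrizes over which of the five inputs receives the derivative, producing the sum $\sum_{\tau\in\sigma_5}$ in the statement. \textbf{The main obstacle} is the simultaneous management of the $\ell^2_n$ summability and the $L^\infty_{x_2}\leftrightarrow \mathcal{H}^1_{x_2}$ endpoint embedding: one must verify that spending the single available $\mathcal{H}^1_{x_2}$ derivative (via $\langle n\rangle\Pi_n=\Pi_n H$ and the Moser estimate) on one factor leaves \emph{exactly enough} regularity — namely $\mathcal{H}^1_{x_2}\hookrightarrow L^\infty(\R)$, which holds in $1$D with room to spare — to put the remaining four factors in $L^\infty_{x_2}$ while keeping the spectral sums in $\ell^2_n$ via Plancherel and the Cauchy–Schwarz trick of converting $\ell^1_n$ sums into $\ell^2_n$ sums at the cost of the weight $\langle n\rangle^{-1}\in\ell^2_n$ used already in the proof of Theorem \ref{large-scale}. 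This is precisely the improvement over the cubic analogue of \cite{Cheng-Guo-Zhao} advertised in Remark 1.5, and I expect it to go through because the quintic structure gives one more factor than needed to close the $\ell^2_n$ summation.
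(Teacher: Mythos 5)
Your plan has a genuine gap: it never produces the mechanism that allows four of the five factors to be measured only in $L^2_{x_2}$. After you take norms, drop the resonance constraint and free up the five indices, you must sum $\|\Pi_{n_j}F_j\|_{L^\infty_{x_2}}$ (or some $L^{q}_{x_2}$ norm with $q>2$) over each $n_j$; Plancherel only controls the $\ell^2_{n_j}$ sum of $\|\Pi_{n_j}F_j\|_{L^2_{x_2}}$, and converting the needed $\ell^1_{n_j}$ sum into an $\ell^2_{n_j}$ sum by Cauchy--Schwarz costs a weight $\langle n_j\rangle^{1/2+\epsilon}$, i.e.\ an $\mathcal{H}^{1/2+\epsilon}_{x_2}$ norm on each undifferentiated factor --- which the statement does not allow. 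The Hermite sup-norm decay $\|e_n\|_{L^\infty}\lesssim\langle n\rangle^{-1/12}$ is far too weak to repair this, and invoking $\mathcal{H}^1_{x_2}\hookrightarrow L^\infty_{x_2}$ for the full functions $F_j$ again requires more than $L^2_{x_2}$. The $\langle n\rangle^{-1}\in\ell^2$ trick you cite from the proof of Theorem \ref{large-scale} (around \eqref{fml-error-term-2}) works there only because the inputs are high-regularity approximants ($\mathcal{H}^6_{x_2}$, $\mathcal{H}^7_{x_2}$ norms appear on the right-hand side); it cannot prove the lemma as stated. Indeed, the impossibility of closing such an estimate with one $\mathcal{H}^1$ and four $L^2$-type norms is exactly the obstruction recorded in Remark \ref{Remark} for the waveguide case, so any argument that ignores the special dispersive structure of the harmonic oscillator is bound to fail.

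The paper's proof uses precisely that structure. By duality against $\varphi_0\in L^6_{x_1}L^2_{x_2}$, the resonance condition $n_1-n_2+n_3-n_4+n_5=n$ is rewritten as a time average over $[0,\pi]$ via $e^{it(-\partial_{x_2}^2+x_2^2)}f=\sum_n e^{it(2n+1)}\Pi_n f$, so the resonant pairing becomes $\frac1\pi\int_0^\pi\int e^{itH}\varphi_0\,\overline{e^{itH}F_1}\cdots\,dx_2\,dt$ pointwise in $x_1$. H\"older in $(t,x_2)$ places $\varphi_0$ in $L^\infty_tL^2_{x_2}$ and each $F_j$ in $L^5_tL^{10}_{x_2}$, and the harmonic-oscillator Strichartz estimate (Proposition \ref{Strichartz-harmonic}, with $(5,10)$ admissible) bounds each of these by $\|F_j(x_1,\cdot)\|_{L^2_{x_2}}$ alone; H\"older in $x_1$ with $\frac56=\frac16\cdot 5$ then finishes $\beta=0$, and $\beta=1$ follows by commuting $H^{1/2}$ with $\Pi_n$ and distributing the derivative with the Moser estimate (Lemma \ref{Moser-estimate}) before running the same argument. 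This Strichartz smoothing in the $x_2$ variable is the key idea missing from your proposal.
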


\begin{proof} Without loss of generality, we can assume that $\beta=0$.
For $\varphi_0\in L_{x_1}^6L_{x_2}^2$, using Strichartz estimates  \eqref{Strichartz-harmonic} and H\"older's inequality, we have
\begin{align*}
&\hspace{3ex}\left(\sum_{\substack{n_1,n_2,n_3,n_4,n_5,n\in\N\\n_1-n_2+n_3-n_4+n_5=n}}\Pi_{n}(\Pi_{n_1}F_1\overline{\Pi_{n_2}F_2}\Pi_{n_3}F_3\overline{\Pi_{n_4}F_4}\Pi_{n_5}F_5),\varphi_0\right)\\
&=\frac{1}{\pi}\sum_{n_1,n_2,n_3,n_4,n_5,n\in\N}\int_{0}^{\pi}e^{2it(n_1-n_2+n_3-n_4+n_5-n)}\int_{\R\times\R}\Pi_{n_1}F_1\overline{\Pi_{n_2}F_2}\Pi_{n_3}F_3\overline{\Pi_{n_4}F_4}\Pi_{n_5}F_5\overline{\Pi_{n}\varphi_0}dx_1dx_2dt\\
&=\frac{1}{\pi}\int_{0}^{\pi}\int_{\R}\int_{\R}e^{it(-\partial_{x_2}^2+x_2^2)}\varphi_0(x_1,x_2)\overline{e^{it(-\partial_{x_2}^2+x_2^2)}F_1(x_1,x_2)}e^{it(-\partial_{x_2}^2+x_2^2)}F_3(x_1,x_2)\overline{e^{it(-\partial_{x_2}^2+x_2^2)}F_4(x_1,x_2)}\\
&\hspace{2ex}\times e^{it(-\partial_{x_2}^2+x_2^2)}F_5(x_1,x_2)\overline{e^{it(-\partial_{x_2}^2+x_2^2)}F_6(x_1,x_2)}dx_1dx_2dt\\
&\lesssim\int_{\R}\left\|e^{it(-\partial_{x_2}^2+x_2^2)}\varphi_0(x_1,x_2)\right\|_{L_t^\infty L_{x_2}^2([-\pi,\pi]\times\R)}\prod_{j=1}^{5}\left\|e^{it(-\partial_{x_2}^2+x_2^2)}F_j(x_1,x_2)\right\|_{L_t^5 L_{x_2}^{10}([-\pi,\pi]\times\R)}dx_1\\
&\lesssim\int_{\R}\|\varphi_0(x_1,x_2)\|_{L_{x_2}^2}\prod_{j=1}^{5}\|F_j\|_{L_{x_2}^2(\R)}dx_1.
\end{align*}
In the above inequalities, we here used the fact that
\begin{align*}
e^{it(-\partial_{x_2}^2+x_2^2)}f(x_1,x_2)=\sum_{n\in \N}e^{it(2n+1)}\Pi_{n}f(x_1,x_2).
\end{align*}
Thus, using H\"older's inequality again, we have
\begin{align*}
\bigg\|\sum_{\substack{n_1,n_2,n_3,n_4,n_5,n\in\N\\n_1-n_2+n_3-n_4+n_5=n}}\Pi_{n}(\Pi_{n_1}F_1\overline{\Pi_{n_2}F_2}\Pi_{n_3}F_3\overline{\Pi_{n_4}F_4}\Pi_{n_5}F_5)\bigg\|_{L_{x_1}^\frac{6}{5}L_{x_2}^2(\R\times\R)}\lesssim\prod_{j=1}^{5}\big\|F_j\|_{L_{x_1}^6L_{x_2}^2(\R\times\R)}.
\end{align*}
For the case of $\beta=1$, since
\begin{align*}
&\hspace{5ex}	\bigg\|\sum_{\substack{n_1,n_2,n_3,n_4,n_5,n\in\N\\n_1-n_2+n_3-n_4+n_5=n}}\Pi_{n}(\Pi_{n_1}F_1\overline{\Pi_{n_2}F_2}\Pi_{n_3}F_3\overline{\Pi_{n_4}F_4}\Pi_{n_5}F_5)\bigg\|_{L_{x_1}^\frac{6}{5}\mathcal{H}_{x_2}^\beta(\R\times\R)}\\&=	\bigg\|\sum_{\substack{n_1,n_2,n_3,n_4,n_5,n\in\N\\n_1-n_2+n_3-n_4+n_5=n}}H^\frac{1}{2}\Pi_{n}(\Pi_{n_1}F_1\overline{\Pi_{n_2}F_2}\Pi_{n_3}F_3\overline{\Pi_{n_4}F_4}\Pi_{n_5}F_5)\bigg\|_{L_{x_1}^\frac{6}{5}L_{x_2}^2(\R\times\R)},
\end{align*}
where $H=-\partial_{x_2}^2+x_2^2$ and the fact that $H^\frac{1}{2}$ can commute with $\Pi_n$, we can obtain the desired estimate by using the similar strategy and Moser type estimate(cf. Lemma \ref{Moser-estimate}).
\end{proof}

\begin{remark}\label{nonlinear}
Indeed, we also have the following nonlinear estimates via the argument above:
\begin{align}\label{Estimate}
\bigg\|\sum_{\substack{n_1,n_2,n_3,n_4,n_5\in\N\\n_1-n_2+n_3-n_4+n_5=n}}\Pi_{n}(\Pi_{n_1}F_1\overline{\Pi_{n_2}F_2}\Pi_{n_3}F_3\overline{\Pi_{n_4}F_4}\Pi_{n_5}F_5)\bigg\|_{\mathcal{H}_{x_2}^1(\R)}\lesssim\sum_{\tau\in\sigma_5}\|F_{\tau(1)}\|_{\mathcal{H}_{x_2}^1(\R)}\prod_{j=2}^{5}\big\|F_{\tau(j)}\|_{L_{x_2}^2(\R)},
\end{align}
and
\begin{align*}
\bigg\|\sum_{\substack{n_1,n_2,n_3,n_4,n_5\in\N\\n_1-n_2+n_3-n_4+n_5=n}}\Pi_{n}(\Pi_{n_1}F_1\overline{\Pi_{n_2}F_2}\Pi_{n_3}F_3\overline{\Pi_{n_4}F_4}\Pi_{n_5}F_5)\bigg\|_{L_{x_2}^2(\R)}\lesssim\sum_{\tau\in\sigma_5}\prod_{j=1}^{5}\|F_{\tau(j)}\|_{L_{x_2}^2(\R)}.
\end{align*} 
\end{remark}

\begin{remark}\label{Remark}
For the case of $\R\times\T$, the corresponding nonlinear estimate can not hold when $\beta=1$. More precisely,  the left-hand side of \eqref{Estimate} cannot be bounded by one $h^1(\Z)$ norm and four other $\ell^2(\Z)$ norms. However, it was shown in \cite{Cheng-Guo-Zhao} that the left-hand side of \eqref{Estimate} can be bounded by one $h^1$ norm and four $h^\beta(\Z )$ norms with $\beta>\frac38$. Compared to \cite{Cheng-Guo-Zhao}, Strichartz estimates of linear Schr\"odinger equation with harmonic oscillator will help us obtain the better estimate \eqref{Estimate}.
\end{remark}

Consequently, we have the following space-time nonlinear estimate
\begin{lemma}\label{nonlinear-2}Let $u$ be a solution to \eqref{DCR-2} on interval $I$ and nonlinear term $F(u)$, then we have 
\begin{align*}
\big\|F(u)\big\|_{L_{t,x_1}^{\frac{6}{5}}\mathcal{H}_{x_2}^\beta(I\times\R\times\R)}\lesssim\|u\|_{L_{t,x_1}^6L_{x_2}^2(I\times\R\times\R)}^5
\end{align*}
for $\beta\geq0$. Moreover, we have the following estimate
\begin{align*}
\|u\|_{L_{t}^\infty L_{x_1}^2\mathcal{H}_{x_2}^1(I\times\R\times\R)}\lesssim\|\phi\|_{L_{x_1}^2\mathcal{H}_{x_2}^1(I\times\R\times\R)}+\|u\|_{L_{t,x_1}^6\mathcal{H}_{x_2}^1(I\times\R\times\R)}^5.
\end{align*}
\end{lemma}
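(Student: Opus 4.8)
The plan is to deduce both inequalities directly from the pointwise-in-$(t,x_1)$ nonlinear estimate already established in Lemma \ref{nonlinear-1} (and its vector-valued refinement in Remark \ref{nonlinear}), combined with the Strichartz estimates of Proposition \ref{Strichartz-Antonelli} and the Duhamel representation of the solution. First I would prove the space-time bound for $F(u)$. Fix $\beta \geq 0$. For each fixed $(t,x_1)$, Remark \ref{nonlinear} gives
\begin{align*}
\big\|F(u)(t,x_1,\cdot)\big\|_{\mathcal{H}_{x_2}^\beta(\R)} \lesssim \sum_{\tau\in\sigma_5}\|u(t,x_1,\cdot)\|_{\mathcal{H}_{x_2}^\beta(\R)}\prod_{j=2}^{5}\|u(t,x_1,\cdot)\|_{L_{x_2}^2(\R)},
\end{align*}
but since the aim is the cleaner bound with five $L_{x_2}^2$ factors, I would actually use the $\beta=0$ version of the estimate, i.e. $\|F(u)(t,x_1,\cdot)\|_{L_{x_2}^2}\lesssim \|u(t,x_1,\cdot)\|_{L_{x_2}^2}^5$, and note that the same argument with the differential operator $H^{\beta/2}$ (which commutes with $\Pi_n$, together with the Moser estimate of Lemma \ref{Moser-estimate}) shows that in fact $\|F(u)(t,x_1,\cdot)\|_{\mathcal{H}_{x_2}^\beta}$ is also controlled by $\|u(t,x_1,\cdot)\|_{L_{x_2}^2}^5$ once we exploit the resonance constraint $n_1-n_2+n_3-n_4+n_5=n$ exactly as in the proof of Lemma \ref{nonlinear-1}. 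Then I would take the $L_{x_1}^{6/5}$ norm in $x_1$ and the $L_t^{6/5}$ norm in $t$: by Hölder's inequality in $x_1$ with exponents $\tfrac{6}{5}=\tfrac15+\cdots+\tfrac15$ ($5$ copies of $\tfrac15$ summing to $\tfrac{5}{6}$, i.e. $\tfrac{1}{6/5} = \tfrac{5}{6} = 5\cdot\tfrac16$), we get
\begin{align*}
\big\|F(u)(t)\big\|_{L_{x_1}^{6/5}\mathcal{H}_{x_2}^\beta(\R\times\R)} \lesssim \|u(t)\|_{L_{x_1}^6L_{x_2}^2(\R\times\R)}^5,
\end{align*}
and then Hölder in $t$ with the same split yields
\begin{align*}
\big\|F(u)\big\|_{L_{t,x_1}^{6/5}\mathcal{H}_{x_2}^\beta(I\times\R\times\R)} \lesssim \|u\|_{L_{t,x_1}^6L_{x_2}^2(I\times\R\times\R)}^5,
\end{align*}
which is the first claim.

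For the second inequality, I would write the Duhamel formula $u(t) = e^{it\partial_{x_1}^2}\phi - i\int_0^t e^{i(t-s)\partial_{x_1}^2}F(u)(s)\,ds$ (with the base point in $I$), apply the $L_t^\infty L_{x_1}^2$-endpoint of the Strichartz estimate of Proposition \ref{Strichartz-Antonelli} — more precisely its $\mathcal{H}_{x_2}^\sigma$-valued version \eqref{Stricharz-Sobolev} with $\sigma=1$ together with the inhomogeneous estimate \eqref{Strichartz-inhomo}, using the admissible pair $(6,6)$ on the dual side — to obtain
\begin{align*}
\|u\|_{L_t^\infty L_{x_1}^2\mathcal{H}_{x_2}^1(I\times\R\times\R)} \lesssim \|\phi\|_{L_{x_1}^2\mathcal{H}_{x_2}^1(\R\times\R)} + \|F(u)\|_{L_{t,x_1}^{6/5}\mathcal{H}_{x_2}^1(I\times\R\times\R)},
\end{align*}
and then invoke the first part with $\beta=1$ to bound the last term by $\|u\|_{L_{t,x_1}^6L_{x_2}^2}^5$, which is finite whenever $u$ is a solution on $I$ in the relevant class; since $L_{t,x_1}^6L_{x_2}^2 \hookleftarrow L_{t,x_1}^6\mathcal{H}_{x_2}^1$ trivially, this gives exactly the stated bound.

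The only genuinely substantive point is the first step: that the $\mathcal{H}_{x_2}^\beta$ norm of the resonant quintic form is controlled by five $L_{x_2}^2$ norms rather than one $\mathcal{H}_{x_2}^\beta$ norm and four $L_{x_2}^2$ norms. This is not automatic — it is precisely the improved nonlinear estimate highlighted in the introduction as the key difference from the waveguide case \cite{Cheng-Guo-Zhao} — and it relies on the resonance relation $n_1-n_2+n_3-n_4+n_5=n$ forcing $\langle n\rangle \lesssim \langle n_1\rangle\cdots\langle n_5\rangle$ and on expanding $\sum_n e^{it(2n+1)}\Pi_n$ as the linear harmonic propagator so that the Strichartz estimate of Proposition \ref{Strichartz-harmonic} on the $x_2$-circle variable can be applied in the dual formulation, exactly as in the proof of Lemma \ref{nonlinear-1}. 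Once that pointwise-in-$(t,x_1)$ estimate is in hand, the rest is routine Hölder in $x_1$ and $t$ plus an application of the Strichartz estimates already recorded, so I expect no further obstacles. I would therefore phrase the proof as: "Applying Remark \ref{nonlinear} pointwise in $(t,x_1)$ and then Hölder's inequality in $x_1$ and $t$ yields the first estimate; the second follows from the Duhamel formula, the Strichartz estimates of Proposition \ref{Strichartz-Antonelli}, and the first estimate with $\beta=1$."
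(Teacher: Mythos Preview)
Your overall route --- Lemma~\ref{nonlinear-1}/Remark~\ref{nonlinear} pointwise in $(t,x_1)$, then H\"older in $(t,x_1)$ for the first claim, and Duhamel plus Strichartz for the second --- is exactly what the paper intends (no separate proof is given), and your treatment of the second inequality is correct.

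The gap is in the first inequality. Neither Lemma~\ref{nonlinear-1} nor Remark~\ref{nonlinear} yields $\|F(u)\|_{\mathcal{H}_{x_2}^\beta}\lesssim\|u\|_{L_{x_2}^2}^5$; what they give, via the Moser estimate of Lemma~\ref{Moser-estimate}, is one factor carrying $\mathcal{H}_{x_2}^\beta$ and four carrying $L_{x_2}^2$ --- and this, not a five-$L_{x_2}^2$ bound, is the ``improved nonlinear estimate'' contrasted with \cite{Cheng-Guo-Zhao} in Remark~\ref{Remark}. Your proposed upgrade (use $\langle n\rangle\lesssim\prod_i\langle n_i\rangle$ together with the harmonic-propagator trick) does not eliminate that factor: distributing $\langle n\rangle^{\beta}$ multiplicatively puts positive $H$-weights on \emph{each} $u_{n_i}$, and Proposition~\ref{Strichartz-harmonic} does not absorb them. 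In fact the bound $\|F(u)\|_{\mathcal{H}_{x_2}^\beta}\lesssim\|u\|_{L_{x_2}^2}^5$ uniformly in $\beta\ge 0$ is false: for $u=\psi(x_1)e_N(x_2)$ one computes $F(u)=|\psi|^4\psi\,\|e_N\|_{L^6}^6\,e_N$, so the left side carries a factor $(2N+1)^{\beta/2}\|e_N\|_{L^6}^6$, which is unbounded in $N$ once $\beta$ is large. The first display in the statement is therefore a misprint; what actually follows from Lemma~\ref{nonlinear-1} and H\"older in $t$ is
\[
\big\|F(u)\big\|_{L_{t,x_1}^{6/5}\mathcal{H}_{x_2}^\beta}\lesssim \|u\|_{L_{t,x_1}^6\mathcal{H}_{x_2}^\beta}\,\|u\|_{L_{t,x_1}^6 L_{x_2}^2}^4,
\]
and this is what is used downstream (the scattering criterion bootstraps the $\mathcal{H}_{x_2}^1$-norm on subintervals where the four $L_{x_2}^2$ factors are small). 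With this corrected form the second inequality follows exactly by your Duhamel/Strichartz argument.
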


Thus, combining Lemma \ref{nonlinear-1} and Lemma \ref{nonlinear-2}, we obtain the following  local well-posedness and small-data scattering  of \eqref{DCR-2}. 
\begin{theorem}
\begin{enumerate}
\item (Local well-posedness)
Suppose that $ \left\|\phi \right\|_{L_{x_1}^2 \mathcal{H}_{x_2}^1(\R\times\R)} <\infty$. Then, there exists an unique solution to \eqref{DCR-2} satisfying 
\begin{align*}
u \in C((-T,T),L_{x_1}^2 \mathcal{H}_{x_2}^1(\R\times\R))\cap L_{t,x_1}^6 \mathcal{H}_{x_2}^1((-T,T)\times\R\times\R) 
\end{align*}
for some $T >0$.

\item (Small data scattering)
There exists a sufficient small constant $\eta >0$ such that when $\|\phi \|_{L_{x_1}^2 \mathcal{H}_{x_2}^1(\R\times\R)} < \eta$, there eixsts  an unique global solution $u$ with $u|_{t=0} = \phi$. Moreover,  $u$ scatters in $L_{x_1}^2 \mathcal{H}_{x_2}^1(\R\times\R)$, i.e, there exists $\phi^\pm \in L_{x_1}^2 \mathcal{H}_{x_2}^1(\R \times \R)$ so that
\begin{align*}
\left\|u(t) - e^{it \partial^2_{x_1}} \phi^\pm \right\|_{L_{x_1}^2 \mathcal{H}_{x_2}^1(\R\times\R) } \to 0, \text{ when } t \to \pm \infty.
\end{align*}
\end{enumerate}
\end{theorem}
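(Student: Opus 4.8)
The plan is to prove the local well-posedness and small data scattering for \eqref{DCR-2} by a standard contraction mapping argument in a suitable Strichartz-type space, using the nonlinear estimates of Lemma \ref{nonlinear-1} and Lemma \ref{nonlinear-2} together with the Strichartz estimate \eqref{Strichartz-harmonic} for the harmonic oscillator in the $x_2$ variable and the classical Strichartz estimate for $e^{it\partial_{x_1}^2}$ in the $x_1$ variable. First I would set up the Duhamel map
\[
\Phi(u)(t) = e^{it\partial_{x_1}^2}\phi - i\int_0^t e^{i(t-s)\partial_{x_1}^2} F(u)(s)\, ds,
\]
and work in the complete metric space
\[
X_T = \big\{ u \in C((-T,T),L_{x_1}^2\mathcal H_{x_2}^1) : \|u\|_{L_t^\infty L_{x_1}^2 \mathcal H_{x_2}^1} \le 2C\|\phi\|_{L_{x_1}^2\mathcal H_{x_2}^1},\ \|u\|_{L_{t,x_1}^6 \mathcal H_{x_2}^1((-T,T)\times\R\times\R)} \le 2\rho \big\}
\]
for an appropriate $\rho$, equipped with the weaker metric $d(u,v) = \|u-v\|_{L_{t,x_1}^6 L_{x_2}^2}$. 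The admissible pair $(6,6)$ in the $(t,x_1)$ variables (since $\tfrac{2}{6}=\tfrac12-\tfrac16$) combined with the endpoint-type $L_t^\infty L_{x_2}^2$ bound in $x_2$ makes $L_{t,x_1}^6\mathcal H_{x_2}^1$ the natural scattering space, and the dual pair $(\tfrac65,\tfrac65)$ controls the nonlinearity $F(u)$ via Lemma \ref{nonlinear-2}.

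The key steps, in order, are: (i) apply the Strichartz estimates to get
\[
\|\Phi(u)\|_{L_t^\infty L_{x_1}^2\mathcal H_{x_2}^1 \cap L_{t,x_1}^6 \mathcal H_{x_2}^1} \lesssim \|\phi\|_{L_{x_1}^2\mathcal H_{x_2}^1} + \|F(u)\|_{L_{t,x_1}^{6/5}\mathcal H_{x_2}^1};
\]
(ii) invoke Lemma \ref{nonlinear-2} to bound $\|F(u)\|_{L_{t,x_1}^{6/5}\mathcal H_{x_2}^1}\lesssim \|u\|_{L_{t,x_1}^6 L_{x_2}^2}^5 \lesssim \|u\|_{L_{t,x_1}^6\mathcal H_{x_2}^1}^5$, which closes the self-mapping estimate once $\rho$ is taken small (this is automatic for small $T$ by dominated convergence, since $\|e^{it\partial_{x_1}^2}\phi\|_{L_{t,x_1}^6\mathcal H_{x_2}^1((-T,T)\times\R^2)}\to 0$ as $T\to 0$ by the global Strichartz estimate); (iii) prove the contraction property in the weaker metric using the multilinearity of $F$ — writing $F(u)-F(v)$ as a telescoping sum of quintilinear terms, each with one factor $u-v$ in $L_{t,x_1}^6 L_{x_2}^2$ and the remaining four factors among $u,v$ measured in $L_{t,x_1}^6 \mathcal H_{x_2}^1$ via Lemma \ref{nonlinear-1} — so that $d(\Phi(u),\Phi(v)) \le C\rho^4 d(u,v) < \tfrac12 d(u,v)$; (iv) for the small data statement take $\|\phi\|_{L_{x_1}^2\mathcal H_{x_2}^1} < \eta$, use the global-in-time Strichartz bound $\|e^{it\partial_{x_1}^2}\phi\|_{L_{t,x_1}^6\mathcal H_{x_2}^1(\R\times\R^2)}\lesssim \eta$, and run the same fixed point argument on $\R$ to get a global solution with $\|u\|_{L_{t,x_1}^6\mathcal H_{x_2}^1}\lesssim\eta$; (v) conclude scattering by showing $\{e^{-it\partial_{x_1}^2}u(t)\}$ is Cauchy in $L_{x_1}^2\mathcal H_{x_2}^1$ as $t\to\pm\infty$: indeed $e^{-it\partial_{x_1}^2}u(t) = \phi - i\int_0^t e^{-is\partial_{x_1}^2}F(u)(s)ds$, and the Strichartz estimate applied to the tail bounds $\|\int_{t_1}^{t_2} e^{-is\partial_{x_1}^2}F(u)\,ds\|_{L_{x_1}^2\mathcal H_{x_2}^1}\lesssim \|F(u)\|_{L_{t,x_1}^{6/5}\mathcal H_{x_2}^1((t_1,t_2)\times\R^2)} \lesssim \|u\|_{L_{t,x_1}^6\mathcal H_{x_2}^1((t_1,t_2)\times\R^2)}^5 \to 0$, so $\phi^\pm := \phi - i\int_0^{\pm\infty} e^{-is\partial_{x_1}^2}F(u)\,ds$ are well-defined limits.

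I do not expect a genuine obstacle here: the scheme is entirely classical and the required harmonic analysis has already been isolated in the preliminary sections. If anything, the most delicate point is verifying that the nonlinear estimate of Lemma \ref{nonlinear-2} genuinely closes at the $\mathcal H_{x_2}^1$ level — that is, that one can place \emph{all four} of the lower-order factors in $L_{t,x_1}^6 L_{x_2}^2$ rather than $L_{t,x_1}^6 \mathcal H_{x_2}^1$, which is exactly the improvement over the waveguide case emphasized in Remark \ref{Remark} and follows from the Hermite Strichartz estimate $\|e^{it(-\partial_{x_2}^2+x_2^2)}F\|_{L_t^5 L_{x_2}^{10}([-\pi,\pi]\times\R)}\lesssim \|F\|_{L_{x_2}^2(\R)}$ combined with the Moser-type estimate (Lemma \ref{Moser-estimate}) to handle the one factor carrying a derivative. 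Once that estimate is in hand, uniqueness follows from the contraction (or a short Gronwall-type argument on small subintervals), continuity in time from the standard Strichartz continuity argument, and persistence of regularity (Proposition \ref{cor-preserve-regularity}) from iterating the same estimates with higher Sobolev norms; hence the theorem.
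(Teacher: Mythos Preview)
Your proposal is correct and follows exactly the approach the paper intends: the paper itself omits the proof entirely, stating only that the theorem follows by combining Lemma \ref{nonlinear-1} and Lemma \ref{nonlinear-2} with the Strichartz estimates, and your write-up supplies precisely the standard contraction mapping argument that this entails. One minor remark: the first displayed estimate in Lemma \ref{nonlinear-2} as stated in the paper appears to have a typo (the right-hand side should carry one factor in $L_{t,x_1}^6\mathcal H_{x_2}^\beta$, not all five in $L_{t,x_1}^6 L_{x_2}^2$, as is clear from Lemma \ref{nonlinear-1}), but you correctly bound everything by $\|u\|_{L_{t,x_1}^6\mathcal H_{x_2}^1}^5$ in any case, so this does not affect your argument.
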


By the nonlinear estimates, we can also prove the following stability lemma. The proof is similar to that in \cite{Cheng-Guo-Guo-Liao-Shen} so we omit it here.
\begin{lemma}[Stability theory]\label{Stability-DCR}Let $I\subset\R$ be a compact interval and function $\tilde{u}\in C(I,L_{x_1}^2\mathcal{H}_{x_2}^1(\R\times\R))\cap L_{t,x_1}^6\mathcal{H}_{x_2}^1(I\times\R\times\R)$  be an approximate solution satisfying the following equation
\begin{align*}
i\partial_t\tilde u+\partial_{x_1}^2\tilde u=F(\tilde u)+e.
\end{align*}
Then for any $\varepsilon>0$,  there exists $\delta>0$ such that if
\begin{align*}
\|e\|_{L_{t,x_1}^\frac{6}{5}\mathcal{H}_{x_2}^{1}(I\times\R\times\R)}+\|\tilde{u}(t_0)-u(t_0)\|_{L_{x_1}^2\mathcal{H}_{x_2}^1(I\times\R\times\R)}<\delta,
\end{align*}
then we have
\begin{align*}
\|u-\tilde{u}\|_{L_{t,x_1}^6\mathcal{H}_{x_2}^{1}\cap L_t^\infty L_{x_1}^2\mathcal{H}_{x_2}^1(I\times\R\times\R)}<\varepsilon.
\end{align*}
\end{lemma}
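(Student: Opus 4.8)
\textbf{Proof proposal for the stability lemma (Lemma~\ref{Stability-DCR}).}

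The plan is to follow the standard Bourgain-type bootstrap/perturbation argument for mass-critical equations, adapted to the vector-valued $L^2_{x_1}\mathcal{H}^1_{x_2}$ setting, exactly as in the one-dimensional NLS theory of Dodson and the appendix of \cite{Cheng-Guo-Guo-Liao-Shen}. First I would reduce to the case where the scattering-size norm of $\tilde u$ is small: by the hypothesis $\tilde u\in L^6_{t,x_1}\mathcal{H}^1_{x_2}(I\times\R\times\R)$ one can partition $I=\bigcup_{m=1}^{M}I_m$ into finitely many subintervals on each of which $\|\tilde u\|_{L^6_{t,x_1}\mathcal{H}^1_{x_2}(I_m\times\R\times\R)}\le\eta_0$ for a small absolute constant $\eta_0$, with $M=M(\varepsilon,\|\tilde u\|_{L^6_{t,x_1}\mathcal{H}^1_{x_2}})$. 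Then I would prove the claim on a single such subinterval and iterate, at each step feeding the bound $\|u(t_m)-\tilde u(t_m)\|_{L^2_{x_1}\mathcal{H}^1_{x_2}}\lesssim\delta^{1/?}$ (which is controlled via the Duhamel formula for the difference) into the next interval; since $M$ is finite, the accumulated constants remain under control and choosing $\delta=\delta(\varepsilon,M)$ small enough closes the iteration.

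On a single subinterval $I_m$ I would set $w=u-\tilde u$, which solves $i\partial_t w+\partial_{x_1}^2 w=F(\tilde u+w)-F(\tilde u)-e$ with $w(t_m)=u(t_m)-\tilde u(t_m)$. Writing the Duhamel formula and applying the Strichartz estimate (Proposition~\ref{Strichartz-Antonelli}, in the free-$x_2$ form, i.e.\ the $\sigma=1$ case used for the (DCR) flow) together with the nonlinear estimate of Lemma~\ref{nonlinear-1} expanded in powers of $w$, one obtains
\begin{align*}
\|w\|_{L^6_{t,x_1}\mathcal{H}^1_{x_2}(I_m)}\lesssim \|w(t_m)\|_{L^2_{x_1}\mathcal{H}^1_{x_2}}+\|e\|_{L^{6/5}_{t,x_1}\mathcal{H}^1_{x_2}(I_m)}+\sum_{\ell=1}^{5}\|\tilde u\|_{L^6_{t,x_1}\mathcal{H}^1_{x_2}(I_m)}^{5-\ell}\|w\|_{L^6_{t,x_1}\mathcal{H}^1_{x_2}(I_m)}^{\ell},
\end{align*}
and similarly for $\|w\|_{L^\infty_t L^2_{x_1}\mathcal{H}^1_{x_2}(I_m)}$. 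Using the smallness $\|\tilde u\|_{L^6_{t,x_1}\mathcal{H}^1_{x_2}(I_m)}\le\eta_0$ and the smallness of $\|w(t_m)\|_{L^2_{x_1}\mathcal{H}^1_{x_2}}$ and $\|e\|_{L^{6/5}_{t,x_1}\mathcal{H}^1_{x_2}}$, a continuity/bootstrap argument absorbs the superlinear terms on the left and yields $\|w\|_{L^6_{t,x_1}\mathcal{H}^1_{x_2}(I_m)\cap L^\infty_t L^2_{x_1}\mathcal{H}^1_{x_2}(I_m)}\lesssim \|w(t_m)\|_{L^2_{x_1}\mathcal{H}^1_{x_2}}+\|e\|_{L^{6/5}_{t,x_1}\mathcal{H}^1_{x_2}(I_m)}$. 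Evaluating the Duhamel formula at $t=t_{m+1}$ then gives control of $\|w(t_{m+1})\|_{L^2_{x_1}\mathcal{H}^1_{x_2}}$ in terms of the same quantities, which is what allows the induction on $m$ to proceed.

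The only genuinely delicate point — and the one worth emphasizing rather than the routine Strichartz bookkeeping — is the propagation of the error across the finitely many subintervals: each application of the argument on $I_m$ loses a constant factor, so after $M$ steps the requisite smallness of the initial difference is $\delta$ times a constant that grows (at worst geometrically) in $M$, and since $M$ itself depends only on $\varepsilon$ and the fixed a~priori bound $\|\tilde u\|_{L^6_{t,x_1}\mathcal{H}^1_{x_2}(I)}$, one simply chooses $\delta$ small at the very end. A secondary technical point is that the nonlinear estimate must be applied with the $\mathcal{H}^1_{x_2}$-derivative landing on a single factor via the Moser/Leibniz rule of Lemma~\ref{Moser-estimate}, so that the four remaining factors are measured only in $L^6_{t,x_1}L^2_{x_2}$; this is exactly the structure of Lemma~\ref{nonlinear-1} and is why the quintic-resonant nonlinearity does not obstruct the estimate. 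Since all of this is entirely parallel to \cite{Cheng-Guo-Guo-Liao-Shen} and \cite{Killip-Visan-note}, I would state the result and refer to those sources for the routine details, as the excerpt already indicates.
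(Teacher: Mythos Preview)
Your proposal is correct and follows exactly the standard approach the paper has in mind: the paper omits the proof entirely, stating only that it ``is similar to that in \cite{Cheng-Guo-Guo-Liao-Shen}'', which is precisely the partition-into-small-intervals plus bootstrap/iteration argument you outline, driven by the Strichartz estimate and the nonlinear estimate of Lemma~\ref{nonlinear-1}. The only cosmetic remark is that for the (DCR) flow the relevant propagator is $e^{it\partial_{x_1}^2}$ alone (not the full partial-harmonic group of Proposition~\ref{Strichartz-Antonelli}), so the Strichartz input is simply the one-dimensional estimate applied to $\mathcal{H}^1_{x_2}$-valued functions; you already flag this with your parenthetical remark, so no change is needed.
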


By the nonlinear estimates and the bootstrap argument as in Section \ref{sec:local}, we can obtain the following scattering criterion.
\begin{lemma}
Let $u$ be the global solution to equation \eqref{DCR-2}  satisfying 
\begin{align*}
\|u\|_{L_{t,x_1}^6L_{x_2}^2(I\times\R\times\R)}<\infty,
\end{align*}	
then we have
\begin{align*}
\|u\|_{L_{t,x_1}^6\mathcal{H}_{x_2}^1(\R\times\R\times\R)}<\infty.
\end{align*}
\end{lemma}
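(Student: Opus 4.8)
The plan is to run the standard "inheritance of regularity via small-interval decomposition" argument, exactly as used for the scattering criterion in Section \ref{sec:local}. First I would record the two ingredients already at hand: the Strichartz estimates (Proposition \ref{Strichartz-harmonic}, applied in the $x_1$ variable together with the global-in-time estimate $\|e^{it\partial_{x_1}^2}f\|_{L^6_{t,x_1}L^2_{x_2}}\lesssim \|f\|_{L^2_{x_1}L^2_{x_2}}$ and its Sobolev-space version) and the space-time nonlinear estimate of Lemma \ref{nonlinear-2}, which gives $\|F(u)\|_{L^{6/5}_{t,x_1}\mathcal{H}^1_{x_2}}\lesssim \|u\|_{L^6_{t,x_1}L^2_{x_2}}^4\|u\|_{L^6_{t,x_1}\mathcal{H}^1_{x_2}}$ on any slab (this sharper form, isolating one factor in $\mathcal{H}^1_{x_2}$ and the remaining four in $L^2_{x_2}$, is exactly what Lemma \ref{nonlinear-1} provides via the permutation sum $\sigma_5$).

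The key steps, in order: (1) Set $L:=\|u\|_{L^6_{t,x_1}L^2_{x_2}(I\times\R\times\R)}<\infty$ and pick a small threshold $\delta=\delta(\text{Strichartz const})\ll 1$ to be fixed at the end. Partition $\R$ (or the maximal interval $I$; global existence is already known from Lemma \ref{lwp}-type theory and the mass/energy conservation, see Remark \ref{rem-global}) into $J\lesssim (L/\delta)^6$ consecutive subintervals $I_j=[t_j,t_{j+1}]$ with $\|u\|_{L^6_{t,x_1}L^2_{x_2}(I_j\times\R\times\R)}\le\delta$. (2) On a fixed $I_j$, apply the Duhamel formula and the $\mathcal{H}^1_{x_2}$-valued Strichartz estimate to get
\begin{align*}
\|u\|_{L^6_{t,x_1}\mathcal{H}^1_{x_2}(I_j\times\R\times\R)}
&\lesssim \|u(t_j)\|_{L^2_{x_1}\mathcal{H}^1_{x_2}(\R\times\R)}
+\|F(u)\|_{L^{6/5}_{t,x_1}\mathcal{H}^1_{x_2}(I_j\times\R\times\R)}\\
&\lesssim \|u(t_j)\|_{L^2_{x_1}\mathcal{H}^1_{x_2}(\R\times\R)}
+\delta^4\,\|u\|_{L^6_{t,x_1}\mathcal{H}^1_{x_2}(I_j\times\R\times\R)}.
\end{align*}
Choosing $\delta$ small enough that the constant times $\delta^4\le\tfrac12$, absorb the last term into the left side. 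This needs an a priori finiteness of $\|u\|_{L^6_{t,x_1}\mathcal{H}^1_{x_2}(I_j)}$ to justify the absorption, which follows from local well-posedness (the solution is in $C_tL^2_{x_1}\mathcal{H}^1_{x_2}$, hence locally in $L^6_{t,x_1}\mathcal{H}^1_{x_2}$ on a short enough interval; a standard continuity/bootstrap argument on $I_j$ upgrades this to the whole $I_j$ because the scattering-size norm is already controlled by $\delta$). This yields $\|u\|_{L^6_{t,x_1}\mathcal{H}^1_{x_2}(I_j)}\lesssim \|u(t_j)\|_{L^2_{x_1}\mathcal{H}^1_{x_2}}$. (3) Meanwhile the mass-type conservation law \eqref{equ:Msudef} controls $\|u(t)\|_{L^2_{x_1}\mathcal{H}^1_{x_2}}$ uniformly in $t$ in terms of $M_S(\phi)$, so $\|u(t_j)\|_{L^2_{x_1}\mathcal{H}^1_{x_2}}\lesssim_{\|\phi\|_{L^2_{x_1}\mathcal{H}^1_{x_2}}}1$ for every $j$. (Alternatively, iterate the $L^\infty_tL^2_{x_1}\mathcal{H}^1_{x_2}$ bound of Lemma \ref{nonlinear-2} across the subintervals, which also gives a uniform bound once $\delta$ is small.) (4) Sum the $J$ estimates: $\|u\|_{L^6_{t,x_1}\mathcal{H}^1_{x_2}(\R\times\R\times\R)}^6\lesssim \sum_{j=1}^J \|u(t_j)\|_{L^2_{x_1}\mathcal{H}^1_{x_2}}^6\lesssim J\cdot C(\|\phi\|_{L^2_{x_1}\mathcal{H}^1_{x_2}})\lesssim (L/\delta)^6\,C(\|\phi\|)<\infty$, giving the claimed conclusion.

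I do not expect a serious obstacle here — the statement is the $L^2_{x_2}$-level analogue of the scattering criterion already proved in Section \ref{sec:local}, and every tool is in place. The one point that deserves care is step (2): the absorption argument requires a priori local finiteness of the $\mathcal{H}^1_{x_2}$-valued norm on each subinterval, so I would phrase it as a continuity/bootstrap statement (the set of $t\in I_j$ with $\|u\|_{L^6_{[t_j,t],x_1}\mathcal{H}^1_{x_2}}\le 2C\|u(t_j)\|_{L^2_{x_1}\mathcal{H}^1_{x_2}}$ is nonempty, closed, and open) rather than a bare inequality manipulation. The only genuinely model-specific input — that one can peel off four $L^2_{x_2}$ factors and keep just one $\mathcal{H}^1_{x_2}$ factor in the quintic term, which is what makes the $L^2_{x_2}$ hypothesis suffice — is exactly the content of Lemma \ref{nonlinear-1} and its Remark \ref{nonlinear}, so nothing new is needed. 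Once $\|u\|_{L^6_{t,x_1}\mathcal{H}^1_{x_2}}<\infty$ is established, scattering in $L^2_{x_1}\mathcal{H}^1_{x_2}$ in the sense of \eqref{scattering-def} follows by the usual Duhamel/Cauchy-criterion argument, concluding the proof of the Proposition (Scattering Criterion) stated in the introduction.
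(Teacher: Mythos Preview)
Your proposal is correct and matches the paper's approach exactly: the paper states this lemma follows ``by the nonlinear estimates and the bootstrap argument as in Section \ref{sec:local}'' without further detail, and what you have written is precisely that argument, with the key model-specific input (one $\mathcal{H}^1_{x_2}$ factor, four $L^2_{x_2}$ factors from Lemma \ref{nonlinear-1}) and the uniform-in-time bound from the conservation law $M_S(u)=\|u\|_{L^2_{x_1}\mathcal{H}^1_{x_2}}^2$ correctly identified.
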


\begin{remark}
As the same reason in Remark \ref{Remark}, for the case of $\R\times\T$, Cheng-Guo-Zhao\cite{Cheng-Guo-Zhao} only reduce the scattering norm to $L_{t,x}^6h^\beta(\R\times\R\times\Z )$ with $\beta>\frac38$. However,  the better nonlinear estimate \eqref{I} can help us reduce the scattering norm to  $L^2$ level with respect  to $x_2$ variable.
\end{remark}




\section{Reduction to almost-periodic solutions}\label{sec:minimal}
In this section, we give the classification of the minimal blow-up solution to the equation \eqref{DCR-5} via Dodson's strategy and reduce the proof of Theorem \ref{Thm2} to preclude the possibility of such special solutions.

We define a group action $G$ generated by phase rotations, Galilean transform, translations and dilations. More precisely, it is given by the map $g \rightarrow T_g$, where
\begin{equation*}
(T_{ g_{ ( \xi_n , x_{1,n} , \lambda_n^k) }  } u ) (t , x) := \left( \frac{1}{ \lambda^k_n } \right)^{\frac{1}{2}}   e^{ i x_1 \cdot \xi_n } e^{ -it |\xi_n|^2 } u \left(   \frac{t}{(\lambda_n^k) ^2} , \frac{x_1 - x_{1,n} - 2 \xi_n t}{\lambda_n^k}   \right).
\end{equation*}
Notice $T_g$ maps $L^2_{ x_1} \mathcal{H}^{1}_{x_2}$ to itself, we let $G / L_{x_1}^2\mathcal{H}_{x_2}^{1}$ be the modulo space of $G$-orbit endowed with the usual quotient topology.

Then, we define the function
\begin{equation*}
\Lambda(L) = \sup  \,  \{ \|  u\|_{L_t^6 L_{x_1}^{6} L^2_{x_2}  (\mathbb{R} \times \mathbb{R}\times\R)} : \| u(0) \|_{  L_{x_1}^2\mathcal{H}_{x_2}^1(\R\times\R) } \le L \},
\end{equation*}
and
\begin{equation*}
L_{max} = \sup \,  \{   L : \Lambda (L^{\prime}) < \infty , \forall L^{\prime} < L     \}.
\end{equation*}
By the local well-posedness theory, $\Lambda(L)< \infty$ for $L$ sufficiently small. Our goal is to prove $L_{max}  = \infty$.

Suppose that Theorem \ref{Thm2} fails, we can reduce to prove the existence of the almost-periodic solution. 

\begin{theorem}\label{thm-reduction-almost-perodic}
Assume $L_{max} < \infty $. Then there exist a solution $u \in C^0_t L^2_{x_1}\mathcal{H}^1_{x_2}(I\times\R\times\R) \cap L^6_{t , x_1} L^2_{x_2}(I\times\R\times\R)$ to $\eqref{DCR-2}$, also called critical element, with $I$ the maximal lifespan interval such that
\begin{enumerate}
\item $ M_S (u) = L_{\max}$ , \\
\item $u$ blow up at both directions, i.e. for some $t_0 , \tilde{t}_0 \in \mathbb{R}$, there are
\begin{equation*}
\|u\|_{L^6_{t , x_1} L_{x_2}^2 ( I \cap (-\infty , t_0) )} = \|u\|_{L^6_{t , x_1} L_{x_2}^2 ( I \cap (\tilde{t}_0 , \infty) )} = \infty.
\end{equation*}
\item $u$ is an almost periodic solution modulo transform group $G$.
\end{enumerate} 
\end{theorem}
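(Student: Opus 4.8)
The plan is to run the concentration--compactness argument of Kenig--Merle and Tao--Visan--Zhang for the scale- and Galilean-invariant system \eqref{DCR-2}, closely paralleling the proof of the Palais--Smale condition of Section \ref{sec:profile} (Proposition \ref{PS-condition}) but now carrying along the full four-parameter symmetry group $G$. First I would reduce the theorem to a Palais--Smale statement: if $L_{max}<\infty$ and $\{u_n\}$ is a sequence of maximal-lifespan solutions to \eqref{DCR-2} with $M_S(u_n)\to L_{max}$ and
\begin{align*}
\big\|u_n\big\|_{L^6_{t,x_1}L^2_{x_2}((-\infty,0]\times\R\times\R)}=\big\|u_n\big\|_{L^6_{t,x_1}L^2_{x_2}([0,\infty)\times\R\times\R)}\to\infty
\end{align*}
(which we may arrange after translating in time), then there exist $g_n\in G$ such that $\{T_{g_n}u_n(0)\}$ is precompact in $L^2_{x_1}\mathcal{H}^1_{x_2}(\R\times\R)$. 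Granting this, its limit point serves as initial data for the critical element $u$; the two-sided blowup and the minimality $M_S(u)=L_{max}$ are inherited from the sequence through the stability theorem (Lemma \ref{Stability-DCR}) and mass conservation, and almost-periodicity modulo $G$ follows by applying the Palais--Smale statement to the orbit $\{u(t)\}_{t\in I}$, since each $u(t)$ is again a minimal-mass solution blowing up in both time directions.

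To establish the Palais--Smale statement, I apply the linear profile decomposition (Theorem \ref{LinearProfile}) to $\{u_n(0)\}$, writing $u_n(0)=\sum_{j=1}^{J}g_n^je^{it_n^j\partial_{x_1}^2}\phi^j+r_n^J$ with asymptotically orthogonal frames in the sense of \eqref{frame-orthogonal}, asymptotic decoupling of the conserved mass $M_S$, and the remainder asymptotically negligible in the relevant Strichartz norm $L^6_{t,x_1}L^2_{x_2}$. Since \eqref{DCR-2} is genuinely scale invariant, the scaling $v\mapsto\lambda^{1/2}v(\lambda^2t,\lambda x_1,x_2)$ is itself a symmetry in $G$, so the large-scale profiles ($\lambda_n^j\to\infty$) present no extra difficulty: they correspond, after undoing the group action, to solutions of \eqref{DCR-2} with rescaled data. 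To each bubble I associate the nonlinear profile $v_n^j$, the solution to \eqref{DCR-2} with data $g_n^je^{it_n^j\partial_{x_1}^2}\phi^j$, and then split into two cases according to whether some nonlinear profile carries the full mass $L_{max}$. In the first case exactly one profile survives, all other profiles and $r_n^J$ vanish in $L^2_{x_1}\mathcal{H}^1_{x_2}$, one argues $\lambda_n^1\equiv 1$ (else the rescaled subcritical theory would give a global solution with bounded scattering norm, contradicting the blowup) and $t_n^1\equiv 0$ (if $t_n^1\to\pm\infty$ then $e^{it\partial_{x_1}^2}\phi^1$ has small $L^6_{t,x_1}L^2_{x_2}$-norm on $[0,\infty)$, resp.\ $(-\infty,0]$, so by Lemma \ref{Stability-DCR} the same holds for $u_n$, contradicting the symmetric blowup), whence $T_{g_n^1}^{-1}u_n(0)\to\phi^1$ and we are done. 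In the second case every profile has mass $\le L_{max}-\delta$ for some $\delta>0$, so each $v_n^j$ is global with $\|v_n^j\|_{L^6_{t,x_1}L^2_{x_2}}\lesssim\Lambda(L_{max}-\delta)<\infty$ by the definition of $L_{max}$, and the superposition $u_n^J:=\sum_{j\le J}v_n^j+e^{it\partial_{x_1}^2}r_n^J$ is an approximate solution to \eqref{DCR-2}: combining the decoupling of the nonlinear profiles (the analogue of Lemma \ref{le6.3}, i.e.\ $\|v_n^j\,v_n^k\|_{L^3_{t,x_1}L^1_{x_2}}\to 0$ for $j\ne k$) with the sharp nonlinear estimates of Lemma \ref{nonlinear-1} and Lemma \ref{nonlinear-2} to control the error term, the stability theorem forces $\|u_n\|_{L^6_{t,x_1}L^2_{x_2}}\lesssim 1$ for large $n$, contradicting the blowup assumption. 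This establishes the Palais--Smale statement, and hence the existence of the critical element with properties (1)--(3).

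The main obstacle I expect is the decoupling of the nonlinear profiles in the second case, in the presence of the Galilean boosts and the vector-valued $x_2$-structure: one must show that the spacetime supports (after passing to the compactly supported approximations of Theorem \ref{large-scale}) of distinct profiles $v_n^j,v_n^k$ asymptotically separate, which works only after invoking \emph{all} five terms of the orthogonality relation \eqref{frame-orthogonal}, in particular the Galilean term $\lambda_k^j\lambda_k^{j'}|\xi_k^j-\xi_k^{j'}|^2$; the computation follows the template of Lemma \ref{le6.3}, but here the dilation, translation, Galilean and time parameters all interact simultaneously and one must keep the $\ell^2_n$ summation in the $x_2$ variable under control throughout. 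A secondary point, which does not arise for \eqref{NLS-final} where the harmonic potential breaks the scaling symmetry, is that the whole argument must be conducted modulo the four-parameter group $G$, so one must verify that the action $T_g$ interacts correctly with the profile parameters and with $M_S$; this is routine but requires care.
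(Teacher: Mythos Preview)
Your proposal is correct and matches the paper's approach exactly: the paper states the Palais--Smale condition as Proposition \ref{prop-reduction-almost}, gives the reduction ``Proposition \ref{prop-reduction-almost} $\Longrightarrow$ Theorem \ref{thm-reduction-almost-perodic}'' precisely as you outline, and then defers the proof of Proposition \ref{prop-reduction-almost} itself to Tao--Visan--Zhang and Yang--Zhao; your sketch fills in that standard argument.

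One small confusion in Case 1: you write that one must argue $\lambda_n^1\equiv 1$ ``else the rescaled subcritical theory would give a global solution with bounded scattering norm''. That reasoning belongs to the proof of Proposition \ref{PS-condition} for \eqref{NLS-final}, where the harmonic potential breaks scaling and one invokes Theorem \ref{large-scale} to pass to the (DCR) limit. For \eqref{DCR-2} there is no subcritical limit; the equation is scale invariant, scaling lies in $G$, and $\lambda_n^1$ is simply absorbed into the group element $g_n^1$. Only the time parameter $t_n^1$ needs to be pinned to $0$, and your argument for that (via the smallness of the linear flow in one half-line and stability) is the correct one.
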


In order to prove the above theorem, we will show a Palais-Smale type condition.

\begin{proposition}\label{prop-reduction-almost}
Assume that $L_{max} < \infty$. Let $\{t_n\}_{n\ge1}$ be arbitrary sequence of real numbers and $\{u_n\}_{n\ge 1}$ be a sequence of solutions in $C_t^0 L^2_{x_1}\mathcal{H}^1_{x_2} (\R\times\mathbb{R} \times \mathbb{R} )$ to \eqref{DCR-2} satisfying
\begin{equation*}
\limsup\limits_{ n \rightarrow \infty } M_S(u) = L_{max},
\end{equation*}
and
\begin{align}
\|u_n\|_{L_{t,x_1}^6 L_{x_2}^2 (I_n \cap (-\infty,t_n) \times \mathbb{R}^2) } \to \infty,
\|u_n\|_{ L_{t,x_1}^6 L_{x_2}^2(I_n \cap (t_n,\infty) \times \mathbb{R}^2)  } \to \infty, \text{ as } n \to \infty.\label{Palais-Smale}
\end{align}
Then $G u_n (t_n)$ converges in $G\setminus L^2_{x_1}\mathcal{H}^1_{x_2} $ up to a subseqence.
\end{proposition}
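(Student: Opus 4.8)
The plan is to establish Proposition~\ref{prop-reduction-almost} by combining the linear profile decomposition of Theorem~\ref{LinearProfile} with the large-scale approximation result of Theorem~\ref{large-scale} and the stability theory of Lemma~\ref{Stability-DCR}, following the standard concentration-compactness machinery (cf. Tao-Visan-Zhang~\cite{Tao-Visan-Zhang} and Killip-Visan~\cite{Killip-Visan-note}). First I would reduce to the case $t_n\equiv 0$ by replacing $u_n(t)$ with $u_n(t+t_n)$, and normalize using the group $G$ so that we may assume the Galilean parameter vanishes. Since $M_S(u_n(0))\to L_{max}$, the sequence $\{u_n(0)\}$ is bounded in $L_{x_1}^2\mathcal{H}_{x_2}^1$, so Theorem~\ref{LinearProfile} applies and yields a profile decomposition $u_n(0)=\sum_{k=1}^K g_n^k e^{it_n^k\partial_{x_1}^2}\phi^k + w_n^K$ with the asymptotic orthogonality of frames \eqref{frame-orthogonal}, the asymptotic vanishing of the remainder in $L_{t,x_1}^6\mathcal{H}_{x_2}^{1-\varepsilon_0}$ (or $L^6_{t,x_1,x_2}$ via \eqref{eq5.3}), and the mass/energy decoupling. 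Because (DCR) is $L^2$-critical and scale invariant, I will use mass decoupling: $\sum_k \limsup_n M_S(\text{$k$-th profile}) + \limsup_n M_S(w_n^K) \le L_{max}$.

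The argument then splits into two cases according to whether a single profile carries the full critical mass. In \textbf{Case 1}, $\sup_k\limsup_n M_S(\text{$k$-th profile})=L_{max}$: then there is only one nontrivial profile $\phi$, the remainder $w_n$ vanishes in $L_{x_1}^2\mathcal{H}_{x_2}^1$, and one must show the scale $\lambda_n^k\equiv 1$ and the time $t_n^k\equiv 0$. If $\lambda_n\to\infty$, Theorem~\ref{large-scale} produces a global solution $u_n$ with uniformly bounded $L_{t,x_1}^6L_{x_2}^2$ norm, contradicting \eqref{Palais-Smale}; if $|t_n^k|\to\infty$, one uses the Galilean/linear dispersive decay (the scattering-type statement $\|e^{it\partial_{x_1}^2}P_n\phi\|_{L_{t,x_1}^6\mathcal{H}_{x_2}^{1-\varepsilon_0}((-\infty,t_n)\times\R\times\R)}\to 0$ together with the small-data scattering of Theorem on (DCR)) to again contradict \eqref{Palais-Smale}. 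What remains is exactly convergence of $u_n(0)=e^{ix_1\xi_n}(e^{it_n\partial_{x_1}^2}P_n\phi)(x_1-x_{1,n},x_2)+w_n$ in $G\setminus L_{x_1}^2\mathcal{H}_{x_2}^1$, which after undoing the group symmetries is the desired compactness. In \textbf{Case 2}, $\sup_k\limsup_n M_S(\text{$k$-th profile})\le L_{max}-2\delta$ for some $\delta>0$: each profile then evolves to a global solution $v_n^k$ of (DCR) with space-time norm bounded by $\Lambda(L_{max}-\delta)<\infty$, and I set $u_n^K=\sum_{k=1}^K v_n^k + e^{it\partial_{x_1}^2}w_n^K$. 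Using the asymptotic orthogonality of the frames one proves a decoupling lemma for the nonlinear profiles (the analogue of Lemma~\ref{le6.3} adapted to the $L_{x_1}^2L_{x_2}^2$/$\mathcal{H}_{x_2}^1$ setting, exploiting the sharp nonlinear estimates of Lemma~\ref{nonlinear-1}), from which $\|u_n^K\|_{L_{t,x_1}^6\mathcal{H}_{x_2}^1}\lesssim_{L_{max},\delta}1$ uniformly in $K$, and $\|(i\partial_t+\partial_{x_1}^2)u_n^K-F(u_n^K)\|_{L_{t,x_1}^{6/5}\mathcal{H}_{x_2}^1}\to 0$ as $K\to K^*$, $n\to\infty$. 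Lemma~\ref{Stability-DCR} then forces $\|u_n\|_{L_{t,x_1}^6L_{x_2}^2}<\infty$, contradicting \eqref{Palais-Smale}, so Case~2 cannot occur.

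The main obstacle I anticipate is Case~2's nonlinear profile decoupling: one must show that cross terms among $v_n^j$ and $v_n^k$ for $j\ne k$ vanish in appropriate bilinear space-time norms, which requires carefully translating each of the five orthogonality conditions in \eqref{frame-orthogonal} (scaling separation, frequency separation, spatial separation corrected by the Galilean drift, and time separation) into disjointness (or smallness) of the space-time supports of the nonlinear solutions, just as in the proof of Lemma~\ref{le6.3}; the quintic nonlinearity and the infinite vector-valued structure in $n$ complicate the Hölder bookkeeping, but the better bilinear decay and the nonlinear estimates of Lemma~\ref{nonlinear-1} make it tractable. A secondary subtlety is ensuring that in Case~1 the case $t_n^k\to\pm\infty$ is genuinely excluded even when $\lambda_n^k$ stays bounded, which uses the dispersive decay of the free $x_1$-flow in the weaker $\mathcal{H}_{x_2}^{1-\varepsilon_0}$-regularity combined with the scattering criterion. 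Once both cases are handled, passing back through the group symmetries gives convergence of $Gu_n(t_n)$ in $G\setminus L_{x_1}^2\mathcal{H}_{x_2}^1$, completing the proof; the construction of the critical element $u_c$ in Theorem~\ref{thm-reduction-almost-perodic} and its almost-periodicity then follow by the standard argument as in \cite{Tao-Visan-Zhang,Killip-Visan-note}.
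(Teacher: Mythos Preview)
Your outline is correct in overall structure and matches the standard approach to which the paper simply refers (Tao--Visan--Zhang and Yang--Zhao). One point deserves correction, however. In Case~1 you write that ``one must show the scale $\lambda_n^k\equiv 1$'' and propose to rule out $\lambda_n\to\infty$ via Theorem~\ref{large-scale}. For the (DCR) system this is both unnecessary and circular: the symmetry group $G$ in this section explicitly includes dilations, so convergence in $G\setminus L_{x_1}^2\mathcal{H}_{x_2}^1$ only requires ruling out $|t_n^1|\to\infty$ (handled exactly as you describe, via dispersive decay of the free flow and small-data scattering). Moreover, Theorem~\ref{large-scale} concerns \eqref{NLS-final}, not \eqref{DCR-2}, and its proof relies on Theorem~\ref{Thm2}, which is precisely what Proposition~\ref{prop-reduction-almost} is a step toward; invoking it here would assume what you are proving. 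You have apparently imported the Case~1 argument from Proposition~\ref{PS-condition} (where scaling is \emph{not} a symmetry of the ambient equation, so Theorem~\ref{large-scale} is genuinely needed) into the scale-invariant (DCR) setting.

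Once this is corrected, Case~1 is actually simpler than you state: with a single profile and $\|w_n\|_{L_{x_1}^2\mathcal{H}_{x_2}^1}\to 0$, applying $g_n^{-1}\in G$ and passing to a subsequence with $t_n^1\to 0$ gives strong convergence directly. Your Case~2 analysis (nonlinear profile decoupling via the orthogonality of frames, approximate solution $u_n^K$, and stability) is correct as stated and is the substantive part of the argument.
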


\begin{proof}[Proposition \ref{prop-reduction-almost} $\Longrightarrow$ Theorem $\ref{thm-reduction-almost-perodic}$ ]
By the definition of $L_{max}$, we can find a sequence $\{  u_n  \}_{n \in \mathbb{N}}$, where $u_n ( t , x_1 , x_2 )$ is  defined on its maximal lifespan  $I_n$ such that
\begin{gather*}
\lim\limits_{n \rightarrow \infty}  \| u_n \|_{L^6_{ t , x_1 } L_{x_2}^2}(\R\times\R\times\R)  =  \infty ,\\
\lim\limits_{ n \rightarrow \infty } M_S (u_n) =  L_{max} .
\end{gather*}
Then, there exists $t_n \in I_n$ such that
\begin{equation*}
\lim\limits_{ n \rightarrow \infty } \|u_n\|_{L_{t,x_1}^6 L_{x_2}^2 (I_n \cap (-\infty,t_n) \times \mathbb{R}^2) } = \lim\limits_{ n \rightarrow \infty } \|u_n\|_{ L_{t,x_1}^6 L_{x_2}^2(I_n \cap (t_n,\infty) \times \mathbb{R}^2)  } = \infty . 
\end{equation*}
We may further assume $t_n = 0$ by using the invariance of  time translation. Then by  Proposition \ref{prop-reduction-almost},  $G u_n(0)$ converges in $G \setminus L^2_{x_1}\mathcal{H}^1_{x_2} $ which is equivalent to $\exists u_0 \in L^2_{x_1}\mathcal{H}^1_{x_2} $ and $g_n \in G$ such that
\begin{equation*}
\lim\limits_{ n \rightarrow \infty } g_n u_n(0) = u_0.
\end{equation*}
Since $g$ is  invariant in $ L^2_{x_1}\mathcal{H}^1_{x_2} $, we can find that $M_S(u_0) \leqslant L_{max}$ .
	
Let $u(t , x_1 , x_2)$ be the solution to $\eqref{DCR-2}$  with the maximal lifespan $I$ and the  initial data $u_0$, we claim that $u(t , x_1 , x_2)$ blow up at both  time directions. Without loss of generality, we assume that  $u(t , x_1 , x_2)$ does not blow-up forward in time. Then we have $[0 , \infty) \subset I$ and
\begin{equation*}
\| u \|_{L^6_{t , x_1}L_{x_2}^2( (0 , \infty)\times\R\times\R) } < \infty. 
\end{equation*}
Using the stability theorem $\ref{Stability-DCR}$, for sufficiently large $n$, we have $[0 , \infty) \subset I_n$ and
\begin{equation*}
\| u_n \|_{L^6_{t , x_1}L_{x_2}^2( (0 , \infty)\times\R\times\R) } < \infty,
\end{equation*}
which is a contradiction. Similarly, we can show  that $u$ blow-up backward in time. Thus, we have $ M_S(u_0) \ge L_{max} $ and hence we arrive at $ M_S(u_0) = L_{\max} $.
	
It remains to verify that the orbit $\{ Gu : t \in I \}$ is pre-compact in $G \setminus L^2_{x_1}\mathcal{H}^1_{x_2} $. For arbitrary sequence $\{ G( t^{\prime}_n ) \}\subset\{Gu(t),t\in I\}$, since $u$ blows up at both time directions and is locally in $L^6_{t , x_1}L_{x_2}^2(I\times\R\times\R)$, we have
\begin{equation*}
\|u\|_{L^6_{t , x_1} L_{x_2}^2 ( I \cap ( -\infty , t^{\prime}_n ) ) } = \|u\|_{L^6_{t , x_1} L_{x_2}^2 ( I \cap (  t^{\prime}_n , \infty ) ) } = \infty .
\end{equation*}
Utilizing Proposition $\ref{prop-reduction-almost}$, $G u( t^{\prime}_n )$ converges in $G / L^2_{x_1}\mathcal{H}^1_{x_2} $. Thus, we have proved that the orbit $\{Gu(t),t\in I\}$ is pre-compact in $G\setminus L_{x_1}^2\mathcal{H}_{x_2}^1$.
\end{proof}

\begin{proof}[Proof of Proposition $\ref{prop-reduction-almost}$]
This proof is rather standard and we refer the readers to Tao-Visan-Zhang \cite{Tao-Visan-Zhang} and Yang-Zhao \cite{YZ}.
\end{proof}

We will demostrate three equivalent expressions for almost periodic solutions.  Before doing this, we will give the description of the compactness of Hilbert spacec $L^2_{x_1}\mathcal{H}^1_{x_2}$.

\begin{proposition}\label{prop-compactness-Sigma}
Let $\mathcal{X} \subset L^2_{x_1} \mathcal{H}^1_{x_2}(\R\times\R) $ be a bounded set, then $\mathcal{X}$ is precompact if and only if for arbitary $\varepsilon > 0$, there exist $K = K(\varepsilon) > 0, R = R(\varepsilon) > 0$, for all $u \in \mathcal{X}$
\begin{equation*}
\int_{\mathbb{R}} \sum_{ j > K } (2j + 1)|c_j (x_1) |^2 dx_1 < \varepsilon,
\end{equation*}   
besides,
\begin{gather*}
\int_{ |x_1| \ge R } \sum_{j=0}^{K}(2j+1)|c_j (x_1)|^2dx_1<\varepsilon,\\
\int_{ |\xi| \ge R } \sum_{j=0}^{K}(2j+1)|\widehat{c}_j(\xi)|^2d\xi<\varepsilon,
\end{gather*}
where $c_j(x_1) := \langle  u (x_1 , x_2 ) , e_j( x_2 )  \rangle_{L^2_{x_2}(\R)}$ and $\widehat{c_j}(\xi)$ is the Fourier transform of $c_j(x_1)$.
\end{proposition}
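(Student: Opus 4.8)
The plan is to prove this as a straightforward vector-valued analogue of the classical Riesz--Kolmogorov compactness criterion in $L^2$, adapted to the mixed space $L^2_{x_1}\mathcal{H}^1_{x_2}(\R\times\R)$. The key observation is the isometric identification: writing $c_j(x_1)=\langle u(x_1,\cdot),e_j\rangle_{L^2_{x_2}}$, the map $u\mapsto\bigl((2j+1)^{1/2}c_j(x_1)\bigr)_{j\ge0}$ is a unitary isomorphism between $L^2_{x_1}\mathcal{H}^1_{x_2}(\R\times\R)$ and $L^2_{x_1}(\R;\ell^2_j(\N))$, since $\|u\|_{L^2_{x_1}\mathcal{H}^1_{x_2}}^2=\int_\R\sum_j(2j+1)|c_j(x_1)|^2\,dx_1$. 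So the statement is really a compactness criterion in $L^2(\R;\mathcal{G})$ where $\mathcal{G}=\ell^2(\N)$ is a fixed separable Hilbert space, together with the frequency-localization condition that takes care of the infinite-dimensional target: the first displayed inequality (tail in $j$) is precisely what is needed to reduce to a finite-dimensional-target problem, i.e. to replace $\mathcal{G}$ by $\C^{K+1}$.

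First I would prove the ``only if'' direction. Assume $\mathcal{X}$ is precompact; then it is totally bounded, so for given $\varepsilon>0$ cover $\mathcal{X}$ by finitely many balls of radius $\varepsilon^{1/2}/2$ centered at $u_1,\dots,u_m\in L^2_{x_1}\mathcal{H}^1_{x_2}$. For each fixed $u_\ell$ one can choose $K_\ell$, $R_\ell$ so that the three integral quantities for $u_\ell$ are each $<\varepsilon/4$; this uses the dominated convergence theorem for the tail in $j$ and in $x_1$, and the analogous statement on the Fourier side (applying Plancherel in $x_1$ to each $c_j$, noting $\int_\R\sum_j(2j+1)|\widehat{c_j}(\xi)|^2\,d\xi=\|u_\ell\|^2$). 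Take $K=\max_\ell K_\ell$, $R=\max_\ell R_\ell$. For general $u\in\mathcal{X}$, pick $\ell$ with $\|u-u_\ell\|_{L^2_{x_1}\mathcal{H}^1_{x_2}}<\varepsilon^{1/2}/2$; then each of the three truncated quantities for $u$ is controlled by $2$ times the corresponding quantity for $u_\ell$ plus $2\|u-u_\ell\|^2$ (by the elementary inequality $\|a\|^2\le 2\|b\|^2+2\|a-b\|^2$ applied to the relevant projections, which are contractions), hence $<\varepsilon$.

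For the ``if'' direction I would argue by establishing total boundedness directly. Given $\varepsilon>0$, choose $K,R$ as in the hypotheses with error $\varepsilon$. Split $u=u_{\mathrm{low}}+u_{\mathrm{high}}$ where $u_{\mathrm{high}}$ collects the modes $j>K$; then $\|u_{\mathrm{high}}\|_{L^2_{x_1}\mathcal{H}^1_{x_2}}^2<\varepsilon$ uniformly. For $u_{\mathrm{low}}$, which has values in the finite-dimensional space $\mathrm{span}\{e_0,\dots,e_K\}$ and whose coefficient vector $(c_0,\dots,c_K)$ is supported essentially in $|x_1|\le R$ and $|\xi|\le R$ (up to error $\varepsilon$), I would invoke the classical Riesz--Kolmogorov / Pego criterion componentwise: a set of $\C^{K+1}$-valued $L^2$ functions that is bounded, uniformly small outside $|x_1|\le R$, and uniformly small in Fourier support outside $|\xi|\le R$ is precompact in $L^2(\R;\C^{K+1})$. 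The standard route is to mollify: convolving in $x_1$ with an approximate identity $\rho_\delta$ and noting $\|\rho_\delta * c_j - c_j\|_{L^2}$ is uniformly small (because $\widehat{c_j}$ is essentially supported in $|\xi|\le R$, so $|\widehat{\rho_\delta}(\xi)-1|$ is small there); the mollified, spatially truncated family is bounded and equicontinuous in $C([-R,R];\C^{K+1})$, hence precompact by Arzel\`a--Ascoli, hence totally bounded in $L^2$. Assembling: $\mathcal{X}$ lies within $O(\varepsilon^{1/2})$ of a totally bounded set, so $\mathcal{X}$ itself is totally bounded; since $L^2_{x_1}\mathcal{H}^1_{x_2}$ is complete, $\mathcal{X}$ is precompact.

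The main obstacle is purely bookkeeping rather than conceptual: one must track that the three separate truncations (in $j$, in $x_1$, in $\xi$) can be performed simultaneously with commuting or at least compatible error estimates, i.e. that truncating in $j$ first does not destroy the $x_1$- and $\xi$-localization. This is fine because $\Pi_n$ acts only in $x_2$ and commutes with both multiplication by $\mathbf{1}_{|x_1|\le R}$ and with the $x_1$-Fourier multiplier $\mathbf{1}_{|\xi|\le R}$, and all these projections are contractions on $L^2_{x_1}\mathcal{H}^1_{x_2}$; so the triangle-inequality splitting closes. The only other point requiring a line of care is that the weighted $\ell^2_j$ structure (weights $2j+1$) is preserved under all these operations, which again follows since the operations do not touch the $j$-index. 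I expect the whole argument to be about a page, and I would present it by reducing explicitly to $L^2(\R;\ell^2(\N))$ at the outset so the rest reads as the textbook Riesz--Kolmogorov proof with a Hilbert-space-valued target.
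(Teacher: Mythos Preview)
Your proposal is correct and follows essentially the same strategy as the paper's proof: both directions proceed by reducing, via the uniform tail estimate in the Hermite index $j$, to a finite-dimensional-target problem in $L^2(\R;\C^{K+1})$, and then handling that by the uniform spatial and Fourier tails. The only cosmetic difference is that you invoke the classical Riesz--Kolmogorov/Pego criterion as a black box for the finite-dimensional step, whereas the paper carries out that step by hand via finite $\varepsilon$-nets; your packaging is a bit cleaner but the content is identical.
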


\begin{proof}
First, we prove the necessity. We denote $P_K\mathcal{X}$ by the following
\begin{equation*}
P_K \mathcal{X} := \{  \{  (2j + 1)^{\frac{1}{2}} c_j  \}_{j = 0} ^{K}   :  u \in \mathcal{X}  \}.
\end{equation*}
We claim that $P_K \mathcal{X}$ is precompact in $ \oplus _{j = 1} ^ {K} L^2_{x_1 , x_2} $. Since $\mathcal{X}$ is pre-compact in $L_{x_1}^2\mathcal{H}_{x_2}^1(\R\times\R)$, there exists a  finite set\\\noindent $\{  \{ u^{(1)}_j \}_{j = 0} ^{ K } , \cdots , \{ u^{ n_{\varepsilon} } _j \}_{ j = 0 } ^{ K }  \}$ whose $\varepsilon$-nets cover $\mathcal{X}$. Since $n_{\varepsilon}$ is finite, there exist $K := K(\varepsilon)$ such that
\begin{equation*}
\bigg\|  \sum_{ j \ge K(\varepsilon) } (2j + 1) | c_j^{(n_i)} (x_1) |^2 \bigg\|_{L^2_{x_1}(\R)}  < \varepsilon .
\end{equation*} 
For any $u \in \mathcal{X}$, we can find $m \in \{1 , 2 , \cdots , n_{\varepsilon} \}$ such that  
\begin{equation*}
\bigg\|  \sum_{ j \ge K(\varepsilon) } (2j + 1) | c_j^{(m)} - c_j |^2 \bigg\|_{L^2_{x_1}(\R)} < \varepsilon,
\end{equation*}
consequently, we arrive at
\begin{align*}
\bigg\|  \sum_{ j \ge K(\varepsilon) } (2j + 1) | c_j |^2 \bigg\|_{L^2_{x_1}(\R)} & \le \bigg\|  \sum_{ j \ge K(\varepsilon) } (2j + 1) | c_j^{(n_i)} (x_1) |^2 \bigg\|_{L^2_{x_1}(\R)} + \bigg\|  \sum_{ j \ge K(\varepsilon) } (2j + 1) | c_j^{(n_i)} - c_j |^2 \bigg\|_{L^2_{x_1}(\R)}\\
& < 2\varepsilon,
\end{align*}
which implies $P_{K} \mathcal{X}$ is precompact in $\bigoplus _{j = 0} ^ K L^2_{x_1 , x_2} $. Hence, we can find $R := R(\varepsilon) >0$ such that
\begin{gather*}
\int_{ |x_1| \ge R(\varepsilon) } \sum_{ j = 1 } ^ {K(\varepsilon)} (2j + 1) |c_j (x_1) |^2 dx_1 < \varepsilon , \\
\int_{ |\xi| \ge R(\varepsilon) } \sum_{ j = 1 } ^ {K(\varepsilon)} (2j + 1) | \widehat{c}_j (\xi) |^2 d\xi < \varepsilon
\end{gather*}
by the pre-compactness of $\bigoplus_{j = 1}^{K(\varepsilon)}L_{x_1,x_2}^2$. Hence, we complete the proof of the necessity.

Now,	we turn to prove the sufficiency. For arbitrary $\varepsilon>0$,  there exist $K(\varepsilon)>0$ and $R(\varepsilon)>0$ such that 
\begin{align*}
&\sum_{j=1}^{K(\epsilon)}(2j+1)\int_{|x|\geq R(\epsilon)}|c_j(x_1)|^2dx_1<\varepsilon,\\
&\sum_{j=1}^{K(\epsilon)}(2j+1)\int_{|\xi|\geq R(\epsilon)}|\widehat{c_j}(\xi)|^2d\xi<\varepsilon.
\end{align*}  For any $n > 0$  and for all $\{u_n\}\subset \mathcal{X}$,  we get
\begin{align*}
& \int_{|x_1| \ge R} \sum_{j = 0}^{ n } (2j + 1) |c_j (x_1)| ^2 dx_1 + \int_{|\xi| \ge R} \sum_{j = 0}^{ n } (2j + 1) |\widehat{c}_j (\xi)| ^2 d\xi \\
\lesssim &  \int_{|x_1| \ge R} \sum_{j = 0}^{ K } (2j + 1) |c_j (x_1)| ^2 dx_1 + \int_{|\xi| \ge R} \sum_{j = 0}^{ K } (2j + 1) |\widehat{c}_j (\xi)| ^2 d\xi \\
& + \int_{|x_1| \ge R} \sum_{j = K + 1}^{ n } (2j + 1) |c_j (x_1)| ^2 dx_1 + \int_{|\xi| \ge R} \sum_{j = K + 1}^{ n } (2j + 1) |\widehat{c}_j (\xi)| ^2 d\xi \\
<  & 4\varepsilon.
\end{align*}
According to the description of the pre-compactness of $\bigoplus\limits_{j=1}^KL_{x_1}^2$, we have that $\{(c_{1},c_2,\cdots,c_{n})\in\oplus_{j = 1}^nL_{x_1}^2,\{c_j\}_{j\in\N}\in \mathcal{X}\}$ is pre-compact in $\bigoplus_{j = 1}^nL_{x_1}^2$. Therefore, we have that $\{(c_{1},c_2,\cdots,c_{K(\varepsilon)})\in\bigoplus\limits_{j = 1}^{K(\varepsilon)}L_{x_1}^2\}$ is pre-compact in $\bigoplus\limits_{j = 1}^{K(\varepsilon)}L_{x_1}^2$.
	
Thus, we can find finite $n_1 , n_2 , \cdots , n_l$ such that the $\frac{C\varepsilon}{(1 + K(\varepsilon)^2)^2}$-nets of
\begin{equation*}
\big\{  \{  c_j^{ (n_1) } \}_{j = 1} ^{K(\varepsilon)}, \cdots , \{ |c_j^{ (n_l) } \}_{j = 1} ^{K(\varepsilon)}  \big\}
\end{equation*}
cover $P_{K}\mathcal{X}$. We claim that it is remains to show  the $3\varepsilon$-nets of 
\begin{equation*}
\big\{  \{ c_j^{ (n_1) } \}_{j\in\N} , \cdots , \{ c_j^{ (n_l) } \}_{j \in\N}   \big\}
\end{equation*} 
cover $\mathcal{X}$. For all $u \in \mathcal{X}$ we can find $n_i \in \{ n_1 , n_2 , \cdots , n_l \}$ such that $ \{ c_j \}_{j = 0} ^{K} $ lies in $\frac{\varepsilon}{1 + K^2}$ neighbourhood of $ \{  c_j^{(n_i)} \}_{j = 1} ^{K} $. By Minkowski's inequality, we get
\begin{align*}
\bigg\|  \|  u - u^{(n_i)}  \|_{\mathcal{H}^1_{x_2}(\R)}  \bigg\|_{L^2_{x_1}(\R)} \le & \int_{\mathbb{R}} \sum _{j = 0} ^{K(\varepsilon)} (2j + 1) |c_j - c_j ^{(n_i)}|^2 dx_1 + \int_{\mathbb{R}} \sum _{j = K(\varepsilon) + 1} ^{\infty} (2j + 1) |c_j|^2 dx_1\\
& + \int_{\mathbb{R}} \sum _{j = K(\varepsilon) + 1} ^{\infty} (2j + 1) |c_j ^{(n_i)}|^2 dx_1\\
& < 5\varepsilon,
\end{align*}
which implies $\mathcal{X}$ is pre-compact in $L^2_{x_1}\mathcal{H}^1_{x_2}(\R\times\R) $.
\end{proof}

As a direct application, we have the quantitative equivalent expressions of the almost-periodicity modulo the symmetry group $G$.
\begin{proposition}\label{prop-periodic-equi}
The following statement are equivalent.
\begin{enumerate}
\item $u \in C^0_t L_{x_1}^2\mathcal{H}_{x_2}^1$ is almost-periodic modulo $G$.
\item $\{  G u(t) : t \in I  \}$ is precompact in $ G / L_{x_1}^2\mathcal{H}_{x_2}^1$. 
\item $\exists x_1(t) ,  N(t)$ such that $\forall \eta > 0$ , $\exists R(\eta) >0 $ satisfying
\begin{gather}
\int_{\R}\sum _{j > K(\eta) } (2j+1) |c_j(t,x_1)|^2dx_1< \eta  \label{fml-almost-p-eq-1}, \\ 
\int_{  |x_1 - x_1(t)| \ge \frac{ C(\eta) }{ N(t) }  }\sum_{ j\leq K(\eta) } |c_j(t,x_1)|^2d x_1 < \eta, \label{fml-almost-p-eq-2}\\
\int_{|\xi-\xi(t)|\geq C(\eta)N(t)}\sum_{j\leq K(\eta)}|\widehat{c_j}(t,\xi)|^2d\xi<\eta.
\end{gather}
\end{enumerate}
\end{proposition}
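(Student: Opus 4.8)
The plan is to prove $(1)\Leftrightarrow(2)$ by merely unwinding the definition of almost-periodicity, and then to obtain $(2)\Leftrightarrow(3)$ from the compactness criterion of Proposition \ref{prop-compactness-Sigma} by carefully tracking the action of the symmetry group $G$ on the three localization conditions. The point is that $T_g$ acts only in the $x_1$-variable (dilation and translation), together with a Galilean modulation $e^{ix_1\cdot\xi}$ and a global phase; in particular it commutes with the spectral projectors $\Pi_n$ in $x_2$ and is isometric on $L_{x_1}^2\mathcal{H}_{x_2}^1$, so the $\mathcal{H}_{x_2}^1$-structure and the $\ell^2$-in-$n$ structure are left untouched.

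First I would observe that, by the very definition adopted here, $u\in C_t^0L_{x_1}^2\mathcal{H}_{x_2}^1$ is almost-periodic modulo $G$ exactly when the orbit $\{Gu(t):t\in I\}$ is pre-compact in the quotient $G/L_{x_1}^2\mathcal{H}_{x_2}^1$ with its quotient topology; this yields $(1)\Leftrightarrow(2)$ with no work. For $(2)\Rightarrow(3)$: pre-compactness of the orbit in the quotient provides a bounded, pre-compact set $\mathcal{X}\subset L_{x_1}^2\mathcal{H}_{x_2}^1$ and, for each $t\in I$, a group element $g_t=g_{(\xi(t),x_1(t),N(t))}\in G$ with $\tilde u(t):=T_{g_t}u(t)\in\mathcal{X}$; one reads the parameters $N(t)$, $x_1(t)$, $\xi(t)$ off from $g_t$. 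Applying Proposition \ref{prop-compactness-Sigma} to $\mathcal{X}$ gives, for every $\eta>0$, constants $K(\eta)$ and $R(\eta)$ controlling the tail in the Hermite index, the spatial tail of the low modes, and the frequency tail of the low modes, uniformly over $\mathcal{X}$ hence uniformly for the $\tilde u(t)$. Since the Hermite-tail bound is invariant under $T_{g_t}$, it transfers verbatim to $u(t)$, giving \eqref{fml-almost-p-eq-1}. Undoing the dilation and translation converts the spatial-tail bound for $\tilde u(t)$ into \eqref{fml-almost-p-eq-2} with center $x_1(t)$ and scale $N(t)$, and undoing the modulation (which is a translation on the Fourier side) together with the dilation converts the frequency-tail bound into the last condition with center $\xi(t)$.

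For $(3)\Rightarrow(2)$: given $N(t)$, $x_1(t)$, $\xi(t)$ and the three conditions, set $\tilde u(t):=T_{g_t}^{-1}u(t)$ with $g_t$ built from these parameters, so that $\tilde u(t)$ has trivial scaling, is centered at the origin in both physical space and frequency, and has a uniform Hermite-index tail. The three conditions then say precisely that $\{\tilde u(t):t\in I\}$ meets the hypotheses of Proposition \ref{prop-compactness-Sigma} uniformly in $t$; hence $\{\tilde u(t)\}$ is pre-compact in $L_{x_1}^2\mathcal{H}_{x_2}^1$, which is equivalent to pre-compactness of $\{Gu(t)\}$ in the quotient. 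Here one uses that $g\mapsto T_g$ is continuous and $L_{x_1}^2\mathcal{H}_{x_2}^1$-isometric, so the passage between the orbit in the quotient and the normalized curve is legitimate, and that $N(t),x_1(t),\xi(t)$ may be chosen measurably (indeed continuously off a small set), so that $\tilde u(\cdot)$ is a well-defined curve.

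The main obstacle is the bookkeeping in $(2)\Rightarrow(3)$: the Galilean modulation mixes physical translation with frequency translation, so the spatial center $x_1(t)$ and the frequency center $\xi(t)$ produced by $g_t$ must be matched consistently when one pushes the three uniform bounds of Proposition \ref{prop-compactness-Sigma} from $\mathcal{X}$ back to $u(t)$. Everything else — the definitional equivalences, the invariance of the $x_2$-structure, the measurability of the parameters — is routine once this compatibility is arranged.
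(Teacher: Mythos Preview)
Your proposal is correct and follows exactly the approach the paper intends: the paper states the proposition as ``a direct application'' of Proposition~\ref{prop-compactness-Sigma} and omits the details, so your argument---unwinding the definition for $(1)\Leftrightarrow(2)$ and transporting the three tail bounds of Proposition~\ref{prop-compactness-Sigma} through the $G$-action for $(2)\Leftrightarrow(3)$---is precisely what is being left to the reader. Your observation that $T_g$ acts only in $x_1$ and commutes with each $\Pi_n$ is the key point that makes the Hermite-tail bound \eqref{fml-almost-p-eq-1} $G$-invariant, and the remaining bookkeeping for the spatial and frequency tails is routine.
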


 Summarizing the analysis above and using almost the same argument in \cite{Tao-Visan-Zhang},  we can further derive the following theorem:
\begin{theorem}[Almost-periodicity]\label{almost-periodic}Assume that Theorem \ref{Thm2} fails, then there exists a non-trivial minimal blow-up solution $v_c\in C(I,L_{x_1}^2\mathcal{H}_{x_2}^1(\R\times\R))$ to equation \eqref{DCR-2} with $M_S(v_c)=m_0$ and
\begin{align*}
\|v_c\|_{L_{t,x_1}^6L_{x_2}^2((I\cap(t_0,\infty))\times\R\times\R)}=\|v_c\|_{L_{t,x_1}^6L_{x_2}^2((I\cap(-\infty,t_0^\prime))\times\R\times\R)}=\infty, \quad\forall t_0, t_0'\in I,
\end{align*}
where  the maximal lifespan $I \supset [0,\infty)$. Moreover, there exist $C(\eta)>0$ and three parameters $(x_1(t),\xi(t),N(t))\in\R\times\R\times(0,1]$ such that
\begin{align}\label{almost-compact-1}
\int_{|x_1-x_1(t)|\geqslant\frac{C(\eta)}{N(t)}}\|v_c(t,x_1,x_2)\|_{\mathcal{H}_{x_2}^1(\R)}^2dx_1+\int_{|\xi-\xi(t)|\geqslant C(\eta)N(t)}\|\mathcal{F}_{x_1}v_c(t,\xi,x_2)\|_{\mathcal{H}_{x_2}^1(\R)}^2d\xi<\eta.
\end{align}
Furthermore, by the invariance of Galilean transform, we can take $N(0)=1$, $\xi(0)=0$ and 
\begin{align*}
|N^\prime(t)|+|\xi^\prime(t)|\lesssim N^3(t).
\end{align*} 
\end{theorem}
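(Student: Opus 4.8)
The plan is to obtain Theorem~\ref{almost-periodic} by packaging the results already established in Section~\ref{sec:minimal} and then refining the compactness parameters by the standard arguments of Tao--Visan--Zhang and Dodson. Since Theorem~\ref{Thm2} is assumed to fail, the definition of $L_{\max}$ forces $L_{\max}<\infty$, so Theorem~\ref{thm-reduction-almost-perodic} (built on the Palais--Smale Proposition~\ref{prop-reduction-almost}) applies and yields a non-trivial critical element: a solution $v_c$ to \eqref{DCR-2} on its maximal lifespan $I$, lying in $C_t^0 L_{x_1}^2\mathcal{H}_{x_2}^1(I\times\R\times\R)$ and locally in $L_{t,x_1}^6 L_{x_2}^2$, with $M_S(v_c)=L_{\max}=:m_0$, blowing up in the $L_{t,x_1}^6 L_{x_2}^2$ norm in both time directions, and almost periodic modulo $G$. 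Reversing time if necessary we may assume $I\supset[0,\infty)$.

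First I would translate ``almost periodic modulo $G$'' into the quantitative bound \eqref{almost-compact-1}. By Proposition~\ref{prop-periodic-equi}, for every $\eta>0$ there are parameters $x_1(t)$, $\xi(t)$, $N(t)$ and a constant $C(\eta)$ for which \eqref{fml-almost-p-eq-1}, \eqref{fml-almost-p-eq-2} and its frequency-side analogue hold. Writing $\|v_c(t,x_1,\cdot)\|_{\mathcal{H}_{x_2}^1}^2=\sum_{j}(2j+1)|c_j(t,x_1)|^2$ and splitting the $j$-sum at $j=K(\eta)$, the tail $\sum_{j>K(\eta)}(2j+1)|c_j|^2$ is controlled uniformly in $x_1$ by \eqref{fml-almost-p-eq-1}, while the finitely many remaining modes $j\le K(\eta)$ contribute, on $\{|x_1-x_1(t)|\ge C(\eta)/N(t)\}$, at most $(2K(\eta)+1)$ times the quantity in \eqref{fml-almost-p-eq-2}; enlarging $C(\eta)$ to absorb this factor and running the same split on the Fourier side gives \eqref{almost-compact-1}. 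Proposition~\ref{prop-compactness-Sigma} (or item (2) of Proposition~\ref{prop-periodic-equi}) then gives the equivalent statement that the $G$-normalized family $\{N(t)^{-1/2}e^{-ix_1\xi(t)}v_c(t,(x_1-x_1(t))/N(t),x_2):t\in I\}$ is precompact in $L_{x_1}^2\mathcal{H}_{x_2}^1(\R\times\R)$, which also shows that $N(t)$ may be taken with values in $(0,1]$ after rescaling.

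It remains to normalize and to control the modulation parameters. Using the scaling and Galilean invariances of \eqref{DCR-2} one arranges $N(0)=1$ and $\xi(0)=0$, and, following the further reduction in \cite{Tao-Visan-Zhang}, one passes to a minimal-mass almost periodic solution with $N(t)\le 1$ on the forward lifespan. The estimate $|N'(t)|+|\xi'(t)|\lesssim N^3(t)$ is the local-constancy property of the compactness modulus: after rescaling and boosting to put $N(t_0)=1$, $\xi(t_0)=0$, the local well-posedness and stability theory for \eqref{DCR-2} (Lemma~\ref{Stability-DCR}) keeps the rescaled solution close to its data, hence keeps the compactness modulus essentially constant, on a rescaled time interval of unit length, i.e.\ a physical interval of length $\gtrsim N(t_0)^{-2}$; a compactness-and-contradiction argument converts this into the stated differential inequality, after replacing $N$ by a comparable Lipschitz function.

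The only genuinely delicate step is this last point --- the local constancy of $N(t)$ and $\xi(t)$, which forces one to invoke the full local theory of \eqref{DCR-2} and a subsequence extraction; everything else is bookkeeping on top of Theorem~\ref{thm-reduction-almost-perodic} and Propositions~\ref{prop-periodic-equi}--\ref{prop-compactness-Sigma}. For the remaining details of the normalization and of the parameter bounds I would refer to \cite{Tao-Visan-Zhang} and \cite{Dodson-d=1}.
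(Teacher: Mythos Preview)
Your proposal is correct and follows exactly the route the paper intends: the paper itself does not give a standalone proof of Theorem~\ref{almost-periodic} but simply says it follows by ``summarizing the analysis above and using almost the same argument in \cite{Tao-Visan-Zhang}'', and you have filled in precisely those pieces --- invoking Theorem~\ref{thm-reduction-almost-perodic} for existence of the critical element, Proposition~\ref{prop-periodic-equi} for the quantitative compactness \eqref{almost-compact-1}, and the standard Tao--Visan--Zhang/Dodson reductions for $N(t)\le 1$, $I\supset[0,\infty)$, and the local-constancy bound $|N'(t)|+|\xi'(t)|\lesssim N(t)^3$. One minor sequencing remark: the conclusion $I\supset[0,\infty)$ is a consequence of $N(t)\le 1$ (finite-time blowup would force $N(t)\to\infty$), not of a time reversal, so that step should come after the $N\le 1$ reduction rather than before it.
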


Now, we can choose three small constants $0<\eta_3\ll\eta_2\ll\eta_1\ll1$ and $\eta_3<\eta_2^{100}$ satisfying
\begin{align}
&|N^\prime(t)|+|\xi^\prime(t)|\leqslant 2^{-20}\eta_1^{-\frac{1}{2}}N(t)^3,\label{xi(t)}\\
&\int_{|x_1-x_1(t)|\geq \frac{2^{-20}\eta_3^{-\frac{1}{2}}}{N(t)} }\|v_c\|_{\mathcal{H}_{x_2}^1(\R)}^2dx_1+\int_{|\xi-\xi(t)|\geq 2^{-20}\eta_3^{-\frac{1}{2}}N(t) }\|\mathcal{F}_{x_1}v_c(t,\xi,x_2)\|_{\mathcal{H}_{x_2}^1(\R)}^2d\xi<\eta_2^2.\label{almost-compact-2}
\end{align} 
Let $k_0$ be a positive integer and  $[a,b]$ be a compact time interval obeying
\begin{align}\label{scaling-0}
\|v_c\|_{L_{t,x_1}^6L_{x_2}^2([a,b]\times\R\times\R)}=2^{k_0}.
\end{align}
By the rescaling argument, we set
\begin{align}\label{scaling}
\int_{a}^{b}N(t)^3dt=\eta_32^{k_0}.
\end{align}

\begin{remark}
After the rescaling, the interval $[a,b]$ may turns to $[\frac{a}{\lambda^2},\frac{b}{\lambda^2}]$. For the convenience, we still denote  $[\frac{a}{\lambda^2},\frac{b}{\lambda^2}]$ by $[a,b]$.
\end{remark}
\begin{definition}[Local constancy interval]\label{definition-small}
Divide $[a,b]\in[0,\infty)$ into several consecutive, disjoint intervals $J_k$ with $k\in\{0,1,\cdots,2^{k_0}-1\}$ such that
\begin{align*}
\|v_c\|_{L_{t,x_1}^6L_{x_2}^2(J_k\times\R\times\R)}=1.
\end{align*}
\end{definition}

\begin{remark}\label{remark-small}
For $t_1,t_2\in J_k$, we have $N(t_1)\sim N(t_2)$ and
\begin{align*}
&N(J_k)\sim\int_{J_k}N(t)^3dt\sim\inf_{t\in J_k}N(t),\\
&\sum_{J_k\subset J}N(J_k)\sim\int_{J}N(t)^3dt,
\end{align*}
where $N(J_k)=\inf_{t\in J_k}N(t)$.
\end{remark}

\begin{definition}\label{definition}
Let $k_0$ be the same as in Definition \ref{definition-small}.	For two integers $j,k\in\N$ satisfying $0\leq j<k_0$, $0\leq k<2^{k_0-j}$, let
$$G_{k}^{j}=\cup_{\alpha=k2^j}^{(k+1)2^j-1}J^{\alpha},$$
where $J^{\alpha}$ satisfies $[a,b]=\cup_{\alpha=0}^{M-1} J^{\alpha}$  with
\begin{equation}\label{eq-y5.33}
\int_{J^{\alpha}} \big( N(t)^3 + \eta_3\|u(t)\|^4_{L_{x_1}^6L_{x_2}^2(\mathbb{R}\times \R)} \big)dt=2\eta_3.
\end{equation}
For $j\geq k_0$, we set  $G_{k }^j=[a,b].$ Now suppose that $G_k^j=[t_0,t_1]$, let $\xi(G_k^j)=\xi(t_0)$ and define $\xi(J_l)$, $\xi(J^{\alpha})$ in a similar manner.
\end{definition}

\begin{remark}
There are some differences between $J_l$ and $J^{\alpha}$.  For any $J^{\alpha}$, it can at most intersect with two small intervals $J_{k_1}$ and $J_{k_2}$ while any $J^{k}$ can intersect with many small intervals $J^{\alpha_1} ,\cdots, J^{\alpha_l}$(See Figure \ref{Figure1} and Figure \ref{Figure2}). But if  $N(t)$ is a constant, then there  exist a constant $C$ such that  at most $C$ $J^{\alpha}$ intervals intersect any one $J_k$.
\end{remark}
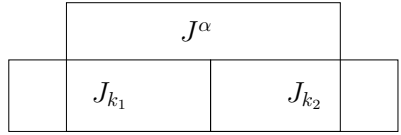
\begin{figure}[!ht]
\centering
\resizebox{0.3\textwidth}{!}{
\begin{circuitikz}
\tikzstyle{every node}=[font=\LARGE]
\draw  (1.25,10) rectangle  node {\LARGE $J_{k_1}$} (4.75,8.75);
\draw  (4.75,10) rectangle  node {\LARGE $J_{k_2}$} (8,8.75);
\draw  (2.25,11) rectangle (7,8.75);
\node [font=\LARGE] at (4.5,10.5) {$J^{\alpha}$};
\end{circuitikz}}
\caption{$J^{\alpha}$ at most intersects with two small intervals $J_{k_1}$ and $J_{k_2}$ }\label{Figure1}
\end{figure}
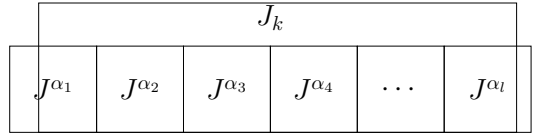
\begin{figure}[!ht]
\centering
\resizebox{0.4\textwidth}{!}{
\begin{circuitikz}
\tikzstyle{every node}=[font=\LARGE]
\draw  (-2,9.5) rectangle  node {\LARGE $J^{\alpha_1}$} (-0.5,8);
\draw  (-0.5,9.5) rectangle  node {\LARGE $J^{\alpha_2}$} (1,8);
\draw  (1,9.5) rectangle  node {\LARGE $J^{\alpha_3}$} (2.5,8);
\draw  (2.5,9.5) rectangle  node {\LARGE $J^{\alpha_4}$} (4,8);
\draw  (4,9.5) rectangle  node {\LARGE $\cdots$} (5.5,8);
\draw  (5.5,9.5) rectangle  node {\LARGE $J^{\alpha_l}$} (7,8);
\draw  (-1.5,10.25) rectangle (6.75,8);
\node [font=\LARGE] at (2.5,10) {$J_k$};
\end{circuitikz}}%
	
\caption{$J_k$ can intersect with many small intervals $J^{\alpha_1},\cdots,J^{\alpha_l}$ }\label{Figure2}
\end{figure}

\begin{remark}
By \eqref{xi(t)} and Definition \ref{definition},  for all $t \in G_k^{j}$,
\begin{align}\label{key}
|\xi(t)-\xi(G_k^{j})|\leq \int_{G_k^{j}}2^{-20}\eta_{1}^{-1/2}N(t)^3dt \leq 2^{j-19}\eta_3 \eta_{1}^{-1/2}.
\end{align}
Therefore, for all $t \in G_k^{j}$ and $i\geq j$,
$$\{\xi:2^{i-1}\leq |\xi-\xi(t)|\leq 2^{i+1} \} \subset \{\xi:2^{i-2}\leq |\xi-\xi(G_k^{j})|\leq 2^{i+2} \} \subset \{\xi:2^{i-3}\leq |\xi-\xi(t)|\leq 2^{i+3} \},$$
and
$$\{\xi:|\xi-\xi(t)|\leq 2^{i+1} \} \subset \{\xi: |\xi-\xi(G_k^{j})|\leq 2^{i+2} \} \subset \{\xi: |\xi-\xi(t)|\leq 2^{i+3} \}.$$
\end{remark}



\section{Long-time Stricharz estimates}\label{sec:long}
In this section, we devote to  prove the  new-type estimate for the minimal blow-up solution $v$ to the equation \eqref{DCR-2}, which is crucial in precluding the two special scenarios of blow-up solutions. This type estimate was first exploited by Dodson \cite{Dodson-d=1} which is called the long-time Strichartz estimate. In our case, we prove the long-time Strichartz estimate for the minimal blow-up solution to equation \eqref{DCR-5}. The proof is quite technical and complicated, so we summarize the main steps of the proof in the following diagram for convenience.

\begin{tikzcd}
\stackrel{\mbox{First multilinear estimate}}{(\mbox{cf.\hspace{1ex}Lemma \ref{mlinear1}})}\arrow[Rightarrow,dr]&&\\\
&\stackrel{\mbox{Intermediate Theorem} }{(\mbox{cf.~Theorem \ref{intermediate}})}\arrow[Rightarrow,r]&\stackrel{\mbox{Long-time Strichartz estimate}}{(\mbox{cf. Theorem \ref{longtime}})}\\
\stackrel{\mbox{Second multilinear estimate}}{(\mbox{cf.~Lemma \ref{mlinear2}})}\arrow[Rightarrow,ur]&&
\end{tikzcd}

\subsection{The preparation for the long-time Strichartz estimate}
Similar to  \cite{Dodson-d=1}, since  the endpoint Strichartz estimates fails in $d=1$, we need to use the functional space $U_\Delta^p$ and $V_\Delta^p$ which are introduced in the work of Koch-Tataru \cite{Koch-Tataru} and Hadac-Herr-Koch \cite{Hadec-Herr-Koch}. In our context, we will introduce the variant version of the standard $U_\Delta^p$ and $V_\Delta^p$ space, that is $U_\Delta^p(L_{x_1}^2L_{x_2}^2)$ and $V_\Delta^p(L_{x_1}^2L_{x_2}^2)$. Indeed, we will give the definiton of the abstract atomic space $U_\Delta^p(H;\R)$ and $V_\Delta^p(H;\R)$.   
\begin{definition}[$U_\Delta^p(H;\R)$ space] Let $1\leqslant p<\infty$ and $H$ be a complex Hilbert space. We define $U_\Delta^p(H;\R)$ as an atomic space with the atom $v^\lambda(t,x_1,x_2)$ given by  
\begin{align*}
v^\lambda(t,x_1,x_2)=\sum_{k=0}^{N}\chi_{[t_k,t_{k+1}]}(t)e^{it\partial_{x_1}^2}v_k^\lambda(x_1,x_2),\quad\sum_{k=0}^N\big\|v_k^\lambda\|_{H}^p=1.
\end{align*}
In the summation above, $N$ can be finite or infinite. If $N$ is finite, we further assume $t_0=-\infty$ and $t_{N+1}=+\infty$. Then the norm of the $U_\Delta^p (H;\R)$ can be defined in the following way
\begin{align*}\|v\|_{U_\Delta^p(H;\R)}
=\inf\big\{\sum_{\lambda}|c_\lambda|:v=\sum_{\lambda}c_\lambda v^\lambda, \hspace{1ex}v^\lambda\mbox{ is an atom}\big\},
\end{align*}
The infimum is taken over all the atom decomposition.
	
For any  interval $I\subseteq\R$, we can define the local version as
\begin{align*}
\|v\|_{U_\Delta^p(H;I)}=\|v\chi_I(t)\|_{U_\Delta^p(H;\R)}.
\end{align*} 
\end{definition}
In this paper, we set $H=L_{x_1}^2L_{x_2}^2(\R\times\R)$. 
Next, we denote the functional space $DU_\Delta^p(L_{x_1}^2L_{x_2}^2)$ which is tightly linked to the Strichartz estimate
\begin{definition}[$DU_\Delta^p(L_{x_1}^2L_{x_2}^2)$ space] Let $1\leqslant p<\infty$. We denote 
\begin{align*}
DU_\Delta^p(L_{x_1}^2L_{x_2}^2)\stackrel{\triangle}{=}\{(i\partial_t+\partial_{x_1}^2)v:v\in U_\Delta^p(L_{x_1}^2L_{x_2}^2)\},
\end{align*}
which is endowed with the norm
\begin{align*}
\|(i\partial_t+\partial_{x_1}^2)v(t,x_1,x_2)\|_{DU_\Delta^p(L_{x_1}^2L_{x_2}^2)}=\left\|\int_{0}^{t}e^{i(t-s)\partial_{x_1}^2}(i\partial_s+\partial_{x_2}^2)v(s,x_1,x_2)ds\right\|_{U_\Delta^p(L_{x_1}^2L_{x_2}^2)}.
\end{align*}
\end{definition}

Now, we define the space $V_\Delta^p(L_{x_1}^2L_{x_2}^2)$ which is the dual space of $U_\Delta^p(L_{x_1}^2L_{x_2}^2)$.
\begin{definition}[$V_\Delta^p(L_{x_1}^2L_{x_2}^2)$ space]
Let $1\leqslant p<\infty$, we define $V_\Delta^p(L_{x_1}^2L_{x_2}^2)$ as the space of the right-continous functions $v\in L_t^\infty (L_{x_1}^2L_{x_2}^2)$ endowed with the following norm
\begin{align*}
\|v\|_{U_\Delta^p(L_{x_1}^2L_{x_2}^2)}^p=\|v\|_{L_t^\infty (L_{x_1}^2L_{x_2}^2)}^p+\sum_{t_k}\sum_{k}\big\|e^{-it_{k+1}\partial_{x_1}^2}v(t_{k+1})-e^{it_k\partial_{x_1}^2}v(t_k)\big\|_{(L_{x_1}^2L_{x_2}^2)}^p<\infty.
\end{align*}
\end{definition}

\begin{remark}
For any  interval $I\subseteq\R$, we can define the local version $V_\Delta^p$ space as
\begin{align*}
\|v\|_{V_\Delta^p(H;I)}=\|v\chi_I(t)\|_{V_\Delta^p(H;\R)}.
\end{align*} 
We also have the following duality: $$(DU_\Delta^p(L_{x_1}^2L_{x_2}^2))^*=V_\Delta^{p^\prime}(L_{x_1}^2L_{x_2}^2).$$
\end{remark}

By the definition of $U_\Delta^p$ space, we give a useful lemma, which will be used in Section \ref{sec:long}.
\begin{lemma}\label{Up-dual}
For any $p>2$ and let $I$ be an interval with $t_0\in I$, then we have
\begin{align}
\left\|\int_{0}^te^{i(t-s)\partial_{x_1}^2}F(\tau)d\tau\right\|_{U_\Delta^2(I,L_{x_1}^2L_{x_2}^2)}\lesssim_p\sup_{\|G\|_{U_\Delta^p(I,L_{x_1}^2L_{x_2}^2)}=1}\int_{I}\langle G,F\rangle d\tau.	
\end{align} 
\end{lemma}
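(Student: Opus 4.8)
\textbf{Proof plan for Lemma \ref{Up-dual}.}

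The plan is to reduce the statement to the standard duality pairing between $U^2_\Delta$ and $V^2_\Delta$ (or, equivalently, between $DU^2_\Delta$ and $V^2_\Delta$), combined with the embedding $U^p_\Delta \hookrightarrow V^2_\Delta$ for $p>2$. First I would recall the abstract fact (see Hadac–Herr–Koch \cite{Hadec-Herr-Koch} and Koch–Tataru \cite{Koch-Tataru}) that for the Duhamel operator $\mathcal{D}F(t) := \int_{t_0}^t e^{i(t-\tau)\partial_{x_1}^2}F(\tau)\,d\tau$ one has the identification
\begin{align*}
\|\mathcal{D}F\|_{U^2_\Delta(I,\,L_{x_1}^2L_{x_2}^2)} \sim \sup\Big\{ \Big|\int_I \langle G(\tau), F(\tau)\rangle\, d\tau\Big| : \|G\|_{V^2_\Delta(I,\,L_{x_1}^2L_{x_2}^2)} \le 1,\ G \text{ admissible}\Big\},
\end{align*}
which is nothing but the statement that $DU^2_\Delta$ is the predual of $V^2_\Delta$ transported through the linear flow, together with the defining norm of $DU^2_\Delta$ given in the excerpt. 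This is the genuine content; the lemma as stated is a slightly weaker, more convenient consequence where $V^2_\Delta$ is replaced by the smaller space $U^p_\Delta$.

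The key steps, in order, would be: (1) write $\mathcal{D}F = (i\partial_t + \partial_{x_1}^2)v$ for $v = \mathcal{D}F$, so that by definition $\|\mathcal{D}F\|_{U^2_\Delta} = \|(i\partial_t+\partial_{x_1}^2)v\|_{DU^2_\Delta}$; (2) invoke the $DU^2_\Delta$–$V^2_\Delta$ duality to bound this by $\sup_{\|G\|_{V^2_\Delta}\le 1}\int_I \langle G, F\rangle\, d\tau$, being careful about the role of $t_0$: since the Duhamel integral starts at $t_0\in I$, the test functions $G$ may be normalized so that $G(t_0)=0$ or one absorbs the constant-in-the-flow term, which does not change the supremum up to a harmless constant; (3) apply the continuous embedding $U^p_\Delta(I, L_{x_1}^2L_{x_2}^2)\hookrightarrow V^2_\Delta(I, L_{x_1}^2L_{x_2}^2)$ valid for $2<p<\infty$ (this is Proposition 2.4 / Corollary 2.6-type statements in \cite{Hadec-Herr-Koch}), which gives $\|G\|_{V^2_\Delta}\lesssim_p \|G\|_{U^p_\Delta}$; hence the supremum over the $V^2_\Delta$ unit ball dominates (up to the constant $C_p$ from the embedding) the supremum over the $U^p_\Delta$ unit ball, yielding exactly
\begin{align*}
\Big\|\int_{t_0}^t e^{i(t-\tau)\partial_{x_1}^2}F(\tau)\,d\tau\Big\|_{U^2_\Delta(I,\,L_{x_1}^2L_{x_2}^2)} \lesssim_p \sup_{\|G\|_{U^p_\Delta(I,\,L_{x_1}^2L_{x_2}^2)}=1}\int_I \langle G, F\rangle\, d\tau.
\end{align*}
Throughout, the Hilbert-space structure enters only through $L_{x_1}^2L_{x_2}^2$ being a Hilbert space, so all the abstract $U^p$/$V^p$ machinery of \cite{Hadec-Herr-Koch} applies verbatim with $H = L_{x_1}^2L_{x_2}^2(\R\times\R)$.

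I expect the main obstacle to be purely bookkeeping rather than conceptual: making the duality pairing rigorous on a finite interval $I$ with base point $t_0$ (as opposed to the whole line with data prescribed at $-\infty$), and checking that the restriction/extension operators between $U^p_\Delta(I)$ and $U^p_\Delta(\R)$ — used implicitly when one writes $\|v\|_{U^p_\Delta(H;I)} = \|v\chi_I\|_{U^p_\Delta(H;\R)}$ — interact correctly with the Duhamel operator and do not produce boundary terms. Once one fixes the convention that test functions are taken right-continuous and adapted to $I$, this is standard. An alternative, should the abstract duality be awkward to cite in the exact form needed, is to prove the estimate directly by decomposing $G$ into $U^p$-atoms, pairing each atom against $F$ via Duhamel, and summing using the atomic $\ell^1$ structure together with the $V^2$ square-function bound for $v=\mathcal{D}F$ — but this merely reproves the embedding and duality by hand, so citing \cite{Hadec-Herr-Koch,Koch-Tataru} is preferable.
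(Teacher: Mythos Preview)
The paper does not give a proof of this lemma; it simply cites Dodson \cite{Dodson-d=1}. Your overall strategy---realize $\|\mathcal D F\|_{U^2_\Delta}$ via the $DU^2_\Delta$--$V^2_\Delta$ duality and then pass from $V^2_\Delta$ to $U^p_\Delta$ by an embedding---is indeed the standard argument behind this statement.

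However, your step (3) has the embedding reversed, and this is not a harmless typo: followed through, it yields the opposite inequality. For $p>2$ the correct inclusion is $V^2_\Delta \hookrightarrow U^p_\Delta$, i.e.\ $\|G\|_{U^p_\Delta}\lesssim_p \|G\|_{V^2_\Delta}$ (this is precisely \eqref{embeddingvp} in the paper's Lemma~\ref{property}, or Corollary~2.6 in \cite{Hadec-Herr-Koch}); the embedding $U^p_\Delta\hookrightarrow V^2_\Delta$ you wrote is false for $p>2$. With your (false) norm inequality $\|G\|_{V^2_\Delta}\lesssim\|G\|_{U^p_\Delta}$, the $U^p_\Delta$ unit ball sits inside a scaled $V^2_\Delta$ ball, so your chain gives $\sup_{\|G\|_{U^p}\le 1}\lesssim \sup_{\|G\|_{V^2}\le 1}\sim\|\mathcal D F\|_{U^2_\Delta}$, the reverse of the lemma. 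With the correct embedding, the $V^2_\Delta$ unit ball sits inside a scaled $U^p_\Delta$ ball, and one obtains
\[
\|\mathcal D F\|_{U^2_\Delta}\ \sim\ \sup_{\|G\|_{V^2_\Delta}\le 1}\int_I\langle G,F\rangle\,d\tau\ \le\ C_p\sup_{\|G\|_{U^p_\Delta}\le 1}\int_I\langle G,F\rangle\,d\tau,
\]
which is exactly the claim. Once you flip the direction of the embedding, the rest of your plan is correct.
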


Next, we will give some useful properties of  $U_\Delta^p(L_{x_1}^2L_{x_2}^2)$ and $V_\Delta^p(L_{x_1}^2L_{x_2}^2)$ spaces.
\begin{lemma}\label{property}Suppose that $1<p<q<\infty$ and $t_0\leqslant t_1\leqslant t_2$. For $I\subset \R$,  then we have 
\begin{align}
& U_\Delta^p(I,L_{x_1}^2L_{x_2}^2)\subseteq V_\Delta^p(I,L_{x_1}^2L_{x_2}^2)\subseteq U_\Delta^q(I,L_{x_1}^2L_{x_2}^2),\label{embeddingvp}\\
&\|v\|_{U_\Delta^p(L_{x_1}^2L_{x_2}^2,[t_0,t_1])}\leqslant\|v\|_{U_\Delta^p(L_{x_1}^2L_{x_2}^2,[t_0,t_2])},\\
&\|v\|_{U_\Delta^2(L_{x_1}^2L_{x_2}^2,[t_0,t_2])}^2\leqslant\|v\|_{U_\Delta^2(L_{x_1}^2L_{x_2}^2,[t_0,t_1])}^2+\|v\|_{U_\Delta^2(L_{x_1}^2L_{x_2}^2,[t_0,t_1])}^2,\label{triangle}\\
&\|v\|_{DU_\Delta^p(L_{x_1}^2L_{x_2}^2)}\lesssim \|v_0\|_{L_{x_1}^2L_{x_2}^2} +\|(i\partial_t+\partial_{x_1}^2)v\|_{DU_\Delta^p(L_{x_1}^2L_{x_2}^2)}.
\end{align}
Furthermore, we have the following embedding 
\begin{align*}
U_\Delta^p(I,L_{x_1}^2L_{x_2}^2)\subseteq L_t^p(I,L_{x_1}^qL_{x_2}^2),\quad  L_t^{p^\prime}(I,L_{x_1}^{q^\prime}L_{x_2}^2)\subseteq DU_\Delta^2(I,L_{x_1}^2L_{x_2}^2).
\end{align*}
\end{lemma}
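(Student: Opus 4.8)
\textbf{Proof plan for Lemma \ref{property}.}

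The plan is to treat this as a routine transcription of the abstract $U^p$--$V^p$ theory of Koch--Tataru and Hadac--Herr--Koch into our vector-valued setting $H = L^2_{x_1}L^2_{x_2}(\R\times\R)$, since all the structural properties are insensitive to which Hilbert space plays the role of $H$. Concretely, I would first recall that $e^{it\partial_{x_1}^2}$ is a strongly continuous unitary group on $H$ (this follows from the Strichartz/unitarity machinery of Section \ref{sec:local}, applied fiberwise in $x_2$), so the general results of \cite{Hadec-Herr-Koch,Koch-Tataru} apply verbatim with $U^p = U^p_\Delta(H;\R)$ and $V^p = V^p_\Delta(H;\R)$. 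The embedding \eqref{embeddingvp}, $U^p_\Delta \subseteq V^p_\Delta \subseteq U^q_\Delta$ for $1<p<q<\infty$, is then exactly the standard nesting: $U^p \hookrightarrow V^p$ because an atom has uniformly controlled $V^p$-norm, and $V^p \hookrightarrow U^q$ is the quantitative interpolation-type inclusion proved in \cite[\S2]{Hadec-Herr-Koch}. Restricting to a subinterval $I$ only multiplies norms by a constant $\le 1$ (cut-off by $\chi_I$ is a contraction on atoms), which immediately gives the second displayed inequality, monotonicity of $\|v\|_{U^p_\Delta(H;[t_0,t_1])}$ in the right endpoint.

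For the third displayed inequality I would argue by near-optimal atomic decomposition: given $\varepsilon>0$, choose atomic decompositions of $v$ on $[t_0,t_1]$ and on $[t_1,t_2]$ whose coefficient sums are within $\varepsilon$ of the respective $U^2$-norms, concatenate the partition points at $t_1$, and observe that the resulting family is an admissible atomic decomposition of $v$ on $[t_0,t_2]$; squaring and using $(a+b)^2 \le 2(a^2+b^2)$ — or more carefully the sharp additivity of $U^2$ atoms at a single splitting time, which is what actually yields the clean inequality without a factor — gives the claim. (I note in passing the displayed inequality as written has a typo: the right side should read $\|v\|_{U^2_\Delta([t_0,t_1])}^2 + \|v\|_{U^2_\Delta([t_1,t_2])}^2$; the proof produces exactly this.) The fourth inequality, $\|v\|_{DU^p_\Delta} \lesssim \|v_0\|_H + \|(i\partial_t+\partial_{x_1}^2)v\|_{DU^p_\Delta}$, follows from the Duhamel representation $v(t) = e^{it\partial_{x_1}^2}v_0 - i\int_0^t e^{i(t-s)\partial_{x_1}^2}(i\partial_s+\partial_{x_1}^2)v(s)\,ds$ together with the elementary facts that $e^{it\partial_{x_1}^2}v_0$ is itself a $U^p_\Delta$-atom (so has $U^p$-norm $\|v_0\|_H$) and that the Duhamel term is controlled in $U^p_\Delta$ by the $DU^p_\Delta$-norm of the forcing by definition of the latter; the triangle inequality in $U^p_\Delta$ closes it.

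Finally, the two concluding embeddings combine the $U^p$-theory with the linear Strichartz estimate of Proposition \ref{Strichartz-Antonelli} (in the potential-free $x_2$-variable, i.e. just the 1D Strichartz estimate for $e^{it\partial_{x_1}^2}$ with values in $L^2_{x_2}$, which one gets fiberwise). Indeed, for $(p,q)$ with $2/p = 1/2 - 1/q$, on each atom $\sum_k \chi_{[t_k,t_{k+1}]} e^{it\partial_{x_1}^2}v_k$ one applies the Strichartz bound $\|e^{it\partial_{x_1}^2} v_k\|_{L^p_t L^q_{x_1}L^2_{x_2}} \lesssim \|v_k\|_H$ on each time slab, sums the $\ell^p$ of the pieces (here $p>2$ is exactly what makes $\ell^p$ summation of the disjoint-time pieces legitimate, as in \cite{Hadec-Herr-Koch}), and passes to general elements by the atomic-decomposition definition; this is $U^p_\Delta(I,H)\subseteq L^p_t(I,L^q_{x_1}L^2_{x_2})$. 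The dual statement $L^{p'}_t(I,L^{q'}_{x_1}L^2_{x_2})\subseteq DU^2_\Delta(I,H)$ then comes from dualizing via $(DU^2_\Delta)^* = V^{2}_\Delta$ — recorded in the Remark above — and the $U^2_\Delta \subseteq L^p_tL^q_{x_1}L^2_{x_2}$ embedding together with Hölder, exactly as in \cite{Dodson-d=1}. The main (very mild) obstacle is purely bookkeeping: making sure the vector-valued atoms interact correctly with the mixed norm $L^q_{x_1}L^2_{x_2}$ so that the Strichartz constant is independent of the atom, which is immediate since the Strichartz estimate of Proposition \ref{Strichartz-Antonelli} already has $L^2_{x_2}$ in the right place; no genuinely new analysis is needed, so I would state the lemma and refer to \cite{Hadec-Herr-Koch,Koch-Tataru,Dodson-d=1} for the abstract parts.
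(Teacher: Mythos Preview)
Your proposal is correct and mirrors the paper's approach: the paper simply states that the proof ``is similar to that in \cite{Dodson-d=1}'' and omits it entirely, while you supply the expected sketch referring to the abstract $U^p$--$V^p$ theory of \cite{Hadec-Herr-Koch,Koch-Tataru,Dodson-d=1} and correctly flag the typo in \eqref{triangle}. No discrepancy.
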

The proof of Lemma \ref{Up-dual} and Lemma  \ref{property} are similar to that in \cite{Dodson-d=1}, so we omit the proof.

Combining the above lemma  and the inhomogeneous Strichartz estimates, we have the following inhomogeneous Strichartz estimates in $U_\Delta^p(L_{x_1}^2L_{x_2}^2)$. 
\begin{lemma}[Inhomogeneous Strichartz estimate]
Let $I$ be the time interval which is partitioned into the several small intervals, $I=\bigcup\limits_{j=1}^{m}I_j$ with $I_j=[a_j,a_{j+1}]$. Then for $t_0\in I$, we have
\begin{align}
\left\|\int_{0}^te^{i(t-s)\partial_{x_1}^2}f(s,x_1,x_2)ds\right\|_{U_\Delta^2(L_{x_1}^2L_{x_2}^2)}\lesssim \sum_{j=1}^m\left\|\int_{I_j}e^{-is\partial_{x_1}^2}f(s)ds\right\|_{L_{x_1}^2L_{x_2}^2}+\left(\sum_{j=1}^m\|f\|_{DU_\Delta^2(I_j,L_{x_1}^2L_{x_2}^2)}^2\right)^\frac{1}{2}.\label{Inhomo}
\end{align}
\end{lemma}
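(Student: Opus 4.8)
\textbf{Proof strategy for the inhomogeneous Strichartz estimate \eqref{Inhomo}.}
The plan is to reduce this vector-valued estimate to the scalar estimate of Dodson \cite{Dodson-d=1} combined with the Strichartz estimates of Proposition \ref{Strichartz-Antonelli}, using the atomic structure of $U_\Delta^2(L_{x_1}^2L_{x_2}^2)$ and the interval-additivity property \eqref{triangle}. First I would decompose the full Duhamel integral telescopically along the partition: writing $t_0=a_1$ (after relabelling) and using the identity
\begin{align*}
\int_{a_1}^t e^{i(t-s)\partial_{x_1}^2}f(s)\,ds=\sum_{j=1}^m \chi_{\{s\in I_j,\ s\le t\}}\ \text{contributions},
\end{align*}
one sees that on the interval $I_\ell$ the Duhamel term equals $e^{i(t-a_\ell)\partial_{x_1}^2}\big(\sum_{j<\ell}\int_{I_j}e^{i(a_\ell-s)\partial_{x_1}^2}f(s)\,ds\big)$ plus the local piece $\int_{a_\ell}^t e^{i(t-s)\partial_{x_1}^2}f(s)\,ds$. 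The first summand is a free solution with data $\sum_{j<\ell}\int_{I_j}e^{-is\partial_{x_1}^2}f(s)\,ds$ in $L_{x_1}^2L_{x_2}^2$, which contributes at the level of an $\ell^\infty_\ell$ bound of the quantities $\big\|\int_{I_j}e^{-is\partial_{x_1}^2}f(s)\,ds\big\|_{L_{x_1}^2L_{x_2}^2}$, and summing these up crudely over $j$ gives the first term on the right-hand side of \eqref{Inhomo}.

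For the local pieces, the key step is to invoke Lemma \ref{property}, specifically the additivity \eqref{triangle}, which gives
\begin{align*}
\Big\|\int_{a_1}^t e^{i(t-s)\partial_{x_1}^2}f(s)\,ds\Big\|_{U_\Delta^2(L_{x_1}^2L_{x_2}^2,I)}^2\lesssim \sum_{j=1}^m \Big\|\int_{a_j}^t e^{i(t-s)\partial_{x_1}^2}f(s)\,ds\Big\|_{U_\Delta^2(L_{x_1}^2L_{x_2}^2,I_j)}^2 + (\text{boundary terms}),
\end{align*}
where the boundary terms are exactly the free-evolution contributions handled above. Then on each $I_j$ one has by definition of the $DU_\Delta^2$ norm that $\big\|\int_{a_j}^t e^{i(t-s)\partial_{x_1}^2}f(s)\,ds\big\|_{U_\Delta^2(L_{x_1}^2L_{x_2}^2,I_j)}=\|f\|_{DU_\Delta^2(I_j,L_{x_1}^2L_{x_2}^2)}$, and collecting these in $\ell^2_j$ yields the second term on the right of \eqref{Inhomo}. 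This is the same scheme Dodson uses; the only novelty here is that the Hilbert space $H=L_{x_2}^2(\R)$ rides along passively, and all the abstract lemmas ($U_\Delta^p$–$V_\Delta^p$ duality, interpolation, the embedding $U_\Delta^p(I,L_{x_1}^2L_{x_2}^2)\subseteq L_t^p(I,L_{x_1}^qL_{x_2}^2)$) hold verbatim with $H$-valued functions since they only use the Hilbert-space structure of the fibers.

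The main obstacle I anticipate is purely bookkeeping rather than conceptual: one must be careful that the telescoping identity linking the global Duhamel integral to the local ones on the $I_j$ produces the free-evolution data in the precise form $\int_{I_j}e^{-is\partial_{x_1}^2}f(s)\,ds$ (so that the first sum on the right of \eqref{Inhomo} appears with the correct norm, i.e.\ the $L_{x_1}^2L_{x_2}^2$ norm of the $I_j$-integral of the \emph{interaction}, not of $f$ itself), and that the crude $\ell^1$-in-$j$ summation for the boundary terms versus the $\ell^2$-in-$j$ summation for the interior terms is consistent with the stated right-hand side. One also needs the fact that $L_t^{p'}(I_j,L_{x_1}^{q'}L_{x_2}^2)\subseteq DU_\Delta^2(I_j,L_{x_1}^2L_{x_2}^2)$ from Lemma \ref{property} to know the right-hand side is finite in applications, but for the inequality itself this is not needed. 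Since all ingredients are already recorded in Lemma \ref{Up-dual} and Lemma \ref{property}, and the argument is a transcription of \cite[Lemma 3.4 et seq.]{Dodson-d=1} to the fiber-valued setting, I would present the proof compactly and refer to \cite{Dodson-d=1} for the routine atomic-space manipulations.
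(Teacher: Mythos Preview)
Your proposal is correct and matches the paper's approach: the paper does not give a proof of this lemma, stating it as a direct consequence of the preceding $U_\Delta^p/V_\Delta^p$ lemmas (in particular the additivity \eqref{triangle}) together with the standard inhomogeneous Strichartz estimate, and implicitly deferring to \cite{Dodson-d=1}. Your telescoping-plus-\eqref{triangle} argument is exactly the Dodson scheme carried over to the $L_{x_2}^2$-fiber setting, so there is nothing to add.
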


Now, we present the bilinear Strichartz estimates, which is useful in the proof of long-time Strichartz estimate. 
\begin{lemma}\label{prop-bilinear}
Let $2\leq p<\infty$ and $\frac{1}{p} + \frac{1}{q} = 1$, $u_0 , v_0 \in L^2_{x_1,x_2}(\R\times\R)$. Suppose that $\mathcal{F}_{x_1} u_0 (\xi , x_2) $ is supported on $|\xi| \sim N$ and $\mathcal{F}_{x_1} v_0 (\xi , x_2)$ is supported on $|\xi| \sim M$. If $M \ll N$, we have 
\begin{align*}
\bigg\|  \big\| e^{it \partial_{x_1}^2} u_0 \big\|_{ L_{x_2}^2(\R)}  \big\| e^{it \partial_{x_1}^2} v_0 \big\|_{ L_{x_2}^2(\R) } & \bigg\| _{L^p_{t} L_{x_1}^q(I\times\R)}  \lesssim \bigg( \frac{1}{N} \bigg) ^ {\frac{1}{p}} \| u_0 \|_{L^2_{ x_1,x_2}(\R\times\R)} \| v_0 \|_{L^2_{ x_1,x_2}(\R\times\R) }.
\end{align*}
\end{lemma}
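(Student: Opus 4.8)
The plan is to reduce this vector-valued bilinear estimate to the scalar one-dimensional bilinear Strichartz estimate for the free Schr\"odinger propagator on $\R$, applied fiberwise in the $x_2$ variable, and then recombine via Minkowski's inequality. First I would recall the classical bilinear refinement: if $g_0, h_0 \in L^2(\R)$ with $\widehat{g_0}$ supported on $|\xi|\sim N$ and $\widehat{h_0}$ supported on $|\xi|\sim M$ with $M\ll N$, then
\begin{align*}
\big\| (e^{it\partial_x^2}g_0)(e^{it\partial_x^2}h_0)\big\|_{L_{t,x}^2(\R\times\R)} \lesssim N^{-\frac12}\|g_0\|_{L^2(\R)}\|h_0\|_{L^2(\R)},
\end{align*}
together with the trivial bound
\begin{align*}
\big\| (e^{it\partial_x^2}g_0)(e^{it\partial_x^2}h_0)\big\|_{L_{t}^\infty L_x^1(\R\times\R)} \lesssim \|g_0\|_{L^2(\R)}\|h_0\|_{L^2(\R)},
\end{align*}
and interpolate between them to obtain, for $2\le p<\infty$ and $\tfrac1p+\tfrac1q=1$,
\begin{align*}
\big\| (e^{it\partial_x^2}g_0)(e^{it\partial_x^2}h_0)\big\|_{L_t^p L_x^q(\R\times\R)} \lesssim N^{-\frac1p}\|g_0\|_{L^2(\R)}\|h_0\|_{L^2(\R)}.
\end{align*}
This is the $d=1$ case of the bilinear estimate already invoked in the refined Strichartz section, so I may cite it.

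Next I would exploit that $\partial_{x_1}^2$ acts only in $x_1$ and commutes with everything in $x_2$: writing $U(t)=e^{it\partial_{x_1}^2}$, for each fixed $x_2$ the function $x_1\mapsto (U(t)u_0)(x_1,x_2)$ is the free 1D evolution of $x_1\mapsto u_0(x_1,x_2)$, whose $x_1$-Fourier support lies in $|\xi|\sim N$; similarly for $v_0$ at frequency $|\xi|\sim M$. Thus for each pair $(x_2,\tilde x_2)$ the scalar bilinear estimate gives
\begin{align*}
\Big\| (U(t)u_0)(\cdot,x_2)\,(U(t)v_0)(\cdot,\tilde x_2)\Big\|_{L_t^p L_{x_1}^q(I\times\R)} \lesssim N^{-\frac1p}\|u_0(\cdot,x_2)\|_{L^2_{x_1}}\|v_0(\cdot,\tilde x_2)\|_{L^2_{x_1}}.
\end{align*}
To pass to the statement, I would take the $L^2_{x_2}$ norm of $U(t)u_0$ and the $L^2_{x_2}$ norm of $U(t)v_0$ first; since $\|f\|_{L^2_{x_2}}=\big(\int |f(x_2)|^2\,dx_2\big)^{1/2}$, I would bound the product $\|U(t)u_0\|_{L^2_{x_2}}\|U(t)v_0\|_{L^2_{x_2}}$ pointwise in $(t,x_1)$ and then move the $L^p_tL^q_{x_1}$ norm inside the two $L^2_{x_2}$ integrations using Minkowski's integral inequality (valid because $p,q\ge 2$): schematically,
\begin{align*}
\big\|\,\|U(t)u_0\|_{L^2_{x_2}}\|U(t)v_0\|_{L^2_{x_2}}\big\|_{L^p_tL^q_{x_1}}
&\le \Big\| \big\| \|(U(t)u_0)(\cdot,x_2)(U(t)v_0)(\cdot,\tilde x_2)\|_{L^p_tL^q_{x_1}}\big\|_{L^2_{x_2}}\Big\|_{L^2_{\tilde x_2}}.
\end{align*}
Applying the fiberwise bilinear bound inside and then using $\|u_0(\cdot,x_2)\|_{L^2_{x_1}}\in L^2_{x_2}$, i.e. $\|u_0\|_{L^2_{x_1,x_2}}$, and likewise for $v_0$, yields the claimed $N^{-1/p}\|u_0\|_{L^2_{x_1,x_2}}\|v_0\|_{L^2_{x_1,x_2}}$.

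The main obstacle, and the step requiring care, is the order of integration: one cannot naively put $L^p_tL^q_{x_1}$ outermost and then split into $x_2$-fibers, because the bilinear gain is obtained fiberwise and the $L^2_{x_2}$ structure must be peeled off \emph{before} invoking the scalar estimate. Minkowski's inequality in the form $\|\,\|\cdot\|_{L^2_{x_2}}\|_{L^p_tL^q_{x_1}}\le \|\,\|\cdot\|_{L^p_tL^q_{x_1}}\|_{L^2_{x_2}}$ requires $p,q\ge 2$, which holds here since $2\le p<\infty$ forces $1<q\le 2$ --- wait, that gives $q\le 2$, so the Minkowski step must instead be arranged so that the $L^2_{x_2}$ (and $L^2_{\tilde x_2}$) norms, which have the larger exponent, sit \emph{outside}; this is consistent and is exactly why one takes $L^2_{x_2}$ last. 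I would therefore be careful to write the product of the two $L^2_{x_2}$-norms, bound it by a double integral over $(x_2,\tilde x_2)$ of the fiberwise products, apply Minkowski to bring $L^p_tL^q_{x_1}$ inside both $L^2$-integrals (legitimate since $2\ge q$ and $2\le p$, so the inner exponents are $\le$ the outer $L^2$), invoke the scalar bilinear estimate fiberwise, and finally recognize the resulting $L^2_{x_2}$ and $L^2_{\tilde x_2}$ norms of $\|u_0(\cdot,x_2)\|_{L^2_{x_1}}$, $\|v_0(\cdot,\tilde x_2)\|_{L^2_{x_1}}$ as $\|u_0\|_{L^2_{x_1,x_2}}$ and $\|v_0\|_{L^2_{x_1,x_2}}$. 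The frequency localization hypotheses $M\ll N$ are used only inside the scalar estimate, and nothing about the harmonic oscillator in $x_2$ enters beyond the fact that it commutes with $e^{it\partial_{x_1}^2}$.
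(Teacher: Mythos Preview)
Your overall strategy---reduce to the scalar one-dimensional bilinear Strichartz estimate fiberwise in $x_2$---is the right one, but the Minkowski step you invoke is not valid for $p>2$. You need
\[
\big\|\,\|F\|_{L^2_{x_2,\tilde x_2}}\big\|_{L^p_tL^q_{x_1}}\ \le\ \big\|\,\|F\|_{L^p_tL^q_{x_1}}\big\|_{L^2_{x_2,\tilde x_2}},
\]
with $F(t,x_1,x_2,\tilde x_2)=|U(t)u_0(x_1,x_2)|\,|U(t)v_0(x_1,\tilde x_2)|$. Minkowski allows you to move an inner $L^a$ outside past an adjacent $L^b$ only when $a\le b$; here $L^2$ must pass both $L^q$ (with $q\le 2$) and $L^p$ (with $p\ge 2$), and the swap past $L^q$ goes the wrong way. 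Your parenthetical ``legitimate since $2\ge q$ and $2\le p$, so the inner exponents are $\le$ the outer $L^2$'' is self-contradictory: $p\ge 2$ means $p$ is \emph{not} $\le 2$. In fact the abstract inequality above is false in general: take $F=\mathbf{1}_{\{x_1=z\}}$, independent of $t$, on the grid $\{1,\dots,n\}^3$; then the left side equals $n^{1/p+1/q}=n$ while the right side equals $n^{1/p+1/2}<n$ once $p>2$.

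The fix is to interpolate at the vector-valued level rather than at the scalar level. For $p=q=2$ the norm swap is an \emph{equality} by Fubini, so your fiberwise argument genuinely gives
\[
\big\|\,\|U(t)u_0\|_{L^2_{x_2}}\|U(t)v_0\|_{L^2_{x_2}}\big\|_{L^2_{t,x_1}}\ \lesssim\ N^{-1/2}\|u_0\|_{L^2_{x_1,x_2}}\|v_0\|_{L^2_{x_1,x_2}}.
\]
The $p=\infty$, $q=1$ endpoint follows from Cauchy--Schwarz in $x_1$ and the fact that $U(t)$ is unitary on $L^2_{x_1,x_2}$. Writing $H(t,x_1)=\|U(t)u_0\|_{L^2_{x_2}}\|U(t)v_0\|_{L^2_{x_2}}$, log-convexity of mixed Lebesgue norms then gives
\[
\|H\|_{L^p_tL^q_{x_1}}\ \le\ \|H\|_{L^2_{t,x_1}}^{2/p}\,\|H\|_{L^\infty_tL^1_{x_1}}^{1-2/p}\ \lesssim\ N^{-1/p}\|u_0\|_{L^2_{x_1,x_2}}\|v_0\|_{L^2_{x_1,x_2}}
\]
for all $2\le p<\infty$. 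The paper states this lemma without proof; note also that in its actual uses (the proof of Lemma~\ref{lemma-bilinear} and the $U^2_\Delta$ corollary) only the case $p=2$ is ever invoked, and for that case your argument with Fubini in place of Minkowski works verbatim.
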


As a direct consequence, we obtain a bilinear estimate on $U^p_{\Delta}(I,L_{x_1}^2L_{x_2}^2)$ space.
\begin{lemma}\label{bilinear-Up}
Let $u , v $ be space-time functions and $I\subset\R$.  Suppose that $\mathcal{F}_{x_1} u (t,\xi , x_2) $ is supported on $|\xi| \sim N$ and $\mathcal{F}_{x_1} v (t,\xi , x_2)$ is supported on $|\xi| \sim M$. with $M \ll N$, then we have the following estimate
\begin{align*}
\bigg\|     \big\|  u \big\|_{ L_{x_2}^2(\R) }  \big\|  v \big\|_{ L_{x_2}^2(\R) }   \bigg\|_{L^p_t L^q_{x_1}(I\times\R)} \lesssim   \big(\frac{1}{N}\big) ^ {\frac{1}{p}} \|u\|_{U^p_{\Delta}(I,L_{x_1}^2L_{x_2}^2)}  \|v\|_{U^p_{\Delta}(I,L_{x_1}^2L_{x_2}^2)}
\end{align*}
with $\frac{1}{p} + \frac{1}{q} = 1$ and $p\geq2$.
\end{lemma}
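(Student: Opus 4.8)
\textbf{Proof proposal for Lemma \ref{bilinear-Up}.}
The plan is to deduce the bilinear estimate on $U^p_\Delta$ spaces directly from the linear bilinear estimate of Lemma \ref{prop-bilinear} via the atomic structure of $U^p_\Delta(I,L_{x_1}^2L_{x_2}^2)$. First I would recall that it suffices, by definition of the $U^p_\Delta$ norm and a limiting argument, to prove the estimate when $u$ and $v$ are single atoms, say
\begin{align*}
u=\sum_{k}\chi_{[t_k,t_{k+1}]}(t)\,e^{it\partial_{x_1}^2}u_k,\qquad v=\sum_{\ell}\chi_{[s_\ell,s_{\ell+1}]}(t)\,e^{it\partial_{x_1}^2}v_\ell,
\end{align*}
with $\sum_k\|u_k\|_{L^2_{x_1,x_2}}^p=\sum_\ell\|v_\ell\|_{L^2_{x_1,x_2}}^p=1$; the general case follows since the left-hand side is sublinear (it is controlled by the $\ell^1$-sum of the atom coefficients in each factor) and by interpolating the bilinear form. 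A mild technical point is that the Fourier supports in $x_1$ of the atom pieces $u_k$, $v_\ell$ inherit the supports $|\xi|\sim N$, $|\xi|\sim M$ of $u$, $v$ respectively, because the Littlewood--Paley type frequency localization commutes with $e^{it\partial_{x_1}^2}$ and with the characteristic functions in $t$; so each $u_k$ (resp. $v_\ell$) may be taken frequency-localized to $|\xi|\sim N$ (resp. $|\xi|\sim M$).

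Next, on the common refinement of the two partitions, write $I$ as a disjoint union of intervals $I_m$ on which both $u$ and $v$ are of the form $e^{it\partial_{x_1}^2}$ applied to a fixed $L^2_{x_1,x_2}$ datum. On each $I_m$ apply Lemma \ref{prop-bilinear} to obtain
\begin{align*}
\bigg\|\,\|u\|_{L^2_{x_2}}\|v\|_{L^2_{x_2}}\,\bigg\|_{L^p_t L^q_{x_1}(I_m\times\R)}\lesssim\Big(\tfrac1N\Big)^{1/p}\|u_{k(m)}\|_{L^2_{x_1,x_2}}\|v_{\ell(m)}\|_{L^2_{x_1,x_2}},
\end{align*}
where $k(m),\ell(m)$ index the atom pieces active on $I_m$. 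Then sum over $m$: since $p\ge 2$, raising to the $p$-th power and using that the intervals $I_m$ are disjoint gives
\begin{align*}
\bigg\|\,\|u\|_{L^2_{x_2}}\|v\|_{L^2_{x_2}}\,\bigg\|_{L^p_t L^q_{x_1}(I\times\R)}^p
\lesssim\Big(\tfrac1N\Big)\sum_m\|u_{k(m)}\|_{L^2_{x_1,x_2}}^p\|v_{\ell(m)}\|_{L^2_{x_1,x_2}}^p
\le\Big(\tfrac1N\Big)\Big(\sum_k\|u_k\|_{L^2_{x_1,x_2}}^p\Big)\Big(\sum_\ell\|v_\ell\|_{L^2_{x_1,x_2}}^p\Big),
\end{align*}
which equals $1/N$ for atoms; taking $p$-th roots yields the claimed bound $\big(\tfrac1N\big)^{1/p}$ for atoms, and hence, after passing back from atoms to general $u,v$ by the triangle inequality in the atomic decomposition, the bound $\big(\tfrac1N\big)^{1/p}\|u\|_{U^p_\Delta}\|v\|_{U^p_\Delta}$.

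The main obstacle I anticipate is the bookkeeping in the reduction to atoms: one must make the passage from the bilinear form on a single pair of atoms to the full $U^p_\Delta\times U^p_\Delta$ estimate rigorous, which requires either a bilinear interpolation/transference argument or a direct $\ell^1$-triangle-inequality estimate exploiting that each $U^p_\Delta$ function is an $\ell^1$-superposition of atoms. Care is also needed to ensure the frequency localizations $|\xi|\sim N$ and $|\xi|\sim M$ survive the atomic decomposition — this is where one invokes that the frequency projections are Fourier multipliers in $x_1$ alone, hence bounded on $U^p_\Delta(I,L_{x_1}^2L_{x_2}^2)$ and compatible with the $L^2_{x_2}$ structure, so one may freely insert them. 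Modulo these standard but slightly delicate points, the estimate is an immediate consequence of Lemma \ref{prop-bilinear} together with $p\ge 2$ and the disjointness of the refining intervals. This argument parallels the corresponding step in Dodson \cite{Dodson-d=1}, so I would simply indicate the modifications and omit the routine details.
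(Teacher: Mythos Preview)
Your proposal is correct and is precisely the standard atomic-decomposition argument that the paper is alluding to when it states the lemma ``as a direct consequence'' of Lemma~\ref{prop-bilinear} without further proof. The key steps---reducing to a single pair of atoms, passing to the common refinement of the two time-partitions, applying Lemma~\ref{prop-bilinear} on each piece, and summing in $\ell^p$ using that the map $m\mapsto(k(m),\ell(m))$ is injective---are all in order; the extension from atoms to general $u,v$ is just the $\ell^1$-triangle inequality in the atomic decomposition (your phrase ``interpolating the bilinear form'' is a slight misnomer here, but you correct this yourself later in the proposal).
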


Using Lemma \ref{prop-bilinear} and the Strichartz estimates, we have the following lemma.
\begin{lemma}\label{lemma-bilinear}
Let $u_0,v_0$, $N$ and $M$  be the same as in Lemma $\ref{prop-bilinear}$, the following estimates hold
\begin{align}
&\big\|   \|e^{it \partial_{x_1}^2}  u_0 \| _{L_{x_2}^2(\R)}  \|e^{it \partial_{x_1}^2}  v_0 \| _{L_{x_2}^2(\R)}  \big\|_{ L^3_{t , x_1}(I\times\R)  } \lesssim  \big( \frac{M}{N} \big) ^{\frac{1}{4}}  \|u_0\|_{L^2_{x_1} L_{x_2}^2(\R\times\R)}  \|v_0\|_{L^2_{x_1} L_{x_2}^2(\R\times\R)}, \label{fml-bilinear-L3} \\
&\big\|   \|e^{it \partial_{x_1}^2}  u_0 \| _{L_{x_2}^2(\R)}  \|e^{it \partial_{x_1}^2}  v_0 \| _{L_{x_2}^2(\R)}  \big\|_{ L^{\frac{12}{5}}_{t , x_1}(I\times\R)  }   \lesssim  \big( \frac{1}{N} \big) ^{\frac{1}{4}}  \|u_0\|_{L^2_{x_1} L_{x_2}^2(\R\times\R)}  \|v_0\|_{L^2_{x_1} L_{x_2}^2(\R\times\R)}. \label{fml-bilinear-L12}
\end{align}
\end{lemma}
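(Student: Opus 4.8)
\textbf{Proof plan for Lemma \ref{lemma-bilinear}.} The plan is to derive both estimates by interpolating the sharp bilinear Strichartz estimate of Lemma \ref{prop-bilinear} against standard (linear) Strichartz bounds. Throughout, I abbreviate $U(t)=e^{it\partial_{x_1}^2}$ and I keep in mind that the harmonic-oscillator variable $x_2$ only plays the role of an auxiliary Hilbert space: all the Fourier supports live in the $\xi$-dual of $x_1$, and the $L^2_{x_2}$ norms pass through Minkowski's inequality and the endpoint-free Strichartz estimate \eqref{Strichartz-L^2} of Proposition \ref{Strichartz-Antonelli} (with trivial potential $x_2^2$, equivalently Proposition \ref{Strichartz-harmonic}) exactly as in the scalar case.

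\textbf{Step 1: the two endpoint inputs.} First I would record the $L^2_t$-type bilinear bound from Lemma \ref{prop-bilinear} with $p=2$:
\begin{align*}
\bigl\|\,\|U(t)u_0\|_{L^2_{x_2}}\|U(t)v_0\|_{L^2_{x_2}}\,\bigr\|_{L^2_t L^2_{x_1}(I\times\R)}\lesssim N^{-\frac12}\|u_0\|_{L^2_{x_1,x_2}}\|v_0\|_{L^2_{x_1,x_2}}.
\end{align*}
Second, I would record the ``trivial'' product estimate coming from applying the linear Strichartz estimate to each factor separately: since $(6,6)$ is a Schrödinger-admissible pair in the sense of the paper's definition ($\frac{2}{6}=\frac12-\frac16$), Proposition \ref{Strichartz-Antonelli} gives $\|U(t)u_0\|_{L^6_tL^6_{x_1}L^2_{x_2}}\lesssim\|u_0\|_{L^2_{x_1,x_2}}$, and by Hölder in $(t,x_1)$ the product of the two $L^2_{x_2}$-norms is bounded in $L^3_tL^3_{x_1}$, i.e.
\begin{align*}
\bigl\|\,\|U(t)u_0\|_{L^2_{x_2}}\|U(t)v_0\|_{L^2_{x_2}}\,\bigr\|_{L^3_t L^3_{x_1}(I\times\R)}\lesssim\|u_0\|_{L^2_{x_1,x_2}}\|v_0\|_{L^2_{x_1,x_2}},
\end{align*}
with no gain in $N$, and also (by the same argument using $(4,\infty)$-type admissibility for one factor, or directly interpolating) a family of such bounds. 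For \eqref{fml-bilinear-L12} I also want the clean consequence of Lemma \ref{prop-bilinear} at a general exponent, namely the $L^{12/5}_{t,x_1}$ form which is the $p=12/5$ specialization once one notes $L^p_tL^q_{x_1}$ with $\frac1p+\frac1q=1$, $p=12/5$ gives $q=12/7$; to land in $L^{12/5}_{t,x_1}$ rather than $L^{12/5}_tL^{12/7}_{x_1}$ one combines Lemma \ref{prop-bilinear} with a further Strichartz bound via Hölder. The cleanest route: interpolate the $p=2$ bilinear bound (exponent $L^2_tL^2_{x_1}$, gain $N^{-1/2}$) with the $L^6_tL^{6/5}_{x_1}$ bilinear bound obtained from Lemma \ref{prop-bilinear} at $p=6$ (gain $N^{-1/6}$), choosing the interpolation parameter so that the resulting space-time Lebesgue exponents coincide and equal $12/5$; tracking the exponent of $N$ through the interpolation produces $N^{-1/4}$, which is \eqref{fml-bilinear-L12}.

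\textbf{Step 2: the $M/N$ estimate \eqref{fml-bilinear-L3}.} Here I would interpolate the $N^{-1/2}$-gain bound from Step 1 (in $L^2_tL^2_{x_1}$) with the symmetric bound in which the roles of the frequencies are exchanged — applying Lemma \ref{prop-bilinear} with $u_0,v_0$ swapped gives a gain $M^{-1/2}$ instead of $N^{-1/2}$ — and with the no-gain $L^3_{t,x_1}$ bound, to produce the geometric-mean gain $(M/N)^{1/4}$ in the target space $L^3_{t,x_1}$. Concretely, bilinear interpolation between $\|\cdot\|_{L^2_{t,x_1}}\lesssim N^{-1/2}(\cdots)$ and $\|\cdot\|_{L^2_{t,x_1}}\lesssim M^{-1/2}(\cdots)$ with weight $1/2$ gives $\|\cdot\|_{L^2_{t,x_1}}\lesssim (MN)^{-1/4}(\cdots)$; interpolating that against the $L^3_{t,x_1}$ bound with no $N$-dependence — with the appropriate weight chosen so that the Lebesgue exponent lands on $3$ (namely, take the $L^2$ endpoint with weight $1/2$ and the $L^3$ endpoint with weight $1/2$, since $\frac1\theta\cdot\frac12+\frac{1-\theta}{3}=\frac13$ forces $\theta=... $, which I would check gives exactly the exponent $(M/N)^{1/4}$ after using $M\ll N$ to absorb the $(MN)^{-1/4}$ against an $N^{+?}$ coming from measure-theoretic interpolation constants). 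The key structural point, which I expect to be routine once set up, is that the Fourier supports $|\xi|\sim N$, $|\xi|\sim M$ are preserved under $U(t)$ and under multiplication only affect the output support, so no extra loss arises.

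\textbf{Main obstacle.} The only genuinely delicate point is bookkeeping the interpolation exponents so that the space-time Lebesgue indices land exactly on $L^3_{t,x_1}$ and $L^{12/5}_{t,x_1}$ while the power of $N$ (resp.\ $M/N$) comes out to be precisely $1/4$; the abstract interpolation (complex interpolation of the bilinear operator $(u_0,v_0)\mapsto \|U(t)u_0\|_{L^2_{x_2}}\|U(t)v_0\|_{L^2_{x_2}}$ between the weighted $L^2_{t,x_1}$ and $L^3_{t,x_1}$ — or $L^6_tL^{6/5}_{x_1}$ — endpoints) is standard, as is passing the $L^2_{x_2}$ norm through by Minkowski. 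I do not expect any new difficulty from the partial harmonic oscillator: since the estimates are stated at the $L^2_{x_2}$ level with no $x_2$-derivatives, the operator $e^{it(\partial_{x_2}^2-x_2^2)}$ never appears, and Lemma \ref{prop-bilinear} already packages the only frequency-localized input needed. I would therefore present the proof as: (i) quote Lemma \ref{prop-bilinear} at $p=2$ and (by symmetry) the $M$-gain version; (ii) quote the no-gain $L^3_{t,x_1}$ Hölder--Strichartz bound; (iii) interpolate; and similarly quote Lemma \ref{prop-bilinear} at $p=2$ and $p=6$ and interpolate to get \eqref{fml-bilinear-L12}.
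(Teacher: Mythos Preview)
Your plan for \eqref{fml-bilinear-L3} has a genuine gap. Swapping $u_0$ and $v_0$ in Lemma \ref{prop-bilinear} does \emph{not} give a gain of $M^{-1/2}$: the product $\|U(t)u_0\|_{L^2_{x_2}}\|U(t)v_0\|_{L^2_{x_2}}$ is symmetric, and the gain in Lemma \ref{prop-bilinear} is always $(1/N)^{1/p}$ with $N$ the \emph{higher} of the two frequencies, regardless of labeling. So none of your listed ingredients --- the $L^2_{t,x_1}$ bilinear bound and the no-gain $L^3_{t,x_1}$ H\"older--Strichartz bound --- carry any $M$-dependence, and no interpolation between them can produce $(M/N)^{1/4}$. (Your arithmetic also breaks: interpolating an $L^2$ bound with an $L^3$ bound and landing exactly on $L^3$ forces weight $1$ on the $L^3$ endpoint, killing the gain.) The missing idea is Bernstein on the low-frequency factor: since $\mathcal F_{x_1}v_0$ is supported on $|\xi|\sim M$, Proposition \ref{Bernstein} and unitarity give $\|\,\|U(t)v_0\|_{L^2_{x_2}}\|_{L^\infty_{t,x_1}}\lesssim M^{1/2}\|v_0\|_{L^2_{x_1,x_2}}$. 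The paper then writes, by H\"older,
\[
\|\cdot\|_{L^3_{t,x_1}}\le \|\cdot\|_{L^2_{t,x_1}}^{1/2}\|\cdot\|_{L^6_{t,x_1}}^{1/2}
\le \|\cdot\|_{L^2_{t,x_1}}^{1/2}\,\|\,\|Uu_0\|_{L^2_{x_2}}\|_{L^6_{t,x_1}}^{1/2}\,\|\,\|Uv_0\|_{L^2_{x_2}}\|_{L^\infty_{t,x_1}}^{1/2},
\]
and applies the $p=2$ bilinear bound to the first factor, Strichartz to the second, and Bernstein to the third, yielding $N^{-1/4}\cdot 1\cdot M^{1/4}=(M/N)^{1/4}$.

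For \eqref{fml-bilinear-L12} your ingredients are actually sufficient, but the route via interpolating the $p=2$ and $p=6$ cases of Lemma \ref{prop-bilinear} does not land on the diagonal: the time and space exponents solve $\theta=3/4$ and $\theta=5/4$ respectively, which is inconsistent. The paper instead uses the simpler H\"older split $\|\cdot\|_{L^{12/5}_{t,x_1}}\le\|\cdot\|_{L^2_{t,x_1}}^{1/2}\|\,\|Uu_0\|_{L^2_{x_2}}\|_{L^6_{t,x_1}}^{1/2}\|\,\|Uv_0\|_{L^2_{x_2}}\|_{L^6_{t,x_1}}^{1/2}$, then applies the $p=2$ bilinear bound and Strichartz twice --- exactly the $L^2$-bilinear and no-gain $L^3$ inputs you already recorded.
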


\begin{proof}  
By using 	H\"older's inequality, Bernstein's inequality and Lemma $\ref{prop-bilinear}$,
\begin{align*}
&\big\|   \|e^{it \partial_{x_1}^2}  u_0 \| _{L_{x_2}^2(\R)}  \|e^{it \partial_{x_1}^2}  v_0 \| _{L_{x_2}^2(\R)}  \big\|_{ L^3_{t , x_1}(I\times\R)  } \\
\lesssim &  \big\|   \|e^{it \partial_{x_1}^2}  u_0 \| _{L_{x_2}^2(\R)}  \|e^{it \partial_{x_1}^2}  v_0 \| _{L_{x_2}^2(\R)} \big\|^{\frac{1}{2}} _{ L^2_{t , x_1}(I\times\R)  } \big\|   \|e^{it \partial_{x_1}^2}  u_0 \| _{L_{x_2}^2(\R)}  \|e^{it \partial_{x_1}^2}  v_0 \| _{L_{x_2}^2(\R)} \big\|^{\frac{1}{2}} _{ L^6_{t , x_1}(I\times\R)  } \\
\lesssim &\big\|   \|e^{it \partial_{x_1}^2}  u_0 \| _{L_{x_2}^2(\R)}  \|e^{it \partial_{x_1}^2}  v_0 \| _{L_{x_2}^2(\R)} \big\|^{\frac{1}{2}} _{ L^2_{t , x_1}(I\times\R)  } \big\|   \|e^{it \partial_{x_1}^2}  u_0 \| _{L_{x_2}^2(\R)}  \big\| ^{\frac{1}{2}} _{ L^6_{ t } L^{6} _{x_1}(I\times\R)  }    \big\|   \|e^{it \partial_{x_1}^2}  v_0 \| _{L_{x_2}^2(\R)}  \big\|^{\frac{1}{2}} _{ L^{\infty}_{ t } L^{\infty} _{x_1}(I\times\R)  }\\
\lesssim  &  \big( \frac{M}{N} \big) ^{\frac{1}{4}}  \|u_0\|_{L^2_{x_1} L_{x_2}^2(\R\times\R)}  \|v_0\|_{L^2_{x_1} L_{x_2}^2(\R\times\R)}.
\end{align*}
Since $(6,6)$ is an admissible pair,  we have
\begin{align*}
&\big\|   \|e^{it \partial_{x_1}^2}  u_0 \| _{L_{x_2}^2(\R)}  \|e^{it \partial_{x_1}^2}  v_0 \| _{L_{x_2}^2(\R)}  \big\|_{ L^{\frac{12}{5}}_{t , x_1}(I\times\R)  } \\
\lesssim &  \big\|   \|e^{it \partial_{x_1}^2}  u_0 \| _{L_{x_2}^2(\R)}  \|e^{it \partial_{x_1}^2}  v_0 \| _{L_{x_2}^2(\R)}  \big\|^{\frac{1}{2}} _{ L^2_{t , x_1}(I\times\R)  }   \big\|   \|e^{it \partial_{x_1}^2}  u_0 \| _{L_{x_2}^2(\R)}  \big\| ^\frac{1}{2} _{ L^6_{ t , x_1 }(I\times\R)  }    \big\|   \|e^{it \partial_{x_1}^2}  v_0 \| _{L_{x_2}^2(\R)}  \big\| ^\frac{1}{2} _{  L^{6} _{t , x_1}(I\times\R)  }\\
\lesssim  &  \big( \frac{1}{N} \big) ^{\frac{1}{4}}  \|u_0\|_{L^2_{x_1} L_{x_2}^2(\R\times\R)}  \|v_0\|_{L^2_{x_1} L_{x_2}^2(\R\times\R)}.
\end{align*}
\end{proof}

By Lemma \ref{prop-bilinear} and Lemma \ref{lemma-bilinear}, we have the following bilinear estimates in $U_\Delta^p(I,L_{x_1}^2L_{x_2}^2)$ spaces.
\begin{corollary}
Let $u_0,v_0$, $N,M$  be the same as in Lemma $\ref{bilinear-Up}$, the following estimates hold
\begin{align}
&\big\|   \| u \| _{L_{x_2}^2(\R)}  \|  v \| _{L_{x_2}^2(\R)}  \big\|_{ L^2_{t , x_1}(I\times\R)  } \lesssim  \big( \frac{1}{N} \big) ^{\frac{1}{2}}  \|u\|_{U^p_{\Delta}(I,L_{x_1}^2L_{x_2}^2)}  \|v\|_{U^p_{\Delta}(I,L_{x_1}^2L_{x_2}^2)}, \label{bilinearl2} \\
&\big\|   \| u \| _{L_{x_2}^2(\R)}  \| v \| _{L_{x_2}^2(\R)}  \big\|_{ L^{\frac{12}{5}}_{t , x_1}(I\times\R)  }   \lesssim  \big( \frac{1}{N} \big) ^{\frac{1}{4}}  \|u_0\|_{U^p_{\Delta}(I,L_{x_1}^2L_{x_2}^2)}  \|v_0\|_{U^p_{\Delta}(I,L_{x_1}^2L_{x_2}^2)}, \label{bilinearl12}\\
&\big\|   \| u \| _{L_{x_2}^2(\R)}  \|  v \| _{L_{x_2}^2(\R)}  \big\|_{ L^3_{t , x_1}(I\times\R)  } \lesssim  \big( \frac{M}{N} \big) ^{\frac{1}{4}}  \|u\|_{U^p_{\Delta}(I,L_{x_1}^2L_{x_2}^2)}  \|v\|_{U^p_{\Delta}(I,L_{x_1}^2L_{x_2}^2)}. \label{bilinearl3} 
\end{align}
\end{corollary}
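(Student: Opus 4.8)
The statement collects three bilinear estimates in $U_\Delta^p$-spaces, and each is obtained by the same recipe: transfer the corresponding estimate for the free evolution $e^{it\partial_{x_1}^2}$ (already proved in Lemma~\ref{prop-bilinear} and Lemma~\ref{lemma-bilinear}) to the atomic space $U_\Delta^p(I,L_{x_1}^2L_{x_2}^2)$ via the atomic structure. The plan is therefore first to record the abstract transference principle: if $T$ is an $n$-linear operator (here $n=2$) and there is an estimate
\begin{align*}
\big\|T\big(e^{it\partial_{x_1}^2}f_1,e^{it\partial_{x_1}^2}f_2\big)\big\|_{Z}\lesssim C\,\|f_1\|_{L_{x_1}^2L_{x_2}^2}\|f_2\|_{L_{x_1}^2L_{x_2}^2}
\end{align*}
for all $f_1,f_2$ with the stated Fourier support, then the same estimate holds with $e^{it\partial_{x_1}^2}f_j$ replaced by $u_j\in U_\Delta^{p}(I,L_{x_1}^2L_{x_2}^2)$ and $\|f_j\|_{L_{x_1}^2L_{x_2}^2}$ replaced by $\|u_j\|_{U_\Delta^{p}(I,L_{x_1}^2L_{x_2}^2)}$, at the (harmless) cost of working with $U_\Delta^p$ instead of the homogeneous evolution. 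This is standard and already implicit in Lemma~\ref{bilinear-Up}; I would just cite Hadac--Herr--Koch \cite{Hadec-Herr-Koch} and Dodson \cite{Dodson-d=1}.

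\textbf{Carrying out the three cases.} For \eqref{bilinearl2}: interpolate or directly quote the $L^2_{t,x_1}$ endpoint of Lemma~\ref{prop-bilinear} (the case $p=2$), which gives the gain $N^{-1/2}$, then transfer it through the atomic decomposition to get the $U_\Delta^p$ bound; note that the Fourier supports $|\xi|\sim N$ and $|\xi|\sim M$ with $M\ll N$ are preserved because the Littlewood--Paley projections commute with both $e^{it\partial_{x_1}^2}$ and the atomic structure. For \eqref{bilinearl12}: transfer estimate \eqref{fml-bilinear-L12} of Lemma~\ref{lemma-bilinear}, which already has the $N^{-1/4}$ gain in the $L^{12/5}_{t,x_1}$ norm. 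For \eqref{bilinearl3}: transfer estimate \eqref{fml-bilinear-L3}, which has the sharper gain $(M/N)^{1/4}$ in $L^3_{t,x_1}$; here one must be slightly careful that the proof of \eqref{fml-bilinear-L3} used H\"older to split into an $L^2_{t,x_1}$ factor (bilinear, gain $(M/N)^{1/4}$ after inserting Bernstein for the low-frequency piece) and an $L^6_t L^6_{x_1}$ Strichartz factor times an $L^\infty_t L^\infty_{x_1}$ factor — all of these Strichartz-type norms are controlled by $U_\Delta^p$ via the embedding $U_\Delta^p(I,L_{x_1}^2L_{x_2}^2)\hookrightarrow L_t^p(I,L_{x_1}^qL_{x_2}^2)$ from Lemma~\ref{property}, so the same H\"older splitting goes through at the level of $U_\Delta^p$ atoms.

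\textbf{Main obstacle.} The genuinely nontrivial input is not the transference itself but making sure the $x_2$-variable is handled correctly: in all three estimates the relevant norm is $\big\|\,\|u\|_{L_{x_2}^2}\|v\|_{L_{x_2}^2}\,\big\|_{L^p_tL^q_{x_1}}$, i.e. one first takes $L^2$ in $x_2$ of each factor \emph{separately} and only then forms the product and the space-time norm. One should therefore verify that this "outer" structure is compatible with the atomic decomposition: since an atom of $U_\Delta^p(I,L_{x_1}^2L_{x_2}^2)$ is a finite sum $\sum_k\chi_{[t_k,t_{k+1}]}(t)e^{it\partial_{x_1}^2}v_k$ with $\sum_k\|v_k\|_{L_{x_1}^2L_{x_2}^2}^p=1$, and $e^{it\partial_{x_1}^2}$ acts only on $x_1$, Minkowski in $x_2$ together with the triangle inequality over the atom decomposition reduces matters to the free-evolution estimates of Lemma~\ref{prop-bilinear} and Lemma~\ref{lemma-bilinear} with $f_j=v_k$. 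Once this bookkeeping is in place the three displayed inequalities follow immediately, so I expect the proof to be short; the only real work is the careful statement of the transference lemma in the vector-valued ($L_{x_2}^2$) setting, after which I would simply write "the claim follows by applying the transference principle to Lemma~\ref{prop-bilinear} and Lemma~\ref{lemma-bilinear}."

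\begin{proof}
Estimates \eqref{bilinearl2}, \eqref{bilinearl12}, \eqref{bilinearl3} follow from Lemma~\ref{prop-bilinear} and Lemma~\ref{lemma-bilinear} by the atomic transference argument of \cite{Hadec-Herr-Koch,Dodson-d=1}: writing $u=\sum_\lambda c_\lambda u^\lambda$, $v=\sum_\mu d_\mu v^\mu$ as superpositions of $U_\Delta^p$-atoms, expanding each atom as a sum of truncated free evolutions $\chi_{[t_k,t_{k+1}]}(t)e^{it\partial_{x_1}^2}v_k$, using Minkowski's inequality in the $x_2$-variable to pull the $L_{x_2}^2$ norms inside, and applying the corresponding free-evolution bilinear estimate to each pair of building blocks, one recovers the stated inequalities with the homogeneous data norms replaced by the $U_\Delta^p(I,L_{x_1}^2L_{x_2}^2)$ norms; the frequency localizations $|\xi|\sim N$, $|\xi|\sim M$ are preserved throughout since the partial Littlewood--Paley projectors commute with $e^{it\partial_{x_1}^2}$. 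We omit the routine details.
\end{proof}
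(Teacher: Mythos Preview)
Your proposal is correct and matches the paper's treatment: the paper states the Corollary immediately after Lemma~\ref{bilinear-Up} and Lemma~\ref{lemma-bilinear} with no proof beyond the lead-in sentence ``By Lemma~\ref{prop-bilinear} and Lemma~\ref{lemma-bilinear}, we have the following bilinear estimates in $U_\Delta^p(I,L_{x_1}^2L_{x_2}^2)$ spaces,'' so the intended argument is exactly the atomic transference you describe. If anything, your write-up is more detailed than the paper's.
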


Before presenting the long-time Strichartz estimate, we need to introduce some notations and definitions.
\begin{definition}[Galilean Littlewood-Paley projection] Let  $j>0$ be an integer and $P_j^{x_1}$ be the Littlewood-Paley projection defined as in Section \ref{sec:pre}. Then we denote the Galilean Littlewood-Paley projection by
\begin{align*}
P_{\xi_0,\leq j}^{x_1}f=e^{ix_1\cdot\xi_0}P_{\leq j}^{x_1}(e^{-ix_1\cdot\xi_0}f).
\end{align*}
We can also define $P_{\xi_0,j}^{x_1}$ $P_{\xi_0,>j}^{x_1}$ in a similar way.
	
For $1\leq p\leq\infty$ and $1\leq q\leq\infty$, define the norm
\begin{align*}
\big\|\|P_{\xi(t),j}^{x_1}f\|_{L_{x_2}^2(\R)}\big\|_{L_t^pL_{x_1}^q(\R\times\R)}=\big\|\|P_{\xi(t),j}^{x_1}\|_{L_{x_1}^qL_{x_2}^2(\R\times\R)}\big\|_{L_t^p(\R)}.
\end{align*}
\end{definition}

For the almost-periodic solution $u$ satisfying the conditions $\eqref{almost-compact-1}$-$\eqref{almost-compact-2}$, we now construct the Galilean norm:
\begin{definition}[Galilean norm]
For $0\leq i\leq j\leq k_0$, if $G_{k}^j\subset[a,b]$ is part of the partition of $[a,b]$ described in Definition \ref{definition}, we write
\begin{align*}
\big\|P_{\xi(t),i}^{x_1}u\big\|_{U_\Delta^2(G_k^j,L_{x_1}^2L_{x_2}^2)}\stackrel{\triangle}{=}\sum\limits_{G_{\alpha}^i\subset G_k^j}\big\|P_{\xi(G_{\alpha}^i),i-2\leq\cdot\leq i+2}^{x_1}u\big\|_{U_\Delta^2(G_{\alpha}^i,L_{x_1}^2L_{x_2}^2)}^2.
\end{align*}
For $i\geq j$, we define
\begin{align*}
\big\|P_{\xi(t),i}^{x_1}u\big\|_{U_\Delta^2(G_k^j,L_{x_1}^2L_{x_2}^2)}^2=\big\|P_{\xi(G_k^j),i}^{x_1}u\big\|_{U_\Delta^2(G_k^j,L_{x_1}^2L_{x_2}^2)}^2.
\end{align*}
\end{definition}

It is obvious that if $(p,q)$ is an admissible pair, then we have the following Strichartz estimate in $U_\Delta^p$ space:
\begin{align}\label{UpStrichartz1}
\|u\|_{L_t^pL_{x_1}^qL_{x_2}^2(I\times\R\times\R)}\lesssim \|u\|_{U_\Delta^p(I,L_{x_1}^2L_{x_2}^2)}\lesssim\|u\|_{U_\Delta^2(I,L_{x_1}^2L_{x_2}^2)},
\end{align}
where $I\subset \R$ is a time interval.

Now, we can give the definition of the long-time Strichartz norm.
\begin{definition}[Long-time Strichartz norms]Let $G_{k}^j\subset[a,b]$, we define
\begin{align*}
\|u\|_{X(G_k^j,L_{x_1}^2L_{x_2}^2)}^2\stackrel{\triangle}{=}\sum_{i\leq j}2^{i-j}\|P_{\xi(t),i}u\|_{U_\Delta^2(G_k^j,L_{x_1}^2L_{x_2}^2)}^2+\sum_{i>j}\|P_{\xi(t),i}u\|_{U_\Delta^2(G_k^j,L_{x_1}^2L_{x_2}^2)}^2.
\end{align*}
For any $0\leq l\leq k_0$, we write
\begin{align*}
\|u\|_{\widetilde{X}_l([a,b],L_{x_1}^2L_{x_2}^2)}^2=\sup_{0\leq j\leq l}\sup_{G_k^j\subset[a,b]}\|u\|_{X(G_k^j,L_{x_1}^2L_{x_2}^2)}^2.
\end{align*}
\end{definition}

\begin{figure}[htbp]
\centering
\includegraphics[height=10cm,width=10cm]{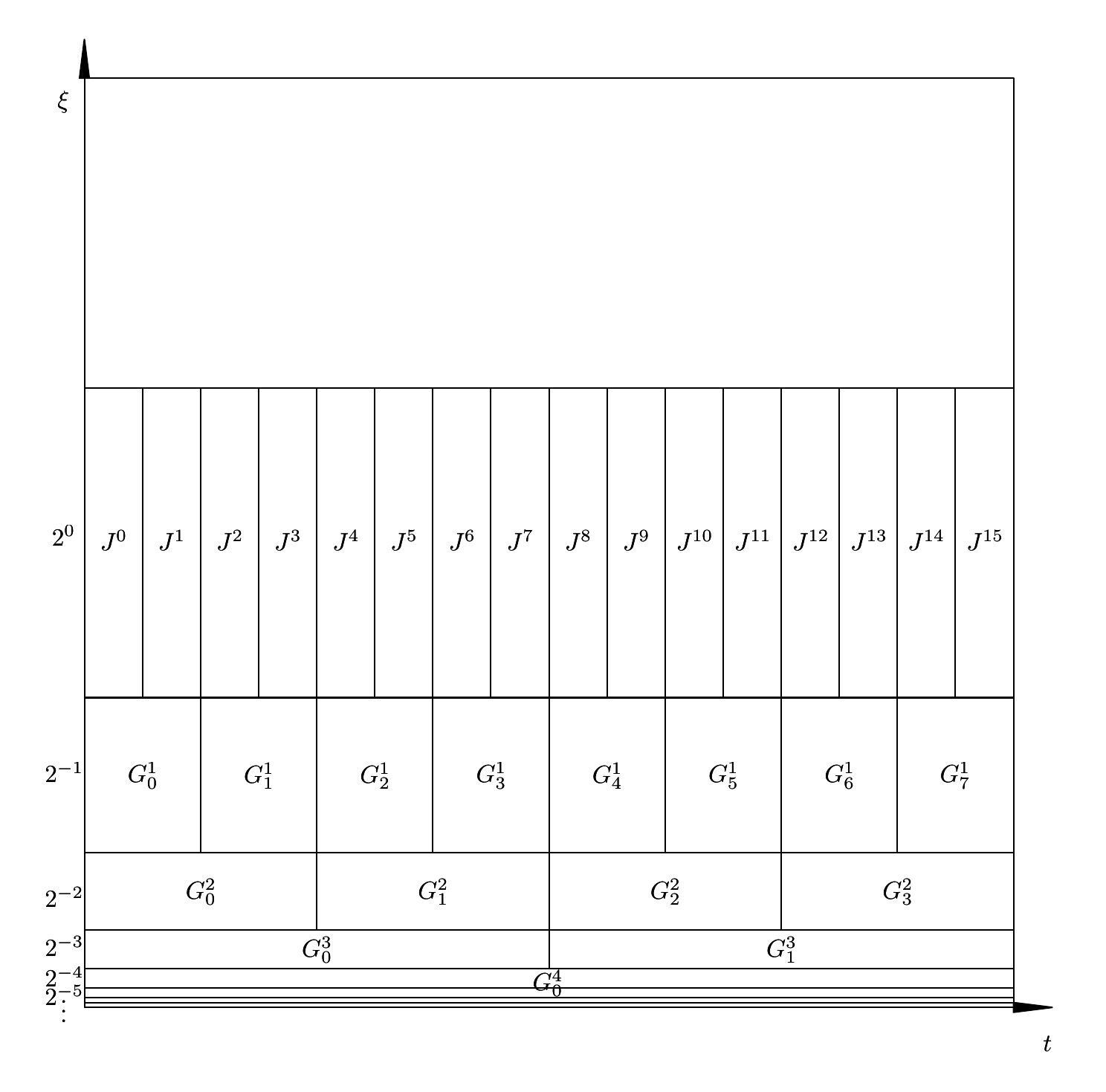}
\vspace{-1cm}
\caption{$\tilde{X}_{k_0}$ norm(cited from Yu \cite{Yu})}\label{figure}
\end{figure}

By using the Littlewood-Paley theorem in Section \ref{sec:pre}, we have the following lemma.
\begin{lemma}\label{highfreq}
Let  $j\geq 0$ be an integer and $(p,q)$ be an admissible pair,  then the following estimate holds
\begin{align*}
\big\|\|P_{\xi(t),\geq j}u\|_{L_{x_2}^2(\R)}\big\|_{L_{t}^pL_{x_1}^q(G_{k}^j\times\R)}\lesssim \|u\|_{X(G_{k}^j,L_{x_1}^2L_{x_2}^2)}.
\end{align*}
\end{lemma}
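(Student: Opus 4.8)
\textbf{Proof plan for Lemma \ref{highfreq}.}
The plan is to decompose the high-frequency piece $P_{\xi(t),\geq j}u$ into a sum of dyadic Galilean Littlewood--Paley pieces $P_{\xi(t),i}u$ with $i\geq j$, estimate each piece by the admissible Strichartz estimate \eqref{UpStrichartz1} in $U_\Delta^p$ space, and then reassemble using the square-summability built into the $X(G_k^j,L_{x_1}^2L_{x_2}^2)$ norm. Concretely, first I would write
\begin{align*}
\big\|\|P_{\xi(t),\geq j}u\|_{L_{x_2}^2(\R)}\big\|_{L_t^pL_{x_1}^q(G_k^j\times\R)}
\lesssim \Big(\sum_{i\geq j}\big\|\|P_{\xi(t),i}u\|_{L_{x_2}^2(\R)}\big\|_{L_t^pL_{x_1}^q(G_k^j\times\R)}^2\Big)^{1/2},
\end{align*}
which follows from the vector-valued Littlewood--Paley theorem (Proposition \ref{Littlewood}) applied in the $x_1$-variable after conjugating out the Galilean phase $e^{ix_1\xi(t)}$, together with Minkowski's inequality to move the $\ell^2_i$ sum outside the $L^2_{x_2}$ and $L^p_tL^q_{x_1}$ norms (legitimate since $p,q\geq 2$). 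One subtlety here is that the frequency center $\xi(t)$ is time-dependent; this is handled by the local constancy of $\xi(t)$ on each sub-interval $G_\alpha^i$ (cf. \eqref{key} and the containment relations for frequency annuli in the Remark following Definition \ref{definition}), so that on each $G_\alpha^i$ the projector $P_{\xi(t),i}^{x_1}$ may be replaced, up to fattening the dyadic block by a bounded factor, by the time-independent projector $P_{\xi(G_\alpha^i),\, i-2\le\cdot\le i+2}^{x_1}$.

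Next, for each fixed $i\geq j$, I would invoke the $U_\Delta^p$ Strichartz embedding \eqref{UpStrichartz1} together with the embedding $U_\Delta^2\hookrightarrow U_\Delta^p$ from Lemma \ref{property} to bound
\begin{align*}
\big\|\|P_{\xi(t),i}u\|_{L_{x_2}^2(\R)}\big\|_{L_t^pL_{x_1}^q(G_\alpha^i\times\R)}\lesssim \big\|P_{\xi(G_\alpha^i),\,i-2\le\cdot\le i+2}^{x_1}u\big\|_{U_\Delta^2(G_\alpha^i,L_{x_1}^2L_{x_2}^2)},
\end{align*}
then sum the squares over all $G_\alpha^i\subset G_k^j$ to recognize exactly $\|P_{\xi(t),i}u\|_{U_\Delta^2(G_k^j,L_{x_1}^2L_{x_2}^2)}^2$ by the definition of the Galilean $U_\Delta^2$ norm. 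Summing over $i\geq j$ then gives
\begin{align*}
\sum_{i\geq j}\big\|\|P_{\xi(t),i}u\|_{L_{x_2}^2(\R)}\big\|_{L_t^pL_{x_1}^q(G_k^j\times\R)}^2\lesssim\sum_{i\geq j}\|P_{\xi(t),i}u\|_{U_\Delta^2(G_k^j,L_{x_1}^2L_{x_2}^2)}^2\leq \|u\|_{X(G_k^j,L_{x_1}^2L_{x_2}^2)}^2,
\end{align*}
where the last inequality is immediate from the definition of the long-time Strichartz norm (the $i>j$ terms appear with coefficient $1$, and for $i=j$ the coefficient $2^{i-j}=1$ as well). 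Combining this with the Littlewood--Paley square-function bound of the first paragraph completes the proof.

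I expect the only genuine obstacle to be the bookkeeping around the time-dependent Galilean frequency center: one must verify that replacing $P_{\xi(t),i}^{x_1}$ by the frozen projectors $P_{\xi(G_\alpha^i),\cdots}^{x_1}$ on each $G_\alpha^i$ costs only a bounded constant and does not destroy the orthogonality needed for the Littlewood--Paley inequality. This is exactly the content of the frequency-annulus inclusions recorded after Definition \ref{definition}, so it is routine but needs to be spelled out; everything else is a direct application of \eqref{UpStrichartz1}, Lemma \ref{property}, Proposition \ref{Littlewood}, and the definitions of the Galilean $U_\Delta^2$ and $X$ norms.
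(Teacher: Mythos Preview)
Your proposal is correct and follows essentially the same route as the paper: Littlewood--Paley decomposition (Proposition \ref{Littlewood}) in the Galilean-shifted frequency variable, Minkowski to pull the $\ell^2_i$ sum outside, the Strichartz embedding \eqref{UpStrichartz1} together with $U_\Delta^2\hookrightarrow U_\Delta^p$, and finally recognition of the $i\geq j$ part of the $X(G_k^j)$ norm. The only minor over-elaboration is your step of summing over subintervals $G_\alpha^i\subset G_k^j$: since here $i\geq j$, the Galilean norm is by definition $\|P_{\xi(G_k^j),i}u\|_{U_\Delta^2(G_k^j,L_{x_1}^2L_{x_2}^2)}$ with a single frozen center on the whole of $G_k^j$, so no subinterval decomposition is needed (the inclusions from the Remark after Definition \ref{definition} with $i\geq j$ already justify replacing $\xi(t)$ by $\xi(G_k^j)$ directly).
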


\begin{proof}
Using Proposition \ref{Littlewood} and Minkowski's inequality, we have
\begin{align*}
\big\|\|P_{\xi(t),\geq j}u\|_{L_{x_2}^2(\R)}\big\|_{L_{t}^pL_{x_1}^q(G_{k}^j\times\R)}&\sim\Big\|\big\|\big(\sum_{l\geq j}\big|P_{\xi(t),l}u\big|^2\big)^\frac12\big\|_{L_{x_2}^2(\R)}\Big\|_{L_{t}^pL_{x_1}^q(G_k^j\times\R)}\\
&\lesssim\Big(\sum_{l\geq j}\Big\|P_{\xi(t),l}u\Big\|^2_{L_t^pL_{x_1}^qL_{x_2}^2(G_k^j\times\R\times\R)}\Big)^\frac12\lesssim \|u\|_{X(G_k^j,L_{x_1}^2L_{x_2}^2)}.
\end{align*} 
Hence, we complete the proof.
\end{proof}
Similarly, for  $i<j$, we have
\begin{align}\label{frequency-property}
\big\|\|P_{\xi(t),i}\|_{L_{x_2}^2(\R)}\big\|_{L_t^pL_{x_1}^q(G_{k}^j\times\R)}\lesssim 2^{\frac{j-i}{p}}\|u\|_{X(G_{k}^j,L_{x_1}^2L_{x_2}^2)}.
\end{align}

Now we state our main theorem of this section, which is called the long-time Strichartz estimate.

\begin{theorem}[Long-time Strichartz estimate]\label{longtimestrichartz}
If $u$ is an almost periodic solution to \eqref{DCR} with the maximal lifespan $I\subset[0,\infty)$, then for any positive $k_0$, $\eta_1$, $\eta_2$, $\eta_3$ satisfying $\eqref{xi(t)}$-$\eqref{scaling}$ and $\eta_3<\eta_2^{100}$, we have
\begin{align}\label{longtime}
\|u\|_{\widetilde{X}_{k_0}([a, b],L_{x_1}^2L_{x_2}^2)}\lesssim 1,
\end{align}
and the implicit constant in \eqref{longtime} does not depend on $k_0$ or $\eta_1,\eta_2,\eta_3$.
\end{theorem}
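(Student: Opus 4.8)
\textbf{Proof proposal for Theorem \ref{longtimestrichartz}.}
The plan is to establish \eqref{longtime} by a bootstrap/induction-on-scale argument on the intervals $G_k^j$, following the blueprint of Dodson \cite{Dodson-d=1} adapted to the infinite vector-valued setting. The key structural inputs are the two multilinear estimates indicated in the diagram (Lemma \ref{mlinear1} and Lemma \ref{mlinear2}), the bilinear estimates in $U_\Delta^p$ (Corollary after Lemma \ref{lemma-bilinear}), the inhomogeneous Strichartz estimate \eqref{Inhomo} in $U_\Delta^2(L_{x_1}^2L_{x_2}^2)$, and the frequency localization properties \eqref{key} coming from the local constancy of $\xi(t)$. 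First I would unfold the Duhamel formula on a fixed $G_k^j$, apply the Galilean projection $P^{x_1}_{\xi(t),i}$, and split the nonlinearity $F(u)$ according to whether the high output frequency $\sim 2^i$ is forced by one large input frequency (the "high-low" interactions, where one factor carries frequency $\sim 2^i$ and the others are at frequency $\lesssim 2^{i-O(1)}$) or by two comparably large inputs (the "high-high" interactions). For the high-low piece I would use the bilinear estimate \eqref{bilinearl3} to extract the gain $(M/N)^{1/4}$ and close in $U_\Delta^2$ via \eqref{Inhomo}; for the high-high piece the gain $(1/N)^{1/4}$ in \eqref{bilinearl12} plus Bernstein is what compensates the loss.

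The heart of the argument is the Intermediate Theorem (Theorem \ref{intermediate}), which I would prove by a double induction: an outer induction on the scale parameter $j$ (from $j=0$, i.e.\ the smallest intervals where the claim is essentially the local theory of Section \ref{sec:Local}, up to $j=k_0$), and an inner bootstrap on each $G_k^j$ using continuity of $t\mapsto \|u\|_{X(G_k^{j}\cap[a,t],L_{x_1}^2L_{x_2}^2)}$. At each stage I would write $\|u\|_{X(G_k^j,L_{x_1}^2L_{x_2}^2)}^2 \lesssim 1 + (\text{error from low frequencies}) + (\text{contributions from the subintervals } G_\alpha^{j-1})$, where the "$1$" comes from the free evolution bounded by the conserved mass $M_S(u)=m_0$ via the Strichartz estimate \eqref{UpStrichartz1}, and the subinterval contributions are summed using \eqref{triangle} and the fact that the number of $G_\alpha^{j-1}$ inside $G_k^j$ is controlled by \eqref{eq-y5.33} together with the normalization \eqref{scaling}. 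The smallness parameters $\eta_1,\eta_2,\eta_3$ with $\eta_3<\eta_2^{100}$ enter precisely here: $\eta_3$ controls how much the space-time norm and $N(t)^3$ can accumulate on each $G_\alpha^{j-1}$, $\eta_2$ controls the tail of the spatial/frequency concentration \eqref{almost-compact-2}, and $\eta_1$ controls the drift of $\xi(t)$ through \eqref{xi(t)}; choosing them appropriately makes the multilinear error terms absorbable. Then Theorem \ref{longtime} follows from the Intermediate Theorem by taking the supremum over $j\le k_0$ and $G_k^j\subset[a,b]$, after checking that the resulting bound is independent of $k_0$ (which it is, since the induction gains a geometric factor at each scale that sums to a $k_0$-independent constant).

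A technical point deserving care is the passage from the scalar Dodson argument to the $\ell_n^2$-valued one: the nonlinearity $F(u)=\sum_{n_1-n_2+n_3-n_4+n_5=n}\Pi_n(v_{n_1}\overline{v_{n_2}}v_{n_3}\overline{v_{n_4}}v_{n_5})$ couples all the Hermite modes, and the spectral projectors $\Pi_n$ do not commute with the nonlinearity nor satisfy an algebra property. The resolution, as in the local theory and in the decoupling of nonlinear profiles, is to exploit the dispersive estimate of $e^{it(-\partial_{x_2}^2+x_2^2)}$ on $[0,\pi]$ (Proposition \ref{Strichartz-harmonic}), which lets the $x_2$-integration be handled by Hölder in the $L^5_tL^{10}_{x_2}$-type norms so that the quintic estimate closes at the $L^2_{x_2}$ level in the interior factors — this is exactly the content of Lemma \ref{mlinear1} and Lemma \ref{mlinear2} that I would take as given. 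The fact that the nonlinearity is quintic (rather than cubic as in \cite{Cheng-Guo-Guo-Liao-Shen}) is an advantage: the extra bilinear factor provides additional decay, so no refined bilinear/interaction-Morawetz input is needed for the minimal blow-up solution at this stage, as noted in the remark following the theorem statement.

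\textbf{Main obstacle.} The hardest part will be organizing the high-high frequency interactions in the Duhamel term so that the summation over the dyadic scales $i$ and over the subintervals $G_\alpha^{j-1}$ closes without logarithmic loss — this is where the variant atomic spaces $U_\Delta^p,V_\Delta^p$ are essential (the double-endpoint Strichartz estimate fails in $d=1$), and where one must be careful that the frequency windows $\{|\xi-\xi(t)|\sim 2^i\}$ used on $G_k^j$ are comparable to $\{|\xi-\xi(G_k^j)|\sim 2^i\}$ uniformly, which is guaranteed by \eqref{key} only because $\eta_1^{-1/2}\eta_3 2^j$ stays bounded; keeping all these smallness budgets consistent across the induction is the main bookkeeping difficulty.
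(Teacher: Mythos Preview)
Your strategy matches the paper's, and the ingredients you list (Lemmas \ref{mlinear1}--\ref{mlinear2}, Theorem \ref{intermediate}, then bootstrap) are the right ones. Two organizational corrections: the Intermediate Theorem is not itself inductive---it follows directly from decomposing the four undifferentiated factors as $P^{x_1}_{\xi(\tau),\leq i-10}u+P^{x_1}_{\xi(\tau),\geq i-10}u$ and invoking the two multilinear lemmas---and the induction/bootstrap lives only in the passage from Theorem \ref{intermediate} to \eqref{longtime}, via the recursive inequality \eqref{bootstrap1}, the base case \eqref{bootstrap2}, and the doubling bound \eqref{bootstrap3} (which plays the role of your continuity hypothesis).

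More substantively, your claim that the free part contributes ``$\lesssim 1$'' by mass conservation alone would not close: for $i\leq j$ there are $2^{j-i}$ intervals $G_\alpha^i\subset G_k^j$, and summing $2^{i-j}\sum_\alpha\|P^{x_1}_{\xi(G_\alpha^i),i}u(a_\alpha^i)\|_{L^2}^2$ with arbitrary $a_\alpha^i$ yields a factor of $j$, which destroys uniformity in $k_0$. The paper's fix is to choose each $a_\alpha^i$ to \emph{minimize} the frequency-localized mass over $G_\alpha^i$ (equation \eqref{infinf}) and then dominate the infimum by the average against the measure $(N(t)^3+\eta_3\|u(t)\|_{L^6_{x_1}L^2_{x_2}}^6)\,dt$, whose total weight on each $G_\alpha^i$ is exactly $2\eta_3\cdot 2^i$ by \eqref{eq-y5.33}; this collapses the double sum to a single time integral controlled by mass (see \eqref{keyestimate}). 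You should also be aware that Theorem \ref{intermediate} carries the hypothesis $N(G_\alpha^i)\leq \eta_3^{1/2}2^{i-5}$; the complementary regime (term \eqref{qqw1} in the paper) is handled separately by the crude local-theory bound \eqref{nonlinearboundness} on each small interval $J_l$ and the counting in Remark \ref{remark-small}.
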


\subsection{Two multilinear estimates}
We start with the following two multilinear estimates,  which are  keys to the proof of Theorem \ref{longtimestrichartz}.   

\begin{lemma}[First multilinear estimate]\label{mlinear1}
For any $i\geq 10$, $l\geq i-5$, $m\geq i-2$ $G_{\alpha}^i\subset G_k^j$ and $a^i_{\alpha} \in G^i_{\alpha}$, if  $N(G_{\alpha}^i)\leq \eta_3^{\frac{1}{2}}2^{i-5}$,  then we have the following multilinear estimate:
\begin{align}\label{key1}
&\bigg\|\sum\limits_{j\in \mathbb{N}}\sum_{\substack{j_1-j_2+j_3-j_4+j_5=j\\j_1,j_2,j_3,j_4,j_5\in\Bbb{N}}}\int_{a_{\alpha}^i}^te^{i(t-\tau)\partial_{x_1}^2}P^{x_1}_{\xi(G^i_{\alpha}),m}\Pi_j\big(P_{\xi(G^i_{\alpha}),l}\Pi_{j_1}uP^{x_1}_{\xi(\tau),\geq i-10}\overline{\Pi_{j_2}u}\Pi_{j_3}u\overline{\Pi_{j_4}u}\Pi_{j_5}u\big)d \tau\bigg\|_{U_\Delta^2(G_{\alpha}^i,L_{x_1}^2L_{x_2}^2)}\notag\\
\lesssim& \eta_2^{\frac{4}{3}}2^{\frac{m-l}{4}}\|P^{x_1}_{\xi(G^i_{\alpha}),l}u\|_{{U_\Delta^2(G_{\alpha}^i,L_{x_1}^2L_{x_2}^2)}}\|u\|_{X(G_{\alpha}^i,L_{x_1}^2L_{x_2}^2)}^{\frac{8}{3}}\notag\\
&+2^{\frac{i-l}{2}}2^{\frac{i-m}{4}}\|P^{x_1}_{\xi(G^i_{\alpha}),l}u\|_{U_\Delta^2(G_{\alpha}^i,L_{x_1}^2L_{x_2}^2)}\big(\eta_2^{\frac{10}{3}}\|u\|_{X(G_{\alpha}^i,L_{x_1}^2L_{x_2}^2)}^{\frac{2}{3}}+\eta_2^{\frac{11}{6}}\|u\|_{X(G_{\alpha}^i,L_{x_1}^2L_{x_2}^2)}^{\frac{13}{6}}\big).
\end{align}
Moreover, if the operator $P_{\xi(G_{\alpha}^i),l}^{x_1}$ and $P_{\xi(\tau),\geq i-10}^{x_1}$ fall on other $\Pi_{j_k}u$ and $\Pi_{j_l}u$ for $k,l\in\{1,2,3,4,5\}$, then  similar estimate also holds.
\end{lemma}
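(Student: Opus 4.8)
\textbf{Proof proposal for the first multilinear estimate (Lemma \ref{mlinear1}).}
The plan is to reduce the left-hand side to a space-time integral via the $U^2_\Delta$-duality in Lemma \ref{Up-dual}, and then estimate the resulting quintilinear form by a careful case split on which of the remaining four factors $\Pi_{j_2}u,\Pi_{j_3}u,\Pi_{j_4}u,\Pi_{j_5}u$ carries the highest Galilean frequency. First I would apply Lemma \ref{Up-dual} to bound the $U^2_\Delta(G^i_\alpha,L^2_{x_1}L^2_{x_2})$-norm of the Duhamel term by $\sup_{\|G\|_{U^p_\Delta}=1}\int_{G^i_\alpha}\langle G, F\rangle\,d\tau$, where $F$ is the quintilinear nonlinearity with the indicated frequency projections. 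Since $P^{x_1}_{\xi(G^i_\alpha),m}$ and $\Pi_j$ are bounded and self-adjoint (and $\Pi_j$ commutes with $P^{x_1}$), I move them onto the test function $G$, and expand the sum over $n$ using Minkowski's inequality exactly as in the nonlinear estimate proof (Remark \ref{nonlinear}): using $\{\langle n\rangle^{-1}\}\in\ell^2_n$ one converts the $\ell^1_n$ summation coming from $\Pi_n$ into an $\ell^2_n$ summation at the cost of shifting regularity onto the $x_2$ variable, which is harmless because we only need $L^2_{x_2}$-type bounds here and the Hermite-Sobolev regularity of $u$ on bounded blocks is controlled by $\|u\|_{X(G^i_\alpha,\cdot)}$ via Lemma \ref{highfreq}.

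Next I would carry out the frequency analysis. The factor $P^{x_1}_{\xi(\tau),\ge i-10}\overline{\Pi_{j_2}u}$ is high frequency ($\gtrsim 2^i$), while $P^{x_1}_{\xi(G^i_\alpha),l}\Pi_{j_1}u$ sits at frequency $\sim 2^l$ with $l\ge i-5$; combined with the output projection at level $m\ge i-2$, frequency considerations force at least one more of the three factors $\Pi_{j_3}u,\Pi_{j_4}u,\Pi_{j_5}u$ (or the test function $G$) to also be at high frequency $\gtrsim 2^{i-O(1)}$, by the near-orthogonality relations for the Galilean-shifted cubes recorded after Definition \ref{definition}. I then pair the two genuinely high-frequency factors against a low-frequency factor using the bilinear estimates \eqref{bilinearl2}, \eqref{bilinearl3} (the gain $(M/N)^{1/4}$ being exactly what produces the $2^{(m-l)/4}$ and $2^{(i-l)/2}2^{(i-m)/4}$ powers), estimate the remaining factors by the linear Strichartz embedding \eqref{UpStrichartz1} and Lemma \ref{highfreq}/\eqref{frequency-property}, and use \eqref{almost-compact-2} together with $N(G^i_\alpha)\le \eta_3^{1/2}2^{i-5}$ to extract the smallness factors $\eta_2^{4/3}$, $\eta_2^{10/3}$, $\eta_2^{11/6}$: each high-frequency factor that is forced to have frequency $\gg N(G^i_\alpha)$ contributes an $\eta_2$-power because the almost-periodicity concentration bound says the mass of $u$ at those frequencies on $G^i_\alpha$ is $O(\eta_2)$. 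The two terms on the right-hand side of \eqref{key1} correspond respectively to the case where the "third" high frequency lands on one of $\Pi_{j_3}u,\Pi_{j_4}u,\Pi_{j_5}u$ (giving $2^{(m-l)/4}$ and exponent $8/3$) and the case where it lands on the test function $G$, forcing $m\sim i$ up to the allowed loss and producing $2^{(i-l)/2}2^{(i-m)/4}$.

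The main obstacle I expect is the bookkeeping of the two intertwined frequency localizations $P^{x_1}_{\xi(G^i_\alpha),l}$ and $P^{x_1}_{\xi(\tau),\ge i-10}$: they are centered at \emph{different} Galilean frequencies ($\xi(G^i_\alpha)$ versus the time-dependent $\xi(\tau)$), so before applying the bilinear estimates one must replace $\xi(\tau)$ by $\xi(G^i_\alpha)$ using \eqref{key}, which is legitimate precisely because $|\xi(\tau)-\xi(G^i_\alpha)|\le 2^{i-19}\eta_3\eta_1^{-1/2}\ll 2^{i}$ on $G^i_\alpha$, so the frequency annuli are only dilated by a bounded factor and the bilinear gains survive. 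A secondary technical point is that the Duhamel integral runs from a point $a^i_\alpha\in G^i_\alpha$ rather than from $0$, which is handled by the interval-additivity \eqref{triangle} of the $U^2_\Delta$ norm. Once the exponents are matched as above, summing the finitely many cases (which factor is high, whether the third high frequency is on a solution factor or on $G$) and using the definition of $\|u\|_{X(G^i_\alpha,L^2_{x_1}L^2_{x_2})}$ to collect the remaining norms yields \eqref{key1}; the claimed symmetry among the five factors $\Pi_{j_k}u$ means the argument is unchanged when $P^{x_1}_{\xi(G^i_\alpha),l}$ and $P^{x_1}_{\xi(\tau),\ge i-10}$ are placed on different factors, so no separate treatment is needed.
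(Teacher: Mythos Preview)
Your reduction to the $U^2_\Delta$ duality and the overall plan of pairing high/low factors via the bilinear estimates \eqref{bilinearl2}, \eqref{bilinearl3} while harvesting $\eta_2$-smallness from \eqref{almost-compact-2} is correct and matches the paper. However, the case analysis you describe does not produce the two terms in \eqref{key1}, and one essential technical step is missing.

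The frequency-forcing argument you propose is vacuous here: the dual function $G$ is already localized at level $m\ge i-2$, the factor $P^{x_1}_{\xi(G^i_\alpha),l}u$ sits at level $l\ge i-5$, and $P^{x_1}_{\xi(\tau),\ge i-10}\overline{\Pi_{j_2}u}$ is high by hypothesis. So the support condition on the output does \emph{not} force a further high-frequency factor among $\Pi_{j_3}u,\Pi_{j_4}u,\Pi_{j_5}u$. The paper's dichotomy is simpler and different: after the pointwise $L^2_{x_2}$ nonlinear estimate (Remark \ref{nonlinear} applied directly, not via the $\{\langle n\rangle^{-1}\}\in\ell^2_n$ trick you mention), one decomposes each of the three unrestricted factors into $P^{x_1}_{\xi(\tau),\ge i-10}$ and $P^{x_1}_{\xi(\tau),\le i-10}$ pieces. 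The first term of \eqref{key1} comes from the case in which \emph{all four} of $j_2,\dots,j_5$ are high: four copies of $\|P^{x_1}_{\xi(t),\ge i-10}u\|_{L^6_{t,x_1}L^2_{x_2}}$ (interpolated with $L^\infty_tL^2$ to extract $\eta_2\|u\|_X^2$, hence the $4/3$ and $8/3$ exponents) times the bilinear $L^3$ pairing of $f$ with $P^{x_1}_{\xi(G^i_\alpha),l}u$ from \eqref{bilinearl3}, giving $2^{-|m-l|/4}$.

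The second term comes from the opposite extreme, where $j_3,j_4,j_5$ are all \emph{low} ($\le i-10$). This is the delicate case, and your proposal is missing the key device: one must further split the low region into a very-low piece $|\xi-\xi(t)|\le 2^{-20}\eta_3^{-1/4}N(t)$ and an intermediate piece $2^{-20}\eta_3^{-1/4}N(t)\le |\xi-\xi(t)|\le 2^{i-10}$. On the very-low piece one uses Bernstein in $L^\infty_{x_1}$ together with a sum over the small intervals $J_k\subset G^i_\alpha$ and Remark \ref{remark-small} to convert $\sum_{J_k} N(J_k)$ into $\eta_3 2^i$, producing the factor $\eta_3^{3/8}2^{(i-l)/2}$; on the intermediate piece one uses a dyadic sum over levels $m_3\le m_2$ and the bilinear estimate \eqref{bilinearl2} to get $\eta_2\,2^{(i-l)/2}\|u\|_X$. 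The analogous bound for the $f$-trilinear gives $2^{(i-m)/4}$, and combining with the single $\|P^{x_1}_{\xi(t),\ge i-10}u\|_{L^6}$ yields precisely $\eta_2^{10/3}\|u\|_X^{2/3}+\eta_2^{11/6}\|u\|_X^{13/6}$. Without this very-low/intermediate subdecomposition and the summation over $J_k$, the trilinear $L^2$ bound on $\|\,\|P_{\xi(G^i_\alpha),l}u\|_{L^2_{x_2}}\|P^{x_1}_{\xi(t),\le i-10}u\|^2_{L^2_{x_2}}\,\|_{L^2_{t,x_1}}$ cannot be closed, so this is the genuine gap in your outline.
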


\begin{proof}
For simplicity, we just prove $\eqref{key1}$. 	By Lemma \ref{Up-dual}, we only need  to prove that for any $\|f\|_{\upppi}=1$ and  $(\mathcal{F}_{x_1}f)(t,\xi,x_2)$ is supported on $|\xi-\xi(G^i_{\alpha})|\sim 2^m$, we have 
\begin{align}
&\int_{G_{\alpha}^i}\langle f, \sum\limits_{j\in \mathbb{N}}\Pi_j\sum_{\substack{j_1-j_2+j_3-j_4+j_5=j\\j_1,j_2,j_3,j_4,j_5\in\Bbb{N}}} P^{x_1}_{\xi(G^i_{\alpha}),l}\Pi_{j_1}uP^{x_1}_{\xi(\tau),\geq i-10}\overline{\Pi_{j_2}u}\Pi_{j_3}u\Pi_{j_4}u\Pi_{j_5}u\rangle_{L_{x_1}^2L_{x_2}^2}d \tau\notag\\
\lesssim& \eta_2^{\frac{4}{3}}2^{\frac{m-l}{4}}\|P^{x_1}_{\xi(G^i_{\alpha}),l}u\|_{{U_\Delta^2(G_{\alpha}^i,L_{x_1}^2L_{x_2}^2)}}\|u\|_{X(G_{\alpha}^i,L_{x_1}^2L_{x_2}^2)}^{\frac{8}{3}}\notag\\
&+2^{\frac{i-l}{2}}2^{\frac{i-m}{4}}\|P^{x_1}_{\xi(G^i_{\alpha}),l}u\|_{U_\Delta^2(G_{\alpha}^i,L_{x_1}^2L_{x_2}^2)}\big(\eta_2^{\frac{10}{3}}\|u\|_{X(G_{\alpha}^i,L_{x_1}^2L_{x_2}^2)}^{\frac{2}{3}}+\eta_2^{\frac{11}{6}}\|u\|_{X(G_{\alpha}^i,L_{x_1}^2L_{x_2}^2)}^{\frac{13}{6}}\big).
\end{align}
we next decompose $\Pi_{j_k}u=P^{x_1}_{\xi(\tau),\leq i-10}\Pi_{j_k}u+P^{x_1}_{\xi(\tau),\geq i-10}\Pi_{j_k}u$, $k\in \{2,3,4,5\}$ and $\Pi_ju=P^{x_1}_{\xi(\tau), \leq i-10}\Pi_{j_k}u+P^{x_1}_{\xi(\tau), \geq i-10}\Pi_{j}u$, then by H\"older's inequality
\begin{align}
&\int_{G_{\alpha}^i}\big\langle f, \sum\limits_{j\in \mathbb{N}}\Pi_j\sum_{\substack{j_1-j_2+j_3-j_4+j_5=j\\j_1,j_2,j_3,j_4,j_5\in\Bbb{N}}} P^{x_1}_{\xi(G^i_{\alpha}),l}\Pi_{j_1}uP^{x_1}_{\xi(\tau),\geq i-10}\overline{\Pi_{j_2}u}\Pi_{j_3}u\Pi_{j_4}u\Pi_{j_5}u\big\rangle_{L_{x_1}^2L_{x_2}^2(\R\times\R)}d \tau\notag\\
\lesssim&\big(\int_{G^i_{\alpha}}\int_{\mathbb{R}}\|f(t,x_1)\|^2_{L_{x_2}^2(\R)}\|P^{x_1}_{\xi(G^i_{\alpha}),l}u(t,x_1)\|^2_{L_{x_2}^2(\R)}\|P_{\xi(\tau),\geq i-10}u\|^2_{L_{x_2}^2(\R)}\|u(t,x_1)\|^6_{L_{x_2}^2(\R)} dx_1 d\tau\big)^{\frac{1}{2}}\notag\\
\lesssim& \big\|P^{x_1}_{\xi(t),\geq i-10}u\big\|^4_{L_{t,x_1}^{6}L_{x_2}^2(G_{\alpha}^i\times \mathbb{R}\times \mathbb{R})}\big\|{\|P_{\xi(G^i_{\alpha}),l}^{x_1}u}\|_{{L_{x_2}^2(\mathbb{R})}}\|f\|_{{L_{x_2}^2(\mathbb{R})}}\big\|_{L_{t,x_1}^{3}(G_{\alpha}^i\times \mathbb{R})}\label{est1}\\
&\hspace{3ex}+\big\|P^{x_1}_{\xi(t),\geq i-10}u\big\|_{L_{t,x_1}^{6}L_{x_2}^2(G_{\alpha}^i\times \mathbb{R}\times \mathbb{R})}\big\|\|P_{\xi(G^i_{\alpha}),l}^{x_1}u\|_{L_{x_2}^2(\mathbb{R})}\|P^{x_1}_{\xi(t),\leq i-10}u\|^2_{{L_{x_2}^2(\mathbb{R})}}\big\|_{L_{t,x_1}^{2}(G_{\alpha}^i\times \mathbb{R})}\label{est2}\\
&\hspace{3ex}\times\big\|\|f\|_{{L_{x_2}^2(\mathbb{R})}} \|P^{x_1}_{\xi(t),\leq i-10}u\|_{{L_{x_2}^2(\mathbb{R})}}\big\|_{L_{t,x_1}^{3}(G_{\alpha}^i\times \mathbb{R})}.\label{est3}
\end{align}
We shall estimate the terms appeared in \eqref{est1}-\eqref{est3} separately. 
	
{\emph{\textbf {Estimate of  $\|P^{x_1}_{\xi(t),\geq i-10}u\|_{L_{t,x_1}^{6}L_{x_2}^2(G_{\alpha}^i\times \mathbb{R}\times \mathbb{R})}$}} }:
Using $L^p$ interpolation,   the compactness condition \eqref{almost-compact-2}, and Lemma \ref{highfreq},  one can directly verify that
\begin{align}\label{esttt1}
&\hspace{3ex}\|P^{x_1}_{\xi(t),\geq i-10}u\|^3_{L_{t,x_1}^{6}L_{x_2}^2(G_\alpha^i\times\R\times\R)}\\
&\lesssim \|P^{x_1}_{\xi(t),\geq i-10}u\|_{L_{t}^{\infty}L^2_{x_1}L_{x_2}^2(G_\alpha^i\times\R\times\R)}\|P^{x_1}_{\xi(t),\geq i-10}u\|^2_{L_{t}^{\infty}L^2_{x_1}L_{x_2}^2(G_\alpha^i\times\R\times\R)}\notag\\
&\lesssim \eta_2\|u\|^2_{X(G_{\alpha}^i,L_{x_1}^2L_{x_2}^2)}.
\end{align}
	
{\emph{\textbf {Estimate of  $\big\|{\|P_{\xi(G^i_{\alpha}),l}^{x_1}u}\|_{{L_{x_2}^2(\mathbb{R})}}\|f\|_{{L_{x_2}^2(\mathbb{R})}}\big\|_{L_{t,x_1}^{3}(G_{\alpha}^i\times \mathbb{R})}$}} :}
For this terms, we directly use the bilinear estimate \eqref{bilinearl3} and the embedding ${U_\Delta^2(G_{\alpha}^i,L_{x_1}^2L_{x_2}^2)}\hookrightarrow {U_\Delta^3(G_{\alpha}^i,L_{x_1}^2L_{x_2}^2)}$ to deduce that
\begin{align}\label{esttt2}
\big\|{\|P_{\xi(G^i_{\alpha}),l}^{x_1}u}\|_{{L_{x_2}^2(\mathbb{R})}}\|f\|_{{L_{x_2}^2(\mathbb{R})}}\big\|_{L_{t,x_1}^{3}(G_{\alpha}^i\times \mathbb{R})}
&\lesssim 2^{-\frac{|m-l|}{4}}\|f\|_{U_\Delta^3(G_{\alpha}^i,L_{x_1}^2L_{x_2}^2)}\|P^{x_1}_{\xi(G^i_{\alpha}),l}u\|_{U_\Delta^3(G_{\alpha}^i,L_{x_1}^2L_{x_2}^2)}\notag\\
&\lesssim 2^{-\frac{|m-l|}{4}}\|f\|_{U_\Delta^3(G_{\alpha}^i,L_{x_1}^2L_{x_2}^2)}\|P^{x_1}_{\xi(G^i_{\alpha}),l}u\|_{U_\Delta^2(G_{\alpha}^i,L_{x_1}^2L_{x_2}^2)}\notag\\
&\lesssim 2^{-\frac{|m-l|}{4}}\|P^{x_1}_{\xi(G^i_{\alpha}),l}u\|_{U_\Delta^2(G_{\alpha}^i,L_{x_1}^2L_{x_2}^2)}.
\end{align}
	
{\emph{\textbf {Estimate of  $\big\|\|P_{\xi(G^i_{\alpha}),l}^{x_1}u\|_{L_{x_2}^2(\mathbb{R})}\|P^{x_1}_{\xi(t),\leq i-10}u\|^2_{L_{x_2}^2(\mathbb{R})}\big\|_{L_{t,x_1}^{2}(G_{\alpha}^i\times \mathbb{R})}$}} :}
We further split $P^{x_1}_{\xi(t),\leq i-10}u$ as
\begin{align*}
P^{x_1}_{\xi(t),\leq i-10}u=P^{x_1}_{|\xi-\xi(t)|\leq \eta_3^{-1/4}2^{-20}N(t)}u+P^{x_1}_{\eta_3^{-1/4}2^{-20}N(t)\leq|\xi-\xi(t)|\leq i-10}u,
\end{align*} 
then we have
\begin{align}
&\big\|\|P_{\xi(G^i_{\alpha}),l}^{x_1}u\|_{L_{x_2}^2(\mathbb{R})}\|P^{x_1}_{\xi(t),\leq i-10}u\|^2_{L_{x_2}^2(\mathbb{R})}\big\|_{L_{t,x_1}^{2}(G_{\alpha}^i\times \mathbb{R})}\notag\\
&\lesssim\big\|\|P_{\xi(G^i_{\alpha}),l}^{x_1}u\|_{L_{x_2}^2(\mathbb{R})}\|P^{x_1}_{|\xi-\xi(t)|\leq \eta_3^{-1/4}2^{-20}N(t)}u\|^2_{L_{x_2}^2(\mathbb{R})}\big\|_{L_{t,x_1}^{2}(G_{\alpha}^i\times \mathbb{R})}\label{estttt1}\\
&+\big\|\|P_{\xi(G^i_{\alpha}),l}^{x_1}u\|_{L_{x_2}^2(\mathbb{R})}\|P^{x_1}_{\eta_3^{-1/4}2^{-20}N(t)\leq|\xi-\xi(t)|\leq i-10}u\|^2_{L_{x_2}^2(\mathbb{R})}\big\|_{L_{t,x_1}^{2}(G_{\alpha}^i\times \mathbb{R})}\label{estttt2}.
\end{align} 
	
For  the term \eqref{estttt1}, by the definition of $G^i_{\alpha}$, $2^l\gg \eta_3^{-1/4}2^{-20}N(t)$ for all $t\in G^i_{\alpha}$, then by the bilinear estimate \eqref{bilinearl2}, Remark \ref{remark-small}, H\"older's inequality and Proposition \ref{Bernstein}  
\begin{align}\label{esttt31}
&\hspace{3ex}\big\|\|P_{\xi(G^i_{\alpha}),l}^{x_1}u\|_{L_{x_2}^2(\mathbb{R})}\|P^{x_1}_{\xi(t),\leq i-10}u_{|\xi-\xi(t)|\leq \eta_3^{-\frac{1}{4}}2^{-20}N(t)}\|^2_{L_{x_2}^2(\mathbb{R})}\big\|_{L_{t,x_1}^{2}(G_{\alpha}^i\times \mathbb{R})}\notag\\
&\lesssim \big(\sum\limits_{ J_k\cap G^i_{\alpha}} 2^{-l}\|P_{\xi(G^i_{\alpha}),l}^{x_1}u\|^2_{U_\Delta^2(G_{\alpha}^i,L_{x_1}^2L_{x_2}^2)}\|P^{x_1}_{|\xi-\xi(t)|\leq \eta_3^{-1/4}2^{-20}N(t)}u\|^2_{U_\Delta^2(J_k,L_{x_1}^2L_{x_2}^2)}\notag\\
&\hspace{7ex}\times \|P^{x_1}_{|\xi-\xi(t)|\leq \eta_3^{-1/4}2^{-20}N(t)}u\|^2_{L_{t,x_1}^{\infty}L_{x_2}^2(G_{\alpha}^i\times \mathbb{R}\times \mathbb{R})}\big)^{1/2}\notag\\
&\lesssim  \big(\sum\limits_{ J_k\cap G^i_{\alpha}}N(J_k)\eta_3^{-1/4}2^{-20}2^{-l}\big)^{\frac{1}{2}}\|P_{\xi(G^i_{\alpha}),l}^{x_1}u\|_{U_\Delta^2(G_{\alpha}^i,L_{x_1}^2L_{x_2}^2)}\notag\\
&\lesssim \eta_3^{3/8}2^{(i-l)/2}\|P_{\xi(G^i_{\alpha}),l}^{x_1}u\|_{U_\Delta^2(G_{\alpha}^i,L_{x_1}^2L_{x_2}^2)}.
\end{align}
	
For term \eqref{estttt2},  H\"older's inequality gives that
\begin{align}\label{qqq}
&\big\|\|P_{\xi(G^i_{\alpha}),l}^{x_1}u\|_{L_{x_2}^2(\mathbb{R})}\|P^{x_1}_{\eta_3^{-1/4}2^{-20}N(t)\leq|\xi-\xi(t)|\leq i-10}u\|^2_{L_{x_2}^2(\mathbb{R})}\big\|^2_{L_{t,x_1}^{2}(G_{\alpha}^i\times \mathbb{R})}\\
\lesssim&\sum\limits_{\eta_3^{-1/4}2^{-20}N(t)\leq m_3\leq m_2\leq i-10}\big\|\|P_{\xi(G^i_{\alpha}),l}^{x_1}u\|_{L_{x_2}^2(\mathbb{R})}\|P^{x_1}_{\xi(t),m_2}u\|_{L_{x_2}^2(\mathbb{R})}\big\|_{L_{t,x_1}^{2}(G_{\alpha}^i\times \mathbb{R})}\notag\\\notag
&\hspace{12ex}\times \big\|\|P_{\xi(G^i_{\alpha}),l}^{x_1}u\|_{L_{x_2}^2(\mathbb{R})}\|P^{x_1}_{\xi(t),m_3}u\|_{L_{x_2}^2(\mathbb{R})}\big\|_{L_{t,x_1}^{2}(G_{\alpha}^i\times \mathbb{R})}\big\|\|P^{x_1}_{\eta_3^{-1/4}2^{-20}N(t)\leq|\xi-\xi(t)|\leq m_3 }u\|_{L_{x_2}^2(\mathbb{R})}\big\|_{L_{t,x_1}^{\infty}}^2.
\end{align}
Then by Sobolev embedding theorem, conservation of mass, Proposition \ref{Bernstein}, the compactness condition $\eqref{almost-compact-2}$, bilinear estimate \eqref{bilinearl2} and the definition of  $X(G_{\alpha}^i,L^2_{x_1}L_{x_2}^2)$, we continue:
\begin{align}\label{esttt32}
&\lesssim\sum\limits_{\eta_3^{-1/4}2^{-20}N(t)\leq m_3\leq m_2\leq i-10}\eta_2^22^{m_3-l}\|P^{x_1}_{\xi(G^i_{\alpha}),l}u\|^2_{U^2_{\Delta}(G_{\alpha}^i,L^2_{x_1}L_{x_2}^2)}\|P^{x_1}_{\xi(t),m_2}u\|^2_{U^2_{\Delta}(G_{\alpha}^i,L^2_{x_1}L_{x_2}^2)}\notag\\
&\hspace{8ex}\times\|P^{x_1}_{\xi(t),m_3}u\|^2_{U^2_{\Delta}(G_{\alpha}^i,L^2_{x_1}L_{x_2}^2)}\notag\\
&\lesssim \eta_2^22^{i-l}\|P^{x_1}_{\xi(G^i_{\alpha}),l}u\|^2_{U^2_{\Delta}(G_{\alpha}^i,L^2_{x_1}L_{x_2}^2)}\|u\|^2_{X(G_{\alpha}^i,L^2_{x_1}L_{x_2}^2)}.
\end{align}
Combining \eqref{esttt31} and \eqref{esttt32}, we have 
\begin{align}\label{esttt3}
&\big\|\|P_{\xi(G^i_{\alpha}),l}^{x_1}u\|_{L_{x_2}^2(\mathbb{R})}\|P^{x_1}_{\xi(t),\leq i-10}u\|^2_{L_{x_2}^2(\mathbb{R})}\big\|_{L_{t,x_1}^{2}(G_{\alpha}^i\times \mathbb{R})}\\\nonumber
\lesssim& 2^{\frac{i-l}{2}}\|P^{x_1}_{G^i_{\alpha},l}u\|_{U^2_{\Delta}(G_{\alpha}^i,L^2_{x_1}L_{x_2}^2)}(\eta^{\frac{3}{8}}_3+\eta_2\|u\|_{X(G_{\alpha}^i,L^2_{x_1}L_{x_2}^2)}).
\end{align}
	
{\emph{\textbf {Estimate of  $\big\|\|f\|_{L_{x_2}^2(\mathbb{R})}\|P^{x_1}_{\xi(t),\leq i-10}u\|_{L_{x_2}^2(\mathbb{R})}\big\|_{L_{t,x_1}^{3}(G_{\alpha}^i\times \mathbb{R})}$}} :}
By H\"older's inequality and \eqref{UpStrichartz1}, 
\begin{align}\label{esttt41}
&\big\|\|f\|_{L_{x_2}^2(\mathbb{R})}\|P^{x_1}_{\xi(t),\leq i-10}u\|_{L_{x_2}^2(\mathbb{R})}\big\|_{L_{t,x_1}^{3}(G_{\alpha}^i\times \mathbb{R})}\notag\\
\lesssim& \big\|\|f\|_{L_{x_2}^2(\mathbb{R})}\|P^{x_1}_{\xi(t),\leq i-10}u\|^2_{L_{x_2}^2(\mathbb{R})}\big\|^{\frac{1}{2}}_{L_{t,x_1}^{2}(G_{\alpha}^i\times \mathbb{R})}\big\|\|f\|_{L_{x_2}^2(\mathbb{R})}\big\|^{\frac{1}{2}}_{L_{t,x_1}^{6}(G_{\alpha}^i\times \mathbb{R})}\notag\\
& \lesssim\|f\|^{\frac{1}{2}}_{U^2_{\Delta}(G_{\alpha}^i,L^2_{x_1}L_{x_2}^2)} \big\|\|f\|_{L_{x_2}^2(\mathbb{R})}\|P^{x_1}_{\xi(t),\leq i-10}u\|^2_{L_{x_2}^2(\mathbb{R})}\big\|^{\frac{1}{2}}_{L_{t,x_1}^{2}(G_{\alpha}^i\times \mathbb{R})}.
\end{align}
Since  $2^m\gg \eta_3^{-1/4}2^{-20}N(t)$ for all $t\in G^i_{\alpha}$,  using the same argument in the proof of \eqref{esttt3}, we  also have
\begin{align}\label{esttt42}
\big\|\|f\|_{L_{x_2}^2(\mathbb{R})}\|P^{x_1}_{\xi(t),\leq i-10}u\|^2_{L_{x_2}^2(\mathbb{R})}\big\|_{L_{t,x_1}^{2}(G_{\alpha}^i\times \mathbb{R})}
\lesssim 2^{\frac{i-m}{2}}\|f\|_{U^2_{\Delta}(G_{\alpha}^i,L^2_{x_1}L_{x_2}^2)}(\eta^{\frac{3}{8}}_3+\eta_2\|u\|_{X(G_{\alpha}^i,L^2_{x_1}L_{x_2}^2)}),
\end{align}
 thus by \eqref{esttt41} , \eqref{esttt42} and the embedding ${U_\Delta^2(G_{\alpha}^i,L_{x_1}^2L_{x_2}^2)}\hookrightarrow {U_\Delta^3(G_{\alpha}^i,L_{x_1}^2L_{x_2}^2)}, $
\begin{align}\label{esttt4}
&\big\|\|f\|_{L_{x_2}^2(\mathbb{R})}\|P^{x_1}_{\xi(t),\leq i-10}u\|_{L_{x_2}^2(\mathbb{R})}\big\|_{L_{t,x_1}^{3}(G_{\alpha}^i\times \mathbb{R})}\notag\\
 \lesssim&2^{\frac{i-m}{4}}\|f\|_{U^2_{\Delta}(G_{\alpha}^i,L^2_{x_1}L_{x_2}^2)} (\eta^{\frac{3}{8}}_3+\eta_2\|u\|_{X(G_{\alpha}^i,L^2_{x_1}L_{x_2}^2)})\notag\\
& \lesssim2^{\frac{i-m}{4}} (\eta^{\frac{3}{8}}_3+\eta_2\|u\|_{X(G_{\alpha}^i,L^2_{x_1}L_{x_2}^2)}).
\end{align}
	
Finally, inserting \eqref{esttt1}, \eqref{esttt2}, \eqref{esttt3}, \eqref{esttt4} into \eqref{est1}-\eqref{est3}, then 
	
\begin{align}\label{keyestimate1}
&\int_{G_{\alpha}^i}\big\langle f, \sum\limits_{j\in \mathbb{N}}\Pi_j\sum_{\substack{j_1-j_2+j_3-j_4+j_5=j\\j_1,j_2,j_3,j_4,j_5\in\Bbb{N}}} P^{x_1}_{\xi(G^i_{\alpha}),l}\Pi_{j_1}uP^{x_1}_{\xi(\tau),i\geq i-10}\overline{\Pi_{j_2}u}\Pi_{j_3}u\Pi_{j_4}\Pi_{j_4}u\big\rangle_{L_{x_1}^2L_{x_2}^2(\R\times\R)}d \tau\notag\\
\lesssim& \eta_2^{\frac{4}{3}}2^{\frac{m-l}{4}}\|P^{x_1}_{\xi(G^i_{\alpha}),l}u\|_{{U_\Delta^2(G_{\alpha}^i,L_{x_1}^2L_{x_2}^2)}}\|u\|_{X(G_{\alpha}^i,L_{x_1}^2L_{x_2}^2)}^{\frac{8}{3}}\notag\\
&+2^{\frac{i-l}{2}}2^{\frac{i-m}{4}}\|P^{x_1}_{\xi(G^i_{\alpha}),l}u\|_{U_\Delta^2(G_{\alpha}^i,L_{x_1}^2L_{x_2}^2)}\big(\eta_2^{\frac{10}{3}}\|u\|_{X(G_{\alpha}^i,L_{x_1}^2L_{x_2}^2)}^{\frac{2}{3}}+\eta_2^{\frac{11}{6}}\|u\|_{X(G_{\alpha}^i,L_{x_1}^2L_{x_2}^2)}^{\frac{13}{6}}\big),
\end{align}
here we actually used $0<\eta_3\ll \eta_2.$ Thus we finish the proof.
\end{proof}

\begin{lemma}[Second multilinear estimate]\label{mlinear2}	
For any $i\geq 10$, $l\geq i-5$ , $G_{\alpha}^i\subset G_k^j$ and $a^i_{\alpha} \in G^i_{\alpha}$, if  $N(G_{\alpha}^i)\leq \eta_3^{\frac{1}{2}}2^{i-5}$,  then we have the following multilinear estimate:
\begin{align}\label{key2}
\bigg\|&\sum\limits_{j\in \mathbb{N}}\sum_{\substack{j_1-j_2+j_3-j_4+j_5=j\\j_1,j_2,j_3,j_4,j_5\in\Bbb{N}}}\int_{a_{\alpha}^i}^te^{i(t-\tau)\partial_{x_1}^2}P^{x_1}_{\xi(G^i_{\alpha}),m}\Pi_j\big(P_{\xi(G^i_{\alpha}),l}\Pi_{j_1}uP^{x_1}_{\xi(\tau),\leq i-10}\overline{\Pi_{j_2}u}\notag\\		
& \hspace{8ex}\times P^{x_1}_{\xi(\tau),\leq i-10}\Pi_{j_3}u P^{x_1}_{\xi(\tau),\leq i-10}\overline{\Pi_{j_4}u} P^{x_1}_{\xi(\tau),\leq i-10}\Pi_{j_5}u\big)d \tau\bigg\|_{U_\Delta^2(G_{\alpha}^i,L_{x_1}^2L_{x_2}^2)}\notag\\
&\lesssim 2^{\frac{i-l}{6}}\eta_2\|P^{x_1}_{\xi(G^i_{\alpha}),l}u\|_{{U_\Delta^2(G_{\alpha}^i,L_{x_1}^2L_{x_2}^2)}}\|u\|^3_{X(G_{\alpha}^i,L_{x_1}^2L_{x_2}^2)}.		
\end{align}
Moreover, if the operator $P_{\xi(G_{\alpha}^i),l}^{x_1}$ falls on other $\Pi_{j_k}u$, $k\in\{2,3,4,5\}$, then  similar estimate also holds.
\end{lemma}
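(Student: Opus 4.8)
The plan is to mimic the structure of the proof of Lemma \ref{mlinear1}, exploiting the extra frequency localization: now \emph{all} the factors other than $P^{x_1}_{\xi(G^i_\alpha),l}\Pi_{j_1}u$ are projected to frequencies $\lesssim 2^{i-10}$ (measured relative to the Galilean frame $\xi(\tau)$). By Lemma \ref{Up-dual}, it suffices to pair the Duhamel term against a test function $f$ with $\|f\|_{U_\Delta^3(G^i_\alpha,L^2_{x_1}L^2_{x_2})}=1$ whose $x_1$-Fourier support lies in $|\xi-\xi(G^i_\alpha)|\sim 2^m$, and bound
\begin{align*}
\int_{G^i_\alpha}\Big\langle f,\ \sum_{j}\Pi_j\!\!\sum_{\substack{j_1-j_2+j_3-j_4+j_5=j}}\!\! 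P^{x_1}_{\xi(G^i_\alpha),l}\Pi_{j_1}u\,P^{x_1}_{\xi(\tau),\le i-10}\overline{\Pi_{j_2}u}\cdots P^{x_1}_{\xi(\tau),\le i-10}\Pi_{j_5}u\Big\rangle_{L^2_{x_1}L^2_{x_2}}d\tau.
\end{align*}
First I would move the summation over $n$ (the outer $\Pi_j$) and over the resonance constraint inside using the boundedness of $\Pi_n$ on $L^2_{x_2}$ together with the Minkowski trick used after \eqref{fml-error-term-2}: write $\langle n\rangle^{1/2}\lesssim\langle n\rangle^{-1}\prod_k\langle j_k\rangle^2$ and absorb the $\langle j_k\rangle^2$ weights into $\mathcal H^{s}_{x_2}$-regularity of each factor (the $X$-norm controls $L^2_{x_1}L^2_{x_2}$, so one pays regularity in $x_2$ only, which is harmless since the $v_n$'s are smooth by Proposition \ref{cor-preserve-regularity}). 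This reduces matters to an $\ell^2_n$ / $L^2_{x_2}$ Hölder estimate in the $x_1,t$ variables.

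Next I would apply Hölder in $(t,x_1)$, distributing the six factors $f,\ P^{x_1}_{\xi(G^i_\alpha),l}u$, and the four low-frequency factors $P^{x_1}_{\xi(\tau),\le i-10}u$. The natural split is to put $\|\,\|f\|_{L^2_{x_2}}\,\|P^{x_1}_{\xi(G^i_\alpha),l}u\|_{L^2_{x_2}}\,\|_{L^3_{t,x_1}}$ on one bilinear factor and $\|\,\|P^{x_1}_{\xi(G^i_\alpha),l}u\|_{L^2_{x_2}}\,\|P^{x_1}_{\xi(\tau),\le i-10}u\|^2_{L^2_{x_2}}\,\|_{L^2_{t,x_1}}$ — exactly the quantities \eqref{est3} and \eqref{est2}–type terms estimated in the proof of Lemma \ref{mlinear1} — keeping the remaining two low-frequency factors in $L^6_{t,x_1}L^2_{x_2}$ via the admissible-pair Strichartz estimate and Lemma \ref{highfreq} / \eqref{frequency-property}, bounded by $\|u\|_{X(G^i_\alpha,L^2_{x_1}L^2_{x_2})}$. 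The key inputs are then: (i) the bilinear estimate \eqref{bilinearl3}, which gains $2^{-|m-l|/4}$ when the frequency $2^m$ of $f$ and $2^l$ of $P^{x_1}_{\xi(G^i_\alpha),l}u$ are well separated; and (ii) the estimate of $\big\|\,\|P^{x_1}_{\xi(G^i_\alpha),l}u\|_{L^2_{x_2}}\|P^{x_1}_{\xi(t),\le i-10}u\|^2_{L^2_{x_2}}\,\big\|_{L^2_{t,x_1}}$ already carried out in \eqref{estttt1}–\eqref{esttt3}, which yields $2^{(i-l)/2}\|P^{x_1}_{\xi(G^i_\alpha),l}u\|_{U^2_\Delta}(\eta_3^{3/8}+\eta_2\|u\|_X)$ by splitting into the $|\xi-\xi(t)|\le\eta_3^{-1/4}2^{-20}N(t)$ piece (where the definition of $G^i_\alpha$ and $\sum_{J_k\subset G^i_\alpha}N(J_k)\lesssim\int N(t)^3dt\sim 2^i\eta_3$ from Remark \ref{remark-small} give the $\eta_3^{3/8}2^{(i-l)/2}$ gain) and the intermediate-frequency piece (where a second dyadic decomposition plus \eqref{bilinearl2} and the compactness bound \eqref{almost-compact-2} give $\eta_2 2^{(i-l)/2}\|u\|_X$).

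Assembling these via Hölder, using $0<\eta_3\ll\eta_2\ll\eta_1\ll1$ to absorb $\eta_3^{3/8}\lesssim\eta_2$, the worst term should be of the form $2^{(i-l)/6}\eta_2\|P^{x_1}_{\xi(G^i_\alpha),l}u\|_{U^2_\Delta(G^i_\alpha,L^2_{x_1}L^2_{x_2})}\|u\|^3_{X(G^i_\alpha,L^2_{x_1}L^2_{x_2})}$, as claimed in \eqref{key2}; the power $2^{(i-l)/6}$ comes from interpolating the $L^3$ bilinear gain $2^{-|m-l|/4}$ with the $L^2$ gain $2^{(i-l)/2}$ across the three Hölder slots (and summing over $m\ge i-2$ loses nothing since the series in $2^{-(m-l)/4}$ converges). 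The case where $P^{x_1}_{\xi(G^i_\alpha),l}$ lands on one of $\Pi_{j_2}u,\dots,\Pi_{j_5}u$ is handled identically after relabeling, using the symmetry of the multilinear form in the five factors (the complex conjugates play no role in any of the estimates above). The main obstacle, as in Lemma \ref{mlinear1}, is bookkeeping the frequency interactions between the $\xi(G^i_\alpha)$-adapted projector and the moving $\xi(\tau)$-adapted projectors while simultaneously carrying the $\ell^2_n$ summation; once the reductions in the first two paragraphs are in place this becomes a mechanical, if lengthy, Hölder-and-bilinear computation entirely parallel to \eqref{est1}–\eqref{keyestimate1}.
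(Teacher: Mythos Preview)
Your proposal has two substantive gaps. First, the invocation of Proposition \ref{cor-preserve-regularity} is a misidentification: that result concerns the DCR solution $v$ with smooth data constructed in the nonlinear-profile approximation of Section \ref{sec:profile}, not the almost-periodic solution $u$ being analyzed in Section \ref{sec:long}, which is only known to lie in $L^2_{x_1}\mathcal{H}^1_{x_2}$. You therefore cannot pay $\langle j_k\rangle^2$ weights in $x_2$-regularity, and the Minkowski trick from \eqref{fml-error-term-2} is unavailable here. The correct way to collapse the $x_2$-summation --- and the one the paper uses throughout the proofs of Lemmas \ref{mlinear1} and \ref{mlinear2} --- is the pointwise-in-$(t,x_1)$ sextilinear estimate of Remark \ref{nonlinear}, which bounds the inner product by $\|f\|_{L^2_{x_2}}\prod_k\|\cdot\|_{L^2_{x_2}}$ directly, without extra regularity.

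Second, your H\"older split places $P^{x_1}_{\xi(G^i_\alpha),l}u$ in two different factors, but it occurs only once in the integrand; more importantly, even a repaired split along the lines of Lemma \ref{mlinear1} misses the main new difficulty. In Lemma \ref{mlinear1} the factor $P^{x_1}_{\xi(\tau),\geq i-10}u$ immediately supplies an $\eta_2$ via \eqref{almost-compact-2}; in Lemma \ref{mlinear2} all four auxiliary factors are $P^{x_1}_{\xi(\tau),\leq i-10}u$ and no single factor is small. The paper therefore decomposes each of them as $u_L+u_H$ with $u_L=P^{x_1}_{|\xi-\xi(t)|\leq 2^{-20}\eta_3^{-1/4}N(t)}u$ and treats two cases. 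If at least two factors are $u_L$ one splits the Duhamel integral over the small intervals $J_{l'}\subset G^i_\alpha$, estimating one piece via the embedding $V^1_\Delta\hookrightarrow U^2_\Delta$ and the other via the triangle inequality \eqref{triangle} plus duality, exploiting $\sum_{J_{l'}}N(J_{l'})\lesssim\eta_3 2^i$ and Bernstein on $u_L$ to gain $\eta_3^{3/4}$. If at least three factors are $u_H$ one gains $\eta_2$ from \eqref{almost-compact-2}, but this requires a further dyadic decomposition in the frequency scales $m_2,m_3$ of two of the $u_H$'s and a splitting over nested subintervals $G^{m_3}_{\alpha'}$. The triple-product bounds \eqref{estttt1}--\eqref{esttt3} do reappear as building blocks, but the additional subinterval and case decompositions are essential and absent from your sketch.
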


\begin{proof}
For simplicity, we ignore this symbol $\sum\limits_{j\in \mathbb{N}}\sum\limits_{\substack{j_1-j_2+j_3-j_4+j_5=j\\j_1,j_2,j_3,j_4,j_5\in\Bbb{N}}}$ in our proof. As before, we just prove \eqref{key2}.  It is sufficient to prove that for $|m-l|\leq 20$, \eqref{key1} holds. First we write $u=u_L+u_H$, where $u_L=P^{x_1}_{|\xi-\xi(t)|\leq 2^{-20}\eta^{-1/4}_3 N(t)}u$ and $u_H=P^{x_1}_{|\xi-\xi(t)|\geq 2^{-20}\eta^{-1/4}_3 N(t)}u$. Then we have
\begin{align}
&\int_{a_{\alpha}^i}^te^{i(t-\tau)\partial_{x_1}^2}P^{x_1}_{\xi(G^i_{\alpha}),m}\Pi_j\big(P_{\xi(G^i_{\alpha}),l}\Pi_{j_1}uP^{x_1}_{\xi(\tau),\leq i-10}\overline{\Pi_{j_2}u} P^{x_1}_{\xi(\tau),\leq i-10}\Pi_{j_3}u P^{x_1}_{\xi(\tau),\leq i-10}\Pi_{j_4}u P^{x_1}_{\xi(\tau),\leq i-10}\Pi_{j_5}u\big)d \tau \notag\\
&=\int_{a_{\alpha}^i}^te^{i(t-\tau)\partial_{x_1}^2}P^{x_1}_{\xi(G^i_{\alpha}),m}\Pi_j\mathcal{O}_H\big(P_{\xi(G^i_{\alpha}),l}\Pi_{j_1}uP^{x_1}_{\xi(\tau),\leq i-10}\overline{\Pi_{j_2}u} P^{x_1}_{\xi(\tau),\leq i-10}\Pi_{j_3}u\notag\\
&\hspace{46ex}\times P^{x_1}_{\xi(\tau),\leq i-10}\overline{\Pi_{j_4}u} P^{x_1}_{\xi(\tau),\leq i-10}\Pi_{j_5}u\big)d \tau \label{decomp1}\\
&+\int_{a_{\alpha}^i}^te^{i(t-\tau)\partial_{x_1}^2}P^{x_1}_{\xi(G^i_{\alpha}),m}\Pi_j   \mathcal{O}_L\big(P_{\xi(G^i_{\alpha}),l}\Pi_{j_1}uP^{x_1}_{\xi(\tau),\leq i-10}\overline{\Pi_{j_2}u} P^{x_1}_{\xi(\tau),\leq i-10}\Pi_{j_3}u\notag\\ &\hspace{46ex}\times P^{x_1}_{\xi(\tau),\leq i-10}\overline{\Pi_{j_4}u} P^{x_1}_{\xi(\tau),\leq i-10}\Pi_{j_5}u\big)d \tau,\label{decomp2}
\end{align}
where the notation
\begin{align*}
\mathcal{O}_H\big(P_{\xi(G^i_{\alpha}),l}\Pi_{j_1}uP^{x_1}_{\xi(\tau),\leq i-10}\overline{\Pi_{j_2}u} P^{x_1}_{\xi(\tau),\leq i-10}\Pi_{j_3}u P^{x_1}_{\xi(\tau),\leq i-10}\overline{\Pi_{j_4}u} P^{x_1}_{\xi(\tau),\leq i-10}\Pi_{j_5}u\big)
\end{align*}
presents that at least three $u$ are replaced by $u_H$ in
\begin{align*}
P^{x_1}_{\xi(\tau),\leq i-10}\overline{\Pi_{j_2}u} P^{x_1}_{\xi(\tau),\leq i-10}\Pi_{j_3}u P^{x_1}_{\xi(\tau),\leq i-10}\overline{\Pi_{j_4}u} P^{x_1}_{\xi(\tau),\leq i-10}\Pi_{j_5}u
\end{align*}
and the notation
\begin{align*}
\mathcal{O}_L\big(P_{\xi(G^i_{\alpha}),l}\Pi_{j_1}uP^{x_1}_{\xi(\tau),\leq i-10}\overline{\Pi_{j_2}u}P^{x_1}_{\xi(\tau),\leq i-10}\Pi_{j_3}u P^{x_1}_{\xi(\tau),\leq i-10}\overline{\Pi_{j_4}u} P^{x_1}_{\xi(\tau),\leq i-10}\Pi_{j_5}u\big)
\end{align*}
presents that at least two $u$ are replaced by $u_L$ in
\begin{align*}
P^{x_1}_{\xi(\tau),\leq i-10}\overline{\Pi_{j_2}u} P^{x_1}_{\xi(\tau),\leq i-10}\Pi_{j_3}u P^{x_1}_{\xi(\tau),\leq i-10}\overline{\Pi_{j_4}u} P^{x_1}_{\xi(\tau),\leq i-10}\Pi_{j_5}u.
\end{align*}

We first estimate \eqref{decomp2}. Without loss of generality, we assume that the $\Pi_{j_2}u, \Pi_{j_3}u$ are repaced by $\Pi_{j_2}u_L, \Pi_{j_3}u_L$ respectively. Let $\theta_{l^{\prime}}$ be the left endpoint of small interval $J_{l^{\prime}}$ satisfies $J_{l^{\prime}}\subset{G^i_{\alpha}}$. On every $J_{l^{\prime}}$, we decompose \eqref{decomp2} as
\begin{align}
&\int_{a_{\alpha}^i}^{\theta_{l^{\prime}}}e^{i(t-\tau)\partial_{x_1}^2}P^{x_1}_{\xi(G^i_{\alpha}),m}\Pi_j   \big(P_{\xi(G^i_{\alpha}),l}\Pi_{j_1}uP^{x_1}_{\xi(\tau),\leq i-10}\overline{\Pi_{j_2}u_L} P^{x_1}_{\xi(\tau),\leq i-10}\Pi_{j_3}u_L P^{x_1}_{\xi(\tau),\leq i-10}\overline{\Pi_{j_4}u} P^{x_1}_{\xi(\tau),\leq i-10}\Pi_{j_5}u\big)d \tau\notag\\
&+\int_{\theta_{l^{\prime}}}^te^{i(t-\tau)\partial_{x_1}^2}P^{x_1}_{\xi(G^i_{\alpha}),m}\Pi_j   \big(P_{\xi(G^i_{\alpha}),l}\Pi_{j_1}uP^{x_1}_{\xi(\tau),\leq i-10}\overline{\Pi_{j_2}u_L} P^{x_1}_{\xi(\tau),\leq i-10}\Pi_{j_3}u_L P^{x_1}_{\xi(\tau),\leq i-10}\overline{\Pi_{j_4}u} P^{x_1}_{\xi(\tau),\leq i-10}\Pi_{j_5}u\big)d \tau.
\end{align}
We also label the left endpoint of $G^i_{\alpha}$ as $l_{G^i_{\alpha}}$ and the right endpoint of $G^i_{\alpha}$ as $r_{G^i_{\alpha}}$. Notice that there exist at least two small intervals $J_1$ and $J_2$ such that $J_1\cap G^i_{\alpha} \neq \emptyset, J_1 \not\subseteq G^{i}_{\alpha}, l_{G^i_{\alpha}} \in J_1$ and $J_2\cap G^i_{\alpha} \neq \emptyset, J_2 \not\subseteq G^{i}_{\alpha}, l_{G^i_{\alpha}} \in J_2$. On $J_1 \cap G^i_{\alpha}$, we decompose \eqref{decomp2} as
	
\begin{align}
&\int_{a_{\alpha}^i}^{l_{G^i_{\alpha}}}e^{i(t-\tau)\partial_{x_1}^2}P^{x_1}_{\xi(G^i_{\alpha}),m}\Pi_j   \big(P_{\xi(G^i_{\alpha}),l}\Pi_{j_1}uP^{x_1}_{\xi(\tau),\leq i-10}\overline{\Pi_{j_2}u_L} P^{x_1}_{\xi(\tau),\leq i-10}\Pi_{j_3}u_L P^{x_1}_{\xi(\tau),\leq i-10}\overline{\Pi_{j_4}u} P^{x_1}_{\xi(\tau),\leq i-10}\Pi_{j_5}u\big)d \tau\label{decomp21}\\
&+\int_{l_{G^i_{\alpha}}}^te^{i(t-\tau)\partial_{x_1}^2}P^{x_1}_{\xi(G^i_{\alpha}),m}\Pi_j   O\big(P_{\xi(G^i_{\alpha}),l}\Pi_{j_1}uP^{x_1}_{\xi(\tau),\leq i-10}\overline{\Pi_{j_2}u_L} P^{x_1}_{\xi(\tau),\leq i-10}\Pi_{j_3}u_L P^{x_1}_{\xi(\tau),\leq i-10}\overline{\Pi_{j_4}u} P^{x_1}_{\xi(\tau),\leq i-10}\Pi_{j_5}u\big)d \tau \label{decomp22}
\end{align}
and on $J_2 \cap G^i_{\alpha}$ as
	
\begin{align}
&\int_{a_{\alpha}^i}^{r_{G^i_{\alpha}}}e^{i(t-\tau)\partial_{x_1}^2}P^{x_1}_{\xi(G^i_{\alpha}),m}\Pi_j   \big(P_{\xi(G^i_{\alpha}),l}\Pi_{j_1}uP^{x_1}_{\xi(\tau),\leq i-10}\overline{\Pi_{j_2}u_L} P^{x_1}_{\xi(\tau),\leq i-10}\Pi_{j_3}u_L P^{x_1}_{\xi(\tau),\leq i-10}\overline{\Pi_{j_4}u} P^{x_1}_{\xi(\tau),\leq i-10}\Pi_{j_5}u\big)d \tau\label{decomp23}\\
&+\int_{r_{G^i_{\alpha}}}^te^{i(t-\tau)\partial_{x_1}^2}P^{x_1}_{\xi(G^i_{\alpha}),m}\Pi_j   \big(P_{\xi(G^i_{\alpha}),l}\Pi_{j_1}uP^{x_1}_{\xi(\tau),\leq i-10}\overline{\Pi_{j_2}u_L} P^{x_1}_{\xi(\tau),\leq i-10}\Pi_{j_3}u_L P^{x_1}_{\xi(\tau),\leq i-10}\overline{\Pi_{j_4}u} P^{x_1}_{\xi(\tau),\leq i-10}\Pi_{j_5}u\big)d \tau.\label{decomp24}
\end{align}
	
In summary, we have decomposed the nonlinear term into $F^L_1 +F^L_2$, where $F^L_1= \eqref{decomp21}$ on $J_{l^{\prime}}$ , $F^L_1= \eqref{decomp23}$ on $J_1 \cap G^i_{\alpha}$ and  $F^L_2= \eqref{decomp22}$ on $J_{l^{\prime}}$, $F^L_2= \eqref{decomp24}$ on $J_2 \cap G^i_{\alpha}$.
	
We first treat the term $\|F^L_1\|_{U_\Delta^2(G_{\alpha}^i,L_{x_1}^2L_{x_2}^2)}$.
Thanks to the embedding $V_\Delta^1(G_{\alpha}^i,L_{x_1}^2L_{x_2}^2)\hookrightarrow    U_{\Delta}^2(G_{\alpha}^i,L_{x_1}^2L_{x_2}^2)$, it is sufficent to estimate 
$\|F^L_1\|_{V_\Delta^1(G_{\alpha}^i,L_{x_1}^2L_{x_2}^2)}$.
Notice that for any partition $\mathcal{L}$ of $G^i_{\alpha}$, 
\begin{align}
\sup\limits_{\mathcal{L}}\|F^L_1\|_{V_\Delta^1(\mathcal{L}, G_{\alpha}^i,L_{x_1}^2L_{x_2}^2)}&\lesssim \notag\\
\sum\limits_{J_{l^{\prime}}\cap G^i_{\alpha}\neq \emptyset}&\big\|\int_{J_{l^{\prime}}\cap G^i_{\alpha}}e^{-i\tau\partial_{x_1}^2}P^{x_1}_{\xi(G^i_{\alpha}),m}\Pi_j   \big(P_{\xi(G^i_{\alpha}),l}\Pi_{j_1}u_{L}P^{x_1}_{\xi(\tau),\leq i-10}\overline{\Pi_{j_2}u_L} P^{x_1}_{\xi(\tau),\leq i-10}\Pi_{j_3}u_L\notag\\
&\hspace{6ex}\times P^{x_1}_{\xi(\tau),\leq i-10}\overline{\Pi_{j_4}u} P^{x_1}_{\xi(\tau),\leq i-10}\Pi_{j_5}u\big)d \tau \big\|_{L_{x_1}^{2}L_{x_2}^2}\label{essst1}
\end{align}
and
\begin{align}
&\|e^{-it\partial_{x_1}^2}F^L_1\|_{L_t ^{\infty}L_{x_1}^2L_{x_2}^2 (G^i_{\alpha} \times \mathbb{R} \times \mathbb{R})} \notag\\
\lesssim&	\sup\limits_{\theta_{l^{\prime}},\theta_{l^{\prime} } \leq a^i_{\alpha}}\sum\limits_{J_{l^{\prime}}\cap [\theta_{l^{\prime}}, a^i_{\alpha}] \neq \emptyset}\big\|\int_{J_{l^{\prime}}\cap [\theta_{l^{\prime}}, a^i_{\alpha}]  }e^{-i\tau\partial_{x_1}^2}P^{x_1}_{\xi(G^i_{\alpha}),m}\Pi_j   \big(P_{\xi(G^i_{\alpha}),l}\Pi_{j_1}u_{L}P^{x_1}_{\xi(\tau),\leq i-10}\overline{\Pi_{j_2}u_L} P^{x_1}_{\xi(\tau),\leq i-10}\Pi_{j_3}u_L\notag\\
&\hspace{6ex}\times P^{x_1}_{\xi(\tau),\leq i-10}\overline{\Pi_{j_4}u} P^{x_1}_{\xi(\tau),\leq i-10}\Pi_{j_5}u\big)d \tau \big\|_{L_{x_1}^{2}L_{x_2}^2(\mathbb{R}\times \mathbb{R})A}\label{essst21}\\
&+\sup\limits_{\theta_{l^{\prime}},\theta_{l^{\prime} } \geq a^i_{\alpha}}\sum\limits_{J_{l^{\prime}}\cap [ a^i_{\alpha},\theta_{l^{\prime}}] \neq \emptyset}\big\|\int_{J_{l^{\prime}}\cap [ a^i_{\alpha}, \theta_{l^{\prime}}]  }e^{-i\tau\partial_{x_1}^2}P^{x_1}_{\xi(G^i_{\alpha}),m}\Pi_j   \big(P_{\xi(G^i_{\alpha}),l}\Pi_{j_1}u_{L}P^{x_1}_{\xi(\tau),\leq i-10}\overline{\Pi_{j_2}u_L} P^{x_1}_{\xi(\tau),\leq i-10}\Pi_{j_3}u_L\notag\\
&\hspace{6ex}\times P^{x_1}_{\xi(\tau),\leq i-10}\overline{\Pi_{j_4}u} P^{x_1}_{\xi(\tau),\leq i-10}\Pi_{j_5}u\big)d \tau \big\|_{L_{x_1}^{2}L_{x_2}^2(\mathbb{R}\times \mathbb{R})}\label{essst22}\\
&+\sum\limits_{J_{l^{\prime}}\cap [l_{G^i_{\alpha}}, a^i_{\alpha}] \neq \emptyset}\big\|\int_{J_{l^{\prime}}\cap [l_{G^i_{\alpha}}, a^i_{\alpha}]  }e^{-i\tau\partial_{x_1}^2}P^{x_1}_{\xi(G^i_{\alpha}),m}\Pi_j   O\big(P_{\xi(G^i_{\alpha}),l}\Pi_{j_1}u_{L}P^{x_1}_{\xi(\tau),\leq i-10}\overline{\Pi_{j_2}u_L} P^{x_1}_{\xi(\tau),\leq i-10}\Pi_{j_3}u_L\notag\\
&\hspace{6ex}\times P^{x_1}_{\xi(\tau),\leq i-10}\overline{\Pi_{j_4}u} P^{x_1}_{\xi(\tau),\leq i-10}\Pi_{j_5}u\big)d \tau \big\|_{L_{x_1}^{2}L_{x_2}^2(\mathbb{R}\times \mathbb{R})}\label{essst23}\\
&+\sum\limits_{J_{l^{\prime}}\cap [ a^i_{\alpha}, r_{G^i_{\alpha}}] \neq \emptyset}\big\|\int_{J_{l^{\prime}}\cap [ a^i_{\alpha}, r_{G^i_{\alpha}}]  }e^{-i\tau\partial_{x_1}^2}P^{x_1}_{\xi(G^i_{\alpha}),m}\Pi_j   O\big(P_{\xi(G^i_{\alpha}),l}\Pi_{j_1}u_{L}P^{x_1}_{\xi(\tau),\leq i-10}\overline{\Pi_{j_2}u_L} P^{x_1}_{\xi(\tau),\leq i-10}\Pi_{j_3}u_L\notag\\
&\hspace{6ex}\times P^{x_1}_{\xi(\tau),\leq i-10}\overline{\Pi_{j_4}u} P^{x_1}_{\xi(\tau),\leq i-10}\Pi_{j_5}u\big)d \tau \big\|_{L_{x_1}^{2}L_{x_2}^2(\mathbb{R}\times \mathbb{R})}.\label{essst24}
\end{align}
	
For \eqref{essst1}, we notice that for any $J_{l^{\prime}}$	, there exists $f_{l^{\prime}}\in L^{2}_{x_1}L_{x_2}^2(\mathbb{R}\times \mathbb{R})$, $\|f_{l^{\prime}}\|_{L^{2}_{x_1}L_{x_2}^2(\mathbb{R}\times \mathbb{R})}=1$ and has Fourier transform supported on $|\xi-\xi(G^i_{\alpha})|\sim 2^m$ such that 	
\begin{align*}
&\bigg\|\int_{J_{l^{\prime}\cap G^i_{\alpha}}}e^{-i\tau\partial_{x_1}^2}P^{x_1}_{\xi(G^i_{\alpha}),m}\Pi_j\big(P_{\xi(G^i_{\alpha}),l}\Pi_{j_1}uP^{x_1}_{\xi(\tau),\leq i-10}\overline{\Pi_{j_2}u_L} P^{x_1}_{\xi(\tau), \leq i-10}\Pi_{j_3}u_L\notag\\ 
&\hspace{6ex}\times P^{x_1}_{\xi(\tau),\leq i-10}\overline{\Pi_{j_4}u} P^{x_1}_{\xi(\tau),\leq i-10}\Pi_{j_5}u\big) d \tau\bigg\|_{L^{2}_{x_1}L_{x_2}^2(\mathbb{R}\times \mathbb{R})}\\
& =\int_{J_{l^{\prime}\cap G^i_{\alpha}}}\big\langle f_{l^{\prime}}, e^{-i\tau\partial_{x_1}^2}P^{x_1}_{\xi(G^i_{\alpha}),m}\Pi_j\big(P_{\xi(G^i_{\alpha}),l}\Pi_{j_1}uP^{x_1}_{\xi(\tau),\leq i-10}\overline{\Pi_{j_2}u} P^{x_1}_{\xi(\tau), \leq i-10}\Pi_{j_3}u_L\notag\\ 
&\hspace{6ex}\times P^{x_1}_{\xi(\tau),\leq i-10}\overline{\Pi_{j_4}u} P^{x_1}_{\xi(\tau),\leq i-10}\Pi_{j_5}u\big)\big\rangle_{L^{2}_{x_1}L_{x_2}^2(\mathbb{R}\times \mathbb{R})} d \tau,
\end{align*}
then by H\"older's inequality,
\begin{align}
& \int_{J_{l^{\prime}\cap G^i_{\alpha}}}\big\langle f_{l^{\prime}}, e^{-i\tau\partial_{x_1}^2}P^{x_1}_{\xi(G^i_{\alpha}),m}\Pi_j\big(P_{\xi(G^i_{\alpha}),l}\Pi_{j_1}uP^{x_1}_{\xi(\tau),\leq i-10}\overline{\Pi_{j_2}u} P^{x_1}_{\xi(\tau), \leq i-10}\Pi_{j_3}u_L\notag\\ 
&\hspace{6ex}\times P^{x_1}_{\xi(\tau),\leq i-10}\overline{\Pi_{j_4}u} P^{x_1}_{\xi(\tau),\leq i-10}\Pi_{j_5}u\big)\big\rangle_{L^{2}_{x_1}L_{x_2}^2(\mathbb{R}\times \mathbb{R})} d \tau \notag\\
&\lesssim \int_{J_{l^{\prime}\cap G^i_{\alpha}}}\big\langle f_{l^{\prime}}, e^{-i\tau\partial_{x_1}^2}P^{x_1}_{\xi(G^i_{\alpha}),m}\Pi_j\big(P_{\xi(G^i_{\alpha}),l}\Pi_{j_1}uP^{x_1}_{\xi(\tau),\leq  i-10}\overline{\Pi_{j_2}u_L}P^{x_1}_{\xi(\tau), \leq i-10}\Pi_{j_3}u_L\notag\\ 
&\hspace{6ex}\times P^{x_1}_{\xi(\tau),\leq i-10}\overline{\Pi_{j_4}u} P^{x_1}_{\xi(\tau),\leq i-10}\Pi_{j_5}u\big)\big\rangle_{L^{2}_{x_1}L_{x_2}^2(\mathbb{R}\times \mathbb{R})} d \tau\notag\\
& \lesssim\big\|\|e^{i\tau \partial_{x_1}^2}P^{x_1}_{\xi(G^i_{\alpha}),m}f_{l^{\prime}}\|_{L_{x_2}^2(\mathbb{R})} \|P_{\xi(G^i_{\alpha}),l}u\|_{L_{x_2}^2(\mathbb{R})}\|P^{x_1}_{\xi(\tau),\leq i-10}u_L\|_{L_{x_2}^2(\mathbb{R})}\|P^{x_1}_{\xi(\tau), \leq i-10}u_L\|_{L_{x_2}^2(\mathbb{R})}\notag\\
&\hspace{4ex}\times\|P^{x_1}_{\xi(\tau),\leq i-10}u\|_{L_{x_2}^2(\mathbb{R})} \|P^{x_1}_{\xi(\tau),\leq i-10}u\|_{L_{x_2}^2(\mathbb{R})} \big\|_{L^{1}_{t,x_1}((J_{l^{\prime}\cap G^i_{\alpha}} )\times \mathbb{R})}\notag\\
&\lesssim \big\| \|P^{x_1}_{\xi(\tau), \leq  i-10}u_L\|_{L_{x_2}^2(\mathbb{R})}\big\|_{L_{t,x_1}^{\infty}(J_{l^{\prime}\cap G^i_{\alpha}} \times \mathbb{R})} \big\|\|e^{i\tau \partial_{x_1}^2}P^{x_1}_{\xi(G^i_{\alpha}),m}f_{l^{\prime}}\|_{L_{x_2}^2(\mathbb{R})} \|P^{x_1}_{\xi(\tau), \leq  i-10}u\|_{L_{x_2}^2(\mathbb{R})}\big\|_{L^2_{t,x_1}(J_{l^{\prime}\cap G^i_{\alpha}}\times \mathbb{R})}\notag\\
&\times \big\| \|P^{x_1}_{\xi(\tau), \leq  i-10}u_L\|_{L^2_{x_2}(\mathbb{R})}\big\|_{L_{t,x_1}^{\infty}(J_{l^{\prime}\cap G^i_{\alpha}} \times \mathbb{R})}\big\|\|P^{x_1}_{\xi(G^i_{\alpha}), l}u\|_{L_{x_2}^2(\mathbb{R})} \|\|P^{x_1}_{\xi(\tau), \leq i-10}u\|_{L_{x_2}^2(\mathbb{R})}\big\|_{L^2_{t,x_1}(J_{l^{\prime}\cap G^i_{\alpha}}\times \mathbb{R})}.\notag
\end{align}
By Proposition \ref{Bernstein} , bilinear estimate \eqref{bilinearl2}, and the fact that $\|u\|_{U_\Delta^2(J_{l^{\prime}},L_{x_1}^2L_{x_2}^2)}\lesssim 1$, we continue:
\begin{align}
&\lesssim  \begin{cases}
N(J_{l^{\prime}})2^{-20}\eta^{-1/4}_3 2^{- l/2} 2^{- m/2} \|e^{i\tau \partial_{x_1}^2}P^{x_1}_{\xi(G^i_{\alpha}),m}f_{l^{\prime}}(\tau)\|_{U_\Delta^2(J_{l^{\prime}}\cap G^i_{\alpha},L_{x_1}^2L_{x_2}^2)} \|P^{x_1}_{\xi(G^i_{\alpha}), l}u_H\|_{U_\Delta^2(J_{l^{\prime}}\cap G^i_{\alpha},L_{x_1}^2L_{x_2}^2)}, &J_{l^{\prime}} \subset G^i_{\alpha}\\
N(G^i_{\alpha})2^{-20}\eta^{-1/4}_3 2^{- l/2} 2^{- m/2} \|e^{i\tau \partial_{x_1}^2}P^{x_1}_{\xi(G^i_{\alpha}),m}f_{l^{\prime}}(\tau)\|_{U_\Delta^2(J_{l^{\prime}}\cap G^i_{\alpha},L_{x_1}^2L_{x_2}^2)} \|P^{x_1}_{\xi(G^i_{\alpha}), l}u_H\|_{U_\Delta^2(J_{l^{\prime}}\cap G^i_{\alpha},L_{x_1}^2L_{x_2}^2)}, &J_{l^{\prime}} \not\subseteq G^i_{\alpha}	 
\end{cases}\notag\\
&\lesssim  \begin{cases}
N(J_{l^{\prime}})\eta^{-1/4}_3 2^{- l} \|P^{x_1}_{\xi(G^i_{\alpha}), l}u_H\|_{U_\Delta^2(G^i_{\alpha},L_{x_1}^2L_{x_2}^2)}, &J_{l^{\prime}} \subset G^i_{\alpha}\\
N(G^i_{\alpha})\eta^{-1/4}_3 2^{- l} \|P^{x_1}_{\xi(G^i_{\alpha}), l}u_H\|_{U_\Delta^2(G^i_{\alpha},L_{x_1}^2L_{x_2}^2)}, &J_{l^{\prime}} \not\subseteq G^i_{\alpha}	 
\end{cases}.\label{essss1} 
\end{align}
	
By Remark \ref{remark-small} and \eqref{essss1} , we have
\begin{align}
\eqref{essst1} \lesssim \sum\limits_{J_{l^{\prime}}\cap G^i_{\alpha}\neq \emptyset} \eqref{essss1}
&\lesssim 2 N(G^i_{\alpha})\eta^{-1/4}_3 2^{- l} \|P^{x_1}_{\xi(G^i_{\alpha}), l}u_H\|_{U_\Delta^2(G^i_{\alpha},L_{x_1}^2L_{x_2}^2)}\notag\\
&+ \sum\limits_{J_{l^{\prime}}\subset G^i_{\alpha}} N(J_{l^{\prime}})\eta^{-1/4}_3 2^{- l}\|P^{x_1}_{\xi(G^i_{\alpha}), l}u_H\|_{U_\Delta^2(G^i_{\alpha},L_{x_1}^2L_{x_2}^2)}\notag\\
&\lesssim \eta^{3/4}_3 2^{i - l} \|P^{x_1}_{\xi(G^i_{\alpha}), l}u_H\|_{U_\Delta^2(G^i_{\alpha},L_{x_1}^2L_{x_2}^2)}.
\end{align}
	
Using almost the same argument above, we also have the similar results:
\begin{align*}
\eqref{essst21}\lesssim \eta^{3/4}_3 2^{i - l} \|P^{x_1}_{\xi(G^i_{\alpha}), l}u_H\|_{U_\Delta^2(G^i_{\alpha},L_{x_1}^2L_{x_2}^2)},\\
\eqref{essst22}\lesssim \eta^{3/4}_3 2^{i - l} \|P^{x_1}_{\xi(G^i_{\alpha}), l}u_H\|_{U_\Delta^2(G^i_{\alpha},L_{x_1}^2L_{x_2}^2)},\\
\eqref{essst23}\lesssim \eta^{3/4}_3 2^{i - l} \|P^{x_1}_{\xi(G^i_{\alpha}), l}u_H\|_{U_\Delta^2(G^i_{\alpha},L_{x_1}^2L_{x_2}^2)},\\
\eqref{essst24}\lesssim \eta^{3/4}_3 2^{i - l} \|P^{x_1}_{\xi(G^i_{\alpha}), l}u_H\|_{U_\Delta^2(G^i_{\alpha},L_{x_1}^2L_{x_2}^2)}.
\end{align*}
So, by the definition of $V_\Delta^1(G^i_{\alpha},L_{x_1}^2L_{x_2}^2)$, $\|F^L_1\|_{V_\Delta^1(G^i_{\alpha},L_{x_1}^2L_{x_2}^2)}\lesssim \eta^{3/4}_3 2^{i - l} \|P^{x_1}_{\xi(G^i_{\alpha}), l}u_H\|_{U_\Delta^2(G^i_{\alpha},L_{x_1}^2L_{x_2}^2)}$, further,
\begin{align}
\|F^L_1\|_{U_\Delta^2(G^i_{\alpha},L_{x_1}^2L_{x_2}^2)}\lesssim\eta^{3/4}_3 2^{i - l} \|P^{x_1}_{\xi(G^i_{\alpha}), l}u\|_{U_\Delta^2(G^i_{\alpha},L_{x_1}^2L_{x_2}^2)}\label{essst}.
\end{align}
	
Next we treat $\|F^L_2\|_{U_\Delta^2(G_{\alpha}^i,L_{x_1}^2L_{x_2}^2)}$. By \eqref{triangle}, $\|F^L_2\|^2_{U_\Delta^2(G_{\alpha}^i,L_{x_1}^2L_{x_2}^2)}\lesssim \sum\limits_{J_{l^{\prime}}\cap G^i_{\alpha} \neq \emptyset}\|F^L_2\|^2_{U_\Delta^2((J_{l^{\prime}}\cap G_{\alpha}^i),L_{x_1}^2L_{x_2}^2)}.$
Using Lemma \ref{Up-dual}, on every interval $J_{l^{\prime}}\cap G^i_{\alpha}$,  $g_{l^{\prime}}\in U_\Delta^6((J_{l^{\prime}}\cap G^i_{\alpha}),L_{x_1}^2L_{x_2}^2)$ 
\begin{align}
&\|F^L_2\|_{U^2_{\Delta}((J_{l^{\prime}}\cap G^i_{\alpha}),L_{x_1}^2L_{x_2}^2)}\notag\\
&\lesssim \sup\limits_{ \|g_{l^{\prime}}\|_{U^6_{\Delta}((J_{l^{\prime}}\cap G^i_{\alpha}),L_{x_1}^2L_{x_2}^2)}=1}\int_{J_{l^{\prime}}\cap G^i_{\alpha}}\big\langle g_{l^{\prime}}, P^{x_1}_{\xi(G^i_{\alpha}),m}\Pi_j\big(P_{\xi(G^i_{\alpha}),l}\Pi_{j_1}uP^{x_1}_{\xi(\tau),\leq i-10}\overline{\Pi_{j_2}u} P^{x_1}_{\xi(\tau), \leq i-10}\Pi_{j_3}u_L\notag\\ 
&\hspace{6ex}\times P^{x_1}_{\xi(\tau),\leq i-10}\overline{\Pi_{j_4}u} P^{x_1}_{\xi(\tau),\leq i-10}\Pi_{j_5}u\big)\big\rangle_{L^{2}_{x_1}L_{x_2}^2(\mathbb{R}\times \mathbb{R})}.\label{qweewq}
\end{align}
By H\"older's inequality and the fact that $\|u\|_{U_\Delta^2(J_{l^{\prime}},L_{x_1}^2L_{x_2}^2)}\lesssim 1$,
\begin{align}
&\eqref{qweewq} \lesssim \big\|\|P_{m}^{x_1} g_{l^{\prime}}\|_{L^2_{x_2}(\mathbb{R})}\big\|_{L_{t,x_1}^{6}((J_{l^{\prime}}\cap G^i_{\alpha})\times\mathbb{R})}\big\|\|u_{\leq i-10}\|_{L^2_{x_2}(\mathbb{R})}\big\|^2_{L_{t,x_1}^{6}((J_{l^{\prime}}\cap G^i_{\alpha})\times\mathbb{R})}\notag\\
&\hspace{6ex}\times \big\|\|P^{x_1}_{\xi(G^i_{\alpha}), l} u\|_{L^2_{x_2}(\mathbb{R})}\|u_L\|^2_{L^2_{x_2}(\mathbb{R})}\big\|_{L_{t,x_1}^{2}((J_{l^{\prime}}\cap G^i_{\alpha})\times\mathbb{R})}\notag\\
&\hspace{6ex}\lesssim \big\|\|P^{x_1}_{\xi(G^i_{\alpha}), l} u\|_{L^2_{x_2}(\mathbb{R})}\|u_L\|^2_{L^2_{x_2}(\mathbb{R})}\big\|_{L_{t,x_1}^{2}((J_{l^{\prime}}\cap G^i_{\alpha})\times\mathbb{R})}\notag.
\label{esssss1} 
\end{align}
So, 
\begin{align}
\|F^L_2\|^2_{U_\Delta^2(G_{\alpha}^i,L_{x_1}^2L_{x_2}^2)}&\lesssim \sum\limits_{J_{l^{\prime}}\cap G^i_{\alpha} \neq \emptyset}\|F^L_2\|^2_{U_\Delta^2((J_{l^{\prime}}\cap G_{\alpha}^i),L_{x_1}^2L_{x_2}^2)}.
\end{align}
However, by \eqref{esttt32}, $$\big\|\|P^{x_1}_{\xi(G^i_{\alpha}), l} u\|_{L^2_{x_2}(\mathbb{R})}\|u_L\|^2_{L^2_{x_2}(\mathbb{R})}\big\|_{L_{t,x_1}^{2}(G^i_{\alpha}\times\mathbb{R})}^2\lesssim 2^{i-l}\|P^{x_1}_{\xi(G^i_{\alpha}),l}u\|^2_{U^2_{\Delta}(G_{\alpha}^i,L^2_{x_1}L_{x_2}^2)}\|u\|^2_{X(G_{\alpha}^i,L^2_{x_1}L_{x_2}^2)}.$$
Thus 
\begin{align}\label{estttttt111}
\|F^L_2\|_{U_\Delta^2(G_{\alpha}^i,L_{x_1}^2L_{x_2}^2)}\lesssim 2^{(i-l)/2}\|P^{x_1}_{\xi(G^i_{\alpha}),l}u\|_{U^2_{\Delta}(G_{\alpha}^i,L^2_{x_1}L_{x_2}^2)}\|u\|_{X(G_{\alpha}^i,L^2_{x_1}L_{x_2}^2)}.
\end{align}
	
Next we estimate \eqref{decomp1}. Without loss of generality, we assume that the $\Pi_{j_2}u, \Pi_{j_3}u, \Pi_{j_4}u$ are replaced by $\Pi_{j_2}u_H, \Pi_{j_3}u_H, \Pi_{j_4}u_H$ respectively.

Let $\gamma^{m_3}_{\alpha^{\prime}}$ be the left endpiont of subinterval $G^{m_3}_{\alpha^{\prime}}\subset G^i_{\alpha}$, on every $G^{m_3}_{\alpha^{\prime}}$, we further decompose 
\begin{align*}
\int_{a_{\alpha}^i}^te^{i(t-\tau)\partial_{x_1}^2}P^{x_1}_{\xi(G^i_{\alpha}),m}\Pi_j\big(P_{\xi(G^i_{\alpha}),l}\Pi_{j_1}uP^{x_1}_{\xi(\tau),m_2}\overline{\Pi_{j_2}u_{H}} P^{x_1}_{\xi(\tau),m_3}\Pi_{j_3}u_{H} P^{x_1}_{\xi(\tau),\leq m_3}\overline{\Pi_{j_4}u_{H}} P^{x_1}_{\xi(\tau),\leq m_3}\Pi_{j_5}u\big)d \tau
\end{align*}
as
\begin{align}
&\int_{a^i_{\alpha}}^{\gamma^{m_3}_{\alpha^{\prime}}}e^{i(t-\tau)\partial_{x_1}^2}P^{x_1}_{\xi(G^i_{\alpha}),m}\Pi_j\big(P_{\xi(G^i_{\alpha}),l}\Pi_{j_1}uP^{x_1}_{\xi(\tau), m_2}\overline{\Pi_{j_2}u_{H}} P^{x_1}_{\xi(\tau), m_3}\Pi_{j_3}u_{H} P^{x_1}_{\xi(\tau),\leq m_3}\overline{\Pi_{j_4}u_{H}} P^{x_1}_{\xi(\tau),\leq m_3}\Pi_{j_5}u\big)d \tau\label{esttttt1}\\
&+\int_{\gamma^{m_3}_{\alpha^{\prime}}}^{t}e^{i(t-\tau)\partial_{x_1}^2}P^{x_1}_{\xi(G^i_{\alpha}),m}\Pi_j\big(P_{\xi(G^i_{\alpha}),l}\Pi_{j_1}uP^{x_1}_{\xi(\tau), m_2}\overline{\Pi_{j_2}u_{H}} P^{x_1}_{\xi(\tau), m_3}\Pi_{j_3}u_{H} P^{x_1}_{\xi(\tau),\leq m_3}\overline{\Pi_{j_4}u_{H}} P^{x_1}_{\xi(\tau),\leq m-3}\Pi_{j_5}u\big)d \tau\label{esttttt2}	
\end{align}
and  denote 	
\begin{align*}
&\int_{a^i_{\alpha}}^t e^{i(t-\tau)\partial_{x_1}^2}P^{x_1}_{\xi(G^i_{\alpha}),m}\Pi_j\big(P_{\xi(G^i_{\alpha}),l}\Pi_{j_1}uP^{x_1}_{\xi(\tau),\leq m_2}\overline{\Pi_{j_2}u_H}P^{x_1}_{\xi(\tau), m_3}\Pi_{j_3}u_HP^{x_1}_{\xi(\tau),\leq m_3}\overline{\Pi_{j_4}u_H}P^{x_1}_{\xi(\tau),\leq m_3}\Pi_{j_5}u\big)d \tau\\
&= F^{m_3}_1 + F^{m_3}_2,
\end{align*}	 
where $F^{m_3}_1=\eqref{esttttt1}$ and $F^{m_3}_2=\eqref{esttttt2}$	on every $G^{m_3}_{\alpha^{\prime}}$.
	
First we treat $\sum\limits_{\substack{0\leq m_3\leq m_2\\ \leq j-10}}\|F^{m_3}_1\|_{U_\Delta^2(G_{\alpha}^i,L_{x_1}^2L_{x_2}^2)}$.	
By the embedding $V_\Delta^1(G_{\alpha}^i,L_{x_1}^2L_{x_2}^2)\hookrightarrow    U_{\Delta}^2(G_{\alpha}^i,L_{x_1}^2L_{x_2}^2)$ , 
\begin{align}
\|F^{m_3}_1\|_{U_\Delta^2(G_{\alpha}^i,L_{x_1}^2L_{x_2}^2)}&\lesssim
\|F^{m_3}_1\|_{V_\Delta^1(G_{\alpha}^i,L_{x_1}^2L_{x_2}^2)}\notag\\
&\lesssim \sum\limits_{G^{m_3}_{\alpha^{\prime}}\subset G^j_k} \bigg\|\int_{G^{m_3}_{\alpha^{\prime}}}e^{-i\tau\partial_{x_1}^2}P^{x_1}_{\xi(G^i_{\alpha}),m}\Pi_j\big(P_{\xi(G^i_{\alpha}),l}\Pi_{j_1}uP^{x_1}_{\xi(\tau),\leq i-10}\overline{\Pi_{j_2}u_H}P^{x_1}_{\xi(\tau), m_2}\Pi_{j_3}u_H\notag\\ 
&\times P^{x_1}_{\xi(\tau), m_3}\overline{\Pi_{j_4}u_{H}}P^{x_1}_{\xi(\tau),\leq m_3}u\Pi_{j_5}P^{x_1}_{\xi(\tau),\leq m_3}u\big)d \tau\bigg\|_{L^{2}_{x_1}L_{x_2}^2(\mathbb{R}\times \mathbb{R})}.
\end{align}
	
Notice that for any $G^{m_3}_{\alpha^{\prime}}$	, there exists $f^{m_3}_{\alpha^{\prime}}\in L^{2}_{x_1}L_{x_2}^2(\mathbb{R}\times \mathbb{R})$ , $\|f^{m_3}_{\alpha^{\prime}}\|_{L^{2}_{x_1}L_{x_2}^2(\mathbb{R}\times \mathbb{R})}=1$ and has Fourier transform supported on $|\xi-\xi(G^i_{\alpha})|\sim 2^m$ such that 
	
\begin{align*}
&\bigg\|\int_{G^{m_3}_{\alpha^{\prime}}}e^{-i\tau\partial_{x_1}^2}P^{x_1}_{\xi(G^i_{\alpha}),m}\Pi_j\big(P_{\xi(G^i_{\alpha}),l}\Pi_{j_1}uP^{x_1}_{\xi(\tau), m_2}\overline{\Pi_{j_2}u_H} P^{x_1}_{\xi(\tau), m_3}\Pi_{j_3}u_H\notag\\ 
&\hspace{6ex}\times P^{x_1}_{\xi(\tau),\leq m_3}\overline{\Pi_{j_4}u_{H}} P^{x_1}_{\xi(\tau),\leq m_3}\Pi_{j_5}u\big) d \tau\bigg\|_{L^{2}_{x_1}L_{x_2}^2(\mathbb{R}\times \mathbb{R})}\\
& =\int_{G^{m_3}_{\alpha^{\prime}}}\big\langle f_{G^{m_3}_{\alpha^{\prime}}}, e^{-i\tau\partial_{x_1}^2}P^{x_1}_{\xi(G^i_{\alpha}),m}\Pi_j\big(P_{\xi(G^i_{\alpha}),l}\Pi_{j_1}uP^{x_1}_{\xi(\tau), m_2}\overline{\Pi_{j_2}u_H} P^{x_1}_{\xi(\tau), m_3}\Pi_{j_3}u_H\notag\\ 
&\hspace{6ex}\times P^{x_1}_{\xi(\tau),\leq m_3}\overline{\Pi_{j_4}u_{H}} P^{x_1}_{\xi(\tau),\leq m_3}\Pi_{j_5}u\big)\big\rangle_{L^{2}_{x_1}L_{x_2}^2(\mathbb{R}\times \mathbb{R})} d \tau.
\end{align*}
Denote $\widetilde{F}_{m_3}(t, x_1, x_2)=\sum\limits_{G^{m_3}_{\alpha^{\prime}}\subset G^i_{\alpha}}\chi_{G^{m_3}_{\alpha^{\prime}}}(t)f^{m_3}_{\alpha^{\prime}}(x_1, x_2)$, then by H\"older's inequality
	
\begin{align}
& \bigg\|\int_{G^{i}_{\alpha}}e^{-i\tau\partial_{x_1}^2}P^{x_1}_{\xi(G^i_{\alpha}),m}\Pi_j\big(P_{\xi(G^i_{\alpha}),l}\Pi_{j_1}uP^{x_1}_{\xi(\tau), m_2}\overline{\Pi_{j_2}u_H}P^{x_1}_{\xi(\tau), m_3}\Pi_{j_3}u_H\notag\\ 
&\hspace{6ex}\times P^{x_1}_{\xi(\tau),\leq m_3}\overline{\Pi_{j_4}u_H} P^{x_1}_{\xi(\tau),\leq m_3}\Pi_{j_5}u\big)d \tau\bigg\|_{L^{2}_{x_1}L_{x_2}^2(\mathbb{R}\times \mathbb{R})}\notag\\
&\lesssim \int_{G^{i}_{\alpha}}\big\langle \widetilde{F}_{m_3}(\tau), e^{-i\tau\partial_{x_1}^2}P^{x_1}_{\xi(G^i_{\alpha}),m}\Pi_j\big(P_{\xi(G^i_{\alpha}),l}\Pi_{j_1}uP^{x_1}_{\xi(\tau), m_2}\overline{\Pi_{j_2}u_H}P^{x_1}_{\xi(\tau), m_3}\Pi_{j_3}u_H\notag\\ 
&\hspace{6ex}\times P^{x_1}_{\xi(\tau),\leq m_3}\overline{\Pi_{j_4}u_H} P^{x_1}_{\xi(\tau),\leq m_3}\Pi_{j_5}u\big)\big\rangle_{L^{2}_{x_1}L_{x_2}^2(\mathbb{R}\times \mathbb{R})} d \tau\notag\\
& \lesssim\big\|\|e^{i\tau \partial_{x_1}^2}P^{x_1}_{\xi(G^i_{\alpha}),m}\widetilde{F}_{m_3}(\tau)\|_{L_{x_2}^2(\mathbb{R})} \|P_{\xi(G^i_{\alpha}),l}u\|_{L_{x_2}^2(\mathbb{R})}\|P^{x_1}_{\xi(\tau), m_2}u_H\|_{L_{x_2}^2(\mathbb{R})}\|P^{x_1}_{\xi(\tau), m_3}u_H\|_{L_{x_2}^2(\mathbb{R})}\notag\\
&\hspace{4ex}\times\|P^{x_1}_{\xi(\tau),\leq m_3}u_H\|_{L_{x_2}^2(\mathbb{R})} \|P^{x_1}_{\xi(\tau),\leq m_3}u\|_{L_{x_2}^2(\mathbb{R})} \big\|_{L^{1}_{t,x_1}(G^i_{\alpha} \times \mathbb{R})}\notag\\
& \lesssim\big\|\|e^{i\tau \partial_{x_1}^2}P^{x_1}_{\xi(G^i_{\alpha}),m}\widetilde{F}_{m_3}(\tau)\|_{L^2_{x_2}(\mathbb{R})} \|P_{\xi(G^i_{\alpha}),l}u\|_{L_{x_2}^2(\mathbb{R})}\|P^{x_1}_{\xi(\tau), m_2}u_H\|_{L_{x_2}^2(\mathbb{R})}\|P^{x_1}_{\xi(\tau), m_3}u_H\|_{L_{x_2}^2(\mathbb{R})}\notag\\
&\hspace{4ex}\times\|P^{x_1}_{\xi(\tau),\leq m_3}u_H\|_{L^2_{x_2}(\mathbb{R})} \|P^{x_1}_{\xi(\tau),\leq m_3}u\|_{L_{x_2}^2(\mathbb{R})} \big\|_{L^{1}_{t,x_1}(G^i_{\alpha} \times \mathbb{R})}.\label{estttttt1}
\end{align}

For \eqref{estttttt1}, we use the bilinear estimate \eqref{bilinearl2}, the compactness condition \eqref{almost-compact-2}, Proposition \ref{Bernstein} and H\"older's inequality to deduce that 
\begin{align}
\eqref{estttttt1}&\lesssim \sum\limits_{G^{m_3}_{\alpha^{\prime}}\subset G^i_{\alpha}}\big\|\|e^{i\tau \partial_{x_1}^2}P^{x_1}_{\xi(G^i_{\alpha}),m}\widetilde{F}_{m_3}(\tau)\|_{L_{x_2}^2(\mathbb{R})} \|P_{\xi(G^i_{\alpha}),l}^{x_1}u\|_{L_{x_2}^2(\mathbb{R})}\|P^{x_1}_{\xi(\tau), m_2}u_H\|_{L_{x_2}^2(\mathbb{R})}\|P^{x_1}_{\xi(\tau), m_3}u_H\|_{L_{x_2}^2(\mathbb{R})}\notag\\
&\hspace{4ex}\times\|P^{x_1}_{\xi(\tau),\leq m_3}u_H\|_{L_{x_2}^2(\mathbb{R})} \|P^{x_1}_{\xi(\tau),\leq m_3}u\|_{L_{x_2}^2(\mathbb{R})} \big\|_{L^{1}_{t,x_1}(G^{m_3}_{\alpha^{\prime}} \times \mathbb{R})}\notag\\
&\lesssim \sum\limits_{G^{m_3}_{\alpha^{\prime}}\subset G^i_{\alpha}}\big\|\|e^{i\tau \partial_{x_1}^2}P^{x_1}_{\xi(G^i_{\alpha}),m}\widetilde{F}_{m_3}(\tau)\|_{L_{x_2}^2(\mathbb{R})} \|P^{x_1}_{\xi(\tau), m_3}u_H\|_{L_{x_2}^2(\mathbb{R})} \|P^{x_1}_{\xi(\tau), \leq m_3}u_H\|_{L_{x_2}^2(\mathbb{R})}\big\|_{L^2_{t,x_1}(G^{m_3}_{\alpha}\times \mathbb{R})}\notag\\
&\hspace{4ex}\times  \big\|P^{x_1}_{\xi(G^i_{\alpha}), l}u\|_{L_{x_2}^2(\mathbb{R})} \|\|P^{x_1}_{\xi(\tau), m_2}u_H\|_{L_{x_2}^2(\mathbb{R})} \|P^{x_1}_{\xi(\tau), \leq m_3}u\|_{L_{x_2}^2(\mathbb{R})}\big\|_{L^2_{t,x_1}(G^{m_3}_{\alpha^{\prime}}\times \mathbb{R})}\notag\\
&\lesssim \sum\limits_{G^{m_3}_{\alpha^{\prime}}\subset G^i_{\alpha}}\big\|\|e^{i\tau \partial_{x_1}^2}P^{x_1}_{\xi(G^i_{\alpha}),m}\widetilde{F}_{m_3}(\tau)\|_{L_{x_2}^2(\mathbb{R})} \|P^{x_1}_{\xi(\tau), m_3}u_H\|_{L_{x_2}^2(\mathbb{R})} \big\|_{L^2_{t,x_1}(G^{m_3}_{\alpha}\times \mathbb{R})}\notag\\
&\hspace{4ex}\times \big\|\|P^{x_1}_{\xi(\tau), \leq m_3}u_H\|_{L_{x_2}^2(\mathbb{R})}\big\|_{L^{\infty}_{t,x_1}(G^{m_3}_{\alpha}\times \mathbb{R})}\notag\\ &\hspace{4ex}\times\big\|\|P^{x_1}_{\xi(G^i_{\alpha}), l}u\|_{L_{x_2}^2(\mathbb{R})} \|P^{x_1}_{\xi(\tau), m_2}u_H\|_{L_{x_2}^2(\mathbb{R})} \|P^{x_1}_{\xi(\tau), \leq m_3}u\|_{L_{x_2}^2(\mathbb{R})}\big\|_{L^2_{t,x_1}(G^{m_3}_{\alpha^{\prime}}\times \mathbb{R})}\notag\\
&\lesssim \eta_2 2^{(m_3-l)/2}\sum\limits_{G^{m_3}_{\alpha^{\prime}}\subset G^i_{\alpha}}\|P^{x_1}_{\xi(\tau), m_3}u\|_{U_\Delta^2(G_{\alpha^{\prime}}^{m_3},L_{x_1}^2L_{x_2}^2)}\|e^{it\partial_{x_1}^2}f^{m_3}_{\alpha^{\prime}}\|_{U_\Delta^2(G_{\alpha^{\prime}}^{m_3},L_{x_1}^2L_{x_2}^2)}\notag\\
&\hspace{4ex}\times\big\|\|P^{x_1}_{\xi(G^i_{\alpha}), l}u\|_{L_{x_2}^2(\mathbb{R})} \|P^{x_1}_{\xi(\tau), m_2}u_H\|_{L_{x_2}^2(\mathbb{R})} \|P^{x_1}_{\xi(\tau), \leq m_3}u\|_{L_{x_2}^2(\mathbb{R})}\big\|_{L^2_{t,x_1}(G^{m_3}_{\alpha^{\prime}}\times \mathbb{R})}\notag\\
&\lesssim \eta_2 2^{(m_3-l)/2}\|P^{x_1}_{\xi(\tau), m_3}u\|_{U_\Delta^2(G_{\alpha}^i,L_{x_1}^2L_{x_2}^2)}\notag\\
&\hspace{4ex}\times\big\|\|P^{x_1}_{\xi(G^i_{\alpha}), l}u\|_{L_{x_2}^2(\mathbb{R})} \|P^{x_1}_{\xi(\tau), m_2}u_H\|_{L_{x_2}^2(\mathbb{R})} \|P^{x_1}_{\xi(\tau), \leq m_3}u\|_{L_{x_2}^2(\mathbb{R})}\big\|_{L^2_{t,x_1}(G^{i}_{\alpha}\times \mathbb{R})}\label{esst1}.
\end{align}
So, 
\begin{align}
&\sum\limits_{\substack{0\leq m_3\leq m_2\\ \leq j-10}}\eqref{estttttt1}\lesssim \eta_2 \sum\limits_{\substack{0\leq m_3\leq m_2\\ \leq j-10}}2^{(m_3-l)/2}\|P^{x_1}_{\xi(\tau), m_3}u\|_{U_\Delta^2(G_{\alpha}^i,L_{x_1}^2L_{x_2}^2)}\notag\\
&\hspace{4ex}\times\big\|\|P^{x_1}_{\xi(G^i_{\alpha}), l}u\|_{L_{x_2}^2(\mathbb{R})} \|P^{x_1}_{\xi(\tau), m_2}u_H\|_{L_{x_2}^2(\mathbb{R})} \|P^{x_1}_{\xi(\tau), \leq m_3}u\|_{L_{x_2}^2(\mathbb{R})}\big\|_{L^2_{t,x_1}(G^{i}_{\alpha}\times \mathbb{R})}\notag\\
&\lesssim \eta_2 2^{(i-l)/2}\big(\sum\limits_{m_3\leq j-10} 2^{m_3-i}\|P^{x_1}_{\xi(\tau), m_3}u\|^2_{U_\Delta^2(G_{\alpha}^i,L_{x_1}^2L_{x_2}^2)}\big)^{1/2} \notag\\
&\times\big(\sum\limits_{m_3\leq j-10}\big(\sum\limits_{m_3\leq m_2\leq j-10}\big\|\|P^{x_1}_{\xi(G^i_{\alpha}), l}u\|_{L_{x_2}^2(\mathbb{R})} \|P^{x_1}_{\xi(\tau), m_2}u_H\|_{L_{x_2}^2(\mathbb{R})} \|P^{x_1}_{\xi(\tau), \leq m_3}u\|_{L_{x_2}^2(\mathbb{R})}\big\|_{L^2_{t,x_1}(G^{i}_{\alpha}\times \mathbb{R})}\big)^2\big)^{1/2}\notag\\
&\lesssim \eta_2 \|u\|_{X(G_{\alpha}^i,L_{x_1}^2L_{x_2}^2)}\notag\\
&\times\big(\sum\limits_{m_3\leq j-10}\big(\sum\limits_{m_3\leq m_2\leq j-10}\big\|\|P^{x_1}_{\xi(G^i_{\alpha}), l}u\|_{L_{x_2}^2(\mathbb{R})} \|P^{x_1}_{\xi(\tau), m_2}u_H\|_{L_{x_2}^2(\mathbb{R})} \|P^{x_1}_{\xi(\tau), \leq m_3}u\|_{L_{x_2}^2(\mathbb{R})}\big\|_{L^2_{t,x_1}(G^{i}_{\alpha}\times \mathbb{R})}\big)^2\big)^{1/2}.\label{essttt1}
\end{align}
	
For fixed $m_2$ and $m_3$, using H\"older's inequality, bilinear estimate \eqref{bilinearl2}, the compactness condition \eqref{almost-compact-2} and Proposition \ref{Bernstein}, we have
\begin{align}
&\big\|\|P^{x_1}_{\xi(G^i_{\alpha}), l}u\|_{L_{x_2}^2(\mathbb{R})} \|P^{x_1}_{\xi(\tau), m_2}u_H\|_{L_{x_2}^2(\mathbb{R})} \|P^{x_1}_{\xi(\tau), \leq m_3}u\|_{L_{x_2}^2(\mathbb{R})}\big\|^2_{L^2_{t,x_1}(G^{i}_{\alpha}\times \mathbb{R})}\notag\\
&\lesssim \big\|\|P^{x_1}_{\xi(G^i_{\alpha}), l}u\|_{L_{x_2}^2(\mathbb{R})} \|P^{x_1}_{\xi(\tau), m_2}u_H\|_{L_{x_2}^2(\mathbb{R})}\big\|^2_{L^2_{t,x_1}(G^{i}_{\alpha}\times \mathbb{R})}\big\| \|P^{x_1}_{\xi(\tau), \leq m_3}u\|_{L_{x_2}^2(\mathbb{R})}\big\|_{L^{\infty}_{t,x_1}(G^i_{\alpha} \times \mathbb{R})}\notag\\
&\lesssim  2^{m_3-l}\|P^{x_1}_{\xi(G^i_{\alpha}), m_3}u\|^2_{U_\Delta^2(G_{\alpha}^i,L_{x_1}^2L_{x_2}^2)}\|P^{x_1}_{\xi(\tau), m_2}u\|^2_{U_\Delta^2(G_{\alpha}^i,L_{x_1}^2L_{x_2}^2)}.\label{essttt2}
\end{align}
Inserting \eqref{essttt2} into \eqref{essttt1}, then 
\begin{align}
\eqref{essttt1}&\lesssim \eta_2 \|u\|_{X(G_{\alpha}^i,L_{x_1}^2L_{x_2}^2)}\notag\\
&\times\big(\sum\limits_{m_3\leq j-10}\big(\sum\limits_{m_3\leq m_2\leq j-10} 2^{(m_3-l)/2}\|P^{x_1}_{\xi(G^i_{\alpha}), m_3}u\|_{U_\Delta^2(G_{\alpha}^i,L_{x_1}^2L_{x_2}^2)}\|P^{x_1}_{\xi(\tau), m_2}u\|_{U_\Delta^2(G_{\alpha}^i,L_{x_1}^2L_{x_2}^2)}\big)^2\big)^{1/2}\notag\\
&\lesssim \eta_2 \|P^{x_1}_{\xi(G^i_{\alpha}),l}u\|_{U^2_{\Delta}(G_{\alpha}^i,L_{x_1}^2L_{x_2}^2)} \|u\|^2_{X(G_{\alpha}^i,L_{x_1}^2L_{x_2}^2)},\notag
\end{align}
that is,
\begin{align}
\sum\limits_{\substack{0\leq m_3\leq m_2\\ \leq j-10}}\|F^{m_3}_1\|_{U_\Delta^2(G_{\alpha}^i,L_{x_1}^2L_{x_2}^2)}\lesssim \eta_2 \|P^{x_1}_{\xi(G^i_{\alpha}),l}u\|^2_{U^2_{\Delta}(G_{\alpha}^i,L_{x_1}^2L_{x_2}^2)} \|u\|_{X(G_{\alpha}^i,L_{x_1}^2L_{x_2}^2)}.\label{essttt3}
\end{align}
	
Next we treat the final term $\sum\limits_{\substack{0\leq m_3\leq m_2\\ \leq j-10}}\|F^{m_3}_2\|_{U_\Delta^2(G_{\alpha}^i,L_{x_1}^2L_{x_2}^2)}$ .	
By  \eqref{triangle}, $$\sum\limits_{\substack{0\leq m_3\leq m_2\\ \leq j-10}}\|F^{m_3}_2\|_{U_\Delta^2(G_{\alpha}^i,L_{x_1}^2L_{x_2}^2)}\lesssim \sum\limits_{\substack{0\leq m_3\leq m_2\\ \leq j-10}}\big(\sum\limits_{G^{m_3}_{\alpha^{\prime}}\subset G^i_{\alpha}}\|F^{m_3}_2\|^2_{U_\Delta^2(G_{{\alpha}^{\prime}}^{m_3},L_{x_1}^2L_{x_2}^2)}\big)^{1/2}.$$
	
Noticing  by Lemma \ref{Up-dual} and H\"older's inequality, For any $G^i_{\alpha^{\prime}}$, we have
\begin{align}
&\|F^{m_3}_2\|_{U_\Delta^2(G_{{\alpha}^{\prime}}^{m_3},L_{x_1}^2L_{x_2}^2)}=\sup\limits_{ \|g^{m_3}_{\alpha^{\prime}}\|_{U^{12/5}_{\Delta}(G_{{\alpha}^{\prime}}^{m_3},L_{x_1}^2L_{x_2}^2)}=1}\int_{G^{m_3}_{\alpha^{\prime}}}\big\langle g^{m_3}_{\alpha^{\prime}},P_{\xi(G^i_{\alpha}), l}^{x_1}\Pi_{j_1}u P_{\xi(\tau),m_2}^{x_1}\overline{\Pi_{j_2}u_H} \notag\\
&\hspace{20ex}\times P_{\xi(\tau), m_3}^{x_1}\Pi_{j_3}u_H P_{\xi(\tau),\leq m_3}^{x_1}\overline{\Pi_{j_4}u_H}P_{\xi(\tau),\leq m_3}^{x_1}\Pi_{j_5}u)\big\rangle_{L^{2}_{x_1}L_{x_2}^2(\mathbb{R}  \times \mathbb{R})} d \tau\notag\\
&\hspace{13ex}\lesssim \sup\limits_{ \|g^{m_3}_{\alpha^{\prime}}\|_{U^{12/5}_{\Delta}(G_{{\alpha}^{\prime}}^{m_3},L_{x_1}^2L_{x_2}^2(\R))}=1}\big\|\|g^{m_3}_{\alpha^{\prime}}\|_{L_{x_2}^2(\R)}\|P^{x_1}_{\xi(\tau),m_3} u_H\|_{L_{x_2}^2}\|P^{x_1}_{\xi(\tau),\leq m_3}u_H\|_{L_{x_2}^2(\R)}\big\|_{L^2_{t,x_1}(G_{{\alpha}^{\prime}}^{m_3}\times\mathbb{R})}\notag\\
&\hspace{20ex}\times \big\|\|P_{\xi(G^i_{\alpha}),l}^{x_1}u\|_{L_{x_2}^2(\R)}\|P^{x_1}_{\xi(\tau),m_2} u_H\|_{L_{x_2}^2(\R)}\|P^{x_1}_{\xi(\tau),\leq m_3}u\|_{L_{x_2}^2(\R)}\big\|_{L^2_{t,x_1}(G_{{\alpha}^{\prime}}^{m_3}\times\mathbb{R})}\label{esttttttt1}.
\end{align}
	
For $\sup\limits_{ \|g^{m_3}_{\alpha^{\prime}}\|_{U^{12/5}_{\Delta}(G_{{\alpha}^{\prime}}^{m_3},L_{x_1}^2L_{x_2}^2)}=1}\big\|\|g^{m_3}_{\alpha^{\prime}}\|_{L_{x_2}^2(\R)}\|P^{x_1}_{\xi(\tau),m_3} u_H\|_{L_{x_2}^2(\R)}\|P^{x_1}_{\xi(\tau),\leq m_3}u_H\|_{L_{x_2}^2(\R)}\big\|_{L^2_{t,x_1}(G_{{\alpha}^{\prime}}^{m_3}\times\mathbb{R})}$, We first notice that by Proposition \ref{Bernstein} and \eqref{frequency-property},
\begin{align}
\sum\limits_{0\leq i\leq m_3}\big\|\|P^{x_1}_{\xi(t), i}u\|_{L_{x_2}^2(\mathbb{R})}\big\|_{L_{t,x_1}^{12}(G^{m_3}_{{\alpha}^{\prime}}\times \mathbb{R})}&\lesssim \sum\limits_{0\leq i\leq m_3}2^{i/4}\big\|\|P^{x_1}_{\xi(t), i}u\|_{L_{x_2}^2(\mathbb{R})}\big\|_{L_{t}^{12}L^{3}_{x_1}(G^{m_3}_{{\alpha}^{\prime}}\times \mathbb{R})}\notag\\
&\lesssim \sum\limits_{0\leq i\leq m_3}2^{i/4}\cdot2^{\frac{m_3-i}{12}}\|P^{x_1}_{\xi(t), i}u\|_{U^{2}_{\Delta}(G_{{\alpha}^{\prime}}^{m_3},L_{x_1}^2L_{x_2}^2)}\notag\\
&\lesssim 2^{{m_3}/4} \|u\|_{X(G_{{\alpha}^{\prime}}^{m_3},L_{x_1}^2L_{x_2}^2)}.\label{qqq1}
\end{align}
Thus, by  \eqref{qqq1}, bilinear estimate \eqref{bilinearl2},  H\"older's inequality and the embedding $U^{2}_{\Delta}(G_{{\alpha}^{\prime}}^{m_3},L_{x_1}^2L_{x_2}^2)\hookrightarrow U^{12/5}_{\Delta}(G_{{\alpha}^{\prime}}^{m_3},L_{x_1}^2L_{x_2}^2)$,
\begin{align}
&\sup\limits_{ \|g^{m_3}_{\alpha^{\prime}}\|_{U^{12/5}_{\Delta}(G_{{\alpha}^{\prime}}^{m_3},L_{x_1}^2L_{x_2}^2)}=1}\big\|\|g^{m_3}_{\alpha^{\prime}}\|_{L_{x_2}^2(\R)}\|P^{x_1}_{\xi(\tau),m_3} u_H\|_{L_{x_2}^2(\R)}\|P^{x_1}_{\xi(\tau),\leq m_3}u\|_{L_{x_2}^2(\R)}\big\|_{L^2_{t,x_1}(G_{{\alpha}^{\prime}}^{m_3}\times\mathbb{R})}\notag\\
&\hspace{4ex}\lesssim \sup\limits_{ \|g^{m_3}_{\alpha^{\prime}}\|_{U^{12/5}_{\Delta}(G_{{\alpha}^{\prime}}^{m_3},L_{x_1}^2L_{x_2}^2)}=1}\big\|\|g^{m_3}_{\alpha^{\prime}}\|_{L_{x_2}^2(\R)}\|P^{x_1}_{\xi(\tau),m_3} u_H\|_{L_{x_2}^2(\R)}\big\|_{L^{12/5}_{t,x_1}(G_{{\alpha}^{\prime}}^{m_3}\times\mathbb{R})}\notag\\
&\hspace{10ex}\times \big\|\|P^{x_1}_{\xi(\tau),\leq m_3}u\|_{L_{x_2}^2(\R)}\big\|_{L^{12}_{t,x_1}(G_{{\alpha}^{\prime}}^{m_3}\times\mathbb{R})}\notag\\
&\hspace{4ex}\lesssim 2^{({m_3}-l)/4}\sup\limits_{ \|g^{m_3}_{\alpha^{\prime}}\|_{U^{12/5}_{\Delta}(G_{{\alpha}^{\prime}}^{m_3},L_{x_1}^2L_{x_2}^2)}=1} \|g^{m_3}_{{\alpha}^{\prime}}\|_{U^{12/5}_{\Delta}(G_{{\alpha}^{\prime}}^{m_3},L_{x_1}^2L_{x_2}^2)}
\|u\|_{U^{12/5}_{\Delta}(G_{{\alpha}^{\prime}}^{m_3},L_{x_1}^2L_{x_2}^2)}\|u\|_{X(G_{{\alpha}^{\prime}}^{m_3},L_{x_1}^2L_{x_2}^2)}\notag\\
&\hspace{4ex}\lesssim 2^{({m_3}-l)/4}
\|P_{\xi(\tau), m_3 }^{x_1}u\|_{U^{2}_{\Delta}(G_{{\alpha}^{\prime}}^{m_3},L_{x_1}^2L_{x_2}^2)}\|u\|_{X(G_{{\alpha}^{\prime}}^{m_3},L_{x_1}^2L_{x_2}^2)}\notag\\
&\hspace{4ex}\lesssim 2^{({m_3}-l)/4}
\|u\|^2_{X(G_{{\alpha}^{\prime}}^{m_3},L_{x_1}^2L_{x_2}^2)}.\label{qqq2}
\end{align}
	
On the other hand, by bilinear estimate \eqref{bilinearl2}, compactness condition \eqref{almost-compact-2}, Proposition \ref{Bernstein} and H\"older's inequality,
\begin{align}
&\big\|\|P_{\xi(G^i_{\alpha}),l}^{x_1}u\|_{L_{x_2}^2(\R)}\|P^{x_1}_{\xi(\tau),m_2} u_H\|_{L_{x_2}^2(\R)}\|P^{x_1}_{\xi(\tau),\leq m_3}u_H\|_{L_{x_2}^2(\R)}\big\|_{L^2_{t,x_1}(G^i_{\alpha}\times\mathbb{R})}\notag\\
&\lesssim \big\|\|P_{\xi(G^i_{\alpha}),l}^{x_1}u\|_{L_{x_2}^2(\R)}\|P^{x_1}_{\xi(\tau),m_2} u_H\|_{L_{x_2}^2(\R)}\big\|_{L^2_{t,x_1}(G_{{\alpha}^{\prime}}^{m_3}\times\mathbb{R})}\big\|\|P^{x_1}_{\xi(\tau),\leq m_3}u_H\|_{L_{x_2}^2(\R)}\big\|_{L_{t,x_1}^{\infty}(G^i_{\alpha}\times\mathbb{R})}\notag\\
&\lesssim \eta_2 2^{{m_3}/2} 2^{-l/2}\|P_{\xi(G^i_{\alpha}),l}^{x_1}u\|_{U^{2}_{\Delta}(G^i_{\alpha},L_{x_1}^2L_{x_2}^2)} \|P_{\xi(\tau),m_2}^{x_1}u\|_{U^{2}_{\Delta}(G^i_{\alpha},L_{x_1}^2L_{x_2}^2)}\notag\\
&\lesssim \eta_2 2^{({m_3}-m_2)/2} 2^{(i-l)/2}\|P_{\xi(G^i_{\alpha}),l}^{x_1}u\|_{U^{2}_{\Delta}(G^i_{\alpha},L_{x_1}^2L_{x_2}^2)} \|u\|_{X(G^i_{\alpha},L_{x_1}^2L_{x_2}^2)}\label{qqq3}.
\end{align}
	
By \eqref{esttttttt1}, \eqref{qqq2} and \eqref{qqq3}, we have
\begin{align*}
&\big(\sum\limits_{G^{m_3}_{\alpha^{\prime}}\subset G^i_{\alpha}}\|F^{m_3}_2\|^2_{U_\Delta^2(G_{{\alpha}^{\prime}}^{m_3},L_{x_1}^2L_{x_2}^2)}\big)^{1/2}\lesssim 	2^{({m_3}-l)/4}
\sup\limits_{G_{{\alpha}^{\prime}}^{m_3}}\|u\|^2_{X(G_{{\alpha}^{\prime}}^{m_3},L_{x_1}^2L_{x_2}^2)}\\
&\hspace{16ex}\times\big(\sum\limits_{G^{m_3}_{\alpha^{\prime}}\subset G^i_{\alpha}} \big\|\|P_{\xi(G^i_{\alpha}),l}^{x_1}u\|_{L_{x_2}^2(\R)}\|P^{x_1}_{\xi(\tau),m_2} u_H\|_{L_{x_2}^2(\R)}\|P^{x_1}_{\xi(\tau),\leq m_3}u_H\|_{L_{x_2}^2(\R)}\big\|^2_{L^2_{t,x_1}(G_{{\alpha}^{\prime}}^{m_3}\times\mathbb{R})}\big)^{1/2}\\
&\hspace{8ex}\lesssim2^{({m_3}-l)/4}
\|u\|^2_{\widetilde{X}([a,b],L_{x_1}^2L_{x_2}^2)}\big\|\|P_{\xi(G^i_{\alpha}),l}^{x_1}u\|_{L_{x_2}^2}\|P^{x_1}_{\xi(\tau),m_2} u_H\|_{L_{x_2}^2(\R)}\|P^{x_1}_{\xi(\tau),\leq m_3}u_H\|_{L_{x_2}^2(\R)}\big\|_{L^2_{t,x_1}(G^i_{\alpha}\times\mathbb{R})}\\
&\hspace{8ex}\lesssim \eta_2 2^{({m_3}-l)/4}2^{({m_3}-m_2)/2} 2^{(i-l)/2}\|P_{\xi(G^i_{\alpha}),l}^{x_1}u\|_{U^{2}_{\Delta}(G^i_{\alpha},L_{x_1}^2L_{x_2}^2)} \|u\|^3_{\widetilde{X}([a, b],L_{x_1}^2L_{x_2}^2)}.
\end{align*}
	
Summing over $0\leq m_3\leq m_2\leq i-10$, we deduce that
\begin{align}
\sum\limits_{\substack{0\leq m_3\leq m_2\\ \leq j-10}}\|F^{m_3}_2\|_{U_\Delta^2(G_{\alpha}^i,L_{x_1}^2L_{x_2}^2)}&\lesssim \sum\limits_{\substack{0\leq m_3\leq m_2\\ \leq j-10}}\big(\sum\limits_{G^{m_3}_{\alpha^{\prime}}\subset G^i_{\alpha}}\|F^{m_3}_2\|^2_{U_\Delta^2(G_{{\alpha}^{\prime}}^{m_3},L_{x_1}^2L_{x_2}^2)}\big)^{1/2}\notag\\
&\lesssim \eta_2 2^{\frac{3(i-l)}{4}} \|P_{\xi(G^i_{\alpha}),l}^{x_1}u\|_{U^{2}_{\Delta}(G^i_{\alpha},L_{x_1}^2L_{x_2}^2)} \|u\|^3_{\widetilde{X}([a, b],L_{x_1}^2L_{x_2}^2)}.\label{essttttttt}
\end{align}
Combining \eqref{essst}, \eqref{estttttt111}, \eqref{essttt3} and \eqref{essttttttt}, we finish the proof.
	
\end{proof}

\subsection{The proof of long-time Strichartz estimate}
In this subsection, we give the proof of long-time Strichartz estimate(cf. Theorem \ref{longtimestrichartz}). Before presenting the proof, we first show the intermediate theorem, which is the key step in the proof of Theorem \ref{longtimestrichartz}.

\begin{theorem}[Intermediate theorem]\label{intermediate}
For any $i\geq 10$, $G_{\alpha}^i\subset G_k^j$ and $a^i_{\alpha} \in G^i_{\alpha}$, if  $N(G_{\alpha}^i)\leq \eta_3^{\frac{1}{2}}2^{i-5}$,  then there exist two constants  $\varepsilon(\eta_2,\eta_3)>0$ and $c>0$   we have the following multilinear estimate:
\begin{align}\label{intermediate1}
\bigg\|&\sum\limits_{j\in \mathbb{N}}\sum_{\substack{j_1-j_2+j_3-j_4+j_5=j\\j_1,j_2,j_3,j_4,j_5\in\Bbb{N}}}\int_{a_{\alpha}^i}^te^{i(t-\tau)\partial_{x_1}^2}P^{x_1}_{\xi(G^i_{\alpha}),m}\Pi_j\big(P_{\xi(G^i_{\alpha}),l}^{x_1}\Pi_{j_1}u\overline{\Pi_{j_2}u}\Pi_{j_3}u\overline{\Pi_{j_4}u}\Pi_{j_5}u\big)d \tau\bigg\|_{U_\Delta^2(G_{\alpha}^i,L_{x_1}^2L_{x_2}^2)}\notag\\
& \lesssim \varepsilon(\eta_2,\eta_3) 2^{-|m-l|/2}\|P_{\xi(G^i_{\alpha}),l}^{x_1}u\|_{U_\Delta^2(G_{{\alpha}}^{i},L_{x_1}^2L_{x_2}^2)}\big(1+\big\|u\big\|_{\widetilde{X}_{j}([a, b],L_{x_1}^2L_{x_2}^2)}\big)^{c}.
\end{align}
Moreover, $\varepsilon(\eta_2,\eta_3)\to0$ when $\eta_2,\eta_3\to0$ and  the constant $c$ does not depend on the choice of $\eta_2$ and $\eta_3$.
\end{theorem}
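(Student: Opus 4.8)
\textbf{Proof proposal for Theorem \ref{intermediate}.}

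The plan is to split the multilinear expression according to where the high-frequency projector $P_{\xi(\tau),\ge i-10}^{x_1}$ lands (or whether no factor carries high frequency at all), and then feed each piece into one of the two multilinear estimates already established. First I would observe that, by Littlewood--Paley decomposition in the $x_1$-variable adapted to the Galilean frame, each factor $\Pi_{j_k}u$ can be written as $P_{\xi(\tau),\le i-10}^{x_1}\Pi_{j_k}u+P_{\xi(\tau),\ge i-10}^{x_1}\Pi_{j_k}u$, and similarly the output $\Pi_j(\cdots)$ can be localized. Because of the frequency support constraint coming from $P_{\xi(G_\alpha^i),m}^{x_1}$ with $m\ge i-2$ together with the convolution structure in $\xi$, at least one of the five inputs must have $x_1$-frequency $\gtrsim 2^{i-10}$ relative to $\xi(\tau)$ --- here the local constancy $|\xi(t)-\xi(G_\alpha^i)|\le 2^{j-19}\eta_3\eta_1^{-1/2}$ on $G_\alpha^i$ (inequality \eqref{key}) is what lets us compare $\xi(\tau)$ and $\xi(G_\alpha^i)$ up to harmless scales. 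Thus the expression decomposes into: (a) terms where $P_{\xi(G_\alpha^i),l}^{x_1}u$ is \emph{not} the high-frequency factor, so some \emph{other} $\Pi_{j_k}u$ carries $P_{\xi(\tau),\ge i-10}^{x_1}$, which is exactly the situation covered by Lemma \ref{mlinear1} (including its stated ``moreover'' clause about the projectors falling on other factors); and (b) the term where all four companion factors of $P_{\xi(G_\alpha^i),l}^{x_1}u$ are low-frequency, i.e.\ $P_{\xi(\tau),\le i-10}^{x_1}$, which is precisely Lemma \ref{mlinear2}.

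Next I would assemble the bounds. From Lemma \ref{mlinear1} one gets a contribution bounded by
$$\eta_2^{4/3}2^{(m-l)/4}\|P_{\xi(G_\alpha^i),l}^{x_1}u\|_{U_\Delta^2}\|u\|_{X(G_\alpha^i)}^{8/3}+2^{(i-l)/2}2^{(i-m)/4}\|P_{\xi(G_\alpha^i),l}^{x_1}u\|_{U_\Delta^2}\big(\eta_2^{10/3}\|u\|_{X(G_\alpha^i)}^{2/3}+\eta_2^{11/6}\|u\|_{X(G_\alpha^i)}^{13/6}\big),$$
and from Lemma \ref{mlinear2} a contribution bounded by $2^{(i-l)/6}\eta_2\|P_{\xi(G_\alpha^i),l}^{x_1}u\|_{U_\Delta^2}\|u\|_{X(G_\alpha^i)}^3$. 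Since $N(G_\alpha^i)\le\eta_3^{1/2}2^{i-5}$ and $m,l\ge i-5$ (so $i-l$, $i-m$ are bounded above by a fixed constant and the exponential gains $2^{(i-l)/2}$, $2^{(i-m)/4}$, $2^{(i-l)/6}$ are all controlled by, and in fact comparable to, $2^{-|m-l|/2}$ up to a constant when one also uses the constraint $|m-l|\lesssim 1$ forced inside the two lemmas), all of the above can be absorbed into a single factor $\varepsilon(\eta_2,\eta_3)\,2^{-|m-l|/2}\|P_{\xi(G_\alpha^i),l}^{x_1}u\|_{U_\Delta^2(G_\alpha^i)}$ times a polynomial in $\|u\|_{X(G_\alpha^i)}$. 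Finally, since $\|u\|_{X(G_\alpha^i,L_{x_1}^2L_{x_2}^2)}\le\|u\|_{\widetilde X_j([a,b],L_{x_1}^2L_{x_2}^2)}$ whenever $G_\alpha^i\subset G_k^j$ with $i\le j$ (and for $i\ge j$ one replaces $G_k^j$ by $[a,b]$ and uses the same monotonicity), we may replace $\|u\|_{X(G_\alpha^i)}$ by $1+\|u\|_{\widetilde X_j([a,b])}$; collecting the various powers $8/3,13/6,3,\dots$ into a single constant $c$ independent of $\eta_2,\eta_3$ yields \eqref{intermediate1}. The smallness $\varepsilon(\eta_2,\eta_3)\to 0$ as $\eta_2,\eta_3\to 0$ is manifest because every term in the assembled bound carries a positive power of $\eta_2$ or $\eta_3$.

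The main obstacle I anticipate is the bookkeeping of the frequency decomposition: one has to verify carefully that the case split is exhaustive and non-overlapping, in particular that when $P_{\xi(G_\alpha^i),l}^{x_1}u$ \emph{is} the unique high-frequency factor (a subcase of (a) with the roles of the ``marked'' factor and the high-frequency factor coinciding), Lemma \ref{mlinear1} still applies --- it does, via its ``moreover'' clause, but one must check the frequency relation $l\ge i-5$ is compatible with $P_{\xi(\tau),\ge i-10}^{x_1}$ acting on a factor of frequency $\sim 2^l$. A secondary technical point is handling the transition between the frame $\xi(\tau)$ (which varies with $\tau$ over $G_\alpha^i$) and the fixed frame $\xi(G_\alpha^i)$ used in the long-time norm; this is exactly what \eqref{key} and the nesting of frequency annuli in the subsequent remark are designed to absorb, so it should cost only a fixed constant. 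Everything else is a routine repackaging of the two multilinear lemmas, so no genuinely new estimate is needed at this stage.
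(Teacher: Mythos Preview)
Your proposal is correct and matches the paper's approach: decompose the four companion factors as $P^{x_1}_{\xi(\tau),\le i-10}+P^{x_1}_{\xi(\tau),\ge i-10}$, apply Lemma~\ref{mlinear2} to the all-low piece and Lemma~\ref{mlinear1} to every piece with at least one high companion, then repackage. One minor correction: the constraint $|m-l|\lesssim 1$ is forced only in the all-low case of Lemma~\ref{mlinear2} (by frequency support of the output), not in Lemma~\ref{mlinear1}; for the latter the required $2^{-c|m-l|}$ decay comes directly from the explicit factors in \eqref{key1} combined with $l\ge i-5$, $m\ge i-2$, rather than from any a~priori bound on $|m-l|$.
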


\begin{proof}
Just notice that 
\begin{align*}
&P^{x_1}_{\xi(G^i_{\alpha}),m}\Pi_j \big (P_{\xi(G^i_{\alpha}),l}^{x_1}\Pi_{j_1}u\overline{\Pi_{j_2}u}\Pi_{j_3}u\overline{\Pi_{j_4}u}\Pi_{j_5}u\big)=\\
&P^{x_1}_{\xi(G^i_{\alpha}),m}\Pi_j \big (P_{\xi(G^i_{\alpha}),l}^{x_1}\Pi_{j_1}uP^{x_1}_{\leq i-10}\overline{\Pi_{j_2}u}P^{x_1}_{\leq i-10}\Pi_{j_3}uP^{x_1}_{\leq i-10}\overline{\Pi_{j_4}u}P^{x_1}_{\leq i-10}\Pi_{j_5}u\big)\\
&+P^{x_1}_{\xi(G^i_{\alpha}),m}\Pi_j \mathcal{O}\big (P_{\xi(G^i_{\alpha}),l}^{x_1}\Pi_{j_1}uP^{x_1}_{\geq i-10}\overline{\Pi_{j_2}u}\Pi_{j_3}u\overline{\Pi_{j_4}u}\Pi_{j_5}u\big),
\end{align*}
where the symbol $\mathcal{O}$ represents that different frequncies are located in different $\Pi_{j_k}u$ for $k\in\{1,2,3,4,5\}$. 
Thus, we obtain
\begin{align}\label{qwer}
\bigg\|&\sum\limits_{j\in \mathbb{N}}\sum_{\substack{j_1-j_2+j_3-j_4+j_5=j\\j_1,j_2,j_3,j_4,j_5\in\Bbb{N}}}\int_{a_{\alpha}^i}^te^{i(t-\tau)\partial_{x_1}^2}P^{x_1}_{\xi(G^i_{\alpha}),m}\Pi_j\big(P_{\xi(G^i_{\alpha}),l}^{x_1}\Pi_{j_1}u\overline{\Pi_{j_2}u}\Pi_{j_3}u\overline{\Pi_{j_4}u}\Pi_{j_5}u\big)d \tau\bigg\|_{U_\Delta^2(G_{\alpha}^i,L_{x_1}^2L_{x_2}^2)}\notag\\
\lesssim \bigg\|&\sum\limits_{j\in \mathbb{N}}\sum_{\substack{j_1-j_2+j_3-j_4+j_5=j\\j_1,j_2,j_3,j_4,j_5\in\Bbb{N}}}\int_{a_{\alpha}^i}^te^{i(t-\tau)\partial_{x_1}^2}P^{x_1}_{\xi(G^i_{\alpha}),m}\Pi_j\big(P^{x_1}_{\xi(G^i_{\alpha}),m}\Pi_j \big (P_{\xi(G^i_{\alpha}),l}^{x_1}\Pi_{j_1}uP^{x_1}_{\leq i-10}\overline{\Pi_{j_2}u}\notag\\
&\hspace{2ex}\times P^{x_1}_{\leq i-10}\Pi_{j_3}uP^{x_1}_{\leq i-10}\overline{\Pi_{j_4}u}P^{x_1}_{\leq i-10}\Pi_{j_5}u\big)\big)d \tau\bigg\|_{U_\Delta^2(G_{\alpha}^i,L_{x_1}^2L_{x_2}^2)}\notag\\
+\bigg\|&\sum\limits_{j\in \mathbb{N}}\sum_{\substack{j_1-j_2+j_3-j_4+j_5=j\\j_1,j_2,j_3,j_4,j_5\in\Bbb{N}}}\int_{a_{\alpha}^i}^te^{i(t-\tau)\partial_{x_1}^2}P^{x_1}_{\xi(G^i_{\alpha}),m}\Pi_j\big(P^{x_1}_{\xi(G^i_{\alpha}),m}\Pi_j \mathcal{O}\big (P_{\xi(G^i_{\alpha}),l}^{x_1}\Pi_{j_1}uP^{x_1}_{\geq i-10}\overline{\Pi_{j_2}u}\notag\\
&\hspace{2ex}\times\Pi_{j_3}u\overline{\Pi_{j_4}u}\Pi_{j_5}u\big)d \tau\bigg\|_{U_\Delta^2(G_{\alpha}^i,L_{x_1}^2L_{x_2}^2)}.
\end{align}
Inserting \eqref{key1} and \eqref{key2} into \eqref{qwer}, we have proved \eqref{intermediate1} and hence complete the proof of Theorem \ref{intermediate}.
\end{proof}

Utilizing Theorem \ref{intermediate},  we  begin to prove the proof of long-time Strichartz estimate, i.e. Theorem \ref{longtimestrichartz}. 

\begin{proof}[Proof of Theorem \ref{longtimestrichartz}]
For each $0\leq j\leq k_0$, we fixed  $G^j_{\alpha}$ and compute $\|u\|_{X(G^j_{\alpha},L_{x_1}^2L_{x_2}^2)}$
	
We choose $a^i_{\alpha}\in G^i_{\alpha}$ which satisfies
\begin{align}\label{infinf}
\big\|P_{\xi(G^i_{\alpha}),i-2\leq \cdot \leq i+2}^{x_1}u(a^i_{\alpha})\big\|_{{L^2_{x_1}}}=\inf_{t\in G^i_{\alpha}}\big\|P_{\xi(G^i_{\alpha}),i-2\leq \cdot \leq i+2}^{x_1}\big\|_{L_{x_1}^{2}}.
\end{align}
By Duhamel's principle, we can write  $P_{\xi{G^i_{\alpha}},i-2\leq \cdot \leq i+2}^{x_1}u$ as
\begin{align*}
P_{\xi(G^i_{\alpha}),i-2\leq \cdot \leq i+2}^{x_1}u(t)&=P_{\xi(G^i_{\alpha}),i-2\leq \cdot \leq i+2}^{x_1}e^{i(t-a^i_{\alpha})\partial_{x_1}^2}u(a^i_{\alpha})\notag\\
&-\sum\limits_{j\in \mathbb{N}}\sum_{\substack{j_1-j_2+j_3-j_4+j_5=j\\j_1,j_2,j_3,j_4,j_5\in\Bbb{N}}}i\int^t_{a^i_{\alpha}}e^{i(t-\tau)\partial_{x_1}^2}P_{\xi(G^i_{\alpha}),i-2\leq \cdot \leq i+2}^{x_1}\Pi_j(\Pi_{j_1}u\overline{\Pi_{j_2}u}\Pi_{j_3}u\overline{\Pi_{j_4}u}\Pi_{j_5}u)d \tau,
\end{align*}
then by Strichartz estimate \eqref{Inhomo},
\begin{align*}
&\big\|P_{\xi(G^i_{\alpha}),i-2\leq \cdot \leq i+2}^{x_1}u\big\|_{{U^{2}_{\Delta}(G^i_{\alpha},L_{x_1}^2L_{x_2}^2)}}\lesssim \big\|P_{\xi(G^i_{\alpha}),i-2\leq \cdot \leq i+2}^{x_1}u(a^i_{\alpha})\big\|_{L^2_{x_1}(\mathbb{R})}\\
&\hspace{10ex}+\big\|\sum\limits_{j\in \mathbb{N}}\sum_{\substack{j_1-j_2+j_3-j_4+j_5=j\\j_1,j_2,j_3,j_4,j_5\in\Bbb{N}}}\int^t_{a^i_{\alpha}}e^{i(t-\tau)\partial_{x_1}^2}P_{\xi(G^i_{\alpha}),i-2\leq \cdot \leq i+2}^{x_1}\Pi_j(\Pi_{j_1}u\overline{\Pi_{j_2}u}\Pi_{j_3}u\overline{\Pi_{j_4}u}\Pi_{j_5}u)d \tau\big\|_{U^{2}_{\Delta}(G^i_{\alpha},L_{x_1}^2L_{x_2}^2)}.
\end{align*}
By the definition of $\|u\|_{X(G^j_{\alpha},L_{x_1}^2L_{x_2}^2)}$, 
\begin{align}
&\|u\|^2_{X(G^j_{\alpha},L_{x_1}^2L_{x_2}^2)}\lesssim \sum\limits_{0\leq i\leq j}2^{i-j}\sum\limits_{G^i_{\alpha}\subset G^j_k}\big\|P_{\xi(G^i_{\alpha}),i-2\leq \cdot \leq i+2}^{x_1}u(a^i_{\alpha})\big\|^2_{L^2_{x_1}(\mathbb{R})}+\sum\limits_{i> j}\big\|P_{\xi(G^j_{k}),i-2\leq \cdot \leq i+2}^{x_1}u(a^i_{\alpha})\big\|^2_{L^2_{x_1}(\mathbb{R})} \label{1qqq}\\
&+\sum\limits_{0\leq i\leq j}2^{i-j}\sum\limits_{G^i_{\alpha}\subset G^j_k}\big\|\sum\limits_{j\in \mathbb{N}}\sum_{\substack{j_1-j_2+j_3-j_4+j_5=j\\j_1,j_2,j_3,j_4,j_5\in\Bbb{N}}}\int^t_{a^i_{\alpha}}e^{i(t-\tau)\partial_{x_1}^2}P_{\xi(G^i_{\alpha}),i-2\leq \cdot \leq i+2}^{x_1}\Pi_j(\Pi_{j_1}u\overline{\Pi_{j_2}u}\Pi_{j_3}u\notag\\
&\hspace{22ex}\times\overline{\Pi_{j_4}u}\Pi_{j_5}u)d \tau\big\|^2_{U^{2}_{\Delta}(G^i_{\alpha},L_{x_1}^2L_{x_2}^2)}\notag\\
&+\sum\limits_{i>j}\big\|\sum\limits_{j\in \mathbb{N}}\sum_{\substack{j_1-j_2+j_3-j_4+j_5=j\\j_1,j_2,j_3,j_4,j_5\in\Bbb{N}}}\int^t_{a^i_{\alpha}}e^{i(t-\tau)\partial_{x_1}^2}P_{\xi(G^j_{k}),i-2\leq \cdot \leq i+2}^{x_1}\Pi_j(\Pi_{j_1}u\overline{\Pi_{j_2}u}\Pi_{j_3}u\overline{\Pi_{j_4}u}\Pi_{j_5}u)d \tau\big\|^2_{U^{2}_{\Delta}(G^j_{k},L_{x_1}^2L_{x_2}^2)}.\label{2qqq}
\end{align}
	
For \eqref{1qqq}, by compactness condition \eqref{eq-y5.33} and \eqref{infinf}
\begin{align}\label{keyestimate}
\eqref{1qqq}\lesssim \sum\limits_{i\geq 0}\eta_3^{-1}2^{-j}\int_{G^j_k}\|P_{\xi(t),i}^{x_1}u(t)\|_{L^2_{x_1}(\mathbb{R})}\big({N(t)}^3+\eta_3 \|u(t)\|^{{2(d+2)}/{d}}_{L_{x_1}^{{2(d+2)}/{d}}}\big) dt\lesssim 1.
\end{align}
	
For \eqref{2qqq}, we further decompose \eqref{2qqq} as the following four terms:
\begin{align}
&\eqref{2qqq}=\notag\\
&\sum\limits_{0\leq i\leq j}2^{i-j}\sum\limits_{\substack{G^i_{\alpha}\subset G^j_k\\N(G^i_{\alpha})\geq \eta_3^{1/2}2^{i-5}}}\big\|\sum\limits_{j\in \mathbb{N}}\sum_{\substack{j_1-j_2+j_3-j_4+j_5=j\\j_1,j_2,j_3,j_4,j_5\in\Bbb{N}}}\int^t_{a^i_{\alpha}}e^{i(t-\tau)\partial_{x_1}^2}P_{\xi(G^i_{\alpha}),i-2\leq \cdot \leq i+2}^{x_1}\Pi_j(\Pi_{j_1}u\overline{\Pi_{j_2}u}\notag\\
&\hspace{50ex}\times\Pi_{j_3}u\overline{\Pi_{j_4}u}\Pi_{j_5}u)d \tau\big\|_{U^{2}_{\Delta}(G^i_{\alpha},L_{x_1}^2L_{x_2}^2)}\label{qqw1}\\
&+\sum\limits_{\substack{ 0\leq i\leq j\\i\leq 10}}2^{i-j}\sum\limits_{\substack{G^i_{\alpha}\subset G^j_k\\N(G^i_{\alpha})< \eta_3^{1/2}2^{i-5}}}\big\|\sum\limits_{j\in \mathbb{N}}\sum_{\substack{j_1-j_2+j_3-j_4+j_5=j\\j_1,j_2,j_3,j_4,j_5\in\Bbb{N}}}\int^t_{a^i_{\alpha}}e^{i(t-\tau)\partial_{x_1}^2}P_{\xi(G^i_{\alpha}),i-2\leq \cdot \leq i+2}^{x_1}\Pi_j(\Pi_{j_1}u\overline{\Pi_{j_2}u}\notag\\
&\hspace{50ex}\times\Pi_{j_3}u\overline{\Pi_{j_4}u}\Pi_{j_5}u)d \tau\big\|_{U^{2}_{\Delta}(G^i_{\alpha},L_{x_1}^2L_{x_2}^2)}\label{qqw4}\\
&+\sum\limits_{\substack{ 0\leq i\leq j\\i\geq 10}}2^{i-j}\sum\limits_{\substack{G^i_{\alpha}\subset G^j_k\\N(G^i_{\alpha})< \eta_3^{1/2}2^{i-5}}}\big\|\sum\limits_{j\in \mathbb{N}}\sum_{\substack{j_1-j_2+j_3-j_4+j_5=j\\j_1,j_2,j_3,j_4,j_5\in\Bbb{N}}}\int^t_{a^i_{\alpha}}e^{i(t-\tau)\partial_{x_1}^2}P_{\xi(G^i_{\alpha}),i-2\leq \cdot \leq i+2}^{x_1}\Pi_j(\Pi_{j_1}u\overline{\Pi_{j_2}u}\notag\\
&\hspace{50ex}\times\Pi_{j_3}u\overline{\Pi_{j_4}u}\Pi_{j_5}u)d \tau\big\|_{U^{2}_{\Delta}(G^i_{\alpha},L_{x_1}^2L_{x_2}^2)}\label{qqw2}\vspace{1ex}\\
&+\sum\limits_{i>j}\big\|\sum\limits_{j\in \mathbb{N}}\sum_{\substack{j_1-j_2+j_3-j_4+j_5=j\\j_1,j_2,j_3,j_4,j_5\in\Bbb{N}}}\int^t_{a^i_{\alpha}}e^{i(t-\tau)\partial_{x_1}^2}P_{\xi(G^j_{k}),i-2\leq \cdot \leq i+2}^{x_1}\Pi_j(\Pi_{j_1}u\overline{\Pi_{j_2}u}\Pi_{j_3}u\overline{\Pi_{j_4}u}\Pi_{j_5}u)d \tau\big\|_{U^{2}_{\Delta}(G^j_{k},L_{x_1}^2L_{x_2}^2)}\label{qqw3}	
\end{align}
	
For any interval $G^i_{\alpha}$ with $N(G^i_{\alpha})\geq \eta_3^{1/2}2^{i-5}$, \eqref{xi(t)} yields that $N(t)\geq \eta^{1/2}_32^{i-6}$ for all $t\in G^i_{\alpha}$. Moreover, by Lemma \ref{remark-small}, if $J_{l}$ is a small interval, then we have
\begin{align}\label{nonlinearboundness}
\big\|\sum\limits_{j\in \mathbb{N}}\sum_{\substack{j_1-j_2+j_3-j_4+j_5=j\\j_1,j_2,j_3,j_4,j_5\in\Bbb{N}}}P_{\xi(G^j_{k}),i-2\leq \cdot \leq i+2}^{x_1}\Pi_j(\Pi_{j_1}u\overline{\Pi_{j_2}u}\Pi_{j_3}u\overline{\Pi_{j_4}u}\Pi_{j_5}u)\big\|_{L^1_{t}L^{2}_{x_1}L^2_{x_2}(J_l\times \mathbb{R}\times \mathbb{R})}\lesssim 1.
\end{align}
By Strichartz estimate \eqref{Inhomo}, compactness condition \eqref{almost-compact-1}-\eqref{key} and Remark \ref{remark-small}, we deduce that 
	
\begin{align}
&\eqref{qqw1}\notag\\
&\lesssim  \sum\limits_{0\leq i\leq j}2^{i-j}\sum\limits_{\substack{G^i_{\alpha}\subset G^j_k\\N(G^i_{\alpha})\geq \eta_3^{1/2}2^{i-5}}} 
\sum\limits_{J_{l}\subset G^j_k}\big\|\sum\limits_{j\in \mathbb{N}}\sum_{\substack{j_1-j_2+j_3-j_4+j_5=j\\j_1,j_2,j_3,j_4,j_5\in\Bbb{N}}}P_{\xi(G^i_{\alpha}),i-2\leq \cdot \leq i+2}^{x_1}\Pi_j(\Pi_{j_1}u\overline{\Pi_{j_2}u}\notag\\
&\hspace{55ex}\times\Pi_{j_3}u\overline{\Pi_{j_4}u}\Pi_{j_5}u)d \tau\big\|^2_{L^1_tL^2_{x_1}L_{x_2}^2((J_{l}\cap G^i_{\alpha}) \times \mathbb{R}\times \mathbb{R})}\notag\\
&+\sum\limits_{0\leq i\leq j}2^{i-j}\sum\limits_{\substack{G^i_{\alpha}\subset G^j_k\\N(G^i_{\alpha})\geq \eta_3^{1/2}2^{i-5}}} 
\sum\limits_{\substack{J_{l}\not\subseteq G^j_k\\J_l\cap G^j_k \neq \emptyset}}\big\|\sum\limits_{j\in \mathbb{N}}\sum_{\substack{j_1-j_2+j_3-j_4+j_5=j\\j_1,j_2,j_3,j_4,j_5\in\Bbb{N}}}P_{\xi(G^i_{\alpha}),i-2\leq \cdot \leq i+2}^{x_1}\Pi_j(\Pi_{j_1}u\overline{\Pi_{j_2}u}\vspace{2ex}\notag\\
&\hspace{55ex}\times\Pi_{j_3}u\overline{\Pi_{j_4}u}\Pi_{j_5}u)d \tau\big\|^2_{L^1_tL^2_{x_1}L_{x_2}^2((J_{l}\cap G^i_{\alpha}) \times \mathbb{R}\times \mathbb{R})}\notag\\
&\lesssim \sum\limits_{0\leq i\leq j}2^{i-j}\sum\limits_{\substack{J_{l}\subset G^j_k\\N(J_{l})\geq \eta_3^{1/2}2^{i-6}}}\big\|\sum\limits_{j\in \mathbb{N}}\sum_{\substack{j_1-j_2+j_3-j_4+j_5=j\\j_1,j_2,j_3,j_4,j_5\in\Bbb{N}}}P_{\xi(G^i_{\alpha}),i-2\leq \cdot \leq i+2}^{x_1}\Pi_j(\Pi_{j_1}u\overline{\Pi_{j_2}u}\notag\\
&\hspace{55ex}\times\Pi_{j_3}u\overline{\Pi_{j_4}u}\Pi_{j_5}u)d \tau\big\|^2_{L^1_tL^2_{x_1}L_{x_2}^2(J_{l} \times \mathbb{R}\times \mathbb{R})}\notag\\
&+\sum\limits_{0\leq i\leq j}2^{i-j}\sum\limits_{\substack{G^i_{\alpha}\subset G^j_k\\N(G^i_{\alpha})\geq \eta_3^{1/2}2^{i-5}}} 
\sum\limits_{\substack{J_{l}\not\subseteq G^j_k\\J_l\cap G^j_k \neq \emptyset}}\big\|\sum\limits_{j\in \mathbb{N}}\sum_{\substack{j_1-j_2+j_3-j_4+j_5=j\\j_1,j_2,j_3,j_4,j_5\in\Bbb{N}}}P_{\xi(G^i_{\alpha}),i-2\leq \cdot \leq i+2}^{x_1}\Pi_j(\Pi_{j_1}u\overline{\Pi_{j_2}u}\notag\\
&\hspace{55ex}\times\Pi_{j_3}u\overline{\Pi_{j_4}u}\Pi_{j_5}u)d \tau\big\|^2_{L^1_tL^2_{x_1}L_{x_2}^2((J_{l}\cap G^i_{\alpha}) \times \mathbb{R}\times \mathbb{R})}\notag\\
&\lesssim \eta_3^{-1/2}2^{-j}\sum\limits_{J_l\subset G^j_k} N(J_l)\notag\\
&+\sum\limits_{0\leq i\leq j}2^{i-j}\sum\limits_{\substack{G^i_{\alpha}\subset G^j_k\\N(G^i_{\alpha})\geq \eta_3^{1/2}2^{i-5}}} 
\sum\limits_{\substack{J_{l}\not\subseteq G^j_k\\J_l\cap G^j_k \neq \emptyset}}\big\|\sum\limits_{j\in \mathbb{N}}\sum_{\substack{j_1-j_2+j_3-j_4+j_5=j\\j_1,j_2,j_3,j_4,j_5\in\Bbb{N}}}P_{\xi(G^i_{\alpha}),i-2\leq \cdot \leq i+2}^{x_1}\Pi_j(\Pi_{j_1}u\overline{\Pi_{j_2}u}\notag\\
&\hspace{55ex}\times\Pi_{j_3}u\overline{\Pi_{j_4}u}\Pi_{j_5}u)d \tau\big\|^2_{L^1_tL^2_{x_1}L_{x_2}^2((J_{l}\cap G^i_{\alpha}) \times \mathbb{R}\times \mathbb{R})}.\notag
\end{align}
Notice that for any $G^j_k$, there are at most two interval $J_1$ and $J_2$ that intersect $G^j_k$ but are not contained in $G^j_k$, we continue:
\begin{align}
&\lesssim \eta_3^{-1/2}2^{-j}\sum\limits_{J_l\subset G^j_k} N(J_l)\notag\\
&+\sum\limits_{0\leq i\leq j}2^{i-j}\sum\limits_{\substack{G^i_{\alpha}\subset G^j_k\\N(G^i_{\alpha})\geq \eta_3^{1/2}2^{i-5}}} 
\big\|\sum\limits_{j\in \mathbb{N}}\sum_{\substack{j_1-j_2+j_3-j_4+j_5=j\\j_1,j_2,j_3,j_4,j_5\in\Bbb{N}}}P_{\xi(G^i_{\alpha}),i-2\leq \cdot \leq i+2}^{x_1}\Pi_j(\Pi_{j_1}u\overline{\Pi_{j_2}u}\notag\\
&\hspace{50ex}\times\Pi_{j_3}u\overline{\Pi_{j_4}u}\Pi_{j_5}u)d \tau\big\|^2_{L^1_tL^2_{x_1}L_{x_2}^2((J_1\cap G^i_{\alpha}) \times \mathbb{R}\times \mathbb{R})}\notag\\
&+\sum\limits_{0\leq i\leq j}2^{i-j}\sum\limits_{\substack{G^i_{\alpha}\subset G^j_k\\N(G^i_{\alpha})\geq \eta_3^{1/2}2^{i-5}}} 
\big\|\sum\limits_{j\in \mathbb{N}}\sum_{\substack{j_1-j_2+j_3-j_4+j_5=j\\j_1,j_2,j_3,j_4,j_5\in\Bbb{N}}}P_{\xi(G^i_{\alpha}),i-2\leq \cdot \leq i+2}^{x_1}\Pi_j(\Pi_{j_1}u\overline{\Pi_{j_2}u}\notag\\
&\hspace{50ex}\times\Pi_{j_3}u\overline{\Pi_{j_4}u}\Pi_{j_5}u)d \tau\big\|^2_{L^1_tL^2_{x_1}L_{x_2}^2((J_{2}\cap G^i_{\alpha}) \times \mathbb{R}\times \mathbb{R})}\notag\\
&\lesssim  \sum\limits_{0\leq i\leq j}2^{i-j}+\eta_3^{-1/2}2^{-j}\sum\limits_{J_l\subset G^j_k} N(J_l)\notag\\
&\lesssim 1\label{key111}.
\end{align}
	
For \eqref{qqw4}, one can follow the similar strategy above to show that  
\begin{align}\label{key222}
\eqref{qqw4}\lesssim 1.
\end{align}
	
For \eqref{qqw2}, notice that for any $j_1,j_2,j_3,j_4,j_5,j \in{\mathbb{N}}$, $$P^{x_1}_{m} \Pi_j(P^{x_1}_{\leq i-5}\Pi_{j_1}uP^{x_2}_{\leq i-5}\overline{\Pi_{j_2}u}P^{x_3}_{\leq i-5}\Pi_{j_3}uP^{x_1}_{\leq i-5}\overline{\Pi_{j_4}u}P^{x_5}_{\leq i-5}\Pi_{j_5}u)=0.$$
Therefore, by Theorem \ref{intermediate} and H\"older's inequality,
\begin{align}\eqref{qqw2}
&\lesssim \varepsilon(\eta_2,\eta_3)\sum\limits_{10\leq i\leq j} 2^{i-j}\sum\limits_{G^i_{\alpha}\subset G^j_{k}}\bigg(\sum\limits_{i-2\leq m \leq i+2}\sum\limits_{l\geq i-5} 2^{c(d)(m-l)}\big\|P^{x_1}_{\xi(G^i_{\alpha}),l}u\big\|_{U^{2}_{\Delta}(G^i_{\alpha},L_{x_1}^2L_{x_2}^2)}\notag\\
&\hspace{50ex}\times\big(1+\big\|u\big\|_{\widetilde{X}_{i}([a, b],L_{x_1}^2L_{x_2}^2)}\big)^{c}\bigg)^2\notag\\
&\lesssim  \varepsilon(\eta_2,\eta_3)\big(1+\big\|u\big\|_{\widetilde{X}_{j}([a, b],L_{x_1}^2L_{x_2}^2)}\big)^{c}\sum\limits_{10\leq i\leq j} 2^{i-j}\sum\limits_{G^i_{\alpha}\subset G^j_{k}}\bigg(\sum\limits_{l\geq i-5} 2^{c(d)(i-l)}\big\|P^{x_1}_{\xi(G^i_{\alpha}),l}u\big\|_{U^{2}_{\Delta}(G^i_{\alpha},L_{x_1}^2L_{x_2}^2)}\bigg)^2\notag\\
&\lesssim \varepsilon(\eta_2,\eta_3)\big(1+\big\|u\big\|_{\widetilde{X}_{j}([a, b],L_{x_1}^2L_{x_2}^2)}\big)^{c}\sum\limits_{10\leq i\leq j} 2^{i-j}\sum\limits_{G^i_{\alpha}\subset G^j_{k}}\big(\sum\limits_{l\geq i-5} 2^{c(d)(i-l)}\big)\notag\\
&\hspace{50ex}\times\bigg(\sum\limits_{l\geq i-5} 2^{c(d)(i-l)}\big\|P^{x_1}_{\xi(G^i_{\alpha}),l}u\big\|^2_{U^{2}_{\Delta}(G^i_{\alpha},L_{x_1}^2L_{x_2}^2)}\bigg)\notag\\
&\lesssim \varepsilon(\eta_2,\eta_3)\big(1+\big\|u\big\|_{\widetilde{X}_{j}([a, b],L_{x_1}^2L_{x_2}^2)}\big)^{c}\sum\limits_{10\leq i\leq j} 2^{i-j}\sum\limits_{G^i_{\alpha}\subset G^j_{k}}\sum\limits_{i-5\leq l \leq j} 2^{c(d)(i-l)}\big\|P^{x_1}_{\xi(G^i_{\alpha}),l}u\big\|^2_{U^{2}_{\Delta}(G^i_{\alpha},L_{x_1}^2L_{x_2}^2)}\label{key221}\\
&+ \varepsilon(\eta_2,\eta_3)\big(1+\big\|u\big\|_{\widetilde{X}_{j}([a, b],L_{x_1}^2L_{x_2}^2)}\big)^{c}\sum\limits_{10\leq i\leq j} 2^{i-j}\sum\limits_{G^i_{\alpha}\subset G^j_{k}}\sum\limits_{l > j} 2^{c(d)(i-l)}\big\|P^{x_1}_{\xi(G^i_{\alpha}),l}u\big\|^2_{U^{2}_{\Delta}(G^i_{\alpha},L_{x_1}^2L_{x_2}^2)}\label{key22221}.
\end{align}
	
Notice that the interval $G^i_{\alpha}$ intersects $2^{j-i}$ intervals $G^i_{\alpha}$. So 
\begin{align}\eqref{key22221}
&\lesssim \varepsilon(\eta_2,\eta_3)\big(1+\big\|u\big\|_{\widetilde{X}_{j}([a, b],L_{x_1}^2L_{x_2}^2)}\big)^{c}\sum\limits_{10\leq i\leq j}2^{i-j}2^{j-i}\sum\limits_{l > j} 2^{c(d)(i-l)}\big\|P^{x_1}_{\xi(G^i_{\alpha}),l}u\big\|^2_{U^{2}_{\Delta}(G^j_{k},L_{x_1}^2L_{x_2}^2)}\notag\\
&\lesssim \varepsilon(\eta_2,\eta_3)\big(1+\big\|u\big\|_{\widetilde{X}_{j}([a, b],L_{x_1}^2L_{x_2}^2)}\big)^{c}\sum\limits_{10\leq i\leq j}2^{c(d)(i-j)}\sum\limits_{l > j} \big\|P^{x_1}_{\xi(t),l}u\big\|^2_{U^{2}_{\Delta}(G^j_{k},L_{x_1}^2L_{x_2}^2)}\notag\\
&\lesssim \varepsilon(\eta_2,\eta_3)\big(1+\big\|u\big\|_{\widetilde{X}_{j}([a, b],L_{x_1}^2L_{x_2}^2)}\big)^{c}\|u\|^2_{X(G^j_{k},L_{x_1}^2L_{x_2}^2)}.\label{key2222}
\end{align}
	
Also, when $i-5\leq l\leq j$, every $G^i_{\alpha}$ overlap at most $100 \cdot 2^{l-i} $ intervals $G^l_{{\alpha}^{\prime}}$, thus
\begin{align*}
\sum\limits_{G^i_{\alpha}\subset G^j_k}\big\|P^{x_1}_{\xi(G^i_{\alpha}),l}u\big\|_{U^{2}_{\Delta}(G^i_{\alpha},L_{x_1}^2L_{x_2}^2)}&\lesssim 2^{l-i}\big\|P^{x_1}_{\xi(t),l}u\big\|^2_{U^{2}_{\Delta}(G^j_{k},L_{x_1}^2L_{x_2}^2)}.
\end{align*}
Further,
\begin{align}\eqref{key221}
&\lesssim \sum\limits_{10\leq i\leq j} 2^{i-j}\sum\limits_{i-5\leq l \leq j} 2^{c(d)(i-l)}2^{l-i}\big\|P^{x_1}_{\xi(t),l}u\big\|^2_{U^{2}_{\Delta}(G^j_{k},L_{x_1}^2L_{x_2}^2)}\notag\\
&\lesssim \varepsilon(\eta_2,\eta_3)\big(1+\big\|u\big\|_{\widetilde{X}_{j}([a, b],L_{x_1}^2L_{x_2}^2)}\big)^{c}\sum\limits_{10\leq i\leq j}\sum\limits_{i-5\leq l \leq j} 2^{c(d)(i-l)}2^{l-j}\big\|P^{x_1}_{\xi(t),l}u\big\|^2_{U^{2}_{\Delta}(G^j_{k},L_{x_1}^2L_{x_2}^2)}\notag\\
&\lesssim  \varepsilon(\eta_2,\eta_3)\big(1+\big\|u\big\|_{\widetilde{X}_{j}([a, b],L_{x_1}^2L_{x_2}^2)}\big)^{c}\sum\limits_{0\leq l\leq j}2^{l-j}\big\|P^{x_1}_{\xi(t),l}u\big\|^2_{U^{2}_{\Delta}(G^j_{k},L_{x_1}^2L_{x_2}^2)}\big(\sum\limits_{0\leq i\leq l+5}2^{c(d)(i-l)}\big)\notag\\
&\lesssim \varepsilon(\eta_2,\eta_3)\big(1+\big\|u\big\|_{\widetilde{X}_{j}([a, b],L_{x_1}^2L_{x_2}^2)}\big)^{c}\|u\|^2_{X(G^j_{k},L_{x_1}^2L_{x_2}^2)}.\label{key2221}
\end{align}
Combining \eqref{key2222} and \eqref{key2221}, we have
\begin{align}\label{key33}
\eqref{qqw2}\lesssim \varepsilon(\eta_2,\eta_3)\big(1+\big\|u\big\|_{\widetilde{X}_{j}([a, b],L_{x_1}^2L_{x_2}^2)}\big)^{c}\|u\|^2_{X(G^j_{k},L_{x_1}^2L_{x_2}^2)}.
\end{align}
	
Finally, if $N(G^j_k)\leq \eta_3^{1/2}$, then one can use the same argument in dealing with the term in \eqref{qqw1} to show that
\begin{align}\label{key44}
\eqref{qqw3}\lesssim 1.
\end{align}
If $N(G^j_k)\leq \eta_3^{1/2}$, then one can use the same argument in dealing with the term in \eqref{qqw2} to show that
\begin{align}\label{key55}
\eqref{qqw3}\lesssim \|u\|^2_{X(G^j_{k},L_{x_1}^2L_{x_2}^2)}.
\end{align}
	
Collecting the estimate \eqref{key111}, \eqref{key222}, \eqref{key33}, \eqref{key44} and \eqref{key55}, then for any  $j\geq 10$ and any interval $G^j_{\alpha}$,
\begin{align*}
\|u\|_{X(G^j_{\alpha},L_{x_1}^2L_{x_2}^2)}\lesssim 1+\varepsilon(\eta_2,\eta_3)\big(1+\big\|u\big\|_{\widetilde{X}_{j}([a, b],L_{x_1}^2L_{x_2}^2)}\big)^{c}\|u\|^2_{X(G^j_{k},L_{x_1}^2L_{x_2}^2)}.
\end{align*} 
By the definition of $\widetilde{X}_{j}([a, b],L_{x_1}^2L_{x_2}^2)$, 
	
\begin{align}\label{bootstrap1}
\|u\|_{\widetilde{X}_{j}([a,b],L_{x_1}^2L_{x_2}^2)}\lesssim 1+\varepsilon(\eta_2,\eta_3)\big(1+\big\|u\big\|_{\widetilde{X}_{j}([a, b],L_{x_1}^2L_{x_2}^2)}\big)^{c}\|u\|^2_{\widetilde{X}_{j}([a,b],L_{x_1}^2L_{x_2}^2)}, \quad j\geq 10.
\end{align}
	
On the other hand, using the same argument in dealing with the term in \eqref{qqw2}, one can easily check that
\begin{align}\label{bootstrap2}
\|u\|_{\widetilde{X}_{0}(G^j_{\alpha},L_{x_1}^2L_{x_2}^2)}\lesssim 1.
\end{align}
Next by Definition \ref{definition}, $G^{j+1}_k= G^{j}_{2k} \bigcup G^{j}_{2k+1}$ with $G^{j}_{2k} \bigcap G^{j}_{2k+1}=\varnothing$, and for $0\leq i\leq j$, if $G^{i}_{\alpha} \subset G^{j+1}_k= G^{j}_{2k} \bigcup G^{j}_{2k+1}$, then either $G^{i}_{\alpha} \subset G^{j}_{2k}$ or $G^{i}_{\alpha} \subset G^{j}_{2k+1}$ would happen. Thus
\begin{equation}\label{eq-y5.8}
\begin{split}
&\sum_{0\leq i< j+1}2^{i-(j+1)}\sum_{G_{\alpha}^{i}\subset G_k^{j+1}}\|P_{\xi(G_{\alpha}^{i}), i-2\leq\cdot\leq i+2}u\|^2_{U^{2}_{\Delta}(G^i_{\alpha},L_{x_1}^2L_{x_2}^2)}\\
\leq& 2^{-1}\sum_{0\leq i< j}2^{i-j} \big[\sum_{G_{\alpha}^{i}\subset G_{2k}^{j}}\|P_{\xi(G_{\alpha}^{i}), i-2\leq\cdot\leq i+2}u\|^2_{U^2_{\Delta}(G^j_{2k};L_{x_1}^2L^2_{x_2})}+\sum_{G_{\alpha}^{i}\subset G_{2k+1}^{j}}\|P_{\xi(G_{\alpha}^{i}), i-2\leq\cdot\leq i+2}u\|^2_{U^{2}_{\Delta}(G^j_{2k+1},L_{x_1}^2L_{x_2}^2)}\big]\\
&+ 2^{-1} \big[\|P_{\xi(G_{2k}^{j}), j-2\leq\cdot\leq j+2}u\|^2_{U^{2}_{\Delta}(G^j_{2k},L_{x_1}^2L_{x_2}^2)} + \|P_{\xi(G_{2k+1}^{j}), j-2\leq\cdot\leq j+2}u\|^2_{U^{2}_{\Delta}(G^j_{2k+1},L_{x_1}^2L_{x_2}^2)}\big]\\
\leq& \frac{1}{2}\big[ \|u\|^2_{X(G_{2k}^{j})}+\|\u\|^2_{X(G_{2k+1}^{j})}\big].
\end{split}
\end{equation}
Meanwhile notice that for all $t \in G_k^{j+1}$, from \eqref{key}, we have
\begin{equation}
|\xi(t)-\xi(G_k^{j+1})|\leq 2^{-18}\eta_3 \eta_{1}^{-1/2}.
\end{equation}
Therefore, for all $t \in G_k^{j+1}$ and $i\geq j$,
$$\{\xi:2^{i-1}\leq |\xi-\xi(t)|\leq 2^{i+1} \} \subset \{\xi:2^{i-2}\leq |\xi-\xi(G_k^{j+1})|\leq 2^{i+2} \} \subset \{\xi:2^{i-3}\leq |\xi-\xi(t)|\leq 2^{i+3} \},$$
which, combined with  \eqref{triangle}, yield
\begin{equation}\label{eq-y5.10}
\begin{split}
&\sum_{i\geq j+1}\|P_{\xi(G_{k}^{j+1}),i-2\leq\cdot\leq i+2} u\|^2_{U^{2}_{\Delta}(G^{j+1}_{k},L_{x_1}^2L_{x_2}^2)}\\
\leq & \sum_{i\geq j+1} \big[\|P_{\xi(G_{k}^{j+1}),i-2\leq\cdot\leq i+2}u\|^2_{U^{2}_{\Delta}(G^j_{2k},L_{x_1}^2L_{x_2}^2)} + \|P_{\xi(G_{k}^{j+1}),i-2\leq\cdot\leq i+2} u\|^2_{U^{2}_{\Delta}(G^j_{2k+1},L_{x_1}^2L_{x_2}^2)} \big]\\
\leq & \sum_{i\geq j+1} \big[\|P_{\xi(G_{2k}^{j}),i-3\leq\cdot\leq i+3}u\|^2_{U^{2}_{\Delta}(G^j_{2k},L_{x_1}^2L_{x_2}^2)} + \|P_{\xi(G_{2k+1}^{j}),i-3\leq\cdot\leq i+3} u\|^2_{U^{2}_{\Delta}(G^j_{2k+1},L_{x_1}^2L_{x_2}^2)} \big]\\
\end{split}
\end{equation}
Thus \eqref{eq-y5.8} and \eqref{eq-y5.10} give
\begin{equation}
\begin{split}
\|u\|^2_{X(G_k^{j+1})}&=\sum_{0\leq i< j+1}2^{i-(j+1)}\sum_{G_{\alpha}^{i}\subset G_k^{j+1}}\|P_{\xi(G_{\alpha}^{i}), i-2\leq\cdot\leq i+2}u\|^2_{U^2_{\Delta}(G_{\alpha}^{i},L_{x_1}^2L_{x_2}^2)}\notag\\
&\hspace{5ex}+\sum_{i\geq j+1}\|P_{\xi(G_{k}^{j+1}),i-2\leq\cdot\leq i+2} u\|^2_{U^2_{\Delta}(G_{j+1}^{k},L_{x_1}^2L_{x_2}^2)}\\
&\leq \big[ \|u\|^2_{X(G_{2k}^{j},L_{x_1}^2L_{x_2}^2)}+\|u\|^2_{X(G_{2k+1}^{j},L_{x_1}^2L_{x_2}^2)}\big],
\end{split}
\end{equation}
which directly implies
\begin{align}\label{bootstrap3}
\|u\|^2_{\widetilde{X}_{k_{\ast}+1}([a,b],L_{x_1}^2L_{x_2}^2)}\leq 2\|u\|^2_{\widetilde{X}_{k_{\ast}}([a,b],L_{x_1}^2L_{x_2}^2)}.
\end{align}
	
Finally, thanks to    \eqref{bootstrap1},  \eqref{bootstrap2} and \eqref{bootstrap3}, if we choose sufficiently small $\varepsilon(\eta_2,\eta_3)$, then by a standard bootstrap argument, 
\begin{align}\label{longtimeestimate}
\|u\|_{\widetilde{X}_{k_{0}}([a,b],L_{x_1}^2L_{x_2}^2)}\lesssim 1.
\end{align}
\end{proof}




\section{Frequency-localized Morawetz estimate}\label{sec:Morawetz-frequency}

In this section, we prove the low-frequency localized interaction Morawetz estimates which is crucial in the exclusion of the critical element in the next section. Unlike the case of \cite{Cheng-Guo-Guo-Liao-Shen}, we need to give additional calculus to derive the following interaction Morawetz estimates.

\begin{theorem}[Low-frequency localized interaction Morawetz estimates]\label{frequency-localized}
Let $v(t,x_1,x_2)$ be the almost-periodic solution as in Theorem \ref{almost-periodic} with $\int_{0}^{T}N(t)^3dt=\eta_3K$, then we have
\begin{align}
\bigg\|\int_{\R}\partial_{x_1}\left(\left|P_{\leqslant K}^{x_1}u(t,x_1,x_2)\right|^2\right)dx_2\bigg\|_{L_{t,x_1}^2([0,T]\times\R)}^2\lesssim o(K)+\sup_{t\in[0,T]}M_I(t),
\end{align}
where the symbol $o(K)$ denotes the quantity in the sense that 
\begin{align*}
\frac{o(K)}{K}\rightarrow0,\mbox{ as }K\to\infty
\end{align*}
and  the interaction Morawetz action is defined by
\begin{align*}
M_I(t)&=\int_{\R\times\R}\int_{\R\times\R}\frac{x_1-\widetilde{x_1}}{|x_1-\widetilde{x_1}|}|P_{\leqslant K}^{x_1}{u}(t,\widetilde{x_1},\widetilde{x_2})|^2\Im(\overline{P_{\leqslant K}^{x_1}u}\partial_{x_1}u)(t,x_1,x_2)dx_1d\widetilde{x_1}dx_2d\widetilde{x_2}\\
&\hspace{2ex}+\int_{\R\times\R}\int_{\R\times\R}\frac{x_1-\widetilde{x_1}}{|x_1-\widetilde{x_1}|}|P_{\leqslant K}^{x_1}u(t,\widetilde{x_1},\widetilde{x_2})|^2\Im(\overline{P_{\leqslant K}^{x_1}{u}}\partial_{x_1}{u})(t,x_1,x_2)dx_1d\widetilde{x_1}dx_2d\widetilde{x_2}.
\end{align*}
\end{theorem}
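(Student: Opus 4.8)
The plan is to run the standard interaction Morawetz machinery of Dodson \cite{Dodson-d=1}, but applied to the low-frequency truncated solution $v_{\le K} := P_{\le K}^{x_1} v_c$, where $v_c$ is the almost-periodic solution from Theorem \ref{almost-periodic}, and then to absorb all the errors generated by the frequency cut-off into the $o(K)$ term using the long-time Strichartz estimate (Theorem \ref{longtimestrichartz}) together with the low-frequency mass/momentum bounds coming from the compactness condition \eqref{almost-compact-2}. Concretely, I would first record that $v_{\le K}$ satisfies a forced equation
\begin{align*}
i\partial_t v_{\le K} + \partial_{x_1}^2 v_{\le K} = P_{\le K}^{x_1} F(v_c) =: \mathcal{N}_{\le K} + \mathcal{E},
\end{align*}
where $\mathcal{N}_{\le K} = F(v_{\le K})$ is the ``self-interaction'' term and $\mathcal{E} = P_{\le K}^{x_1}F(v_c) - F(P_{\le K}^{x_1}v_c)$ is the frequency-cut-off error. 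Differentiating $M_I(t)$ in time using this equation, the quintic-defocusing structure of $F$ and the almost-positivity of the Morawetz bilinear form (as in \cite{Planchon-Vega} via the choice of weight $\psi_{r_0}$, though here in the simpler $x_1$-one-dimensional interaction form so that $-\partial_{x_1}^2(\tfrac{x_1-\tilde x_1}{|x_1-\tilde x_1|}) = 2\delta_0(x_1-\tilde x_1)$) yields the identity
\begin{align*}
\bigg\|\int_{\R}\partial_{x_1}\big(|v_{\le K}(t,x_1,x_2)|^2\big)\,dx_2\bigg\|_{L^2_{t,x_1}([0,T]\times\R)}^2 \lesssim \sup_{t\in[0,T]}|M_I(t)| + \mathcal{E}_{\mathrm{Mor}},
\end{align*}
where $\mathcal{E}_{\mathrm{Mor}}$ collects (i) the contribution of the forcing term $\mathcal{E}$ (both from $\partial_t$ hitting the first factor and from the momentum bracket), and (ii) the nonnegative nonlinear term which can be discarded by defocusing sign.

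The core of the argument is then to show $\mathcal{E}_{\mathrm{Mor}} = o(K)$. For this I would split, as in Dodson, the error into a ``low-high'' piece (where one input is $P_{\le K}^{x_1}v_c$ and at least one input is $P_{> K}^{x_1}v_c$) and handle it via a bilinear estimate: by the corollary to Lemma \ref{prop-bilinear}, a product of a $\lesssim K$-frequency factor and a $\gtrsim K$-frequency factor gains a factor $K^{-1/2}$ (or $(M/N)^{1/4}$) in an $L^2_{t,x_1}$ or $L^3_{t,x_1}$ bilinear norm, which together with the long-time Strichartz bound $\|v_c\|_{\widetilde X_{k_0}([a,b],L^2_{x_1}L^2_{x_2})}\lesssim 1$ (rescaled so that $\eta_3 K = \int_0^T N(t)^3\,dt$, with $2^{k_0}\sim \|v_c\|_{L^6_{t,x_1}L^2_{x_2}}$) gives a gain of $2^{-ck_0} K \sim o(K)$. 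The remaining pieces — the high-frequency tails of the mass and momentum densities appearing in $M_I$ and in the time-derivative error — are controlled using \eqref{almost-compact-2}: the Fourier localization there forces $\int_{|\xi-\xi(t)|\ge 2^{-20}\eta_3^{-1/2}N(t)}\|\mathcal{F}_{x_1}v_c\|_{\mathcal{H}^1_{x_2}}^2\,d\xi < \eta_2^2$, and since $N(t)\le 1$ and $K$ is large relative to the compactness scale over most of $[0,T]$, the portion of the solution with $x_1$-frequency $\gtrsim K$ is small in the relevant weighted norms; combined with mass conservation \eqref{equ:Msudef} and Bernstein (Proposition \ref{Bernstein}) this makes those contributions $o(K)$ as well. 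One also needs the elementary observation that $\|v_{\le K}\|_{L^\infty_t \dot H^{1/2}_{x_1}L^2_{x_2}}^2 \lesssim K$ (Bernstein plus $L^2$-boundedness), which is what makes $M_I(t)$ a priori $O(K)$ and hence legitimately comparable to the left side.

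The main obstacle I anticipate is bookkeeping the error terms $\mathcal{E}_{\mathrm{Mor}}$ coming from the commutator $\mathcal{E} = [P_{\le K}^{x_1}, \text{nonlinearity}]$ in the presence of the spectral projectors $\Pi_n$ in $F$: unlike the scalar $1$d case, every input carries an $x_2$-Hermite index and the resonance constraint $n_1-n_2+n_3-n_4+n_5=n$, so one must make sure that the $\ell^2_n$-summation (and the associated $\langle n\rangle^{1/2}$ weights for the $\mathcal{H}^1_{x_2}$ norm) does not destroy the frequency gain. The remedy, already used in Lemma \ref{mlinear1} and Lemma \ref{mlinear2}, is to trade a little $x_2$-regularity for $\ell^2_n$-summability via Minkowski's inequality, and crucially the long-time Strichartz norm $\widetilde X_{k_0}$ is built on $L^2_{x_1}L^2_{x_2}$ (i.e. only $L^2$ in $x_2$), so one only needs the $L^2_{x_2}$-valued bilinear estimates \eqref{bilinearl2}--\eqref{bilinearl3}; the $x_2$-regularity needed to close the $n$-sum is supplied cheaply by Sobolev embedding in $x_2$ and the uniform $\mathcal{H}^1_{x_2}$ bound on $v_c$. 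A secondary technical point is that the weight $\psi_{r_0}$ must be chosen (following \cite{Planchon-Vega}) so that the ``virial'' term has a favorable sign after integrating in $x_2$; I would verify that the defocusing quintic nonlinearity contributes with the correct sign by the same pointwise algebra as in the scalar case, term by term in the $\Pi_n$ decomposition, using that $\sum_n \Pi_n(\cdots)\overline{\Pi_n(\cdots)}$ is nonnegative after integration.
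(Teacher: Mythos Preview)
Your plan is essentially the paper's approach: differentiate the truncated interaction Morawetz action, discard the defocusing nonlinear term by sign (the paper records this as Lemmas \ref{le5.30v9} and \ref{le4.27v40}), and control the commutator error $\mathcal{E} = P_{\le K}^{x_1}F(v_c) - F(P_{\le K}^{x_1}v_c)$ by a high--low frequency decomposition plus long-time Strichartz.

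Two refinements in the paper that you should fold in. First, the paper exploits that the Morawetz weight $a(x_1-\tilde x_1) = \frac{x_1-\tilde x_1}{|x_1-\tilde x_1|}$ is odd, so the entire error $\mathcal{E}_{\mathrm{Mor}}$ is Galilean invariant in $x_1$; this lets one replace every $\partial_{x_1}$ by $\partial_{x_1} - i\xi(t)$. This is not cosmetic: the Galilean-corrected momentum satisfies $\|(\partial_{x_1}-i\xi(t))P_{\le K}^{x_1}u\|_{L^2_{x_1}L^2_{x_2}} = o(K)$ by compactness (equation \eqref{eq6.49v66}), whereas without the shift you would only get $O(K)$ and could not close the estimate for the term $I$ (the one coming from $\partial_t$ hitting the mass density). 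The paper also uses the symmetry observation $\int_\R \Im(\overline{P_{\le K}u}\, F(P_{\le K}u))\,dx_2 = 0$ to reduce $I$ to a pure commutator term before estimating.

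Second, your concern about the $\ell^2_n$-summation and ``trading $x_2$-regularity'' is unnecessary here. The paper handles all the nonlinear error terms at the pure $L^2_{x_2}$ level via the pointwise-in-$(t,x_1)$ nonlinear estimate of Remark \ref{nonlinear} (proved through the Strichartz estimate for the harmonic oscillator, Proposition \ref{Strichartz-harmonic}): this collapses the resonant sum $\sum_{n_1-n_2+n_3-n_4+n_5=n}\Pi_n(\cdots)$ directly to a product of five $L^2_{x_2}$ norms, after which the $x_1$-analysis proceeds exactly as in the scalar 1d case of \cite{Dodson-d=1}, including the multiplier-difference lemma $|\varphi((\xi_1+\xi_2)/K)-\varphi(\xi_1/K)|\lesssim |\xi_2|/K$ to extract the $K^{-1}$ gain in the ``four low, one high'' commutator piece. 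No $\mathcal{H}^1_{x_2}$ weights or Sobolev embedding in $x_2$ enter the error analysis.
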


Before giving the frequency localized interaction Morawetz estimate, we need the following two lemmas:
\begin{lemma}\label{le5.30v9}
\noindent For $F_n(u)=\sum\limits_{\substack{n_1,n_2,n_3,n_4,n_5\in\N\\n_1-n_2+n_3-n_4+n_5=n}}\Pi_n(\Pi_{n_1}u\overline{\Pi_{n_2}u}\Pi_{n_3}u\overline{\Pi_{n_4}u}\Pi_{n_5}u)$, we have:
\begin{equation*}
\sum_{n\in\N}\left\{F_n(u) ,\phi\right\}_p=-\frac{1}{3}\sum_{n\in\N} \partial\left(\bar{\phi}, F_n(u)\right),
\end{equation*}
\noindent where $\{f,g\}_p := \Re(f\partial \bar{g}-g \partial \bar{f})$ is the momentum bracket.
\end{lemma}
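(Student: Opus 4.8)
The identity to be established is the Hamiltonian/momentum-bracket relation
\[
\sum_{n\in\N}\left\{F_n(u),\phi\right\}_p = -\frac{1}{3}\sum_{n\in\N}\partial\left(\bar\phi,F_n(u)\right),
\]
where $F_n(u)$ is the quintic resonant nonlinearity and $\{f,g\}_p=\Re(f\partial\bar g - g\partial\bar f)$. The plan is to verify this by a direct computation that exploits the gauge-invariance of $F$ together with the orthogonality built into the spectral projectors $\Pi_n$. First I would recall that for the classical (non-resonant) quintic nonlinearity $F(u)=|u|^4u$ one has the well-known pointwise momentum-bracket identity $\{|u|^4u,u\}_p = -\tfrac13\partial(|u|^6)$, which follows because $|u|^4u$ is the gradient (in $\bar u$) of the density $\tfrac16|u|^6$; the factor $\tfrac13$ comes from the homogeneity degree $6$ of the density together with the definition of the momentum bracket. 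The present statement is the analogue of this at the level of the DCR system, and the key structural fact that makes it work is that $F_n(u)=\Pi_n\big(\partial_{\bar{u}_n}\mathcal{E}\big)$ where $\mathcal{E}$ is the sextic resonant interaction density appearing in $E_S$ (see \eqref{equ:Esudef}), namely $\mathcal{E}=\sum_{n_1-n_2+n_3-n_4+n_5=n}u_{n_1}\bar u_{n_2}u_{n_3}\bar u_{n_4}u_{n_5}\bar u_{n}$, summed over all indices.

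The key steps, in order, are as follows. (1) Expand $\sum_n\{F_n(u),\phi\}_p = \sum_n\Re\big(F_n(u)\,\partial\bar\phi - \phi\,\partial\overline{F_n(u)}\big)$ and substitute the explicit resonant sum for $F_n$. (2) Use that $\Pi_n$ is an orthogonal projector in $L^2_{x_2}$ and is self-adjoint and commutes with $\partial=\partial_{x_1}$, so that in the pairing $\langle F_n(u),\phi\rangle$-type terms one may replace $\phi$ by $\Pi_n\phi$ and, more importantly, the resonance condition $n_1-n_2+n_3-n_4+n_5=n$ lets the five "input" factors and the one "output" factor be treated symmetrically — this is where one re-indexes the sum so that after summation over $n$ the six slots $(n_1,\dots,n_5,n)$ all play the same role. (3) Apply the Leibniz rule to $\partial(\bar\phi,F_n(u))=\langle \partial\bar\phi,F_n(u)\rangle + \langle\bar\phi,\partial F_n(u)\rangle$ and distribute $\partial F_n(u)$ across the five factors $\Pi_{n_j}u$; by the symmetry from step (2) each of the five resulting terms contributes equally, and combined with the one term from $\partial\bar\phi$ hitting the product one gets six equal pieces, yielding the factor $-\tfrac13$ (six halves of a bracket, i.e. $6\cdot\tfrac12\cdot\tfrac13$ bookkeeping matching the quintic homogeneity). (4) Collect the real parts, using $\Re\bar z=\Re z$ and the fact that complex-conjugating the whole resonant sum maps it to itself with $n_j\leftrightarrow n_{j'}$ relabelling, to see that the imaginary cross-terms cancel and one is left precisely with $-\tfrac13\sum_n\partial(\bar\phi,F_n(u))$.

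The main obstacle, and the part deserving genuine care, is the combinatorial bookkeeping in steps (2)–(3): one must check that after summing over $n$ and over all five-tuples with $n_1-n_2+n_3-n_4+n_5=n$, the operator $\Pi_n$ can be moved around freely (using self-adjointness and the fact that $\Pi_n\Pi_m=\delta_{nm}\Pi_n$) so that the "output index" $n$ is genuinely interchangeable with each input index $n_j$; only then does the five-fold symmetry of $\partial F_n$ combine with the $\bar\phi$-term to give exactly a degree-$6$ homogeneity factor. A secondary subtlety is the sign alternation $+,-,+,-,+$ in the resonance relation: the conjugated factors $\bar u_{n_2},\bar u_{n_4}$ must be paired correctly with $\partial\bar\phi$ versus $\phi\,\partial\overline{F_n}$ so that the momentum-bracket structure $f\partial\bar g-g\partial\bar f$ is reproduced term by term. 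Once these two points are handled, the remaining manipulations (Leibniz rule, taking real parts, relabelling indices) are routine, and the identity follows. I would present the computation cleanly by first isolating a single generic term of the resonant sum, proving the identity for it up to the symmetrization, and then summing.
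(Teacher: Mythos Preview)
Your proposal is correct and takes essentially the same approach as the paper: the paper's entire proof reads ``The proof of this lemma is direct by using the symmetry of resonant system and the definition of the momentum bracket,'' and your plan is precisely a fleshed-out version of that direct computation, exploiting the six-fold symmetry among $(n_1,\dots,n_5,n)$ under the resonance constraint and the Leibniz rule. The one place to tighten is your heuristic for the constant $-\tfrac13$: rather than counting ``six equal pieces,'' it is cleanest to mimic the scalar computation $\{|u|^4u,u\}_p=-|u|^2\partial(|u|^4)$ termwise on the resonant sum, from which the coefficient drops out immediately once you identify $\sum_n\bigl(\bar\phi_n\,F_n(u)\bigr)$ (or its real part, depending on how the pairing notation is read) with the sextic resonant density.
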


\begin{proof}
The proof of this lemma is direct by using the symmetry of resonant system  and the definition of the momemtum bracket.
\end{proof}

Arguing like \cite{Cheng-Guo-Zhao}, we also have the following positive estimate:
\begin{lemma}\label{le4.27v40}
For any  bounded sequence $u\in L_{x_1}^2\mathcal{H}_{x_2}^1(\R\times\R)$, we have
\begin{equation*}
\sum\limits_{\substack{n_1,n_2,n_3,n_4,n_5,n\in \mathbb{N}, \\ n+n_2+n_4=n_1+n_3+n_5} }  \overline{\Pi_nu} \Pi_{n_1}u\overline{\Pi_{n_2}u}\Pi_{n_3}u\overline{\Pi_{n_4}u}\Pi_{n_5}u\geq 0 .
\end{equation*}
\end{lemma}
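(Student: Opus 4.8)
The statement to prove is the pointwise (in $x_1$, $x_2$) nonnegativity of the sextilinear sum
\[
\sum_{\substack{n_1,n_2,n_3,n_4,n_5,n\in\mathbb{N},\\ n+n_2+n_4=n_1+n_3+n_5}} \overline{\Pi_nu}\,\Pi_{n_1}u\,\overline{\Pi_{n_2}u}\,\Pi_{n_3}u\,\overline{\Pi_{n_4}u}\,\Pi_{n_5}u \geq 0.
\]
The plan is to recognize this expression as a squared modulus after a Fourier-type resummation in the Hermite index. The key observation is that the resonance condition $n+n_2+n_4=n_1+n_3+n_5$ is exactly the constraint that appears when one multiplies together three factors of the form $e^{it(2n+1)}\Pi_n u$ and three complex-conjugated such factors and integrates in a suitable ``time'' variable; equivalently, it is the phase-matching condition that survives after averaging $e^{i\theta(n+n_2+n_4-n_1-n_3-n_5)}$ over $\theta\in[0,\pi]$ (up to the harmless shift coming from the $+1$ in $2n+1$, which cancels since there are three plus and three minus indices).

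Concretely, first I would introduce, for fixed $(x_1,x_2)$, the auxiliary function $G(\theta,x_1,x_2) := \sum_{n\in\mathbb{N}} e^{i\theta n}\,\Pi_n u(x_1,x_2) = e^{-i\theta/2} e^{i\theta(-\partial_{x_2}^2+x_2^2)/2} u$, using that $(-\partial_{x_2}^2+x_2^2)\Pi_n u = (2n+1)\Pi_n u$ and the spectral decomposition $u=\sum_n \Pi_n u$. Then I would expand $|G(\theta,x_1,x_2)|^6 = G\,\overline{G}\,G\,\overline{G}\,G\,\overline{G}$, which produces exactly the sextilinear sum over $n_1,\dots,n_5,n$ with weight $e^{i\theta(n_1-n_2+n_3-n_4+n_5-n)}$. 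Averaging over $\theta\in[0,2\pi]$ (or $[0,\pi]$, since the Hermite functions $e_n$ are real and $\Pi_n$ is self-adjoint, the needed orthogonality $\frac{1}{2\pi}\int_0^{2\pi} e^{i\theta m}\,d\theta = \delta_{m,0}$ holds) picks out precisely the terms with $n_1-n_2+n_3-n_4+n_5-n=0$, i.e.\ the resonance set in the statement after relabeling ($n+n_2+n_4=n_1+n_3+n_5$). Hence
\[
\sum_{\substack{n+n_2+n_4=n_1+n_3+n_5}} \overline{\Pi_nu}\,\Pi_{n_1}u\,\overline{\Pi_{n_2}u}\,\Pi_{n_3}u\,\overline{\Pi_{n_4}u}\,\Pi_{n_5}u = \frac{1}{2\pi}\int_0^{2\pi} |G(\theta,x_1,x_2)|^6\,d\theta \geq 0,
\]
which is the claim. (One must double-check the pairing of conjugates: the term $\overline{\Pi_n u}$ should be matched to the three conjugated slots so that the phase exponent is $n_1-n_2+n_3-n_4+n_5-n$; a careful bookkeeping of which of $n_1,\dots,n_5$ carry conjugation versus which carry none shows the exponent is as claimed, and the resonance condition is symmetric enough that the relabeling is legitimate.)

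The main obstacle — really the only thing requiring care — is the combinatorial/notational matching between the abstract resumming identity and the precise placement of complex conjugates in the statement, together with verifying that the ``$+1$'' in the eigenvalue $2n+1$ genuinely cancels (it does: three factors contribute $+1/2$ in the phase and three contribute $-1/2$, so the net contribution of the constant is zero, and $G$ can be taken literally as $e^{i\theta(-\partial_{x_2}^2+x_2^2)/2}u$ or even just $\sum_n e^{i\theta n}\Pi_n u$). I would also note for safety that convergence of the $\theta$-integral and the interchange of sum and integral are justified whenever $u\in L^2_{x_1}\mathcal{H}^1_{x_2}$ (indeed $u\in L^2_{x_1}L^2_{x_2}$ suffices for a.e.\ $(x_1,x_2)$, by Cauchy–Schwarz in $n$ after noting $\|G(\theta)\|_{L^2_{x_2}}=\|u\|_{L^2_{x_2}}$ for each $\theta$). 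No deeper input is needed; this is essentially the same ``average out the phase'' device used to derive the (DCR) system itself in the introduction, now run in reverse to exhibit positivity.
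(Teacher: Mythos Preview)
Your proposal is correct and is exactly the intended argument: the paper does not spell out a proof but refers to \cite{Cheng-Guo-Zhao}, and the standard proof there (and in the present paper's own Lemma~\ref{nonlinear-1}) is precisely to rewrite the resonant sum as a $\theta$-average of $|G(\theta,x_1,x_2)|^6$ with $G(\theta)=\sum_n e^{i\theta n}\Pi_n u$, which is manifestly nonnegative. Your bookkeeping of the conjugation pattern and the cancellation of the $+1$ shift in the eigenvalue $2n+1$ is accurate, so nothing further is needed.
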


\begin{remark}\label{rm4.30}
The above two basic lemmas are to ensure that the term related to the momentum bracket appear in the calculus of interaction Morawetz estimate is positive. Thus we can extract this term. 
\end{remark}

\begin{proof}[Proof of Theorem \ref{frequency-localized}]
Recall that the generalized interaction Morawetz action with a weight $a(x_1-\widetilde{x_1})$,
\begin{align*}
M_I^0(t)&=\int_{\R\times\R}\int_{\R\times\R}a(x_1-\widetilde{x_1})|u(t,\widetilde{x_1},\widetilde{x_2})|^2\Im(\overline{u}\partial_{x_1}u)(t,x_1,x_2)dx_1d\widetilde{x_1}dx_2d\widetilde{x_2}\\
&\hspace{2ex}+\int_{\R\times\R}\int_{\R\times\R}a(x_1-\widetilde{x_1})|u(t,\widetilde{x_1},\widetilde{x_2})|^2\Im(\overline{u}\partial_{x_1}u)(t,x_1,x_2)dx_1d\widetilde{x_1}dx_2d\widetilde{x_2}.
\end{align*}
 Integrating by parts, together with Lemmas \ref{le5.30v9} and \ref{le4.27v40}, then as in \cite{Dodson-d=1}, we can prove
\begin{equation}\label{eq4.20v66}
\bigg\|\int_{\R}\partial_{x_1}(|{u}(t,x_1,x_2)|^2)dx_2\bigg\|_{L_{t,x_1}^2([0,T]\times\R)}^2\lesssim \int_0^T  M'(t) \,\mathrm{d}t  \lesssim \sup\limits_{t\in [0,T]}|M(t)|.
\end{equation}
 Similar as in \cite{Dodson-d=3,dodson-d=2,Dodson-d=1}, since $\dot{H}^1_{x_1}$ can not be controlled by $L^2_{x_1}$ norm, thus we need to add the low-frequency cut-off on the interaction Morawetz action $M_I(t)$. Due to the low-frequency cut-off, the error term will appear inevitably and our goal is to control the error term. More precisely, we have
\begin{equation*}
\bigg\|\int_{\R}\partial_{x_1}(|P_{\leq K}^{x_1}{u}(t,x_1,x_2)|^2)dx_2\bigg\|_{L_{t,x_1}^2([0,T]\times\R)}^2 \lesssim \int_0^T M_I'(t) \mathrm{d}t + \mathcal{E} \lesssim \sup\limits_{t\in [0,T]}|M_I(t)| + \mathcal{E},
\end{equation*}
where
\begin{align*}
&\mathcal{E} =2\int_{0}^{T}\int_{\R}\int_{\R}\int_{\R}\int_{\R}a(x_1-\widetilde{x_1})\Im(\overline{P_{\leq K}^{x_1}u}P_{\leq K}^{x_1}F(u))(t,\widetilde{x_1},\widetilde{x_2})\Im(\overline{P_{\leq K}^{x_1}u}\partial_{x_1}P_{\leq K}^{x_1}u)(t,x_1,x_2)dx_1d\widetilde{x_1}dx_2d\widetilde{x_2}dt\\
&\hspace{2ex}+\int_{0}^{T}\int_{\R}\int_{\R}\int_{\R}\int_{\R}a(x_1-\widetilde{x_1})|P_{\leq K}^{x_1}u(t,\widetilde{x_1},\widetilde{x_2})|^2\Re\big((\overline{P_{\leq K}^{x_1}F(u)}-\overline{F(P_{\leq K}u)})\notag\\
&\hspace{20ex}\times\partial_{x_1}P_{\leq K}u\big)(t,x_1,x_2)dx_1d\widetilde{x_1}dx_2d\widetilde{x_2}dt \\
&\hspace{2ex}-\int_{0}^{T}\int_{\R}\int_{\R}\int_{\R}\int_{\R}a(x_1-\widetilde{x_1})|P_{\leq K}^{x_1}u(t,\widetilde{x_1},\widetilde{x_2})|^2\Re(\overline{P_{\leq K}^{x_1}u}\notag\\
&\hspace{20ex}\times\partial_{x_1}(F(P_{\leq K}^{x_1}u)-P_{\leq K}^{x_1}F(u)))(t,x_1,x_2)dx_1d\widetilde{x_1}dx_2d\widetilde{x_2}dt.
\end{align*}
	
It suffices to prove that $ \mathcal{E} \leq o(K)$. In order to prove this, we use the fact that $a(x)$ is odd implies that $\mathcal{E}$ is Galilean invariant with respect to the $x_1$ variable. Thus, we can write $\mathcal{E}$ to the following 
\begin{equation}\label{error-terms}
\aligned
\mathcal{E} &=2\int_{0}^{T}\int_{\R^4}a(x_1-\widetilde{x_1})\Im(\overline{P_{\leq K}^{x_1}u}P_{\leq K}^{x_1}F(u))(t,\widetilde{x_1},\widetilde{x_2})\Im(\overline{P_{\leq K}^{x_1}u}(\partial_{x_1}-i\xi(t))P_{\leq K}^{x_1}u)(t,x_1,x_2)dx_1d\widetilde{x_1}dx_2d\widetilde{x_2}dt\\
&\hspace{2ex}+\int_{0}^{T}\int_{\R^4}a(x_1-\widetilde{x_1})|P_{\leq K}^{x_1}u(t,\widetilde{x_1},\widetilde{x_2})|^2\Re\big((\overline{P_{\leq K}^{x_1}F(u)}-\overline{F(P_{\leq K}u)})\notag\\
&\hspace{20ex}\times(\partial_{x_1}-i\xi(t))P_{\leq K}u\big)(t,x_1,x_2)dx_1d\widetilde{x_1}dx_2d\widetilde{x_2}dt \\
&\hspace{2ex}-\int_{0}^{T}\int_{\R^4}a(x_1-\widetilde{x_1})|P_{\leq K}^{x_1}u(t,\widetilde{x_1},\widetilde{x_2})|^2\Re(\overline{P_{\leq K}^{x_1}u}(\partial_{x_1}-i\xi(t))(F(P_{\leq K}^{x_1}u)-P_{\leq K}^{x_1}F(u))(t,x_1,x_2)dx_1d\widetilde{x_1}dx_2d\widetilde{x_2}dt\\
&:=   I + II + III.
\endaligned
\end{equation}
	
Now it suffices to control the three  error terms in \eqref{error-terms} respectively. We introduce the useful scaling transform.
For $\lambda = \frac{2^{k_0}}{K}$, let $u_\lambda(t,x_1,x_2)= \lambda^\frac12 u(\lambda^2 t, \lambda x,y)$.  Then, we have
\begin{equation}\label{est-galilean}
\aligned
\left\|\left(\partial_{x_1}-i\xi(t)\right)P_{\le  K}^{x_1} u\right\|_{L^4_tL^{\infty}_{x_1}L_{x_2}^2\left(\left[0,T\right]\times \R\times\R\right)}
&=2^{-k_0}K \left\|\left(\partial_{x_1}-i \lambda \xi(t)\right)P_{\le \lambda  K}^{x_1}u_{\lambda}\right\|_{L^4_tL^{\infty}_{x_1}L_{x_2}^2\left(\left[0,\frac{T}{\lambda^2}\right]\times \R\times\R\right)} \\
&\lesssim 2^{-k_0}K\sum_{j=0}^{k_0+2}2^j \left\|P_{\lambda\xi(t),j}u_{\lambda}\right\|_{L^4_tL^{\infty}_{x_1}L_{x_2}^2\left(\left[0,\frac{T}{\lambda^2}\right]\times \R\times\R\right)}   \lesssim K.
\endaligned
\end{equation}
As in \cite{Cheng-Guo-Guo-Liao-Shen,Dodson-d=1,dodson-d=2}, we use a useful trick of frequency decomposition, i.e. considering $u= P^{x_1}_{\leq \frac{K}{32}}u+P^{x_1}_{\geq \frac{K}{32}}u$ and the following basic fact:

\begin{lemma}
Let $\varphi$ be a real-valued radially symmetric bump function satisfying 
\begin{align*}
\varphi(x)=\begin{cases}
1,&|x|\leq1,\\
0,&|x|\geq2,
\end{cases}
\end{align*}
then	we have for any $\xi_1, \xi_2\in \mathbb{R}$, 
\begin{equation}
\left|\varphi\left(\frac{ \xi_2+\xi_1}{K} \right)- \varphi\left(\frac{ \xi_1}{K} \right)\right| \lesssim \frac{|\xi_2|}{K}. 
\end{equation}
\end{lemma}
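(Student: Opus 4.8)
The plan is to recognize that this is nothing more than the Lipschitz continuity of $\varphi$ transported through the dilation $x\mapsto x/K$. First I would record that, since $\varphi$ is a smooth bump function with compact support, its derivative $\varphi'$ is continuous and compactly supported, hence bounded; set $C_0:=\|\varphi'\|_{L^\infty(\mathbb{R})}<\infty$, a constant depending only on the fixed function $\varphi$. By the fundamental theorem of calculus, for any $a,b\in\mathbb{R}$ one has $\varphi(b)-\varphi(a)=\int_a^b\varphi'(s)\,ds$, and therefore $|\varphi(b)-\varphi(a)|\le C_0\,|b-a|$.

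Next I would apply this Lipschitz bound with the specific choice $a=\xi_1/K$ and $b=(\xi_1+\xi_2)/K$, so that $|b-a|=|\xi_2|/K$. This yields
\[
\left|\varphi\!\left(\frac{\xi_2+\xi_1}{K}\right)-\varphi\!\left(\frac{\xi_1}{K}\right)\right|\le C_0\,\frac{|\xi_2|}{K},
\]
which is exactly the claimed estimate, with implicit constant $C_0$ independent of $K$, $\xi_1$, and $\xi_2$. Note that the $K$-uniformity is automatic, since the right-hand side displays the $K$-dependence explicitly.

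There is essentially no obstacle here; the only points worth flagging are that the radial symmetry of $\varphi$ and the precise normalizations $\varphi\equiv 1$ on $\{|x|\le 1\}$, $\varphi\equiv 0$ on $\{|x|\ge 2\}$ play no role in this particular inequality — they are stated merely because the same $\varphi$ serves as the Littlewood--Paley cutoff elsewhere — and that all that is genuinely used is the membership $\varphi\in C_0^\infty(\mathbb{R})$, or equivalently the boundedness of $\varphi'$.
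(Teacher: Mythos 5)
Your proof is correct: the estimate is exactly the Lipschitz bound $|\varphi(b)-\varphi(a)|\le \|\varphi'\|_{L^\infty}|b-a|$ applied with $a=\xi_1/K$, $b=(\xi_1+\xi_2)/K$, which is the argument the paper implicitly relies on (it states the lemma without proof and invokes it as a standard multiplier estimate). Your remark that the radial symmetry and the precise values of $\varphi$ on $\{|x|\le 1\}$ and $\{|x|\ge 2\}$ are irrelevant here is also accurate.
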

To simplify the notations, we denote $\Pi_{n_i}u$ by $u_{n_i}$, $i=1,2,3,4,5$;  $u_{n_i}^{l}=P_{\leq\frac{K}{32}}^{x_1}u_{n_i}$ and $u_{n_i}^h=P_{\geq\frac{K}{32}}^{x_1}u_{n_i}$. 
	
By using the H\"older inequality, Bernstein's inequality and the conservation of mass, we obtain that
\begin{align*}
\mathcal{E}&\lesssim\|N_1\|_{L_{t}^\frac{4}{3}L_{x_1}^1L_{x_2}^2}\|(\partial_{x_1}-i\xi(t))P_{\leq K}^{x_1}u\|_{L_{t}^4L_{x_1}^\infty L_{x_2}^2}\|P_{\leq K}^{x_1}u\|_{L_{t}^\infty L_{x_1,x_2}^2}^2.
\end{align*}
	
Next, we will give the estimate of $N_1=P_{\leq K}^{x_1}F(u)-F(P_{\leq K }^{x_1}u)$. By the high-low frequency truncation mentioned above, we can rewrite the term in $N_1$ to the following and obtain that
\begin{align}
\|N_1\|_{L_{t}^\frac{4}{3}L_{x_1}^{1}L_{x_2}^2\left(\left[0,T\right]\times \R\times\R\right)}&=
\bigg\|P_{\leq K}^{x_1}\bigg(\sum_{\substack{n_1,n_2,n_3,n_4,n_5,n\in\N\\n_1-n_2+n_3-n_4+n_5=n}}\Pi_n(u_{n_1}^l\overline{u_{n_2}^l}u_{n_3}^l\overline{u_{n_4}^l}u_{n_5}^l)\bigg)\label{N-est1}\\
&\hspace{8ex}-\sum_{\substack{n_1,n_2,n_3,n_4,n_5,n\in\N\\n_1-n_2+n_3-n_4+n_5=n}}\Pi_n(u_{n_1}^l\overline{u_{n_2}^l}u_{n_3}^l\overline{u_{n_4}^l}u_{n_5}^l)\bigg\|_{L_{t}^\frac{4}{3}L_{x_1}^{1}L_{x_2}^2\left(\left[0,T\right]\times \R\times\R\right)}\notag\\
&+\bigg\|P_{\leq K}^{x_1}\bigg(\sum_{\substack{n_1,n_2,n_3,n_4,n_5,n\in\N\\n_1-n_2+n_3-n_4+n_5=n}}\Pi_n\mathcal{O}(u_{n_1}^l\overline{u_{n_2}^l}u_{n_3}^l\overline{u_{n_4}^l}u_{n_5}^h)\bigg)\notag\\
&\hspace{8ex}-\sum_{\substack{n_1,n_2,n_3,n_4,n_5,n\in\N\\n_1-n_2+n_3-n_4+n_5=n}}\Pi_n\mathcal{O}(u_{n_1}^l\overline{u_{n_2}^l}u_{n_3}^l\overline{u_{n_4}^l}P_{\leq K}^{x_1}u_{n_5}^h)\bigg\|_{L_{t}^\frac{4}{3}L_{x_1}^{1}L_{x_2}^2\left(\left[0,T\right]\times \R\times\R\right)}\label{N-est2}\\
&+\bigg\|\sum_{\substack{n_1,n_2,n_3,n_4,n_5,n\in\N\\n_1-n_2+n_3-n_4+n_5=n}}\Pi_n\mathcal{O}\big(u_{n_1}^h\overline{u_{n_2}^h}u_{n_3}^l\overline{u_{n_4}^l}u_{n_5}^l\big)\bigg\|_{L_{t}^\frac{4}{3}L_{x_1}^{1}L_{x_2}^2\left(\left[0,T\right]\times \R\times\R\right)}\label{N-est3}\\
&+\bigg\|\sum_{\substack{n_1,n_2,n_3,n_4,n_5,n\in\N\\n_1-n_2+n_3-n_4+n_5=n}}\Pi_n\big(u_{n_1}^h\overline{u_{n_2}^h}u_{n_3}^h\overline{u_{n_4}^h}u_{n_5}^h\big)\bigg\|_{L_{t}^\frac{4}{3}L_{x_1}^{1}L_{x_2}^2\left(\left[0,T\right]\times \R\times\R\right)}\label{N-est4}\\
&:=J_1+J_2+J_3+J_4,\notag
\end{align}
where $\mathcal{O}$ include the conjugate part and the different frequency.  We claim that 
\begin{align}\label{claim1}
J_1+J_2+J_3+J_4\lesssim o(1).
\end{align}
By the claim \eqref{claim1} and the H\"older inequality, we can verify that 
\begin{align*}
II&=\int_{0}^{T}\int_{\R^4}a(x_1-\widetilde{x_1})|P_{\leq K}^{x_1}u(t,\widetilde{x_1},\widetilde{x_2})|^2\Re\big((\overline{P_{\leq K}^{x_1}F(u)}-\overline{F(P_{\leq K}u)})\\
&\hspace{4ex}\times(\partial_{x_1}-i\xi(t))P_{\leq K}u\big)(t,x_1,x_2)dx_1d\widetilde{x_1}dx_2d\widetilde{x_2}dt\\
&\lesssim o(K).
\end{align*}
	
Now, it remains to prove the claim \eqref{claim1}. By the direct computation, we can show that 
\begin{align*}
P_{\leq K}^{x_1}\bigg(\sum_{\substack{n_1,n_2,n_3,n_4,n_5,n\in\N\\n_1-n_2+n_3-n_4+n_5=n}}\Pi_n\mathcal{O}(u_{n_1}^l\overline{u_{n_2}^l}u_{n_3}^l\overline{u_{n_4}^l}u_{n_5}^l)\bigg)=\sum_{\substack{n_1,n_2,n_3,n_4,n_5,n\in\N\\n_1-n_2+n_3-n_4+n_5=n}}\Pi_n\mathcal{O}\big(u_{n_1}^l\overline{u_{n_2}^l}u_{n_3}^lu_{n_4}^l\overline{u_{n_5}^h}\big).
\end{align*}
This implies $J_1=0$.
	
We turn to give the estimate of the term $J_2$. First, recall the following multiplier estimate
\begin{align*}
\big|\varphi\big(\frac{\xi_2+\xi_1}{K}\big)-\varphi\big(\frac{\xi_1}{K}\big)\big|\lesssim\frac{|\xi_2|}{K}\sup_{|\xi|\sim K}|\partial_{x}\varphi(\xi)|.
\end{align*}
Then using the Fourier inverse formula and the fact that $P_{\leq K}^{x_1}$ can commute with $\Pi_n$, we can obtain that 
\begin{align*}
J_2&\lesssim\frac{1}{K}\bigg\|\sum_{\substack{n_1,n_2,n_3,n_4,n_5,n\in\N\\n_1-n_2+n_3-n_4+n_5=n}}\Pi_n(P_{\leq K}^{x_1}u_{n_5}^h\partial_{x_1}(u_{n_1}^l\overline{u_{n_2}^l}u_{n_3}^l\overline{u_{n_4}^l}))\bigg\|_{L_{t}^\frac{4}{3}L_{x_1}^1L_{x_2}^2([0,T]\times\R\times\R)}.
\end{align*}
Now we give the estimate the general term of the summation above, i.e. 
\begin{align*}
J_2(n_1,n_2,n_3,n_4,n_5)=P_{\leq K}^{x_1}u_{n_5}^h\partial_{x_1}(u_{n_1}^l\overline{u_{n_2}^l}u_{n_3}^l\overline{u_{n_4}^l}).
\end{align*}
Using the H\"older inequality, we have
\begin{align*}
\|J_2(n_1,n_2,n_3,n_4,n_5)\|_{L_{t}^\frac{4}{3}L_{x_1}^1([0,T]\times\R)}&\lesssim\|u_{n_5}^hu_{n_3}^l\overline{u_{n_4}^l}\|_{L_{t,x_1}^2([0,T]\times\R)}\|\partial_{x_1}(e^{-ix_1\xi(t)}u_{n_1}^l\overline{u_{n_2}^le^{-ix_1\xi(t)}})\|_{L_{t}^{4}L_{x_1}^2([0,T]\times\R)}.
\end{align*}
We remark that in the following inequality, we act the derivatives to $u_{n_1}^l\overline{u_{n_2}^l}$. The case of $\partial_{x_1}(u_{n_3}^l\overline{u_{n_4}^l})$ can be treated  similarly.
Then taking the $L_{x_2}^2$ norm, we have
\begin{align*}
&\|J_2\|_{L_{t}^\frac{4}{3}L_{x_1}^1L_{x_2}^2([0,T]\times\R\times\R)}\\
\lesssim&\frac{1}{K}\big\|\|u^h\|_{L_{x_2}^2(\R)}\|u^l\|_{L_{x_2}^2(\R)}\|\overline{u^l}\|_{L_{x_2}^2(\R)}\big\|_{L_{t,x_1}^2([0,T]\times\R)}\big\|\partial_{x_1}(\|e^{-ix_1\xi(t)}u^l\|_{L_{x_2}^2(\R)}\|\overline{e^{-ix_1\xi(t)}u^l}\|_{L_{x_2}^2(\R)})\big\|_{L_{t}^{4}L_{x_1}^2([0,T]\times\R)}.
\end{align*}
By using the intermediate estimate, i.e. Theorem \ref{intermediate}, we have 
\begin{align*}
\|J_2\|_{L_{t}^\frac{4}{3}L_{x_1}^1L_{x_2}^2([0,T]\times\R\times\R)}\lesssim\frac{1}{K}\big\|\partial_{x_1}(\|e^{-ix_1\xi(t)}u^l\|_{L_{x_2}^2(\R)}\|\overline{u^le^{-ix_1\xi(t)}}\|_{L_{x_2}^2(\R)})\big\|_{L_{t}^{4}L_{x_1}^2([0,T]\times\R)}.
\end{align*} 
	
Using the compactness of almost-periodic solution and \eqref{est-galilean}, we have
\begin{align*}
\big\|\|P_{\xi(t),\geq C(\eta)N(t)}u\|_{L_{x_2}^2(\R)}(\partial_{x_1}-i\xi(t))\|u^l\|_{L_{x_2}^2(\R)}\big\|_{L_{t}^4L_{x_1}^2([0,T]\times\R)}\lesssim \eta K.
\end{align*}
We further assume that $C(\eta)\gg\eta_1^{-1}$. Then by the compactness condition \eqref{almost-compact-1}, we have that
\begin{align*}
&\sum_{10C(\eta)\leq 2^j\leq \frac{K}{C(\eta)}}2^jN(J_l)\big\|(\|P_{\xi(J_l),\geq C(\eta)N(J_l)}^{x_1}u\|_{L_{x_2}^2(\R)})(\|P_{\xi(t),2^jN(J_l)}^{x_1}u\|_{L_{x_2}^2(\R)})\big\|_{L_{t,x_1}^2(J_l\times\R)}^\frac{1}{2}\\
&\hspace{2ex}\times\big\|\|P_{\xi(J_l),2^jN(J_l)}^{x_1}u\|_{L_{x_2}^2(\R)}\big\|_{L_t^\infty L_{x_1}^2(J_l\times\R)}^\frac{1}{2}\big\|\|P_{\xi(J_l),\leq C(\eta)N(t)}^{x_1}u\|_{L_{x_2}^2(\R)}\big\|_{L_{t,x_1}^\infty(J_l\times\R)}^\frac{1}{2}\\
&\lesssim o(K^\frac{3}{4})\big(C(\eta)N(J_l)\big)^\frac{1}{4}.
\end{align*}
In the last inequality, we use the bilinear Strichartz estimates. On the other hand, we have that
\begin{align*}
10C(\eta)N(J_l)\big\|\|P_{\xi(J_l),\leq C(\eta)N(J_l)}u\|_{L_{x_2}^2(\R)}\big\|_{L_t^4L_{x_1}^\infty(J_l\times\R)}\big\|\|P_{\xi(J_l),\leq 10C(\eta)N(J_l)}u\|_{L_{x_2}^2(\R)}\big\|_{L_t^\infty L_{x_2}^2(J_l\times\R)}\lesssim C(\eta)N(J_l).
\end{align*}
Summing over all $J_l\subset[0,T]$ and using the condition
\begin{align*}
\int_{0}^{T}N(t)^3dt=\eta_3 K,
\end{align*}  we finally get the control for $J_2$ when $K\to\infty$(i.e. $\eta(K)\to0$),
\begin{align*}
\|J_2\|_{L_{t}^\frac{4}{3}L_{x_1}^1L_{x_2}^2([0,T]\times\R\times\R)}\lesssim\frac{1}{K}(\eta K+C(\eta)K^\frac{1}{4}+C(\eta)o(K))\lesssim o(1).
\end{align*}
	
Next, we give the proof of $J_3$ and $J_4$.  By a direct computation, 
\begin{align*}
\|J_4+J_3\|_{L_t^\frac{4}{3}L_{x_1}^1L_{x_2}^2([0,T]\times\R\times\R)}&\lesssim\bigg\|\sum_{\substack{n_1,n_2,n_3,n_4,n_5,n\in\N\\n_1-n_2+n_3-n_4+n_5=n}}\Pi_n\mathcal{O}\big(u_{n_1}^h\overline{u_{n_2}^h}u_{n_3}^l\overline{u_{n_4}^l}u_{n_5}^l\big)\bigg\|_{L_{t}^\frac{4}{3}L_{x_1}^{1}L_{x_2}^2\left(\left[0,T\right]\times \R\times\R\right)}\notag\\
&\hspace{2ex}+\bigg\|\sum_{\substack{n_1,n_2,n_3,n_4,n_5,n\in\N\\n_1-n_2+n_3-n_4+n_5=n}}\Pi_n\big(u_{n_1}^h\overline{u_{n_2}^h}u_{n_3}^h\overline{u_{n_4}^h}u_{n_5}^h\big)\bigg\|_{L_{t}^\frac{4}{3}L_{x_1}^{1}L_{x_2}^2\left(\left[0,T\right]\times \R\times\R\right)}\\
&\lesssim\big\|\|u^h\|_{L_{x_2}^2(\R)}\|\overline{u^h}\|_{L_{x_2}^2(\R)}\|{u^l}\|_{L_{x_2}^2(\R)}\|\overline{u^l}\|_{L_{x_2}^2(\R)}\|u^l\|_{L_{x_2}^2(\R)}\big\|_{L_{t}^\frac43L_{x_1}^1([0,T]\times\R)}\\
&\hspace{2ex}+\big\|\|u^h\|_{L_{x_2}^2(\R)}\|\overline{u^h}\|_{L_{x_2}^2(\R)}\|{u^l}\|_{L_{x_2}^2(\R)}\|\overline{u^h}\|_{L_{x_2}^2(\R)}\|u^h\|_{L_{x_2}^2(\R)}\big\|_{L_{t}^\frac43L_{x_1}^1([0,T]\times\R)}\\
&\lesssim\big\|\|\|u^h\|_{L_{x_2}^2(\R)}\|u^l\|_{L_{x_2}^2(\R)}\|\overline{u^l}\|_{L_{x_2}^2(\R)}\|_{L_{t,x_1}^2}^\frac{3}{2}\big\|\|u^h\|_{L_{x_2}^2(\R)}\|_{L_t^\infty L_{x_1}^2([0,T]\times\R)}^\frac{1}{2}\\
&\hspace{2ex}+\big\|\|u^h\|_{L_{x_2}^2(\R)}\big\|_{L_t^4L_{x_1}^\infty}^3\big\|\|u^h\|_{L_{x_2}^2(\R)}\big\|_{L_t^\infty L_{x_1}^2([0,T]\times\R)}^3\\
&\lesssim o(1),
\end{align*}
where we use the long-time Strichartz estimate and the bilinear Strichartz estimates in the proof of long-time Strichartz estimate. 
Combining the estimate above, we obtain that 
\begin{align*}
II\lesssim o(K).
\end{align*}	
	
For the third term $III$,   integrating by parts, we have
\begin{align} \label{III-1}
III &  =- \int_{0}^{T}\int_{\R^4}a(x_1-\widetilde{x_1})|P_{\leq K}^{x_1}u(t,\widetilde{x_1},\widetilde{x_2})|^2\notag\\
&\hspace{18ex}\times\Re\big((\overline{P_{\leq K}^{x_1}F(u)}-\overline{F(P_{\leq K}u)})(\partial_{x_1}-i\xi(t))P_{\leq K}u\big)(t,x_1,x_2)dx_1d\widetilde{x_1}dx_2d\widetilde{x_2}dt \\
& \quad  - \int_0^T\int_{\R^4}
\partial_{x_1} a(x_1-\widetilde{x_1})|P_{\le K}^{x_1} u(t,\widetilde{x_1},\widetilde{x_2})|^2\notag\\
&\hspace{18ex}\times \Re\left( \overline{P_{\le  K}^{x_1} u(t,x_1,x_2)}
\left(F \left( P_{\le  K}^{x_1} u \right)- P_{\le  K}^{x_1} F \left(u\right)\right)(t,x_1,x_2) \right)  dx_1d\widetilde{x_1}dx_2d\widetilde{x_2}dt\label{III-2}.
\end{align}
The estimate of \eqref{III-1} is similar to the estimate of $II$, thus we have 
\begin{align*}
\eqref{III-1}\lesssim o(K).
\end{align*}
It is sufficient to prove the term \eqref{III-2}. We note that $\partial_{x_1} a(x_1-\widetilde{x_1})$ is an $L^1$ function  thus we can  use Young's inequality, \eqref{claim1} and conservation law,
\begin{equation*}
\operatorname{\eqref{III-2}}\lesssim \left\|P_{\le  K}^{x_1} u\right\|^3_{L_t^{12}L_{x_1}^{3}L_{x_2}^2([0,T]\times\R\times\R)} \left\|F(P_{\le  K}^{x_1} u  )- P_{\le  K}^{x_1} F( u) \right\|_{L_t^{\frac{4}{3}}L_x^1 L_{x_2}^2([0,T]\times\R\times\R)} \lesssim o(K).
\end{equation*}
	
Finally, we turn to the term $I$. Using the symmetric property of the  nonlinearity, we see
\begin{equation*}
\int_{\R}\Im \bigg(\sum_{\substack{n_1,n_2,n_3,n_4,n_5,n\in\N\\n_1-n_2+n_3-n_4+n_5=n}}\overline{P_{\leq K}^{x_1}u} \Pi_n\big(P_{\leq K}^{x_1}u_{n_1}\overline{P_{\leq K}^{x_1}u_{n_2}}P_{\leq K}^{x_1}u_{n_3}\overline{P_{\leq K}^{x_1}u_{n_4}}P_{\leq K}^{x_1}u_{n_5}\big)\bigg)(\widetilde{x_2})d\widetilde{x_2}=0.
\end{equation*}
Thus, we can write
\begin{align*}
&\hspace{4ex}	\int_{\R}\sum_{n\in\N}\Im\Big(\overline{\Pi_n\big(P_{\leq K}^{x_1}u\big)}P_{\leq K}^{x_1}\sum_{\substack{n_1,n_2,n_3,n_4,n_5\in\N\\n_1-n_2+n_3-n_4+n_5=n}}\Pi_n(u_{n_1}\overline{u_{n_2}}u_{n_3}\overline{u_{n_4}}u_{n_5})\Big)\\
&=\int_{\R}\sum_{n\in\N}\Im\Big(\overline{\Pi_n\big(P_{\leq K}^{x_1}u\big)}\bigg(P_{\leq K}\sum_{\substack{n_1,n_2,n_3,n_4,n_5\in\N\\n_1-n_2+n_3-n_4+n_5=n}}\Pi_n(u_{n_1}\overline{u_{n_2}}_{n_3}\overline{u_{n_4}}u_{n_5})\\
&\hspace{38ex}-\Pi_n\big(P_{\leq K}^{x_1}u_{n_1}\overline{P_{\leq K}^{x_1}u_{n_2}}P_{\leq K}^{x_1}u_{n_3}\overline{P_{\leq K}^{x_1}u_{n_4}}P_{\leq K}^{x_1}u_{n_5}\big)\Big)\bigg).
\end{align*}
By using \eqref{est-galilean}, \eqref{N-est2}, \eqref{N-est3} and \eqref{N-est4}, we have  
\begin{align}
\bigg\|\sum_{n\in N}\overline{P_{\geq \frac{K}{32}}^{x_1}\Pi_n(P_{\leq K}^{x_1}u)}\Big(P_{\leq K}&\sum_{\substack{n_1,n_2,n_3,n_4,n_5\in\N\\n_1-n_2+n_3-n_4+n_5=n}}u_{n_1}\overline{u_{n_2}}u_{n_3}\overline{u_{n_4}}u_{n_5}\notag\\
&-\big(P_{\leq K}^{x_1}u_{n_1}\overline{P_{\leq K}^{x_1}u_{n_2}}P_{\leq K}^{x_1}u_{n_3}\overline{P_{\leq K}^{x_1}u_{n_4}}P_{\leq K}^{x_1}u_{n_5}\Big)\bigg\|_{L_{t,x_1,x_2}^1([0,T]\times\R\times\R)}\lesssim 1.\label{est-term-1}
\end{align} 
	
Using the decomposition $u=u^h+u^l$, where $u^l=P_{\leq \frac{K}{32}}^{x_1}u$ and $u^h=P_{\geq \frac{K}{32}}^{x_1}u$.
Similar to \eqref{est-term-1},	we can also obtain the following estimates: 
\begin{align}\label{eq6.44v69}
& \bigg\| \sum\limits_{n\in\N}
\overline{ P_{\le \frac{K}{32}}^{x_1}  \Pi_n(P_{\leq K}^{x_1}u)}  \cdot \sum_{\substack{n_1,n_2,n_3,n_4,n_5\in\N\\n_1-n_2+n_3-n_4+n_5=n}} \mathcal{O}\left( u_{n_1}^h \overline{ u_{n_2}^h}   u_{n_3}^l \overline{u_{n_3}^l } u_{n_5}^l\right)
\bigg\|_{L_{t,x_1,x_2}^{1} ([0,T] \times \mathbb{R}\times\R)} \\
& \  +  \bigg\| \sum\limits_{n\in\N}
\overline{ P_{\le \frac{K}{32}}^{x_1}  \Pi_n(P_{\leq K}^{x_1}u)}  \cdot \sum_{\substack{n_1,n_2,n_3,n_4,n_5\in\N\\n_1-n_2+n_3-n_4+n_5=n}}\mathcal{O}\left( u_{n_1}^h \overline{ u_{n_2}^h}   u_{n_3}^h \overline{u_{n_3}^h } u_{n_5}^h\right)
\bigg\|_{L_{t,x_1,x_2}^{1}  ([0,T] \times \mathbb{R}\times\R)}
\lesssim 1. \notag
\end{align}
 Moreover, the Fourier transform of
\begin{align}
&  P_{\le K}^{x_1} \left( \sum_{\substack{n_1,n_2,n_3,n_4,n_5\in\N\\n_1-n_2+n_3-n_4+n_5=n}} \mathcal{O}\left( u_{n_1}^l \overline{u_{n_2}^l}  u_{n_3}^l  \overline{u_{n_4}^l} u_{n_5}^h\right)\right)   \notag\\
& \quad -  \sum_{\substack{n_1,n_2,n_3,n_4,n_5\in\N\\n_1-n_2+n_3-n_4+n_5=n}} \mathcal{O}\left(u_{n_1}^l \overline{u_{n_2}^l}  u_{n_3}^l  \overline{u_{n_4}^l} u_{n_5}^h \right)\notag\\
&=P_{\le K}^{x_1} \left( \sum_{\substack{n_1,n_2,n_3,n_4,n_5\in\N\\n_1-n_2+n_3-n_4+n_5=n}} \mathcal{O}\left( u_{n_1}^l \overline{u_{n_2}^l}  u_{n_3}^l  \overline{u_{n_4}^l} u_{n_5}^h\right)\right)   \notag\\
& \quad -  \sum_{\substack{n_1,n_2,n_3,n_4,n_5\in\N\\n_1-n_2+n_3-n_4+n_5=n}} \mathcal{O}\left(u_{n_1}^l \overline{u_{n_2}^l}  u_{n_3}^l  \overline{u_{n_4}^l} u_{n_5}^h \right)\label{fourier-1}
\end{align}
is supported on $|\xi|\geq \frac{K}{2}$ with respect to $x_1$. And therefore the Fourier transform of
\begin{align}
& P_{\leq \frac{K}{32}}^{x_1}\Pi_n(P_{\leq K}u)\bigg(  P_{\le K}^{x_1} \left( \sum_{\substack{n_1,n_2,n_3,n_4,n_5\in\N\\n_1-n_2+n_3-n_4+n_5=n}} \mathcal{O}\left( u_{n_1}^l \overline{u_{n_2}^l}  u_{n_3}^l  \overline{u_{n_4}^l} u_{n_5}^h\right)\right)  \notag \\
& \quad -  \sum_{\substack{n_1,n_2,n_3,n_4,n_5\in\N\\n_1-n_2+n_3-n_4+n_5=n}} \mathcal{O}\left(u_{n_1}^l \overline{u_{n_2}^l}  u_{n_3}^l  \overline{u_{n_4}^l} u_{n_5}^h \right)\bigg)\label{fourier-2}
\end{align}
is supported on $|\xi|\geq \frac{K}{4}$(resp. $x_1$).
The Sobolev embedding implies,
\begin{equation}\label{eq6.48v69}
\left\|P_{\leq K}^{x_1} u\right\|^4_{L^8_tL^{\infty}_{x_1}L_{x_2}^2([0,T]\times\R\times\R)} \lesssim K.
\end{equation}
Using the fact that $N(t)\leq 1$ and the compactness condition  \eqref{almost-compact-2}, we have
\begin{equation}\label{eq6.49v66}
\left\|\left(\partial_x-i\xi(t)\right)P_{\le  K} u\right\|_{L^2_{x_1}L_{x_2}^2(\R\times\R)} \lesssim o(K).
\end{equation}
By using integration by parts,  \eqref{eq6.48v69}, and \eqref{eq6.49v66}, we have
\begin{align}\label{eq6.47v69}
&\quad  \int_0^T \int_{\R^4}  a(x_1-\widetilde{x_1})
P_{>\frac{K}{2}}\left( P_{\geq \frac{K}{32}}^{x_1}\Pi_n(u)\cdot\sum_{\substack{n_1,n_2,n_3,n_4,n_5\in\N\\n_1-n_2+n_3-n_4+n_5=n}}\mathcal{O}\big(u_{n_1}^l\overline{u_{n_2}^l}u_{n_3}^l\overline{u_{n_4}^l}u_{n_5}^l\big)\right)(t,\widetilde{x_1},\widetilde{x_2})\\
& \qquad  \cdot \Im\left(\overline{P_{\le  K}^{x_1}  u }\left(\partial_{x_1}-i\xi(t)\right)P_{\le K}^{x_1}  u \right)(t,x_1,x_2)dx_1d\widetilde{x_1}dx_2d\widetilde{x_2}\mathrm{d}t \notag \\
&= \int_0^T\int_{\R^4}   \partial_{x_1} a(x_1-\widetilde{x_1})\frac{\partial_{\widetilde{x_1}}}{\partial_{\widetilde{x_1}}^2}
P_{>\frac{K}{2}}^{x_1}\left( P_{\geq \frac{K}{32}}^{x_1}\Pi_n(u)\cdot\sum_{\substack{n_1,n_2,n_3,n_4,n_5\in\N\\n_1-n_2+n_3-n_4+n_5=n}}\mathcal{O}\big(u_{n_1}^l\overline{u_{n_2}^l}u_{n_3}^l\overline{u_{n_4}^l}u_{n_5}^l\big)\right)(t,\widetilde{x_1},\widetilde{x_2})\notag \\
& \qquad \cdot \Im\left(\overline{P_{\le  K}^{x_1}  u }\left(\partial_{x_1}-i\xi(t)\right)P_{\le K}^{x_1}  u \right)(t,x_1,x_2)dx_1d\widetilde{x_1}dx_2d\widetilde{x_2}dt  \notag \\
&\lesssim \frac{1}{K} \left\|\left(\partial_{x_1}-i\xi(t)\right) P_{\le K}^{x_1} u \right\|_{L_t^\infty L^2_{x_1}L_{x_2}^2([0,T]\times\R\times\R) }  \left\| P_{\le  K}^{x_1} u\right\|^4_{L^8_tL^{\infty}_{x_1}L_{x_2}^2([0,T]\times\R\times\R)} \lesssim o(K). \notag
\end{align}
In the above inequality, we also use the fact that the support property of \eqref{fourier-1} and \eqref{fourier-2}.
	
By \eqref{est-term-1}, \eqref{eq6.44v69}, and \eqref{eq6.47v69}, we have
\begin{align*}
I &= 2\int_{0}^{T}\int_{\R^4}a(x_1-\widetilde{x_1})\Im(\overline{P_{\leq K}^{x_1}u}P_{\leq K}^{x_1}F(u))(t,\widetilde{x_1},\widetilde{x_2})\Im(\overline{P_{\leq K}^{x_1}u}(\partial_{x_1}-i\xi(t))P_{\leq K}^{x_1}u)(t,x_1,x_2)dx_1d\widetilde{x_1}dx_2d\widetilde{x_2}dt\\
&\lesssim o(K).
\end{align*}
The proof of Theorem \ref{frequency-localized} is now complete.
\end{proof}



\section{The proof of Theorem \ref{Thm2}}\label{sec:Thm2}
In this section, we give the proof for Theorem \ref{Thm2}.

Recall that if Theorem \ref{Thm2} fails, then Theorem \ref{almost-periodic} implies that there exist a minimal blow-up solution $u\in C^0_tL^2_{x_1}\mathcal{H}^1_{x_2}(I\times\R\times\R)$ with maximal lifespan $I\supset [0,\infty)$ and three parameters $N(t):I\to (0,1]$, $x_1(t):I\to\R$, $\xi(t):I\to\R$ such that
\begin{align*}
\left\{\frac{1}{N(t)^\frac{1}{2}}e^{-ix_1\cdot\xi(t)}u\left(t,\frac{x_1-x_1(t)}{N(t)},x_2\right):t\in I\right\}
\end{align*}
is pre-compact in $L_{x_1}^2L_{x_2}^2(\R\times\R)$. Without loss of generality, we can further assume that $N(0)=1$.  Thus, we can consider the following two scenarios respectively, that is
\begin{enumerate}
\item(Rapid frequency cascade) $$\int_{0}^\infty N(t)^3dt=K<\infty,$$
\item(Quasi-soliton) $$\int_{0}^\infty N(t)^3dt=K=\infty.$$
\end{enumerate}
Our goal of this subsection is to prove Theorem \ref{Thm2} by reducing it to  the preclusion  of these two type solutions. The main ingredients are the frequency-localized Morawetz estimates and the additional regularity of the critical elements.

\subsection{Rapid frequency cascade}

\begin{theorem}If $u$ is an almost-periodic solution to (DCR) system $\eqref{DCR-2}$ with the form of Theorem \ref{almost-periodic} and obeys 
\begin{align*}
\int_{0}^{\infty}N(t)^3dt=K<\infty,
\end{align*}
then $u=0$.
\end{theorem}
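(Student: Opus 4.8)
The plan is to show that the rapid frequency cascade hypothesis $\int_0^\infty N(t)^3\,dt = K<\infty$ forces the critical element $u$ to lie in $L^\infty_t H^s_{x_1}L^2_{x_2}$ for some $s>1$, and then to combine this extra regularity with the conservation of mass and the decay of $N(t)$ to conclude $M_S(u) = 0$, hence $u=0$. This is the standard Dodson/Killip--Visan argument adapted to the vector-valued setting of \eqref{DCR-2}.

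First I would establish the additional regularity. The key input is the long-time Strichartz estimate (Theorem \ref{longtimestrichartz}): since $K<\infty$, on any interval $[a,b]\subset[0,\infty)$ the quantity $\int_a^b N(t)^3\,dt$ is uniformly bounded, so the hypotheses \eqref{xi(t)}--\eqref{scaling} can be met with $k_0$ bounded, and one obtains $\|u\|_{\widetilde X_{k_0}([0,\infty),L^2_{x_1}L^2_{x_2})}\lesssim 1$ uniformly. This controls the high-frequency tail $\|P^{x_1}_{\xi(t),\ge N} u\|$ by a quantity that decays in $N$ (roughly like a power of $N(t)/N$), which, fed into the Duhamel formula and the nonlinear estimates of Lemma \ref{nonlinear-1}/Remark \ref{nonlinear} together with the bilinear estimates (Corollary after Lemma \ref{lemma-bilinear}), yields a frequency-localized bound of the form $\|P^{x_1}_{\ge N}u\|_{L^\infty_t L^2_{x_1}\mathcal H^1_{x_2}}\lesssim N^{-s}$ for some small $s>0$. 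Summing the Littlewood--Paley pieces (using the vector-valued Littlewood--Paley theorem, Proposition \ref{Littlewood}) gives $u\in L^\infty_t H^s_{x_1}\mathcal H^1_{x_2}([0,\infty)\times\R\times\R)$ with $s>1$; the persistence of regularity (Proposition \ref{cor-preserve-regularity}) is what allows iterating/bootstrapping this gain. I expect this regularity-bootstrap step to be the main obstacle: one must be careful that the almost-periodicity and the Galilean parameter $\xi(t)$ are handled correctly, that the gain in $N$ survives the five-linear nonlinearity (the quintic structure actually helps here, as remarked in the paper), and that the estimates do not lose the needed power when summing over frequency shells.

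Next I would run the usual compactness-plus-decay contradiction. Because $K = \int_0^\infty N(t)^3\,dt<\infty$ and $N(t)\in(0,1]$, there is a sequence $t_n\to\infty$ with $N(t_n)\to 0$. Applying the almost-periodicity: after the rescaling/modulation, $v_n := N(t_n)^{-1/2}e^{-ix_1\xi(t_n)}u\big(t_n,\frac{x_1-x_1(t_n)}{N(t_n)},x_2\big)$ lies in a precompact subset of $L^2_{x_1}\mathcal H^1_{x_2}$, so along a subsequence $v_n\to v_\infty$ strongly. On the other hand, the extra regularity $u\in L^\infty_t H^s_{x_1}\mathcal H^1_{x_2}$ with $s>1$ is scale-subcritical above $L^2$, so $\|v_n\|_{\dot H^{s}_{x_1}\mathcal H^1_{x_2}} = N(t_n)^{s}\|u(t_n)\|_{\dot H^{s}_{x_1}\mathcal H^1_{x_2}}\lesssim N(t_n)^{s}\to 0$; hence $v_\infty$ has vanishing $\dot H^s_{x_1}$ norm, which combined with the finiteness of its $L^2$ norm (or by testing against high frequencies) forces $v_\infty$ to have all its mass at frequency zero in $x_1$, giving $\mathcal F_{x_1}v_\infty$ supported at $\{0\}$, impossible for a nonzero $L^2$ function unless $v_\infty = 0$.

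Finally, $v_\infty = 0$ together with the strong convergence gives $\lim_n \|u(t_n)\|_{L^2_{x_1}\mathcal H^1_{x_2}} = 0$, but by conservation of mass $M_S(u(t_n)) = M_S(u(0)) = m_0$ for all $n$, and $M_S$ controls $\|u\|_{L^2_{x_1}\mathcal H^1_{x_2}}^2$ from above up to a constant; since $M_S$ is actually constant in $t$, one concludes $m_0 = 0$, so $u\equiv 0$, contradicting the nontriviality of the critical element in Theorem \ref{almost-periodic}. This rules out the rapid frequency cascade scenario, and the quasi-soliton case is excluded separately via the frequency-localized Morawetz estimate of Theorem \ref{frequency-localized}, completing the proof of Theorem \ref{Thm2}.
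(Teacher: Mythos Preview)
Your overall strategy---prove additional regularity from the long-time Strichartz estimate, then extract a contradiction from $N(t_n)\to 0$---matches the paper, but the contradiction step contains a scaling error that breaks it. With your definition $v_n(x_1,x_2)=N(t_n)^{-1/2}e^{-ix_1\xi(t_n)}u\big(t_n,\tfrac{x_1-x_1(t_n)}{N(t_n)},x_2\big)$, the $x_1$-dilation factor is $\lambda=1/N(t_n)$, so for any $s>0$ one has (ignoring the bounded Galilean shift) $\|v_n\|_{\dot H^s_{x_1}\mathcal H^1_{x_2}}= N(t_n)^{-s}\|u(t_n)\|_{\dot H^s_{x_1}\mathcal H^1_{x_2}}$, not $N(t_n)^{s}\|u(t_n)\|_{\dot H^s_{x_1}\mathcal H^1_{x_2}}$. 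Since $N(t_n)\to 0$ and you only know $\|u(t_n)\|_{\dot H^s_{x_1}}\lesssim 1$, this quantity blows up rather than vanishes, and you cannot conclude $v_\infty=0$. Heuristically: the precompact orbit is \emph{constructed} so that $v_n$ lives at unit frequency scale, so its $\dot H^s$ norm is comparable to its $L^2$ norm, which is conserved and nonzero.

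The paper instead works directly with $u(t_n)$. After establishing $\|u\|_{L^\infty_t\dot H^5_{x_1}L^2_{x_2}}\lesssim \eta_3^{-5}K^5$ (the bootstrap to $\dot H^5$ uses, crucially, that in the rapid-cascade scenario $\liminf_t N(t)=0$, so $\inf_{t\in[0,T]}\|P^{x_1}_{>N}u(t)\|_{L^2}\to 0$ as $T\to\infty$; this kills the linear Duhamel term and is a point your sketch glosses over), one splits $u(t_n)=P^{x_1}_{\xi(t_n),\ge C(\eta)N(t_n)}u(t_n)+P^{x_1}_{\xi(t_n),\le C(\eta)N(t_n)}u(t_n)$. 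The high-frequency piece has $L^2$ norm $<\eta$ by almost-periodicity and bounded $\dot H^5$ norm, so Gagliardo--Nirenberg interpolation gives $\dot H^1$ norm $\lesssim\eta^{4/5}$; the low-frequency piece has $\dot H^1$ norm $\lesssim(C(\eta)N(t_n)+|\xi(t_n)|)\|u\|_{L^2}$ by Bernstein. After a Galilean shift sending $\xi_\infty=\lim\xi(t_n)$ to the origin, both terms tend to $0$ along $t_n$, so $\|u(t_n)\|_{\dot H^1_{x_1}L^2_{x_2}}\to 0$. The contradiction then comes from \emph{energy} conservation, not mass: $E_S(u(t_n))\lesssim\|u(t_n)\|_{\dot H^1_{x_1}L^2_{x_2}}^2(1+\|u\|_{L^2_{x_1,x_2}}^4)\to 0$, forcing $E_S(u)\equiv 0$ and hence $u\equiv 0$.
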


\begin{proof}
First, we will show that the almost-periodic solution $u$ enjoys the additional regularity:
\begin{align*}
\big\|u(t)\big\|_{\dot{H}_{x_1}^5L_{x_2}^2(\R\times\R)}\lesssim\eta_3^{-5}K^5.
\end{align*}
We assume that there exists $T>0$ such that 
\begin{align*}
\int_{0}^{T}\|u(t)\|_{L_{x_1}^6L_{x_2}^2(\R\times\R)}dt=2^{k_0}.
\end{align*}
Using the following scaling transform,
\begin{align*}
u_\lambda(t,x_1,x_2)=\lambda u(\lambda^2t,\lambda x_1,x_2),
\end{align*}
where $\lambda=\eta_3\frac{2^{k_0}}{K}$, thus
\begin{align*}
\int_{0}^{\frac{T}{\lambda^2}}N_\lambda(t)^3dt=\eta_32^{k_0}.
\end{align*}
The long-time Strichartz estimate \eqref{longtimestrichartz} gives 
\begin{align}\label{qaz}
\|u_\lambda\|_{\tilde{X}_{k_0}([0,\frac{T}{\lambda^2}]\times\R\times\R)}\leq C_0,
\end{align}
for some constant $C_0>0$ .
Using scaling and \eqref{remark-small}, we obtain $N(t)\leq \eta_32^{k_0}$,
then	 \eqref{xi(t)} yields that 
\begin{align*}
|\xi(t)|\leqslant 2^{-20}\eta_{1}^{-\frac12}\eta_32^{k_0}.
\end{align*}
Without loss of generality, we can also assume that $K\geq1$.
\begin{align*}
\|P_{\geq N}^{x_1}u_\lambda\|_{U_\Delta^2(I;L_{x_1}^2L_{x_2}^2)}\sim\sum_{j:2^j\geq N}\|P_j^{x_1}u_\lambda\|_{U_\Delta^2(I;L_{x_1}^2L_{x_2}^2)},
\end{align*}
where $I=[0,\frac T{\lambda^2}]$.  
By the Duhamel principle and 	 Intermediate  Theorem \ref{intermediate}, we have
\begin{align}\label{qazw}
\left\|P_{>N}^{x_1}u_{\lambda}\right\|_{U_{\Delta}^{2}(I,L_{x_1}^2L_{x_2}^2)}& \lesssim\inf_{t\in I}\left\|P_{>N}^{x_1}u_{\lambda}(t)\right\|_{L_{x_1}^{2}L_{x_2}^2(I\times\R\times\R)} \notag\\
&+\epsilon_{2}^{1/2}C_{0}^{3}\big\Vert P_{>\frac{N}{64}}^{x_1}u_{\lambda}\big\Vert_{U_{\Delta}^{2}(I,L_{x_1}^{2}L_{x_2}^2(I\times\R\times\R))}.
\end{align}

Since the equation is mass-critical,  $U_\Delta^2$ and $L^2$ norm are invariant under the above scaling symmetry. Thus, \eqref{qaz} and \eqref{qazw} yield that for $N\geq \eta_{3}^{-1}K$,
\begin{align}\label{qwertyui}
\big\|P_{> \eta_3^{-1}K}^{x_1}u\big\|_{L_{x_1}^2L_{x_2}^2(\R\times\R)}\leq C_0,
\end{align}
and
\begin{align}\label{qwerty}
\left\|P_{>N}^{x_1}u\right\|_{U_{\Delta}^{2}([0,T],L_{x_1}^2L_{x_2}^2)}& \lesssim\inf_{t\in[0,T] }\left\|P_{>N}^{x_1}u(t)\right\|_{L_{x_1}^{2}L_{x_2}^2(\R\times\R)} \notag\\
&+\epsilon_{2}^{1/2}C_{0}^{3}\big\Vert P_{>\frac{N}{64}}^{x_1}u\big\Vert_{U_{\Delta}^{2}([0,T],L_{x_1}^{2}L_{x_2}^2(\R\times\R))}.
\end{align}

Notice that $\varlimsup\limits_{t\to\infty}N(t)=0$ and $|\xi(t)|\leqslant 2^{-20}\varepsilon_{3}^{-\frac12}K$, we obtain
\begin{align}\label{qwertyu}
\lim_{T\to\infty}\inf_{t\in[0,T] }\|P_{\geq N}^{x_1}u\|_{L_{x_1}^2L_{x_2}^2(\R\times\R)}\to 0.
\end{align}
Thus by \eqref{qwerty} and \eqref{qwertyu}, we have
\begin{align}\label{qwerty2}
\left\|P_{>N}^{x_1}u\right\|_{U_{\Delta}^{2}([0,\infty),L_{x_1}^2L_{x_2}^2)}\leq \epsilon_{2}^{1/2}C_{1}\cdot C_0^3\big\Vert P_{>\frac{N}{64}}^{x_1}u\big\Vert_{U_{\Delta}^{2}([0,\infty),L_{x_1}^{2}L_{x_2}^2(\R\times\R))},
\end{align}
where $C_1$ is the implicit constant in \eqref{qwerty}.

Therefore, if we choose sufficiently small $\eta_2$ such that $\eta_2^{1/2}C_1C_0^3\leq 2^{-100}$, then \eqref{qwertyui}, \eqref{qwerty2},  Bernstein's inequality and the embedding $U^2_{\Delta}([0,\infty),L_{x_1}^{2}L_{x_2}^2(\R\times\R))\hookrightarrow L_t^{\infty}\dot{H}_{x_1}^{5}L_{x_2}^2([0,\infty)\times\mathbb{R}\times\mathbb{R})$ imply that 
\begin{align}\label{ert}
\|P_{> N}^{x_1}u\|_{L_t^{\infty}\dot{H}_{x_1}^{5}L_{x_2}^2([0,\infty)\times\mathbb{R}\times\mathbb{R})}&\lesssim 	\sum_{k\geq 0}\|P_{2^{6k} N< \cdot < 2^{6(k+1)}N}^{x_1}u\|_{L_t^{\infty}\dot{H}_{x_1}^{5}L_{x_2}^2([0,\infty)\times\mathbb{R}\times\mathbb{R})}\notag\\
&\lesssim \sum_{k\geq 0} 2^{30(k+1)}N^5 \|P_{2^{6k} N< \cdot < 2^{6(k+1)}N}^{x_1}u\|_{L_t^{\infty}L^2_{x_1}L_{x_2}^2([0,\infty)\times\mathbb{R}\times\mathbb{R})}\notag\\
&\lesssim \sum_{k\geq 0} 2^{30(k+1)}N^5 2^{-100k}\notag\\
&\lesssim N^5.
\end{align}
	
On the other hand, by Bernstein's inequality, 
\begin{align}\label{ertt}
\|P_{\leq N}^{x_1}u\|_{L_t^{\infty}\dot{H}_{x_1}^{5}L_{x_2}^2([0,\infty)\times\mathbb{R}\times\mathbb{R})}\lesssim N^5\|P_{\leq N}^{x_1}u\|_{L_t^{\infty}L^2_{x_1}L_{x_2}^2([0,\infty)\times\mathbb{R}\times\mathbb{R})}\lesssim N^5.
\end{align}
Choosing $N\sim \eta_3^{-1}K$, $N\geq \eta_3^{-1}K$, then by \eqref{ert} and \eqref{ertt},
\begin{align*}
\|u\|_{L_t^{\infty}\dot{H}_{x_1}^{5}L_{x_2}^2([0,\infty)\times\mathbb{R}\times\mathbb{R})}\lesssim\varepsilon_{3}^{-5}K^5.
\end{align*} 
	
Next, we show the impossibility of the non-trivial rapid frequency cascade solution. We choose a sequence $\{t_n\}$ satisfying $N(t_n)\to 0$ and $\lim\limits_{n\to\infty}\xi(t_n)=\xi_\infty\leq 2^{-20}\varepsilon_{3}^{-\frac12}KA$. By the Galilean transformation with respect to $x_1$-direction, we can shift $\xi_\infty$ to the origin. Nevertheless we still have
\begin{align*}
\|u\|_{L_t^{\infty}\dot{H}_{x_1}^{5}L_{x_2}^2([0,\infty)\times\mathbb{R}\times\mathbb{R})}\lesssim\varepsilon_{3}^{-5}K^5.
\end{align*} 
By using the compactness of the almost-periodic solution and Lemma \ref{GNIN}, we have
\begin{align*}
\left\|u(t) \right\|_{\dot{H}_x^1 L_{x_2}^2(\R\times\R)} & \le  \left\|P_{\xi(t), \ge C(\eta) N(t)} u(t) \right\|_{\dot{H}_x^1 L_{x_2}^2(\R\times\R)} + \left\|P_{\xi(t), \le C(\eta) N(t)} u(t) \right\|_{\dot{H}_x^1 L_{x_2}^2(\R\times\R)}\\
& \lesssim \left\|P_{\xi(t), \ge C(\eta) N(t)} u(t) \right\|_{L^2_{x_1} L_{x_2}^2}^\frac45 \left\|P_{\xi(t), \ge C(\eta) N(t)} u(t) \right\|_{\dot{H}^5_{x_1} L_{x_2}^2(\R\times\R)}^\frac15\\
&\hspace{3ex} + (C(\eta) N(t) + |\xi(t)|) \left\| u(t)\right\|_{L^2_{x_1} L_{x_2}^2(\R\times\R)}\\
& \lesssim \left\|P_{\xi(t), \ge C(\eta) N(t)} u(t) \right\|_{L^2_{x_1} L_{x_2}^2(\R\times\R)}^\frac45 \left\|u(t) \right\|_{\dot{H}^5_{x_1} L_{x_2}^2(\R\times\R)}^\frac15 \\
&\hspace{3ex}+ \left(C(\eta) N(t) + |\xi(t)|\right) \left\| u(t) \right\|_{L^2_{x_1} L_{x_2}^2(\R\times\R) }\\
& \lesssim \eta^\frac45 \epsilon_3^{-1} K +  C(\eta) N(t) + |\xi(t)|,
\end{align*}
for any $\eta >0$.
Therefore, we have
\begin{align*}
\|u(t_n) \|_{\dot{H}_x^1 L_{x_2}^2(\R\times\R)} \to 0  \text{ as $n \to \infty$}.
\end{align*}
By the Gagliardo-Nirenberg inequality and nonlinear estimates(cf. Remark \ref{nonlinear}), we deduce that
\begin{align*}
E_S(u(t_n)) \lesssim \|u(t_n)\|_{\dot{H}_x^1 L_{x_2}^2(\R\times\R)}^2 + \|u(t_n) \|_{L_x^2 L_{x_2}^2(\R\times\R)}^4 \|u(t_n)\|_{\dot{H}_x^1 L_{x_2}^2(\R\times\R)}^2 \to 0 \text{ as } n\to \infty,
\end{align*}
thus by the energy conservation law, $E_S(u(t))=0$, which implies $u \equiv 0$.
This excludes the rapid frequency cascade case.
\end{proof}

\subsection{Quasi-soliton case}	 
In this subsection, we will preclude the possibility of the quasi-soliton.

\begin{theorem}\label{soliton}
If $u$ is an almost-periodic solution to (DCR) system $\eqref{DCR-2}$ with the form of Theorem \ref{almost-periodic} and obeys 
\begin{align*}
\int_{0}^{\infty}N(t)^3dt=K=\infty,
\end{align*}
then $u=0$.
\end{theorem}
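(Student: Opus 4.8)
The plan is to argue by contradiction in the classical Dodson scheme. Suppose $u\not\equiv0$; by mass conservation $m_c:=\|u\|_{L^2_{x_1,x_2}(\mathbb R\times\mathbb R)}>0$ is a fixed positive constant. For a large parameter $T>0$ set $K:=\int_0^TN(t)^3\,dt$, which tends to $\infty$ as $T\to\infty$ in the quasi-soliton regime. Since $N(0)=1$, $N(t)\le1$, and $|\xi'(t)|\lesssim\eta_1^{-1/2}N(t)^3$ by \eqref{xi(t)}, one has $|\xi(t)|\lesssim\eta_1^{-1/2}K$ on $[0,T]$, so for $\eta_3$ small relative to $\eta_1$ the low-frequency cutoff at level $10\eta_3^{-1}K$ keeps essentially all the mass: $\|P^{x_1}_{\le10\eta_3^{-1}K}u(t)\|_{L^2_{x_1,x_2}}^2\ge\tfrac12m_c^2$ for every $t\in[0,T]$. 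Applying the low-frequency localized interaction Morawetz estimate, Theorem~\ref{frequency-localized}, with this cutoff gives
\begin{align*}
\Big\|\int_{\mathbb R}\partial_{x_1}\big(|P^{x_1}_{\le10\eta_3^{-1}K}u(t,x_1,x_2)|^2\big)\,dx_2\Big\|_{L^2_{t,x_1}([0,T]\times\mathbb R)}^2\lesssim o(K)+\sup_{t\in[0,T]}|M_I(t)|.
\end{align*}

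For the lower bound I would use the compactness of the almost-periodic solution furnished by Theorem~\ref{almost-periodic}. Set $f(t,x_1):=\int_{\mathbb R}|P^{x_1}_{\le10\eta_3^{-1}K}u(t,x_1,x_2)|^2\,dx_2\ge0$. The physical-space concentration \eqref{almost-compact-1} (used as in \eqref{eq5.232}) yields $\int_{|x_1-x_1(t)|\le C(\eta)/N(t)}f(t,x_1)\,dx_1\gtrsim m_c^2$, while $\|f(t)\|_{L^1_{x_1}}\le m_c^2$. Hölder's inequality on the interval of length $\sim C(\eta)/N(t)$ gives $\|f(t)\|_{L^2_{x_1}}^2\gtrsim_{\eta}m_c^4N(t)$, and the one-dimensional Gagliardo--Nirenberg inequality $\|f\|_{L^2_{x_1}}^3\lesssim\|\partial_{x_1}f\|_{L^2_{x_1}}\|f\|_{L^1_{x_1}}^2$ then forces $\|\partial_{x_1}f(t)\|_{L^2_{x_1}}^2\gtrsim_{\eta}m_c^4N(t)^3$. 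Integrating over $[0,T]$, the left-hand side of the Morawetz inequality is $\gtrsim_{\eta}m_c^4K$.

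Combining the two estimates reduces everything to showing $\sup_{t\in[0,T]}|M_I(t)|=o(K)$, i.e.\ that the entire right-hand side of Theorem~\ref{frequency-localized} is $o\big(\int_0^TN(t)^3\,dt\big)$. Here I would bound $M_I(t)$ by Cauchy--Schwarz, using that the kernel $\tfrac{x_1-\widetilde x_1}{|x_1-\widetilde x_1|}$ is odd so that $M_I$ is invariant under the $x_1$-Galilean transform: $|M_I(t)|\lesssim m_c^3\|(\partial_{x_1}-i\xi(t))P^{x_1}_{\le10\eta_3^{-1}K}u(t)\|_{L^2_{x_1,x_2}}$. The part at frequencies $|\xi-\xi(t)|\lesssim C(\eta)N(t)$ contributes only $O_{\eta}(1)$ because $N(t)\le1$, and the remaining high-frequency part --- of small mass by \eqref{almost-compact-1} --- is controlled through the long-time Strichartz estimate of Theorem~\ref{longtimestrichartz}, exactly as it enters the proof of Theorem~\ref{frequency-localized} via \eqref{est-galilean}; this is what makes the contribution sublinear in $K$. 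I expect this step --- extracting the sublinear-in-$K$ bound for $\sup_t|M_I(t)|$ and for the truncation error $\mathcal E$ simultaneously --- to be the main obstacle, as it is the heart of Dodson's method in this setting. Granting it, one obtains $m_c^4K\lesssim_{\eta}o(K)$; dividing by $K$ and letting $T\to\infty$ gives $m_c=0$, contradicting $u\not\equiv0$. Together with the already-established exclusion of the rapid frequency cascade case, this rules out all non-trivial almost-periodic solutions and completes the proof of Theorem~\ref{Thm2}.
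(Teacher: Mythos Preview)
Your proposal is correct and follows essentially the same route as the paper: derive a lower bound $\gtrsim m_c^4K$ for the left-hand side of the frequency-localized Morawetz inequality from compactness plus one-dimensional Gagliardo--Nirenberg, derive an upper bound $o(K)$ for the right-hand side, and conclude by contradiction. Your Gagliardo--Nirenberg formulation $\|f\|_{L^2_{x_1}}^3\lesssim\|\partial_{x_1}f\|_{L^2_{x_1}}\|f\|_{L^1_{x_1}}^2$ is equivalent to the paper's use of the embedding $\dot H^1(\mathbb R)\hookrightarrow\dot C^{1/2}(\mathbb R)$ and yields the same bound $\|\partial_{x_1}f(t)\|_{L^2_{x_1}}^2\gtrsim_\eta m_c^4N(t)^3$.

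The one place where you over-complicate matters is the bound on $\sup_{t\in[0,T]}|M_I(t)|$. After the Galilean-invariance reduction to $|M_I(t)|\lesssim m_c^3\|(\partial_{x_1}-i\xi(t))P^{x_1}_{\le10\eta_3^{-1}K}u(t)\|_{L^2_{x_1,x_2}}$, this is a \emph{pointwise-in-$t$} estimate that requires only compactness and $N(t)\le1$, exactly as in \eqref{eq6.49v66}: the near-$\xi(t)$ piece contributes $O_\eta(1)$ and the far piece contributes $\lesssim\eta^{1/2}K$, hence $o(K)$ after optimizing in $\eta$. You do not need the long-time Strichartz estimate here, and the reference to \eqref{est-galilean} (an $L^4_tL^\infty_{x_1}$ estimate of size $O(K)$, not $o(K)$) is misplaced. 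The truncation error $\mathcal E$ you mention is already absorbed into the $o(K)$ term in the statement of Theorem~\ref{frequency-localized}, so no further work is needed for it in this proof.
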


\begin{proof}
In this case, by H{\"o}lder, Gagliardo-Nirenberg, interpolation, Sobolev, and also the definition of the H\"older continuity in \cite{Dodson-d=1}, we have
\begin{align*}
& \int_{|x_1-x_1(t)|\leq \frac{C\Big(\frac{ \|u\|_{L^2_{x_1,x_2} }^2}{100}\Big)}{N(t)}}\int_{\R}|P_{\le  10\eta_3^{-1}K}^{x_1} u(t,x_1,x_2)|^2 dx_1dx_2 \\
 \lesssim& \int_{|x_1-x_1(t)|\leq \frac{C\Big(\frac{ \|u\|_{L_{x_1,x_2}^2}^2}{100}\Big)}{N(t)}}\int_{\R} |x_1- x_1(t)|^\frac12 \frac{ |P_{\le  10\eta_3^{-1}K}^{x_1} u(t,x_1,x_2) |^2 - |P_{\le  10\eta_3^{-1}K}^{x_1} u(t, x_1(t),x_2)|^2}{ | x_1 - x_1(t)|^\frac12} \,dx_1dx_2 \\
&+\int_{|x_1-x_1(t)|\leq \frac{C\Big(\frac{ \|u\|_{L_{x_1,x_2}^2}^2}{100}\Big)}{N(t)}}\int_{\R} |P_{\le  10\eta_3^{-1}K}^{x_1} u(t,x_1(t),x_2)|^2 \,dx_1dx_2\\
\lesssim& \Bigg(\frac{C \Big(\frac{\|u\|_{L_{x_1,x_2}^2}^2}{100}\Big)}{N(t)}\Bigg)^{\frac{3}{2}}\bigg\|\int_{\R}|P_{\le  10\eta_3^{-1}K}^{x_1} u(t,x_1,x_2)|^2dx_2\bigg\|_{\dot{C}_{x_1}^{\frac{1}{2}}(\mathbb{R})} +  \frac{\|u\|_{L^2 }^2}{100}\\
 \lesssim& \Bigg(\frac{C\Big(\frac{ \|u\|_{L^2 l^2}^2}{100}\Big)}{N(t)} \Bigg)^{\frac{3}{2}}\left\|\int_{\R}\partial_{x_1} \left( |P_{\le 10\eta_3^{-1}K}^{x_1} u(t,x_1,x_2)|^2 \right)dx_2 \right\|_{L^2_{x_1}(\mathbb{R})} +  \frac{\|u\|_{L^2 }^2}{100},
\end{align*}
where $\dot{C}_{x_1}^\frac12(\mathbb{R})$ is the homogeneous H\"older norm in $x_1$ direction.
	
By Theorem \ref{frequency-localized} and \eqref{eq6.49v66}, we have
\begin{equation*}
\Big\|\int_{\R}\partial_{x_1} \left( |P_{\le 10\eta_3^{-1}K}^{x_1} u(t,x_1,x_2)|^2  \right)dx_2 \Big\|^2_{L^2_{t,x_1}([0,T]\times \mathbb{R})} \lesssim o(K),
\end{equation*}
therefore, for $K \ge C\Big(\frac{ \left\|u \right\|_{L_{x_1,x_2}^2}^2}{100}\Big)$, we have
\[\frac{\left\|u\right\|_{L^2_{x_1,x_2}}^2}{2} \le
\int_{|x_1-x_1(t)|\leq \frac{C\Big(\frac{ \|u \|_{L^2_{x_1,x_2} }^2}{100}\Big)}{N(t)}} \int_{\R}\left|P_{\le  10\eta_3^{-1}K}^{x_1} u
(t,x_1,x_2)\right|^2 dx_1dx_2. \]
Thus
\begin{align}
\|u\|^4_{L^2_{x_1,x_2}}K   &\sim  \|u \|^4_{L^2_{x_1,x_2}}  \int_0^T N(t)^3 dt\notag\\
&\lesssim \int_0^T N(t)^3 \bigg(\int_{|x_1-x_1(t)|\leq \frac{C\Big(\frac{\|u\|_{L_{x_1,x_2}^2 }^2}{100}\Big)}{N(t)}}\int_{\R}\left|P_{\le  10\eta_3^{-1}K}^{x_1}u(t,x_1,x_2)\right|^2dx_1dx_2 \bigg )^2 dt \notag\\
& \lesssim \bigg\|\int_{\R}\partial_{x_1} \left( |P_{\le 10\eta_3^{-1} K}^{x_1} u(t,x_1,x_2)|^2 \right)dx_2 \bigg\|^2_{L^2_{t,x_1}([0,T]\times \mathbb{R})} \lesssim o(K).\label{qazq}
\end{align}
When  $K$ is sufficiently large, \eqref{qazq} implies $u=0$, which is a contradiction. Thus we can finish  the proof of Theorem \ref{soliton} and Theorem \ref{Thm2}. 
\end{proof}

\noindent\textbf{Acknowledgement:}  We appreciate Professor Xing Cheng for helpful discussions and  beneficial suggestions on this paper. 
This work is supported by National key R\&D program of China: 2021YFA1002500.  Z. Zhao was supported by the NSF grant of China (No. 12101046, 12271032) and the Beijing Institute of Technology Research Fund Program for Young Scholars.
J. Zheng was supported by  NSF grant of China (No. 12271051) and Beijing Natural Science Foundation 1222019.



\end{document}